\documentclass{article}

\RequirePackage{amsthm,amsmath,amsfonts,amssymb}

\usepackage{dsfont}
\usepackage{enumitem}
\usepackage[T1]{fontenc}
\usepackage[margin=1.25in]{geometry}
\usepackage[normalem]{ulem} 

\usepackage[square,sort,comma,numbers]{natbib}

\usepackage{hyperref}
\RequirePackage{graphicx}

\newcommand{\defeq}{\mathrel{\mathop:}=}
\newcommand*\Laplace{\mathop{}\!\mathbin\bigtriangleup}

\newcommand{\myemph}[1]{  
\textit{#1}.
}

\makeatletter
\renewcommand*\env@matrix[1][\arraystretch]{%
  \edef\arraystretch{#1}%
  \hskip -\arraycolsep
  \let\@ifnextchar\new@ifnextchar
  \array{*\c@MaxMatrixCols c}}
\makeatother

\newtheorem{theorem}{Theorem}[section]
\newtheorem{lemma}[theorem]{Lemma}

\newtheorem{assumption}{Assumption}
\newtheorem{proposition}[theorem]{Proposition}

\newtheorem*{theorem*}{Theorem}
\newtheorem*{proposition*}{Proposition}
\newtheorem*{corollary*}{Corollary}
\newtheorem*{lemma*}{Lemma}

\theoremstyle{definition}
\newtheorem{definition}[theorem]{Definition}
\newtheorem{remark}[theorem]{Remark}

\DeclareMathOperator*{\esssup}{ess \, sup}
\DeclareMathOperator*{\supp}{supp}

\newcommand{\ignore}[1]{}

\newcommand{\llbracket}{[\![}
\newcommand{\rrbracket}{]\!]}

\title{A functional law of large numbers for a spatial model of Muller's ratchet}

\author{João Luiz de Oliveira Madeira\thanks{Department of Statistics, University of Oxford, UK} \and Marcel Ortgiese\footnote{Department of Mathematical Sciences, University of Bath, UK} \and Sarah Penington\footnotemark[2]}

\date{}

\begin{document}




\maketitle

\begin{abstract}
The spatial Muller's ratchet is a model introduced by Foutel-Rodier and Etheridge to study the impact of cooperation and competition on the fitness of an expanding asexual population. The model is an interacting particle system consisting of particles performing symmetric random walks that reproduce and die with rates that depend on the local number of particles. For each particle, we keep track of the number of deleterious mutations that it carries, and after each birth event, with some positive probability, the offspring particle can acquire an additional mutation that gives it a lower reproduction rate than its parent. 
We show that, under an appropriate scaling, the process converges weakly to the solution of an infinite system of partial differential equations (PDEs), confirming non-rigorous computations of Foutel-Rodier and Etheridge. 
Combining the weak convergence with analytical results for the limiting PDE system, we derive quantitative lower and upper bounds on the proportion of particles with mutations that hold with high probability for the particle system. A key obstacle is the absence of uniform bounds on the number of particles per site, together with the presence of infinitely many types of particle and the nonlinear interactions. To address this, we establish a new tightness criterion for interacting particle systems in general~$L_p$ spaces based only on local properties of the dynamics.
\end{abstract}

{\small \noindent\textbf{Keywords:} Muller's ratchet; 
spatial birth-death processes; 
hydrodynamic limits; non-local partial differential equations.}

\tableofcontents

\section{Introduction}
\label{Paper02_introduction}

In this article, we rigorously establish the scaling limit of a spatial reaction-diffusion system of interacting particles with countably infinitely many mutation types. The system is a generalisation of a model introduced by Foutel-Rodier and Etheridge in~\cite{foutel2020spatial} to investigate the impact of cooperation and competition on the propagation of deleterious mutations through an asexual population expanding in space, where individual birth and death rates depend on the local population density. Before formally defining the interacting particle system in Section~\ref{Paper02_model_description}, in this section we outline the biological motivation for the model, as well as the (non-rigorous) derivations in~\cite{foutel2020spatial} that we prove in this article.

\subsection{Motivation} \label{Paper02_biological_motivation}

As a central mechanism of evolution, when an individual passes its genetic material to its offspring, a mutation may occur and the genetic material is modified. This modification may have an effect on the fitness of the individual: a mutation may be deleterious (reducing fitness), advantageous (increasing fitness), or neutral (having no effect on fitness).

Although fitness is a key factor in determining the survival probability of an individual, in a spatially structured population this probability also depends on the local population density~\cite{foutel2020spatial}. Higher density may either decrease survival probability due to competition or increase it due to cooperation.

It is known that rare neutral mutations can propagate across large regions during range expansion, a phenomenon called \emph{gene surfing}, first described by Edmonds et al.~\cite{edmonds2004mutations}. Roughly speaking, if competition dominates over cooperation and so the per-capita growth rate is high at low population densities, then a neutral mutation appearing at the front of an expanding population (i.e.~at the leading edge of the expanding population) can spread locally with high probability, since the population density at the front is usually low. Since colonisation of new habitat is driven by individuals near the front, such mutations can become prevalent over large spatial regions~\cite{foutel2020spatial}. 

Gene surfing of neutral mutations has been studied through simulations of discrete particle systems, partial differential equations (PDEs), and stochastic PDEs (see~\cite{edmonds2004mutations,hallatschek2008gene, roques2012allee}), and rigorously in PDE-based models~\cite{garnier2012inside, roques2012allee}. We also refer the reader to~\cite{roques2012allee} for a discussion of the possible different definitions of gene surfing of neutral mutations. In contrast, the gene surfing of deleterious mutations has received less attention.

To address the question of whether deleterious mutations can also surf, Foutel-Rodier and Etheridge introduced the \emph{spatial Muller's ratchet}~\cite{foutel2020spatial}, a spatial extension of a mechanism first proposed by Muller~\cite{muller1964relation} to explain the evolution of recombination and sexual reproduction. In asexual reproduction, chromosomes are inherited as indivisible units, so the number of mutations along an ancestral lineage can only increase. Since the majority of mutations in biological systems are deleterious~\cite{bao2022mutations}, this leads to a gradual decline in overall population fitness~\cite{etheridge2009often}. In Muller's ratchet models, when all individuals in the most-adapted class in the population acquire at least one additional deleterious mutation, the minimum mutational load in the population increases — this is known as a ‘click' of the ratchet. From a mathematical perspective, Muller's ratchet has been extensively studied in populations without spatial structure~\cite{casanova2022quasi,pfaffelhuber2012muller,haigh1978accumulation,etheridge2009often,mariani2020metastability}.

The model in~\cite{foutel2020spatial} shares the same key feature of deleterious mutations, but also includes a spatial component and density-dependent birth and death rates. We now describe the model. Let $N \in \mathbb{N}$ denote a scaling parameter. The population is subdivided into demes indexed by~$L_N^{-1}\mathbb{Z}$, where $L_N > 0$ is a space renormalisation parameter. For $x \in L_N^{-1}\mathbb{Z}$, $t \geq 0$ and $k \in \mathbb{N}_{0}$, let $\eta^N_{k}(t,x)$ indicate the number of particles (representing individuals in the population) carrying exactly~$k$ mutations living in deme $x$ at time $t$, and let $\| \eta^N(t,x) \|_{\ell_{1}} \defeq \sum_{k = 0}^{\infty} \eta_{k}^N(t,x)$. Each particle (independently) migrates at rate~$m_N$, jumping to one of the two neighbouring demes with equal probability. Moreover, a particle located in deme $x \in L_N^{-1}\mathbb{Z}$ at time $t \geq 0$ and carrying $k \in \mathbb{N}_{0}$ mutations gives birth to a new particle at rate
\begin{equation*}
    r(1-s)^{k}\Bigg(B\frac{\| \eta^N(t-,x) \|_{\ell_{1}}}{N} + 1\Bigg),
\end{equation*}
and dies at rate
\begin{equation*}
   r\frac{\| \eta^N(t-,x) \|_{\ell_{1}}}{N}\Bigg(B\frac{\| \eta^N(t-,x) \|_{\ell_{1}}}{N} + 1\Bigg),
\end{equation*}
where~$r \in (0,\infty)$ is a Malthusian growth parameter, $s \in (0,1)$ represents the impact on fitness of each deleterious mutation, and the parameter $B \in [0, \infty)$ captures the relative effect of cooperation and competition in the local population dynamics. Here, larger values of $B$ indicate a greater role of cooperation in the dynamics. Let $\mu \in (0,1)$. After a birth event, an offspring particle is added to the same deme as its parent: with probability $1 - \mu$, the offspring inherits the same number of mutations as its parent, and with probability~$\mu$, it accumulates an additional mutation.

Foutel-Rodier and Etheridge
consider a scaling regime in which $L_N$ is of order $N$ and $\lim_{N \rightarrow \infty} \frac{m_N}{L_N^2} = m \in (0, \infty)$. By performing a (non-rigorous) generator calculation, they conjecture that the number of particles in each deme (rescaled by $1/N$) converges as $N \rightarrow \infty$
to the solution of the following system of PDEs: For each $k \in \mathbb{N}_0$, for $t > 0$,
\begin{equation} \label{Paper02_conjectured_PDE_heuristics}
\begin{split}
\partial_t u_k 
     = \frac{m}{2} \Laplace u_k & + r(B\| u \|_{\ell_1} + 1)(u_k(1-s)^{k}(1- \mu) \\
    &  + \mathds{1}_{\{k \geq 1\}}u_{k-1}(1-s)^{k-1}\mu - u_{k} \| u \|_{\ell_1}). 
\end{split}
\raisetag{-1cm}
\end{equation}
Foutel-Rodier and Etheridge also predict~\cite[Equation~(9)]{foutel2020spatial} that the (critical) spreading speed into an empty habitat of a population governed by the dynamics of~\eqref{Paper02_conjectured_PDE_heuristics} depends on the parameter $B \in [0, \infty)$, and is given by $c^*>0$, where:
\begin{equation} \label{Paper02_conjecture_spreading_speed_foutel_rodier_etheridge}
    c^* \defeq \left\{\begin{array}{lcl}
        \sqrt{2mr(1-\mu)} \quad & \textrm{if} & \displaystyle B \leq \frac{2}{1-\mu},  \\
          \displaystyle \frac{B(1-\mu) + 2}{2}\sqrt{\frac{mr}{B}} \quad & \textrm{if} & \displaystyle B > \frac{2}{1-\mu}.
    \end{array}\right.
\end{equation}
The change in the expression for the spreading speed at $B = \frac{2}{1-\mu}$ reflects a transition from \emph{pulled} to \emph{pushed} expansion waves~\cite{roques2012allee,garnier2012inside}. Heuristically, in the case of pulled waves, the critical speed is the same as for the system of PDEs linearised around the state $0$, i.e.~the wave is pulled by its leading edge. On the other hand, in pushed waves, the critical speed is determined not only by the leading edge, but by the whole front, i.e.~the wave is pushed from behind~\cite{garnier2012inside}. Pulled expansion waves are observed when the highest per-capita growth rate is achieved at very low population density, i.e.~in particular when the population dynamics is of \emph{Fisher-KPP} type~\cite{roques2012allee}. On the other hand, when the per-capita growth rate is negative at very low population densities, i.e.~when the dynamics displays a \emph{strong Allee} effect, pushed expansion waves are observed~\cite{roques2012allee}. When the maximum growth rate is achieved at an intermediate population density, but the population still exhibits growth even at low population densities, then the dynamics shows a \emph{weak Allee} effect; in this case, the wave can be either pulled or pushed, depending on the model parameters. From~\eqref{Paper02_conjecture_spreading_speed_foutel_rodier_etheridge}, Foutel-Rodier and Etheridge conclude that for $B \in [0, 2/(1-\mu))$, the dynamics of the system of PDEs~\eqref{Paper02_conjectured_PDE_heuristics} exhibits pulled behaviour, while for $B > 2/(1 - \mu)$, the system exhibits pushed behaviour.

From a mathematical perspective, there are several challenges in rigorously proving the conjectures of Foutel-Rodier and Etheridge in~\cite{foutel2020spatial}. First, since we allow for the existence of particles carrying an arbitrarily large number of mutations, we must keep track of infinitely many types of particles. Moreover, there are no \textit{a priori} bounds on the number of particles per deme, and both birth and death rates are unbounded. Furthermore, the fact that particles at the same deme with different numbers of mutations interact with each other leads to ‘non-local interactions' in both the interacting particle system and the system of PDEs~\eqref{Paper02_conjectured_PDE_heuristics}.

In fact, it is highly non-trivial to show that the particle system even exists (when started with an infinite number of particles), not only for the reasons mentioned above, but also because the system turns out to be non-monotone. 
We address this question in our companion article~\cite{madeira2025existence}, where we construct a general version of the interacting particle system introduced in~\cite{foutel2020spatial} rigorously, and {we prove (local) moment bounds on the particle density. In our other companion article~\cite{madeira2025PDE_surfing}, we establish existence and uniqueness of mild solutions to the system of PDEs.
Moreover, we derive the spreading speed in the Fisher-KPP regime (that is, for $B \leq 1/(1-\mu)$) and establish
the asymptotic shape of the mutational profile.}

{In the present article, using the results of~\cite{madeira2025existence,madeira2025PDE_surfing}, we rigorously derive the hydrodynamic limit of the general spatial Muller's ratchet model. The main probabilistic innovation is a new tightness criterion in suitable~$L_p$ spaces for interacting particle systems, which relies only on local properties of the dynamics, and therefore applies to processes with infinitely many particle types and unbounded birth and death rates. Using the weak convergence together with the PDE analysis from~\cite{madeira2025PDE_surfing}, we further derive quantitative bounds on the mutational profile of the particle system.}

\medskip

\noindent \textbf{Structure of the article.} In Section~\ref{Paper02_model_description}, we define the spatial Muller's ratchet model, and in Sections~\ref{Paper02_sec:subsection_law_large_numbers} and~\ref{Paper02_sec:subsection_asymptotic_behaviour_PDE} we state the main results of this article. In Section~\ref{Paper02_example_pop_dynamics}, we mention some biologically relevant examples. In Section~\ref{Paper02_related_work_introd}, we briefly review some techniques commonly used to establish hydrodynamic limit results for interacting particle systems, and explain why they are not directly applicable to our setting. In Section~\ref{Paper02_subsection_heuristics}, we informally present the key ideas behind the proofs of our results. In Section~\ref{Paper02_subsection_preliminaries}, we review some concepts that will be used in the article, including the notion of solutions to the (infinite) system of PDEs. The main results of our companion article~\cite{madeira2025existence} that will be used in this article are stated in Section~\ref{Paper02_subsection_preliminaries_existence_moment_bouds}.

The proofs of our results can be found in Sections~\ref{Paper02_section_greens_function}--\ref{Paper02_uniqueness_weak_solutions_section}. In Section~\ref{Paper02_section_greens_function}, we use a Green's function representation of the random walk semigroup to establish regularity properties of the population density. We use these properties to derive tightness in the space of sequences of Radon measures in Section~\ref{Paper02_tightness}. Tightness of the process in a suitable generalisation of the usual $L_p$ space is established in Section~\ref{Paper02_section_limiting_process_L1_construction}. We complete the proofs of 
the main results of this article in Section~\ref{Paper02_uniqueness_weak_solutions_section}. The appendix contains standard results and technical estimates used in Sections~\ref{Paper02_section_greens_function}--\ref{Paper02_uniqueness_weak_solutions_section}.

\medskip
\noindent\textbf{Notation.} Throughout the article we will use the following notation. Let $\ell_{1}$ denote the space of summable real-valued sequences, i.e. 
\begin{equation*}
    \ell_{1} = \ell_{1}\left(\mathbb{N}_{0}\right) \defeq \bigg\{z = \left(z_{k}\right)_{k \in \mathbb{N}_{0}} \in \mathbb{R}^{\mathbb{N}_{0}}: \, \| z \|_{\ell_1} \defeq \sum_{k = 0}^{\infty}\vert z_{k} \vert < \infty \bigg\}.
\end{equation*}
Moreover, let $\ell_{1}^{+}$ denote the subset of $\ell_{1}$ consisting of the summable sequences with non-negative entries. For any 
$K \in \mathbb{N}$, we let $\llbracket K \rrbracket \defeq \{1,2,\ldots, K\}$. Let $\lambda$ be the Lebesgue measure on $\mathbb R^d$ for any $d\in \mathbb N$. Let
\begin{equation*}
    \mathcal{C}_{c}(\mathbb{R}) \defeq \left\{\phi: \mathbb{R} \rightarrow \mathbb{R}: \, \phi \textrm{ is continuous and has compact support} \right\}.
\end{equation*}
Let $\mathcal{M}(\mathbb{R})$ be the space of non-negative locally finite Radon measures on $\mathbb{R}$. For $\varphi \in \mathcal{C}_c(\mathbb R)$ and $\rho \in \mathcal{M}(\mathbb{R})$, we let
\[
\langle \rho, \varphi \rangle \defeq \int_{\mathbb R} \varphi(x) \, \rho(dx).
\]
We equip $\mathcal{M}(\mathbb{R})$ with the vague topology to turn this space into a Polish space, and let~$d_{\textrm{vague}}$ denote a corresponding metric. 
For a complete and separable metric space $(\mathcal{S}, d_{\mathcal{S}})$, we let $\mathcal{D}([0,\infty); \mathcal{S})$ denote the space of $\mathcal{S}$-valued càdlàg paths, and we equip $\mathcal{D}([0,\infty); \mathcal{S})$ with the $J_1$-Skorokhod metric.

For a metric space $(\mathcal{S}, d_{\mathcal{S}})$, we let $\mathcal{C}(\mathcal{S}; \mathbb{R})$ denote the space of continuous real-valued functions on $\mathcal{S}$. Let $\mathcal{C}^{1}(\mathbb{R})$ denote the set of continuously differentiable real-valued functions defined on $\mathbb R$. Let $\mathcal{C}^{1,2}((0,\infty) \times \mathbb{R}; \mathbb{R})$ denote the set of real-valued functions on $(0,\infty) \times \mathbb{R}$ that are continuously differentiable in the first coordinate and twice continuously differentiable in the second coordinate.

For a set $\mathcal{S}$, suppose $f,g: \mathcal{S} \rightarrow (0,\infty)$ are functions on $\mathcal{S}$. We write $f \lesssim g$ to indicate that there exists a constant $C > 0$ such that for every $x \in \mathcal{S}$, $f(x) \leq Cg(x)$. If the constant $C$ depends on a parameter $p$, then we write $f \lesssim_p g$. We say that sequences of positive real numbers $(a_N)_{N \in \mathbb{N}}$ and $(b_N)_{N \in \mathbb{N}}$ satisfy $a_N = \Theta(b_N)$ as $N\rightarrow \infty$ if
$$0 < \liminf_{N \rightarrow \infty} \frac{a_N}{b_N} \leq \limsup_{N \rightarrow \infty} \frac{a_N}{b_N} < \infty.$$
For $z \in \mathbb R$, we write
$
z^+ \defeq z \vee 0
$ for the positive part of $z$.

\section{Model definition and main results} \label{Paper02_model_description}

Let $N \in \mathbb{N}$ be a scaling parameter and $L_{N} > 0$ a space renormalisation parameter depending on $N$. The model consists of particles moving on the rescaled one-dimensional lattice $L_{N}^{-1}\mathbb{Z}$, where we call each point $x \in L_{N}^{-1}\mathbb{Z}$ a \emph{deme}. Each particle (representing an individual in the population that carries a unique chromosome) is characterised by two features: the number of deleterious mutations that it carries and its spatial location. 
For each $t \geq 0$, $x \in L_{N}^{-1}\mathbb{Z}$ and $k \in \mathbb N_0$, let $\eta^{N}_{k}(t,x)$  denote the number of particles at deme $x$ carrying exactly $k$ mutations at time $t$. Note that for each deme $x \in L_{N}^{-1}\mathbb{Z}$, we can characterise the set of particles at this deme at time $t$ by the sequence $\eta^{N}(t,x) \defeq (\eta^{N}_{k}(t,x))_{k \in \mathbb{N}_{0}}$. The total number of particles living at deme $x$ at time $t$ is given by
$\| \eta^{N}(t,x) \|_{\ell_1} = \sum_{k = 0}^{\infty} \eta^{N}_{k}(t,x)$.

Let $m_{N} > 0$ be the migration rate, and let $(s_{k})_{k \in \mathbb N_0}$ be a sequence of fitness parameters, where $s_k \geq 0$ denotes the fitness of a particle carrying $k$ mutations. Let $q_{+},q_{-}: \mathbb{R}_{+} \rightarrow \mathbb{R}_{+}$ be non-negative functions, and let $\mu \in [0,1]$ be the mutation probability. We will precisely state our assumptions on $m_{N}$, $(s_{k})_{k \in \mathbb{N}_{0}}$, $q_{+}$ and $q_{-}$ later in this section. The spatial Muller's ratchet process $(\eta^{N}(t))_{t \geq 0} = (\eta_{k}^{N}(t,x): \, k \in \mathbb{N}_{0}, \, x \in L_{N}^{-1}\mathbb{Z})_{t \geq 0}$ can be described informally as follows:
\begin{itemize}
    \item \textit{Migration events}: For each $t \geq 0$ and $x \in L_{N}^{-1}\mathbb{Z}$, each particle living at deme $x$ at time $t$ independently jumps at rate $m_{N}$ to a uniformly chosen deme from $\{x - L_{N}^{-1}, \, x + L_{N}^{-1}\}$.
    \item \textit{Reproduction events}: For each $t \geq 0$, $x \in L_{N}^{-1}\mathbb{Z}$ and $k \in \mathbb{N}_{0}$, each particle carrying $k$ mutations at deme $x$ at time $t$ reproduces independently at rate $\displaystyle s_{k}q_{+}(\| \eta^{N}(t-,x) \|_{\ell_{1}} / N)$. After such an event, a new offspring particle will be added at deme $x$. With probability $1 - \mu$, the offspring particle will carry $k$ mutations, and with probability $\mu$ it will carry $k + 1$ mutations. Note that in both cases, the parent particle remains alive and keeps the same number of mutations.
    \item \textit{Death events}: For each $t \geq 0$ and $x \in L_{N}^{-1}\mathbb{Z}$, each particle at deme $x$ at time $t$ dies independently at rate $\displaystyle q_{-}(\| \eta^{N}(t-,x) \|_{\ell_{1}} / N)$, regardless of its number of mutations. When a particle dies, it is simply removed from the process.
\end{itemize}

We will now state our conditions on $m_{N}$, $L_{N}$, $(s_{k})_{k \in \mathbb{N}_{0}}$, $q_{+}$ and $q_{-}$. The migration rate $m_{N}$ and the space renormalisation parameter $L_{N}$ will satisfy the following assumptions.

\begin{assumption}[Space renormalisation]\label{Paper02_scaling_parameters_assumption}
The positive parameters $m_{N}$ and $L_{N}$ are such that
\begin{enumerate}
    \item[(i)] $\displaystyle \frac{m_{N}}{L_{N}^{2}} \rightarrow m \in (0, \infty)$ as $N \rightarrow \infty$.
    \item[(ii)] $L_{N} = \Theta(N)$ as $N \rightarrow \infty$. 
\end{enumerate}
\end{assumption}

We assume that all the mutations in our model are deleterious, i.e.~that $(s_k)_{k \in \mathbb{N}_0}$ is decreasing, so that any mutation decreases the rate of reproduction. More precisely, we assume:

\begin{assumption} [Fitness parameters]\label{Paper02_assumption_fitness_sequence}
The sequence of fitness parameters $\left(s_{k}\right)_{k \in \mathbb{N}_{0}}$ satisfies the following conditions:
\begin{enumerate}
\item[(i)] $s_{0} = 1$.
\item[(ii)] $s_{k} \geq 0$ for all $k \in \mathbb{N}_{0}$.
\item[(iii)] $\left(s_{k}\right)_{k \in \mathbb{N}_{0}}$ is monotonically non-increasing, i.e.~$s_{k} \geq s_{k+1}$ for all $k \in \mathbb{N}_{0}$.
\item[(iv)] $\lim_{k \rightarrow \infty} s_{k} = 0$.
\end{enumerate}
\end{assumption}

Now, since in our model we do not impose \textit{a priori} bounds on the number of particles per deme, we make the following assumptions on the birth and death rates.

\begin{assumption} [Birth and death polynomial rates]\label{Paper02_assumption_polynomials}
Suppose that $q_{+}, q_{-}: [0,\infty) \rightarrow [0,\infty)$ are polynomials such that $0 \leq \deg q_{+} < \deg q_{-}$.
\end{assumption}

By Assumption~\ref{Paper02_assumption_polynomials}, the leading coefficient of the polynomial $q_{+}$ is non-negative, and the leading coefficient of $q_-$ is strictly positive. Note also that the scaling parameter $N$ can be thought of as being proportional to the local carrying capacity of the population. Indeed, since $\deg q_{+} < \deg q_{-}$, when the number of particles at a deme is much larger than $N$, then death events happen at a higher rate than birth events, providing a local regulation mechanism for the population density.

The initial configurations of particles in ${\eta}^N$ will be chosen in such a way that they converge as $N \rightarrow \infty$ to a function $f: \mathbb{R} \rightarrow \ell_{1}^{+}$ satisfying the following assumptions. Let $\lambda$ denote the Lebesgue measure on $\mathbb R$.

\begin{assumption} [Initial condition] \label{Paper02_assumption_initial_condition}
Suppose $f = \left(f_{k}\right)_{k \in \mathbb{N}_{0}}: \mathbb{R} \rightarrow \ell_{1}^{+}$ satisfies the following conditions:
\begin{enumerate}[label=(\roman*)]
    \item $f$ is continuous $\lambda$-almost everywhere, i.e.~there exists $\mathcal{N}^{(1)} \subset \mathbb{R}$ such that $\lambda(\mathcal{N}^{(1)}) = 0$ and $f$ is continuous on $\mathbb{R} \setminus \mathcal{N}^{(1)}$.
    \item $f \in L_{\infty}(\mathbb{R}; \ell_{1})$, i.e.~$\esssup_{x \in \mathbb{R}} \| f(x) \|_{\ell_1} < \infty$.
    \item There exists $\mathcal{N}^{(2)} \subset \mathbb{R}$ such that $\lambda(\mathcal{N}^{(2)}) = 0$ and
    \begin{equation*}
        \lim_{k \rightarrow \infty} \; \sup_{x \in \mathbb{R} \setminus \mathcal{N}^{(2)}} \; \sum_{j \geq k} f_{j}(x) = 0.
    \end{equation*}
\end{enumerate}
\end{assumption}
We will clarify what we mean by $L_{p}$ spaces of $\ell_{1}$-valued functions in Section~\ref{Paper02_subsection_preliminaries}. For a function $f = (f_k)_{k \in \mathbb N_0}$ satisfying Assumption~\ref{Paper02_assumption_initial_condition}, for every $N \in \mathbb{N}$, we will define the initial condition of our process ${\eta}^N$ as the configuration $\boldsymbol{\eta}^{N} = (\eta^N_k(x))_{k \in \mathbb N_0, \, x \in L_N^{-1}\mathbb Z} \in (\mathbb{N}_{0}^{\mathbb{N}_{0}})^{L_{N}^{-1}\mathbb{Z}}$ such that for all $x \in L_{N}^{-1}\mathbb{Z}$ and $k \in \mathbb{N}_{0}$,
\begin{equation} \label{Paper02_initial_condition}
    {\eta}^{N}_{k}(x) \defeq \bigg\lfloor L_{N}N\int_{x - \frac 12 L_{N}^{-1}}^{x + \frac 12 L_{N}^{-1}} f_{k}(y) \, dy \bigg\rfloor.
\end{equation}
For $f$ satisfying Assumption~\ref{Paper02_assumption_initial_condition}, for every $N \in \mathbb{N}$, the initial configuration~$\boldsymbol{\eta}^{N} \in (\mathbb{N}_{0}^{\mathbb{N}_{0}})^{L_{N}^{-1}\mathbb{Z}}$ given by~\eqref{Paper02_initial_condition} satisfies
\begin{equation} \label{Paper02_immediate_estimates_definition_initial_condition}
    \sup_{x \in L_{N}^{-1}\mathbb{Z}} \; \| \eta^{N}(x) \|_{\ell_{1}} < \infty \quad \textrm{and} \quad \lim_{k \rightarrow \infty} \; \sup_{x \in L_{N}^{-1}\mathbb{Z}} \; \sum_{j \geq k} \; \eta^{N}_{j}(x) = 0.
\end{equation}
In particular, for every~$N \in \mathbb{N}$, the initial configuration $\boldsymbol{\eta}^N$ satisfies $\boldsymbol{\eta}^N \in \mathcal{S}^N$, where
\begin{align} \label{Paper02_definition_state_space_formal}
    \quad \mathcal{S}^{N} \defeq \bigg\{\boldsymbol{\xi} = (\xi_k(x))_{k \in \mathbb{N}_0, \, x \in L^{-1}_N\mathbb{Z}} \in (\mathbb{N}_{0}^{\mathbb{N}_{0}})^{L_{N}^{-1}\mathbb{Z}}: \; \vert \vert \vert \boldsymbol{\xi} \vert \vert \vert_{\mathcal{S}^{N}} \defeq \sum_{x \in L_{N}^{-1}\mathbb{Z}} \frac{\| \xi(x) \|_{\ell_{1}}}{(1 + \vert x \vert)^{2}} < \infty\bigg\}.
\raisetag{-0.7cm}
\end{align}
We define a metric~$d_{\mathcal{S}^{N}}$ on~$\mathcal{S}^{N}$ by
\begin{equation} \label{Paper02_definition_semi_metric_state_space}
\begin{aligned}
   d_{\mathcal{S}^{N}}: \mathcal{S}^{N} \times \mathcal{S}^{N} & \rightarrow [0, \infty) \\
    (\boldsymbol{\zeta}, \boldsymbol{\xi}) & \mapsto  d_{\mathcal{S}^{N}}(\boldsymbol{\zeta}, \boldsymbol{\xi}) \defeq \vert \vert \vert \boldsymbol{\zeta} - \boldsymbol{\xi} \vert \vert \vert_{\mathcal{S}^{N}} = \sum_{x \in L_{N}^{-1}\mathbb{Z}} \frac{\| \zeta(x) - \xi(x) \|_{\ell_{1}}}{(1 + \vert x \vert)^{2}}.
\end{aligned}
\end{equation}
Then the space~$(\mathcal{S}^{N},d_{\mathcal{S}^{N}})$ is a complete and separable metric space (see Proposition~\ref{Paper02_topological_properties_state_space} below). In our companion article~\cite{madeira2025existence},
we prove that, under Assumptions~\ref{Paper02_assumption_fitness_sequence} and~\ref{Paper02_assumption_polynomials}, for each $N \in \mathbb N$, for an initial configuration $\boldsymbol \eta^N$ satisfying~\eqref{Paper02_immediate_estimates_definition_initial_condition}, there exists a càdlàg $\mathcal{S}^{N}$-valued strong Markov process~$(\eta^{N}(t))_{t \geq 0}$, with $\eta^N(0) = \boldsymbol{\eta}^{N}$ almost surely, given by the unique weak limit of a sequence of $\mathcal{S}^{N}$-valued Markov processes, and such that $(\eta^N(t))_{t \geq 0}$ evolves according to the informal description at the start of this section (see~\cite[Theorem~2.2]{madeira2025existence} for a full statement of this result; we will formally state a version of the result that will suffice for our purposes in this article in Theorem~\ref{Paper02_thm_existence_uniqueness_IPS_spatial_muller} below). We also refer the reader to~\cite[Section~3.3]{madeira2025existence} for an explanation of why~\eqref{Paper02_immediate_estimates_definition_initial_condition} is required. We call the process~$(\eta^{N}(t))_{t \geq 0} = (\eta^{N}_k(t,x))_{k \in \mathbb N_0, \, x \in L_N^{-1}\mathbb Z, \, t \geq 0}$  the \emph{spatial Muller's ratchet}. 

For $x \in L_N^{-1}\mathbb Z$ and $k \in \mathbb{N}_0$, let $\boldsymbol{e}_{k}^{(x)} \in (\mathbb{N}_{0}^{\mathbb{N}_{0}})^{L_{N}^{-1}\mathbb{Z}}$ denote the configuration consisting of a single particle carrying exactly $k$ mutations at deme $x$. In order to define the infinitesimal generator of the Markov process $(\eta^N(t))_{t \geq 0}$, let
\begin{align}
    & \mathcal{C}_*(\mathcal{S}^N; \mathbb{R}) \defeq \bigg\{\phi \in \mathcal{C}(\mathcal{S}^N; \mathbb{R}): \sup_{\substack{x \in L_N^{-1}\mathbb{Z}, \\ k \in \mathbb{N}_{0}}} \sup_{\boldsymbol{\zeta} \in \mathcal{S}^N} \bigg( \Big\vert \phi\Big(\boldsymbol{\zeta} + \boldsymbol{e}^{(x)}_{k}\Big) - \phi(\boldsymbol{\zeta}) \Big\vert \nonumber \\
    & \quad +  \mathds{1}_{\{\zeta_{k}(x)>0\}}\Big\vert \phi\Big(\boldsymbol{\zeta} - \boldsymbol{e}^{(x)}_{k}\Big) - \phi(\boldsymbol{\zeta}) \Big\vert \label{Paper02_definition_Lipschitz_function} \\ & \quad + \mathds{1}_{\{\zeta_{k}(x)>0\}} \sum_{a = 1}^{2} \Big\vert \phi\Big(\boldsymbol{\zeta} + \boldsymbol{e}^{(x+ (-1)^{a}L_N^{-1})}_{k} - \boldsymbol{e}^{(x)}_{k}\Big) - \phi(\boldsymbol{\zeta}) \Big\vert \bigg) (1 + \vert x \vert)^{2(1 + \deg q_{-})} < \infty \bigg\} \nonumber.
\end{align}
The set $\mathcal{C}_*(\mathcal{S}^N; \mathbb{R})$ can be thought of as a set of functions whose discrete derivatives decrease polynomially fast in distance from the origin.
By~\cite[Theorem~2.2]{madeira2025existence}, for every~$N \in \mathbb{N}$, the infinitesimal generator~$\mathcal{L}^N$ of~$(\eta^{N}(t))_{t \geq 0}$ satisfies, for all $\phi \in \mathcal{C}_*(\mathcal{S}^{N}; \mathbb{R})$,
\begin{equation} \label{Paper02_generator_foutel_etheridge_model}
    \mathcal{L}^{N}\phi(\boldsymbol{\xi}) = \frac{m_{N}}{2}\mathcal{L}^{N}_{m}\phi(\boldsymbol{\xi}) + \mathcal{L}^{N}_{r}\phi(\boldsymbol{\xi}) \quad \forall \boldsymbol{\xi} \in \mathcal{S}^{N},
\end{equation}
where $\mathcal{L}^{N}_{m}$ corresponds to the migration process and $\mathcal{L}^{N}_{r}$ corresponds to the birth-death process and are defined as follows: for any~$\phi \in \mathcal{C}_{*}(\mathcal{S}^{N}, \mathbb{R})$ and~$\boldsymbol{\xi} = (\xi_k(x))_{k \in \mathbb N_0, \, x \in L_N^{-1}\mathbb Z} \in \mathcal{S}^{N}$,
{\begin{align} \label{Paper02_infinitesimal_generator}
     (\mathcal{L}^{N}_{m}\phi)(\boldsymbol{\xi}) & \defeq \sum_{x \in L_{N}^{-1}\mathbb{Z}} \; \sum_{k = 0}^{\infty} \; \sum_{z \in \{-L_N^{-1}, L_N^{-1}\}} \xi_{k}(x) \left(\phi\Big(\boldsymbol{\xi} + \boldsymbol{e}_{k}^{(x +z)} - \boldsymbol{e}_{k}^{(x)}\Big) - \phi(\boldsymbol{\xi}) \right), \notag \\
     (\mathcal{L}^{N}_{r}\phi)(\boldsymbol{\xi}) 
    &  \defeq \sum_{x \in L_{N}^{-1}\mathbb{Z}} \, \sum_{k = 0}^{\infty} \bigg(q_{+}\left(\frac{\| \xi(x) \|_{\ell_{1}}}{N}\right)\Big(s_{k}(1 - \mu)\xi_{k}(x) + \mathds{1}_{\{k \geq 1\}} s_{k-1} \mu \xi_{k-1}(x)\Big) \\ 
    & \quad \quad  \cdot \left(\phi\Big(\boldsymbol{\xi} + \boldsymbol{e}_{k}^{(x)}\Big) - \phi(\boldsymbol{\xi})\right) + q_{-}\left(\frac{\| \xi(x) \|_{\ell_{1}}}{N}\right)  \xi_{k}(x)\left(\phi\Big(\boldsymbol{\xi}- \boldsymbol{e}_{k}^{(x)}\Big) - \phi(\boldsymbol{\xi})\right)\bigg) \notag.
\end{align}}

\subsection{Functional law of large numbers} \label{Paper02_sec:subsection_law_large_numbers}

We now prepare to state our main result concerning convergence of the particle system to a system of PDEs. For $N \in \mathbb N$ and $k \in \mathbb N_0$, we define the approximate density process $(u^N_k(t, \cdot))_{t \geq 0}$ given by $u^{N}_{k}(t, \cdot): \mathbb{R} \rightarrow [0,\infty)$ for $t \geq 0$
by setting
\begin{equation} \label{Paper02_approx_pop_density_definition}
    u^{N}_{k}(t,x) \defeq \frac{\eta^{N}_{k}(t,x)}{N}\quad \textrm{ for all } x \in L_{N}^{-1}\mathbb{Z},
\end{equation}
and linearly interpolating $u^{N}_{k}(t,\cdot)$ between the demes $x \in L_{N}^{-1}\mathbb{Z}$. Since $(\eta^N(t))_{t \geq 0}$ is an $\mathcal{S}^N$-valued càdlàg process, we have that for every $x \in L_{N}^{-1}\mathbb{Z}$ and any $t \geq 0$, $\sum_{k = 0}^{\infty} u^{N}_{k}(t,x)$ is almost surely finite.

Our main result will describe the limiting behaviour of the system defined above as $N \rightarrow \infty$. To deal with the infinitely many types of particles in the system, we will interpret 
$((u^{N}_{k}(t, \cdot))_{k \in \mathbb{N}_{0}})_{t \geq 0}$ as a Markov process taking values in the space of sequences of non-negative Radon measures $\mathcal{M}(\mathbb{R})^{\mathbb N_0}$. 

By the definition of the state space~$\mathcal{S}^{N}$ in~\eqref{Paper02_definition_state_space_formal}, the definition of $u^{N}$ in~\eqref{Paper02_approx_pop_density_definition}, and the fact that $(\eta^{N}(t))_{t \geq 0}$ is an $\mathcal{S}^N$-valued process, we conclude that for any $k \in \mathbb N_0$ and $t \geq 0$, almost surely, for any compact set $\mathcal{K} \subset \mathbb{R}$ we have $u^N_k(t, \cdot) \in \mathcal{C}(\mathcal{K}; \mathbb{R})$. Therefore, for any $k \in \mathbb{N}_{0}$ and $t \geq 0$, $(u^N_k(t, x))_{x \in \mathbb{R}}$ is almost surely the density of a Radon measure with respect to the Lebesgue measure. With a slight abuse of notation, we denote this measure by $u^{N}_{k}(t)$, and for $\phi \in \mathcal{C}_{c}(\mathbb{R})$, we let
\begin{equation} \label{Paper02_definition_u_N_as_radon_measure}
    \left\langle u^{N}_{k}(t), \phi \right\rangle \defeq \int_{\mathbb{R}} \phi(x) u^{N}_{k}(t,x) \, dx.
\end{equation}
For each $N \in \mathbb N$ and $k \in \mathbb N_0$, since $(\eta^N(t))_{t \geq 0}$ is an $\mathcal S^N$-valued càdlàg process, we have $(u^{N}_k(t))_{t \geq 0} \in \mathcal{D}\Big([0, \infty), \Big(\mathcal{M}(\mathbb{R}), d_{\textrm{vague}}\Big)\Big)$ almost surely, where we recall that $d_{\textrm{vague}}$ is a metric on $\mathcal{M}(\mathbb{R})$ which induces the vague topology. Recall that $\Big(\mathcal{M}(\mathbb{R}), d_{\textrm{vague}}\Big)$ is a complete and separable metric space (see e.g.~\cite[Proposition~III.1.9.14]{bourbaki2004measures}). 

The process $(u^N(t))_{t \geq 0} = ((u^N_k(t))_{k \in \mathbb N_0})_{t\ge 0}$ can then be interpreted as a càdlàg process with sample paths in $\mathcal{M}(\mathbb{R})^{\mathbb{N}_{0}}$, i.e.~the space of sequences of non-negative Radon measures. In order to characterise the convergence of $u^N$ in $\mathcal{M}(\mathbb{R})^{\mathbb{N}_{0}}$ as $N \rightarrow \infty$, we introduce the metric $d$ given by
\begin{equation} \label{Paper02_metric_product_topology}
\begin{aligned}
    {d}: \mathcal{M}(\mathbb{R})^{\mathbb{N}_{0}} \times \mathcal{M}(\mathbb{R})^{\mathbb{N}_{0}} & \rightarrow [0,2] \\ (u,v) & \mapsto d(u,v) \defeq \sum_{k = 0}^{\infty} 2^{-k} \Big(d_{\textrm{vague}}\left(u_{k},v_{k}\right) \wedge 1\Big).
\end{aligned}
\end{equation}
Then $(\mathcal{M}(\mathbb{R})^{\mathbb{N}_{0}}, d)$ is a complete and separable metric space equipped with the product topology (see e.g.~the comment before Proposition~3.4.6 in~\cite{ethier2009markov}). In particular, a sequence $(v^{n})_{n \in \mathbb{N}} = ((v^n_k)_{k \in \mathbb N_0})_{n \in \mathbb N}$ of elements of $\mathcal{M}(\mathbb{R})^{\mathbb{N}_{0}}$ converges to $v = (v_k)_{k \in \mathbb N_0} \in \mathcal{M}(\mathbb{R})^{\mathbb{N}_{0}}$ in the topology of $(\mathcal{M}(\mathbb{R})^{\mathbb{N}_{0}}, d)$ if and only if $v^{n}_{k} \rightarrow v_{k}$ as $n \rightarrow \infty$ in $(\mathcal{M}(\mathbb{R}), d_{\textrm{vague}})$ for all $k \in \mathbb{N}_{0}$.

In \cite{foutel2020spatial}, Foutel-Rodier and Etheridge 
derived non-rigorously via a generator calculation  
that for the sequence of initial conditions $(\boldsymbol{\eta}^N)_{N \in \mathbb N}$ given by~\eqref{Paper02_initial_condition},
as $N \rightarrow \infty$, the sequence of processes $(u^{N})_{N \in \mathbb{N}}$ should converge to the solution $u = (u_k)_{k \in \mathbb N_0}$ of an infinite system of PDEs given by
\begin{equation} \label{Paper02_PDE_scaling_limit}
\begin{aligned}
    \partial_{t} u_{k} & = \frac{m}{2} \Laplace u_{k} + F_{k}(u) \quad 
    \forall k \in \mathbb N_0, \, t > 0, \\
    u(0,\cdot) & = f(\cdot),
\end{aligned}
\end{equation}
where $f: \mathbb{R} \rightarrow \ell_{1}^{+}$ is the function in~\eqref{Paper02_initial_condition}, and $F = (F_k)_{k \in \mathbb{N}_0}: \ell_{1}^{+} \rightarrow \ell_{1}$ is given by, for all $u = (u_k)_{k \in \mathbb{N}_0} \in \ell_1^+$,
\begin{equation} \label{Paper02_reaction_term_PDE}
    F_{k}(u) = q_{+}(\| u \|_{\ell_1})\left(s_{k}(1-\mu)u_{k} + \mathds{1}_{\{k \geq 1\}}s_{k-1}\mu u_{k-1}\right) - q_{-}(\| u \|_{\ell_1})u_{k} \quad 
    \forall \, k \in \mathbb N_0.
\end{equation}

Note that,  
in the system of PDEs above, $\displaystyle \frac{m}{2} \Laplace u_{k}$ corresponds to migration of particles carrying exactly $k$ mutations; $q_{+}(\| u\|_{\ell_{1}})s_{k}(1 - \mu)u_{k}$ is the rescaled rate at which particles carrying exactly $k$ mutations give birth to new particles carrying exactly $k$ mutations; $q_{-}(\| u\|_{\ell_{1}}) u_{k}$ is the rescaled rate at which particles carrying $k$ mutations die; and $q_{+}(\| u\|_{\ell_{1}})s_{k-1}\mu u_{k-1}$ is the rescaled rate at which particles carrying exactly $k-1$ mutations give birth to new particles carrying $k$ mutations (when $k \geq 1$).
Our main result rigorously confirms the conjectured convergence to the system of PDEs.  

\begin{theorem} \label{Paper02_deterministic_scaling_foutel_rodier_etheridge}
Suppose that $(m_N)_{N \in \mathbb N}$, $(L_N)_{N \in \mathbb N}$, $(s_k)_{k \in \mathbb N_0}$, $q_+$, $q_-$ and $f$ satisfy Assumptions~\ref{Paper02_scaling_parameters_assumption},~\ref{Paper02_assumption_fitness_sequence},~\ref{Paper02_assumption_polynomials} and~\ref{Paper02_assumption_initial_condition}. For $N \in \mathbb N$, define $\boldsymbol \eta^N$ as in~\eqref{Paper02_initial_condition}, and let $(\eta^N(t))_{t \geq 0}$ denote the càdlàg $\mathcal{S}^N$-valued strong Markov process with generator $\mathcal{L}^N$ defined in~\eqref{Paper02_generator_foutel_etheridge_model} and~\eqref{Paper02_infinitesimal_generator} with $\eta^N(0) = \boldsymbol{\eta}^N$ almost surely.
Then, as $N \rightarrow \infty$, the approximate density process $(u^{N}(t))_{t \geq 0}$ defined in~\eqref{Paper02_approx_pop_density_definition} and~\eqref{Paper02_definition_u_N_as_radon_measure} converges in distribution on $\mathcal{D}\left([0,\infty), (\mathcal{M}(\mathbb{R})^{\mathbb{N}_{0}}, d)\right)$ with respect to the $J_1$-topology  to a continuous-time $\mathcal{M}(\mathbb{R})^{\mathbb{N}_{0}}$-valued process $(u(t))_{t \geq 0} = ((u_k(t))_{k \in \mathbb N_0})_{t \geq 0}$, which satisfies the following conditions:
\begin{enumerate}[label = (\roman*)]
    \item For every $k \in \mathbb{N}_{0}$ and $t \geq 0$, $u_{k}(t)$ is  absolutely continuous with respect to the Lebesgue measure, with density denoted by $\left(u_{k}(t,x)\right)_{x \in \mathbb{R}}$.
    \item The family of sequences of densities $(u(t, \cdot))_{t \geq 0} = ((u_k(t, \cdot))_{k \in \mathbb N_0})_{t \geq 0}$ is a non-negative mild solution to the system of PDEs~\eqref{Paper02_PDE_scaling_limit}, and satisfies, for all $T > 0$,
    \[
    \esssup_{(t,x) \in [0, T] \times \mathbb R} \|u(t,x)\|_{\ell_1} < \infty.
    \]
    \item The family of sequences of densities $(u(t, \cdot))_{t \geq 0} = ((u_k(t, \cdot))_{k \in \mathbb N_0})_{t \geq 0}$ is such that for all $k \in \mathbb N_0$, $(u_k(t, \cdot))_{t \geq 0}$ is equal almost everywhere on $[0, \infty) \times \mathbb R$ to a map $\hat{u}_k: [0, \infty) \times \mathbb R \rightarrow [0, \infty)$ such that $\hat u_{k} \in \mathcal{C}^{1,2}((0,\infty) \times \mathbb{R}; \mathbb{R}).$
\end{enumerate}
Finally, $(u(t,\cdot))_{t \geq 0}$ is the unique weak solution to the system of PDEs~\eqref{Paper02_PDE_scaling_limit} satisfying conditions~(i)-(iii).
\end{theorem}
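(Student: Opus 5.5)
The proof follows the classical three-step scheme: tightness, identification of limit points, and uniqueness. The paper's structure indicates the ingredients: Green's function estimates, tightness in Radon measures, tightness in an $L_p$-type space, and uniqueness. We work throughout with the moment bounds from the companion article~\cite{madeira2025existence}, which give local moment bounds on $\|\eta^N(t,x)\|_{\ell_1}/N$ uniformly in $N$, $x$ and $t\le T$.

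\textbf{Step 1: semimartingale decomposition via the discrete Green's function.} Let $p^N_t$ be the transition kernel of the continuous-time random walk on $L_N^{-1}\mathbb Z$ with jump rate $m_N$. Applying the generator~\eqref{Paper02_generator_foutel_etheridge_model} to $\phi(\boldsymbol\xi)=\xi_k(y)$ and using Duhamel's formula gives
\[
u^N_k(t,x)=\sum_{y}p^N_t(x,y)u^N_k(0,y)+\int_0^t\sum_y p^N_{t-s}(x,y)F_k(u^N(s,y))\,ds+M^N_k(t,x),
\]
where $M^N_k$ is a martingale term. Its quadratic variation is of order $N^{-1}L_N\int_0^t\sum_y p^N_{t-s}(x,y)^2(\cdots)\,ds$. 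Since $p^N_{t-s}(x,y)^2\lesssim L_N^{-1}(t-s)^{-1/2}p^N_{t-s}(x,y)$ and $L_N=\Theta(N)$, this is $O(N^{-1})$ times a locally integrable quantity, whose expectation the moment bounds control. So $M^N_k\to0$ in $L_2$, locally uniformly in $x$. We combine this with standard heat-kernel regularity: space and time increments of $p^N$ are controlled by Hölder moduli. This yields (local, in expectation) Hölder-type bounds on the space increments of $u^N_k$ and on its time increments when tested against $\varphi\in\mathcal C_c(\mathbb R)$.

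\textbf{Step 2: tightness.} For each fixed $k$, Aldous' criterion applied to $\langle u^N_k,\varphi\rangle$ gives tightness in $\mathcal D([0,\infty),\mathcal M(\mathbb R))$. Here the drift $\int\varphi F_k(u^N)$ has bounded moments because $q_\pm$ are polynomials and the moment bounds hold. Since $d$ is a product metric, tightness in $(\mathcal M(\mathbb R)^{\mathbb N_0},d)$ follows. Identifying limits requires more, because $F_k$ is nonlinear and non-local in $k$ through $\|u\|_{\ell_1}$: vague convergence does not pass to the limit in $q_\pm(\|u^N\|_{\ell_1})u^N_k$.

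We therefore prove tightness of $u^N(t,\cdot)$ in $L_p$ of compacts with values in $\ell_1$, via a Kolmogorov--Riesz--Fréchet criterion. Two ingredients are needed. The first is equicontinuity of spatial translates, which comes from the Step~1 increment bounds. The second is uniform smallness of the $\ell_1$-tails $\sum_{j\ge K}u^N_j$. For the tails we compare with the total mass: each mutation class $j\ge K$ reproduces at rate at most $s_K q_+$, with $s_K\to0$. The tail therefore satisfies a Duhamel inequality whose source is the initial tail (small by Assumption~\ref{Paper02_assumption_initial_condition}(iii)), plus the influx $\mu s_{K-1}q_+u^N_{K-1}$, plus growth $s_Kq_+\|u^N\|$. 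The latter two are small by the moment bounds and because $s_K\to0$. After Grönwall and Markov's inequality, this gives the tail bound uniformly in $N$. I expect this $L_p$ tightness, with infinitely many types and no pointwise bounds, to be the main obstacle. Strong (rather than vague) compactness is exactly what lets the nonlinear term converge along subsequences.

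\textbf{Step 3: identifying the limit and uniqueness.} Along a convergent subsequence, Skorokhod representation gives $L_p^{\mathrm{loc}}$ convergence of $u^N$, so $F_k(u^N)\to F_k(u)$ in $L_1^{\mathrm{loc}}$. Here we use uniform integrability from the higher moments. The initial data converge by~\eqref{Paper02_initial_condition} and Assumption~\ref{Paper02_assumption_initial_condition}(i). The random-walk kernel converges to the Gaussian kernel with variance $mt$ by Assumption~\ref{Paper02_scaling_parameters_assumption}(i), and the martingale vanishes. Passing to the limit in the Step~1 mild formulation shows that $u$ is a non-negative mild solution, establishing (i) and (ii). The $\operatorname{ess\,sup}$ bound in (ii) follows from a comparison argument: the limit total mass is dominated by the solution of a scalar ODE, since $\deg q_+<\deg q_-$ and $f\in L_\infty$. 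Property (iii) follows from parabolic bootstrapping of the mild formulation using the bounded reaction term. Finally, the uniqueness result of~\cite{madeira2025PDE_surfing} for mild solutions, applied to weak solutions with (i)--(iii) (which are mild, by testing against the heat kernel), identifies every subsequential limit. Convergence of the whole sequence in distribution then follows, and continuity of the limit follows from the Aldous bounds, since jumps are of size $O(N^{-1})$.
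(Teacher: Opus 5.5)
The genuine gap is in your Step~2; you do not yet have time--space compactness. Your overall architecture is the paper's: Green's function representation, Aldous-type bounds for $\langle u^N_k,\varphi\rangle$, strong compactness in an $\ell_1$-valued $L_p$ space from translation estimates plus a tail bound driven by $s_K\to0$, Skorokhod, passage to the limit in the Duhamel formula, and uniqueness from the companion paper. Your tail argument is essentially Lemma~\ref{Paper02_lemma_control_local_density_high_number_mutations}.

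You prove compactness of $u^N(t,\cdot)$ in space only, from equicontinuity of \emph{spatial} translates. Step~3, however, must pass to the limit in $\int_0^t\sum_y p^N_{t-s}(x,y)F_k(u^N(s,y))\,ds$. That requires strong convergence of $u^N(s,\cdot)$ for almost every $s$ simultaneously, i.e.\ compactness in time--space.
\begin{itemize}
\item Fixed-time tightness plus Skorokhod gives convergence only at countably many times.
\item Approximating the time integral by Riemann sums would need an $L_1$ modulus of continuity in time, uniform in $N$. You do not have one: your Step~1 only controls time increments of $\langle u^N_k,\varphi\rangle$, which is a vague estimate.
\item The paper works in $L_{4\deg q_-}([0,T]\times\mathbb R,\hat\lambda;\ell_1)$ and extracts the missing estimate from the same representation. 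Pointwise in $x$, it shows $\mathbb E|\widehat U^N_{\mathcal I}(T_2,x)-\widehat U^N_{\mathcal I}(T_1,x)|\lesssim (T_2-T_1)^{1/4}$. This uses BDG for $M^{N,T_2,x}_{\mathcal I}(T_2)-M^{N,T_2,x}_{\mathcal I}(T_1)$ and for $M^{N,T_2,x}_{\mathcal I}-M^{N,T_1,x}_{\mathcal I}$, plus random-walk time-increment bounds for the drift.
\item The initial-data term $P^N_tu^N_k(0,\cdot)$ has regularity that degenerates as $t\downarrow0$, since $f$ is only bounded and a.e.\ continuous. The paper controls it only after integrating in time (Lemma~\ref{Paper02_spatial_increment_bound_usual}).
\item An Aubin--Lions-type interpolation between your weak time bounds and spatial equicontinuity would also work, but some such step is needed.
\end{itemize}
Because the $L_p$ limit $v$ is then defined only for a.e.\ $t$, you must also relate it to the vague limit $u(t)$ at every $t$. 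The paper uses right-continuity to get $\langle u_k(t),\varphi\rangle=\lim_{t'\downarrow t}\frac{1}{t'-t}\int_t^{t'}\int v_k\varphi$. It obtains (i) for all $t$ only after identifying $v$ with the continuous mild solution.

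There are also two smaller points.
\begin{itemize}
\item Coordinatewise tightness in $\mathcal D([0,\infty),\mathcal M(\mathbb R))$ does not give tightness for the $J_1$ topology on the product path space, because different coordinates can jump at nearby but distinct times. You need compact containment plus a separating family closed under addition (Jakubowski, as in the paper). Alternatively, you need $C$-tightness from the vanishing jump sizes, which you mention only at the very end.
\item For the $\esssup$ bound and (iii) you need neither a comparison principle nor bootstrapping. Comparison with the scalar ODE is not immediate here: the solution is not known a priori to be bounded, and the reaction term is neither monotone nor globally Lipschitz. The mild inequality alone does not close, so you would need something like a Kato-type argument. Since the limit already lies in $L_{4\deg q_-}([0,T]\times\mathbb R,\hat\lambda;\ell_1)$, Proposition~\ref{Paper02_thm_uniqueness_weak_solutions} identifies it with the bounded $\mathcal C^{1,2}$ mild solution of Proposition~\ref{Paper02_smoothness_mild_solution}. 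That gives (ii) and (iii) directly.
\end{itemize}
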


We will carefully define what it means to say that $u$ is a weak solution or a mild solution of~\eqref{Paper02_PDE_scaling_limit} in Section~\ref{Paper02_subsection_preliminaries}.

\begin{remark}
    We note that the arguments used in the proof of Theorem~\ref{Paper02_deterministic_scaling_foutel_rodier_etheridge} remain valid under more general assumptions. In particular, Assumption~\ref{Paper02_assumption_fitness_sequence}(iii) is not essential for our proof of the functional law of large numbers. Furthermore, instead of requiring the per-capita birth and death rates $q_+, q_- : [0,\infty)\to[0,\infty)$ to be polynomials satisfying Assumption~\ref{Paper02_assumption_polynomials}, it suffices that $q_+$ and $q_-$ are non-negative locally Lipschitz functions such that:
\begin{enumerate}
    \item[(1)] $ \liminf_{u\to\infty} \displaystyle \frac{q_-(u)}{u\,q_+(u)+1} > 0$.
    \item[(2)]  $\lim_{u\to\infty} q_-(u) = \infty$.
    \item[(3)] There exists $u^*\in(0,\infty)$ for which $q_-(u^{(1)}) > q_-(u^{(2)})$ whenever $u^{(1)} > u^{(2)} > u^*$.
    \item[(4)] There exists a non-negative polynomial $p: [0,\infty) \rightarrow [0, \infty)$ such that $q_-(u) \leq p(u)$, for all $u \in [0, \infty)$.
\end{enumerate}
The same arguments also yield the functional law of large numbers for the spatial Muller's ratchet defined on the rescaled $d$-dimensional lattice $L_N^{-1}\mathbb{Z}^d$ for any spatial dimension $d\in\mathbb{N}$.
\end{remark}

\subsection{The asymptotic behaviour of the particle system} \label{Paper02_sec:subsection_asymptotic_behaviour_PDE}

As an application of our law of large numbers result, we can control 
the long-term behaviour of the mutational profile under certain conditions on the reaction term~\eqref{Paper02_reaction_term_PDE} of the system of PDEs~\eqref{Paper02_PDE_scaling_limit}. 
To introduce these conditions, we first recall the terminology usually used to describe one-dimensional reaction-diffusion equations.
Consider the PDE
\begin{equation} \label{Paper02_one_dimensional_RD}
    \partial_{t}U = \frac{m}{2}\Laplace U + g(U) \quad \textrm{for } t > 0,
\end{equation}
where $g \in \mathcal{C}^{1}(\mathbb{R})$, $g(0) = g(1) = 0$ and $\int_{0}^{1} g(U) \, dU > 0$. We say that the reaction term $g$ is \textit{monostable} if $g'(0) > 0$, $g'(1) < 0$ and $g(x) > 0$ for all $x \in (0,1)$. We adapt this notion to the system~\eqref{Paper02_PDE_scaling_limit} as follows.

\begin{definition}[Monostable reaction terms] \label{Paper02_assumption_monostable_condition}
We say that the reaction term $F=(F_{k})_{k \in \mathbb{N}_{0}}: \ell_1^+ \rightarrow \ell_1$ defined in~\eqref{Paper02_reaction_term_PDE} is \emph{monostable} if the sequence of fitness parameters $(s_{k})_{k \in \mathbb{N}_{0}}$ satisfies Assumption~\ref{Paper02_assumption_fitness_sequence} and is strictly decreasing, the mutation rate satisfies $\mu \in (0,1)$, and the functions $q_{+},q_{-}: [0,\infty) \rightarrow [0,\infty)$ satisfy Assumption~\ref{Paper02_assumption_polynomials} and the following conditions:
\begin{enumerate}[label = (\roman*)]
    \item $q_{+}(U) \geq q_{-}(U) \quad \forall \, U \in [0,1]$.
    \item $q_{+}(U) > 0 \; \forall \, U \in [0,1]$.
    \item $q_{+}(1) = q_{-}(1)$.
    \item $q'_{+}(1) < q'_{-}(1)$.
    \item $(1-\mu)q_{+}(0) - q_{-}(0) > 0$.
\end{enumerate}
\end{definition}

\ignore{Note that if the reaction term $g(U) = U(q_+(U) - q_-(U))$ of the one-dimensional PDE
\begin{equation} \label{Paper02_correspondent_one_dimensional_system}
    \partial_{t} U = \frac{m}{2} \Laplace U + U(q_{+}(U) - q_{-}(U))
\end{equation}
is monostable, then conditions (i), (iii) and (iv) of Definition~\ref{Paper02_assumption_monostable_condition} are automatically satisfied. Moreover, if the reaction term of~\eqref{Paper02_correspondent_one_dimensional_system} is monostable, then $q_{+}(0) >q_{-}(0)$, and therefore condition (v) of Definition~\ref{Paper02_assumption_monostable_condition} must hold for sufficiently small $\mu$. We also must have $q_+(U) > 0$ for all $U \in (0,1)$, but the additional condition $q_+(1) > 0$ in condition (ii) is not implied by the monostability of the reaction term of~\eqref{Paper02_correspondent_one_dimensional_system}; this is a technical assumption required in our proofs in the companion article~\cite{madeira2025PDE_surfing}. This condition, however, is not restrictive, since it is reasonable to assume in biological models that the per-capita reproduction and death rates are both strictly positive in high population density.}

Next, we assume some control on the proportions of particles carrying mutations in the initial condition. Recall that we denote the Lebesgue measure on $\mathbb R$ by $\lambda$.

\begin{assumption}[Control on the initial prevalence of mutations] \label{Paper02_assumption_initial_condition_modified}
Let $f = \left(f_{k}\right)_{k \in \mathbb{N}_{0}}: \mathbb{R} \rightarrow \ell_{1}^{+}$ be a function satisfying Assumption~\ref{Paper02_assumption_initial_condition}, and the following additional conditions:
\begin{enumerate}[label = (\roman*)]
    \item $\displaystyle \| f \|_{L_{\infty}(\mathbb{R}; \ell_{1})} \defeq \esssup_{x \in \mathbb{R}} \, \| f(x) \|_{\ell_{1}} \leq 1$.
   \item $\lambda(\{x \in \mathbb R: \, f_0(x) > 0\}) > 0$.
   \item There exists a sequence $\left(\hat{\pi}_{k}\right)_{k \in \mathbb{N}_{0}} \in \ell_{1}^{+}$ such that for each $k \in \mathbb{N}$ and $\lambda$-almost every $x \in \mathbb R$,
   \begin{equation*}
       0 \leq f_{k}(x) \leq \hat{\pi}_{k} f_{0}(x).
   \end{equation*}
\end{enumerate}
\end{assumption}

For $\mu \in (0,1)$, let $(\alpha_{k})_{k \in \mathbb{N}_{0}} = \Big(\alpha_{k}(\mu, (s_j)_{j \in \mathbb N_0})\Big)_{k \in \mathbb{N}_{0}}$ be given by $\alpha_{0} \defeq 1$ and
\begin{equation} \label{Paper02_definition_alpha_k}
    \alpha_{k} \defeq \prod_{i=1}^{k} \frac{\mu s_{i-1}}{(1-\mu) (1 - s_{i})} \quad \; \forall \, k \in \mathbb{N}.
\end{equation}
The sequence~$(\alpha_k)_{k \in \mathbb N_0}$ defined in~\eqref{Paper02_definition_alpha_k} gives the proportions of the population carrying different numbers of mutations in a stationary solution of the system of PDEs~\eqref{Paper02_PDE_scaling_limit} (see~\cite[Remark~2.3]{madeira2025PDE_surfing}). Let
\begin{equation} \label{Paper02_definition_max_min_birth_polynomial}
\mathfrak{Q}_{\min} \defeq \displaystyle \min_{U \in [0,1]} q_{+}(U) \quad \textrm{and} \quad \mathfrak{Q}_{\max} \defeq \displaystyle \max_{U \in [0,1]} q_{+}(U).
\end{equation}
Note that, by Definition~\ref{Paper02_assumption_monostable_condition}, if the reaction term $F = (F_{k})_{k \in \mathbb{N}_{0}}$ is monostable, then $\alpha_k > 0$ for all $k \in \mathbb N_0$, and $\mathfrak{Q}_{\max} \geq \mathfrak{Q}_{\min} > 0$. In the monostable regime, we use Theorem~\ref{Paper02_deterministic_scaling_foutel_rodier_etheridge}, together with the results about the solutions to the system of PDEs~\eqref{Paper02_PDE_scaling_limit} in our companion article~\cite{madeira2025PDE_surfing}, to establish the following result about 
the proportions of particles carrying different numbers of mutations in the spatial Muller's ratchet. Recall from the end of Section~\ref{Paper02_introduction} that for $K \in \mathbb N$, we let $\llbracket K \rrbracket = \{1, \ldots, K\}$.

\begin{theorem} \label{Paper02_long_time_behaviour_IPS}
   Suppose that $(m_N)_{N\in \mathbb N}$, $(L_N)_{N\in \mathbb N}$, $(s_k)_{k \in \mathbb N_0}$, $q_+$, $q_-$ and $f$ satisfy Assumptions~\ref{Paper02_scaling_parameters_assumption},~\ref{Paper02_assumption_fitness_sequence},~\ref{Paper02_assumption_polynomials} and~\ref{Paper02_assumption_initial_condition_modified}, and that the reaction term $F = (F_{k})_{k \in \mathbb{N}_{0}}$ defined in~\eqref{Paper02_reaction_term_PDE} is monostable in the sense of Definition~\ref{Paper02_assumption_monostable_condition}. For $N \in \mathbb N$, define $\boldsymbol\eta^N$ as in~\eqref{Paper02_initial_condition}, and let $(\eta^N(t))_{t \geq 0}$ denote the càdlàg $\mathcal{S}^N$-valued strong Markov process with generator $\mathcal L^N$ defined in~\eqref{Paper02_generator_foutel_etheridge_model} and~\eqref{Paper02_infinitesimal_generator} with $\eta^N(0) = \boldsymbol{\eta}^N$ almost surely. Let $(u^N(t))_{t \geq 0} = ((u^N_k(t))_{k \in \mathbb N_0})_{t \geq 0}$ denote the approximate density process defined in~\eqref{Paper02_approx_pop_density_definition} and~\eqref{Paper02_definition_u_N_as_radon_measure}. Then for any compact interval with strictly positive length $\mathcal{I} \subset \mathbb{R}$, any $K \in \mathbb{N}$ and any $\delta, \varepsilon > 0$, there exists $T_{\delta, K} > 0$ such that for any $T \geq T_{\delta,K}$, there exists $N_{\delta, \varepsilon,\mathcal{I}, K, T} \in \mathbb{N}$ such that for $N \geq N_{\delta, \varepsilon, \mathcal{I}, K, T}$ and $k \in \llbracket K \rrbracket$,
   \begin{equation*}
   \begin{aligned}
   \mathbb{P}\bigg(\left(\alpha_{k}\Big(\tfrac{\mathfrak{Q}_{\min}}{\mathfrak{Q}_{\max}}\right)^{k}-\delta\Big) u^{N}_{0}(T)(\mathcal{I}) \leq u^{N}_{k}(T)(\mathcal{I})  \leq  \Big(\alpha_{k}\left(\tfrac{\mathfrak{Q}_{\max}}{\mathfrak{Q}_{\min}}\right)^{k} +\delta\Big) u^{N}_{0}(T)(\mathcal{I}) \bigg) \geq 1 - \varepsilon,
   \end{aligned}
   \end{equation*}
where the sequence $(\alpha_{k})_{k \in \mathbb{N}_{0}}$ is given by~\eqref{Paper02_definition_alpha_k}, and $\mathfrak{Q}_{\min}$ and $\mathfrak{Q}_{\max}$ are given by~\eqref{Paper02_definition_max_min_birth_polynomial}. 
\end{theorem}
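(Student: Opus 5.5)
The proof combines Theorem~\ref{Paper02_deterministic_scaling_foutel_rodier_etheridge} with a deterministic statement about the limiting PDE taken from the companion article~\cite{madeira2025PDE_surfing}. That statement says: under monostability and Assumption~\ref{Paper02_assumption_initial_condition_modified}, for every compact $\mathcal I$ and $K$, and for all $T$ large (depending on $\delta,K$), the solution $u$ of~\eqref{Paper02_PDE_scaling_limit} satisfies
\[
\Big(\alpha_k\big(\tfrac{\mathfrak Q_{\min}}{\mathfrak Q_{\max}}\big)^k-\tfrac\delta2\Big)\int_{\mathcal I}u_0(T,x)\,dx\le \int_{\mathcal I}u_k(T,x)\,dx\le \Big(\alpha_k\big(\tfrac{\mathfrak Q_{\max}}{\mathfrak Q_{\min}}\big)^k+\tfrac\delta2\Big)\int_{\mathcal I}u_0(T,x)\,dx .
\]
This comes from pointwise comparison estimates of the form $u_k(T,x)\lessgtr(\alpha_k(\cdot)^k\pm\delta/2)u_0(T,x)$. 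We also need, from~\cite{madeira2025PDE_surfing}, that $\int_{\mathcal I}u_0(T,x)\,dx>0$ for every $T>0$. This follows from Assumption~\ref{Paper02_assumption_initial_condition_modified}(ii) and a strong maximum principle for $u_0$, which solves a scalar equation with a bounded potential because $\|u\|_{\ell_1}\le C$ on $[0,T]\times\mathbb R$. I will take these PDE facts as given.

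Next I pass from vague convergence to convergence of masses of the interval. Fix $T\ge T_{\delta,K}$. By Theorem~\ref{Paper02_deterministic_scaling_foutel_rodier_etheridge}, $u^N\Rightarrow u$ in $\mathcal D([0,\infty),\mathcal M(\mathbb R)^{\mathbb N_0})$. The limit is continuous in time, so the projection at the fixed time $T$ is a.s.\ continuous at the limit path. Hence $(u^N_0(T),\dots,u^N_K(T))\Rightarrow(u_0(T),\dots,u_K(T))$ vaguely.

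The functional $\rho\mapsto\rho(\mathcal I)$ is not vaguely continuous in general. It is continuous at measures that give zero mass to $\partial\mathcal I$, and $u_k(T)$ is absolutely continuous by item~(i). So by the continuous mapping theorem (Portmanteau with $\mathcal I$ a continuity set), $(u^N_k(T)(\mathcal I))_{k\le K}\Rightarrow(u_k(T)(\mathcal I))_{k\le K}$ in $\mathbb R^{K+1}$. This limit is deterministic, so the convergence holds in probability. Alternatively one can sandwich $\mathbf 1_{\mathcal I}$ between continuous compactly supported functions and use that $u$ is locally bounded (item~(ii)), so the boundary layers carry small mass.

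To conclude, write $a_k=u_k(T)(\mathcal I)$ and $c_k^\pm=\alpha_k(\mathfrak Q_{\max}/\mathfrak Q_{\min})^{\pm k}\pm\delta$. The PDE bound gives $a_k\le (c_k^+-\delta/2)a_0$ and $a_k\ge(c_k^-+\delta/2)a_0$, with $a_0>0$. Therefore the open set $\{(b_0,\dots,b_K): (c_k^-)b_0<b_k<(c_k^+)b_0\ \forall k\le K\}$ contains $(a_0,\dots,a_K)$ with a positive margin $\delta a_0/2$. Convergence in probability of finitely many coordinates then yields $N_{\delta,\varepsilon,\mathcal I,K,T}$ such that the event in the statement has probability at least $1-\varepsilon$ for $N$ beyond it.

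The main obstacle is the deterministic PDE input, in particular making its bounds uniform on $\mathcal I$ at a large but fixed time. I would first try to cite the companion paper's comparison estimate for $u_k/u_0$. If that is only available as a $\liminf/\limsup$ as $t\to\infty$ locally uniformly in $x$, that is exactly what produces $T_{\delta,K}$ independent of $\varepsilon$, $\mathcal I$ and $N$. The factor $\delta/2$ absorbs the remaining gap.
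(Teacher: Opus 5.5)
Your proposal is correct and follows essentially the same route as the paper. Both arguments cite the companion paper's pointwise bounds $\underline\pi_k(T)u_0(T,x)\le u_k(T,x)\le\overline\pi_k(T)u_0(T,x)$, where $\underline\pi(T),\overline\pi(T)$ converge in $\ell_1$; this yields $T_{\delta,K}$ with a $\delta/2$ margin uniformly in $x$. Both also use $\int_{\mathcal I}u_0(T,x)\,dx>0$, and then pass to $u^N_k(T)(\mathcal I)\to\int_{\mathcal I}u_k(T,x)\,dx$ because $\mathcal I$ is a continuity set of the absolutely continuous limit. The only cosmetic difference is that you apply the continuous mapping theorem at the fixed time $T$ (valid since the limit path is continuous at $T>0$), whereas the paper uses Skorokhod's representation theorem together with a standard vague-convergence lemma.
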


\subsection{Application to different settings of population dynamics} \label{Paper02_example_pop_dynamics}

In this subsection, we show how our results apply to different models of population growth. Since our definition of the spatial Muller's ratchet $\eta^N$ is rather general, it can be used to model asexual populations under different biological assumptions. For instance, taking for some $r >0$ and $B \geq 0$,
\begin{equation} \label{Paper02_foutel_etheridge_birth_death_rates}
      q_{+}(U) \defeq r(BU + 1) \quad \textrm{ and } \quad q_{-}(U) \defeq r(BU+1)U \quad \forall U \geq 0, 
\end{equation}
we recover the same birth and death rates as in the model introduced by Foutel-Rodier and Etheridge in~\cite{foutel2020spatial} (recall that we described this model in Section~\ref{Paper02_biological_motivation}).
Our Theorem~\ref{Paper02_deterministic_scaling_foutel_rodier_etheridge} shows that for $q_+$ and $q_-$ given by~\eqref{Paper02_foutel_etheridge_birth_death_rates}, under Assumptions~\ref{Paper02_scaling_parameters_assumption} and~\ref{Paper02_assumption_fitness_sequence} on $(m_N)_{N\in \mathbb N}$, $(L_N)_{N\in \mathbb N}$ and $(s_k)_{k \in \mathbb N_0}$ and suitable assumptions on the initial condition,
the spatial Muller's ratchet $\eta^N$ converges as $N \rightarrow \infty$ to the solution of the corresponding system of PDEs~\eqref{Paper02_PDE_scaling_limit} (which is the same as~\eqref{Paper02_conjectured_PDE_heuristics} in the special case $s_k = (1-s)^k$ for all $k \in \mathbb N_0$). For the rates in~\eqref{Paper02_foutel_etheridge_birth_death_rates}, the conditions~(i)--(v) on~$q_+$ and~$q_-$ in Definition~\ref{Paper02_assumption_monostable_condition} are satisfied for all $B \geq 0$, $r > 0$ and $\mu \in (0,1)$. Hence, when the fitness sequence $(s_k)_{k \in \mathbb N_0}$ is strictly decreasing, we 
can deduce from Theorem~\ref{Paper02_long_time_behaviour_IPS} quantitative bounds on the proportions of the population carrying $k \in \mathbb{N}_0$ mutations. 

Another interesting example is to take
\begin{equation*}
    q_{+}(U) \defeq U(B+1) \quad \textrm{and} \quad q_{-}(U) \defeq U^{2} + B \quad \forall \, U \geq 0, \textrm{ for some } B \in \left(0, 1/2\right).
\end{equation*}
Then the function $U \mapsto q_{+}(U) - q_{-}(U) = (1-U)(U-B)$ is negative for $U \in [0,B)$. We say that the population (neglecting the effect of deleterious mutations) exhibits a strong Allee effect \cite{garnier2012inside}. Biologically, this means that cooperation is fundamental for population growth, 
as the population shrinks at low population densities. In this case, our Theorem~\ref{Paper02_deterministic_scaling_foutel_rodier_etheridge} again shows the convergence of the spatial Muller's ratchet to the solution of the corresponding system of PDEs~\eqref{Paper02_PDE_scaling_limit}. However, the asymptotic behaviour of the particle system~$\eta^N$ is not determined by Theorem~\ref{Paper02_long_time_behaviour_IPS}, because the reaction term $F = (F_{k})_{k \in \mathbb{N}_{0}}$ given by~\eqref{Paper02_reaction_term_PDE} is not monostable in the sense of Definition~\ref{Paper02_assumption_monostable_condition}. We highlight that the main obstruction to establishing Theorem~\ref{Paper02_long_time_behaviour_IPS} in this case is determining the asymptotic behaviour of solutions to the system of PDEs~\eqref{Paper02_PDE_scaling_limit} when the reaction term is bistable.

\subsection{Related work} \label{Paper02_related_work_introd}

The study of the convergence of interacting particle systems to reaction–diffusion partial differential equations has been a very active area of research in probability theory since at least the 1980s. For spatial birth–death processes, there are several approaches to proving convergence to a hydrodynamic limit, depending on the assumptions on the initial condition, the spatial domain, and the dynamics of the discrete model. In this subsection, we briefly review some of this literature and explain why the methods developed previously cannot be directly applied to the proof of Theorem~\ref{Paper02_deterministic_scaling_foutel_rodier_etheridge}.

Roughly speaking, the literature on hydrodynamic scaling limits of spatial birth–death processes can be divided into five main categories:
\begin{enumerate}
    \item[(A)] Works that analyse models in which there is an \textit{a priori} bound on the local number of particles, so that in the limit the population density is always uniformly bounded (see for instance \cite{durrett1994particle,mueller1995stochastic,durrett2016genealogies}).
    \item[(B)] Works that assume the spatial domain of the process is compact, e.g.~the torus $\mathbb{T}^{d} \defeq [0,1]^{d}$ (see for instance \cite{arnold1980consistency,arnold1980deterministic,kotelenez1986law,kotelenez1988high,blount1991comparison,blount1992law,blount1993limit,blount1994density,feng1996hydrodynamic}).
    \item[(C)] Works that apply the method of correlation functions to derive the limit \cite{boldrighini1987collective,demasi1991mathematical, perrut2000hydrodynamic,tendron2024non}.
    \item[(D)] Works that use the relative entropy method \cite{mourragui1996comportement,perrut2000hydrodynamic,jara2018non}.
    \item[(E)] Works that analyse models with finitely many particles in the whole domain and bounded reproduction rates, so that there are bounds on the moments of the total mass~\cite{etheridge2023looking,flandoli2021kpp}.
\end{enumerate}

To the best of our knowledge, there are no existing works that address spatial birth–death processes with infinitely many particle types, unbounded reproduction rates, and an unbounded number of particles per site simultaneously. Consequently, the techniques developed in the works mentioned above cannot be applied to our model without significant adaptation.

Works in category (A) typically construct the birth–death interacting particle system using a countable collection of Poisson processes. Since, in these works, the number of particles per deme is bounded by some integer $N$, the approximate population density is uniformly bounded, see e.g.~\cite{durrett1994particle, mueller1995stochastic, durrett2016genealogies}. This uniform bound means that convergence can be studied in the space of continuous functions, which is strictly stronger than convergence in the space of Radon measures. Since no uniform density bound holds in our setting, these techniques cannot be applied directly.

To handle the absence of uniform bounds, Arnold and Theodosopulu~\cite{arnold1980consistency, arnold1980deterministic}, Blount~\cite{blount1991comparison, blount1992law, blount1993limit, blount1994density}, and Kotelenez~\cite{kotelenez1986law, kotelenez1988high} developed techniques in the 1980s and 1990s that apply to spatial birth–death processes on the one-dimensional torus $\mathbb{T} = [0,1]$. These works (category (B) in the list above) exploit the construction of an explicit orthonormal basis of the Hilbert space $L_{2}([0,1])$ to analyse weak convergence. Importantly, their techniques rely on properties of martingales taking values in Hilbert spaces, that are not shared by martingales taking values in general Banach spaces. Since our model involves infinitely many particle types, the solution of the limiting system of PDEs~\eqref{Paper02_PDE_scaling_limit} is most naturally understood as an $\ell_{1}$-valued function on $[0,\infty) \times \mathbb{R}$. Consequently, it is not natural to study convergence in Hilbert spaces, and these techniques do not transfer directly to our setting.

In category (C), Boldrighini et al.~were the first to rigorously analyse a spatial birth–death process with polynomial birth and death rates on the whole real line \cite{boldrighini1987collective, demasi1991mathematical}. They consider a single-type particle system on the lattice $\varepsilon \mathbb{Z}$, where $\varepsilon > 0$ is a spatial scaling parameter, and study its behaviour as $\varepsilon \to 0$. Importantly, their setting does not involve scaling the initial number of particles per deme by $1/\varepsilon$ to produce a mean-field limit. Instead, they use correlation functions, a form of duality function, to bound local moments and show that the limit satisfies a so-called BBGKY hierarchy consistent with the target PDE. More recently, Tendron also used the BBGKY hierarchy to study the scaling limit of branching processes with local competition~\cite{tendron2024non}.

The BBGKY hierarchy becomes intractable for systems with infinitely many particle types (see \cite[Section~4]{boldrighini1987collective}), which means this method is not feasible for our purposes. Nonetheless, we adapt some ideas from \cite{boldrighini1987collective, demasi1991mathematical}, in particular the use of correlation functions, to obtain the estimates in our companion article~\cite[Theorem~2.3]{madeira2025existence} that are used in the proof of the hydrodynamic limit result in this article.

The relative entropy method, used by Mourragui~\cite{mourragui1996comportement} and others~\cite{perrut2000hydrodynamic, jara2018non} (category (D)), requires the solution of the limiting PDE to be uniformly bounded away from zero. In our setting, the limiting population density $u(t,x)$ takes values in $\ell_1$, so for any $t \ge 0$ and $x \in \mathbb{R}$ we have $\lim_{k \to \infty} u_k(t,x) = 0$. It is therefore not clear how to apply the relative entropy method to our model.

In a recent article closely related to our work, Etheridge et al.~\cite{etheridge2023looking} study a spatial birth–death process on $\mathbb{R}^{d}$ with non-local interactions. The authors prove weak convergence of the process under different scaling regimes, either retaining the non-local interaction term in the limit or not. A notable advantage of the approach in~\cite{etheridge2023looking} is that it does not require \emph{a priori} bounds on the density of particles per unit region. By assuming the reproduction rate is uniformly bounded and that the initial configuration contains only finitely many particles, the authors derive precise moment estimates for the total mass of the process, which they use to establish tightness of the sequence of interacting particle systems and to characterise the limit. Moreover, to prove uniqueness of the limiting PDE in the space of measures, they apply~\cite[Theorem~3.5]{kurtz1999particle}, which requires the reproduction and death rates to be uniformly bounded and globally Lipschitz continuous.

In a similar vein, Flandoli and Huang~\cite{flandoli2021kpp} analyse the scaling limit of Brownian particles in~$\mathbb{R}^{d}$ with a uniformly bounded reproduction rate, proving convergence to the Fisher–KPP equation without non-local interaction. Their technique also relies on the uniform bound on the reproduction rate and on having a finite initial number of particles. In contrast, in our setting, in general the reproduction rate is not uniformly bounded, and neither the reproduction nor death rates are globally Lipschitz continuous. We also allow for infinitely many particles at time~$0$. Consequently, we cannot rely on these methods to prove convergence of the spatial Muller's ratchet to the solution of the system of PDEs~\eqref{Paper02_PDE_scaling_limit} without substantial adaptations.

We would also like to highlight some previous work on the hydrodynamic limit of coagulation-fragmentation processes \cite{lang1980smoluchowski, hammond2006kinetic, hammond2007kinetic}. Although in these works the authors study particle systems with infinitely many types, there is no creation of mass, since only coagulation, fragmentation and diffusion events are allowed. Hence, the total mass is always uniformly bounded, and $L_{\infty}$ bounds can be obtained on the population densities. This is not possible in our setting, which makes our proof significantly different. 

Finally, there is also previous work on the scaling limits of interacting particle systems with infinitely many types of particles but without spatial dependence~\cite{barbour2008laws, barbour2012law, barbour2012central, rath2009mean, yeo2018frozen}. These systems arise naturally from questions involving graph dynamics and epidemic models. In this class of systems, it is often possible to describe the deterministic scaling limit in terms of ordinary differential equations taking values in the space of sequences $\ell_{1}$. The fact that we analyse a population with spatial structure, however, makes our approach very different. 

\subsection{Overview of the proof} \label{Paper02_subsection_heuristics}

Before introducing the main ideas of the proof, we highlight the principal challenges in analysing the spatial Muller's ratchet model. First, the lack of \textit{a priori} bounds on the local number of particles — together with unbounded birth and death rates — prevent us from establishing uniform estimates in space; in particular, we are not able to bound~$\mathbb{E}\Big[\| u^N(t,\cdot) \|_{L_\infty(\mathbb{R};\ell_1)}\Big]$. Second, letting the `type' of a particle denote the number of mutations that it carries, having infinitely many types of particles requires us to embed the stochastic process in non-standard Polish spaces to study its scaling limit. Finally, the dynamics is non-local in type space, since particles of different types in the same deme interact with each other (in the sense that birth and death rates depend on the total number of particles in the deme). As we explain below, the key step in overcoming these challenges is to combine local moment estimates with regularity properties derived from the Green’s function representation of the random walk semigroup, to show tightness in suitable $\ell_1$-valued $L_p$ spaces in which uniqueness and smoothness of the solution to the system of PDEs in~\eqref{Paper02_PDE_scaling_limit} can be established.

\ignore{\noindent\textbf{Outline of proof of Theorem~\ref{Paper02_deterministic_scaling_foutel_rodier_etheridge}.}}
The proof of Theorem~\ref{Paper02_deterministic_scaling_foutel_rodier_etheridge} can be roughly divided into two steps: the proof of tightness of the sequence of processes $(u^{N})_{N \in \mathbb{N}}$ in $\mathcal{D}([0,\infty), (\mathcal{M}(\mathbb{R})^{\mathbb{N}_{0}}, d))$, and the characterisation and proof of uniqueness of the limiting process. An important ingredient comes from  our companion article~\cite[Theorem~2.3]{madeira2025existence}, where we 
establish the following estimate:
for every $p \geq 1$ and $T > 0$, and $f = (f_k)_{k \in \mathbb N_0}: \mathbb R \rightarrow \ell_1$ satisfying Assumption~\ref{Paper02_assumption_initial_condition}, defining $\boldsymbol{\eta}^N$ as in~\eqref{Paper02_initial_condition} and letting $\eta^N(0) = \boldsymbol \eta^N$ for every $N \in \mathbb N$,
\begin{equation} \label{Paper02_heuristics_correlation_functions_iii}
     \sup_{N \in \mathbb{N}} \; \sup_{t \leq T} \; \sup_{x \in L_N^{-1}\mathbb{Z}} \mathbb{E}\left[ \| u^N(t,x) \|^p_{\ell_1}\right] \lesssim_{p,T}  \| f \|_{L_{\infty}(\mathbb{R};\ell_1)}^p + 1.
\end{equation}
{The estimate relies on the fact that by Assumption~\ref{Paper02_assumption_fitness_sequence}, $s_k \leq 1$ $\forall k \in \mathbb N_0$, and that by Assumption~\ref{Paper02_assumption_polynomials}, the birth and death polynomials are such that $0 \leq \deg q_+ < \deg q_-$. The proof in~\cite{madeira2025existence} uses the method of correlation functions introduced in~\cite{boldrighini1987collective, demasi1991mathematical}.}
For  ease of reference, we will state this result precisely in Theorem~\ref{Paper02_bound_total_mass} below. Estimate~\eqref{Paper02_heuristics_correlation_functions_iii} is a ‘local' estimate in the sense that the supremum over space is outside the expectation; as mentioned at the start of this subsection, we are not able to bound $\mathbb{E}\Big[\| u^N(t,\cdot) \|_{L_\infty(\mathbb{R};\ell_1)}\Big]$.

To establish tightness of $(u^N)_{N \in \mathbb N}$, and to characterise any subsequential limit, we will need to establish more regularity properties of the sequence $(u^N)_{N \in \mathbb{N}}$. This is carried out in Section~\ref{Paper02_section_greens_function}, where we will apply a Green's function representation of the migration semigroup to write the population density in each deme in terms of a martingale. This technique is widely used for establishing scaling limits of spatial birth-death processes with an almost surely bounded number of particles per site~\cite{durrett2016genealogies,mueller1995stochastic}. More precisely, we will establish in Lemma~\ref{Paper02_lemma_little_u_k_N_hat_semimartingale} that for every $N \in \mathbb{N}$,~$x \in L_N^{-1}\mathbb{Z}$,~$\mathcal{I} \subseteq \mathbb{N}_{0}$ and~$T \geq 0$, there exists a càdlàg martingale $\Big(M^{N,T,x}_{\mathcal{I}}(t)\Big)_{t \in [0,T]}$ with $M^{N,T,x}_{\mathcal{I}}(0) = 0$ such that
\begin{equation} \label{Paper02_heuristics_greens_function}
\begin{aligned}
    \sum_{k \in \mathcal{I}} u^{N}_{k}(T,x) & = \sum_{y \in L_N^{-1}\mathbb Z} \mathbb{P}_{0}(X^N(T) = y - x)  \sum_{k \in \mathcal{I}} u^{N}_{k}(0,y) + M^{N,T,x}_{\mathcal{I}}(T) \\ & \quad \quad + \int_{0}^{T} \sum_{y \in L_N^{-1}\mathbb Z} \mathbb{P}_{0}(X^N(T-t) = y-x) \sum_{k \in \mathcal{I}} F_k(u^N(t-,y)) \, dt,
\end{aligned}
\end{equation}
where the reaction term $F = (F_k)_{k \in \mathbb{N}_0}$ is given by~\eqref{Paper02_reaction_term_PDE}, and $(X^N(t))_{t \geq 0}$ is a simple symmetric random walk on $L^{-1}_N\mathbb Z$ with total jump rate $m_N$, and $\mathbb{P}_0$ is the probability measure under which $X^N(0) = 0$. Combining~\eqref{Paper02_heuristics_greens_function}, an expression for $\left \langle M^{N,T,x}_{\mathcal I} \right \rangle$ and classical martingale estimates will allow us to derive weak equicontinuity properties for $(u^N)_{N \in \mathbb N}$ in Lemma~\ref{Paper02_lemma_increments_liitle_u_hat_n_k}. Moreover, by combining~\eqref{Paper02_heuristics_greens_function} with the fact that by Assumption~\ref{Paper02_assumption_fitness_sequence}, $s_k \rightarrow 0$ as $k \rightarrow \infty$, we will be able to uniformly bound the expectation of the sum over large $k$ of $u^N_k(t,x)$ in
Lemma~\ref{Paper02_lemma_control_local_density_high_number_mutations}. These properties, together with~\eqref{Paper02_heuristics_correlation_functions_iii} and standard stochastic analysis arguments, will be used in our proof of tightness in Section~\ref{Paper02_tightness}.

A major challenge in the proof of Theorem~\ref{Paper02_deterministic_scaling_foutel_rodier_etheridge} is that, since we embed our process in the space of $\mathcal{M}(\mathbb{R})^{\mathbb{N}_{0}}$-valued càdlàg functions, tightness alone does not imply that any subsequential limit has sample paths lying in the subset of $\mathcal{M}(\mathbb R)^{\mathbb N_0}$ given by sequences of measures that are absolutely continuous with respect to Lebesgue measure, nor does it guarantee that the birth and death rate polynomials of the local population density converge in the space of measures. Moreover, even after establishing the existence of a sequence of densities for the limiting measure-valued process, we note that, because tightness is proved in the space of $\mathcal{M}(\mathbb{R})^{\mathbb{N}_{0}}$-valued càdlàg functions, proving uniqueness of the limit requires us to establish uniqueness of solutions to the system of PDEs~\eqref{Paper02_PDE_scaling_limit} in $\mathcal{M}(\mathbb{R})^{\mathbb{N}_{0}}$. This is highly non-trivial due to the non-linear reaction term in~\eqref{Paper02_reaction_term_PDE} that is neither uniformly bounded nor globally Lipschitz continuous, and because, as explained at the start of this subsection, we do not have any guarantee that the limiting solution remains uniformly bounded in space over finite time intervals (see~\cite{brezis1979uniqueness,kurtz1999particle,carrillo2024dissipative} and the references therein for a discussion of the challenges in establishing uniqueness of measure-valued solutions to non-linear PDEs).

To overcome these difficulties, we will also show tightness of $(u^N)_{N\in \mathbb{N}}$ in the function space $L_{4\deg q_-}([0,T] \times \mathbb R, \hat \lambda; \ell_1)$, where for $p \ge 1$,
\begin{equation*}
\begin{aligned}
    & {L}_{p}([0,T] \times \mathbb{R}, \hat{\lambda}; \ell_{1}) \\ & \quad \defeq \bigg\{v: [0,T] \times \mathbb{R} \rightarrow \ell_{1}: \; \| v \|_{{L}_{p}([0,T] \times \mathbb{R}, \hat{\lambda}; \ell_{1})} \defeq \bigg(\int_{0}^{T} \int_{\mathbb{R}} \frac{\| v(t,x) \|_{\ell_{1}}^{p}}{1 + \vert x \vert^{2}} \, dx \, dt\bigg)^{1/p} < \infty\bigg\},
\end{aligned}
\end{equation*}
and $\hat{\lambda}$ is a measure on $[0, \infty) \times \mathbb{R}$ which is absolutely continuous with respect to the Lebesgue measure $\lambda$, and given by
\begin{equation*}
    \hat{\lambda}(dt \; dx) \defeq \frac{\mathds{1}_{\{t \in [0,T]\}}}{1 + \vert x \vert^{2}} \lambda (dt \; dx).
\end{equation*}
We refer the reader to Section~\ref{Paper02_PDES_sequence_spaces} for a brief overview of a rigorous definition of measurable $\ell_{1}$-valued functions on $\mathbb{R}$. The advantage of working in ${L}_{4\deg q_-}([0,T] \times \mathbb{R}, \hat{\lambda}; \ell_{1})$ instead of $\mathcal{D}\Big([0, \infty), \mathcal{M}(\mathbb{R})^{\mathbb{N}_0}\Big)$ is that, as in the usual $L_p$ spaces, convergence in ${L}_{4\deg q_-}([0,T] \times \mathbb{R}, \hat{\lambda}; \ell_{1})$ implies convergence of the densities almost everywhere on the time-space box $[0,T] \times \mathbb{R}$. This idea generalises methods that rely on using Hilbert spaces to characterise the hydrodynamic limit of interacting particle systems as in~\cite{arnold1980consistency,arnold1980deterministic,kotelenez1986law,kotelenez1988high,blount1991comparison,blount1992law,blount1993limit,blount1994density,feng1996hydrodynamic}.
However, we are not aware of any previous results establishing tightness criteria for interacting
particle systems by considering the densities of particles as $L_p$-valued random variables.

By using Díaz and Mayoral's characterisation of compact subsets of Bochner spaces~\cite[Theorem~3.2]{diaz1999compactness} (for which an elementary proof can be found in~\cite{van2014compactness}), we will derive a tightness criterion for ${L}_{4\deg q_-}([0,T] \times \mathbb{R}, \hat{\lambda}; \ell_{1})$ in Lemma~\ref{Paper02_easy_tightness_criterion_L_p_spaces}. The criterion is simple, and requires the following conditions:
\begin{enumerate}[label = (\roman*)]
        \item The~$(8\deg q_-)$-moments of~$\big((u^{N}(t,x))_{t \in [0,T], \, x \in \mathbb{R}}\big)_{N \in \mathbb{N}}$ are uniformly bounded, i.e.
        \begin{equation*}
            \sup_{N \in \mathbb{N}} \; \sup_{t \in [0,T]} \; \sup_{x \in \mathbb{R}} \; \mathbb{E}\Big[\| u^{N}(t,x) \|_{\ell_{1}}^{8\deg q_-}\Big] < \infty.
        \end{equation*}
        \item \textit{Bounds on the expectation of the tail of $u^N(t,x)$ in type space:} 
        \begin{equation*}
            \lim_{k \rightarrow \infty} \; \sup_{N \in \mathbb{N}} \; \sup_{t \in [0,T]} \; \sup_{x \in \mathbb{R}} \; \mathbb{E}\bigg[\sum_{j= k}^\infty \, u^{N}_{j}(t,x) \bigg] = 0.
        \end{equation*}
        \item \textit{Integral weak equicontinuity property:}  There exist~$l_{1},l_{2},C_{T} > 0$ such that for every~$\gamma \in (-1,1)$ and~$i \in \{1,2\}$,
        \begin{equation*}
            \sup_{N \in \mathbb{N}} \; \sup_{k \in \mathbb{N}_{0}} \mathbb{E}\Big[\| \theta^{(i)}_{\gamma}u^{N}_{k} - u^{N}_{k} \|_{{L}_{1}([0,T] \times \mathbb{R}, \hat{\lambda}; \mathbb{R})}\Big] \leq C_{T} \vert \gamma \vert^{l_{i}},
        \end{equation*}
    \end{enumerate}
where for any function~$g: [0,T] \times \mathbb{R} \rightarrow \mathbb{R}$,~$\gamma \in \mathbb{R}$, and for all $(t,x) \in [0,T] \times \mathbb R$,
\begin{equation*}
    (\theta^{(1)}_{\gamma}g)(t,x) \defeq g(t + \gamma,x) \cdot \mathds 1_{\{t + \gamma \in [0,T]\}} \quad \textrm{and} \quad  (\theta^{(2)}_{\gamma}g)(t,x) \defeq g(t,x + \gamma).
\end{equation*}
Tightness of $(u^N)_{N \in \mathbb{N}}$ in ${L}_{4\deg q_-}([0,T] \times \mathbb{R}, \hat{\lambda}; \ell_{1})$ will then follow from this criterion by combining~\eqref{Paper02_heuristics_correlation_functions_iii} with the regularity properties derived from the Green's function representation in~\eqref{Paper02_heuristics_greens_function}.

Let $v = (v_k(t,x))_{k \in \mathbb{N}_0, \, t \in [0,T], \, x \in \mathbb{R}}$ be any subsequential limit of $(u^N)_{N \in \mathbb{N}}$ in the function space ${L}_{4\deg q_-}([0,T] \times \mathbb{R}, \hat{\lambda}; \ell_{1})$. By Skorokhod's representation theorem, we can construct $(u^N)_{N \in \mathbb{N}}$ and $v$ on the same probability space in such a way that the subsequence converges almost surely. Then, by taking the limit as $N \rightarrow \infty$ on both sides of the Green's function representation in~\eqref{Paper02_heuristics_greens_function}, and using the fact that the random walk semigroup converges to the heat kernel under Brownian scaling, letting $\{P_t\}_{t \geq 0}$ denote the semigroup of Brownian motion run at speed $m$, we will be able to establish in Lemma~\ref{Paper02_weak_convergence_mild_solution} that
 \begin{equation*}
       \mathbb{E}\Bigg[\int_{0}^{T} \int_{\mathbb{R}} \, \sum_{k = 0}^\infty \frac{1}{1 + \vert x \vert^{2}} \left\vert v_{k}(t,x) - (P_{t}f_{k})(x) - \int_{0}^{t} \Big(P_{t-\tau}F_{k}(v(\tau, \cdot))\Big)(x) \,d\tau\right\vert \, dx \, dt\Bigg] = 0,
   \end{equation*}
i.e.~that $v$ is a mild solution to the system of PDEs~\eqref{Paper02_PDE_scaling_limit}. Importantly, mild solutions to nonlinear PDEs are better characterised than measure-valued weak solutions. In fact, we establish uniqueness in ${L}_{4\deg q_-}([0,T] \times \mathbb{R}, \hat{\lambda}; \ell_{1})$ for $v$ using standard PDE arguments in our companion article~\cite[Proposition~6.1]{madeira2025PDE_surfing}. The only limitation of using the space ${L}_{4\deg q_-}([0,T] \times \mathbb{R}, \hat{\lambda}; \ell_{1})$ is that we lose information about any specific instant of time, since for any $t \in (0,T]$, the set $\{t\} \times \mathbb{R}$ has null Lebesgue measure. This, however, is not an important limitation, since by the tightness in $\mathcal{D}([0, \infty), \mathcal{M}(\mathbb{R})^{\mathbb{N}_0})$, we will establish in Lemma~\ref{Paper02_relation_measure_valued_L_P_time_space_boxes_solutions} that any limiting process $(u(t))_{t \geq 0} = ((u_k(t))_{k \in \mathbb{N}_{0}})_{t \geq 0}$ in $\mathcal{D}([0, \infty), \mathcal{M}(\mathbb{R})^{\mathbb{N}_0})$ satisfies the following identity in distribution, for every~$H \in \mathbb{N}$,~$(t_{h})_{h \in \llbracket H \rrbracket} \in [0,T)^{H}$,~$(\varphi_{h})_{h \in \llbracket H \rrbracket} \in \mathcal{C}_{c}(\mathbb{R})^{H}$ and~$(k_{h})_{h \in \llbracket H \rrbracket} \in (\mathbb{N}_{0})^{H}$:
    \begin{equation*}
       (\langle u_{k_{h}}(t_{h}), \, \varphi_{h} \rangle)_{h \in \llbracket H \rrbracket} \overset{d}{=} \Bigg(\lim_{t' \downarrow t_{h}} \frac{1}{t' - t_{h}}\int_{t_{h}}^{t'} \int_\mathbb R v_{k_{h}}(\tau, x) \varphi_{h} (x) \, dx \, d\tau\Bigg)_{h \in \llbracket H \rrbracket}.
    \end{equation*}
Together with the uniqueness of mild solutions, this relation implies uniqueness for the measure-valued process, and will complete the proof of our convergence result (Theorem~\ref{Paper02_deterministic_scaling_foutel_rodier_etheridge}).

It is important to note that although we work in the general setting of $\ell_1$-valued functions, the framework extends naturally to the analysis of scaling limits of interacting particle systems with finitely many particle types. In this case, condition~(ii) in the tightness criterion is trivial. Moreover, our method does not require the particle density to be almost surely bounded, nor does it require the birth and death rates to be uniformly bounded, or the initial number of particles to be finite. The framework is therefore flexible enough to apply to a broad class of interacting particle systems, including models related to those studied in~\cite{etheridge2023looking,flandoli2021kpp}.

\section{Preliminaries} \label{Paper02_subsection_preliminaries}

In this section, we first state in Section~\ref{Paper02_subsection_preliminaries_existence_moment_bouds} the main results from the companion article~\cite{madeira2025existence} that will be used in this article. Then, in Section~\ref{Paper02_PDES_sequence_spaces}, we introduce the notions of weak and mild $\ell_1$-valued solutions to partial differential equations, and state the main results from~\cite{madeira2025PDE_surfing} concerning solutions to the system of PDEs~\eqref{Paper02_PDE_scaling_limit} that are required for our analysis.

\subsection{Moment bound and martingale problem for the spatial Muller's ratchet} \label{Paper02_subsection_preliminaries_existence_moment_bouds}

In this subsection, we give an overview of the properties of the spatial Muller's ratchet proved in our companion article~\cite{madeira2025existence}. We start by stating some topological properties of the state space~$\mathcal{S}^{N}$, which can be found in~\cite[Proposition~A.5]{madeira2025existence}.

\begin{proposition} \label{Paper02_topological_properties_state_space}
    For each~$N \in \mathbb{N}$, the metric space $(\mathcal{S}^{N}, d_{\mathcal{S}^{N}})$ defined in~\eqref{Paper02_definition_state_space_formal} is complete and separable. Moreover, a subset $\mathcal{K} \subset \mathcal{S}^{N}$ is relatively compact in the topology induced by $d_{\mathcal{S}^{N}}$ if and only if all the following conditions are satisfied:
    \begin{enumerate}[label = (\roman*)]
        \item $\mathcal{K}$ is $\vert \vert \vert \cdot \vert \vert \vert_{\mathcal{S}^{N}}$-bounded, i.e.~$\sup_{\boldsymbol{\xi} \in \mathcal{K}} \vert \vert \vert \boldsymbol{\xi} \vert \vert \vert_{\mathcal{S}^{N}} < \infty$.
        \item For any $\varepsilon > 0$, there exists $R_{\varepsilon} > 0$ such that
        \begin{equation*}
            \sup_{\boldsymbol{\xi} = (\xi(y))_{y \in L_N^{-1}\mathbb Z} \in \mathcal{K}} \; \sum_{\{x \in L_{N}^{-1}\mathbb{Z}: \, \vert x \vert \geq R_{\varepsilon}\}} \frac{\| \xi(x) \|_{\ell_{1}}}{(1 + \vert x \vert)^{2}} \leq \varepsilon.
        \end{equation*}
        \item For each $x \in L_{N}^{-1}\mathbb{Z}$, there exists $J^{(x)} \in \mathbb{N}_{0}$ such that for any $k \in \mathbb N_0$ with $k \geq J^{(x)}$, $\xi_{k}(x) = 0$ for all~$\boldsymbol{\xi} = (\xi_j(y))_{j \in \mathbb N_0, \, y \in L_N^{-1}\mathbb Z} \in \mathcal{K}$.
    \end{enumerate}
\end{proposition}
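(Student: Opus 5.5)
This is a statement about the discrete sequence space $\mathcal{S}^N$. For each fixed $N$, I would identify $\mathcal{S}^N$ with a subset of the weighted space
\[
\ell_1\big(L_N^{-1}\mathbb{Z};\ell_1\big),\qquad \|\boldsymbol\xi\| \defeq \sum_{x}\frac{\|\xi(x)\|_{\ell_1}}{(1+|x|)^2}.
\]
This weighted space is a Banach space: it is isometric to $\ell_1$ over the countable index set $L_N^{-1}\mathbb Z\times\mathbb N_0$ with weights $(1+|x|)^{-2}$.

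\textbf{Completeness and separability.} Completeness follows by showing that $\mathcal{S}^N$ is closed in this Banach space. If $\boldsymbol\xi^n\to\boldsymbol\xi$ in the weighted norm, then each coordinate satisfies $\xi^n_k(x)\to\xi_k(x)$, since each coordinate is dominated by $(1+|x|)^2$ times the norm. Because the coordinates $\xi^n_k(x)$ lie in $\mathbb N_0$, the limit coordinates lie in $\mathbb N_0$ too. Separability follows from the countable dense subset of configurations with finitely many nonzero entries. Every $\boldsymbol\xi\in\mathcal{S}^N$ is approximated by truncating to $|x|\le R$ and $k\le J$, with error controlled by dominated convergence. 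Since these truncations are integer-valued, they lie in $\mathcal{S}^N$.

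\textbf{Necessity of (i)--(iii).} Suppose $\mathcal K$ is relatively compact. Then (i) holds because compact sets are bounded. For (ii), I would use total boundedness: cover $\mathcal K$ by finitely many $\varepsilon/2$-balls, choose $R$ so that the tail beyond $R$ of each centre is at most $\varepsilon/2$, and conclude by the triangle inequality. Condition (iii) is the key discrete point and uses integrality. Fix $x$ and suppose no $J^{(x)}$ exists. Then there are $\boldsymbol\xi^n\in\mathcal K$ and $k_n\to\infty$ with $\xi^n_{k_n}(x)\ge1$. Pass to a convergent subsequence $\boldsymbol\xi^n\to\boldsymbol\xi$, and choose $J$ such that $\sum_{k\ge J}\xi_k(x)<1/2$. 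For large $n$, the contribution of deme $x$ to $d_{\mathcal S^N}(\boldsymbol\xi^n,\boldsymbol\xi)$ is at least
\[
\frac{\xi^n_{k_n}(x)-\xi_{k_n}(x)}{(1+|x|)^2}\ge \frac{1/2}{(1+|x|)^2}.
\]
This contradicts convergence.

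\textbf{Sufficiency of (i)--(iii).} This is the main step. Take a sequence in $\mathcal K$. Each coordinate $\xi_k(x)$ is bounded by $C(1+|x|)^2$ by (i), so a diagonal argument gives a subsequence along which every coordinate converges, and hence is eventually constant since the values are integers. Call the limit $\boldsymbol\xi$; Fatou's lemma gives $\boldsymbol\xi\in\mathcal S^N$. To upgrade coordinatewise convergence to convergence in $d_{\mathcal S^N}$, fix $\varepsilon$ and choose $R_\varepsilon$ from (ii); by Fatou, the same tail bound holds for the limit. On the finitely many demes with $|x|<R_\varepsilon$, condition (iii) reduces the distance to finitely many coordinates $k<J^{(x)}$, each of which converges. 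So the distance is eventually at most $2\varepsilon$.

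The main obstacle is recognising that (iii) is genuinely needed. Without it, uniform integrability in $k$ at a single deme can fail even for bounded sets, and integer-valuedness turns the usual "uniformly small tail" condition into "eventually zero".
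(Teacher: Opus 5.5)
Your proof is correct and complete.

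\begin{itemize}
\item \textbf{Completeness and separability.} The set of $\mathbb N_0$-valued elements is closed in the weighted $\ell_1$ space over $L_N^{-1}\mathbb Z\times\mathbb N_0$, which gives completeness, and the countable family of finitely supported integer configurations is dense, which gives separability.
\item \textbf{Necessity.} Total boundedness gives (ii). A convergent subsequence combined with integrality gives (iii).
\item \textbf{Sufficiency.} The diagonal extraction, Fatou's lemma for the limit and its tail, and the reduction via (ii) and (iii) to finitely many coordinates close the argument. Those finitely many coordinates are in fact eventually equal to their limits, so your ``eventually at most $2\varepsilon$'' is exact.
\end{itemize}

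The paper does not prove this proposition itself; it imports it from the companion article \cite[Proposition~A.5]{madeira2025existence}. So there is no in-text argument to compare against.

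A shorter route is available. Quote the standard criterion: a subset of a weighted $\ell_1$ space over a countable index set is relatively compact if and only if it is bounded and has uniformly small weighted mass outside finite index sets. Since $\mathcal S^N$ is closed in that ambient space, relative compactness in $\mathcal S^N$ and in the ambient space coincide. The tail condition in $x$ is then exactly (ii). By integrality, requiring the $k$-tail at a fixed deme $x$ to be uniformly below $(1+|x|)^{-2}$ in weighted mass is the same as (iii).

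Your hands-on argument essentially reproves that criterion in this integer-valued setting. That costs a little length but makes the proof self-contained, and it makes transparent why (iii) replaces the usual ``uniformly small tail in $k$'' condition.
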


We now prepare to formally introduce the process $(\eta^{N}(t))_{t \geq 0}$ that we call the spatial Muller's ratchet. For any fixed $N \in \mathbb{N}$, under Assumptions~\ref{Paper02_assumption_fitness_sequence} and~\ref{Paper02_assumption_polynomials}, for an initial configuration $\boldsymbol \eta^N$ satisfying~\eqref{Paper02_immediate_estimates_definition_initial_condition}, the process $(\eta^N(t))_{t \geq 0}$ with $\eta^N(0) = \boldsymbol \eta^N$ almost surely will be given by the weak limit of a sequence of approximating Feller processes~$(\eta^{N,n})_{n \in \mathbb{N}}$. More precisely, let $(\lambda_{n})_{n \in \mathbb{N}} \subset (0, \infty)$ and $(K_{n})_{n \in \mathbb{N}} \subseteq \mathbb{N}$ denote increasing sequences such that
\begin{equation*}
    \lim_{n \rightarrow \infty} \, \lambda_{n} = \infty \quad \textrm{and} \quad  \lim_{n \rightarrow \infty} \, K_{n} = \infty.
\end{equation*}
For each $n \in \mathbb{N}$, let $\Lambda_{N,n} \defeq L^{-1}_N\mathbb{Z} \, \cap \, [-\lambda_n, \lambda_n] $; the process $(\eta^{N,n}(t))_{t \geq 0}$ will be an $\mathcal S^N$-valued càdlàg Markov process. Informally, the process $\eta^{N,n}$ can be described as follows: particles outside the box $\Lambda_{N,n}$ are frozen. All particles living inside $\Lambda_{N,n}$ can
migrate and die (at the rates in the informal description of $\eta^N$ at the start of Section~\ref{Paper02_model_description}), but particles on the boundary of $\Lambda_{N,n}$ can only migrate to a deme inside $\Lambda_{N,n}$. Finally, only particles that carry $K_n$ or fewer mutations are able to reproduce (again, at the rates in the informal description at the start of Section~\ref{Paper02_model_description}). Since the number of demes in $\Lambda_{N,n}$ is finite, and only particles carrying at most $K_n$ mutations can reproduce, by ignoring frozen particles, the process $(\eta^{N,n}(t))_{t \geq 0}$ can be thought of as a Markov process with finitely many types. A formal definition of the process $(\eta^{N,n}(t))_{t \geq 0}$ in terms of its infinitesimal generator is given in the companion article~\cite[Section~2]{madeira2025existence}. In~\cite[Theorem~2.2]{madeira2025existence}, we show that for $\boldsymbol \eta^N$ satisfying~\eqref{Paper02_immediate_estimates_definition_initial_condition},
conditioning on $\eta^{N,n}(0) = \boldsymbol \eta^N$ for every $n \in \mathbb N$, the sequence of processes $((\eta^{N,n}(t))_{t \geq 0})_{n \in \mathbb{N}}$ converges weakly as $n \rightarrow \infty$. More precisely, define a set of initial configurations $\mathcal S^N_0 \subset \mathcal S^N$ as follows (see~\cite[Assumption~3]{madeira2025existence}):
\begin{equation*}
\begin{aligned}
     \mathcal{S}^N_{0} \defeq \bigg\{\boldsymbol{\xi} = (\xi_j(x))_{j \in \mathbb N_0, \, x \in L_N^{-1}\mathbb  Z}\in \mathcal{S}^N: \; & \sup_{x \in L^{-1}_N\mathbb{Z}} \| \xi(x) \|_{\ell_{1}} < \infty \; \\ 
     & \qquad \textrm{and} \; \lim_{k \rightarrow \infty} \, \sup_{x \in L^{-1}_N\mathbb{Z}} \, \sum_{j = k}^{\infty} \, \xi_{j}(x) = 0\bigg\}.
\end{aligned}
\end{equation*}
Then, by~\eqref{Paper02_immediate_estimates_definition_initial_condition}, for $f: \mathbb R \rightarrow \ell_1^+$ satisfying Assumption~\ref{Paper02_assumption_initial_condition}, the initial configuration $\boldsymbol{\eta}^N$ given by~\eqref{Paper02_initial_condition} is an element of $\mathcal{S}^N_0$, and so~\cite[Theorem~2.2]{madeira2025existence} yields the following result. Recall the definition of $\mathcal C_*(\mathcal S^N, \mathbb R)$ in~\eqref{Paper02_definition_Lipschitz_function}, and note that by~\cite[Lemma~6.3]{madeira2025existence}, for $\phi \in \mathcal C_*(\mathcal S^N, \mathbb R)$ and $\mathcal{L}^{N}\phi$ as defined in~\eqref{Paper02_generator_foutel_etheridge_model}-\eqref{Paper02_infinitesimal_generator}, $\mathcal{L}^{N}\phi$ is well-defined and finite on $\mathcal S^N$.

\begin{theorem} \label{Paper02_thm_existence_uniqueness_IPS_spatial_muller}
    Suppose $N \in \mathbb N$, $L_N, \, m_N > 0$, $\mu \in [0,1]$, and $(s_k)_{k \in \mathbb N_0}$, $q_+$, $q_-$ and $f$ satisfy Assumptions~\ref{Paper02_assumption_fitness_sequence},~\ref{Paper02_assumption_polynomials} and~\ref{Paper02_assumption_initial_condition}. Define $\boldsymbol \eta^N \in \mathcal S^N_0$ as in~\eqref{Paper02_initial_condition}. Then conditioning on $\eta^{N,n}(0) = \boldsymbol{\eta}^N$ for every $n \in \mathbb{N}$, the sequence of processes $((\eta^{N,n}(t))_{t \geq 0})_{n \in \mathbb{N}}$ converges weakly with respect to the $J_1$-topology on $\mathcal{D}([0, \infty), \mathcal{S}^{N})$ as $n \rightarrow \infty$ to an $\mathcal S^N$-valued càdlàg Markov process $(\eta^{N}(t))_{t \geq 0}$ with $\eta^N(0) = \boldsymbol{\eta}^N$ almost surely. Moreover, $(\eta^{N}(t))_{t \geq 0}$ is a strong Markov process with respect to its right-continuous natural filtration $\{\mathcal{F}^{\eta^{N}}_{t+}\}_{t \geq 0}$. Furthermore, for any $\phi \in \mathcal C_*(\mathcal{S}^{N}, \mathbb{R})$, there exists a càdlàg $\{\mathcal{F}^{\eta^{N}}_{t+}\}_{t \geq 0}$-square integrable martingale $(M^{\phi}(t))_{t \geq 0}$ such that for all $T \geq 0$,
    \begin{equation*}
        \phi(\eta^{N}(T)) = \phi(\boldsymbol{\eta}^N) + M^{\phi}(T) + \int_{0}^{T} (\mathcal{L}^{N}\phi)(\eta^{N}(t-)) \, dt,
    \end{equation*}
    where $\mathcal{L}^{N}\phi$ is defined in~\eqref{Paper02_generator_foutel_etheridge_model}-\eqref{Paper02_infinitesimal_generator}.
\end{theorem}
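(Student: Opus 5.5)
The approach is to prove the result in three stages: uniform-in-$n$ estimates on the truncated processes $\eta^{N,n}$, tightness together with identification of a unique limit, and finally passing the martingale identity to the limit. $N$ is fixed throughout, so constants may depend on $N$, $L_N$, $m_N$.

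\textbf{Stage 1: uniform moment bounds.} Each $\eta^{N,n}$ is a Markov process with finitely many active types, and its frozen particles play no role. Restricted to $\Lambda_{N,n}$ it is a countable-state jump process without explosion, because $\deg q_+<\deg q_-$ makes large local populations decay. It therefore satisfies Dynkin's formula for $\phi\in\mathcal C_*(\mathcal S^N;\mathbb R)$.

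I would then prove
\[
\sup_n\,\sup_{t\le T}\,\sup_x\,\mathbb E\big[\|\eta^{N,n}(t,x)\|_{\ell_1}^p\big]<\infty \quad\text{for every } p\ge 1 .
\]
The natural route is the method of correlation functions, i.e.\ factorial moments of the $\ell_1$-masses at finitely many demes. The key structural fact is that a birth at rate $\le q_+(\cdot)\,\xi$ (using $s_k\le 1$) is dominated by death at rate $q_-(\cdot)\,\xi$ for large occupation, since $\deg q_+<\deg q_-$. This gives a closed differential inequality whose bound depends only on $\sup_x\|\boldsymbol\eta^N(x)\|_{\ell_1}$, which is finite by \eqref{Paper02_immediate_estimates_definition_initial_condition}.

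Summing the bound against the weights $(1+|x|)^{-2}$ gives $\mathbb E\big[\sup_{t\le T}\vert\vert\vert\eta^{N,n}(t)\vert\vert\vert_{\mathcal S^N}\big]<\infty$ uniformly in $n$. The supremum inside the expectation is handled by a BDG/Doob argument applied to the martingale part, with compensator controlled by the moment bounds. The same estimate makes the spatial tail in Proposition~\ref{Paper02_topological_properties_state_space}(ii) uniformly small.

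For condition (iii) of that proposition, note that the initial configuration has integer entries and a uniformly vanishing type-tail, so each deme carries only finitely many types at time $0$. A birth increases the maximal type by at most one, so the maximal type at deme $x$ up to time $T$ is bounded by the initial maximum plus the number of births seen by particles near $x$. That count is tight by the moment bounds.

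\textbf{Stage 2: tightness and uniqueness of the limit.} Tightness in $\mathcal D([0,\infty),\mathcal S^N)$ then follows from Aldous' criterion. Compact containment comes from Proposition~\ref{Paper02_topological_properties_state_space}. The modulus condition comes from the fact that the jump rates affecting the weighted norm near the origin have bounded expectation.

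For uniqueness of the limit, I would build all $\eta^{N,n}$ on one probability space through a graphical construction: Poisson clocks per deme and per particle label for migration, birth and death, with thinning uniforms to implement the state-dependent rates. The goal is to show that $(\eta^{N,n})_n$ is Cauchy in probability in $\mathcal D([0,T],\mathcal S^N)$.

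\textbf{Main obstacle.} This Cauchy step is where I expect the real difficulty. The system is not monotone, so the two truncations cannot be sandwiched. Instead I would control the discrepancy set, meaning the demes where $\eta^{N,n}$ and $\eta^{N,n'}$ differ. Discrepancies are created only near $\partial\Lambda_{N,n}$ or at types above $K_n$. They propagate inward at most by migration, or locally through rate changes at a rate bounded by a polynomial in the local masses.

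A Gronwall estimate on $\mathbb E\sum_x \|\eta^{N,n}(t,x)-\eta^{N,n'}(t,x)\|_{\ell_1}(1+|x|)^{-2(1+\deg q_-)}$ should then close. The reason is that the local Lipschitz constants of the rates are polynomial in the occupations, and their moments are controlled by Stage 1. If the polynomial growth prevents the Gronwall estimate from closing directly, I would localize with a stopping time at which some deme within distance $R$ exceeds level $M$, and send $M\to\infty$ afterwards.

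\textbf{Stage 3: Markov properties and the martingale problem.} The limit $\eta^N$ is Markov: the graphical construction gives a stochastic flow, and the conditional law depends measurably on the current state. For the strong Markov property with respect to $\{\mathcal F^{\eta^N}_{t+}\}$, I would establish continuity of the law in the initial state by the same discrepancy estimate, and then use the standard approximation of stopping times by discrete ones.

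For the martingale identity, take $\phi\in\mathcal C_*$ and pass to the limit in the Dynkin formula for $\eta^{N,n}$. The generator $\mathcal L^{N,n}\phi$ converges to $\mathcal L^N\phi$ pointwise. Its absolute value is bounded by a sum of $\mathrm{poly}(\|\xi(x)\|_{\ell_1})\,(1+|x|)^{-2(1+\deg q_-)}$, so the Stage 1 moments give uniform integrability. Square integrability of $M^\phi$ follows from the compensator $\int_0^T\Gamma\phi\,dt$, which is bounded in the same way.
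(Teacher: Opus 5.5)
The paper does not prove this statement itself: it imports it from the companion article (Theorem~2.2 there). The two ingredients visible here, the correlation-function moment bound of Theorem~\ref{Paper02_bound_total_mass} and the compactness criterion of Proposition~\ref{Paper02_topological_properties_state_space}, match your Stage~1 and your tightness argument.

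\textbf{Gap in Stage 2.} The genuine gap is the step you flag yourself. Tightness only gives subsequential limits, so the content of the theorem (convergence of the whole sequence, i.e.\ a well-defined $\eta^N$) rests entirely on the Cauchy estimate, and the argument you sketch for it does not close. Under a graphical coupling, a discrepancy $D(x)=\|\eta^{N,n}(t,x)-\eta^{N,n'}(t,x)\|_{\ell_1}$ at deme $x$ changes the per-capita rates of every particle there. New discrepancies are therefore created at rate of order $c(x)D(x)$, where $c(x)$ is a polynomial (of degree about $\deg q_-$) in the two occupations. Moment bounds do not give $\mathbb E[c(x)D(x)]\le C\,\mathbb E[D(x)]$. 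H\"older only gives $C\,\mathbb E[D(x)]^{\theta}$ with $\theta<1$, and an inequality $y'\le Cy^{\theta}$ yields no smallness as the source vanishes.

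Your fallback does not repair this. Stopping when a deme in a fixed window $[-R,R]$ exceeds $M$ controls the rates only inside that window. Discrepancies are produced near $\partial\Lambda_{N,n}$ and, through the type cutoff $K_n$, at every deme. Outside $[-R,R]$ the occupations, and hence the amplification rates, are not bounded at all (for $f\equiv 1$ they are unbounded over space), so the inflow of discrepancy into the window is uncontrolled.

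What is missing is a quantitative finite-speed-of-propagation estimate. One way to supply it:
\begin{enumerate}
\item Bound $\mathbb E[\sup_{t\le T}\|\eta^{N,n}(t,x)\|_{\ell_1}^p]$ uniformly in $n$.
\item Use a union bound with $p\varepsilon>1$ to get, with probability close to one uniformly in $n$, that $\|\eta(t,x)\|_{\ell_1}\le C(1+|x|)^{\varepsilon}$ for all $t\le T$ and all $x$, in both coupled processes.
\item Before the corresponding stopping time, dominate the discrepancies by a branching random walk with local branching rate $O((1+|x|)^{\varepsilon'})$, $\varepsilon'<1$.
\end{enumerate}
Travelling distance $\ell$ by migration in time $T$ has probability of order $e^{-c\,\ell\log\ell}$, which beats growth $e^{C\ell^{\varepsilon'}T}$. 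Together with the smallness of the type-cutoff source (from $s_k\to0$), this is what would make the boundary and cutoff contributions vanish.

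\textbf{Gap in Stage 3.} All your estimates are for initial states with $\sup_x\|\xi(x)\|_{\ell_1}<\infty$ and a uniform type tail, i.e.\ in $\mathcal S^N_0$. This class is not preserved by the dynamics. When infinitely many demes are occupied (e.g.\ $f\equiv 1$), large local fluctuations occur at infinitely many demes, so $\sup_x\|\eta^N(t,x)\|_{\ell_1}=\infty$ almost surely for every $t>0$.

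Three things you rely on for the Markov and strong Markov properties all need the limit, together with your Stage~1 and Stage~2 estimates, to exist from initial states distributed like $\eta^N(t)$:
\begin{enumerate}
\item the flow property $\Phi_{t+s}(\boldsymbol{\eta}^N,\omega)=\Phi_s(\Phi_t(\boldsymbol{\eta}^N,\omega),\theta_t\omega)$;
\item the measurable dependence of the conditional law on the present state;
\item the continuity in the initial state that you want for the strong Markov property.
\end{enumerate}
Suitable classes would be, for instance, random initial conditions independent of the driving noise with uniformly bounded moments (correlation functions), or deterministic configurations with sub-polynomial growth in $|x|$. The proposal never establishes this, so the Markov and strong Markov assertions do not follow from what you have.
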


We will refer to the process~$(\eta^{N}(t))_{t \geq 0}$ as the spatial Muller's ratchet. For $\boldsymbol \eta^N \in \mathcal S^N_0$, we let $\mathbb P_{\boldsymbol \eta^N}$ denote the probability measure under which $(\eta^N(t))_{t \geq 0}$ is the spatial Muller's ratchet with $\eta^N(0) = \boldsymbol \eta^N$ almost surely, and $\mathbb E_{\boldsymbol \eta^N}$ the corresponding expectation. We will also use the following moment bound from our companion article~\cite{madeira2025existence}, which we state here for ease of reference. Note that for $f: \mathbb R \rightarrow \ell_1^+$ satisfying Assumption~\ref{Paper02_assumption_initial_condition}, $N \in \mathbb N$, and $\boldsymbol \eta^N$ as defined in~\eqref{Paper02_initial_condition}, we have
\[
\sup_{x \in L_N^{-1}\mathbb Z} \, \| \eta^N (x) \|_{\ell_1} \leq N \esssup_{x \in \mathbb R} \| f(x) \|_{\ell_1} < \infty.
\]
Hence, the following result is a direct consequence of~\cite[Theorem~2.3]{madeira2025existence}.

\begin{theorem} \label{Paper02_bound_total_mass}
Suppose $q_+$ and $q_-$ satisfy Assumption~\ref{Paper02_assumption_polynomials}, and $f$ satisfies Assumption~\ref{Paper02_assumption_initial_condition}. Then for any $p \geq 1$, there is a non-decreasing function $C_{p}: [0, \infty) \rightarrow [1, \infty)$ such that the following holds. For any $N \in \mathbb N$, $L_N,m_N > 0$, $\mu \in [0,1]$ and $(s_k)_{k \in \mathbb N_0}$ satisfying Assumption~\ref{Paper02_assumption_fitness_sequence}, for $\boldsymbol \eta^N$ as defined in~\eqref{Paper02_initial_condition} and for all $t \geq 0$,
\begin{equation*}
    \sup_{x \in L_{N}^{-1}\mathbb{Z}} \mathbb{E}_{\boldsymbol \eta^N}\left[\| u^{N}(t,x) \|_{\ell_{1}}^{p}\right] \leq C_{p}(t),
\end{equation*}
where $u^N(t,x)$ is defined in~\eqref{Paper02_approx_pop_density_definition}.
\end{theorem}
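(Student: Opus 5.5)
The statement is a direct specialisation of \cite[Theorem~2.3]{madeira2025existence}, so the plan is simply to check that the hypotheses of that theorem are met by the initial configuration $\boldsymbol\eta^N$ of~\eqref{Paper02_initial_condition}, uniformly in $N$, and then read off the bound.

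\textbf{Step 1: a uniform bound on the initial configuration.} By~\eqref{Paper02_initial_condition} and Assumption~\ref{Paper02_assumption_initial_condition}(ii), for every $x\in L_N^{-1}\mathbb Z$ we have
\[
\|\eta^N(x)\|_{\ell_1}\le L_N N\int_{x-\frac12 L_N^{-1}}^{x+\frac12 L_N^{-1}}\|f(y)\|_{\ell_1}\,dy\le N\|f\|_{L_\infty(\mathbb R;\ell_1)}.
\]
Hence $\sup_x\|u^N(0,x)\|_{\ell_1}\le \|f\|_{L_\infty(\mathbb R;\ell_1)}$, uniformly in $N$. Together with~\eqref{Paper02_immediate_estimates_definition_initial_condition}, this also gives $\boldsymbol\eta^N\in\mathcal S^N_0$, so the process is well defined by Theorem~\ref{Paper02_thm_existence_uniqueness_IPS_spatial_muller}.

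\textbf{Step 2: apply the moment theorem.} The cited result asserts that for $q_\pm$ as in Assumption~\ref{Paper02_assumption_polynomials} and $s_k\le 1$, there is a function $\tilde C_p(t,A)$ with
\[
\sup_x\mathbb E\big[\|u^N(t,x)\|_{\ell_1}^p\big]\le \tilde C_p(t,A)
\]
whenever $\sup_x\|u^N(0,x)\|_{\ell_1}\le A$. It is crucial that $\tilde C_p$ is independent of $N$, $L_N$, $m_N$, $\mu$ and the particular sequence $(s_k)$. We take $A=\|f\|_{L_\infty}$ and set $C_p(t):=\max\bigl(1,\sup_{s\le t}\tilde C_p(s,A)\bigr)$. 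This function is non-decreasing and takes values in $[1,\infty)$, which is exactly what is claimed.

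The only point requiring care is confirming that the constant in \cite[Theorem~2.3]{madeira2025existence} depends on the model only through $q_\pm$, $p$, $t$ and the initial sup-bound. In particular it must not depend on the migration rate.

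This is plausible from the method. The correlation-function argument bounds local factorial moments by a comparison with a system in which births are dominated using $s_k\le1$ and $\deg q_+<\deg q_-$, and in which migration only averages the bounds over sites. Taking the supremum over sites, the migration terms therefore cancel.

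If the cited statement is only given for fixed $N$, the plan is to re-run that estimate, tracking constants, to verify this uniformity.
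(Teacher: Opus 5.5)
Your proposal is correct and matches the paper's argument: the paper likewise observes that $\sup_{x}\|\eta^N(x)\|_{\ell_1}\le N\|f\|_{L_\infty(\mathbb R;\ell_1)}$ and then reads the bound off \cite[Theorem~2.3]{madeira2025existence}, which already supplies a constant uniform in $N$, $L_N$, $m_N$, $\mu$ and $(s_k)$. Your heuristic for why the constant does not depend on the migration rate, and your fallback plan of re-running the estimate, are therefore not needed.
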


\subsection{Partial differential equations in $\ell_{1}$} \label{Paper02_PDES_sequence_spaces}

In this subsection, we define $L_{p}$ spaces of $\ell_{1}$-valued functions, and weak and mild solutions to the system of PDEs~\eqref{Paper02_PDE_scaling_limit}. We also state the main results of our companion article~\cite{madeira2025PDE_surfing} that will be used in this paper. Let $(S, \Sigma, \nu)$ be a measure space. We say a function $g = (g_k)_{k \in \mathbb N_0}: S \rightarrow \ell_{1}$ is (Bochner) measurable with respect to $(S, \Sigma, \nu)$ if $g_{k}: S \rightarrow \mathbb{R}$ is measurable for every $k \in \mathbb{N}_{0}$. Note that in this case, $\| g \|_{\ell_{1}}: S \rightarrow [0, \infty)$ is the pointwise limit of a sequence of measurable functions, i.e.~$\| g(x) \|_{\ell_{1}} = \lim_{n \rightarrow \infty} \sum_{k = 0}^{n} \vert g_{k}(x) \vert \; \forall \, x \in S$, and so $\| g \|_{\ell_{1}}$ is also measurable. Following \cite[Chapters 1 and 2]{hytonen2016analysis}, for $r \in [1, \infty]$, we let
\begin{equation*}
    L_{r}(S; \ell_{1}) \defeq \left\{g: S \rightarrow \ell_{1}: \; g \textrm{ is measurable and } \| g \|_{L_r(S;\ell_1)} \defeq \Big \| \, \| g \|_{\ell_{1}} \Big\|_{L_{r}(\nu)} < \infty\right\}.
\end{equation*}
As usual, two functions define the same element of~$L_{r}(S; \ell_{1})$ if they are equal $\nu$-almost everywhere in~$S$. Note that $L_r(S; \ell_1)$ is a Banach space (see e.g.~\cite[Proposition~1.2.29]{hytonen2016analysis}). For $d \in \mathbb N$, for the Lebesgue measure space $(\mathbb{R}^{d}, \mathcal{R}^{(d)}, \lambda)$, for $S \in \mathcal{R}^{(d)}$, we say that a measurable function $g: S \rightarrow \ell_{1}$ is an element of $L_{1,\textrm{loc}}(S; \ell_{1})$ if for all compact sets $\mathcal{K} \subseteq S$, we have
\begin{equation*}
    \int_{\mathcal{K}}  \| g(x) \|_{\ell_{1}} \; dx < \infty.
\end{equation*}

We are now ready to properly define the meaning of a weak solution to the system of PDEs~\eqref{Paper02_PDE_scaling_limit}. We will be thinking of weak solutions in the sense of distributions, rather than in the sense of elements of Sobolev spaces (see~\cite[Chapter~2]{hytonen2016analysis} for the difference between these concepts).

\begin{definition}[Weak solution] \label{Paper02_weak_solution_distributional_sense}
    We say that a measurable function $u \in L_{1, \textrm{loc}}([0, \infty) \times \mathbb{R}; \ell_{1})$ is a weak solution to the system of PDEs~\eqref{Paper02_PDE_scaling_limit} if the following conditions are satisfied:
    \begin{enumerate}[label = (\roman*)]
        \item $u(0,x) = f(x)$ for $\lambda$-almost every $x \in \mathbb{R}$.
        \item $u(T, \cdot) \in L_{1,\textrm{loc}}(\mathbb{R}; \ell_{1})$ for all $T \geq 0$.
        \item $F(u): [0,\infty) \times \mathbb{R} \rightarrow \ell_{1}$ is an element of $L_{1, \textrm{loc}}([0,\infty) \times \mathbb{R}; \ell_{1})$, where $F = (F_{k})_{k \in \mathbb{N}_{0}}$ is the reaction term defined in~\eqref{Paper02_reaction_term_PDE}.
        \item For any $\varphi \in \mathcal{C}^{1,2}_c([0,\infty) \times \mathbb R; \mathbb R)$, i.e.~for any continuous function $\varphi: [0,\infty) \times \mathbb{R} \rightarrow \mathbb{R}$ with compact support, continuously differentiable in time and twice continuously differentiable in space, and any $T > 0$,
   \begin{equation} \label{Paper02_coordinate_wise_integral}
    \begin{aligned}
             & \int_{\mathbb{R}} u(T,x) \varphi(T,x) \, dx \\
         & \quad =  \int_{\mathbb{R}} u(0,x) \varphi(0,x) \, dx + \int_{0}^{T} \int_{\mathbb{R}} u(t,x) \partial_{t} \varphi(t,x) \, dx \, dt
        \\ & \qquad + \int_{0}^{T} \int_{\mathbb{R}} \frac{m}{2} u(t,x) \Laplace \varphi(t,x) \, dx \, dt + \int_{0}^{T} \int_{\mathbb{R}} F(u(t,x)) \varphi(t,x) \, dx \, dt.
    \end{aligned}
    \end{equation}
    \end{enumerate}
\end{definition}

Note that~\eqref{Paper02_coordinate_wise_integral} holds if and only if it holds in a coordinate-wise manner, i.e.~if and only if for all $k \in \mathbb{N}_{0}$,
\begin{equation} \label{Paper02_coordinate_wise_weak_sol}
\begin{aligned}
     & \int_{\mathbb{R}} u_{k}(T,x) \varphi(T,x) \, dx \\ 
     & \quad = \int_{\mathbb{R}} u_{k}(0,x) \varphi(0,x) \, dx + \int_{0}^{T} \int_{\mathbb{R}} u_{k}(t,x) \partial_{t} \varphi(t,x) \, dx \, dt
     \\ & \qquad + \int_{0}^{T} \int_{\mathbb{R}} \frac{m}{2} u_{k}(t,x) \Laplace \varphi(t,x) \, dx \, dt + \int_{0}^{T} \int_{\mathbb{R}} F_{k}(u(t,x)) \varphi(t,x) \, dx \, dt.
\end{aligned}
\end{equation}
We refer the reader to~\cite[Chapter~1]{hytonen2016analysis} for a proof of this equivalence.

We will also need the definition of mild solutions to the system of PDEs~\eqref{Paper02_PDE_scaling_limit}. Let $\{P_{t}\}_{t \geq 0}$ denote the semigroup of Brownian motion run at speed $m$. The following notation will be useful: for $t > 0$ and $x \in \mathbb R$, we let
\begin{equation}
\label{Paper02_Gaussian_kernel}
p(t,x) \defeq \frac{1}{\sqrt{2\pi mt}} e^{-x^2/(2mt)}.
\end{equation}
Analogously to the definition of the action of $\{P_{t}\}_{t \geq 0}$ on real-valued functions, with a slight abuse of notation we can define its action on $\ell_{1}$-valued functions as follows. Let $v: \mathbb{R} \rightarrow \ell_{1}$ be a measurable function. For all $(t,x) \in [0, \infty) \times \mathbb R$ such that $(P_{t}\| v \|_{\ell_{1}}) (x) < \infty$,
we can define
\begin{equation}
\label{Paper02_semigroup_BM_action_ell_1_functions}
    \left(P_{t}v\right)(x) \defeq \left\{\begin{array}{lc}
        \displaystyle \int_{\mathbb{R}} p(t,x-y) v(y) \; dy & \textrm{ if } t > 0, \\
        v(x) & \textrm{ if } t = 0.
    \end{array}\right.
\end{equation}

\begin{definition}[Mild solution] \label{Paper02_definition_mild_solution}
    We say that a measurable function $u: [0,\infty) \times \mathbb{R} \rightarrow \ell_{1}$ is a (global) mild solution to the system of PDEs~\eqref{Paper02_PDE_scaling_limit} if and only if the following conditions are satisfied:
    \begin{enumerate}[label = (\roman*)]
    \item For $\lambda$-almost every $(T,x) \in [0,\infty) \times \mathbb{R}$, $\left(P_{T-t}\| F(u(t,\cdot)) \|_{\ell_{1}}\right) (x) < \infty$ for $\lambda$-almost every $t \in [0,T]$.
    \item For $\lambda$-almost every $(T,x) \in [0,\infty) \times \mathbb{R}$,
    \begin{equation} \label{Paper02_mild_solution_weaker_version}
        u(T,x) = \left(P_{T} f\right)(x) + \int_{0}^{T} \Big(P_{T-t} F(u(t,\cdot))\Big)(x) \, dt.
    \end{equation}
    \end{enumerate}
\end{definition}

\begin{remark} \label{Paper02_remark_general_mild_sol}
Although our definition of a mild solution is compatible with the integral formulations used in the $L_p$-regularity literature (see e.g.~\cite[Chapter 5]{pruss2016moving}), classical semigroup approaches to deterministic PDEs typically require mild solutions to be continuous in time and to satisfy the variation-of-constants formula~\eqref{Paper02_mild_solution_weaker_version} pointwise in time (see e.g.~\cite[Section 4.1]{pazy2012semigroups}). In contrast, we work with a weaker formulation in which~\eqref{Paper02_mild_solution_weaker_version} holds almost everywhere in time and in space. This choice reflects the fact that our construction of solutions takes place in $L_p$-based function spaces, where time and space continuity is not available \textit{a priori}.
\end{remark}

We show in~\cite[Lemma~5.4]{madeira2025PDE_surfing} that, under mild assumptions, a mild solution to the system of PDEs~\eqref{Paper02_PDE_scaling_limit} is also a weak solution. Since we will use this result in this article, we state it for ease of reference.

\begin{lemma} \label{Paper02_lem_equiv_mild_weak_sol}
Let $u: [0, \infty) \times \mathbb R \rightarrow \ell_1^+$ denote a measurable function. Suppose that $u$ is a mild solution to the system of PDEs~\eqref{Paper02_PDE_scaling_limit} in the sense of Definition~\ref{Paper02_definition_mild_solution}, and that it satisfies the following conditions:
\begin{enumerate}[label = (\roman*)]
    \item $u(0,x) = f(x)$ for $\lambda$-almost every $x \in \mathbb R$.
    \item For all $T > 0$, $u \big|_{[0,T] \times \mathbb R} \in L_{\infty}([0,T] \times \mathbb R; \ell_1)$,
    where $ u \big|_{[0,T] \times \mathbb R}$ denotes the restriction of $u$ to $[0,T] \times \mathbb R$.
    \item For all $T > 0$, $u$ satisfies identity~\eqref{Paper02_mild_solution_weaker_version} for $\lambda$-almost every $x \in \mathbb R$.
\end{enumerate}
Then $u$ is a weak solution to the system of PDEs~\eqref{Paper02_PDE_scaling_limit} in the sense of Definition~\ref{Paper02_weak_solution_distributional_sense}.
\end{lemma}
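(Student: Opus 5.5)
We have to prove Lemma~\ref{Paper02_lem_equiv_mild_weak_sol}: a mild solution that is bounded and satisfies the Duhamel identity for every $T$ and a.e.\ $x$ is a weak solution. Since~\eqref{Paper02_coordinate_wise_integral} is equivalent to the coordinate-wise identities~\eqref{Paper02_coordinate_wise_weak_sol}, we work with a fixed $k \in \mathbb N_0$ throughout.

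\textbf{Integrability conditions.} Condition~(i) of Definition~\ref{Paper02_weak_solution_distributional_sense} is hypothesis~(i). Let $M_T \defeq \| u|_{[0,T]\times\mathbb R}\|_{L_\infty(\ell_1)}$. Then $u \in L_{1,\mathrm{loc}}$ follows from~(ii). For each $T$, hypothesis~(iii) together with boundedness of $P_T f$ and of the Duhamel integral gives $\|u(T,\cdot)\|_{\ell_1} \le C(T)$ for a.e.\ $x$, so Definition~\ref{Paper02_weak_solution_distributional_sense}(ii) holds. Since $s_k \le 1$, we have
\[
\|F(u)\|_{\ell_1} \le \big(2q_+(\|u\|_{\ell_1}) + q_-(\|u\|_{\ell_1})\big)\|u\|_{\ell_1},
\]
and the right-hand side is bounded on $[0,T]\times\mathbb R$ because the polynomials are bounded on $[0,M_T]$. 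This gives Definition~\ref{Paper02_weak_solution_distributional_sense}(iii), and it also makes all semigroup integrals absolutely convergent.

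\textbf{The identity.} Write $g(t,x) \defeq F_k(u(t,x))$, which is bounded on $[0,T]\times\mathbb R$, and fix $\varphi \in \mathcal C^{1,2}_c$. For each $t$, use the mild formula $u_k(t,\cdot) = P_t f_k + \int_0^t P_{t-\tau} g(\tau)\,d\tau$ a.e.\ in $x$, valid for every $t$ by hypothesis~(iii). Substitute it into the three terms of the right-hand side of~\eqref{Paper02_coordinate_wise_weak_sol}. By symmetry of the heat kernel, $\int (P_s h)\psi = \int h\, P_s\psi$. The key computation is the classical one: for the bounded measurable $h$ and $\psi = \varphi$,
\[
\frac{d}{dt}\int h(x)\,(P_{t-\tau}\varphi(t,\cdot))(x)\,dx = \int h\,\Big(P_{t-\tau}\big(\partial_t\varphi + \tfrac m2\Laplace\varphi\big)(t,\cdot)\Big)\,dx
\]
for $t > \tau$. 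This holds because $\partial_s P_s\psi = \tfrac m2 \Laplace P_s \psi = P_s \tfrac m2\Laplace\psi$ for $\psi \in \mathcal C^2_c$, and it extends to the $t$-dependence of $\varphi$ by the chain rule. Integrating in $t$ from $\tau$ to $T$ gives
\[
\int h\, P_{T-\tau}\varphi(T) - \int h\,\varphi(\tau) = \int_\tau^T \int h\,P_{t-\tau}\big(\partial_t\varphi + \tfrac m2\Laplace\varphi\big)(t)\,dx\,dt.
\]

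\textbf{Assembling the terms.} Apply this identity with $h = f_k$ and $\tau = 0$, and with $h = g(\tau,\cdot)$ for each $\tau$. Integrate the latter over $\tau \in [0,T]$ and use Fubini to swap $d\tau$ and $dt$; this is legitimate since the integrands are bounded by $C\,\|P(|\partial_t\varphi|+|\Laplace\varphi|)\|_{L_1} \le C'$ by $L_1$-contractivity of $P$. Summing the two results and recognising the mild formula for $u_k(t)$ and $u_k(T)$ inside gives exactly
\[
\int u_k(T)\varphi(T) - \int f_k\varphi(0) - \int_0^T\!\!\int g\,\varphi = \int_0^T\!\!\int u_k\big(\partial_t\varphi + \tfrac m2 \Laplace\varphi\big),
\]
which is~\eqref{Paper02_coordinate_wise_weak_sol}, using $u_k(0)=f_k$.

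\textbf{Main obstacle.} The only delicate point is justifying differentiation under the integral near $t = \tau$ and the measurability of $\tau \mapsto g(\tau,\cdot)$ needed for Fubini. The first is handled by working with the integrated form directly: $P_{s}\psi - \psi = \int_0^s P_r \tfrac m2\Laplace\psi\,dr$ holds pointwise for $\psi \in \mathcal C^2_c$, so no derivative at $s = 0$ is ever taken. The second follows from joint measurability of $u$, hence of $g$, on $[0,T]\times\mathbb R$.
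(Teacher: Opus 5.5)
Your proof is correct. This paper does not prove the lemma itself; it imports it as Lemma~5.4 of \cite{madeira2025PDE_surfing}, so there is no in-text proof to compare against. Your route is the standard duality argument, and each step you list is justified:
\begin{itemize}
\item substitute the Duhamel formula into each term of~\eqref{Paper02_coordinate_wise_weak_sol};
\item move the heat semigroup onto $\varphi$ using symmetry of the kernel;
\item use $\partial_s P_s\psi = P_s\tfrac m2\Laplace\psi$ in integrated form, pointwise in $x$, so that no derivative at $s=0$ is needed;
\item exchange the $\tau$-, $t$- and $x$-integrals by Fubini.
\end{itemize}
The Fubini step is dominated using boundedness of $F_k(u)$ on $[0,T]\times\mathbb R$ together with $L_1$-contractivity of $P$. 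Since $h$ is bounded and $\|P_{t-\tau}(\partial_t\varphi+\tfrac m2\Laplace\varphi)(t)\|_{L_1}$ is uniformly bounded, this already gives absolute integrability.

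Three small bookkeeping points to tidy up:
\begin{itemize}
\item Definition~\ref{Paper02_weak_solution_distributional_sense}(ii) at $T=0$ follows from hypothesis~(i), not~(iii).
\item That case, and your bound on $P_T f$, both use $f\in L_\infty(\mathbb R;\ell_1)$ (Assumption~\ref{Paper02_assumption_initial_condition}(ii)), which is implicit in the statement.
\item The bound $\|F(u(\tau,\cdot))\|_{\ell_1}\le C$ holds only for almost every $\tau$, by Fubini applied to the $L_\infty$ bound on $[0,T]\times\mathbb R$. This is all that the Duhamel integral and your choice $h=g(\tau,\cdot)$ require.
\end{itemize}
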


We prove uniqueness of mild solutions to the system of PDEs~\eqref{Paper02_PDE_scaling_limit} in~\cite[Proposition~6.1]{madeira2025PDE_surfing} using standard PDE techniques. Since this result will be used in the proof of Theorem~\ref{Paper02_deterministic_scaling_foutel_rodier_etheridge}, we state it below for ease of reference.
The function space $L_{r}([0,T] \times \mathbb{R}, \hat{\lambda}; \ell_{1})$ mentioned in Section~\ref{Paper02_subsection_heuristics} above is defined in
\eqref{Paper02_measure_time_space_box_i}-\eqref{Paper02_functional_space_L1_l1} below.

\begin{proposition} \label{Paper02_thm_uniqueness_weak_solutions}
    Suppose $m > 0$, $\mu \in [0,1]$, $(s_k)_{k \in \mathbb N_0}$ satisfies Assumption~\ref{Paper02_assumption_fitness_sequence}, $q_+,q_-: [0, \infty) \rightarrow [0, \infty)$ satisfy Assumption~\ref{Paper02_assumption_polynomials}, and $f \in L_{\infty}(\mathbb R; \ell_1)$ satisfies Assumption~\ref{Paper02_assumption_initial_condition}. For~$T > 0$, let $v^{(1)}, v^{(2)} \in L_{4\deg q_{-}}([0,T] \times \mathbb{R}, \hat{\lambda}; \ell_{1})$ be $\lambda$-almost everywhere non-negative mild solutions to the system of PDEs~\eqref{Paper02_PDE_scaling_limit}. 
    Then~$v^{(2)}$ is a version of~$v^{(1)}$ in~$L_{4\deg q_-}([0,T] \times \mathbb{R}, \hat{\lambda}; \ell_{1})$, i.e.~$v^{(1)}$ and $v^{(2)}$ satisfy 
    \begin{equation*}
    \| v^{(1)} - v^{(2)} \|_{{L}_{4\deg q_{-}}([0,T] \times \mathbb{R}, \hat \lambda; \ell_{1})} = 0.
    \end{equation*}
\end{proposition}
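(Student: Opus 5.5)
The plan is to prove Proposition~\ref{Paper02_thm_uniqueness_weak_solutions} with a Gronwall-type estimate on the difference $w \defeq v^{(1)} - v^{(2)}$. Because $F$ is only locally Lipschitz and the solutions are not assumed bounded, the estimate has to be localised through a truncation (stopping-type) argument.

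\textbf{Step 1: a pointwise bound on $F(v^{(1)}) - F(v^{(2)})$.} For $u, v \in \ell_1^+$, write $a=\|u\|_{\ell_1}$ and $b=\|v\|_{\ell_1}$. Using $s_k\le 1$ and the explicit form~\eqref{Paper02_reaction_term_PDE}, I will show
\[
\|F(u)-F(v)\|_{\ell_1} \lesssim \bigl(1+a+b\bigr)^{\deg q_-}\,\|u-v\|_{\ell_1}.
\]
To get this, expand $q_\pm(a)u_k - q_\pm(b)v_k = q_\pm(a)(u_k-v_k) + (q_\pm(a)-q_\pm(b))v_k$. Then sum over $k$, and use the polynomial mean value bound $|q(a)-q(b)|\lesssim (1+a+b)^{\deg q-1}|a-b|$ together with $|a-b|\le\|u-v\|_{\ell_1}$. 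The same computation gives $\|F(u)\|_{\ell_1}\lesssim (1+a)^{\deg q_-+1}$.

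\textbf{Step 2: the difference equation.} From~\eqref{Paper02_mild_solution_weaker_version}, for a.e.\ $(T',x)$,
\[
\|w(T',x)\|_{\ell_1} \le \int_0^{T'} P_{T'-t}\bigl(G(t,\cdot)\,\|w(t,\cdot)\|_{\ell_1}\bigr)(x)\,dt,
\qquad G \defeq C\bigl(1+\|v^{(1)}\|_{\ell_1}+\|v^{(2)}\|_{\ell_1}\bigr)^{\deg q_-}.
\]
By hypothesis, $G\in L_4([0,T]\times\mathbb R,\hat\lambda)$ and $\|w\|_{\ell_1}\in L_{4\deg q_-}(\hat\lambda)$. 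Condition~(i) of the mild solution definition ensures the integrals are finite a.e.

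\textbf{Step 3: closing the estimate (the main obstacle).} The difficulty is that $G$ is only integrable, not bounded, so a naive sup-norm Gronwall fails. My plan is to work in a weighted $L_1$ norm in space, namely $\int \|w(t,x)\|_{\ell_1}(1+|x|^2)^{-1}dx$, and use that $P_s$ preserves this weight up to a constant, i.e.\ $P_s[(1+|\cdot|^2)^{-1}]\lesssim_T (1+|\cdot|^2)^{-1}$.

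To handle the unbounded factor $G$, split according to $\{G\le M\}$ and $\{G>M\}$.
\begin{itemize}
\item On $\{G\le M\}$, the contribution to $\phi(t)\defeq\int\|w(t,x)\|_{\ell_1}(1+|x|^2)^{-1}dx$ is at most $CM\int_0^t\phi(s)\,ds$.
\item On $\{G>M\}$, apply H\"older in $\hat\lambda$ together with Markov's inequality. Since $\|w\|_{\ell_1}\in L_{4\deg q_-}$ and $G\in L_4$, the remainder is bounded by $\varepsilon(M) \defeq \|G\mathds 1_{\{G>M\}}\|_{L_{p'}}\|w\|_{L_{p}}$ for a suitable pair of conjugate exponents, and $\varepsilon(M)\to0$ as $M\to\infty$. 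The exponent $4\deg q_-$ is exactly what makes $G\|w\|_{\ell_1}\in L_1(\hat\lambda)$ hold with room to spare. If the straightforward H\"older split does not close, I would instead bound $\|w\|$ on $\{G>M\}$ by $G/C$ and use the integrability of $G^{1+1/\deg q_-}$.
\end{itemize}
Gronwall then gives $\int_0^T\phi \le \varepsilon(M)\,T e^{CMT}$. This is useless as it stands, because $e^{CMT}$ grows with $M$. The fix is to iterate on short time intervals, or better, to replace the hard cutoff with a multiplicative weight $e^{-\int_0^t \bar G}$.

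If that remains awkward, the alternative is to first upgrade both solutions to $L_\infty([0,T]\times\mathbb R;\ell_1)$. The route would be: compare with the scalar mild solution for $\|v\|_{\ell_1}$, bound the scalar solution above by $\max(\|f\|_\infty, U^*)$ via the comparison principle (using $\deg q_+<\deg q_-$), and justify the comparison through a bootstrap in $L_p$ spaces with the heat kernel's smoothing $L_p\to L_\infty$ for short times. With both solutions bounded, $G$ is bounded and the standard Gronwall argument directly gives $w=0$ a.e. I expect this $L_\infty$ upgrade to be the real crux.
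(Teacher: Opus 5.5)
The paper does not prove this statement here; it imports it from the companion article~\cite[Proposition~6.1]{madeira2025PDE_surfing}. I therefore judge your argument on its own merits. Your Steps~1 and~2 are correct, but the proposal does not yet prove the proposition.

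The main line of Step~3 does not close, as you note yourself, and the repairs you suggest do not fix it. Iterating on short time intervals does not help. The error $\varepsilon(M)$ is additive rather than proportional to $\phi$, so it reappears on every subinterval. To conclude you would need $\varepsilon(M)e^{CMT}\to0$, i.e.\ $\|G\mathds 1_{\{G>M\}}\|$ decaying faster than $e^{-CMT}$. That is an exponential-type integrability of $G$, which polynomial integrability such as $G\in L_4(\hat\lambda)$ does not provide. A purely time-dependent weight $e^{-\int_0^t\bar G}$ is also unavailable, since $\sup_x G(t,x)$ need not be finite.

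Everything therefore hinges on the $L_\infty$ upgrade, which you leave as ``the real crux''. The route you sketch for it is problematic on two counts.
\begin{itemize}
\item $\|v\|_{\ell_1}$ does not satisfy a closed scalar equation, because the term $\sum_k s_kv_k$ appears. You would need a comparison principle for mild \emph{sub}solutions known only to lie in $L_{4\deg q_-}(\hat\lambda)$, which is a statement of the same nature as the uniqueness you are proving.
\item Heat-kernel smoothing applied to the two-sided bound $|\sum_kF_k(v)|\lesssim(1+\|v\|_{\ell_1})^{\deg q_-+1}$ does not by itself give a bound uniform in $x$. The integrability is only with respect to the weighted measure $\hat\lambda$.
\end{itemize}

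The missing idea is that the upgrade follows in one line from the sign structure of $F$, with no comparison principle. For $v\in\ell_1^+$ with $U\defeq\|v\|_{\ell_1}$, one has $\sum_k\bigl(s_k(1-\mu)v_k+\mathds 1_{\{k\ge1\}}s_{k-1}\mu v_{k-1}\bigr)=\sum_ks_kv_k\le U$ since $s_k\le1$. Hence
\[
\sum_{k=0}^\infty F_k(v)\le U\bigl(q_+(U)-q_-(U)\bigr)\le c_0\defeq\sup_{U'\ge0}U'\bigl(q_+(U')-q_-(U')\bigr)<\infty,
\]
where $c_0$ is finite because $\deg q_+<\deg q_-$ and $q_-$ has positive leading coefficient.

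Now sum the mild identity over $k$, using $v^{(i)}\ge0$. Interchanging the sum with the time and space integrals is justified because $\|F(v)\|_{\ell_1}\lesssim(1+\|v\|_{\ell_1})^{\deg q_-+1}\in L_1(\hat\lambda)$ and $P_s[(1+|\cdot|^2)^{-1}]\lesssim_T(1+|\cdot|^2)^{-1}$. Together these give $\int_0^tP_{t-s}\|F(v(s,\cdot))\|_{\ell_1}(x)\,ds<\infty$ for a.e.\ $(t,x)$. Since $P_t$ is positive and preserves constants, you obtain
\[
\|v^{(i)}(t,x)\|_{\ell_1}\le\|f\|_{L_\infty(\mathbb R;\ell_1)}+c_0t \quad \text{for $\lambda$-a.e.\ $(t,x)\in[0,T]\times\mathbb R$.}
\]
Consequently $G$ is bounded by a constant $\bar G$. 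Your weighted estimate becomes $\phi(t)\le C_T\bar G\int_0^t\phi(s)\,ds$ for a.e.\ $t$, with $\phi$ bounded. Gronwall then gives $\phi=0$ a.e., i.e.\ $v^{(1)}=v^{(2)}$ $\hat\lambda$-a.e., which is exactly the claim.
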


We also establish in~\cite[Proposition~6.2]{madeira2025PDE_surfing} the following regularity property of mild solutions to the system of PDEs~\eqref{Paper02_PDE_scaling_limit}, which will also be used in the proof of Theorem~\ref{Paper02_deterministic_scaling_foutel_rodier_etheridge}.

\begin{proposition} \label{Paper02_smoothness_mild_solution}
    Under the assumptions of Proposition~\ref{Paper02_thm_uniqueness_weak_solutions}, there exists a unique function $v = (v_k)_{k \in \mathbb N_0}: [0, \infty) \times \mathbb R \rightarrow \ell_1^+$ such that $v \in \mathcal C((0,\infty) \times \mathbb  R; \ell_1^+)$ and satisfies the following conditions:
    \begin{enumerate}[label = (\roman*)]
       \item $v = (v_k)_{k \in \mathbb N_0}$ is a global non-negative mild solution to the system of PDEs~\eqref{Paper02_PDE_scaling_limit} in the sense of Definition~\ref{Paper02_definition_mild_solution}, and it satisfies~\eqref{Paper02_mild_solution_weaker_version} for all $(t,x) \in [0,\infty) \times \mathbb R$, i.e.~for any $k \in \mathbb N_0$ and~$(t,x) \in [0, \infty) \times \mathbb R$,
       \begin{equation} \label{Paper02_coordinate_wise_strong_duhamel}
           v_k(t,x) = (P_tf_k)(x) + \int_0^t \Big(P_{t-\tau} F_k(v(\tau, \cdot))\Big)(x) \, d\tau.
       \end{equation}
        \item For all $T > 0$,
        \[
        v \vert_{[0,T] \times \mathbb R} \in L_{4\deg q_-}([0,T] \times \mathbb R, \hat \lambda; \ell_1) \cap L_\infty([0,T] \times \mathbb R; \ell_1).
        \]
        \item For all $T > 0$, $\sup_{t \in [0,T]} \; \| {v}(t, \cdot) \|_{L_{\infty}(\mathbb{R}; \ell_{1})} < \infty$.
        \end{enumerate}
        Moreover, the unique function $v = (v_k)_{k \in \mathbb N_0}: [0, \infty) \times \mathbb R \rightarrow \ell_1^+$ satisfies the following conditions:
        \begin{enumerate}
        \item[(iv)] $v_k \in \mathcal{C}^{1,2}((0, \infty) \times \mathbb R; \mathbb R)$ for every $k \in \mathbb N_0$.
        \item[(v)] The map $(0, \infty) \times \mathbb R \ni (t,x) \mapsto \| v(t,x) \|_{\ell_1}$ is in $\mathcal C^{1,2}((0,\infty) \times \mathbb R; \mathbb R)$.
    \end{enumerate}
\end{proposition}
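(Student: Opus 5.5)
The approach is to build $v$ by a Picard/fixed-point argument in $L_\infty$, upgrade local existence to global existence with an a priori $\ell_1$ bound, and then bootstrap regularity from the Duhamel formula~\eqref{Paper02_coordinate_wise_strong_duhamel}.

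\textbf{Step 1: local existence and positivity.} On the ball $\{u:\|u\|_{L_\infty(\mathbb R;\ell_1)}\le R\}$ the map $F$ from~\eqref{Paper02_reaction_term_PDE} is Lipschitz from $\ell_1^+$ to $\ell_1$, with constant $C_R$ depending only on $q_\pm$ on $[0,R]$; this uses $s_k\le 1$ from Assumption~\ref{Paper02_assumption_fitness_sequence}.
\begin{itemize}
\item To keep iterates non-negative, I would add and subtract $\Lambda u_k$ with $\Lambda\ge \max_{[0,R]}q_-$. This makes the modified reaction $F_k(u)+\Lambda u_k$ non-negative on non-negative inputs.
\item I then iterate the Duhamel map with kernel $e^{-\Lambda(t-\tau)}P_{t-\tau}$.
\item Banach's fixed point theorem in $\mathcal C([0,\tau_0];L_\infty(\mathbb R;\ell_1))$ (or in $L_\infty$ in time) gives a unique non-negative local mild solution on a short interval. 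It satisfies~\eqref{Paper02_coordinate_wise_strong_duhamel} pointwise, because every term is defined pointwise through the heat kernel.
\end{itemize}

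\textbf{Step 2: global a priori bound, giving (iii).} Summing over $k$ and using $s_k\le1$ gives, for $U=\|v\|_{\ell_1}$,
\[
\textstyle\sum_k F_k(v)\le U\big(q_+(U)-q_-(U)\big).
\]
Since $\deg q_+<\deg q_-$, there is $U^*$ such that the right-hand side is negative for $U>U^*$. A comparison argument on the mild formulation of $U$ then shows $U\le \max(\|f\|_{L_\infty(\mathbb R;\ell_1)},U^*)$ for all times. I would implement the comparison by applying the positivity trick of Step 1 to $(U-M)^+$, i.e.\ a maximum principle for the mild equation with a Lipschitz-truncated nonlinearity.

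This bound does not depend on the existence interval, so the local solution extends to all times. Property (ii) then follows, because the weight $(1+|x|^2)^{-1}\mathds 1_{[0,T]}$ is integrable. Uniqueness among functions satisfying (i)--(iii) follows from Gronwall's inequality in $L_\infty(\mathbb R;\ell_1)$, using the local Lipschitz bound on the ball of radius $\max(\|f\|,U^*)$. Alternatively it follows from Proposition~\ref{Paper02_thm_uniqueness_weak_solutions}.

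\textbf{Step 3: continuity in $\ell_1$ and regularity (iv)--(v).} Continuity of each $v_k$ on $(0,\infty)\times\mathbb R$ is standard: $P_tf_k$ is smooth for $t>0$, and the Duhamel integral of a bounded function is Hölder continuous. To obtain $\ell_1$-valued continuity I need uniform tail control, namely
\[
\lim_{K\to\infty}\sup_{t\le T,\,x}\sum_{j\ge K}v_j(t,x)=0 .
\]
I would obtain it from Assumption~\ref{Paper02_assumption_initial_condition}(iii) and $s_k\to0$. The tail $W_K=\sum_{j\ge K}v_j$ satisfies the mild inequality
\[
W_K\le P_t\Big(\sum_{j\ge K}f_j\Big)+\int_0^t P_{t-\tau}\Big(q_+(U)\big(s_{K-1}\,v_{K-1}+s_K W_K\big)\Big)\,d\tau .
\]
Together with the bound on $U$ and Gronwall's inequality, this gives $W_K\lesssim_T \sup_x\sum_{j\ge K}f_j+s_{K-1}$, which tends to $0$.

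For (iv), I bootstrap using the fact that the reaction term is bounded:
\begin{itemize}
\item the Duhamel term is $\mathcal C^{1+\alpha}$ in $x$ and Hölder in $t$;
\item hence $F_k(v)$ is locally Hölder in $(t,x)$, which also uses the uniform tails so that $\|v\|_{\ell_1}$ is Hölder;
\item classical Schauder estimates for the heat potential then give $v_k\in\mathcal C^{1,2}$.
\end{itemize}
For (v), I apply the same argument to $U$. It solves $\partial_tU=\tfrac m2\Laplace U+q_+(U)\sum_k s_kv_k-q_-(U)U$, whose reaction term is Hölder continuous by the tail control.

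\textbf{Main obstacle.} I expect the hardest step to be the rigorous comparison in Step 2, together with the uniform-in-$k$ tail estimate needed to upgrade coordinatewise statements to $\ell_1$-valued continuity.
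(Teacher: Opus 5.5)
The paper does not prove this proposition; it quotes it from the companion article~\cite{madeira2025PDE_surfing} (Proposition~6.2 there), so there is no in-paper argument to compare yours with. Your route is the standard one and works: Picard iteration with a $\Lambda$-shift to keep iterates in $\ell_1^+$, a mild comparison for $U=\|v\|_{\ell_1}$ based on $\sum_kF_k(v)\le U(q_+(U)-q_-(U))$, Gronwall for uniqueness, then a Schauder bootstrap.

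Two small corrections in Steps~1--2:
\begin{itemize}
\item $\mathcal C([0,\tau_0];L_\infty(\mathbb R;\ell_1))$ is not a usable space. The map $t\mapsto P_tf$ is not continuous at $t=0$ in $L_\infty$ when $f$ has jumps, so work with bounded measurable functions in time instead.
\item In the comparison, apply the $\Lambda$-shift to the exact Duhamel identity for $D=U-M$, not to an inequality, since variation-of-constants rewritings do not transfer to mild inequalities. Set $h(U)=U(q_+(U)-q_-(U))$. The source of that identity can be written as $cD-r$ with $c$ bounded and $r\ge0$; here $r$ absorbs both $-h(M)\ge0$ and the defect $h(U)-\sum_kF_k(v)\ge0$. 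After the shift, $D\le 2\Lambda\int_0^tP_{t-\tau}D^+\,d\tau$, and Gronwall gives $D^+\equiv0$.
\end{itemize}

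The step whose justification fails is in Step~3. Uniform tail control plus coordinatewise H\"older bounds does not make $\|v\|_{\ell_1}$ or $\sum_ks_kv_k$ H\"older. It only yields a modulus of the form $\inf_K\big(KC|z-z'|^\alpha+2\sup\sum_{j\ge K}v_j\big)$. Its rate is governed by how slowly $s_K\to0$ and how slowly the tails of $f$ decay, so in general it is not H\"older, or even Dini. Yet H\"older continuity of the reaction terms is exactly what Schauder needs for (iv) and (v).

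The repair uses only what you already have: the heat-kernel bounds can be summed over $k$ by Tonelli. For example,
\[
\|v(t,x)-v(t,x')\|_{\ell_1}\le \|f\|_{L_\infty(\mathbb R;\ell_1)}\,\rho_t(x,x')+\sup_{0<\tau\le t,\,y\in\mathbb R}\|F(v(\tau,y))\|_{\ell_1}\int_0^t\rho_s(x,x')\,ds,
\]
where $\rho_s(x,x')\defeq\|p(s,x-\cdot)-p(s,x'-\cdot)\|_{L_1(\mathbb R)}$, and the analogous bound holds in time. So $v$ is H\"older as an $\ell_1$-valued map on $[t_0,T]\times\mathbb R$ for every $t_0>0$. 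Consequently:
\begin{itemize}
\item $U$ and $\sum_ks_kv_k$ are H\"older. Equivalently, each satisfies its own Duhamel formula with a bounded source.
\item The reaction terms for $v_k$ and for $U$ are H\"older, so Schauder applies and gives (iv) and (v).
\item $\ell_1$-valued continuity follows directly, so your tail estimate, which is correct, is not actually needed.
\end{itemize}
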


We 
refer to the function~$v = (v_k)_{k \in \mathbb N_0}: [0, \infty) \times \mathbb R \rightarrow \ell_1^+$ given in Proposition~\ref{Paper02_smoothness_mild_solution} as the \emph{continuous mild solution} to the system of PDEs~\eqref{Paper02_PDE_scaling_limit}. The next result, which can be found in~\cite[Theorem~2.7]{madeira2025PDE_surfing}, describes the asymptotic behaviour of the continuous mild solution to the system of PDEs~\eqref{Paper02_PDE_scaling_limit} when the reaction term in~\eqref{Paper02_reaction_term_PDE} is monostable in the sense of Definition~\ref{Paper02_assumption_monostable_condition}, and will be used in the proof of Theorem~\ref{Paper02_long_time_behaviour_IPS}.

\begin{theorem} \label{Paper02_prop_control_proportions}
    Suppose that $(s_k)_{k \in \mathbb N_0}$, $q_+$ and $q_-$ satisfy Assumptions~\ref{Paper02_assumption_fitness_sequence} and~\ref{Paper02_assumption_polynomials}, that $\mu \in (0,1)$, that $m > 0$, that the reaction term $F = (F_{k})_{k \in \mathbb{N}_{0}}$ defined in~\eqref{Paper02_reaction_term_PDE} is monostable in the sense of Definition~\ref{Paper02_assumption_monostable_condition},  and that $f$ satisfies Assumption~\ref{Paper02_assumption_initial_condition_modified}. Let $u = (u_{k})_{k \in \mathbb{N}_{0}}$ be the unique continuous mild solution to the system of PDEs~\eqref{Paper02_PDE_scaling_limit}. Define $(\alpha_k)_{k \in \mathbb N_0}$ as in~\eqref{Paper02_definition_alpha_k}, and $\mathfrak{Q}_{\min}$,~$\mathfrak{Q}_{\max}$ as in~\eqref{Paper02_definition_max_min_birth_polynomial}. Then, for all $x \in \mathbb R$ and $T,y \in (0, \infty)$,
    \[
    \int_{x - y}^{x+y} u_0(T,z) \, dz > 0. 
    \]
    Moreover, there exist functions $\underline{\pi} = \left(\underline{\pi}_{k}\right)_{k \in \mathbb{N}_0}: [0, \infty) \rightarrow \ell_1^+$ and $\overline{\pi} = \left(\overline{\pi}_{k}\right)_{k \in \mathbb{N}_0}: [0,\infty) \rightarrow \ell_{1}^{+}$ such that the following limits hold in $\ell_1$:
    \begin{equation*}
        \lim_{T \rightarrow \infty} \underline{\pi}(T) = \bigg(\alpha_{k}\left(\frac{\mathfrak{Q}_{\min}}{\mathfrak{Q}_{\max}}\right)^{k}\bigg)_{k \in \mathbb{N}_0} \quad \textrm{ and } \quad  \lim_{T \rightarrow \infty} \overline{\pi}(T) = \bigg(\alpha_{k}\left(\frac{\mathfrak{Q}_{\max}}{\mathfrak{Q}_{\min}}\right)^{k}\bigg)_{k \in \mathbb{N}_0},
    \end{equation*}
    and such that 
    for all $k \in \mathbb{N}$, $T > 0$ and $x \in \mathbb{R}$, 
    \begin{equation*} 
        \underline{\pi}_{k}(T) u_{0}(T,x) \leq u_{k}(T,x) \leq \overline{\pi}_{k}(T) u_{0}(T,x).
    \end{equation*}
\end{theorem}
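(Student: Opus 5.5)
The plan is to prove the statement in three stages. First, establish a priori bounds on the total mass and positivity of $u_0$. Second, prove the comparison $\underline\pi_k(T)u_0\le u_k\le\overline\pi_k(T)u_0$ by induction on $k$, using a maximum principle for a linear parabolic problem with bounded coefficients. Third, identify the limits of the ODE comparison functions as $T\to\infty$.

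\emph{Step 1: mass bound and positivity.} Let $U(t,x)\defeq\|u(t,x)\|_{\ell_1}$, which is $\mathcal C^{1,2}$ by Proposition~\ref{Paper02_smoothness_mild_solution}(v). Summing the equations over $k$ gives
\[
\partial_t U=\tfrac m2\Laplace U+q_+(U)\sum_k s_ku_k-q_-(U)U .
\]
Since $0\le s_k\le 1$, the reaction term is at most $U(q_+(U)-q_-(U))$ when $q_+(U)\ge0$. Under Definition~\ref{Paper02_assumption_monostable_condition}(i),(iii),(iv), the constant $1$ is a supersolution. Since $U(0,\cdot)\le1$ by Assumption~\ref{Paper02_assumption_initial_condition_modified}(i), a comparison principle for bounded solutions (boundedness comes from Proposition~\ref{Paper02_smoothness_mild_solution}(iii)) gives $U\le 1$. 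Hence the coefficient $q(t,x)\defeq q_+(U(t,x))$ satisfies $q(t,x)\in[\mathfrak Q_{\min},\mathfrak Q_{\max}]$. Next, $u_0$ solves the linear equation
\[
\partial_t u_0=\tfrac m2\Laplace u_0+c(t,x)u_0,\qquad c\defeq(1-\mu)q_+(U)-q_-(U),
\]
where $c$ is bounded. By Feynman--Kac (or by comparison with $e^{-\|c\|_\infty t}P_tf_0$), we get $u_0(T,x)\ge e^{-\|c\|_\infty T}(P_Tf_0)(x)>0$ for every $T>0$, because $f_0>0$ on a set of positive measure by Assumption~\ref{Paper02_assumption_initial_condition_modified}(ii). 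This gives the first claim.

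\emph{Step 2: comparison by induction.} Dividing by $u_0$, the ratio $w_k=u_k/u_0$ formally satisfies
\[
\partial_tw_k=\tfrac m2\Laplace w_k+m\,\partial_x\log u_0\,\partial_xw_k+q\big[-(1-\mu)(1-s_k)w_k+\mu s_{k-1}w_{k-1}\big],
\]
and the $q_-$ terms cancel. This suggests comparing with spatially constant functions. Define $\overline\pi_0=\underline\pi_0\equiv1$ and, for $k\ge1$,
\[
\overline\pi_k'=\max_{q\in[\mathfrak Q_{\min},\mathfrak Q_{\max}]}q\big[-(1-\mu)(1-s_k)\overline\pi_k+\mu s_{k-1}\overline\pi_{k-1}\big],\qquad \overline\pi_k(0)=\hat\pi_k,
\]
with the analogous definition of $\underline\pi_k$ using $\min$ and $\underline\pi_k(0)=0$. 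To avoid the drift $\partial_x\log u_0$, which is unbounded near $t=0$ and at spatial infinity, I will not work with $w_k$ directly. Instead I set $z_k\defeq\overline\pi_k(t)u_0-u_k$ and compute
\[
\partial_tz_k-\tfrac m2\Laplace z_k-(c-q(1-\mu)(1-s_k))z_k=u_0\big(\overline\pi_k'-q[\cdots]\big)+q\mu s_{k-1}(\overline\pi_{k-1}u_0-u_{k-1}).
\]
The right-hand side is $\ge0$ by the choice of $\overline\pi_k'$ and the induction hypothesis. Moreover $z_k(0)\ge0$ by Assumption~\ref{Paper02_assumption_initial_condition_modified}(iii). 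The maximum principle for bounded solutions of a linear equation with bounded zero-order coefficient then gives $z_k\ge0$. The same argument gives the lower bound.

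\emph{Step 3: identifying the limits.} The ODE system is triangular and the $k$-th equation is linearly stable with rate at least $(1-\mu)(1-s_k)\mathfrak Q_{\min}>0$, using that $(s_k)$ is strictly decreasing so $s_k<1$ for $k\ge1$. Induction then gives $\overline\pi_k(T)\to\mu s_{k-1}\mathfrak Q_{\max}\overline\pi_{k-1}(\infty)/((1-\mu)(1-s_k)\mathfrak Q_{\min})$, so that $\overline\pi_k(\infty)=\alpha_k(\mathfrak Q_{\max}/\mathfrak Q_{\min})^k$, and similarly for $\underline\pi_k$.

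I expect the main obstacle to be upgrading this coordinatewise convergence to convergence in $\ell_1$, together with $\overline\pi(T)\in\ell_1$ for each $T$. For this I plan to find a summable sequence $(\beta_k)_{k\in\mathbb N_0}$ with $\sup_T\overline\pi_k(T)\le\beta_k$ and then apply dominated convergence. The target limit ratio $\mu s_{k-1}\mathfrak Q_{\max}/((1-\mu)(1-s_k)\mathfrak Q_{\min})$ tends to $0$, so beyond some $k_0$ it is at most $1/2$. Then $\beta_k=\max(\hat\pi_k,\sup_T\overline\pi_{k-1}(T)/2)$ works for $k>k_0$: since $\overline\pi_k'<0$ whenever $\overline\pi_k$ exceeds this threshold, the set $\{\overline\pi_k\le\beta_k\}$ is forward invariant. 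This bound is summable because $\hat\pi\in\ell_1$.
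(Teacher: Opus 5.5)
Steps 1 and 2 are sound. The $z_k$ computation is correct, and because the zeroth-order coefficient $c-q(1-\mu)(1-s_k)$ is bounded, a Duhamel--Gronwall maximum principle handles the merely $L_\infty$ initial data.

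The problem is in Step 3: the limit you claim is not the limit of the ODE you defined. Set $X_k\defeq-(1-\mu)(1-s_k)\overline\pi_k+\mu s_{k-1}\overline\pi_{k-1}$. Your equation is then $\overline\pi_k'=\mathfrak{Q}_{\max}\max(X_k,0)+\mathfrak{Q}_{\min}\min(X_k,0)$, whose right-hand side vanishes exactly when $X_k=0$. Moreover $X_k'=-(1-\mu)(1-s_k)\bigl(\mathfrak{Q}_{\max}\max(X_k,0)+\mathfrak{Q}_{\min}\min(X_k,0)\bigr)+\mu s_{k-1}\overline\pi_{k-1}'$. Starting from $\overline\pi_0\equiv1$, induction gives $X_k\to0$ exponentially. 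Hence $\overline\pi_k(T)\to\frac{\mu s_{k-1}}{(1-\mu)(1-s_k)}\overline\pi_{k-1}(\infty)$, i.e.\ $\overline\pi(T)\to(\alpha_k)_k$, and the same computation gives $\underline\pi(T)\to(\alpha_k)_k$. The ratio $\mathfrak{Q}_{\max}/\mathfrak{Q}_{\min}$ never enters, because $q$ multiplies the whole bracket and the zero of the bracket does not depend on $q$. So as written, your functions do not have the limits the statement requires (unless $\mathfrak{Q}_{\max}=\mathfrak{Q}_{\min}$), and the inductive formula in Step 3 is false.

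The repair is easy, and you can do it in either of two ways.

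(a) Replace the $\max/\min$ over $q$ by the decoupled linear ODEs $\overline\pi_k'=-\mathfrak{Q}_{\min}(1-\mu)(1-s_k)\overline\pi_k+\mathfrak{Q}_{\max}\mu s_{k-1}\overline\pi_{k-1}$ and $\underline\pi_k'=-\mathfrak{Q}_{\max}(1-\mu)(1-s_k)\underline\pi_k+\mathfrak{Q}_{\min}\mu s_{k-1}\underline\pi_{k-1}$. Their solutions stay non-negative, so their right-hand sides dominate (respectively, are dominated by) $q[\cdots]$ for every $q\in[\mathfrak{Q}_{\min},\mathfrak{Q}_{\max}]$. Step 2 then goes through verbatim. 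The equilibria are exactly $\alpha_k(\mathfrak{Q}_{\max}/\mathfrak{Q}_{\min})^{k}$ and $\alpha_k(\mathfrak{Q}_{\min}/\mathfrak{Q}_{\max})^{k}$, so your Step 3, including the tail bound, is correct as written.

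(b) Keep your $\max/\min$ ODEs. They actually give the sharper conclusion $\underline\pi(T),\overline\pi(T)\to(\alpha_k)_k$ in $\ell_1$, i.e.\ $\sup_x|u_k(T,x)/u_0(T,x)-\alpha_k|\to0$. To match the statement literally, replace $\overline\pi_k$ by $\overline\pi_k+\alpha_k\bigl((\mathfrak{Q}_{\max}/\mathfrak{Q}_{\min})^k-1\bigr)$. Replace $\underline\pi_k$ by $\max\bigl(\underline\pi_k-\alpha_k(1-(\mathfrak{Q}_{\min}/\mathfrak{Q}_{\max})^k),0\bigr)$, which is still a valid lower bound since $u_k\ge0$. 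Both changes are summable perturbations and preserve $\ell_1$-convergence.

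The paper itself does not prove this theorem; it quotes it from the companion article. The constants $(\mathfrak{Q}_{\max}/\mathfrak{Q}_{\min})^{\pm k}$ in the statement are precisely what route (a) produces.
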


\section{Green's function representation}
\label{Paper02_section_greens_function}

Recall the definition of the approximate density process $u^N$ in~\eqref{Paper02_approx_pop_density_definition}. In this section, we will apply a technique used in~\cite{mueller1995stochastic,durrett2016genealogies} based on a Green's function representation to obtain uniform estimates on $u^N_k(t,x)$ for large $k$, and estimates on both space and time increments of $(u^{N})_{N \in \mathbb{N}}$. We begin by briefly highlighting the main differences between our setting and that of~\cite{mueller1995stochastic,durrett2016genealogies}.

Mueller and Tribe~\cite{mueller1995stochastic},  and later Durrett and Fan~\cite{durrett2016genealogies}, studied the density of particles in a scaled version of the voter model. Since in their model, under a suitable rescaling, the local density of particles is uniformly bounded by $1$, a Green's function representation combined with random walk estimates allows them to prove H\"{o}lder estimates on the expectation of the particle density. In our case, we have two main challenges. First, the particle density is not uniformly bounded, so although we will be able to derive estimates for the space and time increments, we will not be able to establish H\"{o}lder estimates for these increments. Second, since we must keep track of infinitely many types of particles, we will prove estimates that depend on the subset of types of particles considered (here, the ‘type' of a particle refers to the number of mutations that it carries). This will provide us with some control over $u^N_k(t,x)$ for large $k$. We will repeatedly use the random walk estimates stated in Lemma~\ref{Paper02_random_walks_lemma} in Section~\ref{Paper02_subsection_rw_estimates} of the appendix.

For any set of indices $\mathcal{I} \subseteq \mathbb{N}_{0}$, $N \in \mathbb N$, $T \geq 0$, and $x \in \mathbb{R}$, let
\begin{equation} \label{Paper02_local_mass_subset_indices_definition}
    U^N_{\mathcal{I}} (T,x) \defeq \sum_{k \in \mathcal{I}} u^N_{k}(T,x).
\end{equation}
In particular, for any~$k \in \mathbb{N}_{0}$, we have $U^{N}_{\{k\}} \equiv u^{N}_{k}$. Our strategy will be to write~$U^{N}_{\mathcal{I}} (T,x)$ for each $T \geq 0$ and $x \in L_{N}^{-1}\mathbb{Z}$ in terms of an appropriate martingale problem.

For $N \in \mathbb N$, let $(X^{N}(t))_{t \geq 0}$ denote a simple symmetric random walk on $L_{N}^{-1}\mathbb{Z}$ with total jump rate $m_{N}$. We denote its infinitesimal generator by $\frac{m_{N}}{2}\hat{\mathcal{L}}^{N}_{m}$, where for any bounded function $g: L_{N}^{-1}\mathbb{Z} \rightarrow \mathbb{R}$ and for all $x \in L_{N}^{-1}\mathbb{Z}$,
\begin{equation} \label{Paper02_inf_gen_rw}
    \frac{m_{N}}{2}\hat{\mathcal{L}}^{N}_{m} g(x) \defeq \frac{m_{N}}{2} \left(g(x+L_{N}^{-1}) + g(x-L_{N}^{-1}) - 2g(x)\right).
\end{equation}
Let $\{P^{N}_{t}\}_{t \geq 0}$ be the semigroup associated with $ \frac{m_{N}}{2}\hat{\mathcal{L}}^{N}_{m}$. By Assumption~\ref{Paper02_scaling_parameters_assumption}(i), we have $ \frac{m_{N}}{L_{N}^{2}} \rightarrow m$ as $N \rightarrow \infty$, and so $\{P^{N}_{t}\}_{t \geq 0}$ converges as $N \rightarrow \infty$ to the semigroup $\left\{P_{t}\right\}_{t \geq 0}$ associated with Brownian motion on $\mathbb{R}$ run at total rate $m$ (recall that we defined this before~\eqref{Paper02_semigroup_BM_action_ell_1_functions}). Following Durrett and Fan~\cite{durrett2016genealogies}, we define the normalised transition density $p^{N}$ by letting, for all $N \in \mathbb N$, $t \geq 0$ and~$x \in L_N^{-1}\mathbb Z$,
\begin{equation} \label{Paper02_definition_p_N_test_function_green}
    p^{N}(t,x) \defeq L_{N} \mathbb{P}_{0}\left(X^{N}(t) = x\right) = L_{N}\mathbb{P}\left(X^{N}(t) = x \Big\vert X^{N}(0) = 0 \right).
\end{equation}
Then, for any fixed $N \in \mathbb{N}$, $T \geq 0$ and $x \in L_{N}^{-1}\mathbb{Z}$, let $\phi^{N,T,x}: [0,\infty) \times L_{N}^{-1}\mathbb{Z} \rightarrow \left[0,L_{N}\right]$ be given by
\begin{equation} \label{Paper02_green_function_test_function}
    \phi^{N, T, x}(t, y) = \left\{\begin{array}{ll} p^{N}\left(T-t,y - x\right) &  \textrm{if } t \in [0,T], \\ 0 & \textrm{otherwise.}\end{array}\right.
\end{equation}
Observe that, by standard large deviation results on Poisson processes and continuous-time random walks, see e.g.~\cite[Corollary~A.10]{madeira2025existence}, for~$y \in L_{N}^{-1}\mathbb{Z}$ such that~$\vert y - x \vert \geq e^2m_NTL_N^{-1}$ and~$t \in [0,T]$,
\begin{equation} \label{Paper02_trivial_large_deviation_estimatre_CTRW}
    \phi^{N,T,x}(t,y) \leq L_{N}e^{-L_{N} \vert y -x \vert}.
\end{equation}
For $N \in \mathbb N$ and $g,h: L_{N}^{-1}\mathbb{Z} \rightarrow \mathbb{R}$, we write
\begin{equation} \label{Paper02_definition_discrete_integral_over_space}
    \langle g,h \rangle_{N} \defeq \frac{1}{L_{N}} \sum_{x \in L_{N}^{-1}\mathbb{Z}} g(x)h(x),
\end{equation}
whenever the sum on the right-hand side is well defined. Using the notation defined above, we now work towards the Green's function representation. We start by proving a simple consequence of the moment bound in Theorem~\ref{Paper02_bound_total_mass}.

\begin{lemma} \label{Paper02_simple_estimate_greens_function_representation_moments}
    Suppose $q_+$ and $q_-$ satisfy Assumption~\ref{Paper02_assumption_polynomials}, and $f$ satisfies Assumption~\ref{Paper02_assumption_initial_condition}. Then, for any~$l,r \geq 1$ and~$T \geq 0$, there exists~$C_{l,r,T} > 0$ such that for any $N \in \mathbb N$, $L_N,m_N>0$, $\mu \in [0,1]$ and $(s_k)_{k \in \mathbb N_0}$ satisfying Assumption~\ref{Paper02_assumption_fitness_sequence}, for $\boldsymbol \eta^N$ as defined in~\eqref{Paper02_initial_condition},
    \begin{equation*}
        \sup_{\substack{\mathcal{I} \subseteq \mathbb{N}_{0}}} \; \sup_{x \in L_{N}^{-1}\mathbb{Z}} \; \sup_{t_{1},t_{2} \in [0,T]} \; \mathbb{E}_{\boldsymbol \eta^N}\left[\Big\langle U^{N}_{\mathcal{I}}(t_{2}, \cdot)^{l}, \phi^{N,T,x}(t_{1}, \cdot) \Big\rangle_{N}^{r} \right] \leq C_{l,r,T}.
    \end{equation*}
\end{lemma}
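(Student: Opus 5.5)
The plan is to view $\langle \cdot , \phi^{N,T,x}(t_1,\cdot)\rangle_N$ as integration against a probability measure and then apply Jensen's inequality. By the definitions of $p^N$ and of $\langle\cdot,\cdot\rangle_N$,
\[
\frac{1}{L_N}\sum_{y \in L_N^{-1}\mathbb Z} \phi^{N,T,x}(t_1,y) = \sum_{y} \mathbb P_0\big(X^N(T-t_1) = y-x\big) = 1
\]
for $t_1 \in [0,T]$. The weights $w(y) \defeq \mathbb P_0(X^N(T-t_1)=y-x)$ are non-negative and sum to one, and
\[
\Big\langle U^N_{\mathcal I}(t_2,\cdot)^l, \phi^{N,T,x}(t_1,\cdot)\Big\rangle_N = \sum_y w(y)\, U^N_{\mathcal I}(t_2,y)^l .
\]
(If one also allowed $t_1>T$, the test function would vanish and the bound would be trivial.)

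\textbf{Step 1.} Since $r \ge 1$, the map $z\mapsto z^r$ is convex on $[0,\infty)$. Jensen's inequality for the probability weights $w$ gives
\[
\Big(\sum_y w(y) U^N_{\mathcal I}(t_2,y)^l\Big)^r \le \sum_y w(y)\, U^N_{\mathcal I}(t_2,y)^{lr}.
\]
All terms are non-negative, so Tonelli lets us take expectations inside the sum:
\[
\mathbb E_{\boldsymbol\eta^N}\big[\langle\cdot\rangle_N^r\big] \le \sum_y w(y)\,\mathbb E_{\boldsymbol\eta^N}\big[U^N_{\mathcal I}(t_2,y)^{lr}\big].
\]

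\textbf{Step 2.} The densities are non-negative, so for any $\mathcal I\subseteq\mathbb N_0$ we have $0\le U^N_{\mathcal I}(t_2,y)\le \|u^N(t_2,y)\|_{\ell_1}$. Theorem~\ref{Paper02_bound_total_mass} with $p=lr$ then gives
\[
\mathbb E_{\boldsymbol\eta^N}\big[U^N_{\mathcal I}(t_2,y)^{lr}\big]\le C_{lr}(t_2)\le C_{lr}(T),
\]
using that $C_{lr}$ is non-decreasing. This bound is uniform in $y$, in $N$, in $L_N,m_N$, in $\mu$ and in $(s_k)$. Summing against $w$ produces the constant $C_{l,r,T}\defeq C_{lr}(T)$, which does not depend on $\mathcal I$, $x$, $t_1$, $t_2$ or $N$.

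\textbf{Potential obstacle.} Here the only point needing care is the exchange of expectation and the infinite sum over $y$, together with the measurability of $U^N_{\mathcal I}$. Tonelli's theorem for non-negative terms handles both. Note that the uniformity in $t_2\in[0,T]$ relies on the monotonicity of $C_p$ stated in Theorem~\ref{Paper02_bound_total_mass}.
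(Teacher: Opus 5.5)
Your proof is correct and follows essentially the same approach as the paper. The paper likewise writes the pairing as an average against the probability weights $\mathbb{P}_0(X^N(T-t_1)=y-x)$, dominates $U^N_{\mathcal{I}}$ by $\|u^N\|_{\ell_1}$, applies Jensen's inequality, and concludes with Theorem~\ref{Paper02_bound_total_mass}; the only difference is that it applies the domination before Jensen rather than after, which is immaterial.
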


\begin{proof}
    By the definitions in~\eqref{Paper02_local_mass_subset_indices_definition},~\eqref{Paper02_green_function_test_function},~\eqref{Paper02_definition_p_N_test_function_green} and~\eqref{Paper02_definition_discrete_integral_over_space}, observe that for any $N \in \mathbb{N}$, $\mathcal{I} \subseteq \mathbb{N}_{0}$, $t_{1},t_{2} \in [0,T]$ and $x \in L_{N}^{-1}\mathbb{Z}$,
    \begin{equation} \label{Paper02_step_i_trivial_bound_kernel_RW_moments_ppulation_density}
    \begin{aligned}
        \Big\langle U^{N}_{\mathcal{I}}(t_{2}, \cdot)^{l}, \phi^{N,T,x}(t_{1}, \cdot) \Big\rangle_{N} \leq \sum_{y \in L_{N}^{-1}\mathbb{Z}} \, \| u^{N}(t_{2}, y) \|_{\ell_{1}}^{l} \cdot \mathbb{P}_{0}\Big(X^{N}(T-t_{1}) = y -x \Big).
    \end{aligned}
    \end{equation}
    By Jensen's inequality, it follows that
    \begin{equation} \label{Paper02_step_iii_trivial_bound_kernel_RW_moments_ppulation_density}
    \begin{aligned}
         \Big\langle U^{N}_{\mathcal{I}}(t_{2}, \cdot) ^{l}, \phi^{N,T,x}(t_{1}, \cdot) \Big\rangle_{N}^{r} \leq \sum_{y \in L_{N}^{-1}\mathbb{Z}} \,\| u^{N}(t_{2}, y) \|_{\ell_{1}}^{lr} \mathbb{P}_{0}\Big(X^{N}(T-t_{1}) = y -x \Big).
    \end{aligned}
    \end{equation}
    By taking expectations on both sides of~\eqref{Paper02_step_iii_trivial_bound_kernel_RW_moments_ppulation_density} and applying Theorem~\ref{Paper02_bound_total_mass}, the result follows.
\end{proof}

It will be convenient to introduce, for any~$N \in \mathbb{N}$,~$T \geq 0$, $\mathcal{I} \subseteq \mathbb{N}_{0}$ and $x \in L_{N}^{-1}\mathbb{Z}$, the function $g^{N,T,x}_{\mathcal{I}}:  [0,T] \times \mathcal{S}^{N} \rightarrow [0,\infty)$ given by, for all~$t \in [0,T]$ and~$\boldsymbol{\xi} = (\xi_k(y))_{k \in \mathbb N_0, \, y \in L_{N}^{-1}\mathbb Z} \in \mathcal{S}^{N}$,
\begin{equation} \label{Paper02_green_function_representation_writing_as_lipschitz_map}
    g^{N,T,x}_{\mathcal{I}}(t, \boldsymbol{\xi}) \defeq \Bigg\langle \sum_{k \in \mathcal{I}} \frac{\xi_{k}(\cdot)}{N}, \phi^{N,T,x}(t, \cdot) \Bigg\rangle_{N}.
\end{equation}
By the definition of the metric space~$(\mathcal{S}^{N},d_{\mathcal{S}^{N}})$ in~\eqref{Paper02_definition_state_space_formal} and~\eqref{Paper02_definition_semi_metric_state_space}, and by~\eqref{Paper02_trivial_large_deviation_estimatre_CTRW},~$g^{N,T,x}_{\mathcal{I}}$ is well defined as a real-valued continuous map on~$[0,T] \times \mathcal{S}^{N}$. We now characterise the action of the infinitesimal generator~$\mathcal{L}^{N}$ (defined in~\eqref{Paper02_generator_foutel_etheridge_model} and~\eqref{Paper02_infinitesimal_generator}) on~$g^{N,T,x}_{\mathcal{I}}$. Before stating this result, we recall that $F =(F_{k})_{k \in \mathbb{N}_{0}}$ is the reaction term defined in~\eqref{Paper02_reaction_term_PDE}, and we define the modified reaction term $F^+=(F^{+}_{k})_{k \in \mathbb{N}_{0}}: \ell_{1}^{+} \rightarrow \ell_{1}^{+}$ as follows. For all $u = (u_{j})_{j \in \mathbb N_0} \in \ell_{1}^{+}$ and $k \in \mathbb{N}_{0}$, we let
\begin{equation} \label{Paper02_reaction_predictable_bracket_process}
    F^{+}_{k}(u) \defeq q_{+}(\| u \|_{\ell_1})\left(s_{k}(1-\mu)u_{k} + \mathds{1}_{\{k \geq 1\}}s_{k-1}\mu u_{k-1}\right) + q_{-}(\| u \|_{\ell_1})u_{k}.
\end{equation}
Recalling~\eqref{Paper02_definition_p_N_test_function_green}, for $t \geq 0$ and $x \in L_N^{-1}\mathbb Z$, we let
\begin{equation} \label{Paper02_discrete_grad}
    \nabla_{L_N} p^N(t,x) \defeq L_N \left(p^N(t,x + L_N^{-1}) - p^N(t,x)\right).
\end{equation}
We also recall the definition of~$\mathcal{C}_*(\mathcal{S}^{N}; \mathbb{R})$ in~\eqref{Paper02_definition_Lipschitz_function}.

\begin{lemma} \label{Paper02_properties_greens_function_muller_ratchet}
    Suppose $N \in \mathbb N$, $L_N,m_N > 0$, $\mu \in [0,1]$, $(s_k)_{k \in \mathbb N_0}$ satisfies Assumption~\ref{Paper02_assumption_fitness_sequence}, $q_+$ and $q_-$ satisfy Assumption~\ref{Paper02_assumption_polynomials}, $f$ satisfies Assumption~\ref{Paper02_assumption_initial_condition}, and $\boldsymbol \eta^N$ is defined as in~\eqref{Paper02_initial_condition}. Then, for any~$x \in L_{N}^{-1}\mathbb{Z}$,~$T \geq 0$ and~$\mathcal{I} \subseteq \mathbb{N}_{0}$, the map~$g^{N,T,x}_{\mathcal{I}}: [0,T] \times \mathcal{S}^{N} \rightarrow [0, \infty)$ defined in~\eqref{Paper02_green_function_representation_writing_as_lipschitz_map} satisfies the following properties:
    \begin{enumerate}[label = (\roman*)]
        \item The map~$[0,T] \times \mathcal{S}^N \ni (t, \boldsymbol \xi) \mapsto g^{N,T,x}_{\mathcal I}(t,\boldsymbol{\xi})$ is in $\mathcal C([0,T] \times \mathcal S^N;\mathbb R)$. Moreover, the map~$(0,T) \times \mathcal{S}^{N} \ni (t,\boldsymbol{\xi}) \mapsto \left(\frac{\partial}{\partial t} g^{N,T,x}_{\mathcal{I}}(\cdot, \boldsymbol{\xi})\right)(t)$ is in~$\mathcal{C}((0,T) \times \mathcal{S}^{N}; \mathbb{R})$, and is given by, for~$t \in (0,T)$ and~$\boldsymbol{\xi} \in \mathcal{S}^{N}$,
        \begin{equation} \label{Paper02_formula_derivative_time_greens_function}
            \left(\frac{\partial}{\partial t} g^{N,T,x}_{\mathcal{I}}(\cdot, \boldsymbol{\xi})\right)(t) = \frac{m_{N}}{2}\Big(2g^{N,T,x}_{\mathcal{I}} - g^{N,T,x - L_{N}^{-1}}_{\mathcal{I}} - g^{N,T,x + L_{N}^{-1}}_{\mathcal{I}}\Big)(t,\boldsymbol{\xi}).
        \end{equation}
        \item For any $t \in [0,T]$, the map~$\mathcal S^N \ni \boldsymbol{\xi} \mapsto g^{N,T,x}_{\mathcal{I}}(t, \boldsymbol{\xi})$ is in $\mathcal{C}_*(\mathcal{S}^{N};\mathbb{R})$. Moreover, the map~$[0,T] \times \mathcal{S}^{N} \ni (t,\boldsymbol{\xi}) \mapsto \mathcal{L}^{N}\Big(g^{N,T,x}_{\mathcal{I}}(t,\cdot)\Big)(\boldsymbol{\xi})$ is in $\mathcal{C}([0,T] \times \mathcal{S}^{N}; \mathbb{R})$, and is given by, for~$t \in [0,T]$ and~$\boldsymbol{\xi} = (\xi_k(y))_{k \in \mathbb N_0, \, y \in L_N^{-1}\mathbb Z} \in \mathcal{S}^{N}$,
        \begin{equation} \label{Paper02_definition_action_generator_greens_function}
        \begin{split}
             \mathcal{L}^{N}\Big(g^{N,T,x}_{\mathcal{I}}(t,\cdot)\Big)(\boldsymbol{\xi})  & = \frac{m_{N}}{2}\Big(g^{N,T,x - L_{N}^{-1}}_{\mathcal{I}} + g^{N,T,x + L_{N}^{-1}}_{\mathcal{I}} - 2g^{N,T,x}_{\mathcal{I}}\Big)(t,\boldsymbol{\xi}) \\ & \qquad + \bigg\langle \sum_{k \in \mathcal{I}} \, F_{k}\left(\frac{\xi(\cdot)}{N}\right), \, \phi^{N,T,x}(t, \cdot) \bigg\rangle_{N}.
        \end{split}
        \end{equation}
        \item 
        The map~$[0,T] \times \mathcal S^N \ni (t, \boldsymbol{\xi}) \mapsto \mathcal{L}^{N}\Big((g^{N,T,x}_{\mathcal{I}})^{2}(t, \cdot)\Big)(\boldsymbol{\xi})$ is in $\mathcal{C}([0,T] \times \mathcal{S}^{N}; \mathbb{R})$, and is given by, for~$t \in [0,T]$ and~$\boldsymbol{\xi} = (\xi_k(y))_{k \in \mathbb N_0, y \in L_N^{-1}\mathbb Z} \in \mathcal{S}^{N}$,
        \begin{equation} \label{Paper02_definition_action_generator_on_square_greens_function}
        \begin{split}
            & \mathcal{L}^{N}\Big((g^{N,T,x}_{\mathcal{I}})^{2}(t, \cdot)\Big)(\boldsymbol{\xi}) \\ & \quad = 2g^{N,T,x}_{\mathcal{I}}(t, \boldsymbol{\xi})\mathcal{L}^{N}\Big(g^{N,T,x}_{\mathcal{I}}(t, \cdot)\Big)(\boldsymbol{\xi}) + \frac{1}{N L_{N}^{2}} \sum_{y \in L_{N}^{-1}\mathbb{Z}} p^{N}(T-t,y-x)^{2} \sum_{k \in \mathcal{I}} F^{+}_{k}\left(\frac{\xi(y)}{N}\right) \\ & \quad \quad + \frac{m_{N}}{2NL^{4}_{N}} \sum_{y \in L_{N}^{-1}\mathbb{Z}} \left( \nabla_{L_N}p^{N}(T-t,y - L_{N}^{-1} - x)^2 + \nabla_{L_N}p^{N}(T-t,y - x)^2 \right) \sum_{k \in \mathcal{I}} \frac{\xi_{k}(y)}{N}.
        \end{split}
        \end{equation}
        \item
        The following estimate holds:
        \begin{equation} \label{Paper02_general_estimates_moments_green_function_and_derivatives_time_generator}
        \begin{aligned}
            \sup_{t_{1},t_{2} \in [0,T]} \; & \bigg(\mathbb{E}_{\boldsymbol \eta^N}\bigg[\Big(g^{N,T,x}_{\mathcal{I}}\Big)^{2}(t_{1}, \eta^{N}(t_{2})) + \left(\frac{\partial}{\partial t} g^{N,T,x}_{\mathcal{I}}(\cdot, \eta^{N}(t_{2}))\right)^{2}(t_{1}) \bigg] 
            \\ & \quad + \mathbb{E}_{\boldsymbol \eta^N}\bigg[\Big(g^{N,T,x}_{\mathcal{I}}\Big)^{2}(t_{1}, \eta^{N}(t_{2}))\left(\frac{\partial}{\partial t} g^{N,T,x}_{\mathcal{I}}(\cdot, \eta^{N}(t_{2}))\right)^{2}(t_{1})\Bigg] 
            \\ & \quad + \mathbb{E}_{\boldsymbol \eta^N}\bigg[\Big(\mathcal{L}^{N}(g^{N,T,x}_{\mathcal{I}}(t_{1}, \cdot))\Big)^{2}(\eta^{N}(t_{2})) \\
            & \hspace{3cm} + \Big(\mathcal{L}^{N}(g^{N,T,x}_{\mathcal{I}})^{2}(t_{1}, \cdot)\Big)^{2}(\eta^{N}(t_{2}))\bigg] \bigg) < \infty.
        \end{aligned}
        \end{equation}

        \item For any fixed~$t_{1} \in [0,T]$, the process~$(M(t))_{t \geq 0}$ given by, for~$t \geq 0$,
        \begin{equation*}
            M(t) \defeq \Big(g^{N,T,x}_{\mathcal{I}}(t_{1}, \eta^{N}(t))\Big)^{2} - \Big(g^{N,T,x}_{\mathcal{I}}(t_{1}, \eta^{N}(0))\Big)^{2} - \int_{0}^{t}  \mathcal{L}^{N}\Big((g^{N,T,x}_{\mathcal{I}})^{2}(t_{1}, \cdot)\Big)(\eta^{N}(\tau-)) \, d\tau,
        \end{equation*}
        is a càdlàg martingale with respect to the filtration~$\{\mathcal{F}^{\eta^{N}}_{t+}\}_{t \geq 0}$.

        \item 
        For any~$T_{1},T_{2} \in [0,T]$ and~$x_{1},x_{2} \in L_{N}^{-1}\mathbb{Z}$, the map~$[0,T_{1} \wedge T_{2}] \times \mathcal{S}^{N} \ni (t, \boldsymbol{\xi})  \, \mapsto \mathcal{L}^{N}\Big((g^{N,T_{1},x_{1}}_{\mathcal{I}} - g^{N,T_{2},x_{2}}_{\mathcal{I}})^{2}(t, \cdot)\Big)(\boldsymbol{\xi})$ is in $\mathcal{C}([0,T_{1} \wedge T_{2}] \times \mathcal{S}^{N}; \mathbb{R})$ and is given by, for~$t \in [0,T_{1} \wedge T_{2}]$ and~$\boldsymbol{\xi} = (\xi_k(y))_{k \in \mathbb N_0, \, y \in L_N^{-1}\mathbb Z}\in \mathcal{S}^{N}$,
        \begin{equation} \label{Paper02_identity_martingale_difference_increments_generator}
        \begin{split}
            & \mathcal{L}^{N}\Big((g^{N,T_{1},x_{1}}_{\mathcal{I}} - g^{N,T_{2},x_{2}}_{\mathcal{I}})^{2}(t, \cdot)\Big)(\boldsymbol{\xi}) \\ & \quad = 2\Big(g^{N,T_{1},x_{1}}_{\mathcal{I}} - g^{N,T_{2},x_{2}}_{\mathcal{I}}\Big)(t, \boldsymbol{\xi}) \mathcal{L}^{N} \Big((g^{N,T_{1},x_{1}}_{\mathcal{I}} - g^{N,T_{2},x_{2}}_{\mathcal{I}})(t, \cdot)\Big)(\boldsymbol{\xi}) \\ & \quad \quad \quad + \frac{m_{N}}{2NL_{N}^{4}} \sum_{y \in L_{N}^{-1}\mathbb{Z}} \Big(\delta^{N,T_{1},T_{2},x_{1},x_{2}}_{-}(t,y) + \delta^{N,T_{1},T_{2},x_{1},x_{2}}_{+}(t,y) \Big) \sum_{k \in \mathcal{I}} \frac{\xi_{k}(y)}{N} \\ & \quad \quad \quad + \frac{1}{NL_{N}^{2}} \sum_{y \in L_{N}^{-1}\mathbb Z} \; \Big(p^{N}(T_{1} - t, y - x_{1}) - p^{N}(T_{2} - t, y - x_{2})\Big)^{2} \sum_{k \in \mathcal{I}} F^{+}_{k}\left(\frac{\xi(y)}{N}\right),
        \end{split}
        \raisetag{-2.3cm}
        \end{equation}
        where
        \begin{align} 
             & \delta^{N,T_{1},T_{2},x_{1},x_{2}}_{-}(t,y) \notag \\ 
             & \quad \defeq \Big( \nabla_{L_N} p^{N}(T_1-t,y - L_{N}^{-1} - x_{1}) - \nabla_{L_N} p^{N}(T_2-t,y - L_{N}^{-1} - x_{2})\Big)^2 \label{Paper02_migration_backward_difference_green}, \\ & \delta^{N,T_{1},T_{2},x_{1},x_{2}}_{+}(t,y) \notag \\
             & \quad \defeq \Big( \nabla_{L_N} p^{N}(T_1-t,y - x_{1}) - \nabla_{L_N} p^{N}(T_2-t,y - x_{2})\Big)^2 \label{Paper02_migration_forward_difference_green}.
        \end{align}
        Furthermore,
        \begin{equation} \label{Paper02_square_gen_diff_square_green}
            \sup_{t_1,t_2 \in [0, T_1 \wedge T_2]} \mathbb{E}_{\boldsymbol \eta^N} \left[ \left(\mathcal{L}^{N}\Big((g^{N,T_{1},x_{1}}_{\mathcal{I}} - g^{N,T_{2},x_{2}}_{\mathcal{I}})^{2}(t_1, \cdot)\Big)(\eta^N(t_2))\right)^2\right] < \infty.
        \end{equation}\end{enumerate}
\end{lemma}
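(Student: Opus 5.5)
The plan is to reduce every item to explicit computations with the random walk kernel, together with the local moment bound of Theorem~\ref{Paper02_bound_total_mass} and the tail estimate~\eqref{Paper02_trivial_large_deviation_estimatre_CTRW}.

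\textbf{Continuity and the time derivative (item (i)).} Write
\[
g^{N,T,x}_{\mathcal I}(t,\boldsymbol\xi)=N^{-1}\sum_y \mathbb P_0(X^N(T-t)=y-x)\sum_{k\in\mathcal I}\xi_k(y).
\]
The kernel decays like $L_Ne^{-L_N|y-x|}$ by~\eqref{Paper02_trivial_large_deviation_estimatre_CTRW}, uniformly in $t\in[0,T]$. This decay dominates the weight $(1+|y|)^{2}$ in $d_{\mathcal S^N}$, so $g$ is Lipschitz in $\boldsymbol\xi$ uniformly in $t$. The map is continuous in $t$ by dominated convergence, and these two facts give joint continuity. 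For the time derivative I would use the backward Kolmogorov equation for $X^N$:
\[
\partial_t p^N(T-t,y-x)=-\tfrac{m_N}{2}\hat{\mathcal L}^N_m p^N(T-t,\cdot)(y-x).
\]
The discrete Laplacian in $y$ can be moved onto $x$, since the kernel depends only on $y-x$; this yields~\eqref{Paper02_formula_derivative_time_greens_function}. Differentiating under the sum is justified again by the exponential tail bound, which holds for all derivatives $\partial_t p^N$ because they are finite combinations of shifts of $p^N$.

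\textbf{Generator identities (items (ii), (iii), (vi)).} I first check that $g(t,\cdot)\in\mathcal C_*$: each discrete increment of $g$ at site $y$ is at most $N^{-1}L_N^{-1}p^N\le N^{-1}L_Ne^{-L_N|y-x|}$, which beats $(1+|y|)^{2(1+\deg q_-)}$. The formula for $\mathcal L^N g$ then follows by inserting $g$ into~\eqref{Paper02_infinitesimal_generator}, since $g$ is linear in $\boldsymbol\xi$.
\begin{itemize}
\item Migration terms give, after summation by parts, the discrete Laplacian of $p^N$ in $y$. As before this equals the expression in $x$-shifts.
\item Birth and death terms give $\langle\sum_{k\in\mathcal I}F_k(\xi/N),\phi\rangle_N$. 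Here $\mathcal I$ may cut the mutation chain, but $F_k$ already accounts for this, since it includes the influx from $k-1$.
\end{itemize}
For $g^2$ (and $(g_1-g_2)^2$, which is again linear in $\boldsymbol\xi$ before squaring) I would use the identity
\[
(a+h)^2-a^2=2ah+h^2.
\]
The $2ah$ part reproduces $2g\,\mathcal L^Ng$. The $h^2$ parts come from single jumps:
\begin{itemize}
\item A birth or death event has $h=\pm N^{-1}L_N^{-1}p^N(T-t,y-x)$, so its rates sum to $F^+_k$. This gives the $\frac{1}{NL_N^2}\sum_y(p^N)^2\sum_{k\in\mathcal I} F^+_k$ term.
\item A migration from $y$ to $y\pm L_N^{-1}$ has $h=N^{-1}L_N^{-1}(p^N(\cdot,y\pm L_N^{-1}-x)-p^N(\cdot,y-x))$. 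Its square is $N^{-2}L_N^{-4}(\nabla_{L_N}p^N)^2$; multiplied by the rate $\frac{m_N}{2}\xi_k(y)$ this yields the stated gradient terms.
\end{itemize}
Continuity in $(t,\boldsymbol\xi)$ of all these expressions follows as in (i). The only care needed is that $F_k,F^+_k$ are polynomial in $\|\xi(y)/N\|_{\ell_1}$, which is locally bounded on $\mathcal S^N$-balls at each fixed site. Summability over $y$ comes from the exponential kernel tails.

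\textbf{Moment estimate and martingale (items (iv), (v), and~\eqref{Paper02_square_gen_diff_square_green}).} Each quantity in~\eqref{Paper02_general_estimates_moments_green_function_and_derivatives_time_generator} is a polynomial in expressions of the form $\langle \|u^N(t_2,\cdot)\|_{\ell_1}^l,\phi^{N,T,x'}(t_1,\cdot)\rangle_N$ with $x'\in\{x,x\pm L_N^{-1}\}$, multiplied by deterministic factors depending on $N$ (such as $m_N$ and $L_N^{-2}$). The $q_\pm$ terms are bounded by $C(1+\|u\|_{\ell_1}^{\deg q_-+1})$. Using Cauchy--Schwarz and Jensen as in Lemma~\ref{Paper02_simple_estimate_greens_function_representation_moments}, everything reduces to Theorem~\ref{Paper02_bound_total_mass}. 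Finiteness is only required for fixed $N$, so the $N$-dependent prefactors are harmless.

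Item (v) then follows from Theorem~\ref{Paper02_thm_existence_uniqueness_IPS_spatial_muller} applied to $\phi=g^2(t_1,\cdot)$. This is the main obstacle: we must verify $g^2(t_1,\cdot)\in\mathcal C_*$. Since $g$ is unbounded, the increments of $g^2$ are of size $|h|(2|g|+|h|)$, so they are not uniformly bounded in $\boldsymbol\xi$, which $\mathcal C_*$ demands. I would try a truncation: set $\phi_R=\psi_R(g)$ with $\psi_R$ smooth, equal to $z^2$ on $[0,R]$ and having bounded derivative. Then $\phi_R\in\mathcal C_*$. Theorem~\ref{Paper02_thm_existence_uniqueness_IPS_spatial_muller} gives martingales for each $R$, and we let $R\to\infty$ using the $L^2$ bounds from (iv) and dominated convergence for the integral term. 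The martingale property passes to the limit via uniform integrability, which also comes from (iv). Estimate~\eqref{Paper02_square_gen_diff_square_green} is handled exactly like (iv).
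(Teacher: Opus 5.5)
Your proposal is correct and follows the paper's proof essentially step by step. The shared ingredients are: exponential tails of $p^N$ against the polynomial growth of $\|\xi(y)\|_{\ell_1}$ on $\mathcal S^N$, used for continuity, differentiation under the sum and the generator identities via $(a+h)^2-a^2=2ah+h^2$; Lemma~\ref{Paper02_simple_estimate_greens_function_representation_moments} and Theorem~\ref{Paper02_bound_total_mass} for (iv); and, for (v), a truncation of $g^2$ lying in $\mathcal C_*$, where the paper uses $g^2\wedge n^2$ together with $|(a^2\wedge n^2)-(b^2\wedge n^2)|\le|a^2-b^2|$. One detail in your version should be made explicit: choose $\psi_R$ with $0\le\psi_R'(z)\le 2z$ on $[0,\infty)$, for instance $\psi_R(z)=R^2+2R(z-R)$ for $z>R$, so that the increments of $\psi_R(g)$ are dominated by those of $g^2$; only then is the second-moment bound on $\mathcal L^N\psi_R(g)$, needed for uniform integrability, uniform in $R$, rather than something you can read off from (iv) directly.
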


Since the proof of Lemma~\ref{Paper02_properties_greens_function_muller_ratchet} follows from standard arguments, we postpone it until Section~\ref{Paper02_appendix_section_proof_auxiliary_no_technical_lemmas} in the appendix. Importantly, Lemma~\ref{Paper02_properties_greens_function_muller_ratchet} will now allow us to prove a Green's function representation of the spatial Muller's ratchet. Recall from after~\eqref{Paper02_inf_gen_rw} that for~$N \in \mathbb{N}$, we let~$\{P^{N}_{t}\}_{t \geq 0}$ be the semigroup associated to the simple symmetric random walk on~$L_{N}^{-1}\mathbb{Z}$ with total jump rate~$m_{N}$. For any set of indices $\mathcal{I} \subseteq \mathbb{N}_{0}$ and $N \in \mathbb{N}$, define $\widehat{U}^{N}_{\mathcal{I}}: [0,\infty) \times \mathbb{R} \rightarrow \mathbb{R}$ by letting, for~$T \geq 0$ and~$x \in L_{N}^{-1}\mathbb{Z}$,
\begin{equation} \label{Paper02_definition_little_u_k_N_hat}
    \widehat{U}^{N}_{\mathcal{I}}(T,x) \defeq {U}^{N}_{\mathcal{I}}(T,x) - \sum_{k \in \mathcal{I}} P^{N}_{T}u^{N}_{k}(0,\cdot)(x),
\end{equation}
and linearly interpolating between the demes $x \in L_{N}^{-1}\mathbb{Z}$ to extend the definition of $\widehat{U}^{N}_{\mathcal{I}}(T,x)$ to all $x \in \mathbb{R}$. We are now ready to characterise $\widehat{U}^{N}_{\mathcal{I}}$ in terms of a semimartingale, giving us our Green's function representation.

\begin{lemma}[Green's function representation] \label{Paper02_lemma_little_u_k_N_hat_semimartingale}
Suppose $N \in \mathbb{N}$ and the conditions of Lemma~\ref{Paper02_properties_greens_function_muller_ratchet} are satisfied. Then for any $x \in L_{N}^{-1}\mathbb{Z}$, $T \geq 0$ and $\mathcal{I} \subseteq \mathbb{N}_{0}$, under $\mathbb P_{\boldsymbol \eta^N}$, there exist a càdlàg square integrable martingale $(M^{N,T,x}_{\mathcal{I}}(t))_{t \in [0,T]}$ with respect to the filtration~$\{\mathcal{F}^{\eta^{N}}_{t+}\}_{t \geq 0}$ with $M^{N,T,x}_{\mathcal{I}}(0) = 0$, and a finite variation process $(A^{N,T,x}_{\mathcal{I}}(t))_{t \in [0,T]}$ such that
\begin{equation} \label{Paper02_definition_general_formula_u_hat}
    \widehat{U}^{N}_{\mathcal{I}}(T,x) = M^{N,T,x}_{\mathcal{I}}(T) + A^{N,T,x}_{\mathcal{I}}(T).
\end{equation}
Moreover, for~$t \in [0,T]$,
\begin{equation} \label{Paper02_finite_variation_process_uN}
     A^{N,T,x}_{\mathcal{I}}(t) = \frac{1}{L_{N}} \, \sum_{y \in L_{N}^{-1}\mathbb{Z}} \, \int_{0}^{t} \sum_{k \in \mathcal{I}} \; F_{k}(u^{N}(\tau-, y)) p^{N}(T-\tau,y - x)\, d\tau,
\end{equation}
and the predictable bracket process of $(M^{N,T,x}_{\mathcal{I}}(t))_{t \in [0,T]}$ is given by, for $t \in [0,T]$,
\begin{equation} \label{Paper02_predictable_bracket_process_uN_k_hat}
\begin{aligned}
     & \left\langle M^{N,T,x}_{\mathcal{I}}\right\rangle (t) \\
     & \quad = \frac{1}{NL_{N}^{2}} \, \sum_{y \in L_{N}^{-1}\mathbb{Z}} \, \int_{0}^{t} p^{N}(T-\tau,y - x)^{2} \sum_{k \in \mathcal{I}} \, F_{k}^{+}(u^{N}(\tau-, y)) \, d\tau
     \\ & \quad \; + \frac{m_{N}}{2NL_{N}^{4}} \, \sum_{y \in L_{N}^{-1}\mathbb{Z}} \, \int_{0}^{t} \left( \nabla_{L_N} p^{N}(T-\tau,y - L_{N}^{-1} - x)^2 + \nabla_{L_N} p^{N}(T-\tau,y - x)^2\right) 
     \\ & \hspace{10cm} \cdot \sum_{k \in \mathcal{I}} u^{N}_{k}(\tau-, y)  \, d\tau.
\end{aligned}
\end{equation}
\end{lemma}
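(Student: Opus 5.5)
The plan is to apply a time-dependent version of Dynkin's formula to the process $t \mapsto g^{N,T,x}_{\mathcal{I}}(t,\eta^N(t))$ on $[0,T]$. Note first that $g^{N,T,x}_{\mathcal{I}}(T,\boldsymbol{\xi}) = \sum_{k\in\mathcal I}\xi_k(x)/N$, since $p^N(0,y-x) = L_N\mathds 1_{\{y=x\}}$. Moreover, $g^{N,T,x}_{\mathcal{I}}(0,\boldsymbol{\eta}^N) = \sum_{k\in\mathcal I}P^N_T u^N_k(0,\cdot)(x)$, by the definition of $p^N$ and the symmetry of the walk. Hence $\widehat U^N_{\mathcal I}(T,x) = g^{N,T,x}_{\mathcal{I}}(T,\eta^N(T)) - g^{N,T,x}_{\mathcal{I}}(0,\eta^N(0))$.

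The first step is to show that
\[
M^{N,T,x}_{\mathcal I}(t) \defeq g^{N,T,x}_{\mathcal{I}}(t,\eta^N(t)) - g^{N,T,x}_{\mathcal{I}}(0,\eta^N(0)) - \int_0^t \Big(\partial_\tau g^{N,T,x}_{\mathcal{I}} + \mathcal L^N g^{N,T,x}_{\mathcal{I}}(\tau,\cdot)\Big)(\eta^N(\tau-))\,d\tau
\]
is a càdlàg square integrable martingale. For each fixed $t_1$, Theorem~\ref{Paper02_thm_existence_uniqueness_IPS_spatial_muller} together with Lemma~\ref{Paper02_properties_greens_function_muller_ratchet}(ii) shows that $g(t_1,\eta^N(t))$ minus its compensator is a martingale. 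I then extend to time-dependent test functions by the standard argument (cf.\ Ethier--Kurtz, Lemma 4.3.4). Partition $[s,t]$ and telescope $g(t_{i+1},\eta(t_{i+1}))-g(t_i,\eta(t_i))$ into a space increment at frozen time plus a time increment at frozen state. Then take conditional expectations, using the martingale property for each frozen $t_i$ and the fundamental theorem of calculus in time from Lemma~\ref{Paper02_properties_greens_function_muller_ratchet}(i). Passing to the limit along the mesh requires dominated convergence. The second-moment bounds \eqref{Paper02_general_estimates_moments_green_function_and_derivatives_time_generator} in Lemma~\ref{Paper02_properties_greens_function_muller_ratchet}(iv), together with the joint continuity in (i)--(ii), supply uniform integrability. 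They also give square integrability.

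Next comes the drift. Summing \eqref{Paper02_formula_derivative_time_greens_function} and \eqref{Paper02_definition_action_generator_greens_function}, the migration terms cancel exactly, since $p^N$ solves the backward discrete heat equation. What remains is $\langle\sum_{k\in\mathcal I}F_k(\xi/N),\phi^{N,T,x}(\tau,\cdot)\rangle_N$, which yields \eqref{Paper02_finite_variation_process_uN}. Setting $A = $ this integral gives \eqref{Paper02_definition_general_formula_u_hat} at $t=T$. Finite variation is clear once the integrand is locally integrable, and this follows from Lemma~\ref{Paper02_simple_estimate_greens_function_representation_moments} because the $F_k$ are polynomially bounded.

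For the bracket, I run the same time-dependent Dynkin argument on $g^2$, using Lemma~\ref{Paper02_properties_greens_function_muller_ratchet}(iii),(v) and the same moment bounds. This shows that $g(t,\eta(t))^2 - \int_0^t (\partial_\tau g^2 + \mathcal L^N g^2)\,d\tau$ is a martingale. Writing $g = g(0) + M + A$ and applying Itô's product rule, the terms $2g\,\partial_\tau g + 2g\,\mathcal L^N g$ combine to $2g\,dA$. Since $A$ is continuous with finite variation, the quantity $M^2$ minus the remaining terms of \eqref{Paper02_definition_action_generator_on_square_greens_function} is a martingale. Those remaining terms are $\frac{1}{NL_N^2}\sum_y (p^N)^2 \sum_{k\in\mathcal I} F^+_k$ and the $\nabla_{L_N}$ terms. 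They are continuous and adapted, hence predictable, and this identifies \eqref{Paper02_predictable_bracket_process_uN_k_hat}.

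The main obstacle is justifying the time-dependent martingale property rigorously. The state space is infinite-dimensional and the generator is unbounded. We therefore rely on the integrability estimates of Lemma~\ref{Paper02_properties_greens_function_muller_ratchet}(iv) to interchange limits and conditional expectations in the Riemann-sum argument. The left limits $\eta^N(\tau-)$ in the integrands also need care, and I would handle them using the fact that a càdlàg process has countably many jumps.
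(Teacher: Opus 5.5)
Your proposal is correct and follows essentially the same route as the paper. The paper applies the time-dependent chain rule of Lemma~\ref{Paper02_general_integration_by_parts_formula} and the bracket formula of Lemma~\ref{Paper02_integration_by_parts_time_predictable_bracket_process}, checking their hypotheses via Lemma~\ref{Paper02_properties_greens_function_muller_ratchet}(i)--(v), and then uses the same cancellation of the migration terms together with $g^{N,T,x}_{\mathcal I}(T,\eta^N(T)) = U^N_{\mathcal I}(T,x)$ and $g^{N,T,x}_{\mathcal I}(0,\eta^N(0)) = \sum_{k\in\mathcal I}P^N_T u^N_k(0,\cdot)(x)$; the only difference is that you re-derive those two appendix lemmas inline (the Ethier--Kurtz partition argument and the It\^o product rule) instead of citing them, where $\int g(\tau-)\,dM$ is a priori only a local martingale, which is harmless since the compensator of $[M]$ is unique.
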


\begin{proof}
The result will follow from an application of Lemma~\ref{Paper02_properties_greens_function_muller_ratchet} and a standard stochastic chain rule formula (see Lemmas~\ref{Paper02_general_integration_by_parts_formula} and~\ref{Paper02_integration_by_parts_time_predictable_bracket_process} in the appendix). Indeed, let $N \in \mathbb{N}$, $x \in L_{N}^{-1}\mathbb{Z}$, $T \geq 0$ and $\mathcal{I} \subseteq \mathbb{N}_{0}$ be fixed. By the definition of~$g^{N,T,x}_{\mathcal{I}}$ in~\eqref{Paper02_green_function_representation_writing_as_lipschitz_map} and by~\eqref{Paper02_green_function_test_function},~\eqref{Paper02_local_mass_subset_indices_definition} and~\eqref{Paper02_approx_pop_density_definition}, we almost surely have
\begin{equation} \label{Paper02_using_green_function_density_process}
    U^{N}_{\mathcal{I}}(T,x) = g^{N,T,x}_{\mathcal{I}}(T, \eta^{N}(T)).
\end{equation}
We now claim that the map~$g^{N,T,x}_{\mathcal{I}}: [0,T] \times \mathcal{S}^{N} \rightarrow \mathbb{R}_{+}$ satisfies the conditions of Lemmas~\ref{Paper02_general_integration_by_parts_formula} and~\ref{Paper02_integration_by_parts_time_predictable_bracket_process} with $\mathcal S = \mathcal S^N$, $\eta = \eta^N$ and $\mathcal L = \mathcal L^N$. Indeed, checking the conditions for Lemma~\ref{Paper02_general_integration_by_parts_formula}, Lemma~\ref{Paper02_properties_greens_function_muller_ratchet} tells us that $g^{N,T,x}_{\mathcal{I}} \in \mathcal{C}([0,T] \times \mathcal S^N, \mathbb R)$, and~$g^{N,T,x}_{\mathcal{I}}$ satisfies condition~(i) by estimate~\eqref{Paper02_general_estimates_moments_green_function_and_derivatives_time_generator} and Jensen's inequality, condition~(ii) by Lemma~\ref{Paper02_properties_greens_function_muller_ratchet}(i), condition~(iii) by Lemma~\ref{Paper02_properties_greens_function_muller_ratchet}(ii), and condition~(iv) by~estimate~\eqref{Paper02_general_estimates_moments_green_function_and_derivatives_time_generator}. Then checking the conditions of Lemma~\ref{Paper02_integration_by_parts_time_predictable_bracket_process}, $g^{N,T,x}_{\mathcal{I}}$ satisfies condition~(i) by Lemma~\ref{Paper02_properties_greens_function_muller_ratchet}(iii) and~(v), and condition~(ii) by estimate~\eqref{Paper02_general_estimates_moments_green_function_and_derivatives_time_generator}. Therefore, by Lemma~\ref{Paper02_general_integration_by_parts_formula}, there exists a càdlàg martingale~$(M^{N,T,x}_{\mathcal{I}}(t))_{t \in [0,T]}$ with respect to the filtration $\{\mathcal F^{\eta^N}_{t +}\}_{t \geq 0}$ with $M^{N,T,x}_{\mathcal I}(0) = 0$ such that
\begin{equation} \label{Paper02_intermediate_semimartingale_formulation}
\begin{aligned}
& g^{N,T,x}_{\mathcal{I}}(T,\eta^{N}(T)) \\
& \quad =  g^{N,T,x}_{\mathcal{I}}(0,\eta^{N}(0)) + M^{N,T,x}_{\mathcal{I}}(T)
\\ & \; \quad + \int_{0}^{T} \left(\left(\frac{\partial}{\partial t} g^{N,T,x}_{\mathcal{I}}(\cdot, \eta^{N}(t-))\right)(t) + \mathcal{L}^{N}\Big(g^{N,T,x}_{\mathcal I}(t, \cdot )\Big) (\eta^{N}(t-))\right) \, dt
\\ & \quad = g^{N,T,x}_{\mathcal{I}}(0,\eta^{N}(0)) + M^{N,T,x}_{\mathcal{I}}(T) + \int_{0}^{T} \Bigg\langle \sum_{k \in \mathcal{I}} \, F_{k}(u^{N}(t-, \cdot)),\phi^{N,T,x}(t, \cdot) \Bigg\rangle_{N} \, dt
\\ & \quad = g^{N,T,x}_{\mathcal{I}}(0,\eta^{N}(0)) + M^{N,T,x}_{\mathcal{I}}(T) + A^{N,T,x}_{\mathcal{I}}(T),
\end{aligned}
\end{equation}
where in the second equality we used identities~\eqref{Paper02_formula_derivative_time_greens_function} and~\eqref{Paper02_definition_action_generator_greens_function}, and in the third equality we used~\eqref{Paper02_finite_variation_process_uN} and~\eqref{Paper02_green_function_test_function}. Moreover, by~\eqref{Paper02_green_function_representation_writing_as_lipschitz_map},~\eqref{Paper02_green_function_test_function} and the definition of $\{P^N_t\}_{t \geq 0}$ after~\eqref{Paper02_inf_gen_rw}, we have
\begin{equation} \label{Paper02_brownian_semigroup_initial_condition_heat_kernel}
    g^{N,T,x}_{\mathcal{I}}(0,\eta^{N}(0)) = \sum_{k \in \mathcal{I}} \, P^{N}_{T}u^{N}_{k}(0, \cdot)(x).
\end{equation}
Applying~\eqref{Paper02_definition_little_u_k_N_hat},~\eqref{Paper02_using_green_function_density_process} and~\eqref{Paper02_brownian_semigroup_initial_condition_heat_kernel} to~\eqref{Paper02_intermediate_semimartingale_formulation}, and then rearranging terms, we get
\begin{equation*}
    \widehat{U}^{N}_{\mathcal{I}}(T,x) = M^{N,T,x}_{\mathcal{I}}(T) + A^{N,T,x}_{\mathcal{I}}(T).
\end{equation*}

It remains to compute the predictable bracket process of~$(M^{N,T,x}_{\mathcal{I}}(t))_{t \in [0,T]}$. By Lemma~\ref{Paper02_integration_by_parts_time_predictable_bracket_process} in the appendix, we have, for~$t \in [0,T]$,
\begin{equation*}
\begin{aligned}
    \left\langle M^{N,T,x}_{\mathcal{I}}\right\rangle(t) &= \int_{0}^{t} \Big(\mathcal{L}^{N}\Big((g^{N,T,x}_{\mathcal{I}})^{2}(\tau, \cdot)\Big)(\eta^{N}(\tau-)) \\
    & \hspace{2.5cm} - 2g^{N,T,x}_{\mathcal{I}}(\tau,\eta^{N}(\tau-))\mathcal{L}^{N}\Big(g^{N,T,x}_{\mathcal{I}}(\tau, \cdot)\Big)(\eta^{N}(\tau-)) \Big) d\tau.
\end{aligned}
\end{equation*}
Hence, by~\eqref{Paper02_definition_action_generator_on_square_greens_function}, we get~\eqref{Paper02_predictable_bracket_process_uN_k_hat}, as claimed.
\ignore{where we used the local central limit theorem for random walks (see for instance~\cite[Thm~2.1.1]{lawler2010random}) on the second estimate, and the definition of $\vert \vert \vert \cdot \vert \vert \vert_{\mathcal{S}^{N}}$. As we will prove in Proposition~\ref{Paper02_existence_solution_martingale_problem} in Section~\ref{Paper02_formal_construction_generator_section}, this means $g^{N,T,x}_{\mathcal{I}}(\cdot, t)$ belongs to the domain of the generator $\mathcal{L}^{N}$ of our discrete process. Hence, we can apply an integration by parts formula (see~\cite[Thm~4.7.1]{ethier2009markov}), obtaining, for all $t \in [0,T]$,
\begin{equation} \label{Paper02_integration_parts_green_function}
\begin{aligned}
     & \left\langle U^{N}_{\mathcal{I}}(t,\cdot), \phi^{N,T,x} (t,\cdot) \right\rangle_{N} \\ & \quad = \left\langle U^{N}_{\mathcal{I}}(0,\cdot), \phi^{N,T,x} (0,\cdot) \right\rangle_{N} + M^{N,T,x}_{\mathcal{I}}(t) + \int_{0}^{t} \left\langle U^{N}_{\mathcal{I}}(\tau-,\cdot), \partial_{\tau} \phi^{N,T,x} (\tau,\cdot) \right\rangle_{N} \; d\tau \\ & \quad  \quad \quad + \int_{0}^{t} \frac{m_{N}}{2} \left(\mathcal{L}^{N}_{m} g^{N,T,x}_{\mathcal{I}}(\cdot, \tau)\right)\left(\eta^{N}(\tau-)\right) \; d\tau + \int_{0}^{t} \left(\mathcal{L}^{N}_{r} g^{N,T,x}_{\mathcal{I}}(\cdot, \tau)\right)\left(\eta^{N}(\tau-)\right) \; d\tau,
\end{aligned}
\end{equation}
where $\displaystyle \frac{m_{N}}{2} \mathcal{L}^{N}_{m}$ and $\mathcal{L}^{N}_{r}$ are, respectively, the migration and the reaction parts of the generator of $\eta^{N}$ (see~\eqref{Paper02_infinitesimal_generator}), and $\left(M^{N,T,x}_{\mathcal{I}}(t)\right)_{t \in [0,T]}$ is a càdlàg local martingale. In the above,~\eqref{Paper02_migration_y_y_plus_LN} corresponds to the migration of a particle carrying $k$ mutations from position $y$ to $y + L_{N}^{-1}$,~\eqref{Paper02_migration_y_y_minus_LN} corresponds to the migration of a particle carrying $k$ mutations from position $y$ to $y - L_{N}^{-1}$,~\eqref{Paper02_birth_y} corresponds to the birth of a particle carrying $k$ mutations at position $y$, and~\eqref{Paper02_death_y} corresponds to the death of a particle carrying $k$ mutations at position $y$.

By combining the definition of the generator of our interacting particle system $\mathcal{L}^{N}$ given by~\eqref{Paper02_generator_foutel_etheridge_model} with identities above, one conclude that the predictable bracket process of $\left(M^{N,T,x}_{\mathcal{I}}(t)\right)_{t \in [0,T]}$ is given by~\eqref{Paper02_predictable_bracket_process_uN_k_hat}. Then, by taking expectations on both sides of~\eqref{Paper02_predictable_bracket_process_uN_k_hat}, recalling the definition of the modified sequence of reaction terms $(F^{+}_{k})_{k \in \mathbb{N}_{0}}$ given by~\eqref{Paper02_reaction_predictable_bracket_process} and that $q_{+}$ and $q_{-}$ are polynomials, and then applying Proposition~\ref{Paper02_bound_total_mass}, we conclude that
\begin{equation*}
\begin{aligned}
     \mathbb{E}\left[\left\langle M^{N,T,x}_{\mathcal{I}}\right\rangle(T)\right] & \lesssim \frac{1}{NL_{N}^{2}} \, \sum_{y \in L_{N}^{-1}\mathbb{Z}} \, \int_{0}^{T} p^{N}(T-t,y - x)^{2} \, dt \\ & \quad \quad + \frac{m_{N}}{2NL_{N}^{2}} \, \sum_{y \in L_{N}^{-1}\mathbb{Z}} \, \int_{0}^{T} \left( p^{N}(T-t,y - x) - p^{N}(T-t,y + L_{N}^{-1} - x) \right)^{2}  \, dt \\ & \quad \quad + \frac{m_{N}}{2NL_{N}^{2}} \, \sum_{y \in L_{N}^{-1}\mathbb{Z}} \, \int_{0}^{T} \left( p^{N}(T-t,y - x) - p^{N}(T-t,y - L_{N}^{-1} - x) \right)^{2}  \, dt \\ & \lesssim \frac{1}{NL_{N}}\sqrt{T} + \frac{m_{N}}{NL_{N}^{2}},
\end{aligned}
\end{equation*}
where for the last inequality we applied estimates~\eqref{Paper02_random_walk_estimates_iv} and~\eqref{Paper02_random_walk_estimates_v} of Lemma~\ref{Paper02_random_walks_lemma}. Hence, $\left(M^{N,T,x}_{\mathcal{I}}(t)\right)_{t \in [0,T]}$ is a càdlàg martingale.

It remains to prove~\eqref{Paper02_finite_variation_process_uN}. By the definition of the test function $\phi^{N,T,x}$ given by~\eqref{Paper02_green_function_test_function}, we have that for all $t \in [0,T]$ and all $x \in L_{N}^{-1}\mathbb{Z}$,
\begin{equation*}
   \left\langle U^{N}_{\mathcal{I}}(\tau-,\cdot), \partial_{\tau} \phi^{N,T,x} (\tau,\cdot) \right\rangle_{N} + \frac{m_{N}}{2} \left(\mathcal{L}^{N}_{m} g^{N,T,x}_{\mathcal{I}}(\cdot, \tau)\right)\left(\eta^{N}(\tau-)\right) \equiv 0.
\end{equation*}
Moreover, we also conclude that
\begin{equation*}
    \left\langle U^{N}_{\mathcal{I}}(0,\cdot), \phi^{N,T,x} (0,\cdot) \right\rangle_{N} \equiv P^{N}_{T}U^{N}_{\mathcal{I}}(0,x).
\end{equation*}
Applying the identities above to~\eqref{Paper02_integration_parts_green_function} and recalling the definition of $\widehat{U}^{N}_{\mathcal{I}}$ given by~\eqref{Paper02_definition_little_u_k_N_hat}, we conclude that
\begin{equation*}
    \widehat{U}^{N}_{\mathcal{I}}(T,x) = M^{N,T,x}_{\mathcal{I}}(T) + \int_{0}^{T} \left(\mathcal{L}^{N}_{r} g^{N,T,x}_{\mathcal{I}}(\cdot, t)\right)\left(\eta^{N}(t-)\right) \; dt.
\end{equation*}
Hence, defining, for all $t \in [0,T]$,
\begin{equation*}
    A^{N,T,x}_{\mathcal{I}}(t) \defeq \int_{0}^{t} \left(\mathcal{L}^{N}_{r} g^{N,T,x}_{\mathcal{I}}(\cdot, \tau)\right)\left(\eta^{N}(\tau-)\right) \; d\tau,
\end{equation*}
and applying identities~\eqref{Paper02_birth_y} and~\eqref{Paper02_death_y}, we obtain~\eqref{Paper02_finite_variation_process_uN}. This concludes the proof of Lemma~\ref{Paper02_lemma_little_u_k_N_hat_semimartingale}.}
\end{proof}

We now apply Lemma~\ref{Paper02_lemma_little_u_k_N_hat_semimartingale} to obtain first moment estimates on the increments of $\widehat{U}^{N}_{\mathcal{I}}$ over time and space.

\begin{lemma} \label{Paper02_lemma_increments_liitle_u_hat_n_k}
Suppose $(m_N)_{N \in \mathbb N}$, $(L_N)_{N \in \mathbb N}$, $(s_k)_{k \in \mathbb N_0}$, $q_+$, $q_-$ and $f$ satisfy Assumptions~\ref{Paper02_scaling_parameters_assumption},~\ref{Paper02_assumption_fitness_sequence},~\ref{Paper02_assumption_polynomials} and~\ref{Paper02_assumption_initial_condition}. For each $N \in \mathbb N$, define $\boldsymbol \eta^N$ as in~\eqref{Paper02_initial_condition}. Then for any $T \geq 0$, there exists $C_{T} > 0$ such that for all $0 \leq T_{1} \leq T_{2} \leq T$, $N \in \mathbb{N}$ and $x_{1}, x_{2} \in L_{N}^{-1}\mathbb{Z}$,
\begin{equation} \label{Paper02_increments_space_time_expectation_bound_fixed_N}
\begin{aligned}
    & \sup_{\substack{\mathcal{I} \subseteq \mathbb{N}_{0}}} \; \mathbb{E}_{\boldsymbol \eta^N}\left[\left\vert \widehat{U}^{N}_{\mathcal{I}}(T_{1}, x_{1}) - \widehat{U}^{N}_{\mathcal{I}}(T_{2}, x_{2}) \right\vert\right] \\
    & \quad \leq C_{T} \left(\vert x_{1} - x_{2} \vert^{1/2} + \vert x_{1} - x_{2} \vert + \left(T_{2} - T_{1}\right)^{1/4}\right).
\end{aligned}
\end{equation}
\end{lemma}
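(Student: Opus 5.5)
We use the Green's function representation of Lemma~\ref{Paper02_lemma_little_u_k_N_hat_semimartingale}: $\widehat{U}^{N}_{\mathcal{I}}(T_i,x_i) = M^{N,T_i,x_i}_{\mathcal{I}}(T_i) + A^{N,T_i,x_i}_{\mathcal{I}}(T_i)$ for $i=1,2$. The difference then splits into a martingale part and a drift part. It suffices to treat pure space increments ($T_1=T_2$) and pure time increments ($x_1=x_2$) separately, and then apply the triangle inequality.

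\textbf{Drift part.} Since $q_\pm$ are polynomials and $s_k\le 1$, we have $\sum_{k\in\mathcal I}|F_k(u)|\le \sum_k F^+_k(u)\lesssim 1+\|u\|_{\ell_1}^{1+\deg q_-}$, uniformly in $\mathcal I$. Theorem~\ref{Paper02_bound_total_mass} then gives $\sup_{\tau\le T,y}\mathbb E[\sum_{k\in\mathcal I}|F_k(u^N(\tau-,y))|]\le C_T$. Therefore
\[
\mathbb E|A^{N,T_1,x_1}_{\mathcal I}(T_1)-A^{N,T_2,x_2}_{\mathcal I}(T_2)| \le C_T\int_0^{T_2}\frac1{L_N}\sum_y\big|p^N(T_1-\tau,y-x_1)\mathds 1_{\{\tau\le T_1\}}-p^N(T_2-\tau,y-x_2)\big|\,d\tau .
\]
For the time increment, the part with $\tau\in[T_1,T_2]$ contributes at most $T_2-T_1$, since $p^N/L_N$ is a probability. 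For $\tau\le T_1$, we need the $\ell_1$ distance between the laws of $X^N(T_1-\tau)$ and $X^N(T_2-\tau)$. By the random walk estimates of Lemma~\ref{Paper02_random_walks_lemma} (local CLT gradient bounds), this is $\lesssim ((T_2-T_1)/(T_1-\tau))^{1/2}\wedge 2$, and it integrates to $\lesssim (T_2-T_1)^{1/2}$ up to logs. For the space increment, the $\ell_1$ distance between the laws shifted by $x_1-x_2$ is $\lesssim |x_1-x_2|/\sqrt{T-\tau}\wedge 2$, which is integrable in $\tau$. These bounds give the $|x_1-x_2|$ and $(T_2-T_1)^{1/2}\le C(T_2-T_1)^{1/4}$ contributions.

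\textbf{Martingale part.} This is the main step. We bound $\mathbb E|M_1-M_2|\le \mathbb E[(M_1-M_2)^2]^{1/2}$. For the time increment, write $M^{N,T_1,x}(T_1)-M^{N,T_2,x}(T_2) = (M^{N,T_1,x}(T_1)-M^{N,T_2,x}(T_1)) - (M^{N,T_2,x}(T_2)-M^{N,T_2,x}(T_1))$. The process $M^{N,T_1,x}-M^{N,T_2,x}$ on $[0,T_1]$ is the martingale associated with $g^{N,T_1,x}_{\mathcal I}-g^{N,T_2,x}_{\mathcal I}$. By Lemma~\ref{Paper02_properties_greens_function_muller_ratchet}(vi) and the same chain-rule lemmas, its second moment equals the expectation of the integrated bracket~\eqref{Paper02_identity_martingale_difference_increments_generator}. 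The analogue of~\eqref{Paper02_predictable_bracket_process_uN_k_hat} handles the increment over $[T_1,T_2]$. Using Theorem~\ref{Paper02_bound_total_mass} to bound $\mathbb E[\sum_k F^+_k]$ and $\mathbb E[\|u^N\|_{\ell_1}]$ by constants, it remains to estimate deterministic sums. The first is $\frac{1}{NL_N^2}\int\sum_y (p^N(T_1-\tau,\cdot)-p^N(T_2-\tau,\cdot))^2$, which is $\lesssim \frac1{NL_N}\int \big((T_2-T_1)/(T_1-\tau)^{3/2}\wedge (T_1-\tau)^{-1/2}\big)d\tau\lesssim (T_2-T_1)^{1/2}$; here $1/(NL_N)\le C$ by Assumption~\ref{Paper02_scaling_parameters_assumption}. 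The second is the discrete-gradient term $\frac{m_N}{NL_N^4}\int\sum_y\delta_\pm$, which by $m_N/L_N^2\to m$ and the bound $\sum_y(\nabla p^N)^2/L_N\lesssim t^{-3/2}$ reduces to similar integrals. The increment over $[T_1,T_2]$ gives $\lesssim (T_2-T_1)^{1/2}+(T_2-T_1)$, where for the gradient term we use $\sum_y(\nabla_{L_N}p^N(t,y))^2\lesssim L_N^3\wedge L_N t^{-3/2}$ together with $L_N=\Theta(N)$.

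Taking square roots gives $(T_2-T_1)^{1/4}$. For the space increment, the bracket sums are $\lesssim |x_1-x_2|$ (e.g.\ $\int_0^T \frac{|x_1-x_2|^2}{(T-\tau)^{3/2}}\wedge (T-\tau)^{-1/2}\,d\tau\lesssim|x_1-x_2|$), whose square root gives $|x_1-x_2|^{1/2}$. The delicate point is to verify these random walk $L_2$ estimates uniformly in $N$, including the small-time regime where $p^N$ is not Gaussian. There we use the crude bounds $p^N\le L_N$ and the identity $\sum_y p^N(t,y)^2/L_N=p^N(2t,0)\lesssim L_N\wedge t^{-1/2}$. All constants are independent of $\mathcal I$, since we only used $\sum_{k\in\mathcal I}\le\sum_k$.
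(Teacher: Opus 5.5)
Your strategy is the same as the paper's: the Green's function representation, the drift/martingale split, separate space and time increments, the moment bound for $\mathbb E[\sum_k F^+_k]$, and Jensen/BDG with the bracket from Lemma~\ref{Paper02_properties_greens_function_muller_ratchet}(vi). The drift estimates are fine. So are the bracket terms built from $p^N$ itself: your pointwise bound $\frac1{L_N}\sum_y(p^N(s,y)-p^N(s+h,y))^2\lesssim hs^{-3/2}\wedge s^{-1/2}$ holds uniformly in $N$, for instance via Chapman--Kolmogorov and convexity of $t\mapsto p^N(t,0)$. The step that does not close as sketched is the discrete-gradient part $\frac{m_N}{NL_N^4}\int\sum_y\delta_\pm$ of the bracket.

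\emph{Time increment.} Consider $M^{N,T_1,x}(T_1)-M^{N,T_2,x}(T_1)$ with $h=T_2-T_1$. The ``similar integral'' your sketch points to is $\frac{m_N}{NL_N^3}\int_0^{T_1}\bigl(hs^{-5/2}\wedge s^{-3/2}\bigr)\,ds$, which diverges at $s=0$. Cutting off small times with crude bounds does not rescue it. Since $\sum_y(\nabla_{L_N}p^N(0,y))^2=2L_N^4$, the window $s\lesssim L_N^{-2}$ contributes order $m_N/(NL_N^2)\asymp N^{-1}$. That is not $\lesssim h^{1/2}$ once $h\ll N^{-2}$. (Incidentally, your $L_N^3\wedge L_Nt^{-3/2}$ should read $L_N^4\wedge L_Nt^{-3/2}$; your $[T_1,T_2]$ estimate still survives with the corrected bound.) You therefore need the time-difference structure for all $s$, including small $s$.

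The paper's device is to discard the gradient. It bounds $\delta_\pm\le 2L_N^2\bigl(p^N(T_1-t,\cdot)-p^N(T_2-t,\cdot)\bigr)^2$ plus the same term at the neighbouring site. Then~\eqref{Paper02_random_walk_estimates_v} gives $C\frac{m_N}{NL_N}h^{1/2}$, and $\frac{m_N}{NL_N}=\Theta(1)$ by Assumption~\ref{Paper02_scaling_parameters_assumption}.

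\emph{Space increment.} Here the gradient terms are not $\lesssim|x_1-x_2|$ through a continuum-type integral, which again diverges at $s=0$. Bound $\delta_\pm$ by $2(\nabla_{L_N}p^N(\cdot-x_1))^2+2(\nabla_{L_N}p^N(\cdot-x_2))^2$ and apply~\eqref{Paper02_random_walk_estimates_v} with shift $L_N^{-1}$: the gradient terms are then of order $m_N/(NL_N^2)\asymp N^{-1}$. They are absorbed into $|x_1-x_2|$ only because distinct points of $L_N^{-1}\mathbb Z$ satisfy $|x_1-x_2|\ge L_N^{-1}\gtrsim N^{-1}$. This is exactly where the lemma's restriction $x_1,x_2\in L_N^{-1}\mathbb Z$ is used, and your argument should say so explicitly.
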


\begin{proof}
    By the triangle inequality, we have that almost surely,
    \begin{equation} \label{Paper02_triangle_inequality_first_moment_increments_little_u_N_hat}
    \begin{aligned}
        & \left\vert \widehat{U}^{N}_{\mathcal{I}}(T_{1}, x_{1}) - \widehat{U}^{N}_{\mathcal{I}}(T_{2}, x_{2}) \right\vert \\
        & \quad \leq \left\vert \widehat{U}^{N}_{\mathcal{I}}(T_{2}, x_{1}) - \widehat{U}^{N}_{\mathcal{I}}(T_{2}, x_{2}) \right\vert + \left\vert \widehat{U}^{N}_{\mathcal{I}}(T_{2}, x_{1}) - \widehat{U}^{N}_{\mathcal{I}}(T_{1}, x_{1}) \right\vert.
    \end{aligned}
    \end{equation}
    We will bound the two terms on the right-hand side of~\eqref{Paper02_triangle_inequality_first_moment_increments_little_u_N_hat} separately. 

\medskip

\myemph{Step~$(1)$: Space increments}
We claim that the space increments satisfy the following bound: there exists $C_T^{(1)} > 0$ such that for all $0 \leq T_2 \leq T$, $N \in \mathbb N$, $x_1,x_2 \in L_N^{-1}\mathbb Z$, and $\mathcal I \subseteq \mathbb N_0$,
\begin{equation} \label{Paper02_spatial_increment_triangle_inequality_intermediate_final_bound}
    \mathbb{E}_{\boldsymbol{\eta}^N}\left[\left\vert \widehat{U}^{N}_{\mathcal{I}}(T_{2}, x_{1}) - \widehat{U}^{N}_{\mathcal{I}}(T_{2}, x_{2}) \right\vert \right] \leq C^{(1)}_T \left(\vert x_{1} - x_{2} \vert + {\vert x_{1} - x_{2} \vert}^{1/2}\right).
\end{equation}
We now prove~\eqref{Paper02_spatial_increment_triangle_inequality_intermediate_final_bound}. Observe first that by Lemma~\ref{Paper02_lemma_little_u_k_N_hat_semimartingale} and the triangle inequality, for $0 \leq T_2 \leq T$, $N \in \mathbb N$, $x_1,x_2 \in L_N^{-1}\mathbb Z$ and $\mathcal I \subseteq \mathbb N_0$,
\begin{equation} \label{Paper02_spatial_increment_triangle_inequality}
\begin{aligned}
    & \left\vert \widehat{U}^{N}_{\mathcal{I}}(T_{2}, x_{1}) - \widehat{U}^{N}_{\mathcal{I}}(T_{2}, x_{2}) \right\vert \\
    & \quad \leq \left\vert M^{N,T_{2},x_{1}}_{\mathcal{I}}(T_{2}) - M^{N,T_{2},x_{2}}_{\mathcal{I}}(T_{2}) \right\vert + \left\vert A^{N,T_{2},x_{1}}_{\mathcal{I}}(T_{2}) - A^{N,T_{2},x_{2}}_{\mathcal{I}}(T_{2}) \right\vert.
\end{aligned}
\end{equation}
We will again bound the terms on the right-hand side of~\eqref{Paper02_spatial_increment_triangle_inequality} separately. For the second term on the right-hand side, by~\eqref{Paper02_finite_variation_process_uN} from Lemma~\ref{Paper02_lemma_little_u_k_N_hat_semimartingale} and the triangle inequality, we conclude that
\begin{equation}
\label{Paper02_bounding_space_increment_finite_variation_process}
\begin{aligned}
    & \mathbb{E}_{\boldsymbol \eta^N}\left[\left\vert A^{N,T_{2},x_{1}}_{\mathcal{I}}(T_{2}) - A^{N,T_{2},x_{2}}_{\mathcal{I}}(T_{2})\right\vert \right]
    \\ & \quad \leq \frac{1}{L_{N}} \sum_{y \in L_{N}^{-1}\mathbb{Z}} \, \int_{0}^{T_{2}} \left\vert p^{N}\left(T_{2}-t,y - x_{1}\right) - p^{N}\left(T_{2}-t,y - x_{2}\right)\right\vert \\
    & \hspace{6cm} \cdot \sum_{k \in \mathcal{I}} \mathbb{E}_{\boldsymbol \eta^N}\left[\left\vert F_{k}(u^{N}(t-, y))\right\vert\right] \, dt
    \\ & \quad \leq \frac{1}{L_{N}} \sum_{y \in L_{N}^{-1}\mathbb{Z}} \, \int_{0}^{T_{2}} \left\vert p^{N}\left(T_{2}-t,y - x_{1}\right) - p^{N}\left(T_{2}-t,y - x_{2}\right)\right\vert \\
    & \hspace{6cm} \cdot \sum_{k \in \mathcal{I}} \mathbb{E}_{\boldsymbol \eta^N}\left[ F_{k}^{+}(u^{N}(t-, y))\right] \, dt,
\end{aligned}
\end{equation}
where in the second inequality we used the definitions of~$(F_{k})_{k \in \mathbb{N}_{0}}$ in~\eqref{Paper02_reaction_term_PDE} and $(F^{+}_{k})_{k \in \mathbb{N}_{0}}$ in~\eqref{Paper02_reaction_predictable_bracket_process} together with the triangle inequality. Now, notice that there exists $C^{(1)}_{q_+,q_-,f}(T) > 0$ such that for any $\mathcal{I} \subseteq \mathbb{N}_{0}$, $t \in [0, T]$ and $y \in L_{N}^{-1}\mathbb{Z}$,
\begin{equation} \label{Paper02_good_behaviour_indices_modified_reaction_term}
\begin{aligned}
    & \sum_{k \in \mathcal{I}} \, \mathbb{E}_{\boldsymbol \eta^N}\left[ F_{k}^{+}(u^{N}(t-, y))\right] \\ & \, =  \sum_{k \in \mathcal{I}} \mathbb{E}_{\boldsymbol \eta^N}\Big[q_{+}\left(\| u^{N}(t-,y) \|_{\ell_{1}}\right)\left(s_{k}(1-\mu)u^{N}_{k}(t-,y) + \mathds{1}_{\{k \geq 1\}}s_{k-1}\mu u^{N}_{k-1}(t-,y)\right)  \\ & \quad \quad \quad \quad \quad + q_{-}\left(\| u^{N}(t-,y) \|_{\ell_{1}}\right)u^{N}_{k}(t-,y)\Big] \\ & \, \leq \mathbb{E}_{\boldsymbol \eta^N}\Big[\| u^{N}(t-,y) \|_{\ell_{1}}\left(q_{+}\left(\| u^{N}(t-,y) \|_{\ell_{1}}\right) + q_{-}\left(\| u^{N}(t-,y) \|_{\ell_{1}}\right)\right)\Big] \\ & \leq {C}^{(1)}_{q_{+}, q_{-},f}(T),
\end{aligned}
\end{equation}
where the first inequality follows since $s_{k} \leq 1$ for every~$k \in \mathbb{N}_{0}$ by Assumption~\ref{Paper02_assumption_fitness_sequence}, and the second inequality follows from Theorem~\ref{Paper02_bound_total_mass}. Hence, applying~\eqref{Paper02_good_behaviour_indices_modified_reaction_term} and then estimate~\eqref{Paper02_random_walk_estimates_ii} from Lemma~\ref{Paper02_random_walks_lemma} in the appendix to~\eqref{Paper02_bounding_space_increment_finite_variation_process}, we get that there exists $C^{(2)}_{q_+,q_-,f}(T) > 0$ such that for any $0 \leq T_2 \leq T$, $N \in \mathbb N$, $x_1,x_2 \in L_N^{-1}\mathbb Z$ and $\mathcal I \subseteq \mathbb N_0$,
\begin{equation} \label{Paper02_bounding_space_increment_finite_variation_process_II}
\begin{aligned}
    & \mathbb{E}_{\boldsymbol \eta^N}\left[\left\vert A^{N,T_{2},x_{1}}_{\mathcal{I}}(T_{2}) - A^{N,T_{2},x_{2}}_{\mathcal{I}}(T_{2})\right\vert \right] \\ & \quad \leq C^{(1)}_{q_+,q_-,f}(T) \frac{1}{L_{N}} \sum_{y \in L_{N}^{-1}\mathbb{Z}} \int_{0}^{T_{2}} \left\vert p^{N}\left(T_{2}-t,y - x_{1}\right) - p^{N}\left(T_{2}-t,y - x_{2}\right)\right\vert dt \\
    & \quad \leq C^{(2)}_{q_+,q_-,f}(T) \vert x_{1} - x_{2} \vert.
\end{aligned}
\end{equation}

Moving now to the first term on the right-hand side of~\eqref{Paper02_spatial_increment_triangle_inequality}, notice that since by Lemma~\ref{Paper02_lemma_little_u_k_N_hat_semimartingale}, both processes $(M^{N,T_{2},x_{1}}_{\mathcal{I}}(t))_{t \in [0,T_2]}$ and $(M^{N,T_{2},x_{2}}_{\mathcal{I}}(t))_{t \in [0,T_2]}$ are martingales with respect to the same filtration~$\{\mathcal{F}^{\eta^{N}}_{t+}\}_{t \in 
[0,T_2]}$, their difference is also a martingale with respect to this filtration. We now bound the predictable bracket process of the difference.
By Lemma~\ref{Paper02_properties_greens_function_muller_ratchet}(vi) and the same argument as in the proof of~\eqref{Paper02_predictable_bracket_process_uN_k_hat} in Lemma~\ref{Paper02_lemma_little_u_k_N_hat_semimartingale}, with~$g^{N,T,x}_{\mathcal{I}}$ replaced by~$g^{N,T_{2},x_{1}}_{\mathcal{I}} - g^{N,T_{2},x_{2}}_{\mathcal{I}}$, we conclude that
\begin{equation} \label{Paper02_first_bound_martingale_term_space_increments_i} 
\begin{aligned}
    & \mathbb{E}_{\boldsymbol \eta^N}\left[\left\langle M^{N,T_{2},x_{1}}_{\mathcal{I}} - M^{N,T_{2},x_{2}}_{\mathcal I} \right\rangle (T_{2})\right]
    \\
    & \quad = \frac{1}{NL_{N}^{2}} \sum_{y \in L_{N}^{-1}\mathbb{Z}} \int_{0}^{T_{2}} \left(p^{N}\left(T_{2}-t,y - x_{1}\right) - p^{N}\left(T_{2}-t,y - x_{2}\right)\right)^{2} \\
    & \hspace{7cm} \cdot \sum_{k \in \mathcal{I}} \mathbb{E}_{\boldsymbol \eta^N}\left[F_{k}^{+}(u^{N}(t-,y))\right]\, dt
    \\ & \quad \quad \quad + \frac{m_{N}}{2NL_{N}^{4}} \sum_{y \in L_{N}^{-1}\mathbb{Z}} \int_{0}^{T_{2}} \Big(\delta^{N,T_{2},T_{2},x_{1},x_{2}}_{-}(t,y) + \delta^{N,T_{2},T_{2},x_{1},x_{2}}_{+}(t,y) \Big) \\
    & \hspace{7cm} \cdot \sum_{k \in \mathcal{I}} \mathbb{E}_{\boldsymbol \eta^N}\left[u^{N}_{k}(t-,y)\right] \, dt,
\end{aligned}
\end{equation}
where the terms~$\delta^{N,T_{2},T_{2},x_{1},x_{2}}_{-}(t,y)$ and~$\delta^{N,T_{2},T_{2},x_{1},x_{2}}_{+}(t,y)$ are defined in~\eqref{Paper02_migration_backward_difference_green} and~\eqref{Paper02_migration_forward_difference_green} respectively. Using the elementary inequality~\eqref{Paper02_elem_ineq_binomial_power_p} with $p =2$, we have that for~$t \in [0,T_{2}]$ and~$y \in L_{N}^{-1}\mathbb{Z}$,
\begin{equation} \label{Paper02_auxiliary_elementary_space_increment}
\begin{aligned}
    & \delta^{N,T_{2},T_{2},x_{1},x_{2}}_{-}(t,y) \\
    & \quad \leq 2 \nabla_{L_N} p^{N}(T_{2} - t, y - L_N^{-1} - x_{1})^2 + 2 \nabla_{L_N} p^{N}(T_{2} - t, y - L_N^{-1} - x_{2})^2, \\ \textrm{and }\quad  & \delta^{N,T_{2},T_{2},x_{1},x_{2}}_{+}(t,y) \leq 2 \nabla_{L_N} p^{N}(T_{2} - t, y - x_{1})^2 + 2 \nabla_{L_N} p^{N}(T_{2} - t, y - x_{2})^2.
\end{aligned}
\end{equation}
Therefore, substituting into~\eqref{Paper02_first_bound_martingale_term_space_increments_i}, using~\eqref{Paper02_good_behaviour_indices_modified_reaction_term} and the random walk estimate~\eqref{Paper02_random_walk_estimates_v} from Lemma~\ref{Paper02_random_walks_lemma} in the appendix to bound the first term on the right-hand side, and using Theorem~\ref{Paper02_bound_total_mass},~\eqref{Paper02_auxiliary_elementary_space_increment},~\eqref{Paper02_discrete_grad} and~\eqref{Paper02_random_walk_estimates_v} again to bound the second term on the right-hand side, there exists $C^{(1)}_{q_+,q_-,f,m}(T)>0$ such that for any $0 \leq T_2 \leq T$, $N \in \mathbb N$, $x_1,x_2 \in L_N^{-1}\mathbb Z$ and $\mathcal I \subseteq \mathbb N_0$,
\begin{equation} \label{Paper02_first_bound_martingale_term_space_increments_ii}
    \mathbb E_{\boldsymbol \eta^N}\left[\left\langle M^{N,T_{2},x_{1}}_{\mathcal{I}} - M^{N,T_{2},x_{2}}_{\mathcal I} \right\rangle (T_{2})\right] 
     \leq C^{(1)}_{q_+,q_-,f,m}(T)\left(\frac{1}{NL_N} \vert x_1 - x_2 \vert + \frac{m_N}{NL_N} \cdot L_N^{-1}\right).
\end{equation}
Observe that there exists a constant $K > 0$ such that for any càdlàg martingale~$(M(t))_{t \geq 0}$ with $M(0) = 0$ and any~$t \geq 0$, by Jensen's inequality and then by the Burkholder-Davis-Gundy (BDG) inequality,
\begin{equation} \label{Paper02_BDG_inequality}
\begin{aligned}
    \mathbb{E}\Big[\sup_{\tau \leq t} \vert M(\tau) \vert \Big]^2  \leq  \mathbb{E}\Big[\sup_{\tau \leq t} \vert M(\tau) \vert^{2} \Big] \leq K \mathbb{E}\Big[ [M](t) \Big] = K \mathbb{E}\Big[ \langle M \rangle(t) \Big].
\end{aligned}
\end{equation}
Therefore, by~\eqref{Paper02_first_bound_martingale_term_space_increments_ii} and since $m_N/L_N^2 \rightarrow m \in (0, \infty)$ and $L_N = \Theta(N)$ as $N \rightarrow \infty$ by Assumption~\ref{Paper02_scaling_parameters_assumption}, there exists $C^{(2)}_{q_+,q_-,f,m}(T) > 0$ such that for any $0 \leq T_2 \leq T$, $N \in \mathbb N$, $x_1 \neq x_2 \in L_N^{-1}\mathbb Z$ and $\mathcal I \subseteq \mathbb N_0$,
\begin{equation} \label{Paper02_estimate_martingale_spatial_increment}
    \mathbb{E}_{\boldsymbol{\eta}^N}\left[\left\vert \left(M^{N,T_{2},x_{1}}_{\mathcal{I}} - M^{N,T_{2},x_{2}}_{\mathcal{I}}\right) (T_{2}) \right\vert\right] \leq C^{(2)}_{q_+,q_-,f,m}(T)\vert x_{1} - x_{2} \vert^{1/2}.
\end{equation}
Hence, combining~\eqref{Paper02_spatial_increment_triangle_inequality},~\eqref{Paper02_bounding_space_increment_finite_variation_process_II} and~\eqref{Paper02_estimate_martingale_spatial_increment}, we obtain~\eqref{Paper02_spatial_increment_triangle_inequality_intermediate_final_bound}, as claimed.

\medskip

\myemph{Step~$(2)$: Time increments}
We will now bound the expectation of the second term on the right-hand side of~\eqref{Paper02_triangle_inequality_first_moment_increments_little_u_N_hat}; we claim that there exists $C^{(2)}_T>0$ such that for all $0 \leq T_1 \leq T_2 \leq T$, $N \in \mathbb N$, $x_1 \in L_N^{-1}\mathbb Z$ and $\mathcal I \subseteq \mathbb N_0$,
\begin{equation} \label{Paper02_temporal_increment_triangle_inequality_intermediate_final_bound}
    \mathbb{E}_{\boldsymbol{\eta}^N}\left[\left\vert \widehat{U}^{N}_{\mathcal{I}}(T_{2}, x_{1}) - \widehat{U}^{N}_{\mathcal{I}}(T_{1}, x_{1}) \right\vert \right] \leq C^{(2)}_T(T_{2} - T_{1})^{1/4}.
\end{equation}
To prove the claim, we begin by noting that by Lemma~\ref{Paper02_lemma_little_u_k_N_hat_semimartingale} and the triangle inequality, almost surely
\begin{equation} \label{Paper02_time_increment_triangle_inequality}
\begin{aligned}
    & \left\vert \widehat{U}^{N}_{\mathcal{I}}(T_{2}, x_{1}) - \widehat{U}^{N}_{\mathcal{I}}(T_{1}, x_{1}) \right\vert \\ & \quad \leq \left\vert A^{N,T_{2},x_{1}}_{\mathcal{I}}(T_{2}) - A^{N,T_{2},x_{1}}_{\mathcal{I}}(T_{1}) \right\vert + \left\vert A^{N,T_{2},x_{1}}_{\mathcal{I}}(T_{1}) - A^{N,T_{1},x_{1}}_{\mathcal{I}}(T_{1}) \right\vert \\ & \quad \quad \quad + \left\vert M^{N,T_{2},x_{1}}_{\mathcal{I}}(T_{2}) - M^{N,T_{2},x_{1}}_{\mathcal{I}}(T_{1}) \right\vert + \left\vert M^{N,T_{2},x_{1}}_{\mathcal{I}}(T_{1}) - M^{N,T_{1},x_{1}}_{\mathcal{I}}(T_{1}) \right\vert.
\end{aligned}
\end{equation}
We will tackle each of the terms on the right-hand side of~\eqref{Paper02_time_increment_triangle_inequality} separately. For the first term, note that for all $k \in \mathbb{N}_{0}$ and all $u \in \ell_{1}^{+}$, $\vert F_{k}(u) \vert \leq F^{+}_{k}(u)$ by~\eqref{Paper02_reaction_term_PDE} and~\eqref{Paper02_reaction_predictable_bracket_process}. Therefore, by~\eqref{Paper02_finite_variation_process_uN} in Lemma~\ref{Paper02_lemma_little_u_k_N_hat_semimartingale}, and by Fubini's theorem, and then by~\eqref{Paper02_good_behaviour_indices_modified_reaction_term}, for $0 \leq T_1 \leq T_2 \leq T$, $N \in \mathbb N$, $x_1 \in L_N^{-1}\mathbb Z$ and $\mathcal I \subseteq \mathbb N_0$,
\begin{equation}
\label{Paper02_bounding_time_increment_finite_variation_process_larger_time}
\begin{aligned}
    & \mathbb{E}_{\boldsymbol{\eta}^N}\left[\left\vert A^{N,T_{2},x_{1}}_{\mathcal{I}}(T_{2}) - A^{N,T_{2},x_{1}}_{\mathcal{I}}(T_{1})\right\vert \right] \\ & \quad \leq \frac{1}{L_{N}} \sum_{y \in L_{N}^{-1}\mathbb{Z}} \int_{T_{1}}^{T_{2}} p^{N}\left(T_{2}-t,y - x_{1}\right) \cdot \sum_{k \in \mathcal{I}} \mathbb{E}_{\boldsymbol \eta^N}\Big[F^{+}_{k}(u^{N}(t-,y))\Big] \, dt \\
    & \quad \leq C^{(1)}_{q_+,q_-,f}(T) \cdot \frac{1}{L_{N}} \sum_{y \in L_{N}^{-1}\mathbb{Z}} \int_{T_{1}}^{T_{2}} p^{N}\left(T_{2}-t,y - x_{1}\right) \, dt \\
    & \quad = C^{(1)}_{q_+,q_-,f}(T)(T_{2} - T_{1}),
\end{aligned}
\end{equation}
where for the last identity we applied the fact that, by the definition of $p^{N}$ in~\eqref{Paper02_definition_p_N_test_function_green},
\begin{equation} \label{Paper02_simple_identity_total_sum_pN}
    \frac{1}{L_{N}} \sum_{y \in L_{N}^{-1}\mathbb{Z}} p^{N}\left(\tau,y - x_{1}\right) = 1 \quad \forall \, \tau \geq 0.
\end{equation}

For the second term on the right-hand side of~\eqref{Paper02_time_increment_triangle_inequality}, we again apply~\eqref{Paper02_finite_variation_process_uN} from Lemma~\ref{Paper02_lemma_little_u_k_N_hat_semimartingale} and use that $\vert F_k(u) \vert \leq F^{+}_k(u)$ $\forall \, k \in \mathbb N_0$, $u \in \ell_1^+$, and then use estimate~\eqref{Paper02_good_behaviour_indices_modified_reaction_term} and the random walk estimate~\eqref{Paper02_random_walk_estimates_i} from Lemma~\ref{Paper02_random_walks_lemma} in the appendix, obtaining that there exists $C^{(3)}_{q_+,q_-,f,m}(T) > 0$ such that for any $0 \leq T_1 \leq T_2 \leq T$, $N \in \mathbb N$, $x_1 \in L_N^{-1}\mathbb Z$ and $\mathcal I \subseteq \mathbb N_0$, 
\begin{equation}
\label{Paper02_bounding_time_increment_finite_variation_process_smaller_time}
\begin{aligned}
    & \mathbb{E}_{\boldsymbol{\eta}^N}\left[\left\vert A^{N,T_{2},x_{1}}_{\mathcal{I}}(T_{1}) - A^{N,T_{1},x_{1}}_{\mathcal{I}}(T_{1})\right\vert \right]
    \\ & \quad \leq \frac{1}{L_{N}} \sum_{y \in L_{N}^{-1}\mathbb{Z}} \int_{0}^{T_{1}} \left\vert p^{N}\left(T_{2}-t,y - x_{1}\right) - p^{N}\left(T_{1}-t,y - x_{1}\right)\right\vert 
    \\ & \hspace{6cm} \cdot\sum_{k \in \mathcal{I}} \mathbb{E}_{\boldsymbol{\eta}^N}\Big[F^{+}_{k}(u^{N}(t-,y))\Big] \, dt
    \\
    & \quad \leq C^{(3)}_{q_+,q_-,f,m}(T) (T_{2} - T_{1})^{1/2}.
\end{aligned}
\end{equation}

Now tackling the third term on the right-hand side of~\eqref{Paper02_time_increment_triangle_inequality}, by~\eqref{Paper02_predictable_bracket_process_uN_k_hat} from Lemma~\ref{Paper02_lemma_little_u_k_N_hat_semimartingale} combined with~\eqref{Paper02_good_behaviour_indices_modified_reaction_term} and Theorem~\ref{Paper02_bound_total_mass}, there exists $C^{(3)}_{q_+,q_-,f}(T) > 0$ such that for any $0 \leq T_1 \leq T_2 \leq T$, $N \in \mathbb N$, $x_1 \in L_N^{-1}\mathbb Z$ and $\mathcal I \subseteq \mathbb N_0$,
\begin{equation} \label{Paper02_intermediate_step_star_control_fluctiation_time_increment_i}
\begin{aligned}
    & \mathbb{E}_{\boldsymbol{\eta}^N}\left[\left\langle M^{N,T_{2},x_{1}}_{\mathcal{I}} \right\rangle (T_{2}) - \left\langle M^{N,T_{2},x_{1}}_{\mathcal{I}}\right\rangle (T_{1}) \right]
    \\ & \quad \leq C^{(3)}_{q_+,q_-,f}(T) \Bigg( \frac{1}{NL_{N}^{2}} \, \sum_{y \in L_{N}^{-1}\mathbb{Z}} \, \int_{T_{1}}^{T_{2}} p^{N}(T_{2}-\tau,y - x_{1})^{2} \, d\tau
    \\ & \quad \quad + \frac{m_{N}}{2NL_{N}^{4}} \, \sum_{y \in L_{N}^{-1}\mathbb{Z}} \, \int_{T_{1}}^{T_{2}} \Big( \nabla_{L_N} p^{N}(T_{2}-\tau,y - L_N^{-1} - x_{1})^2 \\
    & \hspace{5cm} + \nabla_{L_N} p^{N}(T_{2}-\tau,y - x_{1})^2 \Big) d\tau\Bigg).
\end{aligned}
\end{equation}
We will bound each term on the right-hand side of~\eqref{Paper02_intermediate_step_star_control_fluctiation_time_increment_i} separately. For the first term, by~\eqref{Paper02_simple_identity_total_sum_pN} and the fact that $p^N(t,z) \leq L_N$ $\forall \, t \geq 0, \, z \in L_N^{-1}\mathbb Z$, we can write
\begin{equation} \label{Paper02_intermediate_step_star_control_fluctiation_time_increment_ii}
    \frac{1}{NL_{N}^{2}} \, \sum_{y \in L_{N}^{-1}\mathbb{Z}} \, \int_{T_{1}}^{T_{2}} p^{N}(T_{2}-\tau,y - x_{1})^{2} \, d\tau \leq \frac{T_{2} - T_{1}}{N}.
\end{equation}
For the second term on the right-hand side of~\eqref{Paper02_intermediate_step_star_control_fluctiation_time_increment_i}, observe that by Fubini's theorem, and then by the Cauchy-Schwarz inequality,
\begin{equation*}
\begin{aligned}
    & \frac{m_{N}}{2NL_{N}^{4}} \, \sum_{y \in L_{N}^{-1}\mathbb{Z}} \, \int_{T_{1}}^{T_{2}} \nabla_{L_N} p^{N}(T_{2}-\tau,y - x_{1})^{2}  \, d\tau
    \\ & \quad = \frac{m_{N}}{2NL_{N}^{4}} \int_{0}^{T_{2}} \, \sum_{y \in L_{N}^{-1}\mathbb{Z}} \, \mathds{1}_{\{T_{1} \leq \tau \leq T_{2}\}} \left\vert \nabla_{L_N} p^{N}(T_{2}-\tau,y - x_{1}) \right\vert^{3/2}
    \\ & \hspace{6.5cm} \cdot \left\vert \nabla_{L_N} p^{N}(T_{2}-\tau,y - x_{1}) \right\vert^{1/2} \,  d\tau
    \\ & \quad \leq  \frac{m_{N}}{2NL_{N}^{4}} \int_{0}^{T_{2}} \, \Bigg(\sum_{y \in L_{N}^{-1}\mathbb{Z}} \, \mathds{1}_{\{T_{1} \leq \tau \leq T_{2}\}} \left\vert \nabla_{L_N} p^{N}(T_{2}-\tau,y - x_{1}) \right\vert^{3}\Bigg)^{1/2} \\ & \quad \quad \quad \quad \quad \quad \quad \quad \quad \quad \quad \quad \quad \cdot \Bigg(\sum_{y \in L_{N}^{-1}\mathbb{Z}} \, \left\vert \nabla_{L_N} p^{N}(T_{2}-\tau,y - x_{1}) \right\vert \Bigg)^{1/2} \, d\tau.
\end{aligned}
\end{equation*}
Using the definition of $\nabla_{L_N}p^N$ in~\eqref{Paper02_discrete_grad}, the fact that $p^N(t,z) \leq L_N$ $\forall \, t \geq 0, \, z \in L_N^{-1}\mathbb Z$, and the elementary inequality $(\vert a \vert + \vert b \vert)^3 \leq 4(\vert a\vert^3 + \vert b\vert^3) \; \forall \, a,b \in \mathbb R$, and then by~\eqref{Paper02_simple_identity_total_sum_pN}, it follows that
\begin{equation} \label{Paper02_intermediate_step_star_control_fluctiation_time_increment_iii}
\begin{split}
    & \frac{m_{N}}{2NL_{N}^{4}} \, \sum_{y \in L_{N}^{-1}\mathbb{Z}} \, \int_{T_{1}}^{T_{2}} \nabla_{L_N} p^{N}(T_{2}-\tau,y - x_{1})^2  \, d\tau \\ 
    & \quad \leq \frac{m_{N}}{2NL_{N}^{4}} \int_{0}^{T_{2}} \, \Bigg(4L_{N}^{5}\sum_{y \in L_{N}^{-1}\mathbb{Z}} \, \mathds{1}_{\{T_{1} \leq \tau \leq T_{2}\}} \\
    & \hspace{5cm} \cdot \Big(p^{N}(T_{2}-\tau,y - x_{1}) + p^{N}(T_{2}-\tau,y + L_{N}^{-1} - x_{1}) \Big)\Bigg)^{1/2} \\ 
    & \hspace{7cm} \cdot \Bigg(\sum_{y \in L_{N}^{-1}\mathbb{Z}} \, \left\vert \nabla_{L_N}
    p^{N}(T_{2}-\tau,y - x_{1}) \right\vert \Bigg)^{1/2} \, d\tau \\ 
    & \quad \leq \frac{\sqrt{2} m_{N}}{NL_{N}^{3/2}} \int_{0}^{T_{2}} \mathds{1}_{\{T_{1} \leq \tau \leq T_{2}\}} \Bigg(\sum_{y \in L_{N}^{-1}\mathbb{Z}} \, \left\vert \nabla_{L_N}
    p^{N}(T_{2}-\tau,y - x_{1}) \right\vert \Bigg)^{1/2} \, d\tau \\ & \quad \leq \frac{\sqrt{2} m_{N}}{NL_{N}^{3/2}} \Bigg(\int_{0}^{T_{2}} \mathds{1}_{\{T_{1} \leq \tau \leq T_{2}\}} \, d\tau\Bigg)^{1/2} \cdot \Bigg(\int_{0}^{T_{2}} \sum_{y \in L_{N}^{-1}\mathbb{Z}} \, \left\vert \nabla_{L_N} p^{N}(T_{2}-\tau,y - x_{1}) \right\vert \, d\tau \Bigg)^{1/2},
\end{split}
\raisetag{-5cm}
\end{equation}
where the last inequality follows by the Cauchy-Schwarz inequality. Therefore, by~\eqref{Paper02_random_walk_estimates_ii} from Lemma~\ref{Paper02_random_walks_lemma} in the appendix, we can write
\[
\frac{m_N}{2NL_N^4} \sum_{y \in L_N^{-1}\mathbb Z} \int_{T_1}^{T_2} \nabla_{L_N} p^N(T_2 - \tau, y-x_1)^2 \, d\tau \lesssim_{m,T} \frac{m_N}{NL_N^{3/2}} (T_2 - T_1)^{1/2} L_N^{1/2}.
\]
Since the same bound holds when $y$ is replaced by $y - L_N^{-1}$ on the left-hand side, by~\eqref{Paper02_intermediate_step_star_control_fluctiation_time_increment_i} and~\eqref{Paper02_intermediate_step_star_control_fluctiation_time_increment_ii} it follows that
\begin{equation} \label{Paper02_intermediate_step_star_control_fluctiation_time_increment_iv}
\begin{aligned}
    & \mathbb{E}_{\boldsymbol{\eta}^N}\left[\left\langle M^{N,T_{2},x_{1}}_{\mathcal{I}} \right\rangle (T_{2}) - \left\langle M^{N,T_{2},x_{1}}_{\mathcal{I}}\right\rangle (T_{1}) \right] \\
    & \quad \lesssim_{m,T} C^{(3)}_{q_+,q_-,f}(T) \left(\frac{T_2 - T_1}{N} + \frac{m_N}{NL_N} (T_2 - T_1)^{1/2}\right).
\end{aligned}
\end{equation}

By Assumption~\ref{Paper02_scaling_parameters_assumption}, we have $L_{N} = \Theta(N)$ and $m_{N}/L_{N}^{2} \rightarrow m \in (0, \infty)$ as $N \rightarrow \infty$. Therefore, by~\eqref{Paper02_intermediate_step_star_control_fluctiation_time_increment_iv}, and since $\left(M^{N,T_{2},x_{1}}_{\mathcal{I}}(t) - M^{N,T_{2},x_{1}}_{\mathcal{I}}(T_{1})\right)_{t \in [T_{1}, T_{2}]}$ is a martingale, by applying~Jensen's and the BDG inequalities as in~\eqref{Paper02_BDG_inequality}, we conclude that there exists $C^{(4)}_{q_+,q_-,f,m}(T) > 0$ such that for any $0 \leq T_1 \leq T_2 \leq T$, $N \in \mathbb N$, $x_1 \in L_N^{-1}\mathbb Z$ and $\mathcal I \subseteq \mathbb N_0$,
\begin{equation} \label{Paper02_estimate_martingale_time_increment_larger_time}
    \mathbb{E}_{\boldsymbol \eta^N}\left[\left\vert M^{N,T_{2},x_{1}}_{\mathcal{I}}(T_{2}) - M^{N,T_{2},x_{1}}_{\mathcal{I}}(T_{1}) \right\vert\right] \leq C^{(4)}_{q_+,q_-,f,m}(T)(T_{2} - T_{1})^{1/4}.
\end{equation}

It remains to bound the expectation of the fourth term on the right-hand side of~\eqref{Paper02_time_increment_triangle_inequality}. Since the processes~$(M^{N,T_{2},x_{1}}_{\mathcal{I}}(t))_{t \in [0,T_{2}]}$ and~$(M^{N,T_{1},x_{1}}_{\mathcal{I}}(t))_{t \in [0,T_{1}]}$ are martingales with respect to the filtration~$\{\mathcal{F}^{{\eta}^{N}}_{t+}\}_{t \geq 0}$, the càdlàg process $(M^{N,T_{2},x_{1}}_{\mathcal{I}}(t) - M^{N,T_{1},x_{1}}_{\mathcal{I}}(t))_{t \in [0,T_{1}]}$ is also a martingale with respect to the same filtration. By the same argument as for~\eqref{Paper02_first_bound_martingale_term_space_increments_i}, i.e.~by Lemma~\ref{Paper02_properties_greens_function_muller_ratchet}(vi) and the same argument as in the proof of~\eqref{Paper02_predictable_bracket_process_uN_k_hat} in Lemma~\ref{Paper02_lemma_little_u_k_N_hat_semimartingale}, with~$g^{N,T,x}_{\mathcal{I}}$ replaced by~$g^{N,T_{2},x_{1}}_{\mathcal{I}} - g^{N,T_{1},x_{1}}_{\mathcal{I}}$, we conclude that
\begin{equation} \label{Paper02_increments_time_xi}
\begin{aligned}
    & \mathbb{E}_{\boldsymbol \eta^N}\left[\left\langle M^{N,T_{2},x_{1}}_{\mathcal{I}} - M^{N,T_{1},x_{1}}_{\mathcal{I}} \right\rangle (T_{1})\right] \\
    & \quad = \frac{1}{NL_{N}^{2}} \sum_{y \in L_{N}^{-1}\mathbb{Z}} \int_{0}^{T_{1}} \left(p^{N}\left(T_{2}-t,y - x_{1}\right) - p^{N}\left(T_{1}-t,y - x_{1}\right)\right)^{2} \\
    & \hspace{6cm} \cdot \sum_{k \in \mathcal{I}} \mathbb{E}_{\boldsymbol{\eta}^N}\left[F_{k}^{+}(u^{N}(t-,y))\right]\, dt
    \\ & \quad \quad + \frac{m_{N}}{2NL_{N}^{4}} \sum_{y \in L_{N}^{-1}\mathbb{Z}} \int_{0}^{T_{1}} \Big(\delta^{N,T_{2},T_{1},x_{1},x_{1}}_{-}(t,y) + \delta^{N,T_{2},T_{1},x_{1},x_{1}}_{+}(t,y) \Big) \\
    & \hspace{6cm} \cdot \sum_{k \in \mathcal{I}} \mathbb{E}_{\boldsymbol \eta^N}\left[u^{N}_{k}(t-,y)\right] \, dt,
\end{aligned}
\end{equation}
where the terms~$\delta^{N,T_{2},T_{1},x_{1},x_{1}}_{-}(t,y)$ and~$\delta^{N,T_{2},T_{1},x_{1},x_{1}}_{+}(t,y)$ are defined in~\eqref{Paper02_migration_backward_difference_green} and~\eqref{Paper02_migration_forward_difference_green} respectively. Using~\eqref{Paper02_discrete_grad} and that $(a + b)^2 \leq 2a^2 + 2b^2$ $\forall \, a,b \in \mathbb R$, for $t \in [0,T_1]$ and $y \in L_N^{-1} \mathbb Z$, we have
\begin{equation} \label{Paper02_increments_time_x}
\begin{split}
    \delta^{N,T_{2},T_{1},x_{1},x_{1}}_{-}(t,y) & \leq 2L_N^2 \Big(p^N(T_1-t, y-x_1) - p^N(T_2-t, y-x_1)\Big)^2 \\
     & \quad \; + 2L_N^2 \Big(p^N(T_1-t, y - L_N^{-1} -x_1) - p^N(T_2-t, y - L_N^{-1} -x_1)\Big)^2, \\
     \delta^{N,T_{2},T_{1},x_{1},x_{1}}_{+}(t,y) & \leq 2L_N^2 \Big(p^N(T_1-t, y-x_1) - p^N(T_2-t, y-x_1)\Big)^2 \\ & \quad \; + 2L_N^2 \Big(p^N(T_1-t, y + L_N^{-1} -x_1) - p^N(T_2-t, y + L_N^{-1} -x_1)\Big)^2.
\end{split}
\raisetag{-1.7cm}
\end{equation}
Hence, substituting into~\eqref{Paper02_increments_time_xi}, using~\eqref{Paper02_good_behaviour_indices_modified_reaction_term} and the random walk estimate~\eqref{Paper02_random_walk_estimates_v} from Lemma~\ref{Paper02_random_walks_lemma} in the appendix to bound the first term on the right-hand side, and using Theorem~\ref{Paper02_bound_total_mass},~\eqref{Paper02_increments_time_x} and~\eqref{Paper02_random_walk_estimates_v} again to bound the second term, there exists $C^{(5)}_{q_+,q_-,f,m}(T) > 0$ such that for any $0 \leq T_1 \leq T_2 \leq T$, $N \in \mathbb N$, $x_1 \in L_N^{-1}\mathbb Z$ and $\mathcal I \subseteq \mathbb N_0$,
\begin{equation*} 
\begin{aligned}
    & \mathbb{E}_{\boldsymbol \eta^N}\left[\left\langle M^{N,T_{2},x_{1}}_{\mathcal{I}} - M^{N,T_{1},x_{1}}_{\mathcal{I}} \right\rangle (T_{1})\right] \\
    & \quad \leq C^{(5)}_{q_+,q_-,f,m}(T) \left(\frac{({T_{2} - T_{1}})^{1/2}}{NL_{N}} + \frac{m_N}{NL_N}({T_{2} - T_{1}})^{1/2}\right).
\end{aligned}
\end{equation*}
Hence, by applying Jensen's and the BDG inequalities as in~\eqref{Paper02_BDG_inequality}, and since $m_N/L_N^2 \rightarrow m \in (0,\infty)$ and $L_N = \Theta(N)$ as $N \rightarrow \infty$ by Assumption~\ref{Paper02_scaling_parameters_assumption}, there exists $C^{(6)}_{q_+,q_-,f,m}(T)> 0$ such that for any $0 \leq T_1 \leq T_2 \leq T$, $N \in \mathbb N$, $x_1 \in L_N^{-1}\mathbb Z$ and $\mathcal I \subseteq \mathbb N_0$,
\begin{equation} \label{Paper02_estimate_martingale_time_increment_lower_times}
    \mathbb{E}_{\boldsymbol \eta^N}\left[\left\vert (M^{N,T_{2},x_{1}}_{\mathcal{I}} - M^{N,T_{1},x_{1}}_{\mathcal{I}}) (T_{1}) \right\vert\right] \leq C^{(6)}_{q_+,q_-,f,m}(T) \left(T_{2} - T_{1}\right)^{1/4}.
\end{equation}

Combining~\eqref{Paper02_time_increment_triangle_inequality},~\eqref{Paper02_bounding_time_increment_finite_variation_process_larger_time},~\eqref{Paper02_bounding_time_increment_finite_variation_process_smaller_time},~\eqref{Paper02_estimate_martingale_time_increment_larger_time} and~\eqref{Paper02_estimate_martingale_time_increment_lower_times}, our claim~\eqref{Paper02_temporal_increment_triangle_inequality_intermediate_final_bound} holds. Therefore, combining our bounds on space increments~\eqref{Paper02_spatial_increment_triangle_inequality_intermediate_final_bound} and time increments~\eqref{Paper02_temporal_increment_triangle_inequality_intermediate_final_bound} with~\eqref{Paper02_triangle_inequality_first_moment_increments_little_u_N_hat}, the proof is complete.
\end{proof}

By combining standard random walk estimates with Lemma~\ref{Paper02_lemma_increments_liitle_u_hat_n_k}, we can now derive an integral form of equicontinuity for~$\Big((U^{N}_{\mathcal{I}}(t,x))_{t \in [0,T], \, x \in \mathbb{R}}\Big)_{N \in \mathbb{N}}$ that will be useful for the characterisation of the limiting process in Section~\ref{Paper02_tightness_L_P_l_1spaces}.

\begin{lemma}
\label{Paper02_spatial_increment_bound_usual}
Under the conditions of Lemma~\ref{Paper02_lemma_increments_liitle_u_hat_n_k}, for any~$T > 0$, there exists~$C_{T} > 0$ such that for any $N \in \mathbb N$, $\mathcal I \subseteq \mathbb N_0$, $\gamma_1 \in (-1,1)$, $\gamma_2 \in \mathbb R$ and $x \in \mathbb R$,
\begin{equation*}
\begin{aligned}
    & \mathbb{E}_{\boldsymbol \eta^N}\Bigg[\int_{0}^{T} \left\vert U^{N}_{\mathcal{I}}\left(t,x\right) - U^{N}_{\mathcal{I}}\left(t + \gamma_{1},x+\gamma_{2}\right) \cdot \mathds 1_{\{t + \gamma_1 \in [0,T]\}} \right\vert \, dt\Bigg] \\
    & \quad \leq C_{T}\left(\left\vert \gamma_{1}\right\vert^{1/4} + \left\vert \gamma_{2}\right\vert^{1/2} + \vert \gamma_2 \vert\right).
\end{aligned}
\end{equation*}
\end{lemma}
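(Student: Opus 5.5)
The plan is to decompose $U^N_{\mathcal I} = \widehat U^N_{\mathcal I} + \sum_{k\in\mathcal I} P^N_t u^N_k(0,\cdot)$ and treat the two pieces separately. Increments of $\widehat U^N_{\mathcal I}$ will be controlled by Lemma~\ref{Paper02_lemma_increments_liitle_u_hat_n_k}. Increments of the deterministic semigroup term will be controlled by random walk estimates, using that $\sum_{k}\|u^N(0,y)\|_{\ell_1}\le \|f\|_{L_\infty(\mathbb R;\ell_1)}$ uniformly in $y$ and $N$.

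\textbf{Reductions.} First split the integrand via the indicator. On the set where $t+\gamma_1\notin[0,T]$, a $t$-interval of length at most $|\gamma_1|$, the integrand is $U^N_{\mathcal I}(t,x)$. Its expectation is bounded by $C_1(T)$ through Theorem~\ref{Paper02_bound_total_mass} and linear interpolation, giving a contribution $\lesssim|\gamma_1|\le|\gamma_1|^{1/4}$. On the remaining set, use the triangle inequality through the intermediate point $(t+\gamma_1,x)$, separating a pure time increment at fixed $x$ from a pure space increment at fixed time.

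Since $U^N_{\mathcal I}(t,\cdot)$ is the linear interpolation of deme values, for non-lattice $x$ I will write $U^N_{\mathcal I}(t,x)$ as a convex combination of the values at the two neighbouring demes. The space increment between $x$ and $x+\gamma_2$ is then bounded by convex combinations of increments between demes $z_1,z_2$ with $|z_1-z_2|\le|\gamma_2|+2L_N^{-1}$.

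\textbf{Key obstacle.} The slack $2L_N^{-1}$ is harmless when $|\gamma_2|\ge L_N^{-1}$. When $|\gamma_2|<L_N^{-1}$, I will instead use that $x$ and $x+\gamma_2$ lie in at most two adjacent cells. The interpolated difference is then at most $L_N|\gamma_2|$ times an increment between adjacent demes, whose expectation is $\lesssim L_N^{-1/2}$. The product is $\lesssim L_N^{1/2}|\gamma_2|\le|\gamma_2|^{1/2}$.

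\textbf{Increments of $\widehat U^N_{\mathcal I}$.} For lattice points, Lemma~\ref{Paper02_lemma_increments_liitle_u_hat_n_k} gives directly
\[
\mathbb E\bigl|\widehat U^N_{\mathcal I}(t,z_1)-\widehat U^N_{\mathcal I}(s,z_2)\bigr|\le C_T\bigl(|z_1-z_2|^{1/2}+|z_1-z_2|+|t-s|^{1/4}\bigr).
\]
Integrating over $t\in[0,T]$ costs only a factor $T$.

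\textbf{Semigroup term.} Write $\sum_{k\in\mathcal I}P^N_tu^N_k(0,\cdot)(z)=\frac1{L_N}\sum_y p^N(t,y-z)\,U^N_{\mathcal I}(0,y)$ with $0\le U^N_{\mathcal I}(0,y)\le\|f\|_{L_\infty(\mathbb R;\ell_1)}$.
\begin{itemize}
\item[] \emph{Space increments.} These are bounded by $\|f\|_{L_\infty(\mathbb R;\ell_1)}\frac1{L_N}\sum_y|p^N(t,y-z_1)-p^N(t,y-z_2)|$. The random walk estimates in Lemma~\ref{Paper02_random_walks_lemma} bound this by $\lesssim \min(2,|z_1-z_2|t^{-1/2})$, whose time integral over $[0,T]$ is $\lesssim|z_1-z_2|$ (up to a logarithm, which is absorbed in $|\gamma_2|^{1/2}$ for small $|\gamma_2|$). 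The estimate integrated in time, \eqref{Paper02_random_walk_estimates_ii}, already gives $\lesssim|z_1-z_2|$.
\item[] \emph{Time increments.} Here I will use the total-variation bound $\frac1{L_N}\sum_y|p^N(t+\gamma,y)-p^N(t,y)|\lesssim\min(2,\gamma/t)$. Its integral over $t\in[0,T]$ is $\lesssim|\gamma|\log(T/|\gamma|)\lesssim_T|\gamma|^{1/4}$. Alternatively, the integrated estimate \eqref{Paper02_random_walk_estimates_i} gives the bound $\lesssim|\gamma|^{1/2}$.
\end{itemize}

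Summing all contributions yields $C_T(|\gamma_1|^{1/4}+|\gamma_2|^{1/2}+|\gamma_2|)$, uniformly in $N$, $\mathcal I$ and $x$.
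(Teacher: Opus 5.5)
Your proposal is correct and follows essentially the same route as the paper. You both split off the boundary strip of length $|\gamma_1|$ using Theorem~\ref{Paper02_bound_total_mass}, then prove the bound at lattice points by writing $U^N_{\mathcal I}=\widehat U^N_{\mathcal I}+\sum_{k\in\mathcal I}P^N_t u^N_k(0,\cdot)$ and combining Lemma~\ref{Paper02_lemma_increments_liitle_u_hat_n_k} with the integrated random walk estimates \eqref{Paper02_random_walk_estimates_i}--\eqref{Paper02_random_walk_estimates_ii}, and finally pass to general $x$ by linear interpolation with the same case split $|\gamma_2|<L_N^{-1}$ versus $|\gamma_2|\ge L_N^{-1}$; the remaining differences are cosmetic (your intermediate point $(t+\gamma_1,x)$ instead of the paper's $(t,x+\gamma_2)$, and the pointwise-in-$t$ total-variation bounds, which are not needed because the integrated estimates already suffice).
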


\begin{proof}
The proof adapts ideas of Durrett and Fan from~\cite[Lemma~5]{durrett2016genealogies}. Recall from~\eqref{Paper02_local_mass_subset_indices_definition} and~\eqref{Paper02_approx_pop_density_definition} that~$U^{N}_{\mathcal{I}}(t,x)$ is defined via linear interpolation in space. We first observe that for $N \in \mathbb N$, $\mathcal I \subseteq \mathbb N_0$, $\gamma_1 \in (-1,1)$, $\gamma_2 \in \mathbb R$ and $x \in \mathbb R$, we have almost surely
\begin{equation} \label{Paper02_lem_equic_int_i}
\begin{aligned}
    & \int_{0}^{T} \left\vert U^{N}_{\mathcal{I}}\left(t,x\right) - U^{N}_{\mathcal{I}}\left(t + \gamma_{1},x+\gamma_{2}\right) \cdot \mathds 1_{\{t + \gamma_1 \in [0,T]\}} \right\vert \, dt \\ & \quad =  \int_{0 \vee (- \gamma_1)}^{T \wedge (T - \gamma_1)} \left\vert U^{N}_{\mathcal{I}}\left(t,x\right) - U^{N}_{\mathcal{I}}\left(t + \gamma_{1},x+\gamma_{2}\right) \right\vert \, dt + \int_{0}^{0 \vee (- \gamma_1)} U^{N}_{\mathcal{I}}\left(t,x\right) \, dt \\ & \quad \quad + \int_{T \wedge (T- \gamma_1)}^{T} U^{N}_{\mathcal{I}}\left(t,x\right) \, dt.
\end{aligned}
\end{equation}
We will bound the expectation of the terms on the right-hand side of~\eqref{Paper02_lem_equic_int_i} separately. For the second and third terms, by~\eqref{Paper02_local_mass_subset_indices_definition} and~\eqref{Paper02_approx_pop_density_definition}, and then by Theorem~\ref{Paper02_bound_total_mass}, there exists $C^{(1)}_{q_+,q_-,f}(T) > 0$ such that for any $N \in \mathbb{N}$, $\mathcal I \subseteq \mathbb N_0$, $\gamma_1 \in (-1,1)$ and $x \in \mathbb R$,
\begin{equation} \label{Paper02_lem_equic_int_ii}
\begin{aligned}
    & \mathbb{E}_{\boldsymbol \eta^N}\Bigg[\int_{0}^{0 \vee (- \gamma_1)} U^{N}_{\mathcal{I}}\left(t,x\right) \, dt + \int_{T \wedge (T- \gamma_1)}^{T} U^{N}_{\mathcal{I}}\left(t,x\right) \, dt \Bigg] \\ & \quad = \int_{0}^{0 \vee (- \gamma_1)} \mathbb E_{\boldsymbol \eta^N}\left[U^{N}_{\mathcal{I}}\left(t,x\right)\right] \, dt + \int_{T \wedge (T- \gamma_1)}^{T} \mathbb E_{\boldsymbol \eta^N}\left[U^{N}_{\mathcal{I}}\left(t,x\right)\right] \, dt \\ & \quad \leq C^{(1)}_{q_+,q_-,f}(T) \vert \gamma_1 \vert.
\end{aligned}
\end{equation}

It remains to bound the expectation of the first term on the right-hand side of~\eqref{Paper02_lem_equic_int_i}. For $N \in \mathbb N$, $\mathcal I \subseteq \mathbb N_0$, $\gamma_1 \in (-1,1)$, $t \in [0 \vee (- \gamma_1), T \wedge (T - \gamma_1)]$, and $y_1,y_2 \in  L_N^{-1}\mathbb Z$, by~\eqref{Paper02_definition_little_u_k_N_hat} and the triangle inequality we have
\begin{equation} \label{Paper02_lem_equic_int_iii}
\begin{aligned}
    & \left\vert U^N_{\mathcal I}(t, y_1) - U^N_{\mathcal I}(t + \gamma_1,y_2) \right\vert \\ & \quad \leq \left\vert \widehat U^N_{\mathcal I}(t, y_1) - \widehat U^N_{\mathcal I}(t + \gamma_1,y_2) \right\vert + \left\vert P^N_t U^N_{\mathcal I}(0, \cdot)(y_1) - P^N_{t+\gamma_1} U^N_{\mathcal I}(0,\cdot)(y_2) \right\vert.
\end{aligned}
\end{equation}
For the second term on the right-hand side of~\eqref{Paper02_lem_equic_int_iii}, recall from after~\eqref{Paper02_inf_gen_rw} that $\{P^N_t\}_{t \geq 0}$ is the semigroup associated to the simple symmetric random walk on $L_N^{-1}\mathbb Z$ with total jump rate $m_N$, and recall the definition of $p^N$ in~\eqref{Paper02_definition_p_N_test_function_green}. Since~$\boldsymbol \eta^N$ is given by~\eqref{Paper02_initial_condition}, where $f$ satisfies Assumption~\ref{Paper02_assumption_initial_condition}, we have $\| \eta^N(y) \|_{\ell_1} \leq N \| f \|_{L_\infty(\mathbb R; \ell_1)}$ $\forall \, y \in L_N^{-1}\mathbb Z$, and so
\begin{equation} \label{Paper02_lem_equic_int_iv}
\begin{aligned}
    & \mathbb E_{\boldsymbol \eta^N} \Bigg[\int_{0 \vee (-\gamma_1)}^{T \wedge (T - \gamma_1)} \left\vert P^N_t U^N_{\mathcal I}(0, \cdot)(y_1) - P^N_{t+\gamma_1} U^N_{\mathcal I}(0,\cdot)(y_2) \right\vert \, dt \Bigg] \\ & \quad \leq \int_{0 \vee (-\gamma_1)}^{T \wedge (T - \gamma_1)} \frac{1}{L_N} \sum_{y \in L_N^{-1}\mathbb Z} \| f \|_{L_{\infty}(\mathbb R; \ell_1)} \left\vert p^N(t, y - y_1) - p^N(t + \gamma_1, y-y_2)\right\vert \, dt \\ & \quad \lesssim_{T,m} \| f \|_{L_{\infty}(\mathbb R; \ell_1)} \left(\vert y_1 - y_2 \vert + \vert \gamma_1 \vert^{1/2} \right),
\end{aligned}
\end{equation}
where the second inequality follows from~\eqref{Paper02_random_walk_estimates_i} and~\eqref{Paper02_random_walk_estimates_ii} in Lemma~\ref{Paper02_random_walks_lemma} in the appendix. By~\eqref{Paper02_lem_equic_int_i}-\eqref{Paper02_lem_equic_int_iv} and Lemma~\ref{Paper02_lemma_increments_liitle_u_hat_n_k}, it follows that for $T > 0$, there exists $C^{(1)}_{T} > 0$ such that for any $N \in \mathbb N$, $\mathcal I \subseteq \mathbb N_0$, $\gamma_1 \in (-1,1)$ and $y_1, y_2 \in L_N^{-1}\mathbb Z$,
\begin{equation} \label{Paper02_lem_equic_int_v}
\begin{aligned}
    & \mathbb{E}_{\boldsymbol \eta^N}\Bigg[\int_{0}^{T} \left\vert U^{N}_{\mathcal{I}}\left(t,y_1\right) - U^{N}_{\mathcal{I}}\left(t + \gamma_{1},y_{2}\right) \cdot \mathds 1_{\{t + \gamma_1 \in [0,T]\}} \right\vert \, dt\Bigg] \\ & \quad \leq C^{(1)}_{T}\left(\left\vert \gamma_{1}\right\vert^{1/4} + \left\vert y_1 - y_2\right\vert^{1/2} + \vert y_1 - y_2 \vert\right).
\end{aligned}
\end{equation}

From now on in the proof, for $y \in \mathbb R$ we write $\lfloor y \rfloor_N \defeq L_N^{-1} \lfloor L_N y \rfloor \in L_N^{-1}\mathbb Z$. Take $\gamma_1 \in (-1,1)$ and $\gamma_2 \in \mathbb R$; for $N \in \mathbb N$, $\mathcal I \subseteq \mathbb N_0$, $t \in [0,T]$ and $x \in \mathbb R,$
\begin{equation} \label{Paper02_lem_equic_int_vi}
\begin{aligned}
    & \left\vert U^N_{\mathcal I} (t,x) - U^N_{\mathcal I}(t + \gamma_1, x + \gamma_2) \cdot \mathds 1_{\{t + \gamma_1 \in [0,T]\}}\right\vert \\ & \quad \leq \left\vert U^N_{\mathcal I} (t,x) - U^N_{\mathcal I}(t, x + \gamma_2)\right\vert + \left\vert U^N_{\mathcal I} (t,x + \gamma_2) - U^N_{\mathcal I}(t + \gamma_1, x + \gamma_2)\cdot \mathds 1_{\{t + \gamma_1 \in [0,T]\}}\right\vert.
\end{aligned}
\end{equation}
We will control the terms on the right-hand side of~\eqref{Paper02_lem_equic_int_vi} separately. For the first term, we consider the cases $\vert \gamma_2 \vert < L_N^{-1}$ and $\vert \gamma_2 \vert \geq L_N^{-1}$ separately.

First, suppose $\vert \gamma_2 \vert < L_N^{-1}$. Then we must have $x + \gamma_2 \in \left[\lfloor x \rfloor_N - L_N^{-1}, \lfloor x \rfloor_N + 2L_N^{-1} \right]$, and so by the mean value theorem,
\begin{equation*}
\begin{aligned}
    & \left\vert U^N_{\mathcal I} (t,x) - U^N_{\mathcal I}(t, x + \gamma_2)\right\vert  \\
    & \quad \leq L_N \vert \gamma_2 \vert \max_{y \in \{\lfloor x \rfloor_N - L_N^{-1}, \lfloor x \rfloor_N, \lfloor x \rfloor_N + L_N^{-1}\}} \; \left\vert U^N_{\mathcal I} (t,y + L_N^{-1}) - U^N_{\mathcal I}(t,y)\right\vert
    \\ & \quad \leq L_N \vert \gamma_2 \vert \sum_{y \in \{\lfloor x \rfloor_N - L_N^{-1}, \lfloor x \rfloor_N, \lfloor x \rfloor_N + L_N^{-1}\}} \left\vert U^N_{\mathcal I} (t,y + L_N^{-1}) - U^N_{\mathcal I}(t,y)\right\vert.  
\end{aligned}
\end{equation*}
By~\eqref{Paper02_lem_equic_int_v}, it follows that
\begin{equation} \label{Paper02_lem_equic_int_vii}
\begin{aligned}
    \mathbb{E}_{\boldsymbol \eta^N}\Bigg[\int_{0}^{T} \left\vert U^{N}_{\mathcal{I}}\left(t,x\right) - U^{N}_{\mathcal{I}}\left(t,x + \gamma_2\right) \right\vert \, dt\Bigg] & \leq 3C^{(1)}_{T} L_N \vert \gamma_2 \vert (L_N^{-1/2} + L_N^{-1}) \\ & \leq 3C^{(1)}_T \vert \gamma_2 \vert^{1/2} (1 + L_N^{-1/2}),
\end{aligned}
\end{equation}
where the second inequality follows since $\vert \gamma_2 \vert < L_N^{-1}$.

Now we consider the case $\vert \gamma_2 \vert \geq L_N^{-1}$. By the triangle inequality,
\begin{equation} \label{Paper02_lem_equic_int_viii}
\begin{aligned}
     & \left\vert U^N_{\mathcal I} (t,x) - U^N_{\mathcal I}(t, x + \gamma_2)\right\vert  \\
     & \quad \leq  \left\vert U^N_{\mathcal I} (t,x) - U^N_{\mathcal I}(t, \lfloor x \rfloor_N)\right\vert + \left\vert U^N_{\mathcal I} (t,x + \gamma_2) - U^N_{\mathcal I}(t, \lfloor x + \gamma_2\rfloor_N)\right\vert \\ 
     & \quad \qquad + \left\vert U^N_{\mathcal I} (t, \lfloor x \rfloor_N) - U^N_{\mathcal I}(t, \lfloor x + \gamma_2\rfloor_N)\right\vert.
\end{aligned}
\end{equation}
By applying~\eqref{Paper02_lem_equic_int_viii} to the integrand and using~\eqref{Paper02_lem_equic_int_vii} to bound the first two terms and~\eqref{Paper02_lem_equic_int_v} to bound the last term with the observation that $\vert \lfloor x \rfloor_N - \lfloor x + \gamma_2\rfloor_N \vert \leq \vert \gamma_2 \vert + L_N^{-1} \; \forall x \in \mathbb R$, we obtain
\begin{equation} \label{Paper02_lem_equic_int_ix}
\begin{aligned}
    & \mathbb{E}_{\boldsymbol \eta^N}\Bigg[\int_{0}^{T} \left\vert U^{N}_{\mathcal{I}}\left(t,x\right) - U^{N}_{\mathcal{I}}\left(t,x + \gamma_2\right) \right\vert \, dt\Bigg] \\ & \quad \leq 6 C^{(1)}_{T} L_N^{-1/2}(1 + L_N^{-1/2}) + C^{(1)}_{T} \left((\vert \gamma_{2} \vert + L_N^{-1})^{1/2} + \vert \gamma_{2} \vert + L_N^{-1}\right) \\ & \quad \leq (8 + \sqrt{2})C^{(1)}_{T}(1 + L_N^{-1/2})(\vert \gamma_2 \vert^{1/2} + \vert \gamma_2 \vert),
\end{aligned}
\end{equation}
where the second inequality follows since $\vert \gamma_2 \vert \geq L_N^{-1}$. 

It remains to control the second term on the right-hand side of~\eqref{Paper02_lem_equic_int_vi}. Since~$U^N_{\mathcal I}(t, \cdot)$ and~$U^N_{\mathcal I}(t + \gamma_1, \cdot)$ are defined by linear interpolation in space, we can write
\begin{equation*}
\begin{aligned}
    & \left\vert U^N_{\mathcal I}(t, x + \gamma_2) -  U^N_{\mathcal I}(t + \gamma_1, x + \gamma_2)\cdot \mathds 1_{\{t + \gamma_1 \in [0,T]\}} \right\vert \\ & \; = \Big\vert L_N(x + \gamma_2 - \lfloor x + \gamma_2 \rfloor_N)\Big(U^N_{\mathcal I}(t, \lfloor x + \gamma_2 \rfloor_N+L_N^{-1}) \\
    & \qquad \qquad \qquad \qquad \qquad \qquad \qquad -  U^N_{\mathcal I}(t + \gamma_1, \lfloor x + \gamma_2 \rfloor_N+L_N^{-1})\cdot \mathds 1_{\{t + \gamma_1 \in [0,T]\}}\Big) \\ & \qquad + L_N(\lfloor x + \gamma_2 \rfloor_N + L_N^{-1} - (x + \gamma_2))\Big(U^N_{\mathcal I}(t, \lfloor x + \gamma_2 \rfloor_N) \\ & \qquad \qquad \qquad \qquad \qquad \qquad \qquad \qquad \qquad -  U^N_{\mathcal I}(t + \gamma_1, \lfloor x + \gamma_2 \rfloor_N)\cdot \mathds 1_{\{t + \gamma_1 \in [0,T]\}}\Big) \Big\vert \\ & \quad \leq \sum_{y \in \{\lfloor x + \gamma_2 \rfloor_N, \lfloor x + \gamma_2 \rfloor_N + L_N^{-1}\}} \left\vert U^N_{\mathcal I}(t, y) - U^N_{\mathcal I}(t + \gamma_1, y)\cdot \mathds 1_{\{t + \gamma_1 \in [0,T]\}} \right\vert.
\end{aligned}
\end{equation*}
Therefore, by~\eqref{Paper02_lem_equic_int_v},
\begin{equation} \label{Paper02_lem_equic_int_x}
\begin{aligned}
    & \mathbb{E}_{\boldsymbol \eta^N}\Bigg[\int_{0}^{T} \left\vert U^{N}_{\mathcal{I}}\left(t,x + \gamma_2\right) - U^{N}_{\mathcal{I}}\left(t + \gamma_{1},x + \gamma_2\right) \cdot \mathds 1_{\{t + \gamma_1 \in [0,T]\}} \right\vert \, dt\Bigg] \\
    & \quad \leq 2C^{(1)}_T\left\vert \gamma_{1}\right\vert^{1/4}.
\end{aligned}
\end{equation}
Recall from Assumption~\ref{Paper02_scaling_parameters_assumption} that $L_N \rightarrow \infty$ as $N \rightarrow \infty$. Therefore, by combining~\eqref{Paper02_lem_equic_int_vi} with~\eqref{Paper02_lem_equic_int_vii} and~\eqref{Paper02_lem_equic_int_x} in the case $\vert \gamma_2 \vert < L_N^{-1}$ and with~\eqref{Paper02_lem_equic_int_ix} and~\eqref{Paper02_lem_equic_int_x} in the case $\vert \gamma_2 \vert \geq L_N^{-1}$, the result follows.
\end{proof}

Since the action of the reaction part of the generator of our process involves polynomials in the local number of particles (recall~\eqref{Paper02_infinitesimal_generator}), we will need a weak equicontinuity property for moments of the local density. This will be our next result.

\begin{lemma} \label{Paper02_lemma_increments_moments_densities}
    Under the conditions of Lemma~\ref{Paper02_lemma_increments_liitle_u_hat_n_k}, for $T \geq 0$ and $r\in \mathbb{N}$ there exists $C_{r,T} > 0$ such that for any $N \in \mathbb{N}$, $k \in \mathbb{N}_{0}$ and $x_{1},x_{2} \in \mathbb{R}$,
    \begin{equation*}
    \begin{aligned}
        & \int_{0}^{T} \mathbb{E}_{\boldsymbol{\eta}^N}\left[\left\vert u_{k}^{{{N}}}(t-,x_{1})\| u^{{{N}}}(t-,x_{1}) \|_{\ell_{1}}^{r}  - u_{k}^{{{N}}}(t-,x_{2})\| u^{{{N}}}(t-,x_{2}) \|_{\ell_{1}}^{r}\right\vert\right] \, dt \\ & \qquad \leq C_{r,T} \Big(\vert x_{1} - x_{2} \vert^{1/4} + \vert x_{1} - x_{2} \vert^{1/2}\Big).
    \end{aligned}
    \end{equation*}
\end{lemma}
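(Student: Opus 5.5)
The plan is to reduce the nonlinear increment to the linear increments already controlled in Lemma~\ref{Paper02_spatial_increment_bound_usual}, with the moment bounds of Theorem~\ref{Paper02_bound_total_mass} absorbing the polynomial factors. Write $a_i = u^N_k(t-,x_i)$ and $b_i = \|u^N(t-,x_i)\|_{\ell_1}$, $i\in\{1,2\}$. We use the telescoping identity
\begin{equation*}
a_1 b_1^r - a_2 b_2^r = (a_1 - a_2) b_1^r + a_2 \big(b_1^r - b_2^r\big),
\end{equation*}
together with the bound $|b_1^r - b_2^r| \le r|b_1 - b_2|(b_1^{r-1} + b_2^{r-1})$. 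Since $a_2 \le b_2$, the integrand is dominated by
\begin{equation*}
|a_1-a_2|\,b_1^r + r\,|b_1-b_2|\,(b_1^{r-1}+b_2^{r-1})\,b_2 .
\end{equation*}
Each term is the absolute increment of a linear functional, namely $U^N_{\{k\}}$ or $U^N_{\mathbb N_0}$ (note $\|u^N\|_{\ell_1}=U^N_{\mathbb N_0}$, since linear interpolation commutes with summation over $k$), multiplied by a polynomial in the local densities.

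Apply Cauchy--Schwarz in $(\omega,t)$ to each term. For the first term,
\begin{equation*}
\int_0^T \mathbb E\big[|a_1-a_2|\, b_1^r\big]\,dt \le \Big(\int_0^T \mathbb E\big[|a_1-a_2|^2\big]dt\Big)^{1/2}\Big(\int_0^T \mathbb E\big[b_1^{2r}\big]dt\Big)^{1/2}.
\end{equation*}
The second factor is bounded by Theorem~\ref{Paper02_bound_total_mass}, which extends to non-lattice points $x$ because linear interpolation is a convex combination. For the first factor, use $|a_1-a_2|^2 \le |a_1-a_2|\,(a_1+a_2)$ and apply Cauchy--Schwarz once more, or Hölder with exponents $(p,p')$ close to $1$. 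This gives
\begin{equation*}
\mathbb E\big[|a_1-a_2|^2\big] \le \mathbb E\big[|a_1-a_2|\big]^{1/2}\, \mathbb E\big[|a_1-a_2|^3\big]^{1/2}\lesssim \mathbb E\big[|a_1-a_2|\big]^{1/2},
\end{equation*}
again by the moment bound. Interchanging the time integral with Cauchy--Schwarz, we get $\int_0^T \mathbb E[|a_1-a_2|^2]\,dt \lesssim (\int_0^T \mathbb E|a_1-a_2|\,dt)^{1/2}$. Lemma~\ref{Paper02_spatial_increment_bound_usual} with $\gamma_1=0$, $\gamma_2=x_2-x_1$ bounds the inner integral by $C_T(|x_1-x_2|^{1/2}+|x_1-x_2|)$. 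Taking the two successive square roots yields
\begin{equation*}
\Big(\int_0^T \mathbb E\big[|a_1-a_2|^2\big]\,dt\Big)^{1/2} \lesssim \big(|x_1-x_2|^{1/2}+|x_1-x_2|\big)^{1/4},
\end{equation*}
which is too weak an exponent.

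To recover the claimed exponents, we avoid squaring the increment. Use a single Hölder step with the increment to the first power:
\begin{equation*}
\mathbb E\big[|a_1-a_2|\,b_1^r\big] \le \mathbb E\big[|a_1-a_2|\big]^{1/2}\, \mathbb E\big[|a_1-a_2|\,b_1^{2r}\big]^{1/2} .
\end{equation*}
The second factor is bounded uniformly by the moment bounds. Then integrate in $t$ and apply Cauchy--Schwarz in $t$, which gives
\begin{equation*}
\lesssim_T \Big(\int_0^T \mathbb E\big[|a_1-a_2|\big]\,dt\Big)^{1/2} \lesssim \big(|x_1-x_2|^{1/2}+|x_1-x_2|\big)^{1/2} \le |x_1-x_2|^{1/4}+|x_1-x_2|^{1/2}.
\end{equation*}
The second term is handled identically, with $U^N_{\mathbb N_0}$ in place of $u^N_k$ and the polynomial weight $(b_1^{r-1}+b_2^{r-1})b_2$. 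All constants are uniform in $N$, $k$, $x_1$ and $x_2$, because both Lemma~\ref{Paper02_spatial_increment_bound_usual} and Theorem~\ref{Paper02_bound_total_mass} are uniform over $\mathcal I\subseteq\mathbb N_0$ and over space.

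The main point requiring care is this choice of Hölder split. The increment must be kept to the first power so that the only regularity input is the $L^1$ estimate of Lemma~\ref{Paper02_spatial_increment_bound_usual}, whose exponent $1/2$ then halves to the stated $1/4$. A second point is that $t-$ may replace $t$: $u^N$ is càdlàg, so the two agree for Lebesgue-a.e.\ $t$, and the $dt$-integrals are unaffected.
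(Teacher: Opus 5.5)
Your final argument (after you discard the squared-increment detour) is correct and is essentially the paper's proof. It uses the same product-rule splitting into increments of $U^N_{\{k\}}$ and $U^N_{\mathbb N_0}$ weighted by powers of the local mass, a Cauchy--Schwarz split that keeps the increment to the first power (with $|a_1-a_2|\le b_1+b_2$ absorbing the second factor into the moment bound), Cauchy--Schwarz in time, and Lemma~\ref{Paper02_spatial_increment_bound_usual} with $\gamma_1=0$. Your observation that $t-$ may be replaced by $t$ for Lebesgue-a.e.\ $t$ is a small point the paper uses without comment.
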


\begin{proof}
    Observe that for~$t \in [0,T]$, 
    \begin{equation} \label{Paper02_bound_mixed_equic_i}
    \begin{aligned}
        & \left\vert u_{k}^{{{N}}}(t-,x_{1})\| u^{{{N}}}(t-,x_{1}) \|_{\ell_{1}}^{r}  - u_{k}^{{{N}}}(t-,x_{2})\| u^{{{N}}}(t-,x_{2}) \|_{\ell_{1}}^{r} \right\vert \\ & \quad = \Big\vert \| u^{{{N}}}(t-,x_{2}) \|_{\ell_{1}}^{r} \left(u^{N}_{k}(t-,x_{1}) - u^{N}_{k}(t-,x_{2})\right) \\ & \quad \quad \, + u^{N}_{k}(t-,x_{1}) \left(\| u^{{{N}}}(t-,x_{1}) \|_{\ell_{1}}^{r}  - \| u^{{{N}}}(t-,x_{2}) \|_{\ell_{1}}^{r}\right) \Big\vert \\ & \quad \leq \| u^{{{N}}}(t-,x_{2}) \|_{\ell_{1}}^{r} \left\vert u^{N}_{k}(t-,x_{1}) - u^{N}_{k}(t-,x_{2})\right\vert \\ & \quad \quad \, + u^{N}_{k}(t-,x_{1}) r \Big( \| u^N(t-,x_1) \|_{\ell_1} + \| u^N(t-,x_2) \|_{\ell_1} \Big)^{r-1} \\
        & \hspace{4cm} \cdot\left\vert\| u^{{{N}}}(t-,x_{1}) \|_{\ell_{1}}  - \| u^{{{N}}}(t-,x_{2}) \|_{\ell_{1}}\right\vert \\ & \quad \leq r \Big(\| u^{{{N}}}(t-,x_{1}) \|_{\ell_{1}} + \| u^{{{N}}}(t-,x_{2}) \|_{\ell_{1}}\Big)^{r}\\ & \quad \qquad \, \cdot \Big( \left\vert u^{N}_{k}(t-,x_{1}) - u^{N}_{k}(t-,x_{2})\right\vert + \left\vert\| u^{{{N}}}(t-,x_{1}) \|_{\ell_{1}}  - \| u^{{{N}}}(t-,x_{2}) \|_{\ell_{1}}\right\vert \Big),
    \end{aligned}
    \end{equation}
    where the first inequality follows since for any~$a,b \geq 0$ and~$r \in [1,\infty)$, by the mean value theorem,
    \begin{equation} \label{Paper02_elementary_factorising}
        \vert a^{r} - b^{r} \vert \leq r (a \vee b)^{r-1} \vert a - b\vert \leq r (a + b)^{r-1} \vert a - b\vert,
    \end{equation}
    and the second inequality follows since $u^N_k(t-,x_1) \leq \| u^N(t-,x_1) \|_{\ell_1}$. For $\mathcal I \subseteq \mathbb{N}_0$, recall the definition of $U^N_{\mathcal I}$ in~\eqref{Paper02_local_mass_subset_indices_definition}. By~\eqref{Paper02_bound_mixed_equic_i}, the proof will be complete after establishing that for $T \geq 0$ and $r \in \mathbb N$, there exists $C_{r,T}^{(1)} > 0$ such that for any $N \in \mathbb N$, $\mathcal I \subseteq \mathbb N_0$ and $x_1,x_2 \in \mathbb R$,
    \begin{equation} \label{Paper02_bound_mixed_equic_ii}
    \begin{aligned}
         & \int_{0}^{T} \mathbb{E}_{\boldsymbol{\eta}^N}\Big[\Big(\| u^{{{N}}}(t-,x_{1}) \|_{\ell_{1}} + \| u^{{{N}}}(t-,x_{2}) \|_{\ell_{1}}\Big)^{r} \left\vert U^{N}_{\mathcal{I}}(t-,x_1) - U^{N}_{\mathcal{I}}(t-,x_2) \right\vert\Big] \, dt \\ & \quad \leq C^{(1)}_{r,T} \Big(\vert x_{1} - x_{2} \vert^{1/4} + \vert x_{1} - x_{2} \vert^{1/2}\Big).
    \end{aligned}
    \end{equation}
    To bound the integrand on the left-hand side of~\eqref{Paper02_bound_mixed_equic_ii}, we observe that by the Cauchy-Schwarz inequality, for $t \in [0,T]$,
    \begin{equation} \label{Paper02_bound_mixed_equic_iii}
    \begin{aligned}
        & \mathbb{E}_{\boldsymbol{\eta}^N}\Big[\Big(\| u^{{{N}}}(t-,x_{1}) \|_{\ell_{1}} + \| u^{{{N}}}(t-,x_{2}) \|_{\ell_{1}}\Big)^{r} \left\vert U^{N}_{\mathcal{I}}(t-,x_1) - U^{N}_{\mathcal{I}}(t-,x_2) \right\vert\Big] \\ & \quad \leq \mathbb{E}_{\boldsymbol{\eta}^N}\Big[\Big(\| u^{{{N}}}(t-,x_{1}) \|_{\ell_{1}} + \| u^{{{N}}}(t-,x_{2}) \|_{\ell_{1}}\Big)^{2r} \left\vert U^{N}_{\mathcal{I}}(t-,x_1) - U^{N}_{\mathcal{I}}(t-,x_2) \right\vert\Big]^{1/2} \\ & \qquad \quad \cdot \mathbb{E}_{\boldsymbol{\eta}^N}\Big[\left\vert U^{N}_{\mathcal{I}}(t-,x_1) - U^{N}_{\mathcal{I}}(t-,x_2) \right\vert\Big]^{1/2}.
    \end{aligned}
    \end{equation}
    Note that by the definition of $u^N_k(t, \cdot)$ by linear interpolation after~\eqref{Paper02_approx_pop_density_definition}, we have that for $x \in \mathbb R$ and $\tau \geq 0$,
    \begin{equation} \label{Paper02_bound_mixed_equic_iv}
        \| u^N(\tau,x) \|_{\ell_1} \leq \| u^N(\tau,L_N^{-1} \lfloor L_Nx \rfloor) \|_{\ell_1} + \| u^N(\tau,L_N^{-1} (\lfloor L_Nx \rfloor + 1)) \|_{\ell_1}.
    \end{equation}
    Therefore, there exists $C^{(2)}_{r,T} > 0$ such that for any $N \in \mathbb N$, $\mathcal I \subseteq \mathbb N_0$, $t \in [0,T]$ and $x_1,x_2 \in \mathbb R$,
    \begin{equation} \label{Paper02_bound_mixed_equic_v}
    \begin{split}
        & \mathbb{E}_{\boldsymbol{\eta}^N}\Big[\Big(\| u^{{{N}}}(t-,x_{1}) \|_{\ell_{1}} + \| u^{{{N}}}(t-,x_{2}) \|_{\ell_{1}}\Big)^{2r} \left\vert U^{N}_{\mathcal{I}}(t-,x_1) - U^{N}_{\mathcal{I}}(t-,x_2) \right\vert\Big] \\ & \quad \leq \mathbb{E}_{\boldsymbol{\eta}^N}\Big[\Big(\| u^{{{N}}}(t-,x_{1}) \|_{\ell_{1}} + \| u^{{{N}}}(t-,x_{2}) \|_{\ell_{1}}\Big)^{2r+1}\Big] \\ & \quad \leq 2^{2r} \Big(\mathbb{E}_{\boldsymbol{\eta}^N}\Big[\| u^{{{N}}}(t-,x_{1}) \|_{\ell_{1}}^{2r+1}\Big] + \mathbb{E}_{\boldsymbol{\eta}^N}\Big[\| u^{{{N}}}(t-,x_{2}) \|_{\ell_{1}}^{2r+1}\Big]\Big) \\ & \quad \leq 2^{4r} \sum_{y \in \{L_N^{-1}\lfloor L_N x_1 \rfloor, L_N^{-1}(\lfloor L_N x_1 \rfloor + 1), L_N^{-1}\lfloor L_N x_2 \rfloor, L_N^{-1}(\lfloor L_N x_2 \rfloor + 1) \}} \mathbb E_{\boldsymbol \eta^N} \Big[\| u^{{{N}}}(t-,y) \|_{\ell_{1}}^{2r+1}\Big] \\ & \quad \leq C^{(2)}_{r,T},
    \end{split}
    \raisetag{-2.7cm}
    \end{equation}
    where the second and third inequalities follow from the elementary inequality
    \begin{equation} \label{Paper02_power_bound_elem}
    (a + b)^{2r+1} \leq 2^{2r}(a^{2r+1} + b^{2r+1}) \quad \forall \, a,b \geq 0, \; \forall \, r \in [1, \infty),
    \end{equation}
    combined with~\eqref{Paper02_bound_mixed_equic_iv} for the third inequality, and the last inequality follows from Theorem~\ref{Paper02_bound_total_mass}. Hence, combining~\eqref{Paper02_bound_mixed_equic_iii} and~\eqref{Paper02_bound_mixed_equic_v}, for $N \in \mathbb N$, $\mathcal I \subseteq \mathbb N_0$ and~$x_1,x_2 \in \mathbb R$,
    \begin{equation} \label{Paper02_bound_mixed_equic_vi}
    \begin{aligned}
        & \int_{0}^{T} \mathbb{E}_{\boldsymbol{\eta}^N}\Big[\Big(\| u^{{{N}}}(t-,x_{1}) \|_{\ell_{1}} + \| u^{{{N}}}(t-,x_{2}) \|_{\ell_{1}}\Big)^{r} \left\vert U^{N}_{\mathcal{I}}(t-,x_1) - U^{N}_{\mathcal{I}}(t-,x_2) \right\vert\Big] \, dt \\ & \quad \leq (C^{(2)}_{r,T})^{1/2} \int_0^T \mathbb E_{\boldsymbol \eta^N} \Big[\left\vert U^{N}_{\mathcal{I}}(t-,x_1) - U^{N}_{\mathcal{I}}(t-,x_2) \right\vert\Big]^{1/2} \, dt \\ & \quad \leq (C^{(2)}_{r,T})^{1/2} T^{1/2} \Bigg(\int_0^T \mathbb E_{\boldsymbol \eta^N} \Big[\left\vert U^{N}_{\mathcal{I}}(t-,x_1) - U^{N}_{\mathcal{I}}(t-,x_2) \right\vert\Big] \, dt\Bigg)^{1/2}, 
    \end{aligned}
    \end{equation}
    where the second inequality follows from Jensen's inequality. Since $(a + b)^{1/2} \leq a^{1/2} + b^{1/2}$ for all $a,b \geq 0$, estimate~\eqref{Paper02_bound_mixed_equic_ii} follows from applying Lemma~\ref{Paper02_spatial_increment_bound_usual} to~\eqref{Paper02_bound_mixed_equic_vi}. Then, by combining~\eqref{Paper02_bound_mixed_equic_i} and~\eqref{Paper02_bound_mixed_equic_ii}, the proof is complete.
\end{proof}

To finish this section, we will apply Lemma~\ref{Paper02_lemma_little_u_k_N_hat_semimartingale} to bound the expected density of particles carrying a high number of mutations.

\begin{lemma} \label{Paper02_lemma_control_local_density_high_number_mutations}
    Suppose the conditions of Lemma~\ref{Paper02_lemma_increments_liitle_u_hat_n_k} hold. For $k \in \mathbb{N}_{0}$, let $\mathcal{I}_{k} \defeq \{j \in \mathbb{N}_{0}: \; j \geq k\}$. Then, for any $T \geq 0$,
    \begin{equation*}
        \lim_{k \rightarrow \infty} \; \sup_{N \in \mathbb{N}} \; \sup_{t \in [0,T]} \; \sup_{x \in L_{N}^{-1}\mathbb{Z}} \mathbb{E}_{\boldsymbol \eta^N}\left[U^{N}_{\mathcal{I}_{k}}(t,x)\right] = 0.
    \end{equation*}
\end{lemma}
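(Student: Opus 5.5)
We take expectations in the Green's function representation of Lemma~\ref{Paper02_lemma_little_u_k_N_hat_semimartingale} with $\mathcal I = \mathcal I_k$. Since $M^{N,T,x}_{\mathcal I_k}$ is a martingale with $M^{N,T,x}_{\mathcal I_k}(0)=0$, for $t \in [0,T]$ and $x \in L_N^{-1}\mathbb Z$ we get
\[
\mathbb E_{\boldsymbol\eta^N}\big[U^N_{\mathcal I_k}(t,x)\big] = \sum_{j \geq k} P^N_t u^N_j(0,\cdot)(x) + \frac{1}{L_N}\sum_{y} \int_0^t p^N(t-\tau,y-x)\, \mathbb E_{\boldsymbol\eta^N}\Big[\sum_{j\geq k} F_j(u^N(\tau-,y))\Big]\, d\tau .
\]
The initial term is at most $\sup_y \sum_{j \ge k} u^N_j(0,y) \le \sup_{z \notin \mathcal N^{(2)}} \sum_{j\geq k} f_j(z)$, by the definition~\eqref{Paper02_initial_condition} (the floor only decreases the value, and the average over a cell is at most the essential supremum). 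By Assumption~\ref{Paper02_assumption_initial_condition}(iii), this tends to $0$ uniformly in $N,t,x$.

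For the reaction term, we drop the negative death contribution. We sum the birth part over $j\ge k$, using $s_j \le s_{j-1}$ and the fact that $(s_j)$ is non-increasing:
\[
\sum_{j\ge k} F_j(u) \le q_+(\|u\|_{\ell_1})\Big(\sum_{j \ge k} s_j u_j + s_{k-1}\mu u_{k-1}\Big) \le s_{k-1}\, q_+(\|u\|_{\ell_1})\, \|u\|_{\ell_1}
\]
for $k\ge1$. Taking expectations and applying Theorem~\ref{Paper02_bound_total_mass} with $p = 1+\deg q_+$, this gives $\mathbb E[\sum_{j\geq k}F_j(u^N(\tau-,y))] \le s_{k-1} C_{q_+,f}(T)$ uniformly in $N$, $\tau\le T$ and $y$. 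We use $\tau-$ versus $\tau$ only under the time integral, where it is irrelevant. Then~\eqref{Paper02_simple_identity_total_sum_pN} bounds the whole reaction term by $T s_{k-1} C_{q_+,f}(T)$.

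Combining the two bounds,
\[
\sup_N\sup_{t\le T}\sup_x \mathbb E\big[U^N_{\mathcal I_k}(t,x)\big] \le \sup_{z\notin\mathcal N^{(2)}}\sum_{j\ge k} f_j(z) + T C(T) s_{k-1},
\]
and the right-hand side tends to $0$ as $k\to\infty$ by Assumptions~\ref{Paper02_assumption_initial_condition}(iii) and~\ref{Paper02_assumption_fitness_sequence}(iv). The main point needing care is the reaction-term estimate. There we must check that $F_j$ can be bounded above without $|\cdot|$, which is fine since only an upper bound is required and $U\ge0$. We must also check that the tail sum telescopes so that only the factor $s_{k-1}$ (not $s_0$) appears. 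If monotonicity of $(s_j)$ were unavailable, one would instead use $\sup_{j\ge k-1}s_j$, which still tends to $0$.
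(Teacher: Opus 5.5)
Your proof is correct and follows the paper's argument. You take expectations in the Green's function representation so that the martingale term vanishes, control the initial term via Assumption~\ref{Paper02_assumption_initial_condition}(iii), and bound the drift by discarding the death term and using the monotonicity of $(s_j)$ together with Theorem~\ref{Paper02_bound_total_mass} and~\eqref{Paper02_simple_identity_total_sum_pN}. The only cosmetic difference is that you telescope the mutation terms to obtain the single factor $s_{k-1}$, whereas the paper bounds term by term and obtains $s_k + \mathds{1}_{\{k \geq 1\}} s_{k-1}$.
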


\begin{proof}
    Let~$T \geq 0$ be fixed. By taking expectations on both sides of~\eqref{Paper02_definition_little_u_k_N_hat} and then applying~\eqref{Paper02_definition_general_formula_u_hat} from Lemma~\ref{Paper02_lemma_little_u_k_N_hat_semimartingale}, we get, for all $t \in [0,T]$, $N \in \mathbb{N}$, $k \in \mathbb{N}_{0}$ and $x \in L_{N}^{-1}\mathbb{Z}$,
    \begin{equation} \label{Paper02_expectation_capital_U_N_J}
        \mathbb{E}_{\boldsymbol \eta^N}\left[U^{N}_{\mathcal{I}_{k}}(t,x)\right] = \mathbb E_{\boldsymbol \eta^N} \left[P^{N}_{t}U^{N}_{\mathcal{I}_{k}}(0,\cdot)(x)\right] + \mathbb{E}_{\boldsymbol \eta^N}\left[A^{N,t,x}_{{\mathcal{I}_{k}}}(t)\right],
    \end{equation}
    where~$A^{N,t,x}_{{\mathcal{I}_{k}}}(t)$ is given in~\eqref{Paper02_finite_variation_process_uN}. Since $f = \left(f_{k}\right)_{k \in \mathbb{N}_{0}}: \mathbb R \rightarrow \ell_1^+$ satisfies Assumption~\ref{Paper02_assumption_initial_condition}, we have that $f \in L_{\infty}(\mathbb R; \ell_1)$ and there exists a Lebesgue null set $\mathcal{N} \subset \mathbb{R}$ such that
    \begin{equation*}
        \lim_{n \rightarrow \infty} \, \sup_{x \in \mathbb{R} \setminus \mathcal{N}} \, \sum_{k \geq n} f_{k}(x) = 0.
    \end{equation*}
   Therefore, since $\boldsymbol \eta^N$ is given by~\eqref{Paper02_initial_condition}, and since by~\eqref{Paper02_approx_pop_density_definition} and~\eqref{Paper02_local_mass_subset_indices_definition}, for $k \in \mathbb N_0$ and $x \in L_N^{-1}\mathbb Z$,
   \begin{equation*}
       U^{N}_{\mathcal{I}_{k}}(0,x) = \frac{1}{N} \sum_{j \geq k} \eta^{N}_{j}(0,x),
   \end{equation*}
   we have
   \begin{equation*}
         \lim_{k \rightarrow \infty} \; \sup_{N \in \mathbb{N}} \;  \sup_{x \in L_{N}^{-1}\mathbb{Z}} \mathbb E_{\boldsymbol \eta^N} \left[U^{N}_{\mathcal{I}_{k}}(0,x) \right] = 0.
   \end{equation*}
   Hence, by the definition of the action of the semigroup $\{P^{N}_{t}\}_{t \geq 0}$ after~\eqref{Paper02_inf_gen_rw}, we have for all $T \geq 0$,
    \begin{equation*}
        \lim_{k \rightarrow \infty} \; \sup_{N \in \mathbb{N}} \; \sup_{t \in [0,T]} \; \sup_{x \in L_{N}^{-1}\mathbb{Z}} \mathbb E_{\boldsymbol{\eta}^N} \left[P^{N}_{t}U^{N}_{\mathcal{I}_{k}}(0,\cdot)(x) \right] = 0.
    \end{equation*}
    Hence, by~\eqref{Paper02_expectation_capital_U_N_J}, it will suffice to verify that for any $T \geq 0$, there exists a sequence of positive real numbers $\left(\epsilon_{k,T}\right)_{k \in \mathbb{N}_{0}}$ such that $\epsilon_{k,T} \rightarrow 0$ as $k \rightarrow \infty$, and for all $k \in \mathbb{N}_{0}$,
    \begin{equation} \label{Paper02_equivalent_condition_control_local_density_high_number_mutations}
        \sup_{N \in \mathbb{N}} \; \sup_{t \in [0,T]} \; \sup_{x \in L_{N}^{-1}\mathbb{Z}} \mathbb{E}_{\boldsymbol \eta^N}\left[A^{N,t,x}_{\mathcal{I}_{k}}(t)\right] \leq \epsilon_{k,T}. 
    \end{equation}

    Applying the expression for $A^{N,t,x}_{\mathcal{I}_{k}}(t)$ given in~\eqref{Paper02_finite_variation_process_uN} in the statement of Lemma~\ref{Paper02_lemma_little_u_k_N_hat_semimartingale} and the definition of~$F_{k}(u)$ in~\eqref{Paper02_reaction_term_PDE}, noting that~$q_{-}$ is non-negative by Assumption~\ref{Paper02_assumption_polynomials}, and recalling that by Assumption~\ref{Paper02_assumption_fitness_sequence}(iii), the sequence $(s_{k})_{k \in \mathbb{N}_{0}}$ is monotonically non-increasing, we conclude that for $t \in [0,T]$, $N \in \mathbb{N}$ and $x \in L_{N}^{-1}\mathbb{Z}$,
    \begin{equation*}
    \begin{aligned}
        & \mathbb{E}_{\boldsymbol \eta^N}\left[A^{N,t,x}_{\mathcal{I}_{k}}(t)\right]
        \\ & \quad \leq \frac{1}{L_{N}} \sum_{y \in L_{N}^{-1}\mathbb{Z}} \int_{0}^{t} p^{N}(t-\tau,y - x) \Big(s_{k} + \mathds{1}_{\{k \geq 1\}}s_{k - 1}\Big)
        \\ & \quad \quad \quad \quad \quad \quad \quad \quad \cdot \sum_{j \geq k} \mathbb{E}_{\boldsymbol \eta^N}\Big[q_{+}\left(\| u^{N}(\tau-,y) \|_{\ell_{1}}\right)\Big(u^{N}_{j}(\tau-,y)   + \mathds{1}_{\{j \geq 1\}}u^{N}_{j-1}(\tau-,y)\Big) \Big] \, d\tau \\
        & \quad \lesssim_{q_+,T} \Big(s_{k} + \mathds{1}_{\{k \geq 1\}}s_{k - 1}\Big) \frac{1}{L_{N}} \sum_{y \in L_{N}^{-1}\mathbb{Z}} \int_{0}^{t} p^{N}(t-\tau,y - x)  \, d\tau \\
        & \quad = t\Big(s_{k} + \mathds{1}_{\{k \geq 1\}}s_{k - 1}\Big),
    \end{aligned}
    \end{equation*}
    where for the second inequality we applied Theorem~\ref{Paper02_bound_total_mass}, and for the last identity we used~\eqref{Paper02_simple_identity_total_sum_pN}. Therefore, since by Assumption~\ref{Paper02_assumption_fitness_sequence}(iv) we have $s_{k} \rightarrow 0$ as $k \rightarrow \infty$,~\eqref{Paper02_equivalent_condition_control_local_density_high_number_mutations} follows, which completes the proof.
\end{proof}

\section{Tightness} \label{Paper02_tightness}

Under Assumptions~\ref{Paper02_assumption_fitness_sequence},~\ref{Paper02_assumption_polynomials} and~\ref{Paper02_assumption_initial_condition}, for $N \in \mathbb N$, defining $\boldsymbol \eta^N$ as in~\eqref{Paper02_initial_condition}, let~$(\eta^{N}(t))_{t \geq 0}$ with $\eta^N(0) = \boldsymbol \eta^N$ denote the $\mathcal S^N$-valued càdlàg Markov process formally defined in Theorem~\ref{Paper02_thm_existence_uniqueness_IPS_spatial_muller}, and recall the stochastic process~$(u^{N}(t))_{t \geq 0}$ defined in~\eqref{Paper02_approx_pop_density_definition} and~\eqref{Paper02_definition_u_N_as_radon_measure}. Moreover, recall that as stated after~\eqref{Paper02_metric_product_topology}, $(\mathcal M(\mathbb R)^{\mathbb N_0}, d)$ is a complete and separable metric space. Therefore, the space $\mathcal D\left([0, \infty); (\mathcal M(\mathbb R)^{\mathbb N_0}, d)\right)$ is also a complete and separable metric space when equipped with the Skorokhod metric, which generates the~${J}_{1}$-topology (see e.g.~\cite[Theorem~3.5.6]{ethier2009markov}). The goal of this section is to verify the following property of~$(u^{N})_{N \in \mathbb{N}}$.

\begin{proposition} \label{Paper02_PDE_tightness_measures}
Under the assumptions of Theorem~\ref{Paper02_deterministic_scaling_foutel_rodier_etheridge}, the sequence of processes $(u^{N})_{N \in \mathbb{N}}$ is tight in $\mathcal{D}\left([0, \infty); (\mathcal{M}(\mathbb{R})^{\mathbb{N}_{0}}, d)\right)$.
\end{proposition}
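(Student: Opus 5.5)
We reduce tightness in $\mathcal{D}([0,\infty);(\mathcal{M}(\mathbb{R})^{\mathbb{N}_0},d))$ to a countable family of real-valued tightness statements. Since $d$ generates the product topology, a sequence of processes with values in $\mathcal{M}(\mathbb{R})^{\mathbb{N}_0}$ is tight as soon as every finite-dimensional projection $(u^N_0,\dots,u^N_K)$ is tight in $\mathcal{D}([0,\infty);\mathcal{M}(\mathbb{R})^{K+1})$. We then apply Jakubowski's criterion on $\mathcal{D}([0,\infty);\mathcal{M}(\mathbb{R}))$ (cf.~\cite{ethier2009markov}, Theorem~3.9.1 with the family $\{\rho \mapsto \langle \rho,\varphi\rangle\}$). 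This family separates points and is closed under addition. The reduction therefore leaves two tasks:
\begin{itemize}
\item[(a)] a compact containment condition;
\item[(b)] tightness in $\mathcal{D}([0,\infty);\mathbb{R})$ of $(\langle u^N_k(t),\varphi\rangle)_{t\ge 0}$ for each $k$ and each $\varphi$ in a countable dense subset of $\mathcal{C}_c^2(\mathbb{R})$.
\end{itemize}
To make the finite-dimensional projections jointly tight, we use the same criterion with the functions $\sum_{k\le K}\langle \rho_k,\varphi_k\rangle$.

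For compact containment, sets of the form $\{\rho:\rho([-n,n])\le a_n\ \forall n\}$ are vaguely compact. It therefore suffices to bound $\mathbb{E}[\sup_{t\le T}\langle U^N_{\mathbb{N}_0}(t),\psi_n\rangle]$ uniformly in $N$, where $\psi_n\ge \mathds{1}_{[-n,n]}$ is smooth with compact support. Then Markov's inequality and a union bound over $n$, with $a_n$ growing quickly, give the condition.

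For (b), we apply Theorem~\ref{Paper02_thm_existence_uniqueness_IPS_spatial_muller} to the linear functional $\phi(\boldsymbol\xi)=N^{-1}L_N^{-1}\sum_{x}\xi_k(x)\varphi(x)$. This $\phi$ lies in $\mathcal{C}_*$ because $\varphi$ has compact support. Up to an $O(L_N^{-1})$ interpolation error, $\phi(\eta^N(t))$ equals $\langle u^N_k(t),\varphi\rangle$, and we obtain
\[
\langle u^N_k(t),\varphi\rangle_N=\langle u^N_k(0),\varphi\rangle_N+\int_0^t\Big\langle u^N_k(\tau-),\tfrac{m_N}{2}\hat{\mathcal{L}}^N_m\varphi\Big\rangle_N+\big\langle F_k(u^N(\tau-,\cdot)),\varphi\big\rangle_N\,d\tau+M^{N}_{k,\varphi}(t).
\]
Here $\frac{m_N}{2}\hat{\mathcal L}^N_m\varphi$ is bounded uniformly in $N$ by $C\|\varphi''\|_\infty$, with support in a fixed compact set, because $m_N/L_N^2\to m$. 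The integrand of the drift is then dominated by $C_\varphi\sum_{y\in\operatorname{supp}}L_N^{-1}\|u^N(\tau,y)\|_{\ell_1}\bigl(1+q_+(\cdot)+q_-(\cdot)\bigr)$, as in \eqref{Paper02_good_behaviour_indices_modified_reaction_term}. By Theorem~\ref{Paper02_bound_total_mass}, its second moment is bounded uniformly in $\tau\le T$ and $N$. The drift part is therefore Lipschitz in time with a random constant $\Xi^N$ satisfying $\sup_N\mathbb{E}[(\Xi^N)^2]<\infty$, which makes it tight in $\mathcal{C}$.

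For the martingale we use the carré du champ. Its predictable bracket is
\[
\frac{1}{NL_N^2}\sum_y\int \varphi(y)^2F_k^+\,d\tau+\frac{m_N}{2NL_N^4}\sum_y\int(\nabla_{L_N}\varphi)^2u_k\,d\tau .
\]
Using the same moment bounds, its expectation is $O((t-s)/(NL_N))$ on $[s,t]$. Doob's inequality then gives $\mathbb{E}[\sup_{t\le T}|M^N_{k,\varphi}(t)|^2]\to 0$. Alternatively, Aldous' criterion applies, with jumps of size $\|\varphi\|_\infty/(NL_N)\to 0$. Hence the martingale converges to $0$ in probability uniformly on compacts, and the sum is $\mathcal{C}$-tight. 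Taking $\varphi=\psi_n$ and summing over all $k$ (the same estimates hold for $\sum_k u^N_k$) also yields the compact containment bound in (a). The initial values converge by construction \eqref{Paper02_initial_condition}.

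The main obstacle is the absence of an a priori $L_\infty$ bound on the density: all controls on the drift and bracket go through the local moment bounds, with the supremum over space taken outside the expectation. This works here only because $\varphi$ has compact support, so that finitely many $L_N^{-1}$-weighted demes, about $|{\rm supp}\,\varphi|L_N$ of them, enter each sum. The polynomial rates are then handled by Theorem~\ref{Paper02_bound_total_mass} with $p=1+\deg q_-$ or higher, combined with Cauchy–Schwarz. The remaining care point is the justification that the unbounded-rate martingale in Theorem~\ref{Paper02_thm_existence_uniqueness_IPS_spatial_muller} has the stated quadratic variation. We will take this from the analogue of Lemma~\ref{Paper02_properties_greens_function_muller_ratchet}(iii),(v), with $\phi^{N,T,x}$ replaced by $\varphi$.
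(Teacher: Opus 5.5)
\paragraph{One step fails as written.} You claim the drift is Lipschitz in time with a random constant $\Xi^N$ satisfying $\sup_N\mathbb{E}[(\Xi^N)^2]<\infty$. That constant would have to dominate $\sup_{\tau\le T}|g^N(\tau)|$, where $g^N$ is the drift integrand. Theorem~\ref{Paper02_bound_total_mass} only gives $\sup_{\tau\le T}\mathbb{E}[g^N(\tau)^2]<\infty$: the supremum over time sits outside the expectation, exactly as the supremum over space does, which is the point you stress yourself at the end. Nothing in the paper controls $\mathbb{E}[\sup_{\tau\le T}g^N(\tau)^2]$ uniformly in $N$.

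The repair is the one the paper uses in Lemma~\ref{Paper02_aldous_criterion}. Apply Cauchy--Schwarz in time:
\[
|D^N(t)-D^N(s)|\le |t-s|^{1/2}\,\Xi^N, \qquad \Xi^N:=\Big(\int_0^T g^N(\tau)^2\,d\tau\Big)^{1/2}.
\]
By Fubini, $\mathbb{E}[(\Xi^N)^2]=\int_0^T\mathbb{E}[g^N(\tau)^2]\,d\tau\le CT$ uniformly in $N$. This $\tfrac12$-H\"older bound gives $\mathcal{C}$-tightness of the drift just as well. Your compact-containment estimate is unaffected, since there you only need $\mathbb{E}[\int_0^T|g^N|\,d\tau]$.

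\paragraph{Comparison with the paper.} With that fix your argument is a valid alternative route.
\begin{itemize}
\item The paper applies Jakubowski's criterion once, on $\mathcal{M}(\mathbb{R})^{\mathbb{N}_0}$, with $\mathbb{F}$ the finite sums $\sum_{k\le n}\langle u_k,\varphi_k\rangle$. It gets compact containment from Lemma~\ref{Paper02_uniform_time_control_mass_compact_intervals}, and $\mathbb{F}$-weak tightness from Ethier--Kurtz's Theorem~3.8.6 together with Aldous' criterion. The $\sqrt{\gamma}$ bounds there are exactly the Cauchy--Schwarz trick above.
\item You detour through finite-dimensional projections. This is correct, but it needs the modulus bound $w'(x,\delta,T)\le w'_K(x^{(K)},\delta,T)+2^{-K}$, not merely ``$d$ generates the product topology''. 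It is also unnecessary, since your joint criterion with $\sum_{k\le K}\langle\rho_k,\varphi_k\rangle$ is already the paper's $\mathbb{F}$.
\item A small citation point: for the unbounded linear functionals you use, Jakubowski's theorem is the right reference rather than Ethier--Kurtz Theorem~3.9.1, which requires functions in $\bar C(E)$.
\item You then prove $\mathcal{C}$-tightness directly: the bracket is $O(T/(NL_N))$, so the martingale vanishes uniformly in probability. This yields slightly more than the paper states, namely continuous limit points and vanishing noise.
\item The cost is handling the discrete-versus-continuous pairing. The interpolation error is a deterministic $O(L_N^{-2})$ times the local mass near $\operatorname{supp}\varphi$, so it is random, and it must be small uniformly in $t\le T$. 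Your compact-containment bound does give this. The paper avoids the issue by testing against $J_{L_N}(\varphi)$, for which \eqref{Paper02_formulation_inner_product_with_test_function_lipschitz_continuous_ii} is an exact identity.
\end{itemize}
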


To prove Proposition~\ref{Paper02_PDE_tightness_measures}, our strategy will be to apply Jakubowski's criterion \cite[Theorem~3.1]{jakubowski1986skorokhod}.  According to this criterion, tightness of~$(u^{N})_{N \in \mathbb{N}}$ will follow from Propositions~\ref{Paper02_compact_containment_condition} and~\ref{Paper02_tightness_evaluations} below.

\begin{proposition}[Compact containment condition] \label{Paper02_compact_containment_condition}
Under the assumptions of Theorem~\ref{Paper02_deterministic_scaling_foutel_rodier_etheridge}, for any $T \geq 0$ and $\varepsilon > 0$, there exists a compact set $\mathcal{K}_{\varepsilon, T} \subset \mathcal{M}(\mathbb{R})^{\mathbb{N}_{0}}$ such that
\begin{equation*}
    \inf_{N \in \mathbb N} \mathbb{P}_{\boldsymbol \eta^N}\left(u^{N}(t) \in \mathcal{K}_{\varepsilon, T} \; \forall t \in [0,T]\right) \geq 1 - \varepsilon.
\end{equation*}
\end{proposition}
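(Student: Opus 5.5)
We characterise compact subsets of $(\mathcal M(\mathbb R)^{\mathbb N_0}, d)$ and then control the mass of $u^N_k(t)$ on bounded intervals, uniformly in $t\in[0,T]$. Since $d$ induces the product topology, Tychonoff's theorem shows that any set of the form $\prod_{k} \mathcal K_k$, with each $\mathcal K_k$ vaguely compact in $\mathcal M(\mathbb R)$, is compact. A set $\mathcal K_k \subset \mathcal M(\mathbb R)$ is vaguely relatively compact iff $\sup_{\rho \in \mathcal K_k} \rho([-n,n]) < \infty$ for every $n \in \mathbb N$. So we will choose constants $A_{k,n} = A_{k,n}(\varepsilon,T)$ and set
\[
\mathcal K_{\varepsilon,T} \defeq \prod_{k \in \mathbb N_0} \Big\{\rho \in \mathcal M(\mathbb R): \rho([-n,n]) \leq A_{k,n} \ \forall n \in \mathbb N\Big\},
\]
which is closed, hence compact. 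By a union bound, it then suffices to show that for each $k,n$ and each $\delta>0$ there is $A$ with
\[
\sup_N \mathbb P_{\boldsymbol\eta^N}\Big(\sup_{t\le T} u^N_k(t)([-n,n]) > A\Big) \le \delta,
\]
and to take $\delta = \varepsilon 2^{-k-n-2}$.

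Since $u^N_k \le \|u^N\|_{\ell_1}$, we bound the total mass. Let $\psi_n$ be a smooth non-negative compactly supported function with $\psi_n \ge \mathds 1_{[-n,n]}$, say $\psi_n(y)=\psi(y/n)$ with $\psi$ supported in $[-2,2]$. Then $u^N_k(t)([-n,n]) \le \langle \sum_k u^N_k(t), \psi_n\rangle$. The obstacle is the supremum over $t$ inside the probability, since Theorem~\ref{Paper02_bound_total_mass} only bounds moments at fixed times. To handle it, apply the martingale decomposition of Theorem~\ref{Paper02_thm_existence_uniqueness_IPS_spatial_muller} to
\[
\phi(\boldsymbol\xi) = \frac{1}{NL_N}\sum_{x} \psi_n(x)\|\xi(x)\|_{\ell_1},
\]
which lies in $\mathcal C_*(\mathcal S^N;\mathbb R)$ because $\psi_n$ has compact support. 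This gives $\phi(\eta^N(t)) = \phi(\boldsymbol\eta^N) + M^\phi(t) + \int_0^t \mathcal L^N\phi(\eta^N(\tau-))\,d\tau$, where the integrand equals
\[
\frac{1}{L_N}\sum_x\Big(\frac{m_N}{2}\hat{\mathcal L}^N_m\psi_n(x)\,\|u^N(\tau-,x)\|_{\ell_1} + \psi_n(x)\sum_k F_k(u^N(\tau-,x))\Big).
\]
Since $\psi_n$ is smooth, the discrete Laplacian satisfies $\frac{m_N}{2}|\hat{\mathcal L}^N_m \psi_n| \lesssim \|\psi_n''\|_\infty$ by Assumption~\ref{Paper02_scaling_parameters_assumption}. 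Also $|F_k| \le F^+_k$, so \eqref{Paper02_good_behaviour_indices_modified_reaction_term} and Theorem~\ref{Paper02_bound_total_mass} bound $\mathbb E\int_0^T |\mathcal L^N\phi(\eta^N(\tau-))|\,d\tau$ uniformly in $N$. The initial term is at most $\|f\|_{L_\infty}\cdot 5n$.

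For the martingale, compute $\langle M^\phi\rangle$ as in Lemma~\ref{Paper02_properties_greens_function_muller_ratchet}(iii). Its integrand is at most $\frac{1}{N L_N^2}\sum_x \psi_n(x)^2 \sum_k F^+_k(u^N) + \frac{m_N}{NL_N^4}\sum_x |\nabla_{L_N}\psi_n|^2 \|u^N\|_{\ell_1}$, whose expectation is $O(n/(NL_N)) + O(m_N/(NL_N^3))\to 0$, and in any case is bounded uniformly in $N$. Doob's $L^2$ inequality then bounds $\mathbb E[\sup_{t\le T}|M^\phi(t)|]$. Hence
\[
\sup_N \mathbb E\Big[\sup_{t\le T}\phi(\eta^N(t))\Big] \le C_{n,T},
\]
and Markov's inequality yields the required $A$.

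One detail to check is that $u^N(t)$ is defined by linear interpolation, so $\langle \|u^N(t)\|_{\ell_1},\psi_n\rangle$ differs from the Riemann sum $\phi(\eta^N(t))$. We can avoid this by choosing $\psi_n$ with $\psi_n \ge 1$ on $[-n-1,n+1]$: the interpolated mass of $[-n,n]$ is then bounded by the Riemann sum over demes in $[-n-1,n+1]$, which is at most $\phi(\eta^N(t))$. The main step is the uniform-in-time control, which the semimartingale decomposition together with Doob's inequality provides; the remaining steps are bookkeeping.
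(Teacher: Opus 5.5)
Your proposal is correct and is essentially the paper's argument. The paper likewise bounds $\sup_N\mathbb{E}\big[\sup_{t\le T}\int_{-R}^{R}\|u^N(t,x)\|_{\ell_1}\,dx\big]$ through the semimartingale decomposition of a smooth compactly supported test function $\varphi\ge 1$ on $[-R,R]$. It uses the smoothed functional $g^{N,\varphi}_{\mathcal I}$ built from $J_{L_N}(\varphi)$, which reproduces the integral of the interpolated density exactly, where you use a Riemann sum with enlarged support. It controls the drift via the moment bounds and the martingale via its predictable bracket and BDG, then concludes with Markov's inequality, a union bound over $R$, and Tychonoff.

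One small correction, which applies equally to the paper's wording: a set $\{\rho:\rho([-n,n])\le A_n\ \forall n\}$ is vaguely relatively compact but in general not closed, since mass sitting just outside $[-n,n]$ can converge onto its endpoints. So take its vague closure, or define the set via open intervals or continuous test functions; this leaves the probability bound unchanged.
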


\begin{proposition}[Tightness of evaluations] \label{Paper02_tightness_evaluations} Under the assumptions of Theorem~\ref{Paper02_deterministic_scaling_foutel_rodier_etheridge}, there exists $\mathbb{F} \subset \mathcal{C}(\mathcal{M}(\mathbb{R})^{\mathbb{N}_{0}}; \mathbb{R})$, i.e.~a family of real-valued continuous functions on $\mathcal{M}(\mathbb{R})^{\mathbb{N}_{0}}$, satisfying the following conditions:
\begin{enumerate}[label = (\roman*)]
    \item $\mathbb{F}$ separates points in $\mathcal{M}(\mathbb{R})^{\mathbb{N}_{0}}$, i.e.~for any $u,v \in \mathcal{M}(\mathbb{R})^{\mathbb{N}_{0}}$ with~$u \neq v$, there exists $\varphi \in \mathbb{F}$ such that $\varphi(u) \neq \varphi(v)$;
    \item $\mathbb{F}$ is closed under addition, i.e.~if $\varphi, \tilde{\varphi} \in \mathbb{F}$, then $\varphi + \tilde{\varphi} \in \mathbb{F}$;
\end{enumerate}
and such that the sequence of processes $(u^{N})_{N \in \mathbb N}$ is $\mathbb{F}$-weakly tight, i.e.~for any $\varphi \in \mathbb{F}$, the sequence $\Big((\varphi(u^{N}(t)))_{t \geq 0}\Big)_{N \in \mathbb{N}}$ is tight in $\mathcal{D}\left([0,\infty); \mathbb{R}\right)$.
\end{proposition}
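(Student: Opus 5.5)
I would take $\mathbb F$ to be the family of finite sums of functions of the form $v \mapsto \langle v_k, \varphi\rangle$ with $k \in \mathbb N_0$ and $\varphi$ in a countable subset of $\mathcal C_c(\mathbb R)$. More precisely, $\mathbb F$ consists of all maps $v \mapsto \sum_{h=1}^H \langle v_{k_h}, \varphi_h\rangle$, where $H \in \mathbb N$, $k_h \in \mathbb N_0$, and $\varphi_h \in \mathcal C_c^\infty(\mathbb R)$. Each such map is continuous for $d$, because vague convergence is tested against $\mathcal C_c$ functions and $d$ induces coordinatewise convergence. Closure under addition holds by construction. Points are separated because two distinct elements of $\mathcal M(\mathbb R)^{\mathbb N_0}$ differ in some coordinate $k$, and Radon measures are determined by their integrals against $\mathcal C_c^\infty$ functions.

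By linearity, and since a finite sum of processes that are each C-tight is tight, it suffices to show the following: for fixed $k$ and $\varphi \in \mathcal C_c^\infty$, the real processes $X^N(t) := \langle u^N_k(t), \varphi\rangle$ are tight in $\mathcal D([0,\infty);\mathbb R)$ with all limit points continuous. To get this, I apply Theorem~\ref{Paper02_thm_existence_uniqueness_IPS_spatial_muller} to $\phi(\boldsymbol\xi) = \langle \xi_k/N, \varphi\rangle$, where the spatial integral of the interpolated density is a discrete sum up to an $O(L_N^{-1})$ correction. Since $\varphi$ has compact support, $\phi \in \mathcal C_*$. This gives
\begin{equation*}
X^N(t) = X^N(0) + \int_0^t B^N(\tau)\,d\tau + M^N(t), \qquad B^N(\tau) = \Big\langle \tfrac{m_N}{2}\hat{\mathcal L}^N_m \varphi, u^N_k(\tau-)\Big\rangle_N + \big\langle F_k(u^N(\tau-,\cdot)), \varphi\big\rangle_N .
\end{equation*}
The quadratic variation is given by a formula analogous to~\eqref{Paper02_predictable_bracket_process_uN_k_hat} with $\varphi$ in place of $p^N$, that is,
\begin{equation*}
\langle M^N\rangle(t) = \frac{1}{NL_N}\int_0^t \big\langle \varphi^2, F_k^+(u^N)\big\rangle_N\, d\tau + O\Big(\frac{m_N}{NL_N^3}\Big)\int_0^t \big\langle |\varphi'|^2, u^N_k\big\rangle_N\, d\tau .
\end{equation*}

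Next I use Aldous' criterion, or equivalently the Ethier--Kurtz modulus bound. The drift satisfies
\begin{equation*}
|B^N(\tau)| \lesssim_\varphi \frac{1}{L_N}\sum_{y \in \operatorname{supp}\varphi} \|u^N(\tau-,y)\|_{\ell_1}\big(1 + q_+ + q_-\big)\big(\|u^N(\tau-,y)\|_{\ell_1}\big),
\end{equation*}
because $\frac{m_N}{2}\hat{\mathcal L}^N_m\varphi \to \frac m2\varphi''$ uniformly. By Theorem~\ref{Paper02_bound_total_mass}, $\sup_N \sup_{\tau\le T}\mathbb E[|B^N(\tau)|^2] < \infty$. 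Cauchy--Schwarz in time then gives, for stopping times $\sigma_N \le T$,
\begin{equation*}
\mathbb E\Big[\Big|\int_{\sigma_N}^{\sigma_N+\delta} B^N\Big|\Big] \lesssim \delta^{1/2}.
\end{equation*}
To be careful here, I would bound by $\mathbb E\big[\int_0^{T+1}|B^N|^2\big]^{1/2}\delta^{1/2}$, which avoids evaluating moments at stopping times. The martingale satisfies $\mathbb E[\langle M^N\rangle(T)] \lesssim 1/N \to 0$ by the same moment bounds together with $m_N/L_N^2 \to m$ and $L_N = \Theta(N)$. By Doob/BDG, $\sup_{t\le T}|M^N(t)| \to 0$ in probability. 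Tightness of $X^N(0)$ is immediate from $\|f\|_{L_\infty(\mathbb R;\ell_1)} < \infty$.

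Finally, jumps of $X^N$ have size at most $\|\varphi\|_\infty/(NL_N)\cdot L_N$ times $O(1/L_N)$, which tends to $0$. Hence the drift integrals are uniformly tight with continuous limits, the martingale vanishes, and the sum is tight in $\mathcal D$ with continuous limit points.

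The main obstacle I expect is the rigorous justification that $\phi$ lies in $\mathcal C_*$ and that the martingale and bracket identities hold despite unbounded rates. Lemma~\ref{Paper02_properties_greens_function_muller_ratchet} handles this for the Green's-function test function, and the same argument, which is even easier here because $\varphi$ has compact support, applies. A minor point is the mismatch between the linear-interpolation integral $\int \varphi\, u^N_k$ and the discrete pairing $\langle \varphi, u^N_k\rangle_N$. This discrepancy is bounded by $L_N^{-1}\|\varphi'\|_\infty$ times a local mass, whose sup over $t\le T$ I would control via the pathwise bound $\sup_{t\le T}\langle \mathbf 1_{\operatorname{supp}}, u^N\rangle \le$ initial mass plus total births. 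Alternatively, I would simply work with the discrete pairing and transfer tightness, since the difference goes to $0$ in probability uniformly on $[0,T]$.
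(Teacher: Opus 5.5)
Your proposal is correct and is essentially the paper's argument. It uses the same family $\mathbb F$ of finite sums $\sum_k\langle u_k,\varphi_k\rangle$ with smooth compactly supported $\varphi_k$, the generator-based semimartingale decomposition of $\langle u^N_k(t),\varphi\rangle$ (Lemma~\ref{Paper02_action_measure_formulation_semimartinagale}), Cauchy--Schwarz in time together with Theorem~\ref{Paper02_bound_total_mass} for the drift, and the predictable bracket with BDG for the martingale. There are two differences, both inessential. First, the paper tests against $J_{L_N}(\varphi)$, so that $\int_{\mathbb R}\varphi(x)\, u^N_k(t,x)\,dx=\langle u^N_k(t,\cdot),J_{L_N}(\varphi)\rangle_N$ holds exactly and there is no interpolation error. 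Second, the paper checks Aldous' criterion directly on finite sums, together with Proposition~\ref{Paper02_compact_containment_condition}, rather than proving C-tightness coordinate by coordinate.

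If you keep your discrete pairing $\langle u^N_k,\varphi\rangle_N$, drop the pathwise bound by ``initial mass plus total births'', which fails because migration from outside the support also increases the local mass. Instead, control $\sup_{t\le T}$ of the local mass by applying your own decomposition to a nonnegative bump function, exactly as in Lemma~\ref{Paper02_uniform_time_control_mass_compact_intervals}.
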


We start by proving Proposition~\ref{Paper02_compact_containment_condition}. A challenge for proving this result is the fact that the number of particles per deme is not bounded \textit{a priori}. In order to average the process over compact spatial domains, we analyse the inner product of the approximate population densities with functions $\varphi \in \mathcal C^2_c(\mathbb R)$, i.e.~with twice differentiable functions with compact support. Recall that in~\eqref{Paper02_local_mass_subset_indices_definition}, we let $U^{N}_{\mathcal{I}}$ denote the approximate population density of particles whose number of mutations belong to the set of indices $\mathcal{I} \subseteq \mathbb{N}_{0}$.
Analogously to the definition of $g^{N,T,x}_{\mathcal I}(t, \cdot)$ in~\eqref{Paper02_green_function_representation_writing_as_lipschitz_map}, for~$N \in \mathbb N$,~$\varphi \in \mathcal{C}^{2}_{c}(\mathbb{R})$ and $\mathcal I \subseteq \mathbb N_0$, we define the map~$g^{N,\varphi}_{\mathcal{I}}: \mathcal{S}^{N} \rightarrow \mathbb{R}$ by letting, for~$\boldsymbol{\xi} = (\xi_k(x))_{k \in \mathbb N_0, \, x \in L_N^{-1}\mathbb Z} \in \mathcal{S}^{N}$,
\begin{equation} \label{Paper02_formulation_inner_product_with_test_function_lipschitz_continuous_i}
    g^{N,\varphi}_{\mathcal{I}}(\boldsymbol{\xi}) \defeq \frac{1}{L_{N}} \sum_{x \in L_{N}^{-1}\mathbb{Z}} \; \sum_{k \in \mathcal{I}} \frac{\xi_{k}(x)}{N} \int_{-1}^{1} (1 - \vert h \vert) \varphi(x + h L_{N}^{-1}) \, dh.
\end{equation}
To simplify notation, for $N \in \mathbb N$ and $\varphi \in \mathcal C^2_c(\mathbb R)$, define $J_{L_N}(\varphi): \mathbb R \rightarrow \mathbb R$ by letting
\begin{equation} \label{Paper02_definition_discrete_continuous_measure}
    (J_{L_N}(\varphi))(x) \defeq \int_{-1}^{1} (1 - \vert h \vert) \varphi(x + h L_{N}^{-1}) \, dh \quad \forall \, x \in \mathbb R.
\end{equation}
Recall from~\eqref{Paper02_local_mass_subset_indices_definition} and~\eqref{Paper02_approx_pop_density_definition} that ${U}^{N}_{\mathcal{I}}(T,\cdot)$ is given by the linear interpolation of $(U^N_\mathcal I(T,x))_{x \in L_N^{-1}\mathbb Z}$. Therefore, we have for $N \in \mathbb N$, $\mathcal I \subseteq \mathbb N_0$, $T \geq 0$ and~$\varphi \in \mathcal{C}^{2}_{c}(\mathbb{R})$, recalling the definition of $\langle \cdot, \cdot \rangle_N$ in~\eqref{Paper02_definition_discrete_integral_over_space},
\begin{equation} \label{Paper02_formulation_inner_product_with_test_function_lipschitz_continuous_ii}
    g^{N,\varphi}_{\mathcal{I}}(\eta^{N}(T)) = \left\langle U^{N}_{\mathcal{I}}(T,\cdot), J_{L_N}(\varphi) \right\rangle_N = \left\langle U^{N}_{\mathcal{I}}(T,\cdot), \varphi \right\rangle \defeq \int_{\mathbb{R}} U^{N}_{\mathcal{I}}(T,x) \varphi(x) \, dx.
\end{equation}
We will first verify that the map~$g^{N,\varphi}_{\mathcal{I}}$ belongs to the domain of the infinitesimal generator~$\mathcal{L}^{N}$ defined in~\eqref{Paper02_generator_foutel_etheridge_model}. For this, it will be convenient to introduce the discrete gradient $\nabla_{L_N}$ and the discrete Laplacian~$\Laplace_{L_N}$ for any $\varphi: \mathbb{R} \rightarrow \mathbb{R}$ and $x \in \mathbb{R}$ by letting
\begin{equation} \label{Paper02_discrete_gradient_discrete_laplacian}
\left\{\begin{array}{l}
     \nabla_{L_N} \varphi(x) \defeq L_{N}\left(\varphi(x+L_{N}^{-1}) - \varphi(x)\right),  \\[+1mm]
     \Laplace_{L_N} \varphi(x) \defeq L_{N}^{2}\left(\varphi(x+L_{N}^{-1}) + \varphi(x-L_{N}^{-1}) - 2\varphi(x)\right).
\end{array} \right.
\end{equation}
(Recall the analogous definition of $\nabla_{L_N}p^N$ in~\eqref{Paper02_discrete_grad}).
We highlight that the discrete gradient and the discrete Laplacian are defined in~\eqref{Paper02_discrete_gradient_discrete_laplacian} for all~$x \in \mathbb{R}$, not only for $x \in L_{N}^{-1}\mathbb{Z}$. Recall the definition of the reaction term~$F = (F_{k})_{k \in \mathbb{N}_{0}}$ given in~\eqref{Paper02_reaction_term_PDE}, and the modified reaction term~$F^+ = (F^{+}_{k})_{k \in \mathbb{N}_{0}}$ given in~\eqref{Paper02_reaction_predictable_bracket_process}, and the definition of~$\mathcal{C}_*(\mathcal{S}^{N}; \mathbb{R})$ in~\eqref{Paper02_definition_Lipschitz_function}.

\begin{lemma} \label{Paper02_well_behaved_function_inner_product_test_funciton_lemma}
    Under the assumptions of Theorem~\ref{Paper02_deterministic_scaling_foutel_rodier_etheridge}, for any~$N \in \mathbb{N}$,~$\varphi \in \mathcal{C}^{2}_{c}(\mathbb{R})$ and~$\mathcal{I} \subseteq \mathbb{N}_{0}$, the map~$g^{N,\varphi}_{\mathcal{I}}: \mathcal{S}^{N} \rightarrow \mathbb{R}$ given in~\eqref{Paper02_formulation_inner_product_with_test_function_lipschitz_continuous_i} satisfies the following conditions:
    \begin{enumerate}[label = (\roman*)]
        \item $g^{N,\varphi}_{\mathcal{I}} \in \mathcal{C}_*(\mathcal{S}^{N}; \mathbb{R})$.
        \item The map~$\mathcal S^N \ni \boldsymbol{\xi} \mapsto \Big(\mathcal{L}^{N}g^{N,\varphi}_{\mathcal{I}}\Big)(\boldsymbol{\xi})$ is in $\mathcal C(\mathcal S^N; \mathbb R)$, and is given by, for $\boldsymbol{\xi} = (\xi_k(x))_{k \in \mathbb N_0, \, x \in L_N^{-1}\mathbb Z} \in \mathcal{S}^{N}$,
        \begin{equation} \label{Paper02_action_generator_inner_product_smooth_functions}
        \begin{aligned}
            \Big(\mathcal{L}^{N}g^{N,\varphi}_{\mathcal{I}}\Big)(\boldsymbol{\xi}) & \defeq \frac{m_{N}}{2L_{N}^{3}} \sum_{x \in L_{N}^{-1}\mathbb{Z}} \left(J_{L_N}(\Laplace_{L_{N}} \varphi)\right)(x) \sum_{k \in \mathcal{I}} \frac{\xi_{k}(x)}{N} \\
            & \qquad \; + \frac{1}{L_{N}} \sum_{x \in L_{N}^{-1}\mathbb{Z}} \left(J_{L_N}(\varphi)\right)(x) \sum_{k \in \mathcal{I}} F_{k}\left(\frac{\xi(x)}{N}\right).
        \end{aligned}
        \end{equation}

    \item The map~$\mathcal S^N \ni \boldsymbol{\xi} \mapsto \Big(\mathcal{L}^{N}(g^{N,\varphi}_{\mathcal{I}})^{2}\Big)(\boldsymbol{\xi})$ is in $\mathcal{C}(\mathcal{S}^{N};\mathbb{R})$, and is given by, for~$\boldsymbol{\xi} = (\xi_k(x))_{k \in \mathbb N_0, \, x \in L_N^{-1}\mathbb Z}\in \mathcal{S}^{N}$,
    \begin{align} 
         & \Big(\mathcal{L}^{N}(g^{N,\varphi}_{\mathcal{I}})^{2}\Big)(\boldsymbol{\xi}) \notag \\ & \quad = 2 g^{N,\varphi}_{\mathcal{I}}(\boldsymbol{\xi}) \Big(\mathcal{L}^{N}g^{N,\varphi}_{\mathcal{I}}\Big)(\boldsymbol{\xi}) \notag \\ & \quad \quad \quad + \frac{m_{N}}{2NL_{N}^{4}} \sum_{x \in L_{N}^{-1}\mathbb{Z}} \left((J_{L_N}(\nabla_{L_{N}} \varphi))^2(x) + (J_{L_N}(\nabla_{L_{N}} \varphi))^2(x - L_N^{-1}) \right)\sum_{k \in \mathcal{I}} \frac{\xi_{k}(x)}{N} \label{Paper02_action_generator_square_inner_product_test_function}
         \\ 
        & \quad \quad \quad + \frac{1}{NL_{N}^{2}} \sum_{x \in L_{N}^{-1}\mathbb{Z}} (J_{L_N}(\varphi))^2(x) \sum_{k \in \mathcal{I}} F^{+}_{k}\left(\frac{\xi(x)}{N}\right). \notag
    \end{align}

    \item For any~$T \geq 0$ and $r \geq 1$, the following estimate holds: 
     \begin{align}
            & \sup_{N \in \mathbb{N}} \; \sup_{t \in [0,T]} \; \mathbb{E}_{\boldsymbol \eta^N}\Big[\vert g^{N,\varphi}_{\mathcal{I}}(\eta^{N}(t)) \vert^{r} + \Big\vert \Big(\mathcal{L}^{N}g^{N,\varphi}_{\mathcal{I}}\Big)(\eta^{N}(t))\Big\vert^{r} + \Big\vert \Big(\mathcal{L}^{N}(g^{N,\varphi}_{\mathcal{I}})^{2}\Big)(\eta^{N}(t)) \Big\vert^r\Big] \notag \\
            & \hspace{4cm} < \infty. \label{Paper02_general_estimates_moments_test_function_and_derivatives_generator}
        \end{align}

        \item The process~$(M(T))_{T \geq 0}$ given by, for~$T \geq 0$,
        \begin{equation*}
            M(T) \defeq \Big(g^{N,\varphi}_{\mathcal{I}}(\eta^{N}(T))\Big)^{2} - \Big(g^{N,\varphi}_{\mathcal{I}}(\eta^{N}(0))\Big)^{2} - \int_{0}^{T}  \Big(\mathcal{L}^{N}(g^{N,\varphi}_{\mathcal{I}})^{2}\Big)(\eta^{N}(t-)) \, dt
        \end{equation*}
        is a càdlàg martingale with respect to the filtration~$\{\mathcal{F}^{\eta^{N}}_{t+}\}_{t \geq 0}$.
    \end{enumerate}
\end{lemma}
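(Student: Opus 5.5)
The plan follows the same route as Lemma~\ref{Paper02_properties_greens_function_muller_ratchet}, with the Green's function $\phi^{N,T,x}(t,\cdot)$ replaced by the time-independent weight $J_{L_N}(\varphi)$. The key simplification is that $\varphi$ has compact support, so $J_{L_N}(\varphi)$ is supported in the fixed compact set $\operatorname{supp}\varphi + [-1,1]$. Only finitely many demes $x \in L_N^{-1}\mathbb Z$ therefore enter $g^{N,\varphi}_{\mathcal I}$, and all sums in the statement are finite sums.

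\textbf{Part (i).} We check the defining condition of $\mathcal C_*(\mathcal S^N;\mathbb R)$ in~\eqref{Paper02_definition_Lipschitz_function} directly.
\begin{itemize}
\item Adding or removing one particle of type $k$ at deme $x$ changes $g^{N,\varphi}_{\mathcal I}$ by $\mathds 1_{\{k\in\mathcal I\}}(NL_N)^{-1}(J_{L_N}\varphi)(x)$. This is bounded by $\|\varphi\|_\infty/(NL_N)$ and vanishes for $|x|$ large.
\item A migration from $x$ to $x\pm L_N^{-1}$ changes it by $(NL_N)^{-1}\big((J_{L_N}\varphi)(x\pm L_N^{-1})-(J_{L_N}\varphi)(x)\big)$, which is also compactly supported in $x$.
\end{itemize}
Hence the weighted supremum with factor $(1+|x|)^{2(1+\deg q_-)}$ is finite. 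Continuity on $(\mathcal S^N,d_{\mathcal S^N})$ holds because $g^{N,\varphi}_{\mathcal I}$ depends linearly on finitely many coordinates $\sum_{k\in\mathcal I}\xi_k(x)$. Each such coordinate is $d_{\mathcal S^N}$-Lipschitz with constant $(1+|x|)^2$.

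\textbf{Parts (ii) and (iii).} We plug $g^{N,\varphi}_{\mathcal I}$ and its square into~\eqref{Paper02_infinitesimal_generator}.
\begin{itemize}
\item For the migration part of $\mathcal L^N g^{N,\varphi}_{\mathcal I}$, we perform a summation by parts: $\sum_x \xi_k(x)\big(J(x+L_N^{-1})+J(x-L_N^{-1})-2J(x)\big) = L_N^{-2}\sum_x \xi_k(x)\,(\Laplace_{L_N}J_{L_N}\varphi)(x)$. Since $J_{L_N}$ is a convolution, it commutes with the shift, so $\Laplace_{L_N}J_{L_N}=J_{L_N}\Laplace_{L_N}$. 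With the prefactor $m_N/2$ and the normalisation $(NL_N)^{-1}$, this gives the first term of~\eqref{Paper02_action_generator_inner_product_smooth_functions}.
\item For the reaction part, we group the birth terms by the offspring type. This gives $q_+\big(s_k(1-\mu)\xi_k+s_{k-1}\mu\xi_{k-1}\big)$ minus $q_-\xi_k$, which is $N\,F_k(\xi(x)/N)$ and hence the second term.
\item For $(g^{N,\varphi}_{\mathcal I})^2$, we use the identity $(a+\Delta)^2-a^2=2a\Delta+\Delta^2$ for each jump. The linear part reproduces $2g\,\mathcal L^N g$. The squared increments sum to two contributions. The first is the birth and death rates times $(NL_N)^{-2}J^2$, i.e.\ $(NL_N^2)^{-1}J^2\sum_k F_k^+$. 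The second is the migration rates times $(NL_N)^{-2}(J(x\pm L_N^{-1})-J(x))^2 = (NL_N^2)^{-2}(J_{L_N}\nabla_{L_N}\varphi)^2(x \text{ or } x-L_N^{-1})$. This yields~\eqref{Paper02_action_generator_square_inner_product_test_function}.
\end{itemize}
Continuity of both maps again follows because the expressions involve finitely many demes and polynomial functions of $\|\xi(x)\|_{\ell_1}/N$.

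\textbf{Part (iv), the main obstacle.} The estimates in (iv) must be uniform in $N$. We handle each piece as follows.
\begin{itemize}
\item We bound $|g^{N,\varphi}_{\mathcal I}(\eta^N(t))|\le \|\varphi\|_\infty L_N^{-1}\sum_{x\in \mathcal K\cap L_N^{-1}\mathbb Z}\|u^N(t,x)\|_{\ell_1}$, where $\mathcal K \defeq \operatorname{supp}\varphi + [-1,1]$. This is an average over roughly $|\mathcal K|L_N$ demes, so Jensen's inequality followed by Theorem~\ref{Paper02_bound_total_mass} gives an $r$-th moment bound uniform in $N$ and $t\le T$.
\item For $\mathcal L^N g$, Assumption~\ref{Paper02_scaling_parameters_assumption}(i) makes $m_N/L_N^2$ bounded. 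Also $|J_{L_N}\Laplace_{L_N}\varphi|\le\|\varphi''\|_\infty$. Since $|F_k|\le F^+_k$ and $\sum_k F_k^+(u)\le \|u\|_{\ell_1}(q_++q_-)(\|u\|_{\ell_1})$, the same Jensen argument applies with a polynomial in $\|u^N\|_{\ell_1}$ whose moments are controlled by Theorem~\ref{Paper02_bound_total_mass}.
\item For $\mathcal L^N g^2$, the extra terms carry prefactors $m_N/(NL_N^3)$ and $1/(NL_N)$ in front of averages of the same form, and these prefactors are bounded. The term $2g\,\mathcal L^Ng$ is handled by Cauchy--Schwarz.
\end{itemize}

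\textbf{Part (v).} Given (i)--(iv), the martingale property follows by applying the martingale statement of Theorem~\ref{Paper02_thm_existence_uniqueness_IPS_spatial_muller} to $\phi=(g^{N,\varphi}_{\mathcal I})^2$. We first check that $(g^{N,\varphi}_{\mathcal I})^2\in\mathcal C_*$. This is not immediate because $g$ is unbounded. We argue via the identity $|(a+\Delta)^2-a^2|\le 2|a||\Delta|+\Delta^2$ together with the compact support. If membership in $\mathcal C_*$ fails because of the factor $|a|$, we instead truncate, use the integrability from (iv), and invoke the general integration-by-parts Lemmas~\ref{Paper02_general_integration_by_parts_formula} and~\ref{Paper02_integration_by_parts_time_predictable_bracket_process} from the appendix, exactly as in the proof of Lemma~\ref{Paper02_lemma_little_u_k_N_hat_semimartingale}. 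The moment bounds in (iv) supply the integrability conditions required there.
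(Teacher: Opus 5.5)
Your parts (i)--(iv) follow the paper's argument essentially line by line: compact support of $J_{L_N}(\varphi)$, explicit jump increments, $|F_k|\le F^+_k$, averaging over $O(L_N)$ demes, and Theorem~\ref{Paper02_bound_total_mass}. They are fine. The gap is in part (v).

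First, $(g^{N,\varphi}_{\mathcal I})^2$ is definitely \emph{not} in $\mathcal C_*(\mathcal S^N;\mathbb R)$. The definition~\eqref{Paper02_definition_Lipschitz_function} takes a supremum over all $\boldsymbol\zeta\in\mathcal S^N$. Suppose $\boldsymbol\zeta$ carries $M$ particles of some type $k\in\mathcal I$ at a deme $x_0$ with $(J_{L_N}\varphi)(x_0)\neq 0$. Adding one more such particle changes $(g^{N,\varphi}_{\mathcal I})^2$ by $2g^{N,\varphi}_{\mathcal I}(\boldsymbol\zeta)\Delta+\Delta^2$, where $\Delta=(J_{L_N}\varphi)(x_0)/(NL_N)$, and this is unbounded as $M\to\infty$. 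Compact support controls the weight in $x$, not the supremum over states, so Theorem~\ref{Paper02_thm_existence_uniqueness_IPS_spatial_muller} cannot be applied to $\phi=(g^{N,\varphi}_{\mathcal I})^2$.

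Your fallback does not repair this. Lemmas~\ref{Paper02_general_integration_by_parts_formula} and~\ref{Paper02_integration_by_parts_time_predictable_bracket_process} do not enlarge the domain of the martingale problem; they \emph{assume} the relevant function already lies in it. Applied to $(g^{N,\varphi}_{\mathcal I})^2$, hypothesis (iii) of the first lemma and hypothesis (i) of the second both require $(g^{N,\varphi}_{\mathcal I})^2\in A$. That is exactly what (v) is supposed to provide. In the proof of Lemma~\ref{Paper02_lemma_little_u_k_N_hat_semimartingale}, the paper verifies hypothesis (i) of Lemma~\ref{Paper02_integration_by_parts_time_predictable_bracket_process} by citing Lemma~\ref{Paper02_properties_greens_function_muller_ratchet}(v). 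So arguing ``exactly as in that proof'' is circular.

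What is missing is the truncation argument carried through to the limit. Let $h_n\defeq(g^{N,\varphi}_{\mathcal I})^2\wedge n^2$. Then $|h_n(\boldsymbol\xi)-h_n(\boldsymbol\zeta)|\le 2n\,|g^{N,\varphi}_{\mathcal I}(\boldsymbol\xi)-g^{N,\varphi}_{\mathcal I}(\boldsymbol\zeta)|$, and $g^{N,\varphi}_{\mathcal I}\in\mathcal C_*$ by (i), so each $h_n\in\mathcal C_*$. Theorem~\ref{Paper02_thm_existence_uniqueness_IPS_spatial_muller} then gives martingales $M^{(n)}$, defined like $M$ but with $h_n$ in place of $(g^{N,\varphi}_{\mathcal I})^2$. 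You then need two further steps.

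(a) You need $M^{(n)}(t)\to M(t)$ almost surely for each $t$. A càdlàg path in $\mathcal S^N$ has relatively compact range on $[0,T]$, so $\sup_{t\le T}|g^{N,\varphi}_{\mathcal I}(\eta^N(t-))|<\infty$ a.s. Every jump changes $g^{N,\varphi}_{\mathcal I}$ by at most $2\|\varphi\|_{L_\infty(\mathbb R)}/(NL_N)$. Hence, for $n$ large (depending on the path), $h_n$ and $(g^{N,\varphi}_{\mathcal I})^2$ agree at $\eta^N(t-)$ and at every state reachable from it in one jump. It follows that $\mathcal L^N h_n=\mathcal L^N(g^{N,\varphi}_{\mathcal I})^2$ along the whole path.

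(b) You need uniform integrability of $(M^{(n)}(t))_{n\in\mathbb N}$ for each fixed $t$. The bound in (iv) on the signed quantity $\mathcal L^N(g^{N,\varphi}_{\mathcal I})^2$ does not give this directly. Instead, use $|(a^2\wedge n^2)-(b^2\wedge n^2)|\le|a^2-b^2|$ jump by jump. This dominates $|\mathcal L^N h_n(\boldsymbol\xi)|$, uniformly in $n$, by the rate-weighted sum of $|(g^{N,\varphi}_{\mathcal I})^2(\boldsymbol\xi')-(g^{N,\varphi}_{\mathcal I})^2(\boldsymbol\xi)|$ over all jumps $\boldsymbol\xi\to\boldsymbol\xi'$. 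The moments of that sum are controlled, for fixed $N$, by the same Cauchy--Schwarz and Theorem~\ref{Paper02_bound_total_mass} estimates as in (iv).

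Almost sure convergence plus uniform integrability then transfers the martingale property from $M^{(n)}$ to $M$. This is how the paper proves (v).
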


Similarly to the proof of Lemma~\ref{Paper02_properties_greens_function_muller_ratchet}, the proof of Lemma~\ref{Paper02_well_behaved_function_inner_product_test_funciton_lemma} follows from standard arguments, which we postpone to Section~\ref{Paper02_appendix_section_proof_auxiliary_no_technical_lemmas} in the appendix. We now use Lemma~\ref{Paper02_well_behaved_function_inner_product_test_funciton_lemma} to write~$\left\langle U^{N}_{\mathcal{I}}(T,\cdot), \varphi \right\rangle$ as a semimartingale. Recall the definition of $J_{L_N}(\varphi)$ in~\eqref{Paper02_definition_discrete_continuous_measure}.

\begin{lemma} \label{Paper02_action_measure_formulation_semimartinagale}
Under the assumptions of Theorem~\ref{Paper02_deterministic_scaling_foutel_rodier_etheridge}, for any $N \in \mathbb{N}$, $\mathcal{I} \subseteq \mathbb{N}_{0}$ and $\varphi \in \mathcal{C}^{2}_{c}(\mathbb{R})$, there exist a càdlàg square integrable martingale $M^{N,\varphi}_{\mathcal{I}}$ with respect to the filtration~$\{\mathcal{F}^{\eta^{N}}_{t+}\}_{t \geq 0}$ with $M^{N,\varphi}_{\mathcal I}(0) = 0$, and a finite variation process $A^{N,\varphi}_{\mathcal{I}}$ such that for $T \geq 0$,
\begin{equation*}
    \left\langle U^{N}_{\mathcal{I}}(T,\cdot), \varphi \right\rangle = \left\langle U^{N}_{\mathcal{I}}(0,\cdot), \varphi \right\rangle + M^{N,\varphi}_{\mathcal{I}}(T) + A^{N,\varphi}_{\mathcal{I}}(T),
\end{equation*}
where for~$T \geq 0$,
\begin{equation} \label{Paper02_lemma_semimartingale_finite_variation_process}
\begin{aligned}
     A^{N,\varphi}_{\mathcal{I}}(T) & = \frac{1}{L_{N}} \sum_{x \in L_{N}^{-1}\mathbb{Z}}  \left(J_{L_N}(\Laplace_{L_{N}} \varphi)\right)(x) \int_{0}^{T} \frac{m_{N}}{2L_{N}^{2}} U^{N}_{\mathcal{I}}(t-,x) \; dt \\
     & \quad + \frac{1}{L_{N}} \sum_{x \in L_{N}^{-1}\mathbb{Z}}  \left(J_{L_N}( \varphi)\right)(x) \sum_{k \in \mathcal{I}} \int_{0}^{T} F_{k}(u^{N}(t-, x)) \; dt,
\end{aligned}
\end{equation}
and the predictable bracket process of $M^{N,\varphi}_{\mathcal{I}}$ is given by, for $T \geq 0$,
\begin{equation} \label{Paper02_lemma_semimartingale_quadratic_variation_process}
\begin{aligned}
    & \left\langle M^{N,\varphi}_{\mathcal{I}}\right\rangle (T) \\ & \quad =  \frac{m_{N}}{2NL_{N}^{4}} \sum_{x \in L_{N}^{-1}\mathbb{Z}} \left( \left(J_{L_N}(\nabla_{L_{N}} \varphi)\right)^2(x) + \left(J_{L_N}(\nabla_{L_{N}} \varphi)\right)^2(x - L_N^{-1})\right) \int_{0}^{T} U^{N}_{\mathcal{I}}(t-,x) \, dt  \\
    & \quad \quad + \frac{1}{NL_{N}^{2}} \sum_{x \in L_{N}^{-1}\mathbb{Z}} \left(J_{L_N}( \varphi)\right)^2(x)  \sum_{k \in \mathcal{I}} \int_{0}^{T} F^{+}_{k} (u^{N}(t-,x)) \, dt.
\end{aligned}
\end{equation}
\end{lemma}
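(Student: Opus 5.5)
The plan is to argue exactly as in the proof of Lemma~\ref{Paper02_lemma_little_u_k_N_hat_semimartingale}, but now with the time-independent test map $g^{N,\varphi}_{\mathcal I}$ in place of $g^{N,T,x}_{\mathcal I}$. By \eqref{Paper02_formulation_inner_product_with_test_function_lipschitz_continuous_ii}, for every $T\ge 0$ we have almost surely $\langle U^N_{\mathcal I}(T,\cdot),\varphi\rangle = g^{N,\varphi}_{\mathcal I}(\eta^N(T))$. By Lemma~\ref{Paper02_well_behaved_function_inner_product_test_funciton_lemma}(i), $g^{N,\varphi}_{\mathcal I}\in\mathcal C_*(\mathcal S^N;\mathbb R)$, so Theorem~\ref{Paper02_thm_existence_uniqueness_IPS_spatial_muller} gives a càdlàg square integrable $\{\mathcal F^{\eta^N}_{t+}\}$-martingale $M^{N,\varphi}_{\mathcal I}$ with $M^{N,\varphi}_{\mathcal I}(0)=0$ such that
\[
g^{N,\varphi}_{\mathcal I}(\eta^N(T)) = g^{N,\varphi}_{\mathcal I}(\eta^N(0)) + M^{N,\varphi}_{\mathcal I}(T) + \int_0^T (\mathcal L^N g^{N,\varphi}_{\mathcal I})(\eta^N(t-))\,dt.
\]
Alternatively, one can invoke Lemma~\ref{Paper02_general_integration_by_parts_formula} with a test function that is constant in time. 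Its hypotheses are supplied by Lemma~\ref{Paper02_well_behaved_function_inner_product_test_funciton_lemma}(i),(ii),(iv), and the time-derivative term vanishes.

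Next, define $A^{N,\varphi}_{\mathcal I}(T)$ to be the drift integral above. Substituting the explicit formula \eqref{Paper02_action_generator_inner_product_smooth_functions}, with $\boldsymbol\xi=\eta^N(t-)$ and $\xi_k(x)/N=u^N_k(t-,x)$, gives two pieces. The first is $\frac{m_N}{2L_N^3}\sum_x J_{L_N}(\Laplace_{L_N}\varphi)(x)\,U^N_{\mathcal I}(t-,x)$, which is exactly $\frac1{L_N}\sum_x J_{L_N}(\Laplace_{L_N}\varphi)(x)\frac{m_N}{2L_N^2}U^N_{\mathcal I}(t-,x)$. The second is the reaction piece. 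Exchanging the spatial sum, the sum over $k\in\mathcal I$, and the time integral yields \eqref{Paper02_lemma_semimartingale_finite_variation_process}. This exchange is justified by Fubini, because $\varphi$ has compact support, so only finitely many demes contribute. Moreover, $\sum_{k}|F_k(u)|\le \|u\|_{\ell_1}(q_++q_-)(\|u\|_{\ell_1})$, which is integrable in $t$ by Theorem~\ref{Paper02_bound_total_mass}. The same bound shows that $A^{N,\varphi}_{\mathcal I}$ has finite variation on compacts, since its integrand is almost surely locally integrable.

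For the predictable bracket, apply Lemma~\ref{Paper02_integration_by_parts_time_predictable_bracket_process}. Its conditions hold by Lemma~\ref{Paper02_well_behaved_function_inner_product_test_funciton_lemma}(iii)--(v): the martingale property of $(g^{N,\varphi}_{\mathcal I})^2(\eta^N)$ minus its compensator, and the moment bounds \eqref{Paper02_general_estimates_moments_test_function_and_derivatives_generator}. The lemma gives
\[
\langle M^{N,\varphi}_{\mathcal I}\rangle(T)=\int_0^T\Big(\mathcal L^N(g^{N,\varphi}_{\mathcal I})^2 - 2g^{N,\varphi}_{\mathcal I}\,\mathcal L^N g^{N,\varphi}_{\mathcal I}\Big)(\eta^N(t-))\,dt .
\]
By \eqref{Paper02_action_generator_square_inner_product_test_function}, the integrand consists exactly of the two carré-du-champ terms. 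Again exchanging sums and integrals by Fubini gives \eqref{Paper02_lemma_semimartingale_quadratic_variation_process}. Square integrability of $M^{N,\varphi}_{\mathcal I}$ then follows from $\mathbb E[\langle M^{N,\varphi}_{\mathcal I}\rangle(T)]<\infty$, which in turn follows from the moment bound (iv).

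The only real obstacle is verifying the hypotheses of the appendix chain-rule lemmas, which is packaged in Lemma~\ref{Paper02_well_behaved_function_inner_product_test_funciton_lemma}. Granting that lemma, the remaining work is bookkeeping. Here the main care point is the interchange of infinite sums over $k$ and $x$ with the time integral, and the tool is the local moment bound of Theorem~\ref{Paper02_bound_total_mass} together with compact support of $\varphi$. Note that $J_{L_N}(\varphi)$ and $J_{L_N}(\Laplace_{L_N}\varphi)$ are supported in a $2L_N^{-1}$-neighbourhood of $\supp\varphi$.
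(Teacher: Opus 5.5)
Your proposal is correct and follows essentially the same route as the paper. The martingale decomposition comes from Lemma~\ref{Paper02_well_behaved_function_inner_product_test_funciton_lemma}(i) together with Theorem~\ref{Paper02_thm_existence_uniqueness_IPS_spatial_muller}, and the drift is then read off from \eqref{Paper02_action_generator_inner_product_smooth_functions}. The predictable bracket comes from Lemma~\ref{Paper02_integration_by_parts_time_predictable_bracket_process}, using that the time derivative vanishes, combined with \eqref{Paper02_action_generator_square_inner_product_test_function}; your explicit Fubini justification of the sum/integral interchanges is a detail the paper leaves implicit.
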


\begin{proof}
The result will follow from Lemma~\ref{Paper02_well_behaved_function_inner_product_test_funciton_lemma} and an application of Lemmas~\ref{Paper02_general_integration_by_parts_formula} and~\ref{Paper02_integration_by_parts_time_predictable_bracket_process} in the appendix, and Theorem~\ref{Paper02_thm_existence_uniqueness_IPS_spatial_muller}. To see this, we first observe that combining Lemma~\ref{Paper02_well_behaved_function_inner_product_test_funciton_lemma}(i) with~Theorem~\ref{Paper02_thm_existence_uniqueness_IPS_spatial_muller}, and then using~\eqref{Paper02_formulation_inner_product_with_test_function_lipschitz_continuous_ii}, we conclude that for any~$\mathcal{I} \subseteq \mathbb{N}_{0}$,~$N \in \mathbb{N}$ and~$\varphi \in \mathcal{C}^{2}_{c}(\mathbb{R})$, the process $(M^{N,\varphi}_{\mathcal{I}}(t))_{t \geq 0}$ given by, for~$T \geq 0$
\begin{equation} \label{Paper02_initial_definition_martingale_inner_product}
\begin{aligned}
    M^{N,\varphi}_{\mathcal{I}}(T) \defeq \left\langle U^{N}_{\mathcal{I}}(T,\cdot), \varphi \right\rangle - \left\langle U^{N}_{\mathcal{I}}(0,\cdot), \varphi \right\rangle - \int_{0}^{T} \Big(\mathcal{L}^{N}g^{N,\varphi}_{\mathcal{I}}\Big)(\eta^{N}(t-)) \, dt
\end{aligned}
\end{equation}
is a càdlàg martingale with respect to the filtration~$\{\mathcal{F}^{{\eta}^{N}}_{t+}\}_{t \geq 0}$. Then, defining~$(A^{N,\varphi}_{\mathcal{I}}(t))_{t \geq 0}$ by, for $T \geq 0$,
\begin{equation*}
    A^{N,\varphi}_{\mathcal{I}}(T) \defeq \int_{0}^{T} \Big(\mathcal{L}^{N}g^{N,\varphi}_{\mathcal{I}}\Big)(\eta^{N}(t-)) \, dt,
\end{equation*}
Lemma~\ref{Paper02_well_behaved_function_inner_product_test_funciton_lemma}(ii) and~\eqref{Paper02_initial_definition_martingale_inner_product} imply~\eqref{Paper02_lemma_semimartingale_finite_variation_process}.

It remains to compute the predictable bracket process of~$M^{N,\varphi}_{\mathcal{I}}$. Note that since the map~$g^{N,\varphi}_{\mathcal{I}}: \mathcal{S}^{N} \rightarrow \mathbb{R}$ does not depend on time, we have
\begin{equation} \label{Paper02_derivative_time_inner_product_smooth_function}
    \frac{\partial}{\partial t} g^{N,\varphi}_{\mathcal{I}}(\boldsymbol{\xi}) \equiv 0 \quad \forall \, \boldsymbol{\xi} \in \mathcal{S}^{N}.
\end{equation}
We claim that~$g^{N,\varphi}_{\mathcal{I}}$ satisfies the conditions of Lemmas~\ref{Paper02_general_integration_by_parts_formula} and~\ref{Paper02_integration_by_parts_time_predictable_bracket_process} in the appendix. Indeed, checking the conditions of Lemma~\ref{Paper02_general_integration_by_parts_formula}, conditions~(i) and~(iv) follow from Lemma~\ref{Paper02_well_behaved_function_inner_product_test_funciton_lemma}(iv) and~\eqref{Paper02_derivative_time_inner_product_smooth_function}, condition~(ii) follows from~\eqref{Paper02_derivative_time_inner_product_smooth_function}, and condition~(iii) follows from Lemma~\ref{Paper02_well_behaved_function_inner_product_test_funciton_lemma}(i) and~(ii). Checking the conditions of Lemma~\ref{Paper02_integration_by_parts_time_predictable_bracket_process}, condition~(i) follows from Lemma~\ref{Paper02_well_behaved_function_inner_product_test_funciton_lemma}(iii), and condition~(ii) follows from Lemma~\ref{Paper02_well_behaved_function_inner_product_test_funciton_lemma}(iv) and~\eqref{Paper02_derivative_time_inner_product_smooth_function}. Therefore, Lemma~\ref{Paper02_integration_by_parts_time_predictable_bracket_process} implies that $M^{N,\varphi}_{\mathcal I}$ is square integrable, and for~$T \geq 0$,
\begin{equation} \label{Paper02_pre_formula_predictable_bracket_process}
    \left\langle M^{N,\varphi}_{\mathcal{I}}\right\rangle(T) = \int_{0}^{T} \left(\Big(\mathcal{L}^{N}(g^{N,\varphi}_{\mathcal{I}})^{2}\Big)(\eta^{N}(t-)) - 2g^{N,\varphi}_{\mathcal{I}}(\eta^{N}(t-))\Big(\mathcal{L}^{N}g^{N,\varphi}_{\mathcal{I}}\Big)(\eta^{N}(t-))\right) \, dt.
\end{equation}
Hence, applying~\eqref{Paper02_action_generator_square_inner_product_test_function} to~\eqref{Paper02_pre_formula_predictable_bracket_process}, we get~\eqref{Paper02_lemma_semimartingale_quadratic_variation_process}, which completes the proof.
\end{proof}

We now use Lemma~\ref{Paper02_action_measure_formulation_semimartinagale} to derive a uniform in time bound (over a compact time interval) on the total mass of particles in a compact interval.

\begin{lemma} \label{Paper02_uniform_time_control_mass_compact_intervals}
    Under the assumptions of Theorem~\ref{Paper02_deterministic_scaling_foutel_rodier_etheridge}, for any $R > 0$ and $T \geq 0$, there exists $C_{R,T} > 0$ such that
    \begin{equation*}
        \sup_{N \in \mathbb{N}} \; \mathbb{E}_{\boldsymbol \eta^N}\left[\sup_{t \in [0,T]} \, \int_{-R}^{R} \| u^{N}(t, x) \|_{\ell_{1}} \, dx\right] \leq C_{R,T}.
    \end{equation*}
\end{lemma}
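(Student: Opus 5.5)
Since all $u^N_k$ are non-negative, I would test against a non-negative function. Fix $\varphi \in \mathcal C^2_c(\mathbb R)$ with $0 \leq \varphi \leq 1$, $\varphi \equiv 1$ on $[-R,R]$ and support in $[-R-1,R+1]$. Then
\[
\int_{-R}^R \| u^N(t,x)\|_{\ell_1}\,dx \leq \langle U^N_{\mathbb N_0}(t,\cdot),\varphi\rangle ,
\]
so it suffices to bound $\mathbb E_{\boldsymbol\eta^N}[\sup_{t\le T}\langle U^N_{\mathbb N_0}(t,\cdot),\varphi\rangle]$ uniformly in $N$. I would apply Lemma~\ref{Paper02_action_measure_formulation_semimartinagale} with $\mathcal I=\mathbb N_0$, which gives
\[
\sup_{t\le T}\langle U^N_{\mathbb N_0}(t,\cdot),\varphi\rangle \leq \langle U^N_{\mathbb N_0}(0,\cdot),\varphi\rangle + \sup_{t\le T}|M^{N,\varphi}_{\mathbb N_0}(t)| + \sup_{t \le T}|A^{N,\varphi}_{\mathbb N_0}(t)|,
\]
and then bound the three terms separately.

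The initial term is at most $(2R+2)\|f\|_{L_\infty(\mathbb R;\ell_1)}$, using the definition of $\boldsymbol\eta^N$ in~\eqref{Paper02_initial_condition}.

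For the finite variation term, I would bound $\sup_t|A|$ by the integral of absolute values in~\eqref{Paper02_lemma_semimartingale_finite_variation_process}. Since $\varphi\in\mathcal C^2_c$, both $J_{L_N}(\Laplace_{L_N}\varphi)$ and $J_{L_N}(\varphi)$ are bounded by $\|\varphi''\|_\infty$ and $\|\varphi\|_\infty$ respectively, and are supported in $[-R-3,R+3]$ for $N$ large. The factor $m_N/L_N^2$ is bounded by Assumption~\ref{Paper02_scaling_parameters_assumption}. Using $|F_k|\le F^+_k$, the sum over $k$ is controlled by the chain in~\eqref{Paper02_good_behaviour_indices_modified_reaction_term}, and Theorem~\ref{Paper02_bound_total_mass} gives
\[
\sum_k \mathbb E[F^+_k(u^N(t-,x))]\le C^{(1)}_{q_+,q_-,f}(T).
\]
Since $\frac1{L_N}\sum_x$ over the support contributes $O(R+3)$, this yields $\mathbb E[\sup_t|A|]\le C(R+3)T$. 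One point needs care: $\mathbb E$ at time $t-$ equals $\mathbb E$ at time $t$ for almost every $t$. I would handle this by Fubini, which is already used implicitly in the earlier lemmas.

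For the martingale, I would use Doob/BDG as in~\eqref{Paper02_BDG_inequality}:
\[
\mathbb E[\sup_{t\le T}|M|]\le (K\,\mathbb E\langle M\rangle(T))^{1/2}.
\]
By~\eqref{Paper02_lemma_semimartingale_quadratic_variation_process}, the first term of $\mathbb E\langle M\rangle(T)$ is at most $\frac{m_N}{NL_N^2}\|\varphi'\|_\infty^2 (R+3) T\sup \mathbb E[\|u^N\|_{\ell_1}]$, which is $O(1/N)$. The second term is at most $\frac{1}{NL_N}\|\varphi\|_\infty^2 (R+3)TC$, which is even smaller. Hence this contribution is bounded, and in fact vanishes as $N\to\infty$.

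Combining the three bounds gives a constant $C_{R,T}$ for all $N$ large. For the finitely many remaining $N$, the same estimates hold with $N$-dependent but finite constants, since $L_N$ and $m_N$ are fixed, so the supremum over $N$ is still finite.

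There is no serious obstacle here. The only delicate point is controlling the supremum in time inside the expectation; this is exactly why I would use the semimartingale decomposition with the total-variation bound on $A$ and BDG for $M$, rather than the pointwise moment bounds directly.
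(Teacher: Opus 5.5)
Your proposal is correct and follows essentially the same route as the paper: you dominate the mass on $[-R,R]$ by $\langle U^N_{\mathbb N_0}(t,\cdot),\varphi\rangle$ and use the decomposition of Lemma~\ref{Paper02_action_measure_formulation_semimartinagale}. You then bound the drift via $|\Laplace_{L_N}\varphi|\le\|\varphi''\|_{L_\infty(\mathbb R)}$, $|F_k|\le F^+_k$ and Theorem~\ref{Paper02_bound_total_mass}, and the martingale via BDG and its predictable bracket. The differences are cosmetic. The paper fixes $R_\varphi$ with $\supp(\varphi)\subset[-R_\varphi+2L_N^{-1},R_\varphi-2L_N^{-1}]$ for all $N$ at once, using $\sup_N L_N^{-1}<\infty$ and $\sup_N m_N/L_N^2<\infty$, which makes your separate treatment of small $N$ unnecessary. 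Also, your count for the first bracket term is loose (it is really of order $m_N/(NL_N^3)$), which is harmless.
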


\begin{proof}
    Fix $\varphi \in \mathcal{C}^{\infty}_{c}(\mathbb{R})$ such that $\varphi(x) \geq 0$ for all $x \in \mathbb{R}$ and $\varphi(x) \geq 1$ for all $x \in [-R, R]$. Then recalling from~\eqref{Paper02_local_mass_subset_indices_definition} that for $t \in [0,T]$ and $x \in \mathbb R$ we have $U^{N}_{\mathbb{N}_{0}}(t,x) = \sum_{k \in \mathbb N_0} u^{N}_{k}(t,x) = \| u^{N}(t,x) \|_{\ell_{1}}$, and recalling~\eqref{Paper02_formulation_inner_product_with_test_function_lipschitz_continuous_ii}, we have that almost surely for all $N \in \mathbb{N}$,
    \begin{equation} \label{Paper02_bound_supremum_mass_compact_interval_by_inner_product_test_function}
        \sup_{t \in [0,T]} \int_{-R}^{R} \| u^{N}(t, x) \|_{\ell_{1}} \, dx \leq \sup_{t \in [0,T]} \left\langle U^{N}_{\mathbb{N}_{0}}(t, \cdot), \varphi\right\rangle.
    \end{equation}
    By Lemma~\ref{Paper02_action_measure_formulation_semimartinagale}, we can write
    \begin{equation} \label{Paper02_intermediate_step_tightness_bound_mass_compact_intervals}
    \begin{aligned}
        \sup_{t \in [0,T]} \left\langle U^{N}_{\mathbb{N}_{0}}(t, \cdot), \varphi\right\rangle \leq \left\langle U^{N}_{\mathbb{N}_{0}}(0, \cdot), \varphi\right\rangle + \sup_{t \in [0,T]} M^{N, \varphi}_{\mathbb{N}_{0}} (t) + \sup_{t \in [0,T]} A^{N, \varphi}_{\mathbb{N}_{0}}(t).
    \end{aligned}
    \end{equation}
    We will bound each term on the right-hand side of~\eqref{Paper02_intermediate_step_tightness_bound_mass_compact_intervals} separately. For the first term, we recall that, by Assumption~\ref{Paper02_assumption_initial_condition}, the function $f = \left(f_{k}\right)_{k \in \mathbb{N}_{0}}: \mathbb{R} \rightarrow \ell_{1}^+$ that determines the initial condition~$\boldsymbol \eta^{N}$ in~\eqref{Paper02_initial_condition} satisfies~$f \in L_{\infty}(\mathbb{R}; \,\ell_{1})$, and so
    \begin{equation} \label{Paper02_bound_mass_compact_intervals_initial_condition}
    \begin{aligned}
        \sup_{N \in \mathbb{N}} \mathbb E_{\boldsymbol \eta^N} \left[\left\langle U^{N}_{\mathbb{N}_{0}}(0, \cdot), \varphi\right\rangle\right] = \sup_{N \in \mathbb{N}} \mathbb E_{\boldsymbol \eta^N} \left[\int_{\mathbb{R}} U^{N}_{\mathbb{N}_{0}}(0, x) \varphi(x) \, dx\right] \leq \| f \|_{L_{\infty}(\mathbb{R}; \ell_{1})} \| \varphi \|_{L_{1}(\mathbb{R})}.
    \end{aligned}
    \end{equation}
    To bound the finite variation term on the right-hand side of~\eqref{Paper02_intermediate_step_tightness_bound_mass_compact_intervals}, we observe that since $\varphi$ is smooth, for all~$N \in \mathbb  N$ and $x \in \mathbb{R}$,~\eqref{Paper02_discrete_gradient_discrete_laplacian} yields
    \begin{equation*}
    \begin{aligned}
        \Laplace_{L_{N}} \varphi (x) & = L_{N}^{2}\Big((\varphi(x + L_{N}^{-1}) - \varphi(x)) - (\varphi(x) - \varphi(x - L_{N}^{-1}))\Big) \\ & = L_{N}^{2}\int_{x}^{x + L_{N}^{-1}} (\varphi'(y) - \varphi'(y - L_{N}^{-1})) \, dy \\ & =  L_{N}^{2}\int_{x}^{x + L_{N}^{-1}} \int_{y - L_{N}^{-1}}^{y} \varphi''(z) \, dz \, dy.
    \end{aligned}
    \end{equation*}
    Therefore, for all~$N \in \mathbb N$ and~$x \in \mathbb{R}$,
    \begin{equation} \label{Paper02_bound_discrete_laplacian}
        \vert    \Laplace_{L_{N}} \varphi (x) \vert \leq \| \varphi'' \|_{L_{\infty}(\mathbb{R})}.
    \end{equation}
    Recall from Assumption~\ref{Paper02_scaling_parameters_assumption}(ii) that $L_N \rightarrow \infty$ as $N \rightarrow \infty$. Let $R_{\varphi} > 0$ be sufficiently large that $\supp (\varphi) \subset [-R_{\varphi} + 2L_N^{-1}, R_{\varphi} - 2L_N^{-1}]$ for every~$N \in \mathbb{N}$. Then, applying~\eqref{Paper02_bound_discrete_laplacian} to~\eqref{Paper02_lemma_semimartingale_finite_variation_process}, recalling~\eqref{Paper02_definition_discrete_continuous_measure} and using that by~\eqref{Paper02_reaction_predictable_bracket_process} and~\eqref{Paper02_reaction_term_PDE}, the reaction term  $F^+ = (F^+_k)_{k \in \mathbb N_0}$ satisfies $F^{+}_{k}(u) \geq 0 \vee F_{k}(u)$ for all $u \in \ell_{1}^{+}$ and $k \in \mathbb{N}_{0}$, we have that for $N \in \mathbb N$,
    \begin{equation*}
    \begin{aligned}
        \mathbb{E}_{\boldsymbol \eta^N}\bigg[\sup_{t \in [0,T]} A^{N, \varphi}_{\mathbb{N}_{0}}(t)\bigg] & \leq  \frac{1}{L_{N}} \|\varphi''\|_{L_\infty(\mathbb R)} \sum_{x \in L_{N}^{-1}\mathbb{Z} \cap [-R_{\varphi}, R_{\varphi}]} \int_{0}^{T} \frac{m_{N}}{2L_{N}^{2}} \mathbb{E}_{\boldsymbol{\eta}^N}\left[\| u^{N}(t-, x) \|_{\ell_{1}}\right] \, dt \\ &  + \frac{1}{L_{N}} \|\varphi\|_{L_\infty(\mathbb R)} \sum_{x \in L_{N}^{-1}\mathbb{Z} \cap [-R_{\varphi}, R_{\varphi}]}\int_{0}^{T} \mathbb{E}_{\boldsymbol \eta^N}\left[ \sum_{k \in \mathbb N_0} F^{+}_k (u^{N}(t-, x)) \right] \, dt.
    \end{aligned}
    \end{equation*}
    Hence, by Theorem~\ref{Paper02_bound_total_mass} and~\eqref{Paper02_good_behaviour_indices_modified_reaction_term} in the proof of Lemma~\ref{Paper02_lemma_increments_liitle_u_hat_n_k}, and using Assumption~\ref{Paper02_scaling_parameters_assumption}$(i)$, we conclude that there exists $C^{(1)}_{R,T,\varphi} > 0$ such that
    \begin{equation} \label{Paper02_bound_finite_variation_term}
        \sup_{N \in \mathbb N}   \, \mathbb{E}_{\boldsymbol \eta^N}\left[\sup_{t \in [0,T]} A^{N, \varphi}_{\mathbb{N}_{0}}(t)\right] \leq C^{(1)}_{R,T,\varphi}.
    \end{equation}
    Now, to bound the supremum of the martingale term on the right-hand side of~\eqref{Paper02_intermediate_step_tightness_bound_mass_compact_intervals}, we first observe that by the smoothness of~$\varphi$ and by~\eqref{Paper02_discrete_gradient_discrete_laplacian}, we have for all~$N \in \mathbb N$ and~$x \in \mathbb{R}$,
    \begin{equation*}
    \begin{aligned}
       \nabla_{L_{N}} \varphi (x) = L_{N}\Big(\varphi(x + L_{N}^{-1}) - \varphi(x)\Big) = L_{N}\int_{x}^{x + L_{N}^{-1}} \varphi'(y) \, dy,
    \end{aligned}
    \end{equation*}
    and so for all $N \in \mathbb N$ and $x \in \mathbb R$,
    \begin{equation} \label{Paper02_bound_discrete_gradient}
        \vert \nabla_{L_{N}} \varphi (x) \vert \leq \| \varphi' \|_{L_{\infty}(\mathbb{R})}.
    \end{equation}
    Taking expectations on both sides of~\eqref{Paper02_lemma_semimartingale_quadratic_variation_process} and applying~\eqref{Paper02_bound_discrete_gradient} and~\eqref{Paper02_definition_discrete_continuous_measure}, we conclude that for $N \in \mathbb N$,
    \begin{equation*}
    \begin{aligned}
        \mathbb{E}_{\boldsymbol \eta^N} & \left[ \left\langle M^{N, \varphi}_{\mathbb{N}_{0}}\right\rangle (T)\right] \\ & \leq  \frac{m_{N}}{NL_{N}^{4}} \|\varphi'\|_{L_\infty(\mathbb R)}^2\sum_{x \in L_{N}^{-1}\mathbb{Z} \cap[-R_{\varphi}, R_{\varphi}]} \int_{0}^{T} \mathbb{E}_{\boldsymbol \eta^N}\left[\| u^{N}(t-, x) \|_{\ell_{1}}\right] \, dt \\ & \quad + \frac{1}{NL_{N}^{2}} \|\varphi\|_{L_\infty(\mathbb R)}^2\sum_{x \in L_{N}^{-1}\mathbb{Z} \cap [-R_{\varphi}, R_{\varphi}]} \int_{0}^{T} \mathbb{E}_{\boldsymbol \eta^N}\left[\sum_{k \in \mathbb N_0}  F^{+}_{k}(u^{N}(t-, x) )\right] \, dt.
    \end{aligned}
    \end{equation*}
    Hence, by~Assumption~\ref{Paper02_scaling_parameters_assumption},~\eqref{Paper02_good_behaviour_indices_modified_reaction_term} and Theorem~\ref{Paper02_bound_total_mass}, and then by using Jensen's inequality and the BDG inequality as in~\eqref{Paper02_BDG_inequality}, we conclude that there exists $C^{(2)}_{R,T,\varphi} > 0$ such that
     \begin{equation} \label{Paper02_bound_martingale_term_tightness}
        \sup_{N \in \mathbb N}   \, \mathbb{E}_{\boldsymbol \eta^N} \left[\sup_{t \in [0,T]} \left\vert M^{N, \varphi}_{\mathbb{N}_{0}}(t) \right\vert \right] \leq C^{(2)}_{R,T,\varphi}.
    \end{equation}
    Therefore, substituting estimates~\eqref{Paper02_bound_mass_compact_intervals_initial_condition},~\eqref{Paper02_bound_finite_variation_term} and~\eqref{Paper02_bound_martingale_term_tightness} into~\eqref{Paper02_intermediate_step_tightness_bound_mass_compact_intervals}, the result follows from~\eqref{Paper02_bound_supremum_mass_compact_interval_by_inner_product_test_function}.
\end{proof}

Recall from the end of Section~\ref{Paper02_introduction} that we equip $\mathcal M(\mathbb R)$ with the vague topology. By combining Lemma~\ref{Paper02_uniform_time_control_mass_compact_intervals} and the characterisation of compact subsets of $\mathcal{M}(\mathbb{R})$ in the vague topology, we will now be able to prove Proposition~\ref{Paper02_compact_containment_condition}.

\begin{proof}[Proof of Proposition~\ref{Paper02_compact_containment_condition}]
Let $\left(A_{n}\right)_{n \in \mathbb{N}},\left(a_{n}\right)_{n \in \mathbb{N}} \subset (0,\infty)$ be non-decreasing with $\lim_{n \rightarrow \infty} A_{n} = \infty$. Let $\mathcal{K} \subset \mathcal{M}(\mathbb{R})$ be the subset of Radon measures given by
\begin{equation*}
    \mathcal{K} \defeq \left\{\nu \in \mathcal{M}(\mathbb{R}): \, \nu([-A_{n}, A_{n}]) \leq a_{n} \; \forall \, n \in \mathbb{N}\right\}.
\end{equation*}
Then $\mathcal{K}$ is a compact subset of $\mathcal{M}(\mathbb{R})$ in the vague topology (see~\cite[Section III.9 - Proposition 15]{bourbaki2004measures}). Furthermore, recalling~\eqref{Paper02_metric_product_topology}, since $(\mathcal{M}(\mathbb{R})^{\mathbb{N}_{0}}, d)$ has the product topology, by Tychonoff's theorem (see for instance \cite[Theorem~4.42]{folland1999real}), if $\left(\mathcal{K}_{k}\right)_{k \in \mathbb{N}_{0}}$ is a sequence of compact subsets of $\mathcal{M}(\mathbb{R})$, then the Cartesian product $\mathcal{K} \defeq \prod_{k \in \mathbb N_0} \mathcal{K}_{k}$ is a compact subset of $\mathcal{M}(\mathbb{R})^{\mathbb{N}_{0}}$. 

Take~$T \geq 0$ and for each $R \in \mathbb N$, define $C_{R,T} >0$ as in Lemma~\ref{Paper02_uniform_time_control_mass_compact_intervals}. For $\varepsilon \in (0,1)$, let $\mathcal{K}_{\varepsilon, T} = \prod_{k \in \mathbb N_0} \mathcal{K}_{k, \varepsilon, T}$, where for $k \in \mathbb{N}_{0}$, the set $\mathcal{K}_{k, \varepsilon, T}$ is given by
\begin{equation*}
    \mathcal{K}_{k, \varepsilon, T} \defeq \left\{\nu \in \mathcal{M}(\mathbb{R}): \nu([-R, R]) \leq \frac{2^{R} C_{R,T}}{\varepsilon} \, \forall \, R \in \mathbb{N}\right\}.
\end{equation*}
Then $\mathcal{K}_{\varepsilon, T}$ is a compact subset of $\mathcal{M}(\mathbb{R})^{\mathbb{N}_{0}}$. Moreover, by the definition of $\mathcal K_{\varepsilon,T}$, and then, in the third line, by a union bound, Lemma~\ref{Paper02_uniform_time_control_mass_compact_intervals} and Markov's inequality, for any $N \in \mathbb N$,
\begin{equation*}
\begin{aligned}
    \mathbb{P}_{\boldsymbol \eta^N} & \left(u^{N}(t) \in \mathcal{K}_{\varepsilon, T} \; \forall t \in [0,T]\right)  \\ & = \mathbb{P}_{\boldsymbol \eta^N}\bigg(\sup_{t \in [0,T]} \int_{-R}^{R} u^{N}_{k}(t, x) \, dx \leq \frac{2^{R}C_{R,T}}{\varepsilon} \; \forall \, k \in \mathbb{N}_{0}, \; \forall \, R \in \mathbb{N} \bigg) \\ & \geq \mathbb{P}_{\boldsymbol \eta^N}\bigg(\sup_{t \in [0,T]} \int_{-R}^{R} \| u^{N}(t, x) \|_{\ell_{1}} \, dx \leq \frac{2^{R}C_{R,T}}{\varepsilon} \; \forall \, R \in \mathbb{N} \bigg) \\ & \geq 1 - \sum_{R \in \mathbb N}\frac{\varepsilon}{2^{R}} \\ & = 1 - \varepsilon,
\end{aligned}
\end{equation*}
which completes the proof, since~$\varepsilon > 0$ was arbitrary.
\end{proof}

As explained at the start of this section, to conclude our proof of tightness for~$(u^{N})_{N \in \mathbb{N}}$, it will suffice to find a suitable set $\mathbb{F} \subset \mathcal{C}(\mathcal{M}(\mathbb{R})^{\mathbb{N}_{0}}; \mathbb{R})$ that separates points in $\mathcal{M}(\mathbb{R})^{\mathbb{N}_{0}}$ and that is closed under addition, and then prove that Proposition~\ref{Paper02_tightness_evaluations} holds with this choice of~$\mathbb{F}$. We recall that, since $\mathcal{C}^{2}_{c}(\mathbb{R})$ is a dense subset of $\mathcal{C}_{c}(\mathbb{R})$ in the topology of uniform convergence (see for instance~\cite[Exercises~3.10 and~3.11]{ethier2009markov}), it follows that $\mathcal{C}^{2}_{c}(\mathbb{R})$ separates the points of $\mathcal{M}(\mathbb{R})$ (see for instance~\cite[Proposition~2.9]{bourbaki2004measures}). Using this and the fact that $(\mathcal{M}(\mathbb{R})^{\mathbb{N}_{0}},d)$ is equipped with the product topology (by~\eqref{Paper02_metric_product_topology}), it is immediate that the set
\begin{equation} \label{Paper02_nice_set_evaluation_functions}
\begin{aligned}
    \mathbb{F} \defeq \Bigg\{\varphi: \mathcal{M}(\mathbb{R})^{\mathbb{N}_{0}} \rightarrow \mathbb{R}: \; & \exists \, n \in \mathbb N_0, \varphi_0, \varphi_1, \ldots, \varphi_n \in \mathcal C^2_c(\mathbb R) \textrm{ such that}  \\ & \quad \varphi(u) = \sum_{k = 0}^{n} \left\langle u_{k}, \varphi_{k} \right\rangle \quad \forall u = (u_{k})_{k \in \mathbb N_0} \in \mathcal M(\mathbb R)^{\mathbb N_0} \Bigg\}
\end{aligned}
\end{equation}
satisfies conditions~(i) and~(ii) of Proposition~\ref{Paper02_tightness_evaluations}. Then, applying the tightness criterion stated in \cite[Theorem~3.8.6]{ethier2009markov} and in~\cite{aldous1978stopping}, in order to prove Proposition~\ref{Paper02_tightness_evaluations}, it will be enough to establish a weak compact containment condition and Aldous' criterion. The weak compact containment condition reads as follows.

\begin{lemma} \label{Paper02_weak_compact_containment_condition_tightness}
Under the assumptions of Theorem~\ref{Paper02_deterministic_scaling_foutel_rodier_etheridge}, for $\mathbb F$ as defined in~\eqref{Paper02_nice_set_evaluation_functions}, for any $\varphi \in \mathbb{F}$, $\varepsilon > 0$ and $T \geq 0$, there is a compact set $\mathcal{K}_{\varphi, \varepsilon, T} \subset \mathbb{R}$ such that
\begin{equation*}  
\inf_{N \in \mathbb N} \; \mathbb{P}_{\boldsymbol \eta^N}\left(\varphi(u^{N}(t)) \in \mathcal{K}_{\varphi, \varepsilon, T} \; \; \forall t \in [0,T]\right) \geq 1 - \varepsilon.
\end{equation*}
\end{lemma}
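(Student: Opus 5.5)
The plan is to take as compact set a closed interval $\mathcal{K}_{\varphi,\varepsilon,T} = [-A,A]$, with $A$ chosen via Markov's inequality applied to a uniform-in-time bound on $|\varphi(u^N(t))|$. Fix $\varphi \in \mathbb{F}$, so there are $n \in \mathbb{N}_0$ and $\varphi_0,\ldots,\varphi_n \in \mathcal{C}^2_c(\mathbb{R})$ with $\varphi(u) = \sum_{k=0}^n \langle u_k, \varphi_k\rangle$. Choose $R>0$ large enough that $\supp(\varphi_k) \subset [-R,R]$ for all $k \le n$. Since each $u^N_k(t,\cdot)$ is non-negative, for every $t \ge 0$ we have almost surely
\[
|\varphi(u^N(t))| \le \sum_{k=0}^n \|\varphi_k\|_{L_\infty(\mathbb{R})} \int_{-R}^{R} u^N_k(t,x)\,dx \le \Big(\max_{k \le n}\|\varphi_k\|_{L_\infty(\mathbb{R})}\Big) \int_{-R}^R \|u^N(t,x)\|_{\ell_1}\,dx .
\]
The second step uses $\sum_{k\le n} u^N_k \le \|u^N\|_{\ell_1}$.

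Next, take the supremum over $t\in[0,T]$ and then expectations. Lemma~\ref{Paper02_uniform_time_control_mass_compact_intervals} gives $\sup_{N}\mathbb{E}_{\boldsymbol\eta^N}\big[\sup_{t\le T}\int_{-R}^R\|u^N(t,x)\|_{\ell_1}dx\big] \le C_{R,T}$. Set $C_\varphi := \max_{k\le n}\|\varphi_k\|_{L_\infty(\mathbb{R})}$. Markov's inequality then gives, for every $N$,
\[
\mathbb{P}_{\boldsymbol\eta^N}\Big(\sup_{t\in[0,T]}|\varphi(u^N(t))| > A\Big) \le \frac{C_\varphi C_{R,T}}{A}.
\]
Choosing $A = C_\varphi C_{R,T}/\varepsilon$ and $\mathcal{K}_{\varphi,\varepsilon,T} = [-A,A]$ yields the claim uniformly in $N$.

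The only delicate point is measurability of the supremum over $t\in[0,T]$. It is handled by noting that $t \mapsto \varphi(u^N(t))$ is càdlàg, since $u^N$ is càdlàg in the vague topology and $\varphi$ is continuous. The supremum therefore equals a supremum over rational times together with $T$. Lemma~\ref{Paper02_uniform_time_control_mass_compact_intervals} already handles the genuinely hard part, the uniform-in-time control. Apart from this, the argument is a routine reduction.
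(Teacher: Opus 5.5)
Your proof is correct, but it takes a different route from the paper's.

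The paper argues in one line. Proposition~\ref{Paper02_compact_containment_condition} gives a compact set $\mathcal{K}_{\varepsilon,T}\subset\mathcal{M}(\mathbb{R})^{\mathbb{N}_0}$ that contains the whole path $(u^N(t))_{t\in[0,T]}$ with probability at least $1-\varepsilon$, uniformly in $N$. Every $\varphi\in\mathbb{F}$ is continuous on $(\mathcal{M}(\mathbb{R})^{\mathbb{N}_0},d)$, so the image $\varphi(\mathcal{K}_{\varepsilon,T})$ is a compact subset of $\mathbb{R}$ that does the job.

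You bypass the measure-valued compact containment. Instead you apply Markov's inequality directly to $\sup_{t\le T}|\varphi(u^N(t))|$, using:
\begin{itemize}
\item the non-negativity of the $u^N_k$,
\item the compact supports of the $\varphi_k$,
\item Lemma~\ref{Paper02_uniform_time_control_mass_compact_intervals}.
\end{itemize}
The proof of Proposition~\ref{Paper02_compact_containment_condition} is itself Lemma~\ref{Paper02_uniform_time_control_mass_compact_intervals} plus Markov's inequality, the vague-topology compactness criterion and Tychonoff's theorem. So both arguments rest on the same estimate.

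What each buys:
\begin{itemize}
\item Your version is more elementary and gives an explicit interval $[-A,A]$ with $A=C_\varphi C_{R,T}/\varepsilon$. It does rely on the specific form of the elements of $\mathbb{F}$: linear, compactly supported, bounded test functions.
\item The paper's version is shorter once Proposition~\ref{Paper02_compact_containment_condition} is available, and that proposition is needed anyway for Jakubowski's criterion. It also works verbatim for any continuous real function on $\mathcal{M}(\mathbb{R})^{\mathbb{N}_0}$.
\end{itemize}
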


\begin{proof}
Since~$\mathbb{F} \subset \mathcal{C}(\mathcal{M}(\mathbb{R})^{\mathbb{N}_{0}}; \mathbb{R})$, and by the fact that the continuous image of a compact set is compact, the result follows from Proposition~\ref{Paper02_compact_containment_condition}.
\end{proof}

We now proceed to establishing Aldous' criterion \cite{aldous1978stopping}.

\begin{lemma} [Aldous' criterion] \label{Paper02_aldous_criterion}
Under the assumptions of Theorem~\ref{Paper02_deterministic_scaling_foutel_rodier_etheridge}, for $\mathbb F$ as defined in~\eqref{Paper02_nice_set_evaluation_functions}, the following holds. For~$N \in \mathbb N$ and~$T \geq 0$, let $\mathcal{T}^N(T)$ be the family of all $\{\mathcal F^{\eta^N}_{t+}\}_{t \geq 0}$-stopping times~$\tau$ such that $\tau \leq T$ almost surely. Then, for any $\varphi \in \mathbb{F}$, $\varepsilon > 0$ and $T \geq 0$,
    \begin{equation*}
    \lim_{ \gamma \downarrow 0} \; \limsup_{N \rightarrow \infty} \; \sup_{\substack{\tau \in \mathcal{T}^N(T), \\ 0 < \theta \leq \gamma}} \mathbb{P}_{\boldsymbol \eta^N}\left(\left\vert \varphi({u}^{N}((\tau + \theta) \wedge T)) - \varphi({u}^{N}(\tau)) \right\vert > \varepsilon\right) = 0.
    \end{equation*}
\end{lemma}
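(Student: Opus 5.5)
By linearity it suffices to treat a single coordinate $\varphi(u)=\langle u_k,\psi\rangle$ with $k\in\mathbb N_0$ and $\psi\in\mathcal C^2_c(\mathbb R)$. If $\varphi=\sum_{j\le n}\langle u_j,\varphi_j\rangle$, then a union bound with $\varepsilon/(n+1)$ in each term reduces the statement to this case. So fix $\mathcal I=\{k\}$ and apply Lemma~\ref{Paper02_action_measure_formulation_semimartinagale}. This gives
\[
\varphi(u^N((\tau+\theta)\wedge T))-\varphi(u^N(\tau)) = \Big(M^{N,\psi}_{\{k\}}((\tau+\theta)\wedge T)-M^{N,\psi}_{\{k\}}(\tau)\Big) + \Big(A^{N,\psi}_{\{k\}}((\tau+\theta)\wedge T)-A^{N,\psi}_{\{k\}}(\tau)\Big).
\]
By Markov's inequality it is then enough to show that the expectations of the absolute values of both increments are bounded by a quantity $c(\gamma)$, independent of $N$ and of $\tau\in\mathcal T^N(T)$, with $c(\gamma)\to0$ as $\gamma\downarrow0$.

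For the finite variation part, \eqref{Paper02_lemma_semimartingale_finite_variation_process} together with \eqref{Paper02_bound_discrete_laplacian} and $|F_k|\le F_k^+$ gives
\[
|A(\tau+\theta)-A(\tau)|\le \int_0^{T+\gamma}\mathds 1_{\{t\in[\tau,\tau+\theta]\}}\,G^N(t)\,dt .
\]
Here $G^N(t)$ is $\frac1{L_N}\sum_{x\in L_N^{-1}\mathbb Z\cap[-R_\psi,R_\psi]}\big(\tfrac{m_N}{2L_N^2}\|\psi''\|_\infty\|u^N(t-,x)\|_{\ell_1}+\|\psi\|_\infty\sum_j F^+_j(u^N(t-,x))\big)$. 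This is the step I expect to be the main obstacle, because $\tau$ is random and $G^N$ is unbounded, so we cannot simply bound by $\theta\,\sup_t G^N(t)$. I would apply Hölder in time instead: the integral is at most $\theta^{1/2}\big(\int_0^{T+1}G^N(t)^2dt\big)^{1/2}$. Taking expectations and using Cauchy--Schwarz, it suffices to bound $\mathbb E\int_0^{T+1}G^N(t)^2dt$ uniformly in $N$. By Jensen over the finitely many (order $L_N$) sites with weight $1/L_N$, this reduces to $\sup_{x,t}\mathbb E[\|u^N(t,x)\|_{\ell_1}^2(q_+ +q_-)^2(\|u^N\|_{\ell_1})]$, plus a similar term, which is finite by Theorem~\ref{Paper02_bound_total_mass} together with Assumption~\ref{Paper02_scaling_parameters_assumption}(i) to control $m_N/L_N^2$. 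The resulting bound is $C\gamma^{1/2}$.

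For the martingale part, I would use optional stopping for the square-integrable martingale $M^{N,\psi}_{\{k\}}$ at the bounded stopping times $\tau\le(\tau+\theta)\wedge T$. This gives $\mathbb E[(M((\tau+\theta)\wedge T)-M(\tau))^2]=\mathbb E[\langle M\rangle((\tau+\theta)\wedge T)-\langle M\rangle(\tau)]$. By \eqref{Paper02_lemma_semimartingale_quadratic_variation_process} and \eqref{Paper02_bound_discrete_gradient}, this is the integral over $[\tau,\tau+\theta]$ of a density of the same form as $G^N$ but with prefactors $\frac{m_N}{NL_N^2}$ and $\frac1{NL_N}$. The same Hölder argument bounds it by $C\gamma^{1/2}/N$; in fact, thanks to the extra $1/N$ factor, even a crude bound over the whole interval $[0,T]$ would suffice and would vanish as $N\to\infty$. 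Combining the two parts yields $\limsup_N\sup_{\tau,\theta\le\gamma}\mathbb P(\cdot>\varepsilon)\le C\gamma^{1/4}/\varepsilon$, which tends to $0$ as $\gamma\downarrow0$.
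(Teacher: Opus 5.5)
Your proposal is correct and follows essentially the same route as the paper: you reduce to a single coordinate $\langle u^N_k,\psi\rangle$ and use the semimartingale decomposition of Lemma~\ref{Paper02_action_measure_formulation_semimartinagale}. You then control the finite-variation increment by Cauchy--Schwarz in time together with Theorem~\ref{Paper02_bound_total_mass}; the paper applies Cauchy--Schwarz site by site rather than to your aggregated density $G^N$, which makes no difference. Finally, you control the martingale increment via optional sampling and the predictable bracket, which carries an extra factor $1/N$. One small slip: relative to your $\frac{1}{L_N}\sum_x$ normalisation, the first bracket prefactor is $m_N/(NL_N^3)$, not $m_N/(NL_N^2)$; your version is an overestimate that still vanishes as $N\to\infty$, so the argument is unaffected.
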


\begin{proof}
By the definition of $\mathbb{F}$ in~\eqref{Paper02_nice_set_evaluation_functions}, it will be enough to verify that for any~$k \in \mathbb{N}_{0}$, $\phi \in \mathcal{C}^{2}_{c}(\mathbb{R})$, $\varepsilon > 0$ and $T \geq 0$,
\begin{equation} \label{Paper02_reformulation_aldous_criterion}
    \lim_{\gamma \downarrow 0} \; \limsup_{N \rightarrow \infty} \; \sup_{\substack{\tau \in \mathcal{T}^N(T), \\ 0 < \theta \leq \gamma}} \mathbb{P}_{\boldsymbol \eta^N}\left(\left\vert  \left\langle{u}^{N}_{k}((\tau + \theta) \wedge T,\cdot), \phi \right\rangle - \left\langle {u}^{N}_{k}(\tau, \cdot), \phi\right\rangle \right\vert > \varepsilon\right) = 0.
    \end{equation}
To prove~\eqref{Paper02_reformulation_aldous_criterion}, we will apply the semimartingale formulation of $\left\langle u^{N}_{k}(T,\cdot), \phi \right\rangle$ given by Lemma~\ref{Paper02_action_measure_formulation_semimartinagale}. Recall from~\eqref{Paper02_local_mass_subset_indices_definition} that $u^N_k = U^N_{\{k\}}$. For any $N \in \mathbb N$, $\tau \in \mathcal{T}^N(T)$, $\gamma > 0$ and $\theta \in (0, \gamma]$, we have by Lemma~\ref{Paper02_action_measure_formulation_semimartinagale} and the triangle inequality that
\begin{equation*} 
\begin{aligned}
    & \left\vert  \left\langle{u}^{N}_{k}((\tau + \theta) \wedge T,\cdot), \phi \right\rangle - \left\langle {u}^{N}_{k}(\tau, \cdot), \phi\right\rangle \right\vert \\ & \quad \leq \left\vert A^{N,\phi}_{\{k\}}((\tau + \theta) \wedge T) - A^{N,\phi}_{\{k\}}(\tau) \right\vert + \left\vert M^{N,\phi}_{\{k\}}((\tau + \theta) \wedge T) - M^{N,\phi}_{\{k\}}(\tau) \right\vert.
\end{aligned}
\end{equation*}
Therefore, by Markov's inequality, in order to prove~\eqref{Paper02_reformulation_aldous_criterion}, it will be enough to verify that
\begin{equation} \label{Paper02_finite_variation_estimate_aldous_criterion}
    \lim_{\gamma \downarrow 0} \; \limsup_{N \rightarrow \infty} \; \sup_{\substack{\tau \in \mathcal{T}^N(T), \\ 0 < \theta \leq \gamma}} \; \mathbb{E}_{\boldsymbol \eta^N}\left[\left\vert A^{N,\phi}_{\{k\}} \Big((\tau + \theta) \wedge T\Big) -   A^{N,\phi}_{\{k\}}(\tau)\right\vert \right] = 0,
\end{equation}
and that
\begin{equation} \label{Paper02_martingale_estimate_aldous_criterion}
    \lim_{\gamma \downarrow 0} \; \limsup_{N \rightarrow \infty} \; \sup_{\substack{\tau \in \mathcal{T}^N(T), \\ 0 < \theta \leq \gamma}} \; \mathbb{E}_{\boldsymbol \eta^N}\left[\left\vert M^{N,\phi}_{\{k\}} \Big((\tau + \theta) \wedge T\Big) -   M^{N,\phi}_{\{k\}}(\tau)\right\vert \right] = 0.
\end{equation}

We will start by proving~\eqref{Paper02_finite_variation_estimate_aldous_criterion}. Let $R_{\phi} > 0$ be sufficiently large that $\supp (\phi) \subset [-R_{\phi} + 2L_N^{-1}, R_{\phi} - 2L_N^{-1}]$ for every~$N \in \mathbb{N}$. Note that since by Assumption~\ref{Paper02_scaling_parameters_assumption}, $L_{N} \rightarrow \infty$ as~$N \rightarrow \infty$, and since~$\phi$ has compact support, we can choose such an~$R_{\phi} < \infty$. By the definition of $A^{N,\phi}_{\{k\}}$ in~\eqref{Paper02_lemma_semimartingale_finite_variation_process} in the statement of Lemma~\ref{Paper02_action_measure_formulation_semimartinagale}, and since~$\vert F_{k}(u) \vert \leq F^{+}_{k}(u)$ for~$u \in \ell_{1}^{+}$ by~\eqref{Paper02_reaction_term_PDE} and~\eqref{Paper02_reaction_predictable_bracket_process}, we have that for any~$N \in \mathbb N$, $\tau \in \mathcal{T}^N(T)$, $\gamma > 0$ and $\theta \in (0,\gamma]$,
\begin{equation} \label{Paper02_intermediate_step_aldous_criterion_finite_variation_process}
\begin{aligned}
    & \left\vert A^{N,\phi}_{\{k\}}\Big((\tau + \theta) \wedge T \Big) - A^{N,\phi}_{\{k\}}(\tau)\right\vert \\
    & \quad \leq \frac{1}{L_{N}} \sum_{x \in L_{N}^{-1}\mathbb{Z}} \vert \left(J_{L_N}(\Laplace_{L_{N}} \phi)\right)(x) \vert \int_{\tau}^{(\tau + \theta) \wedge T} \frac{m_{N}}{2L_{N}^{2}} u^{N}_{k}(t-,x) \; dt \\
     & \quad \quad + \frac{1}{L_{N}} \sum_{x \in L_{N}^{-1}\mathbb{Z}} \vert \left(J_{L_N}( \phi)\right)(x) \vert \int_{\tau}^{(\tau + \theta) \wedge T} F^{+}_{k}(u^{N}(t-, x)) \; dt \\
     & \quad \leq \frac{m_{N}  \|\phi ''\|_{L_{\infty}(\mathbb{R})}}{2L_{N}^{3}} \sum_{x \in L_{N}^{-1}\mathbb{Z} \cap [-R_{\phi}, R_{\phi}]} \int_{0}^{T} u^{N}_{k}(t-,x) \cdot \mathds{1}_{\{\tau \leq t \leq \tau + \theta\}} \; dt \\ & \quad \quad + \frac{\| \phi\|_{L_{\infty}(\mathbb{R})}}{L_{N}} \sum_{x \in L_{N}^{-1}\mathbb{Z} \cap [-R_{\phi}, R_{\phi}]} \int_{0}^{T} F^{+}_{k}(u^{N}(t-, x)) \cdot \mathds{1}_{\{\tau \leq t \leq \tau + \theta\}} \; dt,
\end{aligned}
\end{equation}
where the second inequality follows by our choice of~$R_{\phi}$ and by~\eqref{Paper02_bound_discrete_laplacian} and~\eqref{Paper02_definition_discrete_continuous_measure}. Since $\theta \in (0, \gamma]$, we have
\begin{equation} \label{Paper02_trivial_cauchy_schwarz}
    \left(\int_{0}^{T} \mathds{1}_{\{\tau \leq t \leq \tau + \theta\}} \; dt\right)^{1/2} =  \left(\int_{\tau}^{(\tau + \theta) \wedge T} \; dt\right)^{1/2} \leq \sqrt{\theta} \leq \sqrt{\gamma}.
\end{equation}
Hence, applying the Cauchy-Schwarz inequality and~\eqref{Paper02_trivial_cauchy_schwarz} to the integrals with respect to time on the right-hand side of~\eqref{Paper02_intermediate_step_aldous_criterion_finite_variation_process}, we obtain
\begin{equation} \label{Paper02_finite_variation_process_intermediate_step_aldous_two}
\begin{aligned}
    & \left\vert A^{N,\phi}_{\{k\}}\Big((\tau + \theta) \wedge T \Big) - A^{N,\phi}_{\{k\}}(\tau)\right\vert \\
    & \quad \leq \frac{m_{N} \| \phi ''\|_{L_{\infty}(\mathbb{R})} \sqrt{\gamma}}{2L_{N}^{3}} \sum_{x \in L_{N}^{-1}\mathbb{Z} \cap [-R_{\phi}, R_{\phi}]} \Bigg(\int_{0}^{T} u^{N}_{k}(t-,x)^{2} \; dt \Bigg)^{1/2} \\ & \qquad + \frac{\| \phi\|_{L_{\infty}(\mathbb{R})} \sqrt{\gamma}}{L_{N}} \sum_{x \in L_{N}^{-1}\mathbb{Z}\cap [-R_{\phi}, R_{\phi}]} \Bigg(\int_{0}^{T} \left(F^{+}_{k}(u^{N}(t-, x))\right)^{2} \; dt \Bigg)^{1/2}.
\end{aligned}
\end{equation}
Therefore, taking expectations on both sides of~\eqref{Paper02_finite_variation_process_intermediate_step_aldous_two}, applying Fubini's theorem, and then Jensen's inequality and again Fubini's theorem, we conclude that there exists $C_{q_+,q_-,f}(T) > 0$ such that for any $N \in \mathbb N$, $\tau \in \mathcal{T}^N(T)$, $\gamma > 0$, and $\theta \in (0, \gamma]$,
\begin{equation} \label{Paper02_intermediate_step_aldous_criterion_finite_variation_process_II}
\begin{aligned}
    & \mathbb{E}_{\boldsymbol \eta^N}\left[\left\vert A^{N,\phi}_{\{k\}}\Big((\tau + \theta) \wedge T \Big) - A^{N,\phi}_{\{k\}}(\tau)\right\vert\right] \\
     & \quad \leq \frac{m_{N} \| \phi ''\|_{L_{\infty}(\mathbb{R})} \sqrt{\gamma}}{2L_{N}^{3}} \sum_{x \in L_{N}^{-1}\mathbb{Z} \cap [-R_{\phi}, R_{\phi}]} \Bigg(\int_{0}^{T} \mathbb{E}_{\boldsymbol \eta^N}\left[u^{N}_{k}(t-,x)^{2}\right] \; dt \Bigg)^{1/2} \\ & \quad \quad \; + \frac{\| \phi\|_{L_{\infty}(\mathbb{R})} \sqrt{\gamma}}{L_{N}} \sum_{x \in L_{N}^{-1}\mathbb{Z} \cap [-R_{\phi}, R_{\phi}]} \Bigg(\int_{0}^{T} \mathbb{E}_{\boldsymbol \eta^N}\Big[\left(F^{+}_{k}(u^{N}(t, x))\right)^{2}\Big] \; dt \Bigg)^{1/2} \\
     & \quad \leq C_{q_+,q_-,f}(T)\left(\frac{m_{N} \| \phi ''\|_{L_{\infty}(\mathbb{R})} R_{\phi}}{L_{N}^{2}} + \| \phi \|_{L_{\infty}(\mathbb{R})} R_{\phi}\right)\sqrt{\gamma},
\end{aligned}
\end{equation}
where in the last inequality we used Theorem~\ref{Paper02_bound_total_mass}, the definition of~$F^+_k$ in~\eqref{Paper02_reaction_predictable_bracket_process}, the fact that $s_j \leq 1 \; \forall j \in \mathbb N_0$ by Assumption~\ref{Paper02_assumption_fitness_sequence}(i) and~(iii), and the fact that~$q_+$ and~$q_-$ are polynomials by Assumption~\ref{Paper02_assumption_polynomials}. Since by~Assumption~\ref{Paper02_scaling_parameters_assumption}$(i)$, we have $\displaystyle \frac{m_{N}}{L_{N}^{2}} \rightarrow m \in (0,\infty)$ as $N \rightarrow \infty$,~\eqref{Paper02_intermediate_step_aldous_criterion_finite_variation_process_II} implies that~\eqref{Paper02_finite_variation_estimate_aldous_criterion} holds.

It remains to establish the limit in~\eqref{Paper02_martingale_estimate_aldous_criterion}. Since, by Lemma~\ref{Paper02_action_measure_formulation_semimartinagale}, $M^{N, \phi}_{\{k\}}$ is a càdlàg martingale with respect to the filtration~$\{\mathcal{F}^{\eta^{N}}_{t+}\}_{t \geq 0}$, by the optional sampling theorem (see for instance~\cite[Theorem~2.2.13]{ethier2009markov}), for any~$\tau \in \mathcal{T}^N(T)$, $(M^{N, \phi}_{\{k\}}(\tau + t))_{t \geq 0}$ is a càdlàg martingale with respect to the filtration~$\{\mathcal{F}^{\eta^{N}}_{(\tau + t)+}\}_{t \geq 0}$. Then, recalling the expression for the predictable bracket process of~$M^{N,\phi}_{\{k\}}$ in~\eqref{Paper02_lemma_semimartingale_quadratic_variation_process} in the statement of Lemma~\ref{Paper02_action_measure_formulation_semimartinagale}, by~\eqref{Paper02_bound_discrete_gradient},~\eqref{Paper02_definition_discrete_continuous_measure} and the fact that we chose $R_\phi$ such that $\supp(\phi) \subset [-R_\phi + 2L_N^{-1},R_\phi -2L_N^{-1}] \; \forall \, N \in \mathbb N $, for any $N \in \mathbb N$, $\tau \in \mathcal{T}^N(T)$, $\gamma > 0$ and $\theta \in (0, \gamma]$,
\begin{equation} \label{Paper02_quadratic_variation_Aldous_first_part}
\begin{aligned}
    & \left\langle M^{N,\phi}_{\{k\}}\right\rangle \Big((\tau + \theta) \wedge T\Big) -  \left\langle M^{N,\phi}_{\{k\}}\right\rangle (\tau) \\
     & \quad \leq \frac{m_{N} \| \phi'\|_{L_{\infty}(\mathbb{R})}^{2}}{NL_{N}^{4}}\sum_{x \in L_{N}^{-1}\mathbb{Z} \cap [-R_{\phi}, R_{\phi}]} \int_{0}^{T} u^{N}_{k}(t-,x) \cdot \mathds{1}_{\{\tau \leq t \leq \tau + \theta\}} \; dt\\
     &  \qquad \; + \frac{\| \phi\|_{L_{\infty}(\mathbb{R})}^{2}}{NL_{N}^{2}} \sum_{x \in L_{N}^{-1}\mathbb{Z} \cap [-R_{\phi}, R_{\phi}]} \int_{0}^{T} F^{+}_{k}(u^{N}(t-, x)) \cdot \mathds{1}_{\{\tau \leq t \leq \tau + \theta\}} \; dt.
\end{aligned}
\end{equation}
By an argument analogous to the one we used to derive estimate~\eqref{Paper02_finite_variation_process_intermediate_step_aldous_two}, applying~\eqref{Paper02_trivial_cauchy_schwarz} and the Cauchy-Schwarz inequality to~\eqref{Paper02_quadratic_variation_Aldous_first_part}, it follows that
\begin{equation} \label{Paper02_quadratic_variation_Aldous_first_part_II}
\begin{aligned}
    & \left\langle M^{N,\phi}_{\{k\}}\right\rangle \Big((\tau + \theta) \wedge T\Big) -  \left\langle M^{N,\phi}_{\{k\}}\right\rangle (\tau) \\ & \quad \leq \frac{m_{N} \| \phi'\|_{L_{\infty}(\mathbb{R})}^{2} \sqrt{\gamma}}{NL_{N}^{4}}\sum_{x \in L_{N}^{-1}\mathbb{Z} \cap [-R_{\phi}, R_{\phi}]} \Bigg(\int_{0}^{T} u^{N}_{k}(t-,x)^{2} \; dt \Bigg)^{1/2} \\
     &  \qquad \; + \frac{\| \phi\|_{L_{\infty}(\mathbb{R})}^{2} \sqrt{\gamma}}{NL_{N}^{2}} \sum_{x \in L_{N}^{-1}\mathbb{Z} \cap [-R_{\phi}, R_{\phi}]} \Bigg(\int_{0}^{T} \left(F^{+}_{k}(u^{N}(t-,x))\right)^{2} \, dt \Bigg)^{1/2}.
\end{aligned}
\end{equation}
Then, taking expectations on both sides of~\eqref{Paper02_quadratic_variation_Aldous_first_part_II} and applying Fubini's theorem, and then Jensen's inequality and Fubini's theorem, we conclude that there exists $C'_{q_+,q_-,f}(T) > 0$ such that for any $N \in \mathbb N$, $\tau \in \mathcal T^N(T)$, $\gamma > 0$ and $\theta \in (0,\gamma]$,
\begin{equation} \label{Paper02_quadratic_variation_Aldous_first_part_III}
\begin{aligned}
    & \mathbb{E}_{\boldsymbol \eta^N}\left[\left\langle M^{N,\phi}_{\{k\}}\right\rangle \Big((\tau + \theta) \wedge T\Big) -  \left\langle M^{N,\phi}_{\{k\}}\right\rangle (\tau)\right] \\
    \\ & \quad \leq \frac{m_{N} \| \phi'\|_{L_{\infty}(\mathbb{R})}^{2} \sqrt{\gamma}}{NL_{N}^{4}}\sum_{x \in L_{N}^{-1}\mathbb{Z} \cap [-R_{\phi}, R_{\phi}]} \Bigg(\int_{0}^{T} \mathbb{E}_{\boldsymbol \eta^N}\left[u^{N}_{k}(t-,x)^{2}\right] \; dt \Bigg)^{1/2} \\
     &  \quad \quad + \frac{\| \phi\|_{L_{\infty}(\mathbb{R})}^{2} \sqrt{\gamma}}{NL_{N}^{2}} \sum_{x \in L_{N}^{-1}\mathbb{Z} \cap [-R_{\phi}, R_{\phi}]} \Bigg(\int_{0}^{T} \mathbb{E}_{\boldsymbol \eta^N}\Big[ \left(F^{+}_{k}(u^{N}(t-,x))\right)^{2}\Big]\; dt \Bigg)^{1/2} \\
     & \quad \leq C'_{q_+,q_-,f}(T) \left( \frac{m_{N} \| \phi'\|_{L_{\infty}(\mathbb{R})}^{2} R_{\phi}}{NL_{N}^{3}} + \frac{\| \phi\|_{L_{\infty}(\mathbb{R})}^{2} R_{\phi}}{NL_{N}}\right)\sqrt{\gamma},
\end{aligned}
\end{equation}
where in the last inequality we used Theorem~\ref{Paper02_bound_total_mass}, the definition of~$F^+_k$ in~\eqref{Paper02_reaction_predictable_bracket_process}, the fact that $s_j \leq 1 \; \forall j \in \mathbb N_0$ by Assumption~\ref{Paper02_assumption_fitness_sequence}(i) and~(iii), and the fact that~$q_+$ and~$q_-$ are polynomials by Assumption~\ref{Paper02_assumption_polynomials}, in the same way as for~\eqref{Paper02_intermediate_step_aldous_criterion_finite_variation_process_II}. Finally, we conclude from Assumption~\ref{Paper02_scaling_parameters_assumption} and~\eqref{Paper02_quadratic_variation_Aldous_first_part_III} that
\begin{equation*}
    \lim_{\gamma \downarrow 0} \; \limsup_{N \rightarrow \infty} \; \sup_{\substack{\tau \in \mathcal{T}^N(T), \\ 0 < \theta \leq \gamma}} \; \mathbb{E}_{\boldsymbol \eta^N}\left[\left\langle M^{N,\phi}_{\{k\}}\right\rangle \Big((\tau + \theta) \wedge T\Big) -  \left\langle M^{N,\phi}_{\{k\}}\right\rangle (\tau)\right] = 0,
\end{equation*}
which implies~\eqref{Paper02_martingale_estimate_aldous_criterion} by an application of Jensen's inequality and the BDG inequality as in~\eqref{Paper02_BDG_inequality}. Hence, both~\eqref{Paper02_finite_variation_estimate_aldous_criterion} and~\eqref{Paper02_martingale_estimate_aldous_criterion} hold, and therefore, as explained after~\eqref{Paper02_reformulation_aldous_criterion}, this establishes~\eqref{Paper02_reformulation_aldous_criterion} and completes the proof.
\end{proof}

We are now ready to prove Proposition~\ref{Paper02_tightness_evaluations}.

\begin{proof}[Proof of Proposition~\ref{Paper02_tightness_evaluations}]
We checked before~\eqref{Paper02_nice_set_evaluation_functions} that~$\mathbb{F}$ satisfies conditions~(i) and~(ii). By the tightness criterion stated in \cite[Theorem~3.8.6]{ethier2009markov} and in \cite{aldous1978stopping}, the result follows directly from Lemmas~\ref{Paper02_weak_compact_containment_condition_tightness} and~\ref{Paper02_aldous_criterion}.
\end{proof}

We can finally prove Proposition~\ref{Paper02_PDE_tightness_measures}.

\begin{proof}[Proof of Proposition~\ref{Paper02_PDE_tightness_measures}]
By Jakubowski's tightness criterion~\cite[Theorem~3.1]{jakubowski1986skorokhod}, since the metric space $(\mathcal M(\mathbb R)^{\mathbb N_0}, d)$ is complete and separable (see e.g.~the comment before Proposition~3.4.6 in~\cite{ethier2009markov} for a discussion on the product topology), the result follows directly from Propositions~\ref{Paper02_compact_containment_condition} and~\ref{Paper02_tightness_evaluations}.
\end{proof}

\section{Characterisation of the limiting process} \label{Paper02_section_limiting_process_L1_construction}

Proposition~\ref{Paper02_PDE_tightness_measures} implies the weak convergence of subsequences of $(u^N)_{N \in \mathbb N}$ to a limiting process~$u = (u_{k})_{k \in \mathbb{N}_{0}}$ in the Polish space $\mathcal{D}\left([0,\infty), (\mathcal{M}(\mathbb{R})^{\mathbb{N}_{0}},d)\right)$. 
To complete the proof of Theorem~\ref{Paper02_deterministic_scaling_foutel_rodier_etheridge}, it remains to establish that any subsequential limit is a solution to the system of PDEs~\eqref{Paper02_PDE_scaling_limit}, and then to prove the uniqueness of the limit. A major challenge to overcome is that we must verify that there exists a sequence of measurable functions~$v = (v_{k})_{k \in \mathbb{N}_{0}}: [0, \infty) \times \mathbb{R} \rightarrow \ell_{1}$ such that for each~$k \in \mathbb{N}_{0}$, $v_{k}$ is the density of the measure~$u_{k}$ with respect to the Lebesgue measure on~$[0, \infty) \times \mathbb{R}$, and then that the sequence of densities~$v = (v_{k})_{k \in \mathbb{N}_{0}}$ satisfies the system of PDEs~\eqref{Paper02_PDE_scaling_limit}.

To achieve this goal, it will be useful to establish tightness of~$(u^{N})_{N \in \mathbb{N}}$ in a function space, so that the sequence of densities~$v = (v_{k})_{k \in \mathbb{N}_{0}}$ coincides with the subsequential limit of~$(u^{N})_{N \in \mathbb{N}}$ in this function space. Bearing this aim in mind, recall from Section~\ref{Paper02_introduction} that we let~$\lambda$ denote the Lebesgue measure on~$\mathbb{R}^{2}$, and now, for any $T \geq 0$, let $\hat{\lambda}$ denote the measure on $[0,T] \times \mathbb{R}$ given by
\begin{equation} \label{Paper02_measure_time_space_box_i}
    \hat{\lambda}(dt \; dx) \defeq \frac{\mathds{1}_{\{t \in [0,T]\}}}{1 + \vert x \vert^{2}} \lambda (dt \; dx).
\end{equation}
For~$T >0$ and~$r \in [1,\infty)$, we introduce the function space
\begin{equation} \label{Paper02_functional_space_L1_l1}
\begin{split}
    & {L}_{r}([0,T] \times \mathbb{R}, \hat{\lambda}; \ell_{1}) \\ & \quad \defeq \Bigg\{v: [0,T] \times \mathbb{R} \rightarrow \ell_{1} \; \textrm{s.t.} \; \| v \|_{{L}_{r}([0,T] \times \mathbb{R}, \hat{\lambda}; \ell_{1})} \defeq \Bigg(\int_{0}^{T} \int_{\mathbb{R}} \frac{\| v(t,x) \|_{\ell_{1}}^{r}}{1 + \vert x \vert^{2}} \, dx \, dt \Bigg)^{1/r} < \infty\Bigg\}.
    \raisetag{-1.2cm}
\end{split}
\end{equation}

\begin{remark}
    Observe that, by~\eqref{Paper02_functional_space_L1_l1}, the space~$L_{r}([0,T] \times \mathbb{R}, \hat{\lambda}; \ell_{1})$ differs from the space
    \begin{equation*}
    \begin{aligned}
    & {L}_{r}([0,T] \times \mathbb{R} \times \mathbb{N}_{0}, \hat{\lambda}; \mathbb{R}) \\
    & \quad \defeq \Bigg\{v: [0,T] \times \mathbb{R} \times \mathbb{N}_{0} \rightarrow \mathbb{R} \; \textrm{s.t.} \;  \sum_{k \in \mathbb N_0} \, \int_{0}^{T} \int_{\mathbb{R}} \frac{\vert v_{k}(t,x) \vert^{r}}{1 + \vert x \vert^{2}} \, dx \, dt < \infty\Bigg\}.
    \end{aligned}
    \end{equation*}
    In particular, the advantage of embedding the sequence of density processes in~$L_r([0,T] \times \mathbb{R}, \hat{\lambda}; \ell_1)$ rather than in~${L}_{r}([0,T] \times \mathbb{R} \times \mathbb{N}_{0}, \hat{\lambda}; \mathbb{R})$ is because of the structure of the reaction term~$F = (F_{k})_{k \in \mathbb{N}_{0}}$ defined in~\eqref{Paper02_reaction_term_PDE}, since for $u \in \ell_1^+$ and $k \in \mathbb{N}_{0}$, the expression defining $F_k(u)$ contains polynomials of~$\| u \|_{\ell_{1}}$.
\end{remark}

Since~$\ell_{1}$ is a complete and separable Banach space, for every $r \in [1, \infty)$, the space $L_{r}([0,T] \times \mathbb{R}, \hat{\lambda}; \ell_{1})$ is also a complete and separable Banach space when equipped with the norm~$\| \cdot \|_{L_{r}([0,T] \times \mathbb{R}, \hat{\lambda}; \ell_{1})}$ defined in~\eqref{Paper02_functional_space_L1_l1} (see~\cite[Proposition~1.2.29]{hytonen2016analysis} for a proof of this fact). Recall that under the assumptions of Theorem~\ref{Paper02_deterministic_scaling_foutel_rodier_etheridge},~$q_{-}$ is a polynomial satisfying Assumption~\ref{Paper02_assumption_polynomials}, and, by Theorem~\ref{Paper02_bound_total_mass} and since $u^N_k(t,x)$ is defined by linear interpolation for $x \not\in L_N^{-1}\mathbb Z$, for every~$N \in \mathbb{N}$ and $T \geq 0$, we have
\begin{equation*}
    \int_{0}^{T} \int_{\mathbb{R}} \frac{\mathbb{E}_{\boldsymbol \eta^N}\Big[\| u^{N}(t,x) \|_{\ell_{1}}^{4\deg q_{-}}\Big]}{1 + \vert x \vert^{2}} \, dx \, dt < \infty,
\end{equation*}
which implies that~$(u^{N}(t,x))_{t \in [0,T], \, x \in \mathbb{R}}$ is almost surely an element of~$L_{4\deg q_{-}}([0,T] \times \mathbb{R}, \hat{\lambda}; \ell_{1})$, for every $N \in \mathbb{N}$. We can then study the tightness of the sequence
\[
((u^{N}(t,x))_{t \in [0,T], \, x \in \mathbb{R}})_{N \in \mathbb{N}}
\]
in~$L_{4\deg q_{-}}([0,T] \times \mathbb{R}, \hat{\lambda}; \ell_{1})$, and use it to characterise any limiting~$\mathcal{M}(\mathbb{R})^{\mathbb{N}_{0}}$-valued process~$u$. Recall the definition of $\mathcal{C}_c(\mathbb R)$ from Section~\ref{Paper02_introduction}. 
For $\rho \in \mathcal{M}(\mathbb R)$ and $\varphi \in \mathcal{C}_c(\mathbb R)$, let $\langle \rho, \varphi \rangle$ be defined as in Section~\ref{Paper02_introduction}. Recall that the measure defined in~\eqref{Paper02_definition_u_N_as_radon_measure} induced by the random function~$u^{N}_{k}(t,\cdot)$  is denoted by~$u^{N}_{k}(t)$, for every~$N \in \mathbb{N}$,~$k \in \mathbb{N}_{0}$ and~$t \geq 0$. Recall the definition of a mild solution to the system of PDEs~\eqref{Paper02_PDE_scaling_limit} in Definition~\ref{Paper02_definition_mild_solution}. Recall from Section~\ref{Paper02_introduction} that
for $H \in \mathbb N$, we let $\llbracket H \rrbracket \defeq \{1,2,\ldots, H\}$. We are now ready to state the main result of this section.

\begin{proposition} \label{Paper02_absolutely_continuity_limiting_process}
Under the assumptions of Theorem~\ref{Paper02_deterministic_scaling_foutel_rodier_etheridge}, for any~$T > 0$, the sequence
$$\Big((u^{N}(t))_{t \in [0,T]}, (u^{N}(t,x))_{t \in [0,T], \, x \in \mathbb{R}}\Big)_{N \in \mathbb{N}}$$
is tight in~$\mathcal{D}\left([0,T], (\mathcal{M}(\mathbb{R})^{\mathbb{N}_{0}},d)\right) \times L_{4\deg q_-}([0,T] \times \mathbb{R}, \hat{\lambda}; \ell_{1})$. Moreover, the following conditions are satisfied by any subsequential limit~$\Big((u(t))_{t \in [0,T]}, \, (v(t,x))_{t \in [0,T], \, x \in \mathbb{R}}\Big)$:
\begin{enumerate}[label = (\roman*)]
    \item For every~$H \in \mathbb{N}$,~$(t_{h})_{h \in \llbracket H \rrbracket} \in [0,T)^{H}$,~$(\varphi_{h})_{h \in \llbracket H \rrbracket} \in \mathcal{C}_{c}(\mathbb{R})^{H}$ and~$(k_{h})_{h \in \llbracket H \rrbracket} \in (\mathbb{N}_{0})^{H}$,
    \begin{equation} \label{Paper02_distribution_characterisation_limiting_process}
       (\langle u_{k_{h}}(t_{h}), \, \varphi_{h} \rangle)_{h \in \llbracket H \rrbracket} \overset{d}{{=}}  \bigg(\lim_{t' \downarrow t_{h}} \frac{1}{t' - t_{h}}\int_{t_{h}}^{t'} \int_{\mathbb R} v_{k_{h}}(\tau, x) \varphi_{h}(x) \, dx \, d\tau\bigg)_{h \in \llbracket H \rrbracket}.
    \end{equation}
     \item $(v(t,x))_{t \in [0,T], \, x \in \mathbb{R}}$ is a mild solution to the system of PDEs~\eqref{Paper02_PDE_scaling_limit}.
\end{enumerate}
\end{proposition}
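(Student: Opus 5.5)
Tightness of the pair follows from tightness of each coordinate separately. The first coordinate is Proposition~\ref{Paper02_PDE_tightness_measures} restricted to $[0,T]$. For the second coordinate, I would prove an $L_p$ tightness criterion, derived from the Díaz--Mayoral characterisation of relatively compact subsets of Bochner spaces. In the weighted space $L_{4\deg q_-}([0,T]\times\mathbb R,\hat\lambda;\ell_1)$, a set is relatively compact if it satisfies three conditions. It is bounded. It is uniformly $L_{4\deg q_-}$-integrable, which follows from a uniform bound in a higher power, $8\deg q_-$, together with the decaying weight $1/(1+|x|^2)$. Its $\ell_1$-valued elements are uniformly small in the tail $\sum_{j\ge k}$ and equicontinuous under translations $\theta^{(1)}_\gamma,\theta^{(2)}_\gamma$ in $L_1$; combined with the integrability, interpolation upgrades this $L_1$ equicontinuity to $L_{4\deg q_-}$.

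I would state this criterion as the lemma described in Section~\ref{Paper02_subsection_heuristics}, and then verify its hypotheses in turn. Hypothesis (i) is Theorem~\ref{Paper02_bound_total_mass} at $p=8\deg q_-$, using \eqref{Paper02_bound_mixed_equic_iv} to pass to the interpolated points. Hypothesis (ii) is Lemma~\ref{Paper02_lemma_control_local_density_high_number_mutations}, again with interpolation. Hypothesis (iii) follows by integrating Lemma~\ref{Paper02_spatial_increment_bound_usual} (with $\mathcal I=\{k\}$) against $dx/(1+|x|^2)$. To get probabilistic tightness, I would build compacts $\mathcal K_\varepsilon$ from deterministic moduli $\omega(\gamma)$ and tails $\epsilon_k$ chosen by Markov's inequality along dyadic sequences, exactly as in the proof of Proposition~\ref{Paper02_compact_containment_condition}.

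For (i), I would take a subsequence along which the pair converges jointly, and use Skorokhod's representation to make the convergence almost sure. Then $u^N\to u$ in $J_1$ and $u^N\to v$ in $L_{4\deg q_-}(\hat\lambda;\ell_1)$. Testing against $\varphi\mathbb 1_{[t_h,t']}(\tau)$ gives $\int_{t_h}^{t'}\langle u^N_k(\tau),\varphi\rangle d\tau\to\int_{t_h}^{t'}\int v_k\varphi\,dx\,d\tau$, since $\varphi$ has compact support and the weight is bounded below there. The same integral also converges to $\int_{t_h}^{t'}\langle u_k(\tau),\varphi\rangle d\tau$, by $J_1$ convergence at continuity times (all but countably many) and dominated convergence using Lemma~\ref{Paper02_uniform_time_control_mass_compact_intervals}. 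Dividing by $t'-t_h$, letting $t'\downarrow t_h$, and using right-continuity of $u$ yields \eqref{Paper02_distribution_characterisation_limiting_process}, in fact almost surely on the coupled space.

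For (ii), I would pass to the limit in the Green's function representation of Lemma~\ref{Paper02_lemma_little_u_k_N_hat_semimartingale}, working coordinatewise and then summing over $k$. The martingale term vanishes in $L_1(\mathbb P)$: estimate~\eqref{Paper02_predictable_bracket_process_uN_k_hat}, with the random walk bounds, gives $\mathbb E\langle M\rangle\lesssim (NL_N)^{-1}T^{1/2}+m_N/(NL_N^2)\to0$. The initial term $P^N_Tu^N_k(0)\to P_Tf_k$ a.e., by the local CLT and the a.e. continuity of $f$ in Assumption~\ref{Paper02_assumption_initial_condition}.

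The main obstacle is the reaction term
\[
\int_0^T\frac1{L_N}\sum_y p^N(T-\tau,y-x)F_k(u^N(\tau,y))\,d\tau\;\to\;\int_0^T P_{T-\tau}F_k(v(\tau))(x)\,d\tau,
\]
because $F_k$ is polynomial in $\|u\|_{\ell_1}$ and the convergence is only in $L_{4\deg q_-}$ with a weight. I would first replace the discrete kernel sum by an integral against the interpolated $u^N$. The error here is controlled by Lemma~\ref{Paper02_lemma_increments_moments_densities} together with moment bounds. I would then replace $p^N$ by $p$, which is possible by uniform convergence of the kernels away from $\tau=T$, with the region near $\tau=T$ handled by moment bounds. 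Finally, I would use that $F_k$ is locally Lipschitz on $\ell_1$, with constant polynomial of degree at most $\deg q_-$ in the norms. Hölder with exponents $4\deg q_-$ and its conjugate then bounds the remaining error by $\|u^N-v\|_{L_{4\deg q_-}}$ times bounded moments. Because the weight $(1+|x|^2)^{-1}$ is not integrable against the heat kernel uniformly in $x$, I would show convergence in $L_1(\hat\lambda\otimes\mathbb P)$ rather than pointwise, integrating over $(T,x)$ and using Fubini. Doing this for every $T$ and summing over $k$ by dominated convergence in $\ell_1$, with the tail control from (ii), gives the displayed identity from Section~\ref{Paper02_subsection_heuristics}, i.e.\ $v$ is a mild solution almost surely.
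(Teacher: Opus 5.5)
Your proposal is correct and follows the paper's proof essentially step for step. It uses the same D\'{i}az--Mayoral-based tightness criterion (Lemma~\ref{Paper02_easy_tightness_criterion_L_p_spaces}), checked via Theorem~\ref{Paper02_bound_total_mass}, Lemma~\ref{Paper02_lemma_control_local_density_high_number_mutations} and Lemma~\ref{Paper02_spatial_increment_bound_usual}; the same Skorokhod/$J_1$/right-continuity argument for (i); and the same error decomposition of the Green's function representation for (ii).

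The differences are minor. First, you control the reaction term by convergence in $L_1(\hat\lambda\otimes\mathbb P)$, via Fubini (with $\int_{\mathbb R} p(s,x-y)(1+|x|^2)^{-1}\,dx\lesssim_T (1+|y|^2)^{-1}$) and uniform integrability of the norms. The paper instead proves pointwise almost sure convergence (Lemma~\ref{Paper02_standard_L_p_gaussian_lem}) and then applies Fatou's lemma. Second, in (i) the pathwise dominating bound should come from the fact that a $J_1$-convergent sequence is uniformly bounded on $[0,T]$. The expectation bound of Lemma~\ref{Paper02_uniform_time_control_mass_compact_intervals} does not give a pathwise bound for fixed $\omega$.
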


For a subsequential limit $(u,v) \in \mathcal{D}\left([0,T], (\mathcal{M}(\mathbb{R})^{\mathbb{N}_{0}},d)\right) \times L_{4\deg q_-}([0,T] \times \mathbb{R}, \hat{\lambda}; \ell_{1})$ as in Proposition~\ref{Paper02_absolutely_continuity_limiting_process}, we will refer to the random function~$v = (v_{k})_{k \in \mathbb{N}_{0}}: [0,T] \times \mathbb{R} \rightarrow \ell_{1}$ as the limiting density process. In the remainder of this section, we will prove Proposition~\ref{Paper02_absolutely_continuity_limiting_process}. For didactic reasons, we split the proof into two subsections. We will establish the desired tightness in Section~\ref{Paper02_tightness_L_P_l_1spaces}, and then characterise the limiting processes $\Big((u(t))_{t \in [0,T]}, \, (v(t,x))_{t \in [0,T], \, x \in \mathbb{R}}\Big)$ in Section~\ref{Paper02_Characterisation_limiting_process}.

\subsection{Tightness in $L_{4\deg q_-}([0,T] \times \mathbb{R}, \hat{\lambda}; \ell_{1})$} \label{Paper02_tightness_L_P_l_1spaces}

Since Proposition~\ref{Paper02_PDE_tightness_measures} provides the tightness of $((u^{N}(t))_{t \in [0,T]})_{N \in \mathbb{N}}$ in~$\mathcal{D}\left([0,\infty), (\mathcal{M}(\mathbb{R})^{\mathbb{N}_{0}},d)\right)$, it will suffice to establish that
\[
((u^{N}(t,x))_{t \in [0,T], \, x \in \mathbb{R}})_{N \in \mathbb{N}}
\]
is tight in~$L_{4\deg q_-}([0,T] \times \mathbb{R}, \hat{\lambda}; \ell_{1})$. Our first step in this direction will be to establish conditions that guarantee the desired tightness. These conditions rely on the characterisation of compact subsets of~${L}_{r}([0,T] \times \mathbb{R}, \hat{\lambda}; \ell_{1})$ given by D\'{i}az and Mayoral's compactness theorem~\cite[Theorem~3.2]{diaz1999compactness}. It will be convenient to introduce some more notation.
For~$T > 0$ and~$r \in [1,\infty)$, define the function space
\begin{equation} \label{Paper02_functional_space_real_functions_measure_time_space_box}
\begin{aligned}
    & L_{r}([0,T] \times \mathbb{R}, \hat{\lambda}; \mathbb{R}) \\ & \quad \defeq \bigg\{v^{*}: [0,T] \times \mathbb{R} \rightarrow \mathbb{R} \; \textrm{such that} \\[-4mm]
    & \hspace{3cm} \| v^{*} \|_{L_{r}([0,T] \times \mathbb{R}, \hat{\lambda}; \mathbb{R})} \defeq \bigg(\int_{0}^{T} \int_{\mathbb{R}} \frac{\vert v^{*}(t,x) \vert^{r}}{1 + \vert x \vert^{2}} \, dx \, dt\bigg)^{1/r} < \infty\bigg\},
\end{aligned}
\end{equation}
and observe that~$\Big(L_{r}([0,T] \times \mathbb{R}, \hat{\lambda}; \mathbb{R}), \| \cdot \|_{L_{r}([0,T] \times \mathbb{R}, \hat{\lambda}; \mathbb{R})} \Big)$ is also a complete and separable Banach space by the usual theory of~$L_p$ spaces (see e.g.~\cite[Section~6.1]{folland1999real}). We also define, for any $\gamma \in \mathbb R$, the following shift maps: for any function~$g: [0,T] \times \mathbb{R} \rightarrow \mathbb{R}$ and any $(t,x) \in [0,T] \times \mathbb{R}$, we let
\begin{equation} \label{Paper02_shift_maps}
    (\theta^{(1)}_{\gamma}g)(t,x) \defeq g(t + \gamma,x) \mathds 1_{\{t + \gamma \in [0,T]\}} \quad \textrm{and} \quad  (\theta^{(2)}_{\gamma}g)(t,x) \defeq g(t,x + \gamma).
\end{equation}

We now state a criterion for relative compactness in~$L_{r}([0,T] \times \mathbb{R}, \hat{\lambda}; \ell_{1})$, which will be used for a tightness criterion.

\begin{lemma} \label{Paper02_alternative_criterion_compactness_diaz_mayoral}
    Let~$T > 0$ and~$r \in [1, \infty)$ be fixed. Let~$C_{1},C_{2} > 0$, $\beta > 1$, $J \in \mathbb{N}$, let~$(k_{h})_{h \in \mathbb{N}} \subseteq \mathbb N_0$ be strictly increasing, and let~$(D_{h})_{h \in \mathbb{N}} \subset [0, \infty)$ be such that $D_{h} \rightarrow 0$ as $h \rightarrow \infty$. Define the following subsets of~$L \defeq L_{r}([0,T] \times \mathbb{R}, \hat{\lambda}; \ell_{1})$:
    \begin{equation*}
    \begin{aligned}
  & \mathcal{K}_{(1)}(C_{1}, T) \defeq \Big\{v \in L: \; \| v \|_{{L}_{2r}([0,T] \times \mathbb{R}, \hat{\lambda}; \ell_{1})} \leq C_{1}\Big\}, \\
  & \mathcal{K}_{(2)}((k_{h})_{h \in \mathbb{N}}, (D_{h})_{h \in \mathbb{N}}, T) \\
& \qquad\qquad\qquad \defeq \bigg\{v = (v_k)_{k \in \mathbb N_0} \in L: \; \sum_{k = k_{h}}^\infty \| v_{k} \|_{{L}_{1}([0,T] \times \mathbb{R}, \hat{\lambda}; \mathbb{R})} < D_{h} \; \forall \, h \in \mathbb{N}\bigg\},
\\ & \mathcal{K}_{(3)}(C_{2},\beta, J, T) \defeq \Big\{v = (v_k)_{k \in \mathbb N_0} \in L: \; \| \theta^{(i)}_{(-1)^{i'} 2^{-(k+j)}}v_{k} - v_{k} \|_{{L}_{1}([0,T] \times \mathbb{R}, \hat{\lambda}; \mathbb{R})} \leq C_{2}\beta^{-(k+j)} \\ & \qquad \qquad \qquad \qquad \qquad \qquad \qquad \qquad \qquad  \; \forall \, k \in \mathbb{N}_{0}, \, j \in \mathbb{N} \cap [J,\infty), \, i,i' \in \{1,2\}\Big\},
    \end{aligned}
    \end{equation*}
    and let $\mathcal{K} = \mathcal{K}(C_{1},C_{2}, (k_{h})_{h \in \mathbb{N}},(D_{h})_{h \in \mathbb{N}},\beta,J,T) \subset L_r([0,T] \times \mathbb{R}, \hat{\lambda}; \ell_{1})$ denote the set
    \begin{equation*}
        \mathcal{K} \defeq \mathcal{K}_{(1)}(C_{1}, T) \cap \mathcal{K}_{(2)}((k_{h})_{h \in \mathbb{N}},(D_{h})_{h \in \mathbb{N}}, T) \cap \mathcal{K}_{(3)}(C_{2},\beta, J, T).
    \end{equation*}
    Then $\mathcal{K}$ is a relatively compact subset of~$L_{r}([0,T] \times \mathbb{R}, \hat{\lambda}; \ell_{1})$.
\end{lemma}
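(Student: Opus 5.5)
We will verify the hypotheses of D\'{i}az and Mayoral's compactness theorem~\cite[Theorem~3.2]{diaz1999compactness}, or equivalently prove total boundedness of $\mathcal{K}$ directly. Fix $\varepsilon>0$. The plan has three reductions.

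\emph{Step 1: reduction to finitely many coordinates.} We use $\mathcal{K}_{(2)}$ to truncate in type space. Choose $h$ with $D_h$ small and set $P_n v \defeq (v_0,\dots,v_{n-1},0,0,\dots)$ with $n=k_h$. We must control $\|v-P_nv\|_{L_r(\hat\lambda;\ell_1)}$, whereas $\mathcal{K}_{(2)}$ only gives an $L_1$ bound. Interpolation handles this. Writing $w=\sum_{k\ge n}|v_k|$, we have $\|w\|_{L_1(\hat\lambda)}<D_h$ and $\|w\|_{L_{2r}(\hat\lambda)}\le C_1$. Hölder's inequality then gives $\|w\|_{L_r}\le \|w\|_{L_1}^{\theta}\|w\|_{L_{2r}}^{1-\theta}$ with $\theta=1/(2r-1)$, and this is small. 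The same interpolation, applied to differences, shows that it suffices to prove relative compactness of $\mathcal{K}$ in $L_1(\hat\lambda;\ell_1)$. The uniform $L_{2r}$ bound, which gives uniform integrability of $\|v\|^r_{\ell_1}$, then upgrades $L_1$-Cauchy subsequences to $L_r$-Cauchy ones. Since $\hat\lambda$ is finite, this is a Vitali-type argument.

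\emph{Step 2: finitely many real coordinates.} For each fixed $k<n$, we show that $\{v_k: v\in\mathcal{K}\}$ is relatively compact in $L_1([0,T]\times\mathbb R,\hat\lambda;\mathbb R)$ via the Kolmogorov--Riesz--Fréchet criterion. We need three ingredients.
(a) Boundedness, which follows from $\mathcal{K}_{(1)}$.
(b) Tightness at spatial infinity: $\int_{|x|>R}|v_k|\,d\hat\lambda\le \hat\lambda(\{|x|>R\})^{1-1/(2r)}C_1\to0$ as $R\to\infty$.
(c) Equicontinuity of translations in $t$ and $x$. From $\mathcal{K}_{(3)}$ we only have translations by dyadic amounts $\pm2^{-(k+j)}$ with $j\ge J$. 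Given $|\gamma|$ small, we write $\gamma$ as a finite signed dyadic sum $\sum_j \epsilon_j 2^{-(k+j)}$ and use the triangle inequality to get $\|\theta_\gamma v_k - v_k\|\lesssim \sum_{j\ge j_0}C_2\beta^{-(k+j)}$. This tends to $0$ as $j_0\to\infty$ because $\beta>1$.

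\emph{Step 3 (the main obstacle): dyadic chaining.} The difficulty is that $\theta^{(1)}$ involves the cutoff indicator and $\hat\lambda$ is not translation invariant. The composite shift $\theta_a\theta_b$ is therefore not exactly $\theta_{a+b}$, and $\|\theta_a g\|$ is not exactly $\|g\|$. We handle both issues as follows.
\begin{itemize}
\item For spatial shifts, the weight ratio $(1+|x|^2)/(1+|x+a|^2)$ is bounded by a constant for $|a|\le1$. This gives $\|\theta^{(2)}_a g\|\le c\|g\|$, and we chain with geometric decay. To avoid accumulating the constant $c$, we also use the estimate $(1+|x|^2)/(1+|x+a|^2)\le 1+3|a|$, so the product over the chain stays bounded.
\item For time shifts, the composite $\theta^{(1)}_a\theta^{(1)}_b g$ differs from $\theta^{(1)}_{a+b}g$ only on a strip of width at most $|a|+|b|$ near $t=0$ or $t=T$. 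The mass of $v_k$ on such a strip is controlled by Hölder's inequality and the $L_{2r}$ bound, giving $O(|a|^{1-1/(2r)})$.
\end{itemize}
Alternatively, we can avoid chaining by using the averaged form of the Kolmogorov criterion, which compares $v_k$ with its mollification. A mollifier supported at dyadic scales needs only the dyadic translation bounds, up to the boundary errors just described.

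Finally, we combine the steps. A finite $\varepsilon$-net for each $\{v_k\}$ with $k<n$ gives a finite net for $P_n\mathcal{K}$ in $L_1(\hat\lambda;\ell_1)$. Together with Step 1, this yields total boundedness of $\mathcal{K}$ in $L_r([0,T]\times\mathbb{R},\hat\lambda;\ell_1)$, which is complete, so $\mathcal{K}$ is relatively compact.
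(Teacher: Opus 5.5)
Your argument is correct, but it is organised differently from the paper's. The paper verifies the four hypotheses of D\'{i}az and Mayoral's theorem (Theorem~\ref{Paper02_diaz_mayoral_compactness_theorem}):
\begin{itemize}
\item boundedness and $r$-uniform integrability, both from the $L_{2r}$ bound;
\item uniform tightness in $\ell_1$, via explicit compact sets $\mathcal{A}_\delta\subset\ell_1$ built from $\mathcal{K}_{(2)}$;
\item scalar relative compactness of $\{\Psi_w(v):v\in\mathcal{K}\}$ for every $w\in\ell_\infty$.
\end{itemize}
The last condition is checked by Kolmogorov--Riesz--Fr\'{e}chet in $L_r$. There one splits $\sum_k w_kv_k$ at a level $H$, interpolates between $L_1$ and $L_{r+1}$, uses $\mathcal{K}_{(2)}$ for $k>H$, and uses dyadic chaining from $\mathcal{K}_{(3)}$ for $k\le H$.

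You bypass the Bochner-space theorem and prove total boundedness directly. Interpolation between $L_1$ and $L_{2r}$ reduces the problem to $L_1(\hat\lambda;\ell_1)$. There, $\mathcal{K}_{(2)}$ places $\mathcal{K}$ uniformly within $D_h$ of its truncation $P_{k_h}\mathcal{K}$. The truncation is totally bounded because each of the finitely many scalar families $\{v_k:v\in\mathcal{K}\}$ is relatively compact in $L_1(\hat\lambda;\mathbb{R})$.

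The ingredients are the same in both proofs: the $L_{2r}$ bound through interpolation, $\mathcal{K}_{(2)}$ for the tail in type space, and $\mathcal{K}_{(3)}$ with chaining for finitely many types. Your route is more elementary and self-contained. It uses only the coordinate structure of $\ell_1$, and it needs scalar KRF only coordinate by coordinate rather than for $\Psi_w(\mathcal{K})$ uniformly in $w$. It also makes the uniform integrability step and the sets $\mathcal{A}_\delta$ unnecessary. The paper's route has the advantage of passing through a general characterisation of compactness in $L_r(\cdot\,;\ell_1)$, which is the framework it advertises.

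Two small points on your Steps~2(c) and~3.

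First, a general $\gamma$ is not a finite dyadic sum, so you need one more line. One option is to use the full binary expansion of $|\gamma|$ and pass to the limit in the chained triangle inequality, using strong continuity of translations for each fixed function; this is how the paper does it, via Remark~\ref{Paper02_corollary_Kolmogorov_Riesz}. Alternatively, note that $\gamma\mapsto\|\theta^{(i)}_\gamma g-g\|_{L_1(\hat\lambda)}$ is continuous for fixed $g$. A bound that is uniform over dyadic rationals $\gamma$ therefore extends to all $\gamma$.

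Second, your Step~3 corrections are unnecessary if every dyadic piece carries the sign of $\gamma$. For $a,b$ of the same sign, $\theta^{(1)}_a\theta^{(1)}_b=\theta^{(1)}_{a+b}$ holds exactly on $[0,T]\times\mathbb{R}$. You can then write $\theta^{(i)}_{s+a}g-\theta^{(i)}_sg=\theta^{(i)}_s(\theta^{(i)}_ag-g)$ with $|s|\le 1$. Each term then costs the weight-ratio constant $3$ only once, so no constant accumulates and no boundary strips appear.

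Your mollifier alternative is also unnecessary. As phrased it would not work directly anyway: averaging over finitely many dyadic shifts does not smooth.
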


The proof of Lemma~\ref{Paper02_alternative_criterion_compactness_diaz_mayoral} relies on D\'{i}az and Mayoral's compactness theorem (see Theorem~\ref{Paper02_diaz_mayoral_compactness_theorem} in the appendix) and on the Kolmogorov-Riesz-Fr\'{e}chet compactness theorem for the usual $L_p$ spaces (see Theorem~\ref{Paper02_kolmogorov_riesz_frechet} in the appendix), and it is postponed until Section~\ref{Paper02_appendix_proof_lemma_compact_sets_of_L_p} in the appendix. The advantage of using Lemma~\ref{Paper02_alternative_criterion_compactness_diaz_mayoral} to prove tightness of a sequence of density processes is that it allows us to define relatively compact subsets of~$L_{r}([0,T] \times \mathbb{R}, \hat{\lambda}; \ell_{1})$ as countable intersections of certain sets. Our next result uses this strategy to give a tightness criterion.

\begin{lemma} \label{Paper02_easy_tightness_criterion_L_p_spaces}
    Let~$T > 0$ and~$r \in [1, \infty)$ be fixed, and let~$((v^{N}_{k}(t,x))_{k \in \mathbb{N}_{0}, \, t \in [0,T], \, x \in \mathbb{R}})_{N \in \mathbb{N}}$ be a sequence of~$L_{r}([0,T] \times \mathbb{R}, \hat{\lambda}; \ell_{1})$-valued random variables such that the following conditions hold:
    \begin{enumerate}[label = (\roman*)]
        \item The~$2r$-moments of~$\Big((v^{N}(t,x))_{t \in [0,T], \, x \in \mathbb{R}}\Big)_{N \in \mathbb{N}}$ are uniformly bounded, i.e.
        \begin{equation*}
            \sup_{N \in \mathbb{N}} \; \sup_{t \in [0,T]} \; \sup_{x \in \mathbb{R}} \; \mathbb{E}\Big[\| v^{N}(t,x) \|_{\ell_{1}}^{2r}\Big] < \infty.
        \end{equation*}
        \item The following limit holds:
        \begin{equation*}
            \lim_{k \rightarrow \infty} \; \sup_{N \in \mathbb{N}} \; \sup_{t \in [0,T]} \; \sup_{x \in \mathbb{R}} \; \mathbb{E}\Bigg[\sum_{j = k}^\infty \, \vert v^{N}_{j}(t,x) \vert\Bigg] = 0.
        \end{equation*}
        \item There exist~$l_{1},l_{2},C_{T} > 0$ such that for every~$\gamma \in (-1,1)$ and~$i \in \{1,2\}$,
        \begin{equation*}
            \sup_{N \in \mathbb{N}} \; \sup_{k \in \mathbb{N}_{0}} \mathbb{E}\Big[\| \theta^{(i)}_{\gamma}v^{N}_{k} - v^{N}_{k} \|_{{L}_{1}([0,T] \times \mathbb{R}, \hat{\lambda}; \mathbb{R})}\Big] \leq C_{T} \vert \gamma \vert^{l_{i}}.
        \end{equation*}
    \end{enumerate}
    Then~$((v^{N}_{k}(t,x))_{k \in \mathbb{N}_{0}, \, t \in [0,T], \, x \in \mathbb{R}})_{N \in \mathbb{N}}$ is tight in $L_{r}([0,T] \times \mathbb{R}, \hat{\lambda}; \ell_{1})$.
\end{lemma}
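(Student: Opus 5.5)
We deduce Lemma~\ref{Paper02_easy_tightness_criterion_L_p_spaces} from Lemma~\ref{Paper02_alternative_criterion_compactness_diaz_mayoral}. Given $\varepsilon>0$, we will choose the parameters $C_1,C_2,(k_h)_{h \in \mathbb N},(D_h)_{h \in \mathbb N},\beta,J$ so that $\mathbb P(v^N\notin\mathcal K_{(i)})\le\varepsilon/3$ for each $i\in\{1,2,3\}$ and every $N$. Each set is handled by Markov's inequality combined with a union bound over countably many conditions, with summable error probabilities. Then $\mathbb P(v^N\in\mathcal K)\ge1-\varepsilon$ for all $N$, and the closure of $\mathcal K$ is compact, which gives tightness.

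\emph{Step 1 (set $\mathcal K_{(1)}$).} By Fubini's theorem,
\[
\mathbb E\big[\|v^N\|^{2r}_{L_{2r}([0,T] \times \mathbb{R}, \hat\lambda; \ell_1)}\big]=\int_0^T\!\!\int_{\mathbb R}\frac{\mathbb E[\|v^N(t,x)\|_{\ell_1}^{2r}]}{1+|x|^2}\,dx\,dt\le \pi T \sup_{N,t,x}\mathbb E\big[\|v^N(t,x)\|_{\ell_1}^{2r}\big],
\]
and the right-hand side is finite by condition (i). Markov's inequality then lets us choose $C_1$ large enough that $\mathbb P(v^N\notin\mathcal K_{(1)})\le\varepsilon/3$ for every $N$.

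\emph{Step 2 (set $\mathcal K_{(2)}$).} Set
\[
a_k\defeq\sup_{N,t,x}\mathbb E\Big[\sum_{j\ge k}|v^N_j(t,x)|\Big],
\]
so that $a_k\to0$ by condition (ii). By Fubini's theorem and monotone convergence,
\[
\mathbb E\Big[\sum_{j\ge k}\|v^N_j\|_{L_1([0,T] \times \mathbb{R}, \hat\lambda; \mathbb{R})}\Big]\le \pi T a_k .
\]
Choose $k_h$ strictly increasing with $a_{k_h}\le 4^{-h}$, and set $D_h\defeq 2^{-h}$. Markov's inequality gives failure probability at most $\pi T 2^{-h}$ for index $h$. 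This sum is not yet below $\varepsilon/3$, so we instead take $D_h\defeq 2^{-h}/\delta$ with $\delta$ small. Summing over $h$ then gives $\mathbb P(v^N\notin\mathcal K_{(2)})\le \pi T\delta\sum_h 2^{-h}\le\varepsilon/3$, and still $D_h\to0$.

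\emph{Step 3 (set $\mathcal K_{(3)}$, the delicate one).} Here the union bound runs over all $k\in\mathbb N_0$, $j\ge J$ and $i,i'\in\{1,2\}$. Condition (iii) with $\gamma=\pm2^{-(k+j)}$ gives
\[
\mathbb E\big[\|\theta^{(i)}_{\gamma}v^N_k-v^N_k\|_{L_1([0,T] \times \mathbb{R}, \hat\lambda; \mathbb{R})}\big]\le C_T2^{-l_i(k+j)} .
\]
Set $l\defeq\min(l_1,l_2)$ and $\beta\defeq2^{l/2}>1$. Since $2^{-l_i(k+j)} \le 2^{-l(k+j)} = \beta^{-2(k+j)}$, Markov's inequality gives
\[
\mathbb P\Big(\|\theta^{(i)}_{\gamma}v^N_k-v^N_k\|_{L_1([0,T] \times \mathbb{R}, \hat\lambda; \mathbb{R})}>C_2\beta^{-(k+j)}\Big)\le \frac{C_T}{C_2}\,\beta^{-(k+j)}.
\]
Summing over $k\ge0$, $j\ge J$ and the four choices of $(i,i')$ gives at most
\[
\frac{4C_T}{C_2}\cdot\frac{\beta^{-J}}{(1-\beta^{-1})^2},
\]
which is at most $\varepsilon/3$ for $C_2$ large (with, say, $J=1$). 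The geometric factor $\beta^{-(k+j)}$ is what makes this sum converge despite the infinitely many indices $k$, and choosing the exponent gap via $\beta=2^{l/2}$ is the main point.

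Combining Steps 1–3 gives $\inf_N\mathbb P(v^N\in\mathcal K)\ge1-\varepsilon$. Lemma~\ref{Paper02_alternative_criterion_compactness_diaz_mayoral} says $\mathcal K$ is relatively compact, so its closure is compact and the sequence is tight. A minor point to check is measurability of the events $\{v^N\in\mathcal K_{(i)}\}$; this holds because each is defined by countably many conditions on measurable norms.
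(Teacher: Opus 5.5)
Your proposal is correct and follows essentially the same route as the paper: Markov's inequality plus a union bound place $v^N$ in each of $\mathcal{K}_{(1)}$, $\mathcal{K}_{(2)}$ and $\mathcal{K}_{(3)}$ with high probability uniformly in $N$, and Lemma~\ref{Paper02_alternative_criterion_compactness_diaz_mayoral} then gives the relatively compact set. The differences are only in parameter choices, and yours are equally valid since that lemma allows any $D_h\to0$ and any $C_2>0$: the paper lets $k_h$ depend on $\varepsilon$ with $D_h=2^{-h}$, and in Step~3 takes any $\beta\in(1,2^{l})$ with $C_2=C_T$ and $J$ large, whereas you scale $D_h$ by $1/\delta$ and take $\beta=2^{l/2}$ with $J=1$ and $C_2$ large.
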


\begin{remark} \label{Paper02_Observations_equivalent_supression_mutations_positive_process}
    We collect here some observations regarding Lemma~\ref{Paper02_easy_tightness_criterion_L_p_spaces}.
    \begin{enumerate}
    \item[(a)] Although condition~(iii) is stated for arbitrary~$l_{1}, l_{2} > 0$, in applications we typically consider~$l_{1}, l_{2} \in (0,1)$, since the case~$l_{1}, l_{2} > 1$ allows one to apply the Kolmogorov continuity criterion and derive a stronger result than that stated in Lemma~\ref{Paper02_easy_tightness_criterion_L_p_spaces}.

    \item[(b)] Lemma~\ref{Paper02_easy_tightness_criterion_L_p_spaces} applies directly to finite-dimensional stochastic processes, since we can identify~$\mathbb{R}^d$ with a subspace of~$\ell_1$ via the canonical embedding~$\mathbb R^d \ni x \mapsto (x_1, \dots, x_d, 0, 0, \dots)$.
    \end{enumerate}
\end{remark}

\begin{proof}[Proof of Lemma~\ref{Paper02_easy_tightness_criterion_L_p_spaces}]
    We must establish that for every~$\varepsilon > 0$, there exists a compact set~$\mathcal{K}_{\varepsilon,T} \subset L_{r}([0,T] \times \mathbb{R}, \hat{\lambda}; \ell_{1})$ such that
    \begin{equation} \label{Paper02_tightness_prohorov_condition}
        \inf_{N \in \mathbb{N}} \; \mathbb{P}\Big((v_{k}^{N}(t,x))_{k \in \mathbb{N}_{0}, \, t \in [0,T], \, x \in \mathbb{R}} \in \mathcal{K}_{\varepsilon,T} \Big) \geq 1 - \varepsilon.
    \end{equation}
    It will suffice to construct~$\mathcal{K}_{\varepsilon,T}$ satisfying both~\eqref{Paper02_tightness_prohorov_condition} and the conditions of Lemma~\ref{Paper02_alternative_criterion_compactness_diaz_mayoral}. First, we observe that by~\eqref{Paper02_functional_space_L1_l1} and Jensen's inequality, and then by Fubini's theorem and condition (i), there exists~$C_{1} = C_1(T) >0$ such that
    \begin{equation*}
        \sup_{N \in \mathbb{N}} \; \mathbb{E}\Big[\| v^{N}\|_{L_{2r}([0,T] \times \mathbb{R}, \hat{\lambda}; \ell_{1})}\Big] \leq \sup_{N \in \mathbb{N}} \; \mathbb{E}\bigg[\int_{0}^{T} \int_{\mathbb{R}} \frac{\| v^{N}(t,x) \|_{\ell_{1}}^{2r}}{1 + \vert x \vert^{2}} \, dx \, dt\bigg]^{1/(2r)} \leq C_{1}.
    \end{equation*}
    Then, using the notation introduced in the statement of Lemma~\ref{Paper02_alternative_criterion_compactness_diaz_mayoral}, we have by Markov's inequality,
    \begin{equation} \label{Paper02_first_condition_tightness_L_p_higher_moments}
        \inf_{N \in \mathbb{N}} \; \mathbb{P}\Big(v^{N} \in \mathcal{K}_{(1)}(4C_{1}\varepsilon^{-1}, T)\Big) \geq 1 - \frac{\varepsilon}{4}.
    \end{equation}
    Moreover, by using condition~(ii), Fubini's theorem,~\eqref{Paper02_functional_space_real_functions_measure_time_space_box} and the fact that the measure~$\hat{\lambda}$ on~$[0,T] \times \mathbb{R}$ defined in~\eqref{Paper02_measure_time_space_box_i} is finite, we conclude that there exists a strictly increasing sequence~$(k_{h})_{h \in \mathbb{N}} = (k_{h}(\varepsilon))_{h \in \mathbb{N}} \subseteq \mathbb N_0$ such that for every~$h \in \mathbb{N}$,
    \begin{equation*}
        \sup_{N \in \mathbb{N}} \; \mathbb{E}\bigg[\sum_{k = k_{h}}^\infty \| v^{N}_{k} \|_{L_{1}([0,T] \times \mathbb{R}, \hat{\lambda}; \mathbb{R})}\bigg] \leq \frac{\varepsilon}{2^{2h + 2}}.
    \end{equation*}
    Recall the definition of~$\mathcal{K}_{(2)}((k_{h})_{h \in \mathbb{N}}, (2^{-h})_{h \in \mathbb{N}}, T)$ from the statement of Lemma~\ref{Paper02_alternative_criterion_compactness_diaz_mayoral}. By Markov's inequality and a union bound, we have
    \begin{equation} \label{Paper02_second_condition_tightness_uniform_tightness}
          \inf_{N \in \mathbb{N}} \; \mathbb{P}\Big(v^{N} \in \mathcal{K}_{(2)}((k_{h})_{h \in \mathbb{N}}, (2^{-h})_{h \in \mathbb{N}}, T)\Big) \geq 1 - \frac{\varepsilon}{4}.
    \end{equation}
    In the language of  Lemma~\ref{Paper02_alternative_criterion_compactness_diaz_mayoral}, it remains to determine~$C_{2}(\varepsilon,T) > 0$, $\beta(\varepsilon,T) > 1$ and $J(\varepsilon,T) \in \mathbb{N}$ such that
    \begin{equation} \label{Paper02_third_condition_tightness_equicontinuity}
          \inf_{N \in \mathbb{N}} \; \mathbb{P}\Big(v^{N} \in \mathcal{K}_{(3)}(C_{2}, \beta, J, T)\Big) \geq 1 - \frac{\varepsilon}{2}.
    \end{equation}
    Let $l = l_{1} \wedge l_{2} > 0$, and take~$\beta \in (1, 2^{l})$. For~$i,i' \in \{1,2\}$,~$N,j \in \mathbb{N}$ and~$k \in \mathbb{N}_{0}$, define the event
    \begin{equation*}
        \mathcal{A}^{i',N}_{i,k,j,\beta} \defeq \Big\{\| \theta^{(i)}_{(-1)^{i'}2^{-(k+j)}}v_{k}^{N} - v_{k}^{N} \|_{L_{1}([0,T] \times \mathbb{R}, \hat{\lambda}; \mathbb{R})} \geq C_{T}\beta^{-(k+j)}\Big\},
    \end{equation*}
    where~$C_{T}> 0$ is the constant introduced in condition~(iii). Then, by condition~(iii) and Markov's inequality, we have
    \begin{equation*}
        \sup_{N \in \mathbb{N}} \, \mathbb{P}\Big(\mathcal{A}^{i',N}_{i,k,j,\beta}\Big) \leq \left(\frac{\beta}{2^{l}}\right)^{k+j}.
    \end{equation*}
    Hence, since by construction we have~$\beta < 2^{l}$, by a union bound, we have for every~$J \in \mathbb{N}$,
    \begin{equation} \label{Paper02_union_bound_equicontinuity_integral_form}
         \sup_{N \in \mathbb{N}} \, \mathbb{P}\bigg(\bigcup_{i,i' \in \{1,2\}} \, \bigcup_{k \in \mathbb{N}_{0}} \, \bigcup_{j \geq J}\mathcal{A}^{i',N}_{i,k,j,\beta}\bigg) \leq 4\left(\frac{2^{l}}{2^{l} - \beta}\right)^{2}  \left(\frac{\beta}{2^{l}}\right)^{J}.
    \end{equation}
    Since by construction we have~$\beta \in (1,2^{l})$, the right-hand side of~\eqref{Paper02_union_bound_equicontinuity_integral_form} vanishes as $J \rightarrow \infty$. Therefore, by taking~$C_{2} = C_{T}$, $\beta \in (1,2^{l})$ and then~$J = J(\varepsilon,T) \in \mathbb{N}$ sufficiently large, we obtain~\eqref{Paper02_third_condition_tightness_equicontinuity}. Finally, by combining~\eqref{Paper02_first_condition_tightness_L_p_higher_moments},~\eqref{Paper02_second_condition_tightness_uniform_tightness} and~\eqref{Paper02_third_condition_tightness_equicontinuity}, we conclude that for every~$\varepsilon > 0$, there exist $C_{1},C_{2} > 0$, $\beta > 1$, $J \in \mathbb{N}$ and a strictly increasing sequence~$(k_{h})_{h \in \mathbb{N}} \subseteq \mathbb N_0$ such that letting
     \begin{equation*}
     \begin{aligned}
        & \mathcal{K}(C_{1}, C_{2}, (k_{h})_{h \in \mathbb{N}}, (2^{-h})_{h \in \mathbb N},\beta,J,T) \\ & \quad \defeq \mathcal{K}_{(1)}(C_{1}, T) \cap \mathcal{K}_{(2)}((k_{h})_{h \in \mathbb{N}}, (2^{-h})_{h \in \mathbb N}, T) \cap \mathcal{K}_{(3)}(C_{2},\beta, J, T),
    \end{aligned}
    \end{equation*}
    we have
    \begin{equation*}
        \inf_{N \in \mathbb{N}} \; \mathbb{P}\Big(v^{N} \in  \mathcal{K}(C_{1}, C_{2}, (k_{h})_{h \in \mathbb{N}}, (2^{-h})_{h \in \mathbb N},\beta,J,T) \Big) \geq 1 - \varepsilon.
    \end{equation*}
    Since Lemma~\ref{Paper02_alternative_criterion_compactness_diaz_mayoral} implies that~$\mathcal{K}(C_{1}, C_{2}, (k_{h})_{h \in \mathbb{N}}, (2^{-h})_{h \in \mathbb N},\beta,J,T)$ is relatively compact,~\eqref{Paper02_tightness_prohorov_condition} holds, which completes the proof.
\end{proof}

We are now ready to prove tightness of $(u^N)_{N \in \mathbb N}$ in the space $L_{4\deg q_-}([0,T] \times \mathbb{R}, \hat{\lambda}; \ell_{1})$.

\begin{lemma} \label{Paper02_tightness_muller_ratchet_L_p}
    Under the assumptions of Theorem~\ref{Paper02_deterministic_scaling_foutel_rodier_etheridge}, for any~$T > 0$, the sequence $$\Big((u^{N}(t))_{t \in [0,T]}, (u^{N}_{k}(t,x))_{k \in \mathbb{N}_{0}, \, t \in [0,T], \, x \in \mathbb{R}}\Big)_{N \in \mathbb{N}}$$ is tight in~$\mathcal{D}\left([0,T], (\mathcal{M}(\mathbb{R})^{\mathbb{N}_{0}}, d)\right) \times L_{4\deg q_-}([0,T] \times \mathbb{R}, \hat{\lambda}; \ell_{1})$.
\end{lemma}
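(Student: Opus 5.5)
Since tightness of a pair of random variables in a product of Polish spaces follows from tightness of each marginal, and tightness of $((u^N(t))_{t\in[0,T]})_{N}$ in $\mathcal{D}([0,T],(\mathcal M(\mathbb R)^{\mathbb N_0},d))$ follows from Proposition~\ref{Paper02_PDE_tightness_measures} (restricting to $[0,T]$), the plan is to verify the three conditions of Lemma~\ref{Paper02_easy_tightness_criterion_L_p_spaces} with $r = 4\deg q_-$ and $v^N = u^N$. Since $u^N_k \ge 0$, all absolute values below can be dropped.

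For condition~(i), we need $\sup_N\sup_{t\le T}\sup_{x\in\mathbb R}\mathbb E[\|u^N(t,x)\|_{\ell_1}^{8\deg q_-}]<\infty$. For $x\in L_N^{-1}\mathbb Z$ this is Theorem~\ref{Paper02_bound_total_mass}, using that $C_p$ is non-decreasing to take the supremum over $t\le T$. For general $x$ we use the linear interpolation bound~\eqref{Paper02_bound_mixed_equic_iv} together with $(a+b)^p\le 2^{p-1}(a^p+b^p)$.

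For condition~(ii), note that $\sum_{j\ge k}u^N_j(t,x)=U^N_{\mathcal I_k}(t,x)$, which is a convex combination of its values at the two neighbouring demes. Hence the supremum over $x\in\mathbb R$ equals the supremum over $x\in L_N^{-1}\mathbb Z$, and Lemma~\ref{Paper02_lemma_control_local_density_high_number_mutations} gives the limit directly.

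Condition~(iii) is the main point, and we apply Lemma~\ref{Paper02_spatial_increment_bound_usual} with $\mathcal I=\{k\}$. For $i=1$, take $\gamma_1=\gamma$ and $\gamma_2=0$. By Fubini,
\[
\mathbb E\big[\|\theta^{(1)}_\gamma u^N_k-u^N_k\|_{L_1(\hat\lambda)}\big]=\int_{\mathbb R}\frac{1}{1+|x|^2}\,\mathbb E\Big[\int_0^T\big|u^N_k(t,x)-u^N_k(t+\gamma,x)\mathds 1_{\{t+\gamma\in[0,T]\}}\big|\,dt\Big]\,dx\le \pi C_T|\gamma|^{1/4},
\]
where the constant is uniform in $x$, $k$ and $N$. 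For $i=2$, take $\gamma_1=0$ and $\gamma_2=\gamma\in(-1,1)$. Then $|\gamma|^{1/2}+|\gamma|\le 2|\gamma|^{1/2}$, which gives the bound $2\pi C_T|\gamma|^{1/2}$. Thus $l_1=1/4$ and $l_2=1/2$ work.

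The only delicate point I expect is measurability and joint integrability, namely that $u^N$ is a genuine $L_r(\hat\lambda;\ell_1)$-valued random variable so that Fubini applies. This was already noted before Proposition~\ref{Paper02_absolutely_continuity_limiting_process}, using the moment bound and the càdlàg property in time. With that in hand, Lemma~\ref{Paper02_easy_tightness_criterion_L_p_spaces} yields tightness of the second marginal, and combining the two marginals completes the proof.
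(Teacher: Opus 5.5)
Your proposal is correct and is exactly the paper's argument: tightness of the measure-valued marginal comes from Proposition~\ref{Paper02_PDE_tightness_measures}, and tightness in $L_{4\deg q_-}([0,T]\times\mathbb R,\hat\lambda;\ell_1)$ comes from checking the three conditions of Lemma~\ref{Paper02_easy_tightness_criterion_L_p_spaces}, via Theorem~\ref{Paper02_bound_total_mass}, Lemma~\ref{Paper02_lemma_control_local_density_high_number_mutations} and Lemma~\ref{Paper02_spatial_increment_bound_usual} respectively. The only difference is that you spell out the routine details the paper leaves implicit: the linear-interpolation step, the factor $\int_{\mathbb R}(1+|x|^2)^{-1}\,dx=\pi$, and the exponents $l_1=1/4$, $l_2=1/2$.
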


\begin{proof}
    As mentioned at the beginning of Section~\ref{Paper02_tightness_L_P_l_1spaces}, by Proposition~\ref{Paper02_PDE_tightness_measures}, it will suffice to establish tightness in~$L_{4\deg q_-}([0,T] \times \mathbb{R}, \hat{\lambda}; \ell_{1})$. This follows if the sequence
    \[
    (u^{N})_{N \in \mathbb{N}} = \Big((u^{N}_{k}(t,x))_{k \in \mathbb{N}_{0},t \in [0,T],x \in \mathbb{R}}\Big)_{N \in \mathbb{N}}
    \]
    satisfies the conditions of Lemma~\ref{Paper02_easy_tightness_criterion_L_p_spaces}. Observe that~$(u^{N})_{N \in \mathbb{N}}$ satisfies condition~(i) by Theorem~\ref{Paper02_bound_total_mass} (and since $u^N(t,x)$ for $x \not\in L_N^{-1}\mathbb Z$ is defined by linear interpolation), condition~(ii) by~\eqref{Paper02_local_mass_subset_indices_definition} and Lemma~\ref{Paper02_lemma_control_local_density_high_number_mutations}, and condition~(iii) by~\eqref{Paper02_local_mass_subset_indices_definition},~\eqref{Paper02_shift_maps},~\eqref{Paper02_functional_space_real_functions_measure_time_space_box} and Lemma~\ref{Paper02_spatial_increment_bound_usual}. Therefore, Lemma~\ref{Paper02_easy_tightness_criterion_L_p_spaces} implies the desired tightness, which completes the proof.
\end{proof}

\subsection{Characterisation of the limiting density process} \label{Paper02_Characterisation_limiting_process}

We now proceed to characterise the limiting density process. Recall the definition of $\left(\mathcal M(\mathbb R)^{\mathbb N_0},d\right)$ introduced in~\eqref{Paper02_metric_product_topology}, and the definition of $L_{r}([0,T] \times \mathbb R, \hat \lambda; \ell_1)$ in~\eqref{Paper02_functional_space_L1_l1} for $T > 0$ and $r \in [1, \infty)$. To simplify notation, we will, throughout this subsection, assume that the sequence
\begin{equation*}
    \Big((u^{N}_{k}(t))_{k \in \mathbb{N}_{0}, \, t \in [0,T]}, (u^{N}_{k}(t,x))_{k \in \mathbb{N}_{0}, \, t \in [0,T], \, x \in \mathbb{R}}\Big)_{N \in \mathbb{N}}
\end{equation*}
is a subsequence that converges weakly in
\begin{equation*}
    \mathcal{D}\left([0,T], (\mathcal{M}(\mathbb{R})^{\mathbb{N}_{0}},d)\right) \times L_{4\deg q_-}([0,T] \times \mathbb{R}, \hat{\lambda}; \ell_{1})
\end{equation*}
to the random maps
\begin{equation} \label{Paper02_lim_meas_func_subseq}
    \Big((u_k(t))_{k \in \mathbb{N}_{0}, \, t \in [0,T]}, \, (v_{k}(t,x))_{k \in \mathbb{N}_{0}, \, t \in [0,T], \, x \in \mathbb{R}}\Big).
\end{equation}

Our next result, which is a direct consequence of Skorokhod's representation theorem, the properties of the Skorokhod $J_1$-topology and the fundamental theorem of calculus, establishes a useful relation between $u$ as an~$\mathcal{M}(\mathbb{R})^{\mathbb{N}_{0}}$-valued process, and~$v$ as an element of~$L_{4\deg q_-}([0,T] \times \mathbb{R}, \hat{\lambda}; \ell_{1})$. Recall the definition of $\mathcal{C}_c(\mathbb R)$ from Section~\ref{Paper02_introduction}. 
For $\rho \in \mathcal{M}(\mathbb R)$ and $\varphi \in \mathcal{C}_c(\mathbb R)$, let $\langle \rho, \varphi \rangle$ be defined as in Section~\ref{Paper02_introduction}.

\begin{lemma} \label{Paper02_relation_measure_valued_L_P_time_space_boxes_solutions}
    Under the assumptions of Theorem~\ref{Paper02_deterministic_scaling_foutel_rodier_etheridge}, for $(u,v)$ as defined in~\eqref{Paper02_lim_meas_func_subseq}, for every~$H \in \mathbb{N}$,~$(t_{h})_{h \in \llbracket H \rrbracket} \in [0,T)^{H}$,~$(\varphi_{h})_{h \in \llbracket H \rrbracket} \in \mathcal{C}_{c}(\mathbb{R})^{H}$ and~$(k_{h})_{h \in \llbracket H \rrbracket} \in (\mathbb{N}_{0})^{H}$, the following equality holds in distribution:
    \begin{equation*}
       (\langle u_{k_{h}}(t_{h}), \, \varphi_{h} \rangle)_{h \in \llbracket H \rrbracket} \overset{d}{=} \bigg(\lim_{t' \downarrow t_{h}} \frac{1}{t' - t_{h}}\int_{t_{h}}^{t'} \int_{\mathbb R} v_{k_{h}}(\tau, x)\varphi_{h}(x) \, dx \, d\tau\bigg)_{h \in \llbracket H \rrbracket}.
    \end{equation*}
\end{lemma}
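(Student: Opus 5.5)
By Skorokhod's representation theorem, and since $\mathcal{D}([0,T],(\mathcal{M}(\mathbb{R})^{\mathbb{N}_0},d)) \times L_{4\deg q_-}([0,T]\times\mathbb{R},\hat\lambda;\ell_1)$ is Polish, we may realise the convergent subsequence and its limit $(u,v)$ on a common probability space with almost sure convergence in both components. The claim concerns only the joint law of finitely many coordinates, so it suffices to prove an almost sure identity. We will show that almost surely, for every $t\in[0,T)$, $k\in\mathbb{N}_0$ and $\varphi\in\mathcal{C}_c(\mathbb{R})$,
\[
\langle u_k(t),\varphi\rangle=\lim_{t'\downarrow t}\frac{1}{t'-t}\int_t^{t'}\int_{\mathbb R} v_k(\tau,x)\varphi(x)\,dx\,d\tau .
\]
The identity in distribution then follows at once, with equality of the random vectors themselves.

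\textbf{Step 1: an integrated identity.} Fix $0\le a<b\le T$, $k$ and $\varphi\in\mathcal{C}_c(\mathbb{R})$, and compare $\int_a^b\langle u^N_k(\tau),\varphi\rangle\,d\tau=\int_a^b\int u^N_k(\tau,x)\varphi(x)\,dx\,d\tau$ with its two possible limits.

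\emph{Function-space side.} Write $\varphi(x)=\bigl(\varphi(x)(1+|x|^2)\bigr)\cdot(1+|x|^2)^{-1}$. Then
\[
\Bigl|\int_a^b\!\!\int (u^N_k-v_k)\varphi\Bigr|\le \|\varphi(1+|\cdot|^2)\|_\infty\,\|u^N-v\|_{L_1([0,T]\times\mathbb{R},\hat\lambda;\ell_1)} .
\]
The $L_1(\hat\lambda)$ norm is bounded by a constant times the $L_{4\deg q_-}(\hat\lambda)$ norm, because $\hat\lambda$ is a finite measure. Hence the difference tends to $0$ almost surely.

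\emph{Skorokhod side.} $J_1$-convergence $u^N\to u$ implies $u^N(\tau)\to u(\tau)$ for all $\tau$ outside the countable set of discontinuities of $u$. For such $\tau$, vague convergence in each coordinate gives $\langle u^N_k(\tau),\varphi\rangle\to\langle u_k(\tau),\varphi\rangle$. Moreover, the functions $\tau\mapsto \langle u^N_k(\tau),\varphi\rangle$ are uniformly bounded on $[0,T]$ for each fixed $\omega$, since Skorokhod convergence implies that $\{u^N(\tau):\tau\le T,\,N\}$ is relatively compact in $\mathcal{M}(\mathbb{R})^{\mathbb{N}_0}$, and continuous functions are bounded on compact sets. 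Dominated convergence therefore gives $\int_a^b\langle u^N_k(\tau),\varphi\rangle d\tau\to\int_a^b\langle u_k(\tau),\varphi\rangle d\tau$.

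Combining the two sides, almost surely
\[
\int_a^b\langle u_k(\tau),\varphi\rangle d\tau=\int_a^b\int v_k(\tau,x)\varphi(x)\,dx\,d\tau .
\]
We obtain this simultaneously for all rational $a,b$, all $k$, and all $\varphi$ in a countable set $\mathcal{D}\subset\mathcal{C}_c(\mathbb{R})$ that is dense in the sup norm with uniformly controlled supports. By continuity in $a,b$ and approximation in $\varphi$, the identity extends to all $a,b$ and all $\varphi$. For the approximation in $\varphi$, use local finiteness of $u_k(\tau)$ uniformly in $\tau\le T$, which follows from the same compactness, together with $\int_0^T\int_{-R}^R|v_k|<\infty$.

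\textbf{Step 2: differentiation.} Since $u$ is càdlàg, $\tau\mapsto\langle u_k(\tau),\varphi\rangle$ is right-continuous. Hence
\[
\frac{1}{t'-t}\int_t^{t'}\langle u_k(\tau),\varphi\rangle\,d\tau\to\langle u_k(t),\varphi\rangle \quad\text{as } t'\downarrow t
\]
for every $t<T$. Applying Step 1 with $a=t$, $b=t'$ identifies this average with the right-hand side of the statement; in particular the limit there exists and equals $\langle u_k(t),\varphi\rangle$.

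The main obstacle is the uniform bound needed for dominated convergence on the Skorokhod side. If the compactness argument is awkward to run pathwise, the alternative is to prove convergence of the time integrals in $L_1(\mathbb{P})$ instead. This would use the moment bound of Theorem~\ref{Paper02_bound_total_mass} to obtain uniform integrability of $\int_0^T|\langle u^N_k(\tau),\varphi\rangle|\,d\tau$, and then pass to an almost surely convergent subsequence.
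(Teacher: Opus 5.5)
Your proposal is correct and follows essentially the same route as the paper. The shared steps are: Skorokhod representation; convergence of $\langle u^N_k(\tau),\varphi\rangle$ off the countable discontinuity set of $u$; a pathwise uniform bound from compactness; dominated convergence matched against the $L_{4\deg q_-}$ limit; and right-continuity to differentiate. The one difference is where compactness is taken: the paper uses the projected real-valued paths in $\mathcal{D}([0,T],\mathbb{R})$, while you use the range in $\mathcal{M}(\mathbb{R})^{\mathbb{N}_0}$. Your reduction to rational $a,b$ and a countable dense family of test functions is harmless but not needed. The paper works on the single full-measure event where the Skorokhod convergence holds, and there the argument is deterministic for all $a,b,k,\varphi$ at once.
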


\begin{proof}
From now on in the proof, whenever $\rho$ is a
$\mathcal D([0,T],\mathcal{M}(\mathbb{R}))$-valued random variable on the
probability space $(\Omega,\mathcal{F},\mathbb{P})$, where
$\mathcal{M}(\mathbb{R})$ is endowed with the vague topology, for $\omega \in \Omega$, we write
$\rho(t,\omega)$ for the value at $t\in[0,T]$ of the sample path
$\rho(\cdot,\omega)$. Likewise, if $\nu$ is an
$L_{4\deg q_-}([0,T]\times\mathbb{R},\hat\lambda;\ell_1)$-valued random
variable, we fix for $\omega \in \Omega$ a measurable representative of $\nu(\omega)$ and write
$\nu(t,x,\omega) =(\nu_k(t,x,\omega))_{k \in \mathbb N_0}$ for its value at $(t,x)\in[0,T]\times\mathbb{R}$.

By Lemma~\ref{Paper02_tightness_muller_ratchet_L_p} and Skorokhod's representation theorem, it is possible to construct the random maps
\begin{equation*} 
\left\{\begin{array}{l}
    \Big((u^{N}_{k}(\tau,\omega))_{k \in \mathbb{N}_{0}, \, \tau \in [0,T], \, \omega \in \Omega}, (u^{N}_{k}(\tau,x,\omega))_{k \in \mathbb{N}_{0}, \, \tau \in [0,T], \, x \in \mathbb{R}, \omega \in \Omega}\Big)_{N \in \mathbb{N}},  \\ \Big((u_k(\tau,\omega))_{k \in \mathbb{N}_{0}, \, \tau \in [0,T],\, \omega \in \Omega}, \, (v_{k}(\tau,x,\omega))_{k \in \mathbb{N}_{0}, \, \tau \in [0,T], \, x \in \mathbb{R}, \, \omega \in \Omega}\Big)
\end{array}\right.
\end{equation*}
on the same probability space $(\Omega, \mathcal F, \mathbb P)$ in such a way that for $\mathbb P$-almost every $\omega \in \Omega$,
\begin{equation} \label{Paper02_construction_Skorokhod}
\begin{aligned}
    & \lim_{N \rightarrow \infty} \Big((u^{N}_{k}(\tau,\omega))_{k \in \mathbb{N}_{0}, \, \tau \in [0,T]}, (u^{N}_{k}(\tau,x, \omega))_{k \in \mathbb{N}_{0}, \, \tau \in [0,T], \, x \in \mathbb{R}}\Big) \\ & \quad = \Big((u_k(\tau, \omega))_{k \in \mathbb{N}_{0}, \, \tau \in [0,T]}, \, (v_{k}(\tau,x, \omega))_{k \in \mathbb{N}_{0}, \, \tau \in [0,T], \, x \in \mathbb{R}}\Big)
\end{aligned}
\end{equation}
in $\mathcal{D}\left([0,T], (\mathcal{M}(\mathbb R)^{\mathbb N_0}, d)\right) \times L_{4\deg q_-}([0,T] \times \mathbb R, \hat \lambda; \ell_1)$. Recall that $d_{\textrm{vague}}$ introduced after~\eqref{Paper02_definition_u_N_as_radon_measure} is a metric that induces the vague topology on $\mathcal{M}(\mathbb R)$, and that the metric $d$ in $\mathcal{M}(\mathbb{R})^{\mathbb N_0}$ defined in~\eqref{Paper02_metric_product_topology} induces the product topology on $\mathcal{M}(\mathbb{R})^{\mathbb N_0}$. In particular, for every $k \in \mathbb N_0$ and every $\varphi \in \mathcal{C}_c(\mathbb R)$, the map $\mathcal{M}(\mathbb R)^{\mathbb N_0} \ni \rho = (\rho_j)_{j \in \mathbb N_0} \mapsto \langle \rho_k, \varphi \rangle$
is continuous with respect to the topology induced by $d$. Hence, by standard results on the Skorokhod $J_1$-topology (see e.g.~\cite[Exercise~3.13]{ethier2009markov}),~\eqref{Paper02_construction_Skorokhod} implies that for $\mathbb P$-almost every $\omega \in \Omega$ the following holds: for any $k \in \mathbb N_0$ and~$\varphi \in \mathcal C_c(\mathbb R)$, the sequence of real-valued càdlàg processes $\Big((\langle u^{N}_{k}(\tau,\omega), \varphi \rangle)_{\tau \in [0,T]}\Big)_{N \in \mathbb{N}}$ converges to $(\langle u_{k}(\tau,\omega), \varphi \rangle)_{\tau \in [0,T]}$ in $\mathcal{D}([0,T], \mathbb R)$ as $N \rightarrow \infty$.

Recall from Section~\ref{Paper02_introduction} that we let $\lambda$ denote the Lebesgue measure on $\mathbb R^d \; \forall \, d \in \mathbb N$. Since convergence in the Skorokhod $J_1$-topology implies pointwise convergence at any continuity point of the limit (see e.g.~\cite[Proposition~3.5.2]{ethier2009markov}), and since the set of discontinuities for any càdlàg path is countable (see e.g.~\cite[Lemma~3.5.1]{ethier2009markov}), we conclude that for $\mathbb P$-almost every $\omega \in \Omega$ the following holds: for any $k \in \mathbb N_0$ and~$\varphi \in \mathcal C_c(\mathbb R)$,
\begin{equation} \label{Paper02_relation_u_v_elementary_step_proof_v}
    \lim_{N \rightarrow \infty} \langle u^N_k(\tau,\omega), \varphi \rangle = \langle u_k(\tau,\omega), \varphi \rangle \quad \textrm{ for } \lambda\textrm{-almost every } \tau \in [0,T].
\end{equation}
Moreover, the set
\[
\mathcal K_{k,\varphi}(\omega)
\defeq
\Big\{ \big(\langle u_k(\tau,\omega),\varphi\rangle\big)_{\tau\in[0,T]}, \big(\langle u_k^N(\tau,\omega),\varphi\rangle\big)_{\tau\in[0,T]} : N\in\mathbb N
\Big\}
\subset \mathcal D([0,T],\mathbb R)
\]
is compact with respect to the Skorokhod $J_1$-topology, which implies (see e.g.~\cite[Exercise~3.16]{ethier2009markov}) that there exists $C_{k,\varphi,T}(\omega) > 0$ such that
\begin{equation} \label{Paper02_relation_u_v_elementary_step_proof_extra}
\sup_{h \in \mathcal K_{k,\varphi}(\omega)} \; \sup_{\tau \in [0,T]} \; \vert h(\tau) \vert \leq C_{k,\varphi,T}(\omega).
\end{equation}
Therefore, using~\eqref{Paper02_relation_u_v_elementary_step_proof_v},~\eqref{Paper02_relation_u_v_elementary_step_proof_extra} and dominated convergence, we conclude that for $\mathbb P$-almost every $\omega \in \Omega$, the following holds: for any $\varphi \in \mathcal C_c(\mathbb R)$, $k \in \mathbb N_0$ and $0 \leq t \leq t' \leq T$,
\begin{equation} \label{Paper02_relation_u_v_elementary_step_proof_vi}
\begin{aligned}
    \int_t^{t'} \langle u_k(\tau,\omega), \varphi \rangle \, d\tau & = \lim_{N \rightarrow \infty} \int_t^{t'} \langle u^N_k(\tau,\omega), \varphi \rangle \, d\tau \\ & = \lim_{N \rightarrow \infty} \int_t^{t'} \int_{\mathbb R} u^N_k(\tau,x,\omega) \varphi(x) \, dx \, d\tau \\ & = \int_t^{t'} \int_{\mathbb R} v_k(\tau,x,\omega) \varphi(x) \, dx \, d\tau,
\end{aligned}
\end{equation}
where for the third equality we used the fact that $(u^{N}_{k}(\tau,x, \omega))_{k \in \mathbb{N}_{0}, \, \tau \in [0,T], \, x \in \mathbb{R}}$ converges in $L_{4\deg q_-}([0,T] \times \mathbb R, \hat \lambda; \ell_1)$ to $(v_{k}(\tau,x, \omega))_{k \in \mathbb{N}_{0}, \, \tau \in [0,T], \, x \in \mathbb{R}}$ by~\eqref{Paper02_construction_Skorokhod}, and the fact that $\varphi$ has compact support.

Since the map~$[0,T] \ni \tau \mapsto \langle u_{k}(\tau, \omega), \, \varphi \rangle$ is right-continuous, by applying the fundamental theorem of calculus to~\eqref{Paper02_relation_u_v_elementary_step_proof_vi}, we conclude that for $\mathbb P$-almost every $\omega \in \Omega$, the following holds: for any $\varphi \in \mathcal C_c(\mathbb R)$, $k \in \mathbb N_0$ and $t \in [0,T)$,
\begin{equation} \label{Paper02_relation_u_v_elementary_step_proof_ii}
\begin{aligned}
    \langle u_{k}(t, \omega), \, \varphi \rangle & = \lim_{t' \downarrow t} \, \frac{1}{t' - t} \int_{t}^{t'} \langle u_{k}(\tau,\omega), \, \varphi \rangle \, d\tau \\
    & = \lim_{t' \downarrow t} \, \frac{1}{t' - t} \int_{t}^{t'} \int_{\mathbb R} v_{k}(\tau, x, \omega) \varphi(x) \, dx \, d\tau.
\end{aligned}
\end{equation}
By observing that~\eqref{Paper02_relation_u_v_elementary_step_proof_ii} holds for any choice of~$t \in [0,T)$,~$\varphi \in \mathcal{C}_{c}(\mathbb{R})$ and~$k \in \mathbb{N}_{0}$, the proof is complete.
\end{proof}

By Lemmas~\ref{Paper02_tightness_muller_ratchet_L_p} and~\ref{Paper02_relation_measure_valued_L_P_time_space_boxes_solutions}, to complete the proof of Proposition~\ref{Paper02_absolutely_continuity_limiting_process}, it remains to establish that~$v$ is a mild solution to the system of PDEs~\eqref{Paper02_PDE_scaling_limit}. This will follow from our next result. Recall that under the assumptions of Theorem~\ref{Paper02_deterministic_scaling_foutel_rodier_etheridge}, the function~$f = (f_{k})_{k \in \mathbb{N}_{0}} \in L_{\infty}(\mathbb{R}; \ell_{1})$ determines the initial condition~$\boldsymbol \eta^{N}$ in~\eqref{Paper02_initial_condition}. Recall the reaction term~$F = (F_{k})_{k \in \mathbb{N}_{0}}$ defined in~\eqref{Paper02_reaction_term_PDE}, and recall from before~\eqref{Paper02_semigroup_BM_action_ell_1_functions} that we let~$\{P_{t}\}_{t \geq 0}$ be the semigroup corresponding to a Brownian motion on $\mathbb R$ run at speed~$m \in (0, \infty)$. We also recall the Green's function representation of $u^N$ given in Lemma~\ref{Paper02_lemma_little_u_k_N_hat_semimartingale}. Moreover, recall from after~\eqref{Paper02_inf_gen_rw} that we let $\{P^{N}_{t}\}_{t \geq 0}$ be the semigroup associated with a simple symmetric random walk~$(X^{N}(t))_{t \geq 0}$ on~$L_{N}^{-1}\mathbb{Z}$ with total jump rate~$m_{N}$, and recall the definition of the normalised transition density $p^{N}$ of~$X^{N}$ in~\eqref{Paper02_definition_p_N_test_function_green}. Also, for $t > 0$ and $x \in \mathbb R$, recall the definition of the Gaussian kernel $p(t,x)$ in~\eqref{Paper02_Gaussian_kernel}.

\begin{lemma} \label{Paper02_weak_convergence_mild_solution}
   Under the assumptions of Theorem~\ref{Paper02_deterministic_scaling_foutel_rodier_etheridge}, the $L_{4\deg q_{-}}([0,T] \times \mathbb{R}, \hat{\lambda}; \ell_{1})$-valued random variable $v$ defined in~\eqref{Paper02_lim_meas_func_subseq} satisfies the following identity:
   \begin{equation*}
       \mathbb{E}\Bigg[\int_{0}^{T} \int_{\mathbb{R}} \, \sum_{k = 0}^{\infty} \frac{1}{1 + \vert x \vert^{2}} \left\vert v_{k}(t,x) - (P_{t}f_{k})(x) - \int_{0}^{t} \Big(P_{t-\tau}F_{k}(v(\tau, \cdot))\Big)(x) \,d\tau\right\vert \, dx \, dt\Bigg] = 0.
   \end{equation*}
\end{lemma}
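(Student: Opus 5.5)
We pass to the limit in the Green's function representation of Lemma~\ref{Paper02_lemma_little_u_k_N_hat_semimartingale}, working on the Skorokhod probability space built in the proof of Lemma~\ref{Paper02_relation_measure_valued_L_P_time_space_boxes_solutions}. There $u^N \to v$ almost surely in $L_{4\deg q_-}([0,T]\times\mathbb R,\hat\lambda;\ell_1)$. For $t\in[0,T]$ and $x\in\mathbb R$ we define the residual
\[
R^N_k(t,x) \defeq u^N_k(t,x) - (P^N_t u^N_k(0,\cdot))(x) - \int_0^t \frac1{L_N}\sum_{y} p^N(t-\tau,y-x)F_k(u^N(\tau-,y))\,d\tau ,
\]
taking $x$ on the lattice and interpolating linearly in between. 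By Lemma~\ref{Paper02_lemma_little_u_k_N_hat_semimartingale} with $\mathcal I=\{k\}$, $R^N_k(t,x)=M^{N,t,x}_{\{k\}}(t)$ on the lattice, and the interpolation error is handled by the spatial increment bounds.

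The proof then has three steps. Write $\|\cdot\|_{\hat\lambda,\ell_1}$ for the norm $\int\!\int\frac{1}{1+|x|^2}\sum_k|\cdot|$. It suffices to show that each of the following tends to zero in probability (or in $L_1(\mathbb P)$):
\[
\text{(a) } \|R^N\|_{\hat\lambda,\ell_1},\qquad \text{(b) } \|P^N_\cdot u^N(0)-P_\cdot f\|_{\hat\lambda,\ell_1},\qquad \text{(c) } \Big\|\int_0^\cdot P^N F(u^N)-\int_0^\cdot P F(v)\Big\|_{\hat\lambda,\ell_1}.
\]
Together with $u^N\to v$, this gives the almost sure identity. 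Combined with Fatou's lemma, it yields the stated vanishing expectation.

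\emph{Step (a).} We do not want to sum martingale bounds over $k$. Instead we truncate at level $K$. For $k\ge K$, use $|R^N_k|\le u^N_k+P^Nu^N_k(0)+(\text{drift with }F^+_k)$. Lemma~\ref{Paper02_lemma_control_local_density_high_number_mutations}, Assumption~\ref{Paper02_assumption_initial_condition}(iii) and the $s_k$-estimate in its proof make this tail uniformly small in $N$. For each of the finitely many $k<K$, \eqref{Paper02_predictable_bracket_process_uN_k_hat}, Theorem~\ref{Paper02_bound_total_mass} and random walk estimate~\eqref{Paper02_random_walk_estimates_v} give $\mathbb E\langle M^{N,t,x}_{\{k\}}\rangle(t)\lesssim \sqrt t/(NL_N)+m_N/(NL_N^2)\to0$, uniformly in $(t,x)$. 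Integrating against $\hat\lambda$ finishes this step.

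\emph{Step (b).} This is deterministic. The initial data converge to $f$ in $L_{1,\mathrm{loc}}$: the cell averages of $f_k$ converge almost everywhere, and the $\ell_1$-tails are uniformly small. The local CLT convergence $p^N\to p$ (from Lemma~\ref{Paper02_random_walks_lemma}) together with the uniform bound $\|f\|_{L_\infty(\mathbb R;\ell_1)}$ then gives convergence in the weighted norm by dominated convergence.

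\emph{Step (c).} This is the main obstacle, because $F$ is polynomial and not globally Lipschitz. We split (c) into two pieces.
\begin{itemize}
\item[(c1)] Replace $P^N$ by $P$ acting on $F(u^N)$. The kernel difference is $L_1$-small, and $F^+(u^N)$ has uniformly bounded moments by \eqref{Paper02_good_behaviour_indices_modified_reaction_term} and Theorem~\ref{Paper02_bound_total_mass}.
\item[(c2)] Show $\int_0^t P_{t-\tau}(F(u^N(\tau))-F(v(\tau)))\,d\tau\to0$.
\end{itemize}
For (c2) we use the local Lipschitz bound
\[
\|F(a)-F(b)\|_{\ell_1}\lesssim (1+\|a\|_{\ell_1}+\|b\|_{\ell_1})^{\deg q_-}\|a-b\|_{\ell_1}
\]
and Hölder's inequality with exponents $(4\deg q_-)/(4\deg q_- -1)$ and $4\deg q_-$. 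The $4\deg q_-$-integrability in $L_{4\deg q_-}(\hat\lambda)$ controls the polynomial factor, since the moments up to order $8\deg q_-$ are uniformly bounded and Fatou's lemma transfers these bounds to $v$. Convergence $\|u^N-v\|_{L_{4\deg q_-}}\to0$ then handles the remaining factor. To move the weight $(1+|x|^2)^{-1}$ through the heat kernel, we use that $\int p(t-\tau,x-y)(1+|x|^2)^{-1}dx\lesssim_T(1+|y|^2)^{-1}$, which lets us control $P_{t-\tau}$ in the weighted $L_1$ norm by the weighted norm of its argument. Since the total convergence argument is uniform in $k$ only through $\ell_1$ sums, the factor $\sum_k|F_k|=\|F\|_{\ell_1}$ is what appears, and the bound closes.
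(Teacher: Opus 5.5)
Your overall structure matches the paper's. On the Skorokhod space you use the Green's function representation of Lemma~\ref{Paper02_lemma_little_u_k_N_hat_semimartingale}, control the martingale part with \eqref{Paper02_predictable_bracket_process_uN_k_hat}, BDG and \eqref{Paper02_random_walk_estimates_v}, and treat the initial datum and the replacement of $P^N$ by $P$ separately.

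The real difference is how you pass from $F(u^N)$ to $F(v)$. The paper proves convergence of $\int_0^tP_{t-\tau}F_k(u^N(\tau,\cdot))(x)\,d\tau$ for each fixed $(t,x)$ (Lemma~\ref{Paper02_standard_L_p_gaussian_lem}). It then uses Fatou to reduce everything to an expectation involving $u^N$ alone. Your (c2) instead combines the local Lipschitz bound, H\"older, and boundedness of $P_s$, $s\le T$, on the weighted $L_1$ space. That boundedness holds, for example from $1+|y|^2\le 2(1+|x|^2)(1+|x-y|^2)$, and it gives pathwise convergence in norm. This part is correct and arguably cleaner.

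Two remarks on (a):
\begin{enumerate}
\item The truncation in $k$ is unnecessary. The statement is the expectation of a sum of non-negative terms, so by Tonelli it suffices to fix $k$, as the paper does.
\item If you keep the truncation, note that the part $q_-(\|u\|_{\ell_1})\sum_{j\ge K}u_j$ of $\sum_{j\ge K}F^+_j(u)$ is not covered by the $s_k$-estimate in the proof of Lemma~\ref{Paper02_lemma_control_local_density_high_number_mutations}. That proof simply drops $q_-$ because it enters with a minus sign. You need something like $\mathbb E[q_-(\|u\|_{\ell_1})U_{\mathcal I_K}]\le\mathbb E[q_-(\|u\|_{\ell_1})^2\|u\|_{\ell_1}]^{1/2}\,\mathbb E[U_{\mathcal I_K}]^{1/2}$.
\end{enumerate}

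The genuine gap is in (c1). Your drift evaluates $F$ at the lattice values $u^N(\tau-,y)$, $y\in L_N^{-1}\mathbb Z$. The object you pass to (c2), $\int_0^tP_{t-\tau}F(u^N(\tau,\cdot))\,d\tau$, must instead integrate $F$ of the \emph{linearly interpolated} density over $\mathbb R$, because that is the function for which $\|u^N-v\|_{L_{4\deg q_-}}\to0$ is known. Since $F$ is nonlinear, $F(u^N(\tau,y+hL_N^{-1}))$ is not the interpolation of the lattice values of $F(u^N(\tau,\cdot))$. The justification ``the kernel difference is small and $F^+(u^N)$ has bounded moments'' does not control this discrepancy, since moment bounds alone allow order-one differences between neighbouring demes.

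What is missing is a lattice-scale spatial equicontinuity estimate for the nonlinear quantities $u^N_k\|u^N\|_{\ell_1}^r$. This is precisely Lemma~\ref{Paper02_lemma_increments_moments_densities}. The paper uses it, with H\"older in time to absorb $p(t-\tau,\cdot)\lesssim(t-\tau)^{-1/2}$, for the second term in \eqref{Paper02_mild_solution_step_v}. It also needs the smoothness of $p$ across a cell, which is the third term there. Alternatively, you could compare with the piecewise-constant extension $\bar u^N$ and show $\|\bar u^N-u^N\|_{L_{4\deg q_-}}\to0$ in probability, using Lemmas~\ref{Paper02_spatial_increment_bound_usual} and~\ref{Paper02_lemma_control_local_density_high_number_mutations} and Theorem~\ref{Paper02_bound_total_mass}.

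A minor point: the convergence $p^N\to p$ you invoke in (b) is the local CLT, Lemma~\ref{Paper2_lem_LCLT}, not Lemma~\ref{Paper02_random_walks_lemma}.
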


\begin{proof}
   By Fubini's theorem, the lemma will be proved after establishing that for every~$k \in \mathbb{N}_{0}$,
    \begin{equation} \label{Paper02_equivalent_formulation_easier_mild_solution_coordinatewise}
    \begin{aligned}
       & \mathbb{E}\Bigg[\int_{0}^{T} \int_{\mathbb{R}} \, \frac{1}{1 + \vert x \vert^{2}} \left\vert v_{k}(t,x) - (P_{t}f_{k})(x) - \int_{0}^{t} \Big(P_{t-\tau}F_{k}(v(\tau, \cdot))\Big)(x) \,d\tau\right\vert \, dx \, dt\Bigg] \\
       & \quad = 0.
    \end{aligned}
   \end{equation}
   By Skorokhod's representation theorem, it is possible to construct the sequence of density processes~$\Big((u^{N}_{k}(t,x))_{k \in \mathbb{N}_{0}, \, t \in [0,T], \, x \in \mathbb{R}}\Big)_{N \in \mathbb{N}}$ and~$(v_{k}(t,x))_{k \in \mathbb{N}_{0}, \, t \in [0,T], \, x \in \mathbb{R}}$ on the same probability space in such a way that
   \begin{equation} \label{Paper02_skorkohod_representation_thm_a.s._convergence_L_p}
       (u^{N}_{k}(t,x))_{k \in \mathbb{N}_{0}, \, t \in [0,T], \, x \in \mathbb{R}} \rightarrow (v_{k}(t,x))_{k \in \mathbb{N}_{0}, \, t \in [0,T], \, x \in \mathbb{R}}
   \end{equation}
   almost surely in  $L_{4\deg q_{-}}([0,T] \times \mathbb{R}, \hat{\lambda}; \ell_{1})$ as $N \rightarrow \infty$. Since the measure~$\hat{\lambda}$ on~$[0,T] \times \mathbb{R}$, defined in~\eqref{Paper02_measure_time_space_box_i}, is finite, by~\eqref{Paper02_functional_space_L1_l1} and Jensen's inequality the limit in~\eqref{Paper02_skorkohod_representation_thm_a.s._convergence_L_p} also holds almost surely in~$L_{r}([0,T] \times \mathbb{R}, \hat{\lambda}; \ell_{1})$, for every~$r \in [1,4\deg q_{-}]$. In particular, since convergence of a sequence of scalar-valued functions in~${L}_{r}([0,T] \times \mathbb{R}, \hat{\lambda}; \mathbb{R})$ implies the existence of a subsequence which converges $\lambda$-almost everywhere~in $[0,T] \times \mathbb{R}$ (see e.g.~\cite[Corollary~2.32]{folland1999real}),~\eqref{Paper02_skorkohod_representation_thm_a.s._convergence_L_p} implies that for every~$k \in \mathbb{N}_{0}$, almost surely there exists a subsequence~$(N^{(k)}_{j})_{j \in \mathbb{N}}$ such that for $\lambda$-almost every~$(t,x) \in [0,T] \times \mathbb{R}$,
   \begin{equation} \label{Paper02_limit_lambda_a.e.}
       \lim_{j \rightarrow \infty} u^{N^{(k)}_{j}}_{k}(t,x) = v_{k}(t,x).
   \end{equation}
   Moreover, by the definition of the reaction term~$F = (F_{k})_{k \in \mathbb{N}_{0}}$ in~\eqref{Paper02_reaction_term_PDE}, by the fact that by Assumption~\ref{Paper02_assumption_polynomials}, $q_+,q_-: [0, \infty) \rightarrow [0,\infty)$ are polynomials with $0 \leq \deg q_+ < \deg q_-$, and by \eqref{Paper02_aux_L_p_dom_c_ii} in Lemma~\ref{Paper02_standard_L_p_gaussian_lem} in the appendix, for all $k \in \mathbb N_0$ and $(t,x) \in [0,T] \times \mathbb R$, the following limit holds almost surely
   \begin{equation} \label{Paper02_limit_heat_kernel_immediate_assumption_a.s._convergence}
       \int_{0}^{t}  \Big(P_{t-\tau}F_{k}(v(\tau, \cdot))\Big)(x) \,d\tau = \lim_{N \rightarrow \infty} \int_{0}^{t}  \Big(P_{t-\tau}F_{k}(u^{N}(\tau, \cdot))\Big)(x) \, d\tau.
   \end{equation}
   Combining~\eqref{Paper02_limit_heat_kernel_immediate_assumption_a.s._convergence} and~\eqref{Paper02_limit_lambda_a.e.}, and then using Fatou's lemma (up to a subsequence if necessary), we conclude that for any $k \in \mathbb N_0$, the term on the left-hand side of~\eqref{Paper02_equivalent_formulation_easier_mild_solution_coordinatewise} is bounded by
   \begin{equation} \label{Paper02_inequality_fatous_lemma_mild_solution}
    \begin{aligned}
        & \mathbb{E}\bigg[\int_{0}^{T} \int_{\mathbb{R}} \, \frac{1}{1 + \vert x \vert^{2}} \left\vert v_{k}(t,x) - (P_{t}f_{k})(x) - \int_{0}^{t} \Big(P_{t-\tau}F_{k}(v(\tau, \cdot))\Big)(x) \,d\tau\right\vert \, dx \, dt\bigg] \\ & \quad \leq \limsup_{N \rightarrow \infty} \mathbb{E}_{\boldsymbol{\eta}^N}\bigg[\int_{0}^{T} \int_{\mathbb{R}} \, \frac{1}{1 + \vert x \vert^{2}} \bigg\vert u^{N}_{k}(t,x) - (P_{t}f_{k})(x) \\
        & \hspace{6cm} - \int_{0}^{t} \Big(P_{t-\tau}F_{k}(u^{N}(\tau, \cdot))\Big)(x) \,d\tau\bigg\vert \, dx \, dt\bigg].
    \end{aligned}
   \end{equation}
   Therefore,~\eqref{Paper02_equivalent_formulation_easier_mild_solution_coordinatewise} will be proved after establishing that the right-hand side of~\eqref{Paper02_inequality_fatous_lemma_mild_solution} vanishes.
   
   Our strategy will be to use the Green's function representation given in Lemma~\ref{Paper02_lemma_little_u_k_N_hat_semimartingale} to prove that the limit on the right-hand side of~\eqref{Paper02_inequality_fatous_lemma_mild_solution} is equal to~$0$. Observe that for every~$N \in \mathbb{N}$ and any non-negative real-valued function~$g \in {L}_{1}([0,T] \times \mathbb{R}, \hat{\lambda}; \mathbb{R})$,
   \begin{equation} \label{Paper02_simple_transformation_integral_space_sum_grid}
   \begin{aligned}
       \int_{0}^{T} \int_{\mathbb{R}} \frac{g(t,x)}{1 + \vert x \vert^{2}} \, dx \, dt & = \int_{0}^{T}  \sum_{x \in L_{N}^{-1}\mathbb{Z}} \frac{1}{L_{N}} \int_{0}^{1} \frac{g(t, x + hL_{N}^{-1})}{1 + \vert x + hL_{N}^{-1}\vert^{2}} \, dh \, dt \\ & \lesssim \int_{0}^{T} \sum_{x \in L_{N}^{-1}\mathbb{Z}} \frac{1}{L_{N} (1 + \vert x \vert^{2})} \int_{0}^{1} g(t, x + hL_{N}^{-1}) \, dh \, dt,
    \end{aligned}
   \end{equation}
   where for the last inequality we used the fact that by Assumption~\ref{Paper02_scaling_parameters_assumption}, $L_{N} \rightarrow \infty$ as $N \rightarrow \infty$. Moreover, recall that the function~$u^{N}_{k}(t,\cdot)$ was defined after~\eqref{Paper02_approx_pop_density_definition} via a linear interpolation in space between points in $L_N^{-1}\mathbb Z$, and so for $x \in L_N^{-1}\mathbb Z$ and $h \in [0,1]$ we have
   \[
   \vert u^N_k(t,x) - u^N_k(t, x+hL_N^{-1}) \vert \leq  \vert u^N_k(t,x) - u^N_k(t, x+L_N^{-1}) \vert.
   \]
   Hence, by~\eqref{Paper02_simple_transformation_integral_space_sum_grid} and the triangle inequality, for $N \in \mathbb N$, we can bound the expectation on the right-hand side of~\eqref{Paper02_inequality_fatous_lemma_mild_solution} by
   \begin{equation*}
    \begin{aligned}
    & \mathbb{E}_{\boldsymbol{\eta}^N}\bigg[\int_{0}^{T} \int_{\mathbb{R}} \, \frac{1}{1 + \vert x \vert^{2}} \left\vert u^{N}_{k}(t,x) - (P_{t}f_{k})(x) - \int_{0}^{t} \Big(P_{t-\tau}F_{k}(u^{N}(\tau, \cdot))\Big)(x) \,d\tau\right\vert \, dx \, dt\bigg] \\
    & \quad \lesssim \int_0^T \sum_{x \in L_N^{-1}\mathbb Z} \frac{1}{L_N(1 + \vert x \vert^2)} \int_0^1  \bigg(\mathbb E_{\boldsymbol \eta^N} \left[\vert u^N_k(t,x) - u^N_k(t, x+ L_N^{-1}) \vert\right]
    \\ & \qquad \qquad \; + \mathbb E_{\boldsymbol \eta^N} \bigg[\bigg\vert u^N_k(t,x) - (P_tf_k)(x + hL_N^{-1}) \\
    & \qquad \qquad \qquad \qquad \qquad - \int_0^t \left(P_{t-\tau} F_k(u^N(\tau, \cdot))\right)(x+hL_N^{-1}) \, d\tau \bigg\vert\bigg]\bigg) \, dh \, dt.
    \end{aligned}
   \end{equation*}
   Therefore, by Lemma~\ref{Paper02_lemma_little_u_k_N_hat_semimartingale},~\eqref{Paper02_definition_little_u_k_N_hat} and the triangle inequality,
   \begin{align} 
        & \mathbb{E}_{\boldsymbol \eta^N}\bigg[\int_{0}^{T} \int_{\mathbb{R}} \, \frac{1}{1 + \vert x \vert^{2}} \left\vert u^{N}_{k}(t,x) - (P_{t}f_{k})(x) - \int_{0}^{t} \Big(P_{t-\tau}F_{k}(u^{N}(\tau, \cdot))\Big)(x) \,d\tau\right\vert \, dx \, dt\bigg] \notag\\ & \; \lesssim \int_{0}^{T} \sum_{x \in L_{N}^{-1}\mathbb{Z}} \frac{1}{L_{N}(1 + \vert x \vert^{2})} \mathbb{E}_{\boldsymbol \eta^N}\Big[\Big\vert M^{N,t,x}_{\{k\}}(t) \Big\vert\Big] \, dt \notag \\ & \quad \; + \int_{0}^{T} \sum_{x \in L_{N}^{-1}\mathbb{Z}} \frac{1}{L_{N}(1 + \vert x \vert^{2})} \mathbb{E}_{\boldsymbol{\eta}^N}\Big[\Big\vert u^{N}_{k}(t,x) - u^{N}_{k}(t,x + L_{N}^{-1}) \Big\vert\Big] \, dt \notag \\ & \quad \; + \int_{0}^{T} \sum_{x \in L_{N}^{-1}\mathbb{Z}} \frac{1}{L_{N}(1 + \vert x \vert^{2})} \label{Paper02_proof_mild_solution_step_i} \\
        & \hspace{2.5cm} \cdot \int_{0}^{1} \mathbb{E}_{\boldsymbol \eta^N}\left[\left\vert A^{N,t,x}_{\{k\}}(t) - \int_{0}^{t} \Big(P_{t-\tau}F_{k}(u^{N}(\tau, \cdot))\Big)(x + hL_{N}^{-1}) \,d\tau \right\vert\right]dh\, dt \notag
        \\ & \quad \; + \int_{0}^{T} \sum_{x \in L_{N}^{-1}\mathbb{Z}} \frac{1}{L_{N}(1 + \vert x \vert^{2})} \int_{0}^{1} \mathbb E_{\boldsymbol \eta^N} \left[\Big\vert (P_{t}f_{k})(x + hL_{N}^{-1}) - (P^{N}_{t}u^{N}_{k}(0, \cdot))(x) \Big\vert \right] \, dh \, dt, \notag
    \end{align}
   where for every~$N \in \mathbb{N}$,~$k \in \mathbb{N}_{0}$,~$t \in [0,T]$ and~$x \in L_{N}^{-1}\mathbb{Z}$, $\big(M^{N,t,x}_{\{k\}}(\tau)\big)_{\tau \in [0,t]}$ is a càdlàg square integrable martingale with $M^{N,t,x}_{\{k\}}(0) = 0$ whose predictable bracket process is given in~\eqref{Paper02_predictable_bracket_process_uN_k_hat}, and $\big(A^{N,t,x}_{\{k\}}(\tau)\big)_{\tau \in [0,t]}$ is a finite variation process defined in~\eqref{Paper02_finite_variation_process_uN}.
   We now bound each of the terms on the right-hand side of~\eqref{Paper02_proof_mild_solution_step_i} separately.

   \medskip

\myemph{Step (1): Bound on the norm of the martingale term}
   For the first term on the right-hand side of~\eqref{Paper02_proof_mild_solution_step_i}, by using Jensen's inequality and the BDG inequality as in~\eqref{Paper02_BDG_inequality} and then applying~\eqref{Paper02_predictable_bracket_process_uN_k_hat}, we have that for every~$N \in \mathbb{N}$,~$k \in \mathbb{N}_{0}$,~$t \in [0,T]$ and~$x \in L_{N}^{-1}\mathbb{Z}$,
   \begin{equation*}
    \begin{aligned}
        & \mathbb{E}_{\boldsymbol \eta^N}\Big[\Big\vert M^{N,t,x}_{\{k\}}(t) \Big\vert\Big]^{2} \\ & \quad \lesssim  \frac{1}{NL_{N}^{2}} \, \sum_{y \in L_{N}^{-1}\mathbb{Z}} \, \int_{0}^{t} p^{N}(t-\tau,y - x)^{2} \mathbb{E}_{\boldsymbol \eta^N}\Big[ F_{k}^{+}(u^{N}(\tau-, y))\Big] \, d\tau \\ & \quad \quad + \frac{m_{N}}{2NL_{N}^{4}} \, \sum_{y \in L_{N}^{-1}\mathbb{Z}} \, \int_{0}^{t} \Big( \nabla_{L_N} p^{N}(t-\tau,y - L_N^{-1}- x)^2 \\ & \qquad \qquad \qquad \qquad \qquad \qquad \qquad + \nabla_{L_N} p^{N}(t-\tau,y - x)^{2} \Big) \mathbb{E}_{\boldsymbol \eta^N}\Big[u^{N}_{k}(\tau-, y)\Big]  \, d\tau \\ 
        & \quad \lesssim_{T} \frac{1}{NL_{N}^{2}} \, \sum_{y \in L_{N}^{-1}\mathbb{Z}} \, \int_{0}^{T} p^{N}(\tau,y - x)^{2} \, d\tau + \frac{m_{N}}{NL_{N}^{4}} \, \sum_{y \in L_{N}^{-1}\mathbb{Z}} \, \int_{0}^{T}  \nabla_{L_N} p^{N}(\tau,y - x)^{2} d\tau \\ & \quad \lesssim_{T} N^{-1},
    \end{aligned}
   \end{equation*}
   where for the second inequality we used estimate~\eqref{Paper02_good_behaviour_indices_modified_reaction_term} and Theorem~\ref{Paper02_bound_total_mass}, and for the last inequality, for the first term we used the fact that~$p^{N}(\tau, y-x) \leq L_{N}$, (by~\eqref{Paper02_definition_p_N_test_function_green}) and identity~\eqref{Paper02_simple_identity_total_sum_pN}, and for the second term we used~\eqref{Paper02_discrete_grad}, the standard random walk estimate~\eqref{Paper02_random_walk_estimates_v} from Lemma~\ref{Paper02_random_walks_lemma} in the appendix and the fact that, by Assumption~\ref{Paper02_scaling_parameters_assumption}, $m_N/L_{N}^2 \rightarrow m \in (0,\infty)$ as $N \rightarrow \infty$. We then conclude that
   \begin{equation} \label{Paper02_mild_solution_step_iii}
       \lim_{N \rightarrow \infty} \int_{0}^{T} \sum_{x \in L_{N}^{-1}\mathbb{Z}} \frac{1}{L_{N}(1 + \vert x \vert^{2})} \mathbb{E}_{\boldsymbol \eta^N}\Big[\Big\vert M^{N,t,x}_{\{k\}}(t) \Big\vert\Big] \, dt = 0.
   \end{equation}

   \medskip
   
\myemph{Step (2): Bound on the error term arising from the linear interpolation}
   For the second term on the right-hand side of~\eqref{Paper02_proof_mild_solution_step_i}, applying Fubini’s theorem and then~\eqref{Paper02_local_mass_subset_indices_definition} and Lemma~\ref{Paper02_spatial_increment_bound_usual} yields
   \begin{equation*}
    \begin{aligned}
    & \int_{0}^{T} \sum_{x \in L_{N}^{-1}\mathbb{Z}} \frac{1}{L_{N}(1 + \vert x \vert^{2})} \mathbb{E}_{\boldsymbol \eta^N}\Big[\Big\vert u^{N}_{k}(t,x) - u^{N}_{k}(t,x + L_{N}^{-1}) \Big\vert\Big] \, dt \\
    & \quad \lesssim_{T} \sum_{x \in L_{N}^{-1}\mathbb{Z}} \frac{1}{L_{N}^{3/2}(1 + \vert x \vert^{2})} \\
    & \quad \lesssim L_{N}^{-1/2},
    \end{aligned}
   \end{equation*}
   and so, since $L_N \rightarrow \infty$ as $N \rightarrow \infty$ by Assumption~\ref{Paper02_scaling_parameters_assumption},
   \begin{equation} \label{Paper02_mild_solution_step_iv}
       \lim_{N \rightarrow \infty} \int_{0}^{T} \sum_{x \in L_{N}^{-1}\mathbb{Z}} \frac{1}{L_{N}(1 + \vert x \vert^{2})} \mathbb{E}_{\boldsymbol \eta^N}\Big[\Big\vert u^{N}_{k}(t,x) - u^{N}_{k}(t,x + L_{N}^{-1}) \Big\vert\Big] \, dt = 0.
   \end{equation}

   \medskip

\myemph{Step (3): Bound on the error arising from approximating $\{P_t\}_{t \geq 0}$ by $\{P^N_t\}_{t \geq 0}$}
   For the third term on the right-hand side of~\eqref{Paper02_proof_mild_solution_step_i}, we note that by the definition of the Brownian semigroup~$\{P_{t}\}_{t \geq 0}$ before~\eqref{Paper02_semigroup_BM_action_ell_1_functions} and the definition of $p(t,x)$ in~\eqref{Paper02_Gaussian_kernel}, for any~$N \in \mathbb N$, $t \in [0,T]$, $x \in L_{N}^{-1}\mathbb{Z}$ and~$h \in [0,1]$,
   \begin{equation} \label{Paper02_rewriting_action_brownian_semigroup}
    \begin{split}
        & \int_{0}^{t} \Big(P_{t - \tau} F_{k}(u^{N}(\tau, \cdot))\Big)(x + hL_{N}^{-1}) \, d\tau \\ & \, = \sum_{y \in L_{N}^{-1}\mathbb{Z}} \, \int_{0}^{t} \frac{1}{L_{N}} \int_{0}^{1} p\Big(t - \tau, y-x + (h'-h)L_N^{-1}\Big) F_{k}(u^{N}(\tau, y + h'L_{N}^{-1})) \, dh' \, d\tau.
    \end{split}
    \raisetag{-1.5cm}
   \end{equation}
   Combining~\eqref{Paper02_finite_variation_process_uN} and~\eqref{Paper02_rewriting_action_brownian_semigroup}, and then applying the triangle inequality and Fubini's theorem, we conclude that for any~$N \in \mathbb N$, $t \in [0,T]$, $x \in L_{N}^{-1}\mathbb{Z}$ and $h \in [0,1]$,
    \begin{align}
        & \mathbb{E}_{\boldsymbol \eta^N}\bigg[\bigg\vert A^{N,t,x}_{\{k\}}(t) - \int_{0}^{t} \Big(P_{t-\tau}F_{k}(u^{N}(\tau, \cdot))\Big)(x + hL_{N}^{-1}) \,d\tau \bigg\vert\bigg] \notag
        \\ & \quad \leq \frac{1}{L_{N}} \sum_{y \in L_{N}^{-1}\mathbb{Z}} \int_{0}^{t} \left\vert p^{N}(t-\tau, y-x) - p(t-\tau,y-x)\right\vert \mathbb{E}_{\boldsymbol \eta^N}\Big[\Big\vert F_{k}(u^{N}(\tau-, y))\Big\vert\Big] \, d\tau \notag
        \\ & \quad \quad + \frac{1}{L_{N}}\sum_{y \in L_{N}^{-1}\mathbb{Z}}\int_{0}^{t} p(t-\tau,y-x) \notag \\[-6mm]
        & \hspace{4cm} \cdot \int_{0}^{1} \mathbb{E}_{\boldsymbol \eta^N} \Big[\Big\vert F_{k}(u^{N}(\tau-,y)) - F_{k}(u^{N}(\tau, y + h'L_{N}^{-1}))\Big\vert\Big] \, dh' d\tau \label{Paper02_mild_solution_step_v}
        \\ & \quad \quad + \frac{1}{L_{N}} \sum_{y \in L_{N}^{-1}\mathbb{Z}} \int_{0}^{t} \int_{0}^{1} \mathbb{E}_{\boldsymbol \eta^N}\Big[\Big\vert F_{k}(u^{N}(\tau, y + h'L_{N}^{-1})) \Big\vert\Big] \notag
        \\ & \qquad \qquad \qquad \qquad \qquad \cdot \left\vert p(t-\tau, y-x) - p(t-\tau, y- x + (h'-h)L_N^{-1})\right\vert \, dh'  d\tau. \notag
    \end{align}
   
   We now bound each of the terms on the right-hand side of~\eqref{Paper02_mild_solution_step_v} separately. For the first term, recall from~\eqref{Paper02_reaction_term_PDE} and~\eqref{Paper02_reaction_predictable_bracket_process} that $\vert F_k(u) \vert \leq F^+_k(u) \; \forall \, u \in \ell_1^+$. Hence, by~\eqref{Paper02_good_behaviour_indices_modified_reaction_term}, there exists $C^{(1)}_{q_+,q_-,f}(T)>0$ such that for any $N \in \mathbb N$, $t \in [0,T]$ and $x \in L_N^{-1}\mathbb Z$,
   \begin{equation} \label{Paper02_mild_solution_step_vi_modified}
    \begin{aligned}
        & \frac{1}{L_{N}} \sum_{y \in L_{N}^{-1}\mathbb{Z}} \int_{0}^{t} \left\vert p^{N}(t-\tau, y-x) - p(t-\tau,y-x)\right\vert \mathbb{E}_{\boldsymbol \eta^N}\Big[\Big\vert F_{k}(u^{N}(\tau-, y))\Big\vert\Big] \, d\tau \\ & \quad \leq C^{(1)}_{q_+,q_-,f}(T) \frac{1}{L_N} \sum_{y \in L_{N}^{-1}\mathbb{Z}} \int_{0}^{t} \left\vert p^{N}(t-\tau, y-x) - p(t-\tau,y-x)\right\vert \, d\tau \\ & \quad \leq C^{(1)}_{q_+,q_-,f}(T) \frac{1}{L_N} \sum_{y \in L_N^{-1}\mathbb Z} \int_{0}^{T} \left\vert p^{N}(\tau, y) - p(\tau,y)\right\vert \, d\tau.
    \end{aligned}
   \end{equation}
   Fix $\varepsilon \in (0,T)$, and let $(W(t))_{t \geq 0}$ denote a Brownian motion run at rate $m$. Also, recall from before~\eqref{Paper02_inf_gen_rw} that for $N \in \mathbb N$, we let $(X^{N}(t))_{t \geq 0}$ denote a simple symmetric random walk on $L_{N}^{-1}\mathbb{Z}$ with total jump rate $m_{N}$. Suppose $N$ is sufficiently large that $\varepsilon^{-1} > L_N^{-1}$. By symmetry, and then by the triangle inequality, identity~\eqref{Paper02_simple_identity_total_sum_pN} and since $p(\tau, \cdot)$ is decreasing on $[0, \infty)$ for $\tau > 0$ with $\int_0^\infty p(\tau,z) \, dz = 1/2$, we can write
   \begin{equation} \label{Paper02_mild_solution_step_modified_ii}
    \begin{split}
        & \frac{1}{L_N} \sum_{y \in L_{N}^{-1}\mathbb{Z}} \int_{0}^{T} \left\vert p^{N}(\tau, y) - p(\tau,y)\right\vert \, d\tau
        \\ & \quad \leq 2 \int_0^{\infty} \int_0^T \left\vert p^{N}(\tau, L_N^{-1} \lfloor yL_N \rfloor) - p(\tau, L_N^{-1} \lfloor yL_N \rfloor)\right\vert \, d\tau \, dy
        \\ & \quad \leq 2 \int_0^{\varepsilon^{-1}} \int_\varepsilon^T \left\vert p^{N}(\tau, L_N^{-1} \lfloor yL_N \rfloor) - p(\tau, L_N^{-1} \lfloor yL_N \rfloor)\right\vert \, d\tau \, dy \\
        & \qquad + 2 \int_0^\varepsilon \left(\frac 32 + L_N^{-1}p(\tau,0)\right) \, d\tau
        \\ & \quad \quad + 2\int_0^T \Big(\mathbb P_0(X^N(\tau) \geq L_N^{-1} \lfloor \varepsilon^{-1}L_N \rfloor) + \mathbb P_0(W(\tau) \geq L_N^{-1} \lfloor \varepsilon^{-1}L_N \rfloor - L_N^{-1})\Big)  \, d\tau.
    \end{split}
    \raisetag{-10.5em}
   \end{equation}
   By Lemma~\ref{Paper2_lem_LCLT} in the appendix, which is a local central limit theorem for continuous-time random walks, and using dominated convergence and that $m_N/L_N^2 \rightarrow m \in (0, \infty)$ as $N \rightarrow \infty$ by Assumption~\ref{Paper02_scaling_parameters_assumption}, the first term on the right-hand side of~\eqref{Paper02_mild_solution_step_modified_ii} converges to $0$ as $N \rightarrow \infty$.

   For the third term on the right-hand side of~\eqref{Paper02_mild_solution_step_modified_ii}, note that for $t \geq 0$ and $N \in \mathbb N$, letting $(\mathcal N^+(\tau))_{\tau \geq 0}$ and $(\mathcal N^-(\tau))_{\tau \geq 0}$ denote independent Poisson processes with rate 1, we have
   \begin{equation} \label{Paper02_second_moment_random_walk}
    \begin{aligned}
        \mathbb E_0 \left[X^N(t)^2\right] & = \mathbb E\left[L_N^{-2}(\mathcal N^+(m_Nt/2) - \mathcal N^-(m_Nt/2))^2 \right] \\
        & = L_N^{-2}\left(2\left(\tfrac 14 m_N^2t^2 + \tfrac 12 m_Nt\right) - 2 \cdot \tfrac 14 m_N^2t^2 \right) \\ & = m_NL_N^{-2}t.
    \end{aligned}
   \end{equation}
   Therefore, using Markov's inequality and that $m_N/L_N^2 \rightarrow m \in (0, \infty)$ and $L_N\to \infty$ as $N \rightarrow \infty$ by Assumption~\ref{Paper02_scaling_parameters_assumption},
   \begin{equation*}
    \begin{aligned}
        & \limsup_{N \rightarrow \infty} \int_0^T \Big(\mathbb P_0(X^N(\tau) \geq L_N^{-1} \lfloor \varepsilon^{-1}L_N \rfloor) + \mathbb P_0(W(\tau) \geq L_N^{-1} \lfloor \varepsilon^{-1}L_N \rfloor - L_N^{-1})\Big)  \, d\tau \\ & \quad \leq \limsup_{N \rightarrow \infty} \int_0^T \left(m_NL_N^{-2}\tau  \cdot (\varepsilon^{-1}/2)^{-2} + m\tau (\varepsilon^{-1}/2)^{-2}\right) \, d\tau  \\ & \quad = 4 \varepsilon^2 mT^2.
    \end{aligned}
   \end{equation*}

   For the second term on the right-hand side of~\eqref{Paper02_mild_solution_step_modified_ii}, we have
   \begin{equation*}
    \begin{aligned}
        \limsup_{N \rightarrow \infty} \int_0^\varepsilon \left(\frac 32  + L_N^{-1}p(\tau,0)\right) \, d\tau & = \limsup_{N \rightarrow \infty} \left(\frac{3\varepsilon}2 + L_N^{-1}\cdot\frac{2}{\sqrt{2\pi m}}\varepsilon^{1/2}\right) \\ & = 3\varepsilon/2.
    \end{aligned}
   \end{equation*}
   Since $\varepsilon \in (0,T)$ can be taken arbitrarily small, it now follows from~\eqref{Paper02_mild_solution_step_modified_ii} that
   \begin{equation} \label{Paper02_mild_solution_step_modified_iii}
       \lim_{N \rightarrow \infty} \frac{1}{L_N} \sum_{y \in L_{N}^{-1}\mathbb{Z}} \int_{0}^{T} \left\vert p^{N}(\tau, y) - p(\tau,y)\right\vert \, d\tau = 0.
   \end{equation}
   
   For the second term on the right-hand side of~\eqref{Paper02_mild_solution_step_v}, fix $r \in (1,2)$; then by using H\"{o}lder's inequality with exponents~$r$ and~$r/(r-1)$, and Jensen's inequality, and then using~\eqref{Paper02_good_behaviour_indices_modified_reaction_term} and the fact that $\vert F_k(u) \vert \leq F^+_k(u) \; \forall \, u \in \ell_1^+$ by~\eqref{Paper02_reaction_term_PDE} and~\eqref{Paper02_reaction_predictable_bracket_process}, there exists $C^{(2)}_{q_+,q_-,f,r}(T) > 0$ such that for any $N \in \mathbb N$, $t \in [0,T]$ and $x \in L_N^{-1}\mathbb Z$,
   \begin{equation} \label{Paper02_mild_solution_step_modified_iv}
    \begin{split}
        & \frac{1}{L_{N}}\sum_{y \in L_{N}^{-1}\mathbb{Z}}\int_{0}^{t} p(t-\tau, y-x) \int_{0}^{1} \mathbb{E}_{\boldsymbol \eta^N} \Big[\Big\vert F_{k}(u^{N}(\tau-,y)) - F_{k}(u^{N}(\tau, y + h'L_{N}^{-1}))\Big\vert\Big] \, dh' \, d\tau 
        \\ & \quad \leq \frac{1}{L_{N}} \sum_{y \in L_{N}^{-1}\mathbb{Z}} e^{- \frac{(y - x)^{2}}{2mT}} \bigg(\int_{0}^{T} (2\pi m \tau)^{-r/2} \, d\tau\bigg)^{1/r} 
        \\ & \hspace{1cm} \cdot \bigg(\int_{0}^{T} \int_{0}^{1} \mathbb{E}_{\boldsymbol \eta^N} \Big[\Big\vert F_{k}(u^{N}(\tau-,y)) - F_{k}(u^{N}(\tau, y + h'L_{N}^{-1}))\Big\vert\Big] ^{r/(r-1)} \,dh' \, d\tau\bigg)^{(r-1)/r} 
        \\ & \quad \leq C^{(2)}_{q_+,q_-,f,r}(T) \frac{1}{L_{N}} \sum_{y \in L_{N}^{-1}\mathbb{Z}} e^{- \frac{(y - x)^{2}}{2mT}} 
        \\ & \hspace{1.1cm} \cdot \bigg(\int_{0}^{1} \int_{0}^{T} \mathbb{E}_{\boldsymbol \eta^N} \Big[\Big\vert F_{k}(u^{N}(\tau-,y)) - F_{k}(u^{N}(\tau, y + h'L_{N}^{-1}))\Big\vert\Big] \,d\tau \, dh'\bigg)^{(r-1)/r}.
    \end{split}
    \raisetag{-4cm}
   \end{equation}
   By the definition of $F_k$ in~\eqref{Paper02_reaction_term_PDE} and by Lemma~\ref{Paper02_lemma_increments_moments_densities}, there exists $C^{(3)}_{q_+,q_-,f}(T) > 0$ such that for $N \in \mathbb N$, $y \in L_N^{-1}\mathbb Z$ and $h' \in [0,1]$,
   \[
   \int_{0}^{T} \mathbb{E}_{\boldsymbol \eta^N} \Big[\Big\vert F_{k}(u^{N}(\tau-,y)) - F_{k}(u^{N}(\tau, y + h'L_{N}^{-1}))\Big\vert\Big] \,d\tau  \leq C^{(3)}_{q_+,q_-,f}(T) L_N^{-1/4}.
   \]
   Therefore, substituting into~\eqref{Paper02_mild_solution_step_modified_iv}, there exists $C^{(4)}_{q_+,q_-,f,r}(T) > 0$ such that for any $N\in \mathbb N$, $t\in [0,T]$ and $x\in L_N^{-1}\mathbb Z$, we have
   \begin{equation} \label{Paper02_mild_solution_step_vii}
    \begin{aligned}
         & \frac{1}{L_{N}}\sum_{y \in L_{N}^{-1}\mathbb{Z}}\int_{0}^{t} p(t-\tau, y-x) \\
         & \hspace{2.5cm} \cdot \int_{0}^{1} \mathbb{E}_{\boldsymbol \eta^N} \Big[\Big\vert F_{k}(u^{N}(\tau-,y)) - F_{k}(u^{N}(\tau, y + h'L_{N}^{-1}))\Big\vert\Big] \, dh' \, d\tau 
         \\ & \quad \leq C^{(4)}_{q_+,q_-,f,r}(T) L_N^{-(r-1)/(4r)},
    \end{aligned}
   \end{equation}
   and since we chose $r > 1$, the right-hand side converges to $0$ as $N \rightarrow \infty$ by Assumption~\ref{Paper02_scaling_parameters_assumption}.
   
   For the third term on the right-hand side of~\eqref{Paper02_mild_solution_step_v}, by~\eqref{Paper02_good_behaviour_indices_modified_reaction_term} and the fact that $\vert F_k(u) \vert \leq F^+_k(u) \; \forall \, u \in \ell_1^+$ by~\eqref{Paper02_reaction_term_PDE} and~\eqref{Paper02_reaction_predictable_bracket_process}, there exists $C^{(5)}_{q_+,q_-,f}(T) > 0$ such that for any $N \in \mathbb N$, $t \in [0,T]$, $x \in L_N^{-1}\mathbb Z$ and $h \in [0,1]$,
   \begin{equation} \label{Paper02_mild_solution_step_modified_v}
    \begin{split}
        & \frac{1}{L_{N}} \sum_{y \in L_{N}^{-1}\mathbb{Z}} \int_{0}^{t} \int_{0}^{1} \mathbb{E}_{\boldsymbol \eta^N}\Big[\Big\vert F_{k}(u^{N}(\tau, y + h'L_{N}^{-1})) \Big\vert\Big] \\ & \qquad \qquad \qquad \qquad \qquad \qquad \cdot \left\vert p(t-\tau, y-x) - p(t-\tau, y- x + (h'-h)L_N^{-1})\right\vert \, dh'  d\tau \\ & \quad \leq C^{(5)}_{q_+,q_-,f}(T) \\
        & \hspace{1.5cm} \cdot \int_0^T \int_{\mathbb R} \int_0^1 \left\vert p(\tau, L_N^{-1}\lfloor L_N y\rfloor) - p(\tau, L_N^{-1}\lfloor L_N (y + (h'-h)L_N^{-1}) \rfloor)\right\vert \, dh'  \, dy \, d\tau.
    \end{split}
    \raisetag{-2cm}
   \end{equation}
   Now fix $\varepsilon \in (0,T)$, then since $p(\tau,z)$ is decreasing in $\vert z \vert$ and $\int_{\mathbb R} p(\tau,z) \, dz = 1$ for $\tau > 0$, we can write
   \begin{equation} \label{Paper02_mild_solution_step_modified_vi}
   \begin{split}
       & \int_0^T \int_{\mathbb R} \int_0^1 \left\vert p(\tau, L_N^{-1}\lfloor L_N y\rfloor) - p(\tau, L_N^{-1}\lfloor L_N (y + (h'-h)L_N^{-1}) \rfloor)\right\vert \, dh'  \, dy \, d\tau \\ & \quad \leq \int_0^{\varepsilon} \left(2 + 2L_N^{-1}p(\tau,0)\right) \, d\tau \\ & \qquad \qquad + \int_\varepsilon^T \int_{\mathbb R} \int_0^1 \left\vert p(\tau, L_N^{-1}\lfloor L_N y\rfloor) - p(\tau, L_N^{-1}\lfloor L_N (y + (h'-h)L_N^{-1}) \rfloor)\right\vert \, dh'  \, dy \, d\tau.
   \end{split}
   \raisetag{-2.8cm}
   \end{equation}
   By dominated convergence, the second term on the right-hand side of~\eqref{Paper02_mild_solution_step_modified_vi} converges to $0$ uniformly in $h \in [0,1]$ as $N \rightarrow \infty$. Therefore, since $\varepsilon \in (0,T)$ can be chosen arbitrarily small, we have
   \begin{equation} \label{Paper02_mild_solution_step_modified_vii}
   \begin{split}
       & \lim_{N \rightarrow \infty} \,
       \sup_{h \in [0,1]} \int_0^T \int_\mathbb R \int_0^1 \left\vert p(\tau, L_N^{-1}\lfloor L_N y\rfloor)  - p(\tau, L_N^{-1}\lfloor L_N (y + (h'-h)L_N^{-1}) \rfloor)\right\vert \, dh'  \, dy \, d\tau  
       \\[+3mm]
       & \qquad\qquad = 0.
    \end{split}
    \raisetag{-1.6cm}
   \end{equation}
   Applying~\eqref{Paper02_mild_solution_step_vi_modified},~\eqref{Paper02_mild_solution_step_modified_iii},~\eqref{Paper02_mild_solution_step_vii},~\eqref{Paper02_mild_solution_step_modified_v} and~\eqref{Paper02_mild_solution_step_modified_vii} to~\eqref{Paper02_mild_solution_step_v}, we conclude that 
   \begin{equation} \label{Paper02_mild_solution_step_ix}
   \begin{split}
    & \int_{0}^{T} \sum_{x \in L_{N}^{-1}\mathbb{Z}} \frac{1}{L_{N}(1 + \vert x \vert^{2})} \\
    & \hspace{1.7cm} \cdot \int_{0}^{1} \mathbb{E}_{\boldsymbol \eta^N}\bigg[\bigg\vert A^{N,t,x}_{\{k\}}(t) - \int_{0}^{t} \Big(P_{t-\tau}F_{k}(u^{N}(\tau, \cdot))\Big)(x + hL_{N}^{-1}) \,d\tau \bigg\vert\bigg] \, dh \, dt \rightarrow 0
    \end{split}
    \raisetag{-2cm}
   \end{equation}
   as~$N \rightarrow \infty$.

   \medskip

\myemph{Step (4): Bound on the error term arising from approximating the initial condition}   
   Finally, for the fourth term on the right-hand side of~\eqref{Paper02_proof_mild_solution_step_i}, note that for $N \in \mathbb N$, $x \in L_N^{-1}\mathbb Z$, $h \in [0,1]$ and $t \in (0,T]$, we can write
   \begin{equation*}
    \begin{aligned}
        & \mathbb E_{\boldsymbol \eta^N} \left[\Big\vert (P_{t}f_{k})(x + hL_{N}^{-1}) - (P^{N}_{t}u^{N}_{k}(0, \cdot))(x) \Big\vert \right] \\ & \quad = \left\vert \int_{\mathbb R} \left( p(t,y) f_k(x + hL_N^{-1}-y) - p^N(t, L_N^{-1}\lfloor L_Ny \rfloor) N^{-1}\eta^N_k(x - L_N^{-1} \lfloor L_Ny \rfloor)\right) \, dy  \right\vert.
    \end{aligned}
   \end{equation*}
   Now fix $\varepsilon \in (0,T)$; then for $N \in \mathbb N$, since $\sup_{y \in L_N^{-1} \mathbb Z} \eta^N_k(y) \leq N \| f \|_{L_{\infty}(\mathbb R; \ell_1)}$ by~\eqref{Paper02_initial_condition}, by the triangle inequality we have
   \begin{equation} \label{Paper02_mild_solution_step_xiii}
    \begin{split}
        & \int_{0}^{T} \sum_{x \in L_{N}^{-1}\mathbb{Z}} \frac{1}{L_{N}(1 + \vert x \vert^{2})} \int_{0}^{1} \mathbb E_{\boldsymbol \eta^N} \left[\Big\vert (P_{t}f_{k})(x + hL_{N}^{-1}) - (P^{N}_{t}u^{N}_{k}(0, \cdot))(x) \Big\vert \right] \, dh \, dt \\ & \quad \leq \int_0^\varepsilon \sum_{x \in L_N^{-1}\mathbb Z} \frac{\| f \|_{L_{\infty}(\mathbb R; \ell_1)}}{L_N(1 + \vert x \vert^2)} \, dt \\
        & \qquad + \int_0^T \sum_{x \in L_N^{-1} \mathbb Z} \frac{\| f \|_{L_{\infty}(\mathbb R; \ell_1)}}{L_N(1 + \vert x \vert^2)} \int_{\mathbb R \setminus [-\varepsilon^{-1}, \varepsilon^{-1}]} \left(p(t,y) + p^N(t,L_N^{-1} \lfloor L_Ny \rfloor)\right) \, dy \, dt \\ & \qquad + \int_0^T \sum_{\{x \in L_N^{-1}\mathbb Z: \, \vert x \vert \geq \varepsilon^{-1}\}} \frac{\| f \|_{L_{\infty}(\mathbb R; \ell_1)}}{L_N(1 + \vert x \vert^2)} \, dt \\ & \qquad + \int_{\varepsilon}^T \sum_{x \in L_N^{-1}\mathbb Z \cap [-\varepsilon^{-1}, \varepsilon^{-1}]} \, \frac{1}{L_N(1 + \vert x \vert^2)}  \int_0^1 \bigg\vert \int_{-\varepsilon^{-1}}^{\varepsilon^{-1}} \Big(p(t,y) f_k(x + hL_N^{-1} - y) \\ & \qquad \qquad \qquad \qquad \qquad \qquad  - p^N(t,L_N^{-1} \lfloor L_N y \rfloor) N^{-1} \eta^N_k(x - L_N^{-1} \lfloor L_Ny \rfloor) \Big) \, dy\bigg\vert \, dh \, dt.
    \end{split}
    \raisetag{-4.6cm}
   \end{equation}
   By the definition of $\eta^N_k$ in~\eqref{Paper02_initial_condition}, and by Lemma~\ref{Paper2_lem_LCLT} in the appendix, and since $\| f \|_{L_\infty(\mathbb R; \ell_1)} < \infty$ and $f$ is continuous almost everywhere by Assumption~\ref{Paper02_assumption_initial_condition}, and $m_N / L_N^2 \rightarrow m \in (0, \infty)$ as $N \rightarrow \infty$ by Assumption~\ref{Paper02_scaling_parameters_assumption}, we can apply dominated convergence to see that the last term on the right-hand side of~\eqref{Paper02_mild_solution_step_xiii} converges to $0$ as $N \rightarrow \infty$. For the second term on the right-hand side of~\eqref{Paper02_mild_solution_step_xiii}, letting $(W(t))_{t \geq 0}$ denote a Brownian motion run at speed $m$ and $(X^N(t))_{t \geq 0}$ a simple symmetric random walk on $L_N^{-1}\mathbb Z$ with total jump rate $m_N$, we can write
   \begin{equation*}
    \begin{aligned}
        & \limsup_{N \rightarrow \infty} \int_0^T \int_{\mathbb R \setminus [- \varepsilon^{-1}, \varepsilon^{-1}]} \left(p(t,y) + p^N(t,L_N^{-1} \lfloor L_Ny \rfloor)\right) \, dy \, dt \\ & \quad \leq \limsup_{N \rightarrow \infty} \; 2 \int_0^T \left(\mathbb P_0(W(t) \geq \varepsilon^{-1}) + \mathbb P_0(X^N(t) \geq \varepsilon^{-1}/2)\right) \, dt \\ & \quad \leq \limsup_{N \rightarrow \infty} \; 2 \int_0^T \left(\varepsilon^2 mt + 4 \varepsilon^2 m_N L_N^{-2} t\right) \, dt \\ & \quad = 5 \varepsilon^2 mT^2,
    \end{aligned}
   \end{equation*}
   where in the second inequality we used~\eqref{Paper02_second_moment_random_walk} and Markov's inequality. Therefore, taking $\varepsilon \in (0,T)$ arbitrarily small, we conclude that
   \begin{equation} \label{Paper02_mild_solution_step_x}
   \begin{split}
      & \lim_{N \rightarrow \infty} \, \int_{0}^{T} \sum_{x \in L_{N}^{-1}\mathbb{Z}} \frac{1}{L_{N}(1 + \vert x \vert^{2})} \int_{0}^{1} \mathbb E_{\boldsymbol \eta^N} \Big[\Big\vert (P_{t}f_{k})(x + hL_{N}^{-1}) - (P^{N}_{t}u^{N}_{k}(0, \cdot))(x) \Big\vert\Big] \, dh \, dt \\
      & \qquad\qquad = 0.
    \end{split}
    \raisetag{-1.7cm}
   \end{equation}
   Therefore, by applying~\eqref{Paper02_proof_mild_solution_step_i} to~\eqref{Paper02_inequality_fatous_lemma_mild_solution}, and then using~\eqref{Paper02_mild_solution_step_iii},~\eqref{Paper02_mild_solution_step_iv},~\eqref{Paper02_mild_solution_step_ix} and~\eqref{Paper02_mild_solution_step_x}, the proof is complete.
\end{proof}
We are now ready to prove Proposition~\ref{Paper02_absolutely_continuity_limiting_process}.

\begin{proof}[Proof of Proposition~\ref{Paper02_absolutely_continuity_limiting_process}]
    The desired tightness follows from Lemma~\ref{Paper02_tightness_muller_ratchet_L_p}, while condition~(i) follows from Lemma~\ref{Paper02_relation_measure_valued_L_P_time_space_boxes_solutions}. To establish that~$v$ is a mild solution to the system of PDEs~\eqref{Paper02_PDE_scaling_limit}, we first observe that since~$v \in L_{4\deg q_{-}}([0,T] \times \mathbb{R}, \hat{\lambda}; \ell_{1})$, since the heat kernel decays exponentially fast in space and by the definition of the reaction term~$F = (F_{k})_{k \in \mathbb{N}_{0}}$ in~\eqref{Paper02_reaction_term_PDE}, for any $(t,x) \in [0,T] \times \mathbb{R}$, $$\left(P_{t-\tau}\| F(v(\tau,\cdot)) \|_{\ell_{1}}\right) (x) < \infty,$$
    for $\lambda$-almost every $\tau \in [0,t]$. Moreover, by Lemma~\ref{Paper02_weak_convergence_mild_solution}, for $\lambda$-almost every $(t,x) \in [0,T] \times \mathbb{R}$,
    \begin{equation*}
        v(t,x) = (P_{t}f)(x) + \int_{0}^{t} \Big(P_{t-\tau} F(v(\tau,\cdot))\Big)(x) \, d\tau.
    \end{equation*}
    Then, it follows from Definition~\ref{Paper02_definition_mild_solution} that~$v$ is a mild solution to the system of PDEs~\eqref{Paper02_PDE_scaling_limit}, which completes the proof.
\end{proof}

\section{Proof of Theorems~\ref{Paper02_deterministic_scaling_foutel_rodier_etheridge} and~\ref{Paper02_long_time_behaviour_IPS}} \label{Paper02_uniqueness_weak_solutions_section}

In this section, we complete the proofs of the main results of this article by combining the regularity theory for mild solutions to~\eqref{Paper02_PDE_scaling_limit}, established in~\cite{madeira2025PDE_surfing}, with Propositions~\ref{Paper02_PDE_tightness_measures} and~\ref{Paper02_absolutely_continuity_limiting_process}. We begin with the proof of Theorem~\ref{Paper02_deterministic_scaling_foutel_rodier_etheridge}, which establishes a functional law of large numbers for the spatial Muller's ratchet.

\begin{proof}[Proof of Theorem~\ref{Paper02_deterministic_scaling_foutel_rodier_etheridge}]
    By Proposition~\ref{Paper02_PDE_tightness_measures}, the sequence $((u^{N}(t))_{t \geq 0})_{N \in \mathbb N}$ is tight in the $J_{1}$-topology on $\mathcal{D}\left([0,\infty), (\mathcal{M}(\mathbb{R})^{\mathbb{N}_{0}}, d)\right)$. Moreover, since Proposition~\ref{Paper02_absolutely_continuity_limiting_process}(i) implies that the finite-dimensional distributions of any subsequential limiting process $(u(t))_{t \geq 0}$ satisfy~\eqref{Paper02_distribution_characterisation_limiting_process}, it follows from Proposition~\ref{Paper02_absolutely_continuity_limiting_process}(ii) and Proposition~\ref{Paper02_thm_uniqueness_weak_solutions} that $(u(t))_{t \geq 0}$ is unique.

    We now characterise the limiting process $(u(t))_{t \geq 0} = ((u_k(t))_{k \in \mathbb N_0})_{t \geq 0}$. We start by observing that by~\eqref{Paper02_approx_pop_density_definition} and~\eqref{Paper02_initial_condition}, and since by Assumption~\ref{Paper02_assumption_initial_condition}(i) $f = (f_k) : \mathbb R \rightarrow \ell_1^+$ is continuous $\lambda$-almost everywhere, the following limit holds almost surely for $\lambda$-almost every $x \in \mathbb R$:
    \[
    \lim_{N \rightarrow \infty} u^N_k(0,x) = f_k(x).
    \]
    Therefore, by~\eqref{Paper02_initial_condition},~\eqref{Paper02_approx_pop_density_definition}, Assumption~\ref{Paper02_assumption_initial_condition}(ii) and dominated convergence, for all $k \in \mathbb N_0$ and $\varphi \in \mathcal C_c(\mathbb R)$, 
    \begin{equation} \label{Paper02_limit_measure_time_0_trivial}
        \lim_{N \rightarrow \infty} \langle u^N_k(0), \varphi \rangle = \lim_{N \rightarrow \infty} \int_{\mathbb R} u^N_k(0,x) \varphi(x) \, dx = \int_{\mathbb R} f_k(x) \varphi(x) \, dx.
    \end{equation}
    Since $(u(t))_{t \geq 0}$ is an element of $\mathcal D\left([0, \infty); (\mathcal M(\mathbb R)^{\mathbb N_0}, d)\right)$, $u$ is continuous at~$t = 0$, and therefore, by~\eqref{Paper02_limit_measure_time_0_trivial} and standard properties of the Skorokhod topology (see e.g.~\cite[Proposition~3.5.2]{ethier2009markov}), for all $k \in \mathbb N_0$, the measure $u_k(0) \in \mathcal M(\mathbb R)$ is absolutely continuous with respect to the Lebesgue measure and has density given by $f_k$. We now prove that for all $t > 0$ and $k \in \mathbb N_0$, $u_k(t)$ is absolutely continuous with respect to the Lebesgue measure, and characterise the density. Let $v = (v_k)_{k \in \mathbb N_0}: [0, \infty) \times \mathbb R \rightarrow \ell_1^+$ be the unique map satisfying the conditions (i)-(v) from Proposition~\ref{Paper02_smoothness_mild_solution}. By Proposition~\ref{Paper02_absolutely_continuity_limiting_process} and Proposition~\ref{Paper02_thm_uniqueness_weak_solutions}, we have that for all $t > 0$, $k \in \mathbb N_0$ and $\varphi \in \mathcal C_c(\mathbb R)$,
    \[
    \langle u_k(t), \varphi \rangle = \lim_{t' \downarrow t} \frac{1}{t'-t}\int_t^{t'} \int_{\mathbb R} v_k(\tau,x) \varphi(x) \, dx \, d\tau = \int_{\mathbb R} v_k(t,x) \varphi(x) \, dx,
    \]
    where the second equality holds since by Proposition~\ref{Paper02_smoothness_mild_solution}(iv), $v_k \in \mathcal C^{1,2}((0,\infty) \times \mathbb R; \mathbb R)$. Hence, for all $t > 0$ and $k \in \mathbb N_0$, $u_k(t)$ is absolutely continuous with respect to the Lebesgue measure, with density given by $v_k(t,\cdot)$. In particular, the limiting process $(u(t))_{t \geq 0} = ((u_k(t))_{k \in \mathbb N_0})_{t \geq 0}$ satisfies condition~(i) from Theorem~\ref{Paper02_deterministic_scaling_foutel_rodier_etheridge}. Moreover, by conditions~(i), (iii) and (iv) from Proposition~\ref{Paper02_smoothness_mild_solution}, $(u(t))_{t \geq 0} = ((u_k(t))_{k \in \mathbb N_0})_{t \geq 0}$ also satisfies conditions~(ii) and (iii) from Theorem~\ref{Paper02_deterministic_scaling_foutel_rodier_etheridge}. The fact that $(u(t))_{t \geq 0} = ((u_k(t))_{k \in \mathbb N_0})_{t \geq 0}$ is the unique element in $\mathcal D([0, \infty), \mathcal M(\mathbb R)^{\mathbb N_0})$ satisfying Theorem~\ref{Paper02_deterministic_scaling_foutel_rodier_etheridge}(i)-(iii) follows from Proposition~\ref{Paper02_thm_uniqueness_weak_solutions}. 
    Since for every $k \in \mathbb N_0$, $u_k(0)$ is a Radon measure with density $f_k$ with respect to the Lebesgue measure, the associated density process $(v_k(t,x))_{k \in \mathbb N_0, \, t \in [0, \infty), \, x \in \mathbb R}$ satisfies Lemma~\ref{Paper02_lem_equiv_mild_weak_sol}(i). Moreover, since $u_k(t)$ has density given by $v_k(t, \cdot)$, we conclude by Proposition~\ref{Paper02_smoothness_mild_solution}(i) and~(ii) that $(v_k(t,x))_{k \in \mathbb N_0, \, t \in [0, \infty), \, x \in \mathbb R}$ satisfies both conditions (ii) and (iii) from Lemma~\ref{Paper02_lem_equiv_mild_weak_sol}. Therefore, it follows from Lemma~\ref{Paper02_lem_equiv_mild_weak_sol} that $(v_k(t,x))_{k \in \mathbb N_0, \, t \in [0, \infty), \, x \in \mathbb R}$ is a weak solution to the system of PDEs~\eqref{Paper02_PDE_scaling_limit} in the sense of Definition~\ref{Paper02_weak_solution_distributional_sense}.
    By observing that any weak solution satisfying conditions~(i) and~(ii) of Theorem~\ref{Paper02_deterministic_scaling_foutel_rodier_etheridge} is a mild solution to the system of PDEs~\eqref{Paper02_PDE_scaling_limit}, uniqueness follows from condition~(ii) and Proposition~\ref{Paper02_smoothness_mild_solution}, which completes the proof.
\end{proof}

Recall that by Theorem~\ref{Paper02_prop_control_proportions} (proved in our companion article~\cite{madeira2025PDE_surfing}), we can control  the density of particles with $k$ mutations 
in the PDE limit~\eqref{Paper02_PDE_scaling_limit} 
when the reaction term $F = (F_k)_{k \in \mathbb N_0}$ defined in~\eqref{Paper02_reaction_term_PDE} is monostable in the sense of Definition~\ref{Paper02_assumption_monostable_condition}. We now combine this control with the law of large numbers given by Theorem~\ref{Paper02_deterministic_scaling_foutel_rodier_etheridge} to prove Theorem~\ref{Paper02_long_time_behaviour_IPS}.

\begin{proof}[Proof of Theorem~\ref{Paper02_long_time_behaviour_IPS}]
By Theorem~\ref{Paper02_prop_control_proportions}, for any $\delta > 0$ and~$K \in \mathbb{N}$, there exists $T_{\delta, K} > 0$ such that for any $T \geq T_{\delta, K}$, $x \in \mathbb{R}$ and $k \leq K$,
\begin{equation*}
    \bigg(\alpha_{k}\left(\frac{\mathfrak{Q}_{\min}}{\mathfrak{Q}_{\max}}\right)^{k}-\frac{\delta}{2}\bigg) u_{0}(T, x)\leq u_{k}(T,x) \leq  \bigg(\alpha_{k}\left(\frac{\mathfrak{Q}_{\max}}{\mathfrak{Q}_{\min}}\right)^{k} + \frac{\delta}{2}\bigg) u_{0}(T,x),
\end{equation*}
where~$u = (u_{k})_{k \in \mathbb{N}_{0}}: [0,\infty) \times \mathbb{R} \rightarrow \ell_{1}^{+}$ is the continuous mild solution to the system of PDEs~\eqref{Paper02_PDE_scaling_limit} given in Proposition~\ref{Paper02_smoothness_mild_solution}. Fix $T \geq T_{\delta, K}$, and let $\mathcal{I}$ be a compact interval in $\mathbb{R}$ which is not a singleton. Since the measure induced by~$u_{k}$ on~$\mathbb{R}$ is absolutely continuous with respect to the Lebesgue measure for every~$k \in \mathbb{N}_{0}$, by Theorem~\ref{Paper02_deterministic_scaling_foutel_rodier_etheridge}, Skorokhod's representation theorem and standard results on the vague topology in the space of Radon measures (see, for instance,~\cite[Lemma~4.1(iv)]{kallenberg2017random}), it is possible to construct the sequence of stochastic processes $(u^{N})_{N \in \mathbb{N}}$ on the same probability space in such a way that for every~$k \in \mathbb{N}_{0}$, almost surely
\begin{equation*}
    u^{N}_{k}(T)(\mathcal{I}) \xrightarrow{N \rightarrow \infty} \int_{\mathcal{I}} u_{k}(T,x) \, dx.
\end{equation*}
Moreover, by Theorem~\ref{Paper02_prop_control_proportions},
\[
\int_{\mathcal I} u_0(T,x)\,dx>0.
\]
Since almost sure convergence implies convergence in probability, we then conclude that for every~$\varepsilon > 0$, we can choose $N_{\delta, \varepsilon, \mathcal{I}, K, T} \in \mathbb{N}$ such that for every $N \geq N_{\delta, \varepsilon, \mathcal{I}, K, T}$ and~$k \in \llbracket K \rrbracket$,
\begin{equation*}
\begin{aligned}
& \mathbb{P}\bigg(\bigg(\alpha_{k}\left(\frac{\mathfrak{Q}_{\min}}{\mathfrak{Q}_{\max}}\right)^{k}-\delta\bigg) u^{N}_{0}(T)(\mathcal{I})\leq u^{N}_{k}(T)(\mathcal{I}) \leq  \bigg(\alpha_{k}\left(\frac{\mathfrak{Q}_{\max}}{\mathfrak{Q}_{\min}}\right)^{k} +\delta\bigg) u^{N}_{0}(T)(\mathcal{I}) \bigg) \\
& \quad \geq 1 - \varepsilon,
\end{aligned}
\end{equation*}
which completes the proof.
\end{proof}


\appendix

\section*{Appendix}

\section{Stochastic chain rule} \label{Paper02_appendix_section_technical_lemmas}

In this section, we will state a general stochastic chain rule formula. In what follows, let~$(\mathcal{S}, d_{\mathcal{S}})$ be some complete and separable metric space, let $A \subseteq \mathcal{C}(\mathcal{S}; \mathbb{R})$ be some vector subspace of real-valued continuous functions defined on~$\mathcal{S}$, let $\mathcal L\Big\vert_{A}: A \rightarrow \mathcal{C}(\mathcal{S}; \mathbb{R})$ be a (possibly unbounded) operator and let $(\eta(t))_{t \geq 0}$ be an~$\mathcal{S}$-valued càdlàg process with initial distribution~$\nu$, which is strongly Markovian with respect to its right-continuous natural filtration~$\{\mathcal{F}^{\eta}_{t+}\}_{t \geq 0}$. We will assume that~$\mathcal L$ is the infinitesimal generator of~$(\eta(t))_{t \geq 0}$, and that~$(\eta(t))_{t \geq 0}$ is a solution of the martingale problem for~$(\mathcal L, A, \nu)$, i.e.~for  any~$\phi \in A$, the process~$(M^{\phi}(t))_{t \geq 0}$ given by, for all $T \geq 0$,
    \begin{equation*}
        M^{\phi}(T) \defeq \phi(\eta(T)) - \phi(\eta(0)) - \int_{0}^{T} (\mathcal L\phi)(\eta(t-)) \, dt
    \end{equation*}
    is a càdlàg martingale with respect to the filtration~$\{\mathcal{F}^{\eta}_{t+}\}_{t \geq 0}$.

We begin by stating a general chain rule formula for Markov processes whose transition rates are not almost surely bounded. This result is classical for Markov processes with uniformly bounded transition rates. We prove the version that we state here in the companion article~\cite[Lemma~A.8]{madeira2025existence}.

\begin{lemma} \label{Paper02_general_integration_by_parts_formula}
Let~$T \geq 0$ be fixed, and let~$g \in \mathcal{C}([0,T] \times \mathcal{S}; \mathbb{R})$. Suppose the following conditions hold:
    \begin{enumerate}[label = (\roman*)]
        \item $
            \sup_{t_{1},t_{2} \in [0,T]}  \mathbb{E}_{\nu}\left[\vert g(t_{1}, \eta(t_{2})) \vert \right] < \infty$.
        \item The map~$[0,T] \times \mathcal{S}\ni (t,\boldsymbol{\eta})  \mapsto \left(\frac{\partial}{\partial t} g(\cdot, \boldsymbol{\eta})\right)(t)$ is in $ \mathcal{C}([0,T] \times \mathcal{S}; \mathbb{R})$.
        \item For any $t \in [0,T]$, the map~$\mathcal{S} \ni \boldsymbol{\eta}  \mapsto g(t, \boldsymbol{\eta}) $ is in $ A$, and the map~$[0,T] \times \mathcal{S} \ni (t,\boldsymbol{\eta})  \mapsto (\mathcal Lg)(t,\cdot)(\boldsymbol{\eta}) $ is in $ \mathcal{C}([0,T] \times \mathcal{S}; \mathbb{R})$.
        \item 
        $
        \!\begin{aligned}[t]
\sup_{t_{1},t_{2} \in [0,T]} \; \mathbb{E}_{\nu}\bigg[\left(\frac{\partial}{\partial t} g(\cdot, \eta(t_{2}))\right)^{2}(t_{1})  \bigg] + \sup_{t_{1},t_{2} \in [0,T]} \; \mathbb{E}_{\nu}\bigg[ \Big(\mathcal Lg(t_{1}, \cdot )\Big)^{2} (\eta(t_{2})) \bigg] < \infty.
\end{aligned}
    $
    \end{enumerate}
    Then the process~$\Big(M^{g}(t \wedge T)\Big)_{t \geq 0}$ given by, for all~$t \geq 0$,
    \begin{equation*}
    \begin{aligned}
        & M^{g}(t \wedge T) \\ & \quad \defeq g\Big( t \wedge T, \eta(t \wedge T)\Big) - g(0, \eta(0)) \\
        & \qquad \quad - \int_{0}^{t \wedge T} \left(\left(\frac{\partial}{\partial t'} g(\cdot, \eta(t'-))\right)(t') + \Big(\mathcal Lg(t', \cdot )\Big) (\eta(t'-))\right) \, dt'
    \end{aligned}
    \end{equation*}
    is a càdlàg martingale with respect to the filtration~$\{\mathcal{F}^{\eta}_{t+}\}_{t \geq 0}$.
\end{lemma}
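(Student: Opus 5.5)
The plan is to reduce the time-dependent statement to the time-homogeneous martingale problem. The device is a Riemann-sum/telescoping argument in time, with the moment conditions (i) and (iv) supplying the integrability needed to pass to the limit in $L^1$.

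\textbf{Step 1: the one-step martingale increment.} Fix $0\le s<t\le T$ and a bounded $\mathcal F^\eta_{s+}$-measurable $Z$. It suffices to show that
\[
\mathbb E_\nu\big[Z\,(M^g(t)-M^g(s))\big]=0 .
\]
Take a partition $s=t_0<t_1<\dots<t_n=t$ of mesh $\delta\to0$ and telescope:
\[
g(t,\eta(t))-g(s,\eta(s))=\sum_{i}\Big[g(t_{i+1},\eta(t_{i+1}))-g(t_{i},\eta(t_{i+1}))\Big]+\sum_i\Big[g(t_{i},\eta(t_{i+1}))-g(t_{i},\eta(t_{i}))\Big].
\]
For the second sum, condition (iii) says $g(t_i,\cdot)\in A$. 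The martingale problem then gives
\[
\mathbb E\Big[Z\,\big(g(t_i,\eta(t_{i+1}))-g(t_i,\eta(t_i))\big)\Big]=\mathbb E\Big[Z\int_{t_i}^{t_{i+1}}\mathcal L g(t_i,\cdot)(\eta(r-))\,dr\Big],
\]
since $Z$ is $\mathcal F_{t_i+}$-measurable. For the first sum, write each term as $\int_{t_i}^{t_{i+1}}\partial_r g(r,\eta(t_{i+1}))\,dr$, which is legitimate by (ii).

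\textbf{Step 2: pass $\delta\to 0$.} We need
\[
\sum_i\int_{t_i}^{t_{i+1}}\partial_r g(r,\eta(t_{i+1}))\,dr\to\int_s^t\partial_r g(r,\eta(r-))\,dr,
\qquad
\sum_i\int_{t_i}^{t_{i+1}}\mathcal Lg(t_i,\cdot)(\eta(r-))\,dr\to\int_s^t\mathcal Lg(r,\cdot)(\eta(r-))\,dr
\]
in $L^1(\mathbb P_\nu)$. Pointwise almost-sure convergence comes from three facts: the joint continuity in (ii) and (iii), the càdlàg property of $\eta$, and the observation that for a.e.\ $r$ the point $t_{i+1}\downarrow r$ sees $\eta(t_{i+1})\to\eta(r)=\eta(r-)$ except at the countably many jump times. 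Here I use that a continuous function on the compact set $[s,t]\times\{\eta(r):r\in[s,t]\}^-$ is uniformly continuous. I will then upgrade to $L^1$ convergence by uniform integrability. The integrands are dominated in $L^2(d\mathbb P\otimes dr)$ uniformly in the partition, by condition (iv) together with Fubini. Hence the families are uniformly integrable on $\Omega\times[s,t]$, and Vitali's theorem applies.

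\textbf{Step 3: integrability of $M^g$.} Condition (i) gives integrability of $g(t,\eta(t))$. Condition (iv) with Cauchy--Schwarz gives integrability of the drift integral. Together these give $M^g(t\wedge T)\in L^1$, and the càdlàg property follows from the path properties of $\eta$ and the continuity of $g$. Combining with Steps 1–2 yields $\mathbb E[Z(M^g(t)-M^g(s))]=0$, which is the martingale property with respect to $\{\mathcal F^\eta_{t+}\}$.

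\textbf{Main obstacle.} I expect the main difficulty to be Step 2. The rates are not bounded, so $\mathcal Lg$ cannot be controlled uniformly in the state, and only the second-moment bounds (iv) are available. One must check that the $L^2$ bounds indeed give uniform integrability of the mixed evaluations $\mathcal Lg(t_i,\cdot)(\eta(r-))$, where the time argument $t_i$ and the evaluation time $r$ differ. Condition (iv) is stated with a supremum over both $t_1$ and $t_2$ precisely to cover this.
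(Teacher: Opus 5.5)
Your proof is correct. The paper does not reprove this lemma (it notes that the result is classical for bounded rates and cites its companion article), and your argument is exactly that classical time-telescoping proof, with Vitali's theorem on $\Omega\times[s,t]$ and the moment conditions (i) and (iv) taking the place of bounded convergence and uniform bounds. The one detail to make explicit is that (iv) controls evaluations at $\eta(t_2)$, not $\eta(t_2-)$. So your $L^2(d\mathbb{P}\otimes dr)$ bound on the $\mathcal{L}g$-integrands, and the drift integrability in Step~3, should first use that pathwise $\eta(r-)=\eta(r)$ for all but countably many $r$, and only then apply Fubini.
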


We also need an expression for the predictable bracket process of the martingale~$M^{g}$ introduced in Lemma~\ref{Paper02_general_integration_by_parts_formula}. This will be our next result. A similar claim is stated without proof in~\cite[Theorem~2.6.3]{demasi1991mathematical}; we provide a proof here for completeness.

\begin{lemma} \label{Paper02_integration_by_parts_time_predictable_bracket_process}
    Let~$T \geq 0$ be fixed, and let~$g \in \mathcal{C}([0,T] \times \mathcal{S}, \mathbb{R})$. Suppose that, in addition to the conditions (i)-(iv) of Lemma~\ref{Paper02_general_integration_by_parts_formula}, the following conditions hold:
    \begin{enumerate}[label = (\roman*)]
        \item For any $t \in [0,T]$, the map~$\mathcal S \ni \boldsymbol{\eta} \mapsto g^{2}(t, \boldsymbol{\eta})$ is in~$A$, and the map~$[0,T] \times \mathcal{S} \ni (t,\boldsymbol{\eta}) \mapsto (\mathcal Lg^{2})(t,\cdot)(\boldsymbol{\eta})$ is in $ \mathcal{C}([0,T] \times \mathcal{S}, \mathbb{R})$.
        \item
         $
        \!\begin{aligned}[t]
        \sup_{t_{1},t_{2} \in [0,T]} \; \mathbb{E}_{\nu}\bigg[& g^{2}(t_{1},\eta(t_{2})) + g^{2}(t_{1}, \eta(t_{2}))\left(\frac{\partial}{\partial t} g(\cdot, \eta(t_{2}))\right)^{2}(t_{1}) \\ & \hspace{5cm} + \Big(\mathcal Lg^{2}(t_{1}, \cdot )\Big)^{2} (\eta(t_{2}))\bigg] < \infty.
        \end{aligned}
        $
    \end{enumerate}
    Then the process~$M^{g}$ introduced in Lemma~\ref{Paper02_general_integration_by_parts_formula} is a square-integrable martingale, and its predictable bracket process~$\Big(\langle M^{g} \rangle (t \wedge T)\Big)_{t \geq 0}$ is given by, for all $t \geq 0$,
    \begin{equation} \label{Paper02_gen_formula_predictable_bracket}
    \begin{aligned}
        & \langle M^{g} \rangle(t \wedge T) \\
        & \quad = \int_{0}^{t\wedge T} \bigg(\Big(\mathcal Lg^{2}(t', \cdot) \Big) (\eta(t'-)) -2 g(t',\eta(t'-)) \Big(\mathcal Lg(t', \cdot) \Big) (\eta(t'-))\bigg) \, dt'.
    \end{aligned}
    \end{equation}
\end{lemma}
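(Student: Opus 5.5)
The plan is to derive the predictable bracket from the Doob–Meyer decomposition of $(M^g)^2$. We identify a process $B$ of finite variation, continuous and hence predictable, such that $(M^g)^2 - B$ is a martingale. Throughout, fix $T$ and write
\[
G(t) \defeq g(t,\eta(t)), \qquad D(t) \defeq \big(\tfrac{\partial}{\partial t}g(\cdot,\eta(t-))\big)(t) + (\mathcal L g(t,\cdot))(\eta(t-)),
\]
so that $M^g(t) = G(t) - G(0) - \int_0^t D(s)\,ds$ for $t \le T$.

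The first step is to apply Lemma~\ref{Paper02_general_integration_by_parts_formula} to the function $g^2$ in place of $g$. Its time derivative is $2g\,\partial_t g$, which is continuous by condition (ii) of Lemma~\ref{Paper02_general_integration_by_parts_formula}. The new condition (i) gives $g^2(t,\cdot)\in A$ and continuity of $\mathcal L g^2$. The moment requirements (i) and (iv) for $g^2$ follow from the new condition (ii): it bounds $\mathbb E[g^2]$, $\mathbb E[g^2(\partial_t g)^2]$ (which controls $\mathbb E[(2g\partial_t g)^2]$) and $\mathbb E[(\mathcal L g^2)^2]$. The conclusion is that
\[
N(t) \defeq G(t)^2 - G(0)^2 - \int_0^t \Big( 2G(s-)\,\partial_s g(s,\eta(s-)) + (\mathcal L g^2(s,\cdot))(\eta(s-)) \Big)\,ds
\]
is a martingale on $[0,T]$.

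Next, square-integrability of $M^g$ follows from the bounds $\sup\mathbb E[g^2] < \infty$ and $\sup\mathbb E[D^2] < \infty$, together with Cauchy–Schwarz applied to the time integral.

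Then expand $(M^g)^2$ using the integration by parts formula for semimartingales. Set $Y(t) = G(0) + \int_0^t D$, which is continuous and of finite variation, so that $M^g = G - Y$. Then
\[
(M^g)^2 = G^2 - 2GY + Y^2.
\]
Since $Y$ is continuous and of finite variation,
\[
d(GY) = Y\,dG + G\,D\,dt, \qquad dY^2 = 2YD\,dt.
\]
Writing $dG = dM^g + D\,dt$, the $2Y\,dM^g$ term is a local martingale, and the $dt$-terms combine to give
\[
(M^g)^2(t) = G(t)^2 - G(0)^2 - 2\int_0^t G(s-)\,D(s)\,ds + (\text{local martingale}).
\]
Subtracting $N$, and observing that the $\partial_s g$ contributions cancel, leads to the identity
\[
(M^g)^2(t) - \int_0^t \Big( \mathcal L g^2(s,\cdot)(\eta(s-)) - 2G(s-)\,\mathcal L g(s,\cdot)(\eta(s-)) \Big)\,ds = \text{local martingale}.
\]
The compensator on the left is continuous and hence predictable, so by uniqueness of the Doob–Meyer decomposition it is $\langle M^g\rangle$, which gives \eqref{Paper02_gen_formula_predictable_bracket}.

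The main obstacle is upgrading the local martingale term $\int Y\,dM^g$ to a true martingale, or at least knowing that the right-hand side of the identity is a true martingale. The intended route is a localization argument. Take stopping times $\tau_n$ at which $|Y|$ exceeds $n$; on each stopped interval $\int_0^{t\wedge\tau_n} Y\,dM^g$ is an $L^2$ martingale. We then pass $n\to\infty$ in the identity. This needs uniform integrability of $(M^g)^2(t\wedge\tau_n)$, for which we use Doob's $L^2$ inequality, $\mathbb E[\sup_{t\le T}(M^g)^2] < \infty$. It also needs integrability of the compensator, for which we use Cauchy–Schwarz with $\mathbb E[g^2]$ and $\mathbb E[(\mathcal L g)^2]$ together with the bound on $\mathbb E[(\mathcal L g^2)^2]$. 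Dominated convergence then yields the martingale property on $[0,T]$.
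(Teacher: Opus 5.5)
Your proposal is correct and follows essentially the same route as the paper. You apply Lemma~\ref{Paper02_general_integration_by_parts_formula} to $g^2$, checking its hypotheses via the new conditions (i)--(ii), then expand $(M^g)^2$ by the integration-by-parts (Itô) formula so that the $\partial_t g$ terms cancel against the martingale for $g^2$, and finally identify the remaining continuous finite-variation compensator as $\langle M^g\rangle$ by uniqueness of the Doob--Meyer decomposition. The paper splits off $-2g(0,\eta(0))M^g$ as a separate martingale and defers the rest to the classical argument in Kipnis--Landim, whereas your localization via the stopping times $\tau_n$, Doob's $L^2$ inequality and dominated convergence supplies the details it omits.
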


\begin{proof}
    Since the proof is a straightforward generalisation of the classical argument for Markov processes with uniformly bounded transition rates, we will omit the details and only point out how to adapt the usual proof to our scenario. Let $(A^g(t))_{t \geq 0}$ denote the process given by the right-hand side of~\eqref{Paper02_gen_formula_predictable_bracket}. Condition~(iv) of Lemma~\ref{Paper02_general_integration_by_parts_formula} and condition~(ii) of this lemma imply that the process~$M^{g}$ is a square-integrable martingale. Hence, to prove our claim, it will suffice to verify that the process~$\left(Q^{g}(t)\right)_{t \geq 0}$ given by, for all~$t \geq 0$,
    \begin{equation*}
        Q^{g}(t) \defeq \Big(M^{g}(t \wedge T)\Big)^{2} -   A^{g}(t \wedge T)
    \end{equation*}
    is a martingale with respect to the filtration~$\{\mathcal{F}^{\eta}_{t+}\}_{t \geq 0}$. By observing that for all $T \geq 0$ and càdlàg functions $a,b:[0,T] \rightarrow \mathbb R$ we have
    \[
    \left(\int_0^T a(t-)b(t-) \, dt \right)^2 \leq \left(\int_0^T a(t-)^2 \, dt \right)\left(\int_0^T b(t-)^2 \, dt \right) \leq T^2 \sup_{t \leq T} a^2(t) \cdot \sup_{t \leq T} b^2(t),
    \]
    we conclude from condition~(iv) of Lemma~\ref{Paper02_general_integration_by_parts_formula} and condition~(ii) of this lemma that~$\left(Q^{g}(t)\right)_{t \geq 0}$ is integrable. Also, observe that by the definition of the process~$M^{g}$ given by Lemma~\ref{Paper02_general_integration_by_parts_formula}, we have, after rearranging terms, for all~$t \geq 0$,
    \begin{equation} \label{Paper02_decomposing_square_martingale_started_random_time}
        \Big(M^{g}(t \wedge T)\Big)^{2} =  A^{g,(1)}(t) + A^{g,(2)}(t),
    \end{equation}
    where
    \begin{equation*} 
    \begin{aligned}
        & A^{g,(1)}(t) \\
        & \quad \defeq g^{2}\Big(t \wedge T, \eta(t \wedge T)\Big) - g^2(0,\eta(0)) \\ & \qquad - 2g\Big(t \wedge T, \eta(t \wedge T)\Big)\int_{0}^{t \wedge T} \left(\left(\frac{\partial}{\partial t'} g(\cdot, \eta(t'-))\right)(t') + \Big(\mathcal Lg(t', \cdot )\Big) (\eta(t'-))\right) \, dt' \\ & \qquad  + \bigg(\int_{0}^{t \wedge T} \left(\left(\frac{\partial}{\partial t'} g(\cdot, \eta(t'-))\right)(t') + \Big(\mathcal Lg(t', \cdot )\Big) (\eta(t'-))\right) \, dt'\bigg)^{2},
    \end{aligned}
    \end{equation*}
    and
    \begin{equation*} 
    \begin{aligned}
        A^{g,(2)}(t) & \defeq -2g(0, \eta(0))g\Big(t \wedge T, \eta(t \wedge T)\Big)  + 2g^{2}(0, \eta(0)) \\ & \quad \quad + 2g(0, \eta(0))\int_{0}^{t \wedge T} \left(\left(\frac{\partial}{\partial t'} g(\cdot, \eta(t'-))\right)(t') + \Big(\mathcal Lg(t', \cdot )\Big) (\eta(t'-))\right) \, dt'.
    \end{aligned}
    \end{equation*}
    By Lemma~\ref{Paper02_general_integration_by_parts_formula}, we conclude that~$(A^{g,(2)}(t))_{t \geq 0}$ is a càdlàg martingale. It then remains to analyse~$(A^{g,(1)}(t))_{t \geq 0}$. For $t \in [0,T]$, let $A^{g}(t)$ denote the term on the right-hand side of~\eqref{Paper02_gen_formula_predictable_bracket}. Then $(A^{g}(t))_{t \geq 0}$ is an increasing finite variation process. The fact that the process $\Big((A^{g,(1)} -  A^{g}) (t)\Big)_{t \geq 0}$ is a martingale can be proved by an application of Itô's formula as in the classical case, with the observation that one should replace the assumption in the classical case of uniform boundedness of~$g$ and of~$\mathcal Lg^{2}$ by Lemma~\ref{Paper02_general_integration_by_parts_formula} and condition~(ii) of this lemma to establish that the process
    \begin{equation*}\begin{aligned}
        \bigg(g^{2}\Big(t \wedge T, & \eta(t \wedge T)\Big)  \\ & - \int_{0}^{t \wedge T} \Big(2g(t', \eta(t'-))\big(\frac{\partial}{\partial t'} g(\cdot, \eta(t'-))\Big)(t') + \big(\mathcal Lg^{2}(t', \cdot )\big) (\eta(t'-))\big) \, dt' \bigg)_{t \geq 0}
    \end{aligned}
    \end{equation*}
    is a martingale with respect to the filtration~$\{\mathcal{F}^{\eta}_{t+}\}_{t \geq 0}$. Since the remainder of the proof follows exactly the same arguments as in the classical case, we refer the interested reader to, for instance,~\cite[Lemma~A.1.5.1]{kipnis1998scaling}. We then conclude that $ \big(\big(M^{g}(t \wedge T)\big)^{2} - A^{g}(t)\big)_{t \geq 0}$ is a martingale with respect to the filtration~$\{\mathcal{F}^{\eta}_{t+}\}_{t \geq 0}$, which completes the proof.
\end{proof}

\ignore{To conclude this section, we state a simplification of the identity defining the predictable bracket process introduced in Lemma~\ref{Paper02_integration_by_parts_time_predictable_bracket_process}.

\begin{lemma} \label{Paper02_lemma_explicit_easy_formula_compute_predictable_bracket_process}
Suppose that, in addition to the conditions of Lemmas~\ref{Paper02_general_integration_by_parts_formula} and~\ref{Paper02_integration_by_parts_time_predictable_bracket_process}, there exists a map~$\Lambda: \mathcal{S} \rightarrow \mathcal{P}(\mathbb{N})$ such that for each 
    
\end{lemma}

This simplification is not new, and can be found, for instance, in~\cite{etheridge2019genealogical}. Nonetheless, since Lemmas~\ref{Paper02_general_integration_by_parts_formula} and~\ref{Paper02_integration_by_parts_time_predictable_bracket_process} are stated under general assumptions, we} 

\section{Random walk estimates} \label{Paper02_subsection_rw_estimates}

In this section, we will state some random walk estimates which are consequences of the local central limit theorem (see~\cite[Chapter~2]{lawler2010random} for a review of the local central limit theorem). These estimates are used to control the spatial and time increments of $u^N$ in Section~\ref{Paper02_section_greens_function}. For $t \geq 0$ and $x \in L_N^{-1}\mathbb{Z}$, recall the definition of $p^N(t,x)$ from~\eqref{Paper02_definition_p_N_test_function_green}. We will need the following inequalities, most of which can be found in~\cite{durrett2016genealogies}.

\begin{lemma}[Random walk estimates] \label{Paper02_random_walks_lemma}
    Suppose Assumption~\ref{Paper02_scaling_parameters_assumption} holds. Then there exists $C > 0$ such that for all $\tau \geq 0$, $N \in \mathbb{N}$, $x_{1},x_{2} \in L_{N}^{-1}\mathbb{Z}$ and $T \geq 0$,
\begin{align}
    & \int_{0}^{T} p^{N}(t, 0) \, dt \leq C\sqrt{T}, \label{Paper02_random_walk_estimate_special} \\ 
    0 \leq & \int_{0}^{T} \left(p^{N}(t,0) - p^{N}(t + \tau,0)\right) \, dt \leq \int_{0}^{\infty} \left(p^{N}(t,0) - p^{N}(t + \tau,0)\right) \, dt \leq C\sqrt{\tau}, \label{Paper02_random_walk_estimate_special_ii} \\
    0 \leq & \int_0^T \left(p^N(t,0) - p^N(t,x_1)\right) \, dt \leq \int_0^\infty \left(p^N(t,0) - p^N(t,x_1)\right) \, dt \leq C \vert x_1 \vert, \label{Paper02_random_walk_estimate_special_extra}
    \\
    & \frac{1}{L_{N}} \sum_{y \in L_{N}^{-1}\mathbb{Z}} \int_{0}^{T} \left\vert p^{N}(t,y) - p^{N}(t + \tau,y) \right\vert dt \leq C\sqrt{T\tau}, \label{Paper02_random_walk_estimates_i} \\ & \frac{1}{L_{N}} \sum_{y \in L_{N}^{-1}\mathbb{Z}} \int_{0}^{T} \left\vert p^{N}\left(t,y - x_{1}\right) - p^{N}\left(t,y - x_{2}\right) \right\vert dt \leq C\sqrt{T}\vert x_{1} - x_{2} \vert, \label{Paper02_random_walk_estimates_ii}
    \\ 
    & \frac{1}{L_{N}} \sum_{y \in L_{N}^{-1}\mathbb{Z}} \int_{0}^{T} p^{N}(T - t, y)^{2} \, dt \leq C\sqrt{T}, \label{Paper02_random_walk_estimates_iv} \\ & \frac{1}{L_{N}} \sum_{y \in L_{N}^{-1}\mathbb{Z}} \int_{0}^{T} \Big(p^{N}(T + \tau - t, y - x_{1}) - p^{N}(T - t, y - x_{2})\Big)^{2} \, dt \notag \\
    & \hspace{7cm} \leq C(\sqrt{\tau} + \vert x_{1} - x_{2} \vert). \label{Paper02_random_walk_estimates_v}
\end{align}
\end{lemma}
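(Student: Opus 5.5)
The approach is to derive every estimate in Lemma~\ref{Paper02_random_walks_lemma} from the Fourier representation of $p^N$. Since $X^N$ is a continuous-time simple random walk on $L_N^{-1}\mathbb Z$ with total jump rate $m_N$, for $z \in L_N^{-1}\mathbb Z$ we have
\[
p^N(t,z) = \frac{1}{2\pi}\int_{-\pi L_N}^{\pi L_N} e^{i\theta z}\, e^{-m_N t(1-\cos(\theta/L_N))}\,d\theta .
\]
On the range of integration, $1-\cos(\theta/L_N) \geq c\,\theta^2/L_N^2$. By Assumption~\ref{Paper02_scaling_parameters_assumption}(i), $m_N/L_N^2 \geq c' > 0$ for all $N$ (after discarding finitely many $N$, or adjusting constants). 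So the exponent is bounded by $e^{-c''t\theta^2}$ uniformly in $N$. This single Gaussian domination drives all the bounds.

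First I would prove \eqref{Paper02_random_walk_estimate_special}. The display above gives $p^N(t,0) \leq \int_{\mathbb R} e^{-c''t\theta^2}\,d\theta \lesssim t^{-1/2}$, and integrating in $t$ yields $C\sqrt T$.

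Next come \eqref{Paper02_random_walk_estimate_special_ii} and \eqref{Paper02_random_walk_estimate_special_extra}. Nonnegativity of the integrands follows because $t\mapsto p^N(t,0)$ is decreasing and $p^N(t,0)\ge p^N(t,x)$, both since the Fourier multiplier is nonnegative. Integrating in time,
\[
\int_0^\infty \bigl(p^N(t,0)-p^N(t+\tau,0)\bigr)\,dt = \frac1{2\pi}\int \frac{1-e^{-\tau\psi_N(\theta)}}{\psi_N(\theta)}\,d\theta ,
\]
where $\psi_N(\theta) = m_N(1-\cos(\theta/L_N))$. Bounding the numerator by $\min(1,\tau\psi_N)$ and using $\psi_N\gtrsim\theta^2$ gives $C\sqrt\tau$. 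Similarly,
\[
\int_0^\infty \bigl(p^N(t,0)-p^N(t,x)\bigr)\,dt = \frac1{2\pi}\int\frac{1-\cos(\theta x)}{\psi_N}\,d\theta \lesssim \int \frac{\min(1,\theta^2x^2)}{\theta^2}\,d\theta \lesssim |x| .
\]

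Estimates \eqref{Paper02_random_walk_estimates_iv} and \eqref{Paper02_random_walk_estimates_v} follow by Parseval. We have $\frac1{L_N}\sum_y p^N(s,y)^2 = p^N(2s,0)$, so \eqref{Paper02_random_walk_estimates_iv} reduces to \eqref{Paper02_random_walk_estimate_special}. Expanding the square in \eqref{Paper02_random_walk_estimates_v} gives
\[
p^N(2s+2\tau,0) + p^N(2s,0) - 2p^N(2s+\tau,x_1-x_2).
\]
I would add and subtract $2p^N(2s+\tau,0)$ and then apply \eqref{Paper02_random_walk_estimate_special_ii} twice and \eqref{Paper02_random_walk_estimate_special_extra} once, obtaining $C(\sqrt\tau+|x_1-x_2|)$.

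The $\ell_1$-type bounds \eqref{Paper02_random_walk_estimates_i} and \eqref{Paper02_random_walk_estimates_ii} are the main obstacle, since Parseval does not apply to them. For \eqref{Paper02_random_walk_estimates_ii}, the difference is a telescoping sum over $|x_1-x_2|L_N$ lattice steps. So it suffices to bound $\frac1{L_N}\sum_y|\nabla_{L_N}p^N(t,y)| \lesssim t^{-1/2}$ and integrate, which gives $\sqrt T\,|x_1-x_2|$. This bound holds because $p^N(t,\cdot)$ is symmetric and unimodal: the walk is a symmetrised Poisson mixture of binomial-type convolutions, each unimodal. Hence the total variation equals $2p^N(t,0)/L_N\cdot L_N = 2p^N(t,0)\lesssim t^{-1/2}$.

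For \eqref{Paper02_random_walk_estimates_i}, I would split at $t=\tau$. For $t\le\tau$, each term contributes at most $2$ after summation, giving at most $2\tau\le 2\sqrt{T\tau}$ when $\tau\le T$. For $\tau\ge T$, the bound $2T\le2\sqrt{T\tau}$ holds trivially. For $t>\tau$, I would write
\[
p^N(t+\tau)-p^N(t) = \int_t^{t+\tau}\tfrac{m_N}{2L_N^2}\Laplace_{L_N}p^N(s)\,ds .
\]
I would bound $\frac1{L_N}\sum_y|\Laplace_{L_N}p^N(s,y)|\lesssim s^{-1}$ via a local CLT/Fourier derivative estimate, for example using $\frac1{L_N}\sum|g|\le(\frac1{L_N}\sum(1+y^2/s)g^2)^{1/2}(\ldots)^{1/2}$ together with Parseval for $\theta^2$ and $\partial_\theta$ of the multiplier. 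This gives a contribution of $\int_\tau^T \tau/t\,dt = \tau\log(T/\tau)\lesssim\sqrt{T\tau}$. I expect this weighted $\ell_1$ bound on the discrete Laplacian, uniform in $N$, to be the step requiring the most care; a fallback is to cite the corresponding estimates in \cite{durrett2016genealogies} together with Lemma~\ref{Paper2_lem_LCLT}.
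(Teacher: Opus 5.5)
Your proposal is correct. For \eqref{Paper02_random_walk_estimates_iv} and \eqref{Paper02_random_walk_estimates_v} it is the paper's argument. Your Parseval identity is the symmetry/Chapman--Kolmogorov identity $\frac{1}{L_N}\sum_y p^N(t,y)^2=p^N(2t,0)$ used there, followed by the same add-and-subtract of $2p^N(2T+\tau-2t,0)$.

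For the first five estimates the paper gives no argument: it imports them as inequalities (46)--(50) of \cite{durrett2016genealogies}, which rest on the local limit theory of \cite{lawler2010random}. Your Fourier route is self-contained and makes the uniformity in $N$ explicit. Everything reduces to $c\theta^2\le\psi_N(\theta)\le C\theta^2$ on $[-\pi L_N,\pi L_N]$, which needs only $0<\inf_N m_N/L_N^2\le\sup_N m_N/L_N^2<\infty$, i.e.\ Assumption~\ref{Paper02_scaling_parameters_assumption}(i). The citation instead leaves the reader to check that the imported constants are uniform in $N$ under this assumption. The cost is that the two $\ell_1$ estimates need ideas beyond Parseval.

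Two details need repair, and one step can be simplified.

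\emph{Unimodality.} The unimodality of $p^N(t,\cdot)$ is true, but your justification fails as stated. Conditioning on the number $n$ of $\pm L_N^{-1}$ jumps gives binomial laws supported on a single parity class, and these are not unimodal on $L_N^{-1}\mathbb Z$. Either of the following fixes it:
\begin{itemize}
\item $L_NX^N(t)$ is a difference of two independent Poisson$(m_Nt/2)$ variables, hence log-concave as a convolution of log-concave laws, and (being symmetric) unimodal at $0$;
\item realise $X^N$ as a Poisson$(2m_Nt)$ number of lazy steps, equal to $\pm L_N^{-1}$ with probability $1/4$ each and $0$ with probability $1/2$; their $n$-fold sums are centred $\mathrm{Bin}(2n,1/2)$ laws, which are genuinely unimodal.
\end{itemize}

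\emph{Laplacian bound for small $s$.} In your weighted Cauchy--Schwarz bound, $\frac{1}{L_N}\sum_y(1+y^2/s)^{-1}\asymp\sqrt{s}+L_N^{-1}$, not $\sqrt{s}$. So the Parseval computation gives $\frac{m_N}{L_N^3}\sum_y|\Laplace_{L_N}p^N(s,y)|\lesssim s^{-1}$ only for $s\gtrsim L_N^{-2}$. For smaller $s$, use the trivial bound $4m_N\lesssim L_N^2\lesssim s^{-1}$.

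\emph{Simplification of \eqref{Paper02_random_walk_estimates_i}.} You can avoid the Laplacian and the logarithm entirely. Since $p^N(t+\tau,\cdot)$ is $p^N(t,\cdot)$ averaged over shifts by an independent copy $\tilde X^N(\tau)$, the gradient bound from \eqref{Paper02_random_walk_estimates_ii} gives
$\frac{1}{L_N}\sum_y|p^N(t+\tau,y)-p^N(t,y)|\le 2p^N(t,0)\,\mathbb E|\tilde X^N(\tau)|\lesssim t^{-1/2}(m_N\tau)^{1/2}L_N^{-1}\lesssim\sqrt{\tau/t}$.
Integrating over $t\in[0,T]$ yields $C\sqrt{T\tau}$ directly.
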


\begin{proof}
Estimates~\eqref{Paper02_random_walk_estimate_special},~\eqref{Paper02_random_walk_estimate_special_ii},~\eqref{Paper02_random_walk_estimate_special_extra},~\eqref{Paper02_random_walk_estimates_i} and~\eqref{Paper02_random_walk_estimates_ii} are stated explicitly in~\cite{durrett2016genealogies} (see inequalities~(46),~(47),~(48),~(49) and~(50) in \cite{durrett2016genealogies}, which follow from Proposition~2.4.1 and Theorem~2.5.6 in~\cite{lawler2010random}). It remains to verify estimates~\eqref{Paper02_random_walk_estimates_iv} and~\eqref{Paper02_random_walk_estimates_v}. As noted in \cite[Section~7]{durrett2016genealogies}, by symmetry and then by the Chapman-Kolmogorov equation, for all $t \geq 0$, we have
\begin{equation} \label{Paper02_intermediate_step_RW_estimates}
    \frac{1}{L_{N}} \sum_{y \in L_{N}^{-1}\mathbb{Z}} p^{N}(t,y)^{2} =  \frac{1}{L_{N}} \sum_{y \in L_{N}^{-1}\mathbb{Z}} p^{N}(t,y)p^N(t,-y) = p^{N}(2t,0).
\end{equation}
Hence, by Fubini's theorem and then by estimate~\eqref{Paper02_random_walk_estimate_special}, for all $T \geq 0$ we have
\begin{equation*}
\begin{aligned}
    \frac{1}{L_{N}} \sum_{y \in L_{N}^{-1}\mathbb{Z}} \int_{0}^{T} p^{N}(T - t, y)^{2} \, dt = \int_{0}^{T} p^{N}(2(T - t), 0) \, dt \leq \tfrac{1}{2}C \sqrt{2T},
\end{aligned}
\end{equation*}
Hence, estimate~\eqref{Paper02_random_walk_estimates_iv} is proved. In order to prove~\eqref{Paper02_random_walk_estimates_v}, we write
\begin{equation*}
\begin{aligned}
& \frac{1}{L_{N}} \sum_{y \in L_{N}^{-1}\mathbb{Z}} \int_{0}^{T} \left(p^{N}\left(T + \tau - t,y - x_{1}\right) - p^{N}\left(T - t,y-x_{2}\right)\right)^{2} dt \\
& \quad =  \frac{1}{L_{N}} \sum_{y \in L_{N}^{-1}\mathbb{Z}} \int_{0}^{T} \left(p^{N}\left(T + \tau - t,y-x_{1}\right)^{2} + p^{N}\left(T- t,y-x_{2}\right)^{2}\right) dt \\
& \quad \quad - \frac{1}{L_{N}} \sum_{y \in L_{N}^{-1}\mathbb{Z}} \int_{0}^{T} 2p^{N}\left(T + \tau - t, y -x_{1}\right)p^{N}\left(T - t, y -x_{2}\right) dt.
\end{aligned}
\end{equation*}
Applying the Chapman-Kolmogorov equation to the previous identity as in~\eqref{Paper02_intermediate_step_RW_estimates}, we conclude that
\begin{equation*}
\begin{aligned}
    & \frac{1}{L_{N}} \sum_{y \in L_{N}^{-1}\mathbb{Z}} \int_{0}^{T} \left(p^{N}\left(T + \tau - t,y - x_{1}\right) - p^{N}\left(T - t,y-x_{2}\right)\right)^{2} dt \\
    & \quad = \int_{0}^{T} \left(p^{N}\left(2(T + \tau - t),0\right) + p^{N}\left(2(T-t),0\right) - 2p^{N}\left(2T + \tau - 2t,x_{2} - x_{1}\right)\right) dt \\
    & \leq C\tfrac{1 + \sqrt{2}}{2}\sqrt{\tau} + C\vert x_{1} - x_{2} \vert,
\end{aligned}
\end{equation*}
where the last inequality is derived after adding and subtracting $2p^{N}\left(2T + \tau - 2t,0\right)$ inside the integral and applying estimates~\eqref{Paper02_random_walk_estimate_special_ii} and~\eqref{Paper02_random_walk_estimate_special_extra}. This completes the proof.
\end{proof}

We also use the following version of the local central limit theorem for continuous-time random walks that can be found in~\cite[Theorem~2.5.6]{lawler2010random}.

\begin{lemma}[Local central limit theorem] \label{Paper2_lem_LCLT}
    For $t > 0$ and $x \in L_N^{-1}\mathbb Z$ with $\vert x \vert \leq m_Nt/(2L_N)$,
    \[
    p^N(t,x) = \frac{L_N}{\sqrt
    {2\pi m_N t}} \exp\left(-\frac{L_N^2}{m_N} \frac{x^2}{2t} + O\left(\frac{1}{\sqrt{m_N t}} + \frac{L_N^3 \vert x \vert^3}{m_N^2t^2}\right)\right).
    \]
\end{lemma}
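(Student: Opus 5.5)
The plan is to reduce the statement, by a deterministic change of time and space, to the classical local central limit theorem for the rate-one continuous-time simple random walk on $\mathbb Z$, namely \cite[Theorem~2.5.6]{lawler2010random}.

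\textbf{Step 1: rescaling.} Let $(S(s))_{s \geq 0}$ be a continuous-time simple symmetric random walk on $\mathbb Z$ with total jump rate $1$ and $S(0)=0$. Since $X^N$ jumps at total rate $m_N$ by steps $\pm L_N^{-1}$, we have the equality in law
\[
(X^N(t))_{t\ge 0} \overset{d}{=} \big(L_N^{-1}S(m_N t)\big)_{t \geq 0}.
\]
Hence, by~\eqref{Paper02_definition_p_N_test_function_green}, for $x\in L_N^{-1}\mathbb Z$,
\[
p^N(t,x)=L_N\,\mathbb P\big(S(m_Nt)=L_Nx\big).
\]

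\textbf{Step 2: the rate-one estimate.} We then invoke the local central limit theorem for $S$ in its large-deviation-corrected form. There is an absolute constant such that for $s>0$ and $k\in\mathbb Z$ with $|k|\le s/2$,
\[
\mathbb P(S(s)=k)=\frac{1}{\sqrt{2\pi s}}\exp\Big(-\frac{k^2}{2s}+O\Big(\frac1{\sqrt s}+\frac{|k|^3}{s^2}\Big)\Big).
\]
If one wanted to prove this rather than cite it, the route is as follows. The characteristic function is $\mathbb E[e^{i\theta S(s)}]=e^{s(\cos\theta-1)}$, and one exponentially tilts by $\lambda$ solving $\sinh\lambda=k/s$. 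This is possible with $\lambda=O(1)$ precisely because $|k|\le s/2$. One then applies Fourier inversion to the tilted walk, whose variance is $\cosh\lambda$, and Taylor expands the rate function $s(\cosh\lambda-1)-\lambda k$. That expansion gives $k^2/(2s)+O(|k|^4/s^3)$, which is bounded by $O(|k|^3/s^2)$ on $|k|\le s/2$. The tilted variance correction $\log\cosh\lambda=O(k^2/s^2)$ is also absorbed, since $k^2/s^2\le 1/\sqrt s+|k|^3/s^2$. Finally, the Edgeworth error of the tilted walk is $O(1/\sqrt s)$.

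This step, controlling the error uniformly up to $|k|\le s/2$ rather than only in the diffusive window $|k|=O(\sqrt s)$, is the main obstacle. However, it is exactly the content of the cited theorem, so we do not reprove it.

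\textbf{Step 3: substitution.} Take $s=m_Nt$ and $k=L_Nx$. The condition $|k|\le s/2$ becomes $|x|\le m_Nt/(2L_N)$, which is the hypothesis of the statement. Under this substitution we have
\[
\frac{k^2}{2s}=\frac{L_N^2}{m_N}\frac{x^2}{2t},\qquad \frac{|k|^3}{s^2}=\frac{L_N^3|x|^3}{m_N^2t^2},\qquad \frac1{\sqrt s}=\frac{1}{\sqrt{m_Nt}}.
\]
Multiplying the resulting expression by $L_N$ yields exactly the claimed formula. The implied constant in $O(\cdot)$ is absolute, hence uniform in $N$, $t$ and $x$.
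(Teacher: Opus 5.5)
Your proposal is correct and follows the same route as the paper, which gives no proof and simply cites \cite[Theorem~2.5.6]{lawler2010random}. You additionally spell out the rescaling $p^N(t,x)=L_N\,\mathbb P(S(m_Nt)=L_Nx)$ that the paper leaves implicit, and that rescaling correctly converts the rate-one statement (valid for $|k|\le s/2$) into the stated formula.
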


\ignore{\section{Gaussian estimates} \label{Paper02_appendix_BM_gaussian}

 In this section, we collect some estimates on spatial and temporal increments of the Gaussian kernel. For $(t,x) \in (0, \infty) \times \mathbb R$, recall the definition of the Gaussian kernel $p(t,x)$ in~\eqref{Paper02_Gaussian_kernel}. We start by establishing bounds on the spatial and temporal increments of~$p$.

\begin{lemma} \label{Paper02_gaussian_estimates_standard}
    Suppose $m > 0$, and let $p:(0, \infty) \times \mathbb R \rightarrow (0, \infty)$ be defined as in~\eqref{Paper02_Gaussian_kernel}. The following estimates hold:
    \begin{enumerate}[label = (\roman*)]
    \item There exists $C^{(1)} = C^{(1)}(m) > 0$ such that for all $t > 0$ and $x,y \in \mathbb R$,
    \begin{equation} \label{Paper02_gaussian_spatial_increment}
    \left\vert p(t, x) - p(t, y) \right\vert \leq \frac{C^{(1)} \vert x - y \vert}{\sqrt{t}} (p(2t,x) + p(2t,y)).
    \end{equation}
    \item There exists $C^{(2)} = C^{(2)}(m)>0$ such that for all $t > 0$ and $t' \geq 0$,
    \begin{equation} \label{Paper02_gaussian_temporal_increment}
    \int_\mathbb R \vert p(t + t',x) - p(t,x) \vert \, dx \leq C^{(2)} \sqrt{\frac{t'}{t}}.
    \end{equation}
    \end{enumerate}
\end{lemma}

\begin{proof}
    Estimate~\eqref{Paper02_gaussian_spatial_increment} is standard (see~e.g.~\cite[Lemma~7.7]{etheridge2023looking}). For estimate~\eqref{Paper02_gaussian_temporal_increment}, the case $t' = 0$ is trivial. For $t' > 0$, observe that by the semigroup property of the Gaussian kernel, we have for all $t,t' > 0$,
    \[
    p(t + t',x) = \int_{\mathbb R} p(t,x-y)p(t',y) \, dy \quad \forall x \in \mathbb R.
    \]
    Therefore for $t,t' >0$,
    \begin{equation*}
    \begin{aligned}
        \int_\mathbb R \vert p(t + t',x) - p(t,x) \vert \, dx & = \int_\mathbb R \left\vert \int_{\mathbb R} p(t,x-y)p(t',y) \, dy - p(t,x) \right\vert  \, dx \\
        & \leq \int_{\mathbb R} \int_\mathbb R \vert p(t,x-y) - p(t,x) \vert p(t',y) \, dy \, dx \\
        & \leq \frac{C^{(1)}}{\sqrt{t}} \int_\mathbb R \int_\mathbb R \vert y \vert p(t',y) (p(2t,x-y) + p(2t,x)) \, dy \, dx \\
        & = \frac{C^{(1)}}{\sqrt{t}} \int_\mathbb R \int_\mathbb R \vert y \vert p(t',y) (p(2t,x-y) + p(2t,x)) \, dx \, dy \\
        & = \frac{2C^{(1)}}{\sqrt{t}} \int_\mathbb R \vert y \vert p(t',y) \, dy,
    \end{aligned}
    \end{equation*}
    where for the third line we used estimate~\eqref{Paper02_gaussian_spatial_increment}, and for the fourth line we used Fubini's theorem. Using the elementary identity
    \[
    \int_\mathbb R \vert y \vert p(t',y) \, dy = \sqrt{\frac{2mt'}{\pi}},
    \]
    we then conclude that
    \begin{equation*}
    \begin{aligned}
        \int_\mathbb R \vert p(t + t',x) - p(t,x) \vert \, dx \leq 2C^{(1)} \sqrt{\frac{2mt'}{\pi t}}.
    \end{aligned}
    \end{equation*}
    Therefore~\eqref{Paper02_gaussian_temporal_increment} holds. This completes the proof.
\end{proof}
}

\section{Topological properties of~$L_{r}([0,T] \times \mathbb{R}, \hat{\lambda}; \ell_{1})$} \label{Paper02_appendix_proof_lemma_compact_sets_of_L_p}

In this section, we will prove Lemma~\ref{Paper02_alternative_criterion_compactness_diaz_mayoral} and some properties regarding the Banach space $L_{r}([0,T] \times \mathbb{R}, \hat{\lambda}; \ell_{1})$ introduced in~\eqref{Paper02_functional_space_L1_l1}. For this purpose, we need to state a general criterion for compact subsets of~$L_{r}([0,T] \times \mathbb{R}, \hat{\lambda}; \ell_{1})$.  Recall the definition of the measure~$\hat{\lambda}$ in~\eqref{Paper02_measure_time_space_box_i}, and the definition of the function space~$L_{r}([0,T] \times \mathbb{R}, \hat \lambda; \mathbb{R})$ in~\eqref{Paper02_functional_space_real_functions_measure_time_space_box}. Let~$\ell_{\infty}$ denote the space of bounded real-valued sequences, and recall that $\ell_\infty$ can be identified with the dual space of $\ell_1$. Define for every~$w \in \ell_{\infty}$, the map
\begin{equation*} 
\begin{aligned}
    \Psi_{w}: L_{r}([0,T] \times \mathbb{R}, \hat{\lambda}; \ell_{1}) & \rightarrow L_{r}([0,T] \times \mathbb{R}, \hat{\lambda}; \mathbb{R}) \\ v & \mapsto \Psi_{w}(v),
\end{aligned}
\end{equation*}
where, for any~$(t,x) \in [0,T] \times \mathbb{R}$,
\begin{equation} \label{Paper02_dual_map_L_p_ell_1}
   \Psi_{w}(v)(t,x) \defeq \sum_{k \in \mathbb{N}_{0}} w_{k} v_{k}(t,x).
\end{equation}
Observe that for every~$w \in \ell_{\infty}$ and~$v \in L_{r}([0,T] \times \mathbb{R}, \hat{\lambda}; \ell_1)$,
\begin{equation} \label{Paper02_continuity_duality_map}
    \| \Psi_{w}(v) \|_{L_{r}([0,T] \times \mathbb{R}, \hat{\lambda}; \mathbb{R})} \leq \| w \|_{\ell_{\infty}}  \| v \|_{L_{r}([0,T] \times \mathbb{R}, \hat{\lambda}; \ell_{1})},
\end{equation}
and so~$\Psi_{w}$ is well defined. We are now ready to state a version of D\'{i}az and Mayoral's compactness theorem from~\cite{diaz1999compactness} which is adapted to our specific setting.

\begin{theorem} \label{Paper02_diaz_mayoral_compactness_theorem}
\emph{(Special case of~\cite[Theorem~3.2]{diaz1999compactness})} 
Let~$T > 0$ and~$r \in [1,\infty)$ be fixed. A subset $\mathcal{K} \subset  L_{r}([0,T] \times \mathbb{R}, \hat{\lambda}; \ell_{1})$ is relatively compact if and only if the following conditions are satisfied:
\begin{enumerate}[label = (\roman*)]
    \item $\mathcal{K}$ is~$\| \cdot \|_{L_{r}([0,T] \times \mathbb{R}, \hat{\lambda}; \ell_{1})}$-bounded, i.e.
    \begin{equation*}
        \sup_{v \in \mathcal{K}} \, \| v \|_{L_{r}([0,T] \times \mathbb{R}, \hat{\lambda}; \ell_{1})} < \infty.
    \end{equation*}
    \item $\mathcal{K}$ is $r$-uniformly integrable, i.e.
    \begin{equation*}
        \lim_{\gamma \rightarrow \infty} \; \sup_{v \in \mathcal{K}} \Big\| v \cdot \mathds{1}_{\{\| v \|_{\ell_{1}} > \gamma\}} \Big\|_{L_{r}([0,T] \times \mathbb{R}, \hat{\lambda}; \ell_{1})}\; = 0.
    \end{equation*}
    \item $\mathcal{K}$ is uniformly tight, i.e.~for every~$\delta > 0$, there exists a compact subset $\mathcal{A}_{\delta} \subset \ell_{1}$ such that
    \begin{equation*}
        \sup_{v \in \mathcal{K}} \; \hat{\lambda}(\{(t,x) \in [0,T] \times \mathbb{R}: \; v(t,x) \not\in \mathcal{A}_{\delta}\}) \leq \delta.
    \end{equation*}
    \item $\mathcal{K}$ is scalarly relatively compact, i.e.~for every~$w = (w_{k})_{k \in \mathbb{N}_{0}} \in \ell_{\infty}$, the set
    \begin{equation*}
    \begin{aligned}
        \mathcal{K}^{*}_{w} \defeq \Big\{\Psi_{w}(v): v \in \mathcal{K}\Big\}
    \end{aligned}
    \end{equation*}
    is a relatively compact subset of~$L_{r}([0,T] \times \mathbb{R}, \hat \lambda; \mathbb{R})$.
\end{enumerate}
\end{theorem}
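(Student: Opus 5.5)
The plan is to deduce the statement directly from \cite[Theorem~3.2]{diaz1999compactness}, which characterises relatively compact subsets of a Bochner space $L_p(\mu; X)$. There, $(S,\Sigma,\mu)$ is a finite measure space, $X$ is a Banach space and $p \in [1,\infty)$. The characterisation is: $\mathcal K$ is relatively compact if and only if it is bounded, $p$-uniformly integrable, uniformly tight (for every $\delta>0$ there is a compact $\mathcal A_\delta \subset X$ outside of which every $v \in \mathcal K$ takes values only on a set of $\mu$-measure at most $\delta$), and scalarly relatively compact (for every $x^* \in X^*$, the set $\{x^*\circ v : v\in\mathcal K\}$ is relatively compact in $L_p(\mu;\mathbb R)$). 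So the proof reduces to checking that our setting is an instance of theirs, and that the objects in conditions (i)--(iv) coincide with theirs.

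\textbf{Step 1: the setting.} I would take $S=[0,T]\times\mathbb R$ with its Borel $\sigma$-algebra and $\mu=\hat\lambda$ from~\eqref{Paper02_measure_time_space_box_i}. This is a finite measure, since $\hat\lambda(S)=T\int_{\mathbb R}(1+|x|^2)^{-1}\,dx=\pi T<\infty$. I would then take $X=\ell_1$, which is a separable Banach space, and $p=r$. With these choices, $L_r([0,T]\times\mathbb R,\hat\lambda;\ell_1)$ in~\eqref{Paper02_functional_space_L1_l1} is exactly the Bochner space $L_r(\mu;X)$, and $L_r([0,T]\times\mathbb R,\hat\lambda;\mathbb R)$ is its scalar counterpart.

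\textbf{Step 2: measurability.} The one point needing care is that the paper calls $v$ measurable when every coordinate $v_k$ is measurable. Díaz and Mayoral instead use strong (Bochner) measurability. I would show these agree, via Pettis' measurability theorem. Since $\ell_1$ is separable, strong measurability is equivalent to weak measurability, meaning that $\langle w, v\rangle$ is measurable for every $w\in\ell_\infty=\ell_1^*$. Coordinatewise measurability gives weak measurability: $\langle w, v(t,x)\rangle=\lim_{n\to\infty}\sum_{k\le n}w_kv_k(t,x)$ is a pointwise limit of measurable functions, the series converging absolutely because $v(t,x)\in\ell_1$. Conversely, weak measurability applied to the unit vectors $w=e_k$ gives coordinatewise measurability. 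The same computation identifies the functional $x^*\circ v$ for $x^*=w\in\ell_\infty$ with $\Psi_w(v)$ defined in~\eqref{Paper02_dual_map_L_p_ell_1}. Estimate~\eqref{Paper02_continuity_duality_map} confirms that $\Psi_w(v)$ lies in $L_r(\hat\lambda;\mathbb R)$ and that $\Psi_w$ is continuous.

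\textbf{Step 3: matching conditions.} Condition (i) is boundedness in norm. Condition (ii) is $r$-uniform integrability, with the truncation expressed through $\|v\|_{\ell_1}$. Condition (iii) is uniform tightness with respect to compact subsets of $\ell_1$. Condition (iv) is scalar relative compactness, with $X^*$ identified with $\ell_\infty$ via $\Psi_w$. Each of these is verbatim the corresponding hypothesis of \cite[Theorem~3.2]{diaz1999compactness}, so both directions follow.

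As a sanity check on necessity, one can argue directly. If $\mathcal K$ is relatively compact, then (i) is immediate, and (iv) follows because $\Psi_w$ is continuous, so it maps relatively compact sets to relatively compact sets. Conditions (ii) and (iii) follow from a standard $\varepsilon$-net argument: each fixed $v$ is $r$-integrable and tight, and a finite net transfers this uniformly to all of $\mathcal K$.

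\textbf{Main obstacle.} I expect the only real obstacle to be definitional. One must ensure that the version of the theorem quoted from~\cite{diaz1999compactness} is stated for finite, not necessarily probability, measures; rescaling $\hat\lambda$ by $1/(\pi T)$ takes care of this if needed. One must also ensure that their notion of uniform tightness matches (iii). I would address both by normalising $\hat\lambda$ and, if necessary, consulting the elementary proof in~\cite{van2014compactness}.
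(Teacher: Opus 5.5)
Your proposal is correct and takes the same route as the paper, which gives no separate argument and simply invokes \cite[Theorem~3.2]{diaz1999compactness} on the finite measure space $([0,T]\times\mathbb R,\hat\lambda)$, of total mass $\pi T$, with $X=\ell_1$ and $X^*=\ell_\infty$ acting through $\Psi_w$; your Pettis-measurability and duality checks are exactly the routine verifications implicit in calling the result a ``special case''. The only loose point is in your optional necessity check for (iii): a single finite net does not yield a \emph{compact} set, because $\mathcal A+B_\eta$ is not compact, so you need nets at a sequence of scales tending to zero and the closed, totally bounded set they determine, but your main argument does not rely on this.
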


In order to apply Theorem~\ref{Paper02_diaz_mayoral_compactness_theorem}, we recall a characterisation of relatively compact subsets of
$L_{r}([0,T]\times\mathbb R,\hat\lambda;\mathbb R)$,
which follows from a straightforward adaptation of the
Kolmogorov--Riesz--Fréchet compactness theorem to the weighted measure $\hat \lambda$ defined in~\eqref{Paper02_measure_time_space_box_i} (see e.g.~\cite[Theorem~4.26]{brezis2011functional}). Recall, for~$i \in \{1,2\}$ and $\gamma \in \mathbb{R}$, the definition of the shift maps~$\theta^{(i)}_{\gamma}$ in~\eqref{Paper02_shift_maps}.

\begin{theorem}[Special case of Kolmogorov-Riesz-Fréchet's compactness theorem] \label{Paper02_kolmogorov_riesz_frechet}
    Let $T > 0$ and $r \in [1,\infty)$ be fixed. A bounded 
    subset $\mathcal{K} \subset L_{r}([0,T] \times \mathbb{R}, \hat \lambda; \mathbb{R})$ is relatively compact in $L_r([0,T] \times \mathbb R, \hat \lambda; \mathbb R)$ if and only if the following limits hold:
    \begin{align}
        \lim_{\vert \gamma \vert \downarrow 0} \; & \sup_{v^{*} \in \mathcal{K}} \; \left(\| \theta^{(1)}_{\gamma}v^{*} - v^{*} \|_{L_{r}([0,T] \times \mathbb{R}, \hat \lambda; \mathbb{R})} + \| \theta^{(2)}_{\gamma}v^{*} - v^{*} \|_{L_{r}([0,T] \times \mathbb{R}, \hat \lambda; \mathbb{R})}\right) = 0, \label{Paper02_integral_equicontinuity_property} \\ \textrm{and } \lim_{R \rightarrow \infty} \; & \sup_{v^{*} \in \mathcal{K}} \; \int_{0}^{T} \int_{\mathbb{R} \setminus [-R,R]} \frac{\vert v^{*}(t,x) \vert^{r}}{1 + \vert x \vert^{2}} dx dt = 0. \label{Paper02_uniform_integrability_real_space_L_p}
    \end{align}
\end{theorem}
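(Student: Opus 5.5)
This is the weighted Kolmogorov--Riesz--Fréchet theorem, and I would reduce it to the unweighted version on $\mathbb{R}^2$. Write $w(t,x) \defeq \mathds 1_{\{t\in[0,T]\}}(1+|x|^2)^{-1}$, so that $\hat\lambda = w\,\lambda$. Consider the isometry
\[
\Phi: L_r([0,T]\times\mathbb R,\hat\lambda;\mathbb R)\to L_r(\mathbb R^2,\lambda;\mathbb R), \qquad \Phi(v^*) \defeq w^{1/r}\tilde v^*,
\]
where $\tilde v^*$ is the extension of $v^*$ by zero outside $[0,T]\times\mathbb R$. The image of $\Phi$ is a closed subspace, because it consists of the functions vanishing for $t\notin[0,T]$. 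Hence $\mathcal K$ is relatively compact if and only if $\Phi(\mathcal K)$ is relatively compact in $L_r(\mathbb R^2)$. I would then apply the classical criterion there (e.g.~\cite[Theorem~4.26]{brezis2011functional} and its converse), which asks for boundedness, equicontinuity of translations in $L_r(\mathbb R^2)$, and uniform vanishing of mass outside large balls.

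\emph{Tails.} The tail condition for $\Phi(\mathcal K)$ outside $[-R,R]^2$ is exactly \eqref{Paper02_uniform_integrability_real_space_L_p}, since the time direction is already supported in $[0,T]$.

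\emph{Translations, sufficiency.} The main step is comparing translations. For the spatial shift I would write
\[
\Phi(v^*)(t,x+\gamma)-\Phi(v^*)(t,x) = w^{1/r}(t,x+\gamma)\big(\theta^{(2)}_\gamma v^*-v^*\big) + \big(w^{1/r}(t,x+\gamma)-w^{1/r}(t,x)\big)v^*.
\]
The first term is controlled by \eqref{Paper02_integral_equicontinuity_property}, using that $w(t,x+\gamma)\le C\,w(t,x)$ for $|\gamma|\le1$. For the second term, note that $|\partial_x w^{1/r}|\lesssim w^{1/r}$. Its $L_r$ norm is therefore $\lesssim|\gamma|\,\|v^*\|_{L_r(\hat\lambda)}$, which tends to zero uniformly because $\mathcal K$ is bounded.

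For the time shift, $\Phi(v^*)(t+\gamma,x)-\Phi(v^*)(t,x)$ equals $w^{1/r}(\theta^{(1)}_\gamma v^*-v^*)$ on $[0,T]$, plus a boundary piece where $t+\gamma\in[0,T]$ but $t\notin[0,T]$. That boundary piece is the restriction of $v^*$ to a time strip of width $|\gamma|$ at an endpoint. Here the indicator convention in the definition of $\theta^{(1)}_\gamma$ helps: taking $\gamma$ of the appropriate sign, $\|\theta^{(1)}_{\gamma}v^*-v^*\|$ already dominates the mass of $v^*$ on the strip $[T-|\gamma|,T]$, and symmetrically on $[0,|\gamma|]$. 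So the boundary piece is controlled by \eqref{Paper02_integral_equicontinuity_property} with $\pm\gamma$.

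\emph{Necessity.} I would reverse these computations. Translations of $\Phi(v^*)$ are equicontinuous by the classical theorem, and the same identities, read backwards, give \eqref{Paper02_integral_equicontinuity_property}; here I would handle the cut-off from the indicator in $\theta^{(1)}$ by noting that the mass of $\mathcal K$ on thin time strips is uniformly small, by uniform integrability of a compact set. Condition \eqref{Paper02_uniform_integrability_real_space_L_p} follows from the tail condition of the classical theorem.

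I expect the only delicate point to be this bookkeeping at the time boundary caused by the indicator in $\theta^{(1)}$. Everything else is routine once the weight is absorbed via $\Phi$.
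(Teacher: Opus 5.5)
Your proposal is correct. The paper gives no proof of this statement, only the remark that it is a ``straightforward adaptation'' of the Kolmogorov--Riesz--Fr\'echet theorem in Brezis. That remark most naturally means re-running the classical argument (mollification plus Arzel\`a--Ascoli) with the weighted measure $\hat\lambda$. You instead use the classical theorem on $\mathbb R^2$ as a black box and transport it through the isometry $v^*\mapsto w^{1/r}\tilde v^*$ onto the closed subspace of $L_r(\mathbb R^2)$ consisting of functions vanishing for $t\notin[0,T]$. The gain is that nothing in the classical proof has to be rechecked. The cost is tracking how translations interact with the weight, and you do this correctly. The bounds $(1+|x|^2)\le 3(1+|x+\gamma|^2)$ for $|\gamma|\le 1$ and $|\partial_x (1+x^2)^{-1/r}|\le r^{-1}(1+x^2)^{-1/r}$ handle the spatial shift. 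For the time shift, the indicator in $\theta^{(1)}_\gamma$ makes $\|\theta^{(1)}_{\mp\gamma}v^*-v^*\|$ dominate the mass on the strips $[0,|\gamma|]$ and $[T-|\gamma|,T]$, which controls the boundary piece. I have two small remarks. First, the classical theorem asks for equicontinuity under all translations $h=(h_1,h_2)\in\mathbb R^2$. Writing $\tau_h f\defeq f(\cdot+h)$, you should note that this reduces to coordinate shifts via $\|\tau_h f-f\|\le\|\tau_{(0,h_2)}f-f\|+\|\tau_{(h_1,0)}f-f\|$, by translation invariance of Lebesgue measure. Second, the necessity direction needs no argument about thin time strips. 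For $t\in[0,T]$ one has exactly $\Phi(v^*)(t+\gamma,x)-\Phi(v^*)(t,x)=(1+|x|^2)^{-1/r}(\theta^{(1)}_\gamma v^*-v^*)(t,x)$, so $\|\theta^{(1)}_\gamma v^*-v^*\|_{L_r(\hat\lambda)}\le\|\tau_{(\gamma,0)}\Phi(v^*)-\Phi(v^*)\|_{L_r(\mathbb R^2)}$ directly. The indicator convention in $\theta^{(1)}_\gamma$ is exactly what makes this immediate.
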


\begin{remark}\label{Paper02_corollary_Kolmogorov_Riesz}
For each $i\in\{1,2\}$, the translation operators $\theta^{(i)}_\gamma$
are strongly continuous on
$L_1([0,T]\times\mathbb R,\hat\lambda;\mathbb R)$, that is,
for all $g\in L_1([0,T]\times\mathbb R,\hat\lambda;\mathbb R)$,
\[
\lim_{|\gamma|\downarrow 0}
\|\theta^{(i)}_\gamma g - g\|_{L_1([0,T]\times\mathbb R,\hat\lambda;\mathbb R)} = 0.
\]
This property follows, for instance, by applying
Theorem~\ref{Paper02_kolmogorov_riesz_frechet} to the singleton set $\{g\}$,
or directly from the fact that $\mathcal C_c^{\infty}([0,T]\times\mathbb R)$ is a dense subset of
$L_1([0,T]\times\mathbb R,\hat\lambda;\mathbb R)$.
\end{remark}

We are now ready to prove Lemma~\ref{Paper02_alternative_criterion_compactness_diaz_mayoral}.

\begin{proof}[Proof of Lemma~\ref{Paper02_alternative_criterion_compactness_diaz_mayoral}]
    It will suffice to establish that
    \[
    \mathcal{K} = \mathcal{K}(C_{1},C_{2}, (k_{h})_{h \in \mathbb{N}},(D_{h})_{h \in \mathbb{N}},\beta,J,T)
    \]
    satisfies the conditions of Theorem~\ref{Paper02_diaz_mayoral_compactness_theorem}. We will divide the proof into steps corresponding to each of the conditions~(i)-(iv) of Theorem~\ref{Paper02_diaz_mayoral_compactness_theorem}.

    \medskip
    

    \myemph{Step~$(1)$: Boundedness}
   Since~$\hat{\lambda}$, defined in~\eqref{Paper02_measure_time_space_box_i}, is a finite measure on~$[0,T] \times \mathbb{R}$, we may apply H\"{o}lder's inequality with exponents~$2r/q$ and~$2r/(2r - q)$ to conclude that for any~$q \in [1, 2r)$,
    \begin{equation} \label{Paper02_boundededness_K_L_P}
        \sup_{v \in \mathcal{K}} \; \| v \|_{{L}_{q}([0,T] \times \mathbb{R}, \hat \lambda; \ell_{1})} \leq \hat{\lambda}([0,T] \times \mathbb{R})^{(2r-q)/2rq}\sup_{v \in \mathcal{K}} \; \| v \|_{{L}_{2r}([0,T] \times \mathbb{R}, \hat \lambda; \ell_{1})} \lesssim_{T,r,q} C_{1},
    \end{equation}
    where the last inequality comes from the fact that~$\mathcal{K} \subset \mathcal{K}_{(1)}(C_{1},T)$, and that, by definition, the set~$\mathcal{K}_{(1)}(C_{1},T)$ is~$\| \cdot \|_{{L}_{2r}([0,T] \times \mathbb{R}, \hat \lambda; \ell_{1})}$-bounded by~$C_{1}$. Hence, applying~\eqref{Paper02_boundededness_K_L_P} with~$q = r$ we conclude that~$\mathcal{K}$ is bounded.

    \medskip

    \myemph{Step~$(2)$: $r$-Uniform integrability}
   We need to show that
    \begin{equation} \label{Paper02_p-uniform_integrablity}
        \lim_{\gamma \rightarrow \infty} \; \sup_{v \in \mathcal{K}} \int_{0}^{T} \int_{\mathbb{R}} \frac{\| v(t,x) \|_{\ell_{1}}^{r}  \cdot \mathds{1}_{\{\| v (t,x) \|_{\ell_{1}} > \gamma\}}}{1 + \vert x \vert^{2}} \, dx \, dt = 0.
    \end{equation}
    Observe that by the Cauchy-Schwarz inequality, for every~$\gamma > 0$ and every $v \in \mathcal{K}$, we can bound the left-hand side of~\eqref{Paper02_p-uniform_integrablity} by
    \begin{equation}
    \begin{aligned}
        & \int_{0}^{T} \int_{\mathbb{R}} \frac{\| v(t,x) \|_{\ell_{1}}^{r}  \cdot \mathds{1}_{\{\| v(t,x) \|_{\ell_{1}} > \gamma\}}}{1 + \vert x \vert^{2}} \, dx \, dt \\ & \quad \leq \bigg(\int_{0}^{T} \int_{\mathbb{R}} \frac{\| v(t,x) \|_{\ell_{1}}^{2r}}{1 + \vert x\vert^{2}} \, dx \, dt\bigg)^{1/2} \hat{\lambda}\Big(\left\{(t,x) \in [0,T] \times \mathbb{R}: \; \| v(t,x) \|_{\ell_{1}} > \gamma\right\}\Big)^{1/2} \\ & \quad \leq\Big(\sup_{v' \in \mathcal{K}} \; \| v' \|_{L_{2r}([0,T] \times \mathbb{R}, \hat \lambda; \ell_{1})}\Big)^{r} \cdot \bigg(\frac{\sup_{v' \in \mathcal{K}} \; \| v' \|_{L_{1}([0,T] \times \mathbb{R}, \hat \lambda; \ell_{1})}}{\gamma}\bigg)^{1/2},
    \end{aligned}
    \end{equation}
    where for the second inequality we applied Markov's inequality. Applying~\eqref{Paper02_boundededness_K_L_P} with $q = 1$ and using the fact that $\mathcal K \subset \mathcal K_{(1)}(C_1,T)$, and hence $\mathcal K$ is~$\| \cdot \|_{{L}_{2r}([0,T] \times \mathbb{R}, \hat \lambda; \ell_{1})}$-bounded, and then taking the limit as $\gamma \rightarrow \infty$, we conclude that~\eqref{Paper02_p-uniform_integrablity} holds, which completes the proof of step~(2).

    \medskip

    \myemph{Step~$(3)$: Uniform tightness}
    We must establish that for every~$\delta > 0$, there exists a compact subset $\mathcal{A}_{\delta} \subset \ell_{1}$ such that
    \begin{equation} \label{Paper02_uniform_tightness_sequence_space}
        \sup_{v \in \mathcal{K}} \; \hat{\lambda}(\{(t,x) \in [0,T] \times \mathbb{R}: \; v(t,x) \not\in \mathcal{A}_{\delta}\}) \leq \delta.
    \end{equation}
    We first recall (see, for instance, \cite[Exercise~I.6]{diestel2012sequences}) that~$\mathcal{A} \subset \ell_{1}$ is relatively compact if and only if the following conditions hold:
    \begin{enumerate}
        \item[(a)] $\mathcal{A}$ is~$\| \cdot \|_{\ell_{1}}$-bounded;
        \item[(b)] $\displaystyle \lim_{k \rightarrow \infty} \; \sup_{z = (z_i)_{i \in \mathbb N_0} \in \mathcal{A}} \; \sum_{j = k}^{\infty} \vert z_{j} \vert = 0$.
    \end{enumerate}
    Recall the sequences~$(k_{h})_{h \in \mathbb{N}}$ and~$(D_{h})_{h \in \mathbb{N}}$ introduced in the statement of Lemma~\ref{Paper02_alternative_criterion_compactness_diaz_mayoral}. Observe that since $D_{h} \rightarrow 0$ as~$h \rightarrow \infty$ and~$(k_{h})_{h \in \mathbb{N}} \subseteq \mathbb N_0$ is strictly increasing, by passing to a subsequence of~$(k_{h})_{h \in \mathbb{N}}$, we can assume without loss of generality that $D_{h} \leq 2^{-h}$ for every $h \in \mathbb{N}$. For~$\delta > 0$, let~$\mathcal{A}_{\delta} \subset \ell_{1}$ be given by
    \begin{equation} \label{Paper02_definition_compact_subset_ell_1}
    \begin{aligned}
    & \mathcal{A}_{\delta} \defeq \bigg\{z = (z_j)_{j \in \mathbb N_0} \in \ell_{1}: \; \| z \|_{\ell_{1}} \leq \frac{2}{\delta}\sup_{v \in \mathcal{K}} \; \| v \|_{{L}_{1}([0,T] \times \mathbb{R}, \hat \lambda;\ell_{1})} \\[-2mm]
    & \hspace{5cm} \textrm{and } \; \sum_{j = k_{2h}}^{\infty} \vert z_{j} \vert \leq \frac{1}{\delta 2^{h-1}} \; \forall \, h \in \mathbb{N}\bigg\}.
    \end{aligned}
    \end{equation}
    Since, using~\eqref{Paper02_boundededness_K_L_P} with $q = 1$, $\mathcal A_\delta$ satisfies conditions~(a) and~(b) above, we have that~$\mathcal{A}_{\delta}$ is a compact subset of~$\ell_{1}$, for every~$\delta > 0$. Therefore, it will suffice to establish that for any~$\delta > 0$,~\eqref{Paper02_uniform_tightness_sequence_space} holds with the choice of~$\mathcal{A}_{\delta}$ given in~\eqref{Paper02_definition_compact_subset_ell_1}. 
    
    Observe that by Markov's inequality, for every~$v \in \mathcal{K}$,
    \begin{equation} \label{Paper02_first_step_tightness_analysts_definition}
        \hat{\lambda}\bigg(\bigg\{(t,x) \in [0,T] \times \mathbb{R}: \; \| v(t,x) \|_{\ell_{1}} >  \frac{2}{\delta}\sup_{v' \in \mathcal{K}} \; \| v' \|_{{L}_{1}([0,T] \times \mathbb{R}, \hat \lambda; \ell_{1})} \bigg\}\bigg) \leq \frac{\delta}{2}.
    \end{equation}
Recall the definition of $\mathcal{K}_{(2)}((k_{h})_{h \in \mathbb{N}},(D_{h})_{h \in \mathbb{N}},T)$ in the statement of this lemma, and recall our assumption that~$D_{h} \leq 2^{-h} \; \forall \, h \in \mathbb N$, which is possible as explained before~\eqref{Paper02_definition_compact_subset_ell_1}. Hence, using Markov's inequality, we conclude that for every~$v = (v_j)_{j \in \mathbb N_0} \in \mathcal K$ and~$h \in \mathbb{N}$,
\begin{equation} \label{Paper02_second_step_tightness_analysts_definition}
\begin{aligned}
& \hat{\lambda}\bigg(\bigg\{(t,x) \in [0,T] \times \mathbb{R}: \; \sum_{j = k_{2h}}^{\infty} \; \vert v_{j}(t,x) \vert  > \frac{1}{\delta 2^{h-1}}  \bigg\}\bigg) \\
& \quad \leq \delta 2^{h-1} \sum_{j = k_{2h}}^{\infty} \|v_j\|_{L_1([0,T] \times \mathbb R, \hat \lambda; \mathbb R)} 
\\
& \quad \leq \frac{\delta}{2^{h+1}}.
\end{aligned}
\end{equation}
    Combining~\eqref{Paper02_first_step_tightness_analysts_definition} and~\eqref{Paper02_second_step_tightness_analysts_definition}, and then by the definition of~$\mathcal{A}_{\delta}$ in~\eqref{Paper02_definition_compact_subset_ell_1} and a union bound, we have that~\eqref{Paper02_uniform_tightness_sequence_space} holds, which completes the proof of step~(3).

    \medskip

\myemph{Step~$(4)$: Scalarly relative compactness}
    We must establish that for every~$w \in \ell_{\infty}$, the set~$\mathcal{K}^{*}_{w} \defeq \Big\{\Psi_{w}(v): \; v \in \mathcal{K}\Big\}$ is relatively compact in~$L_{r}([0,T] \times \mathbb{R}, \hat{\lambda}; \mathbb{R})$. Fix $w = (w_k)_{k \in \mathbb N_0} \in \ell_{\infty}$, and note that by~\eqref{Paper02_continuity_duality_map} and~\eqref{Paper02_boundededness_K_L_P}, $\mathcal{K}^{*}_{w}$ is $\| \cdot \|_{L_{r}([0,T] \times \mathbb{R}, \hat{\lambda}; \mathbb{R})}$-bounded. Therefore, by Theorem~\ref{Paper02_kolmogorov_riesz_frechet}, it will suffice to prove that the limits~\eqref{Paper02_integral_equicontinuity_property} and~\eqref{Paper02_uniform_integrability_real_space_L_p} hold with~$\mathcal K$ replaced by~$\mathcal K^*_w$. Starting with~\eqref{Paper02_uniform_integrability_real_space_L_p}, observe that since~$w \in \ell_{\infty}$, and then by the Cauchy-Schwarz inequality, we have for every~$R > 0$,
    \begin{equation*}
    \begin{aligned}
        & \sup_{v = (v_k)_{k \in \mathbb N_0} \in \mathcal{K}} \; \int_{0}^{T} \int_{\mathbb{R} \setminus [-R,R]} \frac{\Big\vert \sum_{k \in \mathbb N_0} w_{k}v_{k}(t,x) \Big\vert^{r}} {1 + \vert x \vert^{2}} \, dx \, dt \\ & \quad \leq \| w \|^r_{\ell_{\infty}} \sup_{v \in \mathcal{K}} \int_{0}^{T} \int_{\mathbb{R}} \frac{\| v(t,x) \|^{r}_{\ell_{1}} \cdot \mathds{1}_{\{\vert x \vert > R\}}}{1 + \vert x \vert^{2}} \, dx \, dt \\ & \quad \leq \| w \|_{\ell_{\infty}}^r \sup_{v \in \mathcal{K}} \| v \|_{L_{2r}([0,T] \times \mathbb{R}, \hat{\lambda}; \ell_{1})}^{r} \hat{\lambda}([0,T] \times (\mathbb{R} \setminus [-R,R]))^{1/2}.
    \end{aligned}
    \end{equation*}
    Since~$\mathcal K_{(1)}(C_1,T)$, and hence~$\mathcal{K}$, is bounded in $L_{2r}([0,T] \times \mathbb{R}, \hat{\lambda}; \ell_{1})$, by taking the limit as $R \rightarrow \infty$ and then applying the definition of~$\hat{\lambda}$ in~\eqref{Paper02_measure_time_space_box_i}, by the definition of $\Psi_w(v)$ in~\eqref{Paper02_dual_map_L_p_ell_1} we obtain~\eqref{Paper02_uniform_integrability_real_space_L_p} with~$\mathcal K$ replaced by~$\mathcal K^*_w$.

    We now proceed to the proof of~\eqref{Paper02_integral_equicontinuity_property} with $\mathcal K$ replaced by $\mathcal K^*_w$. It will suffice to establish that there exists $C_{r,T,\mathcal{K},w} > 0$ such that for any~$\varepsilon > 0$, there exists~$\delta = \delta(r,T,\varepsilon, \mathcal K, w) > 0$ such that for~$\vert \gamma \vert < \delta$ and $i \in \{1,2\}$,
    \begin{equation} \label{Paper02_equivalent_formulation_limit_integral_equicontinuity}
        \sup_{v \in \mathcal{K}} \; \| \Psi_{w}(v) - \theta^{(i)}_{\gamma}\Psi_{w}(v) \|^{r}_{L_{r}([0,T] \times \mathbb{R}, \hat \lambda; \mathbb{R})} \leq C_{r,T, \mathcal{K},w} \varepsilon.
    \end{equation}
    For any~$\gamma \in (-1,1)$ and each~$i \in \{1,2\}$, for any $H \in \mathbb N$, by applying the elementary inequality
    \begin{equation} \label{Paper02_elem_ineq_binomial_power_p}
    \vert a + b \vert^r \leq 2^{r-1}(\vert a \vert^r + \vert b\vert^r) \quad \forall \, a,b \in \mathbb R,
    \end{equation}
    we have
    \begin{align}
        & \| \Psi_{w}(v) - \theta^{(i)}_{\gamma}\Psi_{w}(v) \|^{r}_{L_{r}([0,T] \times \mathbb{R}, \hat \lambda; \mathbb{R})} \nonumber
        \\ & \quad \leq 2^{r-1} \int_{0}^{T} \int_{\mathbb{R}} \frac{1}{1 + \vert x \vert^{2}} \bigg(\sum_{k = 0}^{H} \vert w_{k} v_{k}(t,x) - w_{k}(\theta^{(i)}_{\gamma}v_{k})(t,x)\vert\bigg)^{r} \, dx \, dt \label{Paper02_first_integral_equicontinuity}
        \\ & \quad \quad \quad + 2^{r-1} \int_{0}^{T} \int_{\mathbb{R}} \frac{1}{1 + \vert x \vert^{2}} \bigg(\sum_{k = H + 1}^\infty \vert w_{k} v_{k}(t,x) - w_{k}(\theta^{(i)}_{\gamma}v_{k})(t,x)\vert\bigg)^{r} \, dx \, dt \label{Paper02_second_integral_equicontinuity}.
    \end{align}
   We will bound the integrals~\eqref{Paper02_first_integral_equicontinuity} and~\eqref{Paper02_second_integral_equicontinuity} separately. Starting with~\eqref{Paper02_second_integral_equicontinuity}, observing that
   \begin{equation*}
   \begin{aligned}
       & \bigg(\sum_{k = H + 1}^\infty \vert w_{k} v_{k} - w_{k}(\theta^{(i)}_{\gamma}v_{k})\vert\bigg)^{r} \\
       & \quad = \bigg(\sum_{k = H + 1}^\infty \vert w_{k} v_{k} - w_{k}(\theta^{(i)}_{\gamma}v_{k})\vert\bigg)^{1/r} \bigg(\sum_{k = H + 1}^\infty \vert w_{k} v_{k} - w_{k}(\theta^{(i)}_{\gamma}v_{k})\vert\bigg)^{(r^{2}-1)/r},
    \end{aligned}
   \end{equation*}
   and then by applying H\"{o}lder's inequality with exponents~$r$ and $r/(r-1)$ in the case~$r > 1$, we obtain
   \begin{equation} \label{Paper02_intermediate_step_integral_equicontinuity_i}
   \begin{split}
     & \int_{0}^{T} \int_{\mathbb{R}} \frac{1}{1 + \vert x \vert^{2}} \bigg(\sum_{k = H + 1}^\infty \vert w_{k} v_{k}(t,x) - w_{k}(\theta^{(i)}_{\gamma}v_{k})(t,x)\vert\bigg)^{r} dx \, dt \\ & \quad \leq  \bigg(\int_{0}^{T} \int_{\mathbb{R}} \frac{1}{1 + \vert x \vert^{2}} \sum_{k = H + 1}^\infty \vert w_{k} v_{k}(t,x) - w_{k}(\theta^{(i)}_{\gamma}v_{k})(t,x) \vert \, dx \, dt \bigg)^{1/r} \\ & \quad \quad \quad \cdot \bigg(\int_{0}^{T} \int_{\mathbb{R}} \frac{1}{1 + \vert x \vert^{2}} \bigg(\sum_{k = H + 1}^\infty \vert w_{k} v_{k}(t,x) - w_{k}(\theta^{(i)}_{\gamma}v_{k})(t,x) \vert \bigg)^{r+1} dx \, dt \bigg)^{(r-1)/r}.
     \end{split}
     \raisetag{-3.3cm}
   \end{equation}
   We will bound each factor on the right-hand side of~\eqref{Paper02_intermediate_step_integral_equicontinuity_i} separately. Starting with the second factor, recall the definition of~$\theta^{(i)}_\gamma$ in~\eqref{Paper02_shift_maps}, and observe that since $\vert x + \gamma \vert^2 \leq 2 \vert x \vert^2 + 2\vert \gamma \vert^2$, we have
   \begin{equation} \label{Paper02_trivial_ratio_bound_1_plus_square}
       \frac{1 + \vert x + \gamma \vert^{2}}{1 + \vert x \vert^{2}} \leq 3 \quad \forall \, \gamma \in (-1,1) \textrm{ and } \forall \, x \in \mathbb{R}. 
   \end{equation}
   Hence, by applying the elementary inequality~\eqref{Paper02_elem_ineq_binomial_power_p}, and then a suitable substitution, we conclude that for all~$\gamma \in (-1,1)$,~$q \geq 1$,~$g \in L_{q}([0,T] \times \mathbb{R}, \hat \lambda; \mathbb{R})$ and~$i \in \{1,2\}$,
   \begin{equation} \label{Paper02_bounding_difference_shift_function_in_terms_original_function}
       \| \theta^{(i)}_{\gamma}g - g \|_{ L_{q}([0,T] \times \mathbb{R}, \hat \lambda; \mathbb{R})}^{q} \leq 2^{q} \cdot 3 \cdot \| g \|_{L_{q}([0,T] \times \mathbb{R}, \hat \lambda; \mathbb{R})}^q.
   \end{equation}
   Using~\eqref{Paper02_bounding_difference_shift_function_in_terms_original_function} with $g = \sum_{k = H + 1}^\infty \vert w_{k} v_{k}\vert$ and~$q = r+1$, we conclude that the second factor on the right-hand side of~\eqref{Paper02_intermediate_step_integral_equicontinuity_i} is bounded by
   \begin{equation*}
    \begin{aligned}
        & \bigg(\int_{0}^{T} \int_{\mathbb{R}} \frac{1}{1 + \vert x \vert^{2}} \bigg(\sum_{k = H + 1}^\infty \vert w_{k} v_{k}(t,x) - w_{k}(\theta^{(i)}_{\gamma}v_{k})(t,x)\vert \bigg)^{r+1} dx \, dt \bigg)^{(r-1)/r}
        \\ & \quad \leq (2^{r+1}3)^{(r-1)/r} \bigg(\int_{0}^{T} \int_{\mathbb{R}} \frac{1}{1 + \vert x \vert^{2}} \bigg(\sum_{k = H+1}^\infty \vert w_{k} v_{k}(t,x) \vert\bigg)^{r+1} dx \, dt\bigg)^{(r-1)/r}
        \\ & \quad \leq (2^{r+1}3)^{(r-1)/r} \| w \|_{\ell_{\infty}}^{(r+1)(r-1)/r} \| v \|_{{L}_{r+1}([0,T] \times \mathbb{R}, \hat \lambda;\ell_{1})}^{(r+1)(r-1)/r}.
    \end{aligned}
   \end{equation*}
   Since~$r \geq 1$, we have $r+1 \leq 2r$, and therefore, by~\eqref{Paper02_boundededness_K_L_P}, we conclude that
   \begin{equation} \label{Paper02_intermediate_step_integral_equicontinuity_i_second_factor}
   \begin{split}
       & \sup_{v = (v_k)_{k \in \mathbb N_0} \in \mathcal{K}} \bigg(\int_{0}^{T} \int_{\mathbb{R}} \frac{1}{1 + \vert x \vert^{2}} \bigg(\sum_{k = H + 1}^\infty \vert w_{k} v_{k}(t,x) - w_{k}(\theta^{(i)}_{\gamma}v_{k})(t,x)\vert \bigg)^{r+1} dx \, dt \bigg)^{(r-1)/r} \\[+3mm]
       & \hspace{2cm} \lesssim_{r,w,T} C^{(r+1)(r-1)/r}_1.
    \end{split}
    \raisetag{-1.9cm}
   \end{equation}
   We now proceed to bound the first factor on the right-hand side of~\eqref{Paper02_intermediate_step_integral_equicontinuity_i}. Observe that by using~\eqref{Paper02_bounding_difference_shift_function_in_terms_original_function} with~$q = 1$, we have
   \begin{equation} \label{Paper02_intermediate_step_integral_equicontinuity_ii}
   \begin{aligned}
       & \sup_{v = (v_k)_{k \in \mathbb N_0} \in \mathcal K} \bigg(\int_{0}^{T} \int_{\mathbb{R}} \frac{1}{1 + \vert x \vert^{2}} \sum_{k = H + 1}^\infty \vert w_{k} v_{k}(t,x) - w_{k}(\theta^{(i)}_{\gamma}v_{k})(t,x)\vert \, dx \, dt \bigg)^{1/r} \\ & \quad \leq 6^{1/r}  \sup_{v = (v_k)_{k \in \mathbb N_0} \in \mathcal K} \bigg(\int_{0}^{T} \int_{\mathbb{R}} \frac{1}{1 + \vert x \vert^{2}} \sum_{k = H + 1}^\infty \vert w_{k} v_{k}(t,x) \vert \, dx \, dt \bigg)^{1/r}
       \\ & \quad \leq 6^{1/r} \| w \|^{1/r}_{\ell_{\infty}} \sup_{v = (v_k)_{k \in \mathbb N_0} \in \mathcal K} \bigg( \sum_{k = H+1}^\infty \| v_{k} \|_{L_{1}([0,T] \times \mathbb{R}, \hat{\lambda}; \mathbb{R})}\bigg)^{1/r}.
   \end{aligned}
   \end{equation}
   By the definition of~$\mathcal{K}_{(2)}((k_{h})_{h \in \mathbb{N}}, (D_{h})_{h \in \mathbb{N}}, T)$ in the statement of the lemma, the right-hand side of~\eqref{Paper02_intermediate_step_integral_equicontinuity_ii} vanishes as~$H \rightarrow \infty$. Therefore, for every~$\varepsilon > 0$, we can take~$H \in \mathbb{N}$ sufficiently large that
   \begin{equation} \label{Paper02_intermediate_step_integral_equicontinuity_iii}
   \begin{split}
       & \sup_{v = (v_k)_{k \in \mathbb N_0} \in \mathcal{K}} \bigg(\int_{0}^{T} \int_{\mathbb{R}} \frac{1}{1 + \vert x \vert^{2}} \sum_{k = H + 1}^\infty \vert w_{k} v_{k}(t,x) - w_{k}(\theta^{(i)}_{\gamma}v_{k})(t,x)\vert \, dx \, dt \bigg)^{1/r} \\
       & \quad \lesssim_{r,w} \varepsilon.
    \end{split}
   \end{equation}
   Applying~\eqref{Paper02_intermediate_step_integral_equicontinuity_i_second_factor} and~\eqref{Paper02_intermediate_step_integral_equicontinuity_iii} to~\eqref{Paper02_intermediate_step_integral_equicontinuity_i}, we conclude that for~$H \in \mathbb{N}$ sufficiently large, for any $\gamma \in (-1,1)$,
   \begin{equation} \label{Paper02_intermediate_step_integral_equicontinuity_iv}
   \begin{split}
       & \sup_{v = (v_k)_{k \in \mathbb N_0} \in \mathcal{K}} \int_{0}^{T} \int_{\mathbb{R}} \frac{1}{1 + \vert x \vert^{2}} \bigg(\sum_{k = H + 1}^\infty \vert w_{k} v_{k}(t,x) - w_{k}(\theta^{(i)}_{\gamma}v_{k})(t,x)\vert\bigg)^{r} dx \, dt \\
       & \quad \lesssim_{r,w,T} C_1^{(r+1)(r-1)/r}\varepsilon.
    \end{split}
   \end{equation}

   We will now bound the integral~\eqref{Paper02_first_integral_equicontinuity}. As in the derivation of~\eqref{Paper02_intermediate_step_integral_equicontinuity_i}, by applying H\"{o}lder's inequality with exponents~$r$ and~$r/(r-1)$ in the case~$r > 1$, we obtain
   \begin{equation} \label{Paper02_intermediate_step_integral_equicontinuity_v}
    \begin{split}
        & \int_{0}^{T} \int_{\mathbb{R}} \frac{1}{1 + \vert x \vert^{2}} \bigg(\sum_{k = 0}^{H} \vert w_{k} v_{k}(t,x) - w_{k}(\theta^{(i)}_{\gamma}v_{k})(t,x)\vert\bigg)^{r} \, dx \, dt \\ & \quad \leq \bigg(\int_{0}^{T} \int_{\mathbb{R}} \frac{1}{1 + \vert x \vert^{2}} \sum_{k = 0}^{H} \vert w_{k} v_{k}(t,x) - w_{k}(\theta^{(i)}_{\gamma}v_{k})(t,x)\vert \, dx \, dt \bigg)^{1/r} \\ & \quad \quad \quad \cdot \bigg(\int_{0}^{T} \int_{\mathbb{R}} \frac{1}{1 + \vert x \vert^{2}} \bigg(\sum_{k = 0}^{H} \vert w_{k} v_{k}(t,x) - w_{k}(\theta^{(i)}_{\gamma}v_{k})(t,x)\vert\bigg)^{r+1} dx \, dt \bigg)^{(r-1)/r}.
    \end{split}
    \raisetag{-3.4cm}
   \end{equation}
   We will bound each factor on the right-hand side of~\eqref{Paper02_intermediate_step_integral_equicontinuity_v} separately. Starting with the second factor, as in the derivation of~\eqref{Paper02_intermediate_step_integral_equicontinuity_i_second_factor}, by using~\eqref{Paper02_bounding_difference_shift_function_in_terms_original_function}, we have
   \begin{equation*}
    \begin{aligned}
        & \bigg(\int_{0}^{T} \int_{\mathbb{R}} \frac{1}{1 + \vert x \vert^{2}} \bigg(\sum_{k = 0}^{H} \vert w_{k} v_{k}(t,x) - w_{k}(\theta^{(i)}_{\gamma}v_{k})(t,x)\vert\bigg)^{r+1} dx \, dt \bigg)^{(r-1)/r} 
        \\ & \quad \leq (2^{r+1}3)^{(r-1)/r} \bigg(\int_{0}^{T} \int_{\mathbb{R}} \frac{1}{1 + \vert x \vert^{2}} \bigg(\sum_{k = 0}^{H} \vert w_{k} v_{k}(t,x) \vert\bigg)^{r+1} dx \, dt\bigg)^{(r-1)/r}
        \\ & \quad \leq (2^{r+1}3)^{(r-1)/r} \| w \|_{\ell_{\infty}}^{(r+1)(r-1)/r} \| v \|_{{L}_{r+1}([0,T] \times \mathbb{R}, \hat \lambda; \ell_{1})}^{(r+1)(r-1)/r},
    \end{aligned}
   \end{equation*}
   and therefore, by~\eqref{Paper02_boundededness_K_L_P}, 
   \begin{equation} \label{Paper02_intermediate_step_integral_equicontinuity_v_second_factor}
   \begin{split}
       & \sup_{v = (v_k)_{k \in \mathbb N_0} \in \mathcal{K}} \bigg(\int_{0}^{T} \int_{\mathbb{R}} \frac{1}{1 + \vert x \vert^{2}} \bigg(\sum_{k =0}^{H} \vert w_{k} v_{k}(t,x) - w_{k}(\theta^{(i)}_{\gamma}v_{k})(t,x)\vert \bigg)^{r+1} dx \, dt \bigg)^{(r-1)/r} \\[+3mm]
       & \hspace{2cm} \lesssim_{r,w,T} C_1^{(r+1)(r-1)/r}.
    \end{split}
    \raisetag{-1.9cm}
   \end{equation}
   It remains to bound the first factor on the right-hand side of~\eqref{Paper02_intermediate_step_integral_equicontinuity_v}. Without loss of generality, assume~$\gamma > 0$, as the case~$\gamma < 0$ can be treated analogously. We start by observing that the triangle inequality, and then a suitable substitution and~\eqref{Paper02_trivial_ratio_bound_1_plus_square} imply that for every~$(\gamma_{j})_{j \in \mathbb N_0} \subset [0,1]^{\mathbb N_0} \cap \ell_1$ with $\gamma_{0} = 0$ and $\sum_{j = 1}^\infty \gamma_j \leq 1$, $i \in \{1,2\}$, $A \in \mathbb N$ and~$g \in {L}_{1}([0,T] \times \mathbb{R}, \hat \lambda; \mathbb{R})$,
   \begin{equation} \label{Paper02_generalisation_triangle_inequality_shift_function_modified}
   \begin{aligned}
       \Big\vert \Big\vert \theta^{(i)}_{\sum_{j = 1}^{A} \gamma_{j}}g - g \Big\vert \Big\vert_{L_{1}([0,T] \times \mathbb{R}, \hat{\lambda}; \mathbb{R})} & \leq \sum_{j = 1}^{A}  \Big\vert \Big\vert \theta^{(i)}_{\sum_{l = 1}^{j} \gamma_{l}}g - \theta^{(i)}_{\sum_{l = 0}^{j-1} \gamma_{l}} g \Big\vert \Big\vert_{L_{1}([0,T] \times \mathbb{R}, \hat{\lambda}; \mathbb{R})} \\ & \leq 3 \sum_{j = 1}^{A}  \| \theta^{(i)}_{\gamma_{j}}g - g \|_{L_{1}([0,T] \times \mathbb{R}, \hat{\lambda}; \mathbb{R})}.
    \end{aligned}
   \end{equation}
   By taking the limit as $A \rightarrow \infty$ on both sides of~\eqref{Paper02_generalisation_triangle_inequality_shift_function_modified} and using Remark~\ref{Paper02_corollary_Kolmogorov_Riesz}, we conclude that for any~$(\gamma_{j})_{j \in \mathbb N_0} \subset [0,1]^{\mathbb N_0} \cap \ell_1$ with $\gamma_{0} = 0$ and $\sum_{j = 1}^\infty \gamma_j \leq 1$, $i \in \{1,2\}$ and~$g \in {L}_{1}([0,T] \times \mathbb{R}, \hat \lambda; \mathbb{R})$,
   \begin{equation} \label{Paper02_generalisation_triangle_inequality_shift_function}
       \Big\vert \Big\vert \theta^{(i)}_{\sum_{j = 1}^{\infty} \gamma_{j}}g - g \Big\vert \Big\vert_{L_{1}([0,T] \times \mathbb{R}, \hat{\lambda}; \mathbb{R})} \leq 3 \sum_{j = 1}^{\infty}  \| \theta^{(i)}_{\gamma_{j}}g - g \|_{L_{1}([0,T] \times \mathbb{R}, \hat{\lambda}; \mathbb{R})}. 
   \end{equation}
   Recall the role of the parameter~$J \in \mathbb{N}$ in the definition of the set~$\mathcal{K}_{(3)}(C_{2}, \beta, J, T)$ in the statement of Lemma~\ref{Paper02_alternative_criterion_compactness_diaz_mayoral}. Take~$J^{*} \in \mathbb{N}$ satisfying~$J^{*} \geq J$, whose exact value will be specified later. Recalling that $\beta > 1$, suppose $H \in \mathbb N$ is sufficiently large that $H+1 < \beta^{H+1}$. By taking~$\gamma \in (0, 2^{-(H + J^{*})})$, we conclude from the dyadic expansion of~$\gamma$ that there exists~$\mathcal{I}_\gamma \subseteq \mathbb{N} \cap [H + J^{*} + 1, \infty)$ such that~$\gamma = \sum_{j \in \mathcal{I}_\gamma}2^{-j}$. By applying this representation and~\eqref{Paper02_generalisation_triangle_inequality_shift_function}, and then using the definition of~$\mathcal{K}_{(3)}(C_{2}, \beta, J, T)$, we conclude that the first factor on the right-hand side of~\eqref{Paper02_intermediate_step_integral_equicontinuity_v} is bounded, for all~$v = (v_k)_{k \in \mathbb N_0} \in \mathcal{K}$, by
   \begin{equation*}
    \begin{aligned}
        & \bigg(\int_{0}^{T} \int_{\mathbb{R}} \frac{1}{1 + \vert x \vert^{2}} \sum_{k = 0}^{H} \vert w_{k} v_{k}(t,x) - w_{k}(\theta^{(i)}_{\gamma}v_{k})(t,x)\vert \, dx \, dt \bigg)^{1/r}
        \\ & \quad \leq 3^{1/r} \| w \|_{\ell_{\infty}}^{1/r} \bigg(\sum_{k = 0}^{H} \, \sum_{j \in \mathcal{I}_\gamma} \| \theta^{(i)}_{2^{-j}}v_{k} - v_{k} \|_{L_{1}([0,T] \times \mathbb{R}, \hat{\lambda}; \mathbb{R})}\bigg)^{1/r}
        \\ & \quad \leq 3^{1/r} \| w \|_{\ell_{\infty}}^{1/r} \bigg(\sum_{k = 0}^{H} \, \sum_{j = H + J^{*} + 1}^\infty \| \theta^{(i)}_{2^{-j}}v_{k} - v_{k} \|_{L_{1}([0,T] \times \mathbb{R}, \hat{\lambda}; \mathbb{R})}\bigg)^{1/r}
        \\ & \quad \leq 3^{1/r} \| w \|_{\ell_{\infty}}^{1/r}C_{2}^{1/r} \bigg(\sum_{k = 0}^{H} \, \sum_{j = H + J^{*} + 1}^\infty \beta^{- j}\bigg)^{1/r}
        \\ & \quad = \left(\frac{3 \| w \|_{\ell_{\infty}}\beta C_{2} (H+1)}{\beta^{H + J^{*} + 1} (\beta - 1)}\right)^{1/r}
        \\ & \quad \leq \left(\frac{3 \| w \|_{\ell_{\infty}}\beta C_{2}}{\beta^{J^{*}} (\beta - 1)}\right)^{1/r},
    \end{aligned}
   \end{equation*}
   where for the fourth line we used the fact that~$\beta > 1$, and the last inequality follows from our choice of $H$. Hence, using the same argument for the case $\gamma < 0$, for any~$\varepsilon > 0$, we can choose~$H,J^{*} \in \mathbb N$ sufficiently large that for any~$\gamma \in (-2^{-(H + J^{*})},2^{-(H + J^{*})})$,
   \begin{equation} \label{Paper02_intermediate_step_integral_equicontinuity_vi}
   \begin{aligned}
       & \sup_{v = (v_k)_{k \in \mathbb N_0} \in \mathcal{K}} \bigg(\int_{0}^{T} \int_{\mathbb{R}} \frac{1}{1 + \vert x \vert^{2}} \sum_{k = 0}^{H} \vert w_{k} v_{k}(t,x) - w_{k}(\theta^{(i)}_{\gamma}v_{k})(t,x)\vert \, dx \, dt \bigg)^{1/r} \\
       & \quad \lesssim_{r,w,\beta,C_2} \varepsilon.
    \end{aligned}
   \end{equation}
   By applying~\eqref{Paper02_intermediate_step_integral_equicontinuity_v_second_factor} and~\eqref{Paper02_intermediate_step_integral_equicontinuity_vi} to~\eqref{Paper02_intermediate_step_integral_equicontinuity_v}, we conclude that for $\varepsilon > 0$, by choosing~$H\in \mathbb N$ and~$J^{*} \geq J$ sufficiently large, for any $\gamma \in (-2^{-(H+J^*)},2^{-(H+J^*)})$,
   \begin{equation} \label{Paper02_intermediate_step_integral_equicontinuity_vii}
   \begin{aligned}
       & \sup_{v = (v_k)_{k \in \mathbb N_0} \in \mathcal K} \int_{0}^{T} \int_{\mathbb{R}} \frac{1}{1 + \vert x \vert^{2}} \bigg(\sum_{k = 0}^{H} \vert w_{k} v_{k}(t,x) - w_{k}(\theta^{(i)}_{\gamma}v_{k})(t,x)\vert\bigg)^{r} \, dx \, dt \\
       & \quad \lesssim_{r,w,T,C_1,\beta,C_2} \varepsilon.
    \end{aligned}
   \end{equation}
   Applying~\eqref{Paper02_intermediate_step_integral_equicontinuity_vii} and~\eqref{Paper02_intermediate_step_integral_equicontinuity_iv} to~\eqref{Paper02_first_integral_equicontinuity} and~\eqref{Paper02_second_integral_equicontinuity}, we conclude that by choosing~$H \in \mathbb{N}$ and~$J^{*} \geq J$ sufficiently large, and then letting~$\gamma \in (-2^{- (H + J^{*})}, 2^{- (H + J^{*})})$, we obtain~\eqref{Paper02_equivalent_formulation_limit_integral_equicontinuity}. As explained before~\eqref{Paper02_equivalent_formulation_limit_integral_equicontinuity}, it then follows by Theorem~\ref{Paper02_kolmogorov_riesz_frechet} that for every~$w \in \ell_{\infty}$, the set~$\mathcal{K}_{w}^{*}$ is a relatively compact subset of~$L_{r}([0,T] \times \mathbb{R}, \hat \lambda; \mathbb{R})$, i.e.~that~$\mathcal{K}$ is scalarly relatively compact.

   \medskip

   Finally, combining steps~$(1)$-$(4)$, we conclude the set~$\mathcal{K}$ satisfies the conditions of Theorem~\ref{Paper02_diaz_mayoral_compactness_theorem}, and therefore that~$\mathcal{K}$ is relatively compact in~$L_{r}([0,T] \times \mathbb{R}, \hat{\lambda}; \ell_{1})$.
\end{proof}

Next, we establish a result on $L_r([0,T] \times \mathbb R, \hat \lambda; \ell_1)$ which is used in the proof of Lemma~\ref{Paper02_weak_convergence_mild_solution} in Section~\ref{Paper02_Characterisation_limiting_process}.

\begin{lemma} \label{Paper02_standard_L_p_gaussian_lem}
      Suppose $m > 0$, and let $p:(0, \infty) \times \mathbb R \rightarrow (0, \infty)$ be defined as in~\eqref{Paper02_Gaussian_kernel}. Let $T > 0$ and $r \in [4, \infty)$ be fixed. Let \[ (\nu^N)_{N \in \mathbb N} = \Big((\nu^N_k)_{k \in \mathbb N_0}\Big)_{N \in \mathbb N} \subset L_{r}([0,T] \times \mathbb R, \hat \lambda; \ell_1)\] denote a sequence that converges to $\nu = (\nu_k)_{k \in \mathbb N_0}$ in~$L_{r}([0,T] \times \mathbb R, \hat \lambda; \ell_1)$ as $N \rightarrow \infty$. Then, for any $l \in \{0\} \cup [1,r/4]$, $k \in \mathbb N_0$, and $(t,x) \in [0,T] \times \mathbb R$,
\begin{equation} \label{Paper02_aux_L_p_dom_c_ii}
\begin{aligned}
    & \lim_{N \rightarrow \infty} \int_0^t \int_\mathbb R p(t-\tau,x - y) \nu^{N}_k(\tau,y) \| \nu^{N}(\tau,y) \|_{\ell_1}^l \, dy\, d\tau \\ & \quad = \int_0^t \int_\mathbb R p(t - \tau,x-y) \nu_k(\tau,y) \| \nu(\tau,y) \|_{\ell_1}^l \, dy \, d\tau.
\end{aligned}
\end{equation}
\end{lemma}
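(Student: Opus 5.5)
We will prove \eqref{Paper02_aux_L_p_dom_c_ii} by bounding the difference of the two integrals with H\"older's inequality in $(\tau,y)$. One factor will be the heat kernel, reweighted so that it becomes an $L_{q}$ function with respect to $\hat\lambda$. The other factor will be the difference of the nonlinear terms, controlled in $L_{r'}([0,T]\times\mathbb R,\hat\lambda;\mathbb R)$ for a suitable $r'\le r$. Fix $k$ and $(t,x)$, and write $G^N(\tau,y) \defeq \nu^N_k(\tau,y)\|\nu^N(\tau,y)\|_{\ell_1}^l$ and $G(\tau,y)\defeq\nu_k(\tau,y)\|\nu(\tau,y)\|_{\ell_1}^l$. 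The quantity to estimate is
\[
\int_0^t\int_{\mathbb R} p(t-\tau,x-y)(1+|y|^2)\,\big(G^N-G\big)(\tau,y)\,\frac{dy\,d\tau}{1+|y|^2}.
\]

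First, we control the nonlinearity. By \eqref{Paper02_elementary_factorising} and $|\nu_k|\le\|\nu\|_{\ell_1}$, we have pointwise
\[
|G^N-G|\le |\nu^N_k-\nu_k|\,\|\nu^N\|_{\ell_1}^l + (l\vee 1)\,\|\nu\|_{\ell_1}\big(\|\nu^N\|_{\ell_1}+\|\nu\|_{\ell_1}\big)^{l-1}\,\|\nu^N-\nu\|_{\ell_1},
\]
and both terms are bounded by $C\,\|\nu^N-\nu\|_{\ell_1}\big(\|\nu^N\|_{\ell_1}+\|\nu\|_{\ell_1}\big)^l$. (The case $l=0$ is just $|\nu^N_k-\nu_k|\le \|\nu^N-\nu\|_{\ell_1}$.) Apply H\"older with exponents $l+1$ and $(l+1)/l$ under $\hat\lambda$. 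This gives
\[
\|G^N-G\|_{L_{s}(\hat\lambda)}\lesssim \|\nu^N-\nu\|_{L_{s(l+1)}(\hat\lambda;\ell_1)}\big(\|\nu^N\|+\|\nu\|\big)_{L_{s(l+1)}(\hat\lambda;\ell_1)}^l
\]
for any $s\ge1$ with $s(l+1)\le r$. Since $l\le r/4$, we have $l+1\le r/2$ when $l\ge1$ (as $r\ge4$), so we may take $s=2$. Since $\hat\lambda$ is finite, convergence in $L_r$ implies convergence in $L_{2(l+1)}$, and the norms of $\nu^N$ stay bounded. Hence $\|G^N-G\|_{L_2(\hat\lambda)}\to0$.

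Second, we check that the kernel factor lies in $L_2(\hat\lambda)$, that is,
\[
\int_0^t\int_{\mathbb R} p(t-\tau,x-y)^2(1+|y|^2)\,dy\,d\tau<\infty .
\]
From $p(s,z)^2= (4\pi m s)^{-1/2}p(s/2,z)$, the inner integral is at most $(4\pi m s)^{-1/2}\,\mathbb E[1+|x+W_{s/2}|^2]\lesssim s^{-1/2}(1+x^2+s)$. This is integrable in $s=t-\tau$ on $(0,t]$. By Cauchy--Schwarz under $\hat\lambda$, the difference of the two integrals is then bounded by a constant times $\|G^N-G\|_{L_2(\hat\lambda)}$, which tends to $0$. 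The case $t=0$ is trivial, since both sides vanish.

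The only delicate point is matching the exponents. Squaring the heat kernel costs exactly the $s^{-1/2}$ singularity, which is still integrable, while the polynomial weight $1+|y|^2$ is absorbed by the Gaussian tails. The nonlinear factor then needs $2(l+1)\le r$ integrability, and the hypothesis $l\le r/4$ with $r\ge4$ guarantees this with room to spare. If one preferred to use a smaller exponent on $G^N-G$, one would need $p^{q'}$ integrable near $\tau=t$, which forces $q'<3$; the choice $q'=2$ is a safe middle ground.
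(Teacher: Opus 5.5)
Your proof is correct and follows essentially the paper's argument: the same pointwise bound $|G^N-G|\lesssim\|\nu^N-\nu\|_{\ell_1}(\|\nu^N\|_{\ell_1}+\|\nu\|_{\ell_1})^l$, followed by Cauchy--Schwarz against the weighted kernel. The paper likewise uses $\int_0^t\int_{\mathbb R}(1+|y|^2)p(t-\tau,x-y)^2\,dy\,d\tau<\infty$, computed explicitly. The only difference is the second split: the paper applies Cauchy--Schwarz again to get $L_4$ and $L_{4l}$ norms (hence $l\le r/4$), whereas your H\"older split with exponents $l+1$ and $(l+1)/l$ needs only $2(l+1)\le r$, which is a slightly weaker requirement.
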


\begin{proof}
   It will suffice to show that for all~$l \in \{0\} \cup [1,r/4]$,~$k \in \mathbb N_0$ and~$(t,x) \in [0,T] \times \mathbb  R$,
    \begin{equation} \label{Paper02_simple_Gaussian_convergence_l_0_i}
    \lim_{N \rightarrow \infty} \int_0^t \int_\mathbb R p(t-\tau,x-y) \Big\vert \nu^{N}_k(\tau,y) \| \nu^{N}(\tau,y) \|_{\ell_1}^l - \nu_k(\tau,y) \| \nu(\tau,y) \|_{\ell_1}^l \, \Big\vert \, dy \, d\tau = 0.
    \end{equation}
    Observe that for all~$N \in \mathbb N$,~$l \in \{0\} \cup [1,r/4]$, $k \in  \mathbb N_0$ and~$(t,y) \in [0,T] \times \mathbb R$, by the triangle inequality,
    \begin{equation} \label{Paper02_aux_L_p_dom_c_iii}
    \begin{aligned}
        &  \Big\vert \nu^{N}_k(\tau,y) \| \nu^{N}(\tau,y) \|_{\ell_1}^l - \nu_k(\tau,y) \| \nu(\tau,y) \|_{\ell_1}^l \, \Big\vert
        \\ & \quad \leq \vert \nu^N_k(\tau,y) - \nu_k(\tau,y) \vert \cdot \| \nu(\tau,y) \|_{\ell_1}^l + \vert \nu^N_k(\tau,y) \vert \cdot \Big\vert \| \nu(\tau,y) \|_{\ell_1}^l -  \| \nu^N(\tau,y) \|_{\ell_1}^l \Big\vert
        \\ & \quad \leq \| \nu^N(\tau,y) - \nu(\tau,y) \|_{\ell_1} \cdot \| \nu(\tau,y) \|_{\ell_1}^l 
        \\ & \qquad + l \| \nu^N(\tau,y) \|_{\ell_1} \Big(\| \nu^N(\tau,y) \|_{\ell_1}^{l-1} + \| \nu(\tau,y) \|_{\ell_1}^{l-1} \Big) \Big\vert \| \nu(\tau,y) \|_{\ell_1} -  \| \nu^N(\tau,y) \|_{\ell_1} \Big\vert
        \\ & \quad \leq (l+1) \| \nu^N(\tau,y) - \nu(\tau,y) \|_{\ell_1} \Big(\| \nu^N(\tau,y) \|_{\ell_1} + \| \nu(\tau,y) \|_{\ell_1}\Big)^l,
    \end{aligned}
    \end{equation}
    where the second inequality follows from the elementary inequality~\eqref{Paper02_elementary_factorising} and since $\vert \nu^N_k(\tau,y) \vert \leq \| \nu^N(\tau,y) \|_{\ell_1}$, and the third inequality follows from the triangle inequality. We use~\eqref{Paper02_aux_L_p_dom_c_iii} to bound the argument of the limit on the left-hand side of~\eqref{Paper02_simple_Gaussian_convergence_l_0_i}, obtaining
    \begin{equation} \label{Paper02_simple_Gaussian_convergence_l_0_ii}
    \begin{split}
        & \int_0^t \int_\mathbb R p(t-\tau, x-y) \Big\vert \nu^{N}_k(\tau,y) \| \nu^{N}(\tau,y) \|_{\ell_1}^l - \nu_k(\tau,y) \| \nu(\tau,y) \|_{\ell_1}^l \, \Big\vert \, dy \, d\tau \\
        & \quad \leq (l+1)\int_0^t \int_\mathbb R (1 + \vert y \vert^2)^{1/2} p(t-\tau, x-y) \\
        & \qquad \quad \cdot \left((1 + \vert y \vert^2)^{-1/2}\| \nu^N(\tau,y) - \nu(\tau,y) \|_{\ell_1}\Big(\| \nu^N(\tau,y) \|_{\ell_1} + \| \nu(\tau,y) \|_{\ell_1}\Big)^l\right) \, dy \, d\tau \\
        & \quad \leq (l+1)\left(\int_0^t \int_\mathbb R (1 + \vert y \vert^2)p^2(t-\tau, x - y) \, dy \, d\tau\right)^{1/2} \\
        & \qquad \quad \cdot \left(\int_0^t \int_\mathbb R \frac{1}{1 + \vert y \vert^2} \| \nu^N(\tau,y) - \nu(\tau,y) \|_{\ell_1}^2 \Big(\| \nu^N(\tau,y) \|_{\ell_1} + \| \nu(\tau,y) \|_{\ell_1}\Big)^{2l} \, dy \, d\tau \right)^{1/2},
    \end{split}
    \raisetag{-3.15cm}
    \end{equation}
    where for the second inequality we used the Cauchy-Schwarz inequality. By the definition of the Gaussian kernel $p$ in~\eqref{Paper02_Gaussian_kernel}, we have for all $(t,x) \in [0,T] \times \mathbb R$,
    \begin{equation} \label{Paper02_simple_Gaussian_convergence_l_0_iii}
    \begin{aligned}
        & \int_0^t \int_\mathbb R (1 + \vert y \vert^2)p^2(t-\tau, x - y) \, dy \, d\tau \\
        & \quad = \int_0^t \frac{1}{2\sqrt{\pi m (t-\tau)}} \int_\mathbb R (1 + \vert y \vert^2) \frac{1}{\sqrt{\pi m (t - \tau)}} \exp \left(- \frac{\vert  x- y \vert^2}{m(t-\tau)}\right) \, dy \, d\tau \\
        & \quad = \int_0^t \frac{1}{2\sqrt{\pi m (t-\tau)}} \left((1 + x^2) + \tfrac{m}{2}(t-\tau)\right) \, d\tau \\
        & \quad = (1 + x^2)\sqrt{\tfrac{t}{\pi m}} + \frac{1}{6}\sqrt{\tfrac{m}{\pi}} t^{3/2}.
    \end{aligned}
    \end{equation}
    Hence, by~\eqref{Paper02_simple_Gaussian_convergence_l_0_ii} and~\eqref{Paper02_simple_Gaussian_convergence_l_0_iii}, the limit in~\eqref{Paper02_simple_Gaussian_convergence_l_0_i} will be proved after establishing that the second factor on the right-hand side of~\eqref{Paper02_simple_Gaussian_convergence_l_0_ii} vanishes as $N \rightarrow \infty$. By applying the Cauchy-Schwarz inequality to the second factor on the right-hand side of~\eqref{Paper02_simple_Gaussian_convergence_l_0_ii} and the definition of $L_4([0,T] \times \mathbb R, \hat \lambda; \ell_1)$ in~\eqref{Paper02_functional_space_real_functions_measure_time_space_box},
    \begin{equation*}
    \begin{aligned}
        & \int_0^t \int_\mathbb R \frac{1}{1 + \vert y \vert^2} \| \nu^N(\tau,y) - \nu(\tau,y) \|_{\ell_1}^2 \Big(\| \nu^N(\tau,y) \|_{\ell_1} + \| \nu(\tau,y) \|_{\ell_1}\Big)^{2l} \, dy \, d\tau \\
        & \quad \leq \|\nu^N - \nu\|_{L_4([0,T] \times \mathbb R, \hat \lambda; \ell_1)}^2 \left(\int_0^t \int_\mathbb R \frac{1}{1 + \vert y \vert^2} \Big(\| \nu^N(\tau,y) \|_{\ell_1} + \| \nu(\tau,y) \|_{\ell_1}\Big)^{4l} \, dy \, d\tau \right)^{1/2}.
    \end{aligned}
    \end{equation*}
    By applying the elementary inequality~\eqref{Paper02_power_bound_elem}, the definition of $L_{4l}([0,T] \times \mathbb R, \hat \lambda; \ell_1)$ in~\eqref{Paper02_functional_space_real_functions_measure_time_space_box} for $l \in [1,r/4]$, and the elementary inequality $\sqrt{a + b} \leq \sqrt{a} + \sqrt{b} \; \forall \, a,b \geq 0$, we conclude that
    \begin{equation} \label{Paper02_simple_Gaussian_convergence_l_0_iv}
    \begin{aligned}
        & \int_0^t \int_\mathbb R \frac{1}{1 + \vert y \vert^2} \| \nu^N(\tau,y) - \nu(\tau,y) \|_{\ell_1}^2 \Big(\| \nu^N(\tau,y) \|_{\ell_1} + \| \nu(\tau,y) \|_{\ell_1}\Big)^{2l} \, dy \, d\tau \\
        & \, \leq \|\nu^N - \nu\|_{L_4([0,T] \times \mathbb R, \hat \lambda; \ell_1)}^2 \\
        & \quad \; \cdot \Big(\mathds 1_{\{l = 0\}} \hat \lambda ([0, T] \times \mathbb R)^{1/2} \\
        & \qquad \quad + \mathds 1_{\{l \in [1,r/4]\}} 2^{(4l-1)/2} \left(\|\nu^N\|_{L_{4l}([0,T] \times \mathbb R, \hat \lambda; \ell_1)}^{2l}  + \|\nu\|_{L_{4l}([0,T] \times \mathbb R, \hat \lambda; \ell_1)}^{2l}\right)\Big).
    \end{aligned}
    \end{equation}
    Observe that by H\"older's inequality and the fact that the measure $\hat \lambda$ defined in~\eqref{Paper02_measure_time_space_box_i} is a finite measure on $[0,T] \times \mathbb R$, for all $q \in [1,r)$ and $\nu^* \in L_r([0,T] \times \mathbb R, \hat \lambda; \ell_1)$,
    \[
    \| \nu^* \|_{L_q([0,T] \times \mathbb R, \hat \lambda; \ell_1)} \leq \hat \lambda ([0,T] \times \mathbb R)^{(r-q)/(rq)}    \| \nu^* \|_{L_r([0,T] \times \mathbb R, \hat \lambda; \ell_1)}.
    \]
    Therefore, by taking the limit as $N \rightarrow \infty$ on both sides of~\eqref{Paper02_simple_Gaussian_convergence_l_0_iv}, and using that $r \in [4, \infty)$, $l \in \{0\} \cup [1, r/4]$ and $\lim_{N \rightarrow \infty} \|\nu^N - \nu\|_{L_r([0,T] \times \mathbb R, \hat \lambda; \ell_1)} = 0$, and then by using~\eqref{Paper02_simple_Gaussian_convergence_l_0_ii} and~\eqref{Paper02_simple_Gaussian_convergence_l_0_iii}, we conclude that the limit in~\eqref{Paper02_simple_Gaussian_convergence_l_0_i} holds, which completes the proof.
    \end{proof}

\section{Proof of Lemmas~\ref{Paper02_properties_greens_function_muller_ratchet} and~\ref{Paper02_well_behaved_function_inner_product_test_funciton_lemma}} \label{Paper02_appendix_section_proof_auxiliary_no_technical_lemmas}

In this section, we will prove two lemmas used in Sections~\ref{Paper02_section_greens_function} and~\ref{Paper02_tightness} whose proofs follow from standard arguments. We start by proving Lemma~\ref{Paper02_properties_greens_function_muller_ratchet}.

\begin{proof}[Proof of Lemma~\ref{Paper02_properties_greens_function_muller_ratchet}]
     We will divide the proof into steps corresponding to each of the assertions (i) - (vi).

     \medskip

\myemph{Proof of assertion~(i)}
     We start by establishing that $g^{N,T,x}_{\mathcal I} \in \mathcal C([0,T] \times \mathcal S^N; \mathbb R)$. Note that by~\eqref{Paper02_trivial_large_deviation_estimatre_CTRW}, there exists $C_{x,T}^N > 0$ such that for all $y \in L_N^{-1}\mathbb Z$ and $t \in [0,T]$,
     \begin{equation} \label{Paper02_trivial_exp_poly_bound_RW}
         \phi^{N,T,x}(t,y) = p^N(T-t,x-y) \leq C_{x,T}^N \frac{1}{(1 + \vert y \vert)^2}.
     \end{equation}
     Therefore, by~\eqref{Paper02_green_function_representation_writing_as_lipschitz_map} and~\eqref{Paper02_green_function_test_function}, and then by using~\eqref{Paper02_trivial_exp_poly_bound_RW} and Proposition~\ref{Paper02_topological_properties_state_space}(i) and~(ii), we can use dominated convergence and the fact that $p^N(\cdot, x) \in \mathcal C([0,T]; \mathbb R)$ for all $x \in L_N^{-1}\mathbb Z$, to conclude that for all $(t,\boldsymbol \xi) \in [0,T] \times \mathcal S^N$ and $x \in L_{N}^{-1}\mathbb Z$, the following limit holds:
     \begin{equation*}
     \begin{aligned}
        \lim_{(t^*,\boldsymbol{\xi}^*) \rightarrow (t, \boldsymbol{\xi})} g^{N,T,x}_{\mathcal I} (t^*,\boldsymbol{\xi}^*) & = \lim_{(t^*,\boldsymbol{\xi}^*) \rightarrow (t, \boldsymbol{\xi})}  \sum_{y \in L_N^{-1}\mathbb Z} \; \sum_{k \in \mathcal{I}} \frac{\xi^*_{k}(y)}{N} p^N(T-t^*,x-y) \\
        & = \sum_{y \in L_N^{-1}\mathbb Z} \; \sum_{k \in \mathcal{I}} \lim_{(t^*,\boldsymbol{\xi}^*) \rightarrow (t, \boldsymbol{\xi})} \frac{\xi^*_{k}(y)}{N} p^N(T-t^*,x-y) \\
        & = \sum_{y \in L_N^{-1}\mathbb Z} \; \sum_{k \in \mathcal{I}} \frac{\xi_{k}(y)}{N} p^N(T-t,x-y)
        \\
        & = g^{N,T,x}_{\mathcal I}(t,\boldsymbol{\xi}).
     \end{aligned}
     \end{equation*}
     Hence, $g^{N,T,x}_{\mathcal I} \in \mathcal C([0,T] \times \mathcal S^N; \mathbb R)$. In order to characterise $\frac{\partial}{\partial t} g^{N,T,x}_{\mathcal I}$, we observe that~\eqref{Paper02_green_function_test_function} implies that for all~$t \in (0,T)$ and~$y \in L_{N}^{-1}\mathbb{Z}$,
     \begin{equation} \label{Paper02_derivative_time_green_funciton_first_part}
     \begin{split}
         & \left(\frac{\partial}{\partial t} \phi^{N,T,x}(\cdot, y)\right)(t) \\[+3mm]
         & \quad = \frac{m_{N}}{2} \Big(2p^{N}(T-t,y-x) - p^{N}(T-t,y - L_{N}^{-1}-x) - p^{N}(T-t,y + L_{N}^{-1}-x) \Big).
        \end{split}
    \raisetag{-1.6cm}
     \end{equation}
     By using~\eqref{Paper02_green_function_representation_writing_as_lipschitz_map},~\eqref{Paper02_derivative_time_green_funciton_first_part},~\eqref{Paper02_trivial_exp_poly_bound_RW}, and the definition of $\mathcal S^N$ in~\eqref{Paper02_definition_state_space_formal}, we conclude by dominated convergence that for all $t \in (0,T)$ and $\boldsymbol \xi = (\xi_k(y))_{k \in \mathbb N_0, y \in L_N^{-1}\mathbb Z} \in \mathcal S^N$,
     \begin{equation*}
     \begin{aligned}
         \left(\frac{\partial}{\partial t} g^{N,T,x}_{\mathcal{I}}(\cdot, \boldsymbol{\xi})\right)(t) & =  \bigg\langle \sum_{k \in \mathcal{I}} \, \frac{\xi_{k}(\cdot)}{N}, \,   \frac{\partial}{\partial t} \phi^{N,T,x}(t, \cdot)\bigg\rangle_{N} \\ & = \frac{m_{N}}{2}\Big(2g^{N,T,x}_{\mathcal{I}} - g^{N,T,x - L_{N}^{-1}}_{\mathcal{I}} - g^{N,T,x + L_{N}^{-1}}_{\mathcal{I}}\Big)(t,\boldsymbol{\xi}),
     \end{aligned}
     \end{equation*}
     where the last equality follows from~\eqref{Paper02_derivative_time_green_funciton_first_part} and the fact that by~\eqref{Paper02_trivial_exp_poly_bound_RW} and~\eqref{Paper02_definition_state_space_formal}, the series defining the map~$g^{N,T,x}_{\mathcal{I}}$ is absolutely convergent. This gives us identity~\eqref{Paper02_formula_derivative_time_greens_function}. The desired continuity of the map~$(0,T) \times \mathcal{S}^{N} \ni (t,\boldsymbol{\xi}) \mapsto \left(\frac{\partial}{\partial t} g^{N,T,x}_{\mathcal{I}}(\cdot, \boldsymbol{\xi})\right)(t)$ then follows from the continuity of~$g^{N,T,x}_{\mathcal{I}}$. This concludes the proof of assertion~(i).

     \medskip

\myemph{Proof of assertion~(ii)}
For any~$t \in [0,T]$ and~$\boldsymbol{\xi} = (\xi(y))_{y \in L_N^{-1}\mathbb Z}, \boldsymbol{\zeta} = (\zeta(y))_{y \in L_N^{-1}\mathbb Z} \in \mathcal{S}^{N}$, we have, from the definition of $g^{N,T,x}_{\mathcal{I}}$ in~\eqref{Paper02_green_function_representation_writing_as_lipschitz_map},
     \begin{equation*}
     \begin{aligned}
         \Big\vert g^{N,T,x}_{\mathcal{I}}(t,\boldsymbol{\xi}) - g^{N,T,x}_{\mathcal{I}}(t,\boldsymbol{\zeta}) \Big\vert & \leq \frac{1}{NL_N}\sum_{y \in L_{N}^{-1}\mathbb{Z}} \| \xi(y) - \zeta(y) \|_{\ell_{1}} p^{N}(T-t,y-x) \\ & \lesssim_{x,N} \sum_{y \in L_{N}^{-1}\mathbb{Z}} \frac{\| \xi(y) - \zeta(y) \|_{\ell_{1}}}{(1 + \vert y \vert)^{2(1 + \deg q_-)}},
     \end{aligned}
     \end{equation*}
     where the last estimate follows from~\eqref{Paper02_trivial_exp_poly_bound_RW}. Therefore, by the definition of $\mathcal C_*(\mathcal S^N; \mathbb R)$ in~\eqref{Paper02_definition_Lipschitz_function} for any~$t \in [0,T]$, the map~$\mathcal{S}^N \ni \boldsymbol{\xi}\mapsto g^{N,T,x}_{\mathcal{I}}(t, \boldsymbol{\xi})$ is in $\mathcal{C}_*(\mathcal{S}^{N};\mathbb{R})$, as desired.
     
     We now establish~\eqref{Paper02_definition_action_generator_greens_function}. Recall from before~\eqref{Paper02_definition_Lipschitz_function} that for~$k \in \mathbb{N}_{0}$ and~$y \in L_{N}^{-1}\mathbb{Z}$, we let~$\boldsymbol{e}^{(y)}_{k}$ denote the configuration in~$\mathcal{S}^{N}$ consisting of exactly one particle carrying~$k$ mutations at deme~$y$. By the definition of $g^{N,T,x}_{\mathcal{I}}$ in~\eqref{Paper02_green_function_representation_writing_as_lipschitz_map}, we have the following identities, for any $y \in L_{N}^{-1}\mathbb{Z}$, $k \in \mathcal{I}$, $\boldsymbol{\xi} = (\xi_j(z))_{j \in \mathbb N_0, z \in L_N^{-1}\mathbb Z} \in \mathcal{S}^{N}$ and $t \in [0,T]$:
\begin{equation} \label{Paper02_pre_application_generator_green_function}
\begin{split}
    & \mathds 1_{\{\xi_k(y) > 0\}}\Big(g^{N,T,x}_{\mathcal{I}}\Big(t, \boldsymbol{\xi} + \boldsymbol{e}^{(y+L_{N}^{-1})}_{k} - \boldsymbol{e}^{(y)}_{k}\Big) - g^{N,T,x}_{\mathcal{I}}\left(t, \boldsymbol{\xi}\right)\Big) \\ & \quad \quad  \quad \quad \quad \quad \quad \quad \quad = \frac{1}{NL_{N}}\mathds 1_{\{\xi_k(y) > 0\}}\Big( p^{N}(T-t,y + L_{N}^{-1} - x) - p^{N}(T-t,y - x) \Big),\\[1ex]
    & \mathds 1_{\{\xi_k(y) > 0\}}\Big(g^{N,T,x}_{\mathcal{I}}\Big(t, \boldsymbol{\xi} + \boldsymbol{e}^{(y-L_{N}^{-1})}_{k} - \boldsymbol{e}^{(y)}_{k}\Big) - g^{N,T,x}_{\mathcal{I}}\left(t, \boldsymbol{\xi}\right)\Big) \\ & \quad \quad  \quad \quad \quad \quad \quad \quad \quad = \frac{1}{NL_{N}}\mathds 1_{\{\xi_k(y) > 0\}}\Big( p^{N}(T-t,y - L_{N}^{-1} - x) - p^{N}(T-t,y - x) \Big), \\[1ex]
    & g^{N,T,x}_{\mathcal{I}}\Big(t, \boldsymbol{\xi} + \boldsymbol{e}^{(y)}_{k}\Big) - g^{N,T,x}_{\mathcal{I}}\left(t, \boldsymbol{\xi}\right)  = \frac{1}{NL_{N}}p^{N}(T-t,y - x), \\[1ex]
      & \mathds 1_{\{\xi_k(y) > 0\}}\Big(g^{N,T,x}_{\mathcal{I}}\left(t, \boldsymbol{\xi} - \boldsymbol{e}^{(y)}_{k}\right) - g^{N,T,x}_{\mathcal{I}}\left(t, \boldsymbol{\xi}\right)\Big) = - \frac{1}{NL_{N}}\mathds 1_{\{\xi_k(y) > 0\}}p^{N}(T-t,y - x).
\end{split}
\raisetag{-4cm}
\end{equation}
Combining the definition of the infinitesimal generator~$\mathcal{L}^{N}$ given in~\eqref{Paper02_generator_foutel_etheridge_model} and in~\eqref{Paper02_infinitesimal_generator}, with~\eqref{Paper02_pre_application_generator_green_function}, we conclude that for all~$t \in [0,T]$ and all~$\boldsymbol{\xi} = (\xi_k(y))_{k \in \mathbb N_0,y \in L_N^{-1}\mathbb Z} \in \mathcal{S}^{N}$,
\begin{equation} \label{Paper02_ugly_formulation_generator_green_function}
\begin{aligned}
    & \Big(\mathcal{L}^{N}g^{N,T,x}_{\mathcal{I}}(t, \cdot)\Big)(\boldsymbol{\xi}) \\ & \, = \frac{m_{N}}{2}\sum_{y \in L_{N}^{-1}\mathbb Z} \, \sum_{k \in \mathcal{I}} \, \frac{\xi_{k}(y)}{NL_{N}} \Big(p^{N}(T-t,y+L_{N}^{-1} - x) + p^{N}(T-t,y-L_{N}^{-1} - x) \\
    & \qquad \qquad \qquad \qquad \qquad \qquad - 2p^{N}(T-t,y - x)\Big) \\ & \quad + \sum_{y \in L_{N}^{-1}\mathbb Z} \, \sum_{k \in \mathcal{I}} \, \frac{1}{L_{N}} F_{k}\left(\frac{\xi(y)}{N}\right) p^{N}(T-t,y-x),
\end{aligned}
\end{equation}
where~$F = (F_{k})_{k \in \mathbb{N}_{0}}$ is the reaction term defined in~\eqref{Paper02_reaction_term_PDE}. Since~\eqref{Paper02_definition_state_space_formal} implies that, for all~$\boldsymbol{\xi} = (\xi(y))_{y \in L_N^{-1}\mathbb Z} \in \mathcal{S}^{N}$, $\| \xi(y) \|_{\ell_{1}}$ grows at most polynomially in $\vert y \vert$ as~$\vert y - x \vert \rightarrow \infty$, it follows from~\eqref{Paper02_trivial_large_deviation_estimatre_CTRW} that the series on the right-hand side of~\eqref{Paper02_ugly_formulation_generator_green_function} are absolutely convergent  uniformly in $t \in [0,T]$. This ensures the desired continuity and justifies rearranging the terms of the series to obtain~\eqref{Paper02_definition_action_generator_greens_function}. This concludes the proof of assertion~(ii).

\medskip

\myemph{Proof of assertion~(iii)}
We must first verify that the map~$[0,T] \times \mathcal{S}^{N} \ni (t,\boldsymbol{\xi})  \mapsto \mathcal{L}^{N}\Big((g^{N,T,x}_{\mathcal{I}})^{2}(t, \cdot)\Big)(\boldsymbol{\xi})$ is well defined and continuous. By the first identity in~\eqref{Paper02_pre_application_generator_green_function}, and using the definition of $\nabla_{L_N} p^N$ in~\eqref{Paper02_discrete_grad}, note that, for any $y \in L_{N}^{-1}\mathbb{Z}$, $k \in \mathcal{I}$, $\boldsymbol{\xi} = (\xi_j(z))_{j \in \mathbb N_0, z \in L_N^{-1}\mathbb Z} \in \mathcal{S}^{N}$ and~$t \in [0,T]$ with $\xi_k(y) > 0$,
\begin{equation} \label{Paper02_first_identity_migration_generator_square_greens_function}
\begin{aligned}
    & (g^{N,T,x}_{\mathcal{I}})^{2}\Big(t, \boldsymbol{\xi} + \boldsymbol{e}^{(y+L_{N}^{-1})}_{k} - \boldsymbol{e}^{(y)}_{k}\Big) - (g^{N,T,x}_{\mathcal{I}})^{2}\left(t, \boldsymbol{\xi}\right) \\ & \quad = \frac{1}{NL_{N}^2}\Big(g^{N,T,x}_{\mathcal{I}}\Big(t, \boldsymbol{\xi} + \boldsymbol{e}^{(y+L_{N}^{-1})}_{k} - \boldsymbol{e}^{(y)}_{k}\Big) + g^{N,T,x}_{\mathcal{I}}\left(t, \boldsymbol{\xi}\right)\Big) \nabla_{L_N}p^N(T-t,y-x) \\ 
    & \quad = \frac{1}{N L_{N}^2} \left(2g^{N,T,x}_{\mathcal{I}}\left(t, \boldsymbol{\xi}\right) + \frac{1}{NL_{N}^2} \nabla_{L_N} p^N(T-t,y-x)\right) \nabla_{L_N} p^N(T-t,y-x)\\ & \quad = \frac{2g^{N,T,x}_{\mathcal{I}}(t,\boldsymbol{\xi})}{NL_{N}^2}\nabla_{L_N}p^N(T-t,y-x) + \frac{1}{N^{2}L_{N}^{4}} \left(\nabla_{L_N}p^N(T-t,y-x)\right)^2.
\end{aligned}
\end{equation}
By a similar argument, the following identities hold  for any $y \in L_{N}^{-1}\mathbb{Z}$, $k \in \mathcal{I}$, $\boldsymbol{\xi} = (\xi_j(z))_{j \in \mathbb N_0, z \in L_N^{-1}\mathbb Z} \in \mathcal{S}^{N}$ and $t \in [0,T]$:
\begin{equation} \label{Paper02_pre_application_generator_green_function_square}
\begin{aligned}
   & \mathds 1_{\{\xi_k(y) > 0\}} \Big((g^{N,T,x}_{\mathcal{I}})^{2}\Big(t, \boldsymbol{\xi} + \boldsymbol{e}^{(y-L_{N}^{-1})}_{k} - \boldsymbol{e}^{(y)}_{k}\Big) - (g^{N,T,x}_{\mathcal{I}})^{2}\left(t, \boldsymbol{\xi}\right)\Big) \\
   & \quad = \mathds 1_{\{\xi_k(y) > 0\}}\bigg(-\frac{2g^{N,T,x}_{\mathcal{I}}(t,\boldsymbol{\xi})}{N L_{N}^2} \nabla_{L_N}p^N(T-t,y-L_N^{-1} - x) \\
   & \qquad \qquad \qquad \qquad + \frac{1}{N^{2}L_{N}^{4}} \left(\nabla_{L_N} p^N(T-t, y-L_N^{-1} - x)\right)^{2}\bigg), \\[1ex]
    & (g^{N,T,x}_{\mathcal{I}})^{2}\Big(t, \boldsymbol{\xi} + \boldsymbol{e}^{(y)}_{k}\Big) - (g^{N,T,x}_{\mathcal{I}})^{2}\left(t, \boldsymbol{\xi}\right)\\ & \quad = \frac{2g^{N,T,x}_{\mathcal{I}}\left(t, \boldsymbol{\xi}\right)}{NL_{N}}p^{N}(T-t,y - x) + \frac{1}{N^{2}L_{N}^{2}}p^{N}(T-t,y - x)^{2}, \\[1ex]
      & \mathds 1_{\{\xi_k(y) > 0\}}\Big((g^{N,T,x}_{\mathcal{I}})^{2}\Big(t, \boldsymbol{\xi} - \boldsymbol{e}^{(y)}_{k}\Big) - (g^{N,T,x}_{\mathcal{I}})^{2}\left(t, \boldsymbol{\xi}\right)\Big) \\ & \quad = \mathds 1_{\{\xi_k(y) > 0\}} \left(- \frac{2g^{N,T,x}_{\mathcal{I}}\left(t, \boldsymbol{\xi}\right)}{NL_{N}}p^{N}(T-t,y - x) + \frac{1}{N^{2}L_{N}^{2}}p^{N}(T-t,y - x)^{2}\right).
\end{aligned}
\end{equation}
Similarly to the proof of assertion~(ii), since~\eqref{Paper02_definition_state_space_formal} implies that, for all~$\boldsymbol{\xi} = (\xi(y))_{y \in L_N^{-1}\mathbb Z} \in \mathcal{S}^{N}$, $\| \xi(y) \|_{\ell_{1}}$ grows at most polynomially in $\vert y \vert$ as~$\vert y - x \vert \rightarrow \infty$, it follows from~\eqref{Paper02_infinitesimal_generator},~\eqref{Paper02_trivial_large_deviation_estimatre_CTRW}, the fact that by assertion~(i), $g^{N,T,x}_{\mathcal I} \in \mathcal C([0,T] \times \mathcal S^N; \mathbb R)$, and from identities~\eqref{Paper02_first_identity_migration_generator_square_greens_function} and~\eqref{Paper02_pre_application_generator_green_function_square}, that the series defining the action of~$\mathcal{L}^{N}$ on~$(g^{N,T,x}_{\mathcal{I}})^{2}$ is absolutely convergent uniformly over $t \in [0,T]$. This ensures that the map~$[0,T] \times \mathcal S^N \ni (t,\boldsymbol{\xi}) \mapsto \mathcal{L}^{N}\Big((g^{N,T,x}_{\mathcal{I}})^{2}(t, \cdot)\Big)(\boldsymbol{\xi})$ is well defined and continuous. Moreover, rearranging the terms of the series, combining~\eqref{Paper02_definition_action_generator_greens_function} with~\eqref{Paper02_first_identity_migration_generator_square_greens_function} and~\eqref{Paper02_pre_application_generator_green_function_square}, and recalling~\eqref{Paper02_reaction_predictable_bracket_process}, we obtain~\eqref{Paper02_definition_action_generator_on_square_greens_function}, as desired.

\medskip

\myemph{Proof of assertion~(iv)}
We will bound the supremum, over~$t_{1},t_{2} \in [0,T]$, of the expectation of each term on the left-hand side of~\eqref{Paper02_general_estimates_moments_green_function_and_derivatives_time_generator} separately. Starting with the first term, by the definition of~$g^{N,T,x}_{\mathcal{I}}$ in~\eqref{Paper02_green_function_representation_writing_as_lipschitz_map} and by Lemma~\ref{Paper02_simple_estimate_greens_function_representation_moments}, we conclude that for any~$r \geq 1$,
\begin{equation} \label{Paper02_trivial_bound_moments_green_function}
    \sup_{y \in L_{N}^{-1}\mathbb{Z}} \; \sup_{t_{1},t_{2} \in [0,T]} \mathbb{E}_{\boldsymbol \eta^N}\Big[(g^{N,T,y}_{\mathcal{I}})^{r}(t_{1}, \eta^{N}(t_{2}))\Big] < \infty.
\end{equation}
In particular, by~\eqref{Paper02_formula_derivative_time_greens_function}, estimate~\eqref{Paper02_trivial_bound_moments_green_function} implies that for any~$r \geq 1$,
\begin{equation} \label{Paper02_trivial_bound_moments_derivative_time_greens}
     \sup_{t_{1},t_{2} \in [0,T]} \mathbb{E}_{\boldsymbol \eta^N}\bigg[\bigg(\frac{\partial}{\partial t} g^{N,T,x}_{\mathcal{I}}(\cdot, \eta^{N}(t_{2}))\bigg)^{r}(t_{1})\bigg] < \infty.
\end{equation}
To bound the third term on the left-hand side of~\eqref{Paper02_general_estimates_moments_green_function_and_derivatives_time_generator}, we apply the Cauchy-Schwarz inequality, and then estimates~\eqref{Paper02_trivial_bound_moments_green_function} and~\eqref{Paper02_trivial_bound_moments_derivative_time_greens}, obtaining
\begin{equation} \label{Paper02_estimate_third_term_moments_generator_derivative_green}
\begin{split}
     & \sup_{t_{1},t_{2} \in [0,T]} \mathbb{E}_{\boldsymbol \eta^N}\bigg[(g^{N,T,x}_{\mathcal{I}})^{2}(t_{1}, \eta^{N}(t_{2}))\bigg(\frac{\partial}{\partial t} g^{N,T,x}_{\mathcal{I}}(\cdot, \eta^{N}(t_{2}))\bigg)^{2}(t_{1})\bigg] \\ & \quad \leq \sup_{t_{1},t_{2} \in [0,T]} \mathbb{E}_{\boldsymbol \eta^N}\Big[(g^{N,T,x}_{\mathcal{I}})^{4}(t_{1}, \eta^{N}(t_{2}))\Big]^{1/2}\mathbb{E}_{\boldsymbol \eta^N}\bigg[\bigg(\frac{\partial}{\partial t} g^{N,T,x}_{\mathcal{I}}(\cdot, \eta^{N}(t_{2}))\bigg)^{4}(t_{1})\bigg]^{1/2} \\ & \quad < \infty.
\end{split}
\raisetag{-2cm}
\end{equation}
For the fourth term on the left-hand side of~\eqref{Paper02_general_estimates_moments_green_function_and_derivatives_time_generator}, by the definition of~$F=(F_{k})_{k \in \mathbb{N}_{0}}$ in~\eqref{Paper02_reaction_term_PDE} and $F^+ = (F_{k}^+)_{k \in \mathbb{N}_{0}}$ in~\eqref{Paper02_reaction_predictable_bracket_process}, and since~$q_{+}$ and~$q_{-}$ are non-negative polynomials by Assumption~\ref{Paper02_assumption_polynomials}, and then in the last line since $s_k \leq 1 \; \forall \, k \in \mathbb N_0$ by Assumption~\ref{Paper02_assumption_fitness_sequence}(i) and~(iii), we have that for any~$\boldsymbol{\xi} = (\xi_k(y))_{k \in \mathbb N_0, y \in L_N^{-1}\mathbb Z} \in \mathcal{S}^{N}$ and any~$y \in L_{N}^{-1}\mathbb{Z}$,
\begin{equation} \label{Paper02_trivial_bound_sum_reaction_terms}
\begin{aligned}
     & \bigg\vert \sum_{k \in \mathcal{I}} F_{k}\left(\frac{\xi(y)}{N}\right) \bigg\vert
     \\ & \quad \leq \sum_{k \in \mathcal{I}} F^{+}_{k}\left(\frac{\xi(y)}{N}\right)
     \\ & \quad =  \sum_{k \in \mathcal{I}} \bigg( q_{+}\left(\frac{\| \xi(y) \|_{\ell_{1}}}{N}\right)\left(s_{k}(1-\mu)\frac{\xi_{k}(y)}{N} + \mathds{1}_{\{k \geq 1\}}s_{k-1}\mu \frac{\xi_{k-1}(y)}{N}\right) \\
     & \hspace{8cm}+ q_{-}\left(\frac{\| \xi(y) \|_{\ell_{1}}}{N}\right)\frac{\xi_{k}(y)}{N}\bigg)
     \\ & \quad \leq \frac{\| \xi(y) \|_{\ell_{1}}}{N} \left(q_{+}\left(\frac{\| \xi(y) \|_{\ell_{1}}}{N}\right) + q_{-}\left(\frac{\| \xi(y) \|_{\ell_{1}}}{N}\right)\right).
\end{aligned}
\end{equation}
Using again the fact that~$q_{+}$ and~$q_{-}$ are polynomials, by Lemma~\ref{Paper02_simple_estimate_greens_function_representation_moments} we conclude that for any~$r \geq 1$,
\begin{equation} \label{Paper02_bounding_moments_action_generator_green_function_i}
     \sup_{t_{1},t_{2} \in [0,T]} \mathbb{E}_{\boldsymbol \eta^N}\bigg[\bigg\langle \bigg\vert \sum_{k \in \mathcal{I}} F_{k}\left(u^{N}(t_{2}, \cdot)\right) \bigg\vert, \phi^{N,T,x}(t_{1}, \cdot) \bigg\rangle^{r}_{N}\bigg] < \infty.
\end{equation}
Combining identity~\eqref{Paper02_definition_action_generator_greens_function} with estimates~\eqref{Paper02_trivial_bound_moments_green_function} and~\eqref{Paper02_bounding_moments_action_generator_green_function_i}, we conclude that for any~$r \geq 1$,
\begin{equation} \label{Paper02_bounding_moments_action_generator_green_function_ii}
     \sup_{t_{1},t_{2} \in [0,T]} \mathbb{E}_{\boldsymbol \eta^N}\Big[\Big\vert \mathcal{L}^{N}\Big(g^{N,T,x}_{\mathcal{I}}(t_{1}, \cdot)\Big)(\eta^{N}(t_{2})) \Big\vert^{r}\Big] < \infty.
\end{equation}
We now proceed to bound the fifth term on the left-hand side of~\eqref{Paper02_general_estimates_moments_green_function_and_derivatives_time_generator}. As in the derivation of~\eqref{Paper02_estimate_third_term_moments_generator_derivative_green}, by combining the Cauchy-Schwarz inequality with estimates~\eqref{Paper02_trivial_bound_moments_green_function} and~\eqref{Paper02_bounding_moments_action_generator_green_function_ii}, we conclude that
\begin{equation} \label{Paper02_bounding_moments_action_generator_green_function_iii}
     \sup_{t_{1},t_{2} \in [0,T]} \mathbb{E}_{\boldsymbol \eta^N}\left[\Big(g^{N,T,x}_{\mathcal{I}}(t_{1}, \eta^{N}(t_{2}))\Big)^{2}\Big(\mathcal{L}^{N}\Big(g^{N,T,x}_{\mathcal{I}}(t_{1}, \cdot)\Big)(\eta^{N}(t_{2}))\Big)^{2} \right] < \infty.
\end{equation}
Moreover, by~\eqref{Paper02_definition_p_N_test_function_green}, we have that for any~$t \in [0,T]$ and any~$y \in L_{N}^{-1}\mathbb{Z}$,
\begin{equation} \label{Paper02_very_elementary_bound_p_N}
    p^{N}(T-t,y-x)^{2} \leq L_{N} p^{N}(T-t,y-x).
\end{equation}
Therefore, by~\eqref{Paper02_very_elementary_bound_p_N},~\eqref{Paper02_trivial_bound_sum_reaction_terms} and the fact that $0 \leq \deg q_+ < \deg q_-$ by Assumption~\ref{Paper02_assumption_polynomials}, we conclude that for all~$t \in [0,T]$ and~$\boldsymbol{\xi} = (\xi_k(y))_{k \in \mathbb N_0, y \in L_N^{-1}\mathbb Z} \in \mathcal{S}^{N}$,
\begin{equation} \label{Paper02_bounding_moments_action_generator_green_function_iv}
\begin{aligned}
     & \frac{1}{N L_{N}^{2}} \sum_{y \in L_{N}^{-1}\mathbb{Z}} \; \sum_{k \in \mathcal{I}} F^{+}_{k}\left(\frac{\xi(y)}{N}\right) p^{N}(T-t,y-x)^{2}
     \\ & \; + \frac{m_{N}}{2NL^{4}_{N}} \sum_{y \in L_{N}^{-1}\mathbb{Z}} \; \sum_{k \in \mathcal{I}} \frac{\xi_{k}(y)}{N} \Big( \left(\nabla_{L_N} p^{N}(T-t,y - L_N^{-1} - x)\right)^2 \\[-4mm]
     & \hspace{6cm} + \left(\nabla_{L_N} p^{N}(T-t,y - x)\right)^2\Big)
     \\ & \; \lesssim_{N,q_{+},q_{-}} \left\langle \left(\frac{\| \xi(\cdot) \|_{\ell_{1}}}{N}\right)^{1 + \deg q_{-}}, \, \phi^{N,T,x}(t, \cdot) \right\rangle_{N} \\
     & \hspace{2cm} + \sum_{x' \in \{x,x-L_N^{-1}, x + L_N^{-1}\}} \left\langle \frac{\| \xi(\cdot) \|_{\ell_{1}}}{N}, \, \phi^{N,T,x'}(t, \cdot) \right\rangle_{N}.
\end{aligned}
\end{equation}
Combining~\eqref{Paper02_definition_action_generator_on_square_greens_function},~\eqref{Paper02_bounding_moments_action_generator_green_function_iii}, Lemma~\ref{Paper02_simple_estimate_greens_function_representation_moments} and estimate~\eqref{Paper02_bounding_moments_action_generator_green_function_iv}, we conclude that
\begin{equation} \label{Paper02_bounding_moments_action_generator_green_function_v}
     \sup_{t_{1},t_{2} \in [0,T]} \mathbb{E}_{\boldsymbol \eta^N}\left[\Big(\mathcal{L}^{N}(g^{N,T,x}_{\mathcal{I}})^{2}(t_{1}, \cdot)\Big)^{2}(\eta^{N}(t_{2})) \right] < \infty.
\end{equation}
Hence, by~\eqref{Paper02_trivial_bound_moments_green_function},~\eqref{Paper02_trivial_bound_moments_derivative_time_greens},~\eqref{Paper02_estimate_third_term_moments_generator_derivative_green},~\eqref{Paper02_bounding_moments_action_generator_green_function_ii} and~\eqref{Paper02_bounding_moments_action_generator_green_function_v}, estimate~\eqref{Paper02_general_estimates_moments_green_function_and_derivatives_time_generator} holds, as desired.

\medskip

\myemph{Proof of assertion~(v)}
It follows from assertion~(iv) that $(M(t))_{t \geq 0}$ is an integrable process. It remains to verify that~$M$ satisfies the martingale property with respect to the filtration~$\{\mathcal{F}^{\eta^{N}}_{t+}\}_{t \geq 0}$. For each~$n \in \mathbb{N}$, define~$h^{N,T,x,n}_{\mathcal{I}}: [0,T] \times \mathcal{S}^{N} \rightarrow \mathbb{R}$ by
\begin{equation} \label{Paper02_truncation_square_green_function}
    h^{N,T,x,n}_{\mathcal{I}}(t,\boldsymbol{\xi}) \defeq \Big(g^{N,T,x}_{\mathcal{I}}\Big)^{2}(t,\boldsymbol{\xi}) \wedge n^{2} \quad \forall (t, \boldsymbol{\xi}) \in [0,T] \times \mathcal{S}^{N}.
\end{equation}
Recall that~$t_{1} \in [0,T]$ is fixed. By assertion~(ii) of this lemma, $g^{N,T,x}_{\mathcal{I}}(t_{1},\cdot) \in \mathcal{C}_*(\mathcal{S}^{N}; \mathbb{R})$. By~\eqref{Paper02_truncation_square_green_function}, for all~$\boldsymbol{\xi}, \boldsymbol{\zeta} \in \mathcal{S}^{N}$ and every~$n \in \mathbb{N}$, we have
\begin{equation*}
\begin{aligned}
    \Big\vert h^{N,T,x,n}_{\mathcal{I}}(t_{1}, \boldsymbol{\xi}) - h^{N,T,x,n}_{\mathcal{I}}(t_{1}, \boldsymbol{\zeta}) \Big\vert & \leq 2n \Big\vert g^{N,T,x}_{\mathcal{I}}(t_{1}, \boldsymbol{\xi}) - g^{N,T,x}_{\mathcal{I}}(t_{1}, \boldsymbol{\zeta}) \Big\vert.
\end{aligned}
\end{equation*}
Hence, in particular, $h^{N,T,x,n}_{\mathcal{I}}(t_{1}, \cdot) \in \mathcal{C}_*(\mathcal{S}^{N}; \mathbb{R})$ for every~$n \in \mathbb{N}$. Therefore, by Theorem~\ref{Paper02_thm_existence_uniqueness_IPS_spatial_muller}, for every~$n \in \mathbb{N}$, the process~$(M^{(n)}(t))_{t \geq 0}$ given by, for all~$t \geq 0$,
\begin{equation} \label{Paper02_associated_truncated_martingale}
\begin{split}
     M^{(n)}(t) & 
 \defeq h^{N,T,x,n}_{\mathcal{I}}(t_{1}, \eta^{N}(t)) - h^{N,T,x,n}_{\mathcal{I}}(t_{1}, \eta^{N}(0)) \\ & \qquad\qquad - \int_{0}^{t} \mathcal{L}^{N}\Big(h^{N,T,x,n}_{\mathcal{I}}(t_{1}, \cdot)\Big)(\eta^{N}(\tau-)) \, d\tau
\end{split}
\raisetag{-1.1cm}
\end{equation}
is a càdlàg martingale with respect to the filtration~$\{\mathcal{F}^{\eta^{N}}_{t+}\}_{t \geq 0}$.

We now claim that for all~$t \geq 0$,
\begin{equation} \label{Paper02_constructing_sequence_UI_martingales_square_green}
    \lim_{n \rightarrow \infty} M^{(n)}(t) = M(t) \quad \textrm{almost surely}.
\end{equation}
Indeed, by the definition of~$h^{N,T,x,n}_{\mathcal{I}}$ in~\eqref{Paper02_truncation_square_green_function}, it is immediate that for any $\boldsymbol{\xi} \in \mathcal{S}^{N}$,
\begin{equation} \label{Paper02_limit_truncation_square_greens_function}
    \lim_{n \rightarrow \infty} \, h^{N,T,x,n}_{\mathcal{I}}(t_{1}, \boldsymbol{\xi}) = \Big(g^{N,T,x}_{\mathcal{I}}\Big)^{2}(t_{1}, \boldsymbol{\xi}).
\end{equation}
Hence,~\eqref{Paper02_constructing_sequence_UI_martingales_square_green} will be proved after verifying that for all~$t \geq 0$, the following limit holds almost surely:
\begin{equation} \label{Paper02_desired_limit_almost_sure_square_green_function}
\begin{aligned}
    & \lim_{n \rightarrow \infty} \, \int_{0}^{t} \mathcal{L}^{N}\Big(h^{N,T,x,n}_{\mathcal{I}}(t_{1}, \cdot)\Big)(\eta^{N}(\tau-)) \, d\tau \\
    & \quad = \int_{0}^{t}  \mathcal{L}^{N}\Big((g^{N,T,x}_{\mathcal{I}})^{2}(t_{1}, \cdot)\Big)(\eta^{N}(\tau-)) \, d\tau.
\end{aligned}
\end{equation}
Since, by Theorem~\ref{Paper02_thm_existence_uniqueness_IPS_spatial_muller},~$(\eta^{N}(\tau))_{\tau \geq 0}$ is an~$\mathcal{S}^{N}$-valued càdlàg process, and since $\mathcal S^N$ is a complete and separable metric space by Proposition~\ref{Paper02_topological_properties_state_space}, by standard results in stochastic analysis (see e.g.~\cite[Remark~3.6.4]{ethier2009markov}), for any~$t \geq 0$ and almost every realisation~$(\eta^{N}_{\omega}(\tau))_{\tau \geq 0}$ of $(\eta^N(\tau))_{\tau \geq 0}$, there exists a compact set~$\mathcal{K}_{t,\omega} \subset \mathcal{S}^{N}$ such that $\eta^{N}_{\omega}(\tau) \in \mathcal{K}_{t,\omega} \; \forall \, \tau \in [0,t]$.
By the characterisation of compact subsets of~$\mathcal{S}^{N}$ given by Proposition~\ref{Paper02_topological_properties_state_space}, we conclude that for almost every realisation~$(\eta^{N}_{\omega}(\tau))_{\tau \geq 0}$ of $(\eta^N(\tau))_{\tau \geq 0}$,
$\sup_{\tau \leq t} \; \vert \vert \vert \eta^{N}_{\omega}(\tau) \vert \vert \vert_{\mathcal{S}^{N}} < \infty$.
In particular, by~\eqref{Paper02_definition_state_space_formal}, for almost every realisation~$(\eta^{N}_{\omega}(\tau))_{\tau \geq 0}$ of $(\eta^N(\tau))_{\tau \geq 0}$, there exists $A_{\omega,t} > 0$ such that
\begin{equation} \label{Paper02_corollary_cadlag_norm_property}
    \| \eta^N_\omega (\tau,y)  \|_{\ell_1} \leq A_{\omega,t}(1 + \vert y \vert^2) \quad \forall (\tau,y) \in [0,t] \times L_N^{-1}\mathbb Z. 
\end{equation}
We also observe that the following elementary inequality holds for any~$a,b \in \mathbb{R}$ and~$n \in \mathbb{N}$:
\begin{equation} \label{Paper02_elementary_inequality_difference_truncated_squares}
\begin{aligned}
    \vert (a^{2} \wedge n^{2}) - (b^{2} \wedge n^{2}) \vert \leq \vert a^{2} -b^{2} \vert.
\end{aligned}
\end{equation}
Hence, combining the definition of~$h^{N,T,x,n}_{\mathcal{I}}$ in~\eqref{Paper02_truncation_square_green_function} with the identities given in~\eqref{Paper02_first_identity_migration_generator_square_greens_function} and~\eqref{Paper02_pre_application_generator_green_function_square}, we conclude that for any $y \in L_{N}^{-1}\mathbb{Z}$, $k \in \mathcal{I}$, $n \in \mathbb{N}$, $t \in [0,T]$, and $\boldsymbol{\xi} = (\xi_j(z))_{j \in \mathbb N_0, z \in L_N^{-1}\mathbb Z} \in \mathcal{S}^{N}$, the following estimates hold:
\begin{equation} \label{Paper02_simple_estimates_generator_on_truncation_square_green_function}
\begin{aligned}
    & \mathds 1_{\{\xi_k(y) > 0\}}\Big\vert h^{N,T,x,n}_{\mathcal{I}}\Big(t, \boldsymbol{\xi} + \boldsymbol{e}^{(y+L_{N}^{-1})}_{k} - \boldsymbol{e}^{(y)}_{k}\Big) - h^{N,T,x,n}_{\mathcal{I}}\left(t, \boldsymbol{\xi}\right) \Big\vert \\ & \quad \leq \frac{2g^{N,T,x}_{\mathcal{I}}(t,\boldsymbol{\xi})}{NL_{N}}\Big(p^{N}(T-t,y + L_{N}^{-1} - x) + p^{N}(T-t,y - x) \Big) \\ & \qquad \qquad + \frac{1}{N^{2}L_{N}^{4}} \left(\nabla_{L_N}p^{N}(T-t,y - x) \right)^{2}, \\
 & \mathds 1_{\{\xi_k(y) > 0\}} \Big\vert h^{N,T,x,n}_{\mathcal{I}}\Big(t, \boldsymbol{\xi} + \boldsymbol{e}^{(y-L_{N}^{-1})}_{k} - \boldsymbol{e}^{(y)}_{k}\Big) - h^{N,T,x,n}_{\mathcal{I}}\left(t, \boldsymbol{\xi}\right) \Big\vert \\ & \quad \leq \frac{2g^{N,T,x}_{\mathcal{I}}(t,\boldsymbol{\xi})}{N L_{N}} \Big( p^{N}(T-t,y - L_{N}^{-1} - x) + p^{N}(T-t,y - x) \Big)\\ & \qquad  \qquad + \frac{1}{N^{2}L_{N}^{4}} \left(\nabla_{L_N}p^{N}(T-t,y - L_{N}^{-1} - x)\right)^{2}, \\[1ex]
\end{aligned}
\end{equation}
as well as
\begin{equation} \label{Paper02_simple_estimates_generator_on_truncation_square_green_function_2}
\begin{aligned}
& \Big\vert h^{N,T,x,n}_{\mathcal{I}}\Big(t, \boldsymbol{\xi} + \boldsymbol{e}^{(y)}_{k}\Big) - h^{N,T,x,n}_{\mathcal{I}}\left(t, \boldsymbol{\xi}\right) \Big\vert
    \\ & \quad \leq \frac{2g^{N,T,x}_{\mathcal{I}}\left(t, \boldsymbol{\xi}\right)}{NL_{N}}p^{N}(T-t,y - x) + \frac{1}{N^{2}L_{N}^{2}}p^{N}(T-t,y - x)^{2}, \\[1ex]
      & \mathds 1_{\{\xi_k(y) > 0\}} \Big\vert h^{N,T,x,n}_{\mathcal{I}}\Big(t, \boldsymbol{\xi} - \boldsymbol{e}^{(y)}_{k}\Big) - h^{N,T,x,n}_{\mathcal{I}}\left(t, \boldsymbol{\xi}\right) \Big\vert \\
      & \quad \leq \frac{2g^{N,T,x}_{\mathcal{I}}\left(t, \boldsymbol{\xi}\right)}{NL_{N}}p^{N}(T-t,y - x) + \frac{1}{N^{2}L_{N}^{2}}p^{N}(T-t,y - x)^{2}.
\end{aligned}
\end{equation}
Using the definition of the action of the generator~$\mathcal{L}^{N}$ given in~\eqref{Paper02_generator_foutel_etheridge_model} and~\eqref{Paper02_infinitesimal_generator}, together with estimates~\eqref{Paper02_trivial_large_deviation_estimatre_CTRW},~\eqref{Paper02_corollary_cadlag_norm_property}, \eqref{Paper02_simple_estimates_generator_on_truncation_square_green_function} and~\eqref{Paper02_simple_estimates_generator_on_truncation_square_green_function_2}, the limit in~\eqref{Paper02_limit_truncation_square_greens_function} and the dominated convergence theorem, we conclude that for any $t \geq 0$, the limit in~\eqref{Paper02_desired_limit_almost_sure_square_green_function} holds almost surely. This completes the proof of the claim~\eqref{Paper02_constructing_sequence_UI_martingales_square_green}. 

Moreover, the definition of~$h^{N,T,x,n}_{\mathcal{I}}$ in~\eqref{Paper02_truncation_square_green_function} and of~$M^{(n)}$ in~\eqref{Paper02_associated_truncated_martingale} imply that for every~$n \in \mathbb{N}$ and all~$t \geq 0$,
\begin{equation} \label{Paper02_expectation_square_truncation_martingale}
\begin{aligned}
    \mathbb{E}_{\boldsymbol \eta^N}\Big[(M^{(n)}(t))^{2}\Big] & \lesssim \mathbb{E}_{\boldsymbol \eta^N}\Big[(g^{N,T,x}_{\mathcal{I}})^{4}(t_{1}, \eta^{N}(t))\Big] + (g^{N,T,x}_{\mathcal{I}})^{4}(t_{1}, \boldsymbol{\eta}^{N}) \\
    & \qquad + \mathbb{E}_{\boldsymbol \eta^N}\bigg[\bigg(\int_{0}^{t} \Big(\mathcal{L}^{N}h^{N,T,x,n}_{\mathcal{I}}(t_{1}, \cdot)\Big)(\eta^{N}(\tau-)) \, d\tau\bigg)^{2}\bigg].
\end{aligned}
\end{equation}
Observe that by Jensen's inequality and Fubini's theorem, we have
\begin{equation} \label{Paper02_easy_trick_cauchy_schwarz}
\begin{aligned}
    & \mathbb{E}_{\boldsymbol \eta^N}\bigg[\bigg(\int_{0}^{t} \Big(\mathcal{L}^{N}h^{N,T,x,n}_{\mathcal{I}}(t_{1}, \cdot)\Big)(\eta^{N}(\tau-)) \, d\tau\bigg)^{2}\bigg] \\
    & \quad \leq t \int_{0}^{t} \mathbb{E}_{\boldsymbol \eta^N}\Big[\Big(\mathcal{L}^{N}h^{N,T,x,n}_{\mathcal{I}}(t_{1}, \cdot)\Big)^{2}(\eta^{N}(\tau-))\Big] \, d\tau.
\end{aligned}
\end{equation}
As in the proof of~\eqref{Paper02_bounding_moments_action_generator_green_function_iv}, to bound the right-hand side of~\eqref{Paper02_easy_trick_cauchy_schwarz}, we first observe that by the definition of $\mathcal L^N$ in~\eqref{Paper02_generator_foutel_etheridge_model} and~\eqref{Paper02_infinitesimal_generator}, estimates~\eqref{Paper02_very_elementary_bound_p_N}, \eqref{Paper02_simple_estimates_generator_on_truncation_square_green_function} and~\eqref{Paper02_simple_estimates_generator_on_truncation_square_green_function_2}, the definition of $F^+ = (F^+_k)_{k \in \mathbb N_0}$ in~\eqref{Paper02_reaction_predictable_bracket_process}, and the fact that $0 \leq \deg q_+ < \deg q_-$ by Assumption~\ref{Paper02_assumption_polynomials}, for all~$t_1 \in [0,T]$,~$\tau \in [0,T]$, and $n \in \mathbb N$, we have almost surely
\begin{equation} \label{Paper02_expectation_square_truncation_martingale_green_i}
\begin{aligned}
    & \left\vert \mathcal L^N \Big(h^{N,T,x,n}_\mathcal I(t_1,\cdot)\Big)(\eta^N(\tau-))\right\vert
    \\ & \quad \lesssim_{N,q_+,q_-} \left(g^{N,T,x}_{\mathcal I}(t_1,\eta^N(\tau-)) + 1\right) \\
    & \hspace{2.5cm} \cdot \sum_{x' \in \{x,x-L_N^{-1}, x + L_N^{-1}\}} \bigg\langle \frac{\| \eta^N(\tau-,\cdot) \|_{\ell_{1}}}{N}, \, \phi^{N,T,x'}(t_1, \cdot) \bigg\rangle_{N}
    \\ & \qquad \qquad \qquad  + \left(g^{N,T,x}_{\mathcal I}(t_1,\eta^N(\tau-)) + 1\right) \\
    & \hspace{3cm} \cdot \bigg\langle \left(\frac{\| \eta^N(\tau-,\cdot) \|_{\ell_{1}}}{N}\right)^{1 + \deg q_{-}}, \, \phi^{N,T,x}(t_1, \cdot) \bigg\rangle_{N} \\
    & \quad \lesssim_N \left(g^{N,T,x}_{\mathcal I}(t_1,\eta^N(\tau-)) + 1\right) \\
    & \hspace{1.5cm} \cdot \sum_{x' \in \{x,x-L_N^{-1}, x + L_N^{-1}\}} \bigg\langle \frac{\| \eta^N(\tau-,\cdot) \|_{\ell_{1}}^{1 + \deg q_-}}{N}, \, \phi^{N,T,x'}(t_1, \cdot) \bigg\rangle_{N}.
\end{aligned}
\end{equation}
By squaring both sides of~\eqref{Paper02_expectation_square_truncation_martingale_green_i} and applying the elementary inequality
\[
\left(\sum_{i = 1}^J a_i\right)^2 \leq J \sum_{i=1}^J a_i^2 \quad \forall \; J \in \mathbb N, \, (a_i)_{i = 1}^J \subset \mathbb R, 
\]
and then by taking expectations, we conclude that for all~$t_1 \in [0,T]$,~$\tau \in [0,T]$ and $n \in \mathbb N$,
\begin{equation} \label{Paper02_expectation_square_truncation_martingale_green_ii}
\begin{split}
    & \mathbb E_{\boldsymbol \eta^N} \left[\Big(\mathcal{L}^{N}h^{N,T,x,n}_{\mathcal{I}}(t_{1}, \cdot)\Big)^{2}(\eta^{N}(\tau-))\right] \\
    & \quad \lesssim_{N,q_+,q_-} \sum_{z \in \{0,-L_N^{-1},L_N^{-1}\}} \bigg(\mathbb E_{\boldsymbol \eta^N} \bigg[\bigg\langle \frac{\| \eta^N(\tau-,\cdot) \|^{1 + \deg q_-}_{\ell_{1}}}{N}, \, \phi^{N,T,x + z}(t_1, \cdot) \bigg\rangle_{N}^2\bigg] \\
    & \qquad \qquad + \mathbb E_{\boldsymbol \eta^N} \bigg[\left(g^{N,T,x}_{\mathcal I}(t_1,\eta^N(\tau-))\right)^2\bigg\langle \frac{\| \eta^N(\tau-,\cdot) \|^{1 + \deg q_-}_{\ell_{1}}}{N}, \, \phi^{N,T,x + z}(t_1, \cdot) \bigg\rangle_{N}^2\bigg] \bigg) \\
    & \quad \leq \sum_{z \in \{0,-L_N^{-1},L_N^{-1}\}} \bigg(\mathbb E_{\boldsymbol \eta^N} \bigg[\bigg\langle \frac{\| \eta^N(\tau-,\cdot) \|^{1 + \deg q_-}_{\ell_{1}}}{N}, \, \phi^{N,T,x + z}(t_1, \cdot) \bigg\rangle_{N}^2\bigg] \\
    & \hspace{2cm} + \mathbb E_{\boldsymbol \eta^N} \left[\left(g^{N,T,x}_{\mathcal I}(t_1,\eta^N(\tau-))\right)^4\right]^{1/2} \\
    & \hspace{3cm} \cdot \mathbb E_{\boldsymbol \eta^N}\bigg[\bigg\langle \frac{\| \eta^N(\tau-,\cdot) \|^{1 + \deg q_-}_{\ell_{1}}}{N}, \, \phi^{N,T,x + z}(t_1, \cdot) \bigg\rangle_{N}^4\bigg]^{1/2}\bigg), 
\end{split}
\raisetag{-4.3cm}
\end{equation}
where for the second inequality we used the Cauchy-Schwarz inequality. Applying~\eqref{Paper02_easy_trick_cauchy_schwarz} and~\eqref{Paper02_expectation_square_truncation_martingale_green_ii} to~\eqref{Paper02_expectation_square_truncation_martingale}, together with estimate~\eqref{Paper02_trivial_bound_moments_green_function} and Lemma~\ref{Paper02_simple_estimate_greens_function_representation_moments}, we conclude that for all~$t \geq 0$,
\begin{equation} \label{Paper02_uniformly_integrable_martingale_truncation}
    \sup_{n \in \mathbb{N}} \; \mathbb{E}_{\boldsymbol \eta^N}\Big[(M^{(n)}(t))^{2}\Big] < \infty.
\end{equation}
Hence, for all~$t \geq 0$, the collection of random variables~$\{M^{(n)}(\tau): \, 0 \leq \tau \leq t \textrm{ and } n \in \mathbb{N}\}$ is uniformly integrable. By~\eqref{Paper02_constructing_sequence_UI_martingales_square_green}, we then conclude that~$M$ satisfies the martingale property with respect to the filtration~$\{\mathcal{F}^{\eta^{N}}_{t+}\}_{t \geq 0}$, and therefore assertion~(v) holds.

\medskip

\myemph{Proof of assertion~(vi)}
We start by observing that for all $\boldsymbol \xi, \boldsymbol \zeta \in \mathcal S^N$,
\begin{equation} \label{Paper02_preparing_generator_difference_martingale_i}
\begin{aligned}
    & (g^{N,T_{1},x_{1}}_{\mathcal{I}} - g^{N,T_{2},x_{2}}_{\mathcal{I}})^{2}(t, \boldsymbol \xi) - (g^{N,T_{1},x_{1}}_{\mathcal{I}} - g^{N,T_{2},x_{2}}_{\mathcal{I}})^{2}(t, \boldsymbol \zeta) \\
    & \, = \left((g^{N,T_1,x_1}_\mathcal I)^2(t,\boldsymbol \xi) - (g^{N,T_1,x_1}_\mathcal I)^2(t,\boldsymbol \zeta)\right) + \left((g^{N,T_2,x_2}_\mathcal I)^2(t,\boldsymbol \xi) - (g^{N,T_2,x_2}_\mathcal I)^2(t,\boldsymbol \zeta)\right) \\
    & \qquad - 2\left(g^{N,T_1,x_1}_{\mathcal I}(t,\boldsymbol \xi) - g^{N,T_1,x_1}_{\mathcal I}(t,\boldsymbol \zeta)\right)g^{N,T_2,x_2}_\mathcal I(t,\boldsymbol \zeta) \\
    & \qquad - 2g^{N,T_1,x_1}_{\mathcal I}(t,\boldsymbol \xi)\left(g^{N,T_2,x_2}_{\mathcal I}(t,\boldsymbol \xi) - g^{N,T_2,x_2}_{\mathcal I}(t,\boldsymbol \zeta)\right).
\end{aligned}
\end{equation}
Identity~\eqref{Paper02_identity_martingale_difference_increments_generator} then follows from combining the definition of $\mathcal L^N$ in~\eqref{Paper02_generator_foutel_etheridge_model} and~\eqref{Paper02_infinitesimal_generator} with the identities in~\eqref{Paper02_preparing_generator_difference_martingale_i},~\eqref{Paper02_pre_application_generator_green_function},~\eqref{Paper02_first_identity_migration_generator_square_greens_function} and~\eqref{Paper02_pre_application_generator_green_function_square}, and with the observation that for all $\boldsymbol \xi \in \mathcal S^N$, the series defining~$\mathcal{L}^{N}\Big((g^{N,T_{1},x_{1}}_{\mathcal{I}} - g^{N,T_{2},x_{2}}_{\mathcal{I}})^{2}(t, \cdot)\Big)(\boldsymbol{\xi})$ is absolutely convergent by~\eqref{Paper02_trivial_large_deviation_estimatre_CTRW} and~\eqref{Paper02_definition_state_space_formal}. Since this argument is similar to the one used in the proof of~\eqref{Paper02_definition_action_generator_greens_function} and~\eqref{Paper02_definition_action_generator_on_square_greens_function}, we omit the details. The fact that the map $[0,T_{1} \wedge T_{2}] \times \mathcal{S}^{N} \ni (t, \boldsymbol{\xi})  \, \mapsto \mathcal{L}^{N}\Big((g^{N,T_{1},x_{1}}_{\mathcal{I}} - g^{N,T_{2},x_{2}}_{\mathcal{I}})^{2}(t, \cdot)\Big)(\boldsymbol{\xi})$ is in $\mathcal{C}([0,T_{1} \wedge T_{2}] \times \mathcal{S}^{N}; \mathbb{R})$ also follows from the fact that the series defining~$\mathcal{L}^{N}\Big((g^{N,T_{1},x_{1}}_{\mathcal{I}} - g^{N,T_{2},x_{2}}_{\mathcal{I}})^{2}(t, \cdot)\Big)(\boldsymbol{\xi})$ is absolutely convergent uniformly in $t \in [0,T_1 \wedge T_2]$ and from the continuity of $p^N$. Finally, estimate~\eqref{Paper02_square_gen_diff_square_green} is proved by using~\eqref{Paper02_identity_martingale_difference_increments_generator},~\eqref{Paper02_general_estimates_moments_green_function_and_derivatives_time_generator} and Lemma~\ref{Paper02_simple_estimate_greens_function_representation_moments}. Since the proof of~\eqref{Paper02_square_gen_diff_square_green} is similar to the proof of~\eqref{Paper02_uniformly_integrable_martingale_truncation}, we omit the details.
\end{proof}

Finally, we prove Lemma~\ref{Paper02_well_behaved_function_inner_product_test_funciton_lemma}.

\begin{proof}[Proof of Lemma~\ref{Paper02_well_behaved_function_inner_product_test_funciton_lemma}]
     We will divide the proof into steps corresponding to each of the assertions (i) - (v). We highlight that, since the proofs of these assertions are very similar to the proof of Lemma~\ref{Paper02_properties_greens_function_muller_ratchet}, we will omit some details. 

     \medskip

\myemph{Proof of assertion~(i)}
     Since~$\supp(\varphi) \subset \mathbb{R}$ is compact, and using~\eqref{Paper02_formulation_inner_product_with_test_function_lipschitz_continuous_i}, there exists~$R > 0$ such that for~$\boldsymbol{\xi} = (\xi_k(y))_{k \in \mathbb N_0, y \in L_N^{-1}\mathbb Z}\in \mathcal{S}^{N}$,
     \begin{equation} \label{Paper02_using_compactness_support_smooth_test_function}
           g^{N,\varphi}_{\mathcal{I}}(\boldsymbol{\xi}) = \frac{1}{L_{N}} \sum_{\{x \in L_{N}^{-1}\mathbb{Z}: \vert x \vert < R\}} \; \sum_{k \in \mathcal{I}} \frac{\xi_{k}(x)}{N} \int_{-1}^{1} (1 - \vert h \vert) \varphi(x + hL_{N}^{-1}) \, dh .
     \end{equation}
     Hence, by the triangle inequality and the fact that~$\varphi \in \mathcal{C}^{2}_{c}(\mathbb{R})$, and then by~\eqref{Paper02_definition_semi_metric_state_space},  we have, for~$\boldsymbol{\xi} = (\xi(y))_{y \in L_N^{-1}\mathbb Z},\boldsymbol{\zeta} = (\zeta(y))_{y \in L_N^{-1}\mathbb Z} \in \mathcal{S}^{N}$,
     \begin{equation*}
    \begin{aligned}
        \left\vert g^{N,\varphi}_{\mathcal{I}}(\boldsymbol{\xi}) - g^{N,\varphi}_{\mathcal{I}}(\boldsymbol{\zeta})\right\vert & \leq \frac{\| \varphi \|_{L_{\infty}(\mathbb{R})}}{L_{N}} \sum_{\{x \in L_{N}^{-1}\mathbb{Z}: \vert x \vert < R\}} \frac{\| \xi(x) - \zeta(x) \|_{\ell_{1}}}{N} \lesssim_{\varphi,N} d_{\mathcal{S}^{N}}(\boldsymbol{\xi},\boldsymbol{\zeta}).
    \end{aligned}
    \end{equation*}
    Therefore, by~\eqref{Paper02_definition_Lipschitz_function} we have~$g^{N,\varphi}_{\mathcal{I}} \in \mathcal{C}_*(\mathcal{S}^{N}; \mathbb{R})$.

    \medskip

\myemph{Proof of assertion~(ii)}
    Recall from before~\eqref{Paper02_definition_Lipschitz_function} that for $k \in \mathbb N_0$ and $x \in L_N^{-1}\mathbb Z$, we let $\boldsymbol{e}^{(x)}_{k} \in (\mathbb{N}_{0}^{\mathbb{N}_{0}})^{L_{N}^{-1}\mathbb{Z}}$ denote the configuration with only one particle carrying exactly $k$ mutations in deme $x$. Then, by the definition of~$g^{N,\varphi}_{\mathcal{I}}$ in~\eqref{Paper02_formulation_inner_product_with_test_function_lipschitz_continuous_i}, the following identities hold for~$\boldsymbol{\xi} = (\xi_j(y))_{j \in \mathbb N_0, y \in L_N^{-1}\mathbb Z} \in \mathcal{S}^{N}$,~$k \in \mathcal{I}$ and~$x \in L_{N}^{-1}\mathbb{Z}$:
    \begin{equation} \label{Paper02_elementary_identities_action_generator_inner_product_smooth_function}
    \begin{aligned}
        &\mathds 1_{\{\xi_k(x) > 0\}}\Big(g^{N,\varphi}_{\mathcal{I}}\Big(\boldsymbol{\xi} + \boldsymbol{e}^{(x \pm L_{N}^{-1})}_{k} - \boldsymbol{e}^{(x)}_{k} \Big) - g^{N,\varphi}_{\mathcal{I}}(\boldsymbol{\xi})\Big)
        \\ & \qquad = \frac{1}{NL_{N}} \mathds 1_{\{\xi_k(x) > 0\}} \int_{-1}^{1} (1 - \vert h \vert) \Big(\varphi(x \pm L_{N}^{-1} + hL_{N}^{-1}) - \varphi(x + hL_{N}^{-1}) \Big) \, dh, 
        \\ & g^{N,\varphi}_{\mathcal{I}}\Big(\boldsymbol{\xi} + \boldsymbol{e}^{(x)}_{k} \Big) - g^{N,\varphi}_{\mathcal{I}}(\boldsymbol{\xi}) = \frac{1}{NL_{N}} \int_{-1}^{1} (1 - \vert h \vert) \varphi(x + hL_{N}^{-1}) \, dh,
        \\ & \mathds 1_{\{\xi_k(x) > 0\}}\Big(g^{N,\varphi}_{\mathcal{I}}\Big(\boldsymbol{\xi} - \boldsymbol{e}^{(x)}_{k} \Big) - g^{N,\varphi}_{\mathcal{I}}(\boldsymbol{\xi})\Big) \\
        & \qquad = - \frac{1}{NL_{N}} \mathds 1_{\{\xi_k(x) > 0\}} \int_{-1}^{1} (1 - \vert h \vert) \varphi(x + hL_{N}^{-1}) \, dh.
    \end{aligned}
    \end{equation} Combining~\eqref{Paper02_elementary_identities_action_generator_inner_product_smooth_function}, the fact that~$\supp(\varphi) \subset \mathbb{R}$ is compact, and that~$\| \varphi \|_{L_{\infty}(\mathbb{R})} < \infty$ and then the triangle inequality, we conclude that the series in~\eqref{Paper02_generator_foutel_etheridge_model} defining the action of~$\mathcal{L}^{N}$ on~$g^{N,\varphi}_{\mathcal{I}}$ is absolutely convergent, and that the map~$\mathcal S^N \ni \boldsymbol{\xi} \mapsto \Big(\mathcal{L}^{N}g^{N,\varphi}_{\mathcal{I}}\Big)(\boldsymbol{\xi})$ is continuous. Moreover, by using~\eqref{Paper02_elementary_identities_action_generator_inner_product_smooth_function}, rearranging terms and then using~\eqref{Paper02_definition_discrete_continuous_measure},~\eqref{Paper02_discrete_gradient_discrete_laplacian} and~\eqref{Paper02_reaction_term_PDE}, we get~\eqref{Paper02_action_generator_inner_product_smooth_functions}, which completes the proof of assertion~(ii).

     \medskip

\myemph{Proof of assertion~(iii)}
     By~\eqref{Paper02_formulation_inner_product_with_test_function_lipschitz_continuous_i} and the same argument as we used to derive~\eqref{Paper02_first_identity_migration_generator_square_greens_function} and~\eqref{Paper02_pre_application_generator_green_function_square}, the following identities hold for~$\boldsymbol{\xi} = (\xi_j(y))_{j \in \mathbb N_0,y \in L_N^{-1}\mathbb Z} \in \mathcal{S}^{N}$,~$k \in \mathcal{I}$ and~$x \in L_{N}^{-1}\mathbb{Z}$:
     \begin{equation} \label{Paper02_elementary_identities_action_generator_inner_product_smooth_function_square}
     \begin{split}
          & \mathds 1_{\{\xi_k(x) > 0\}}\Big((g^{N,\varphi}_{\mathcal{I}})^{2}\Big(\boldsymbol{\xi} + \boldsymbol{e}^{(x \pm L_{N}^{-1})}_{k} - \boldsymbol{e}^{(x)}_{k} \Big) - (g^{N,\varphi}_{\mathcal{I}})^{2}(\boldsymbol{\xi})\Big) \\ & \quad = \mathds 1_{\{\xi_k(x) > 0\}}\bigg(\frac{2 g^{N,\varphi}_{\mathcal{I}}(\boldsymbol{\xi})}{NL_{N}}\int_{-1}^{1} (1 - \vert h \vert) \Big(\varphi(x \pm L_{N}^{-1} + hL_{N}^{-1}) - \varphi(x + hL_{N}^{-1}) \Big) \, dh \\ & \qquad \qquad \quad + \frac{1}{N^{2}L_{N}^{2}} \bigg(\int_{-1}^{1} (1 - \vert h \vert) \Big(\varphi(x \pm L_{N}^{-1} + hL_{N}^{-1}) - \varphi(x + hL_{N}^{-1}) \Big) \, dh\bigg)^{2}\bigg), \\ 
          & (g^{N,\varphi}_{\mathcal{I}})^{2}\Big(\boldsymbol{\xi} + \boldsymbol{e}^{(x)}_{k} \Big) - (g^{N,\varphi}_{\mathcal{I}})^{2}(\boldsymbol{\xi}) = \frac{2g^{N,\varphi}_{\mathcal{I}}(\boldsymbol{\xi})}{NL_{N}} \int_{-1}^{1} (1 - \vert h \vert) \varphi(x + hL_{N}^{-1}) \, dh \\ & \quad \quad 
 \quad  \quad  \quad  \quad  \quad  \quad  \quad 
 \quad \quad \quad 
 \quad  \quad  \quad  \quad  \quad  + \frac{1}{N^{2}L_{N}^{2}} \bigg(\int_{-1}^{1} (1 - \vert h \vert) \varphi(x + hL_{N}^{-1}) \, dh\bigg)^{2}, \\ & \mathds 1_{\{\xi_k(x) > 0\}} \Big((g^{N,\varphi}_{\mathcal{I}})^{2}\Big(\boldsymbol{\xi} - \boldsymbol{e}^{(x)}_{k} \Big) - (g^{N,\varphi}_{\mathcal{I}})^{2}(\boldsymbol{\xi})\Big) \\
 & \quad = \mathds 1_{\{\xi_k(x) > 0\}}\bigg(- \frac{2g^{N,\varphi}_{\mathcal{I}}(\boldsymbol{\xi})}{NL_{N}} \int_{-1}^{1} (1 - \vert h \vert) \varphi(x + hL_{N}^{-1}) \, dh \\
 & \qquad \qquad \qquad + \frac{1}{N^{2}L_{N}^{2}} \bigg(\int_{-1}^{1} (1 - \vert h \vert) \varphi(x + hL_{N}^{-1}) \, dh\bigg)^{2}\bigg),
     \end{split}
     \raisetag{-5.5cm}
     \end{equation}
where~$\boldsymbol{e}^{(x)}_{k} \in (\mathbb{N}_{0}^{\mathbb{N}_{0}})^{L_{N}^{-1}\mathbb{Z}}$ denotes the configuration with only one particle carrying exactly $k$ mutations in deme $x$. Then, since~$\supp(\varphi) \subset\mathbb{R}$ is compact and~$\| \varphi \|_{L_{\infty}(\mathbb{R})} < \infty$, by Proposition~\ref{Paper02_topological_properties_state_space}(i) and~(ii), and using assertion~(ii) of this lemma, the identities in~\eqref{Paper02_elementary_identities_action_generator_inner_product_smooth_function_square} imply that the series in~\eqref{Paper02_infinitesimal_generator} defining the action of~$\mathcal{L}^{N}$ on~$(g^{N,\varphi}_{\mathcal{I}})^{2}$ is absolutely convergent uniformly on compact subsets of $\mathcal S^N$. Therefore, the map~$\mathcal S^N \ni \boldsymbol{\xi} \mapsto \Big(\mathcal{L}^{N}(g^{N,\varphi}_{\mathcal{I}})^2\Big)(\boldsymbol{\xi})$ is continuous. Moreover, by using~\eqref{Paper02_elementary_identities_action_generator_inner_product_smooth_function_square}, rearranging terms and then using~\eqref{Paper02_discrete_gradient_discrete_laplacian},~\eqref{Paper02_definition_discrete_continuous_measure},~\eqref{Paper02_action_generator_inner_product_smooth_functions} and~\eqref{Paper02_reaction_predictable_bracket_process}, we get~\eqref{Paper02_action_generator_square_inner_product_test_function}, which completes the proof of assertion~(iii).

\myemph{Proof of assertion~$(iv)$}
We will bound each term within the expectation on the left-hand side of~\eqref{Paper02_general_estimates_moments_test_function_and_derivatives_generator} individually. For the first term, observe that~\eqref{Paper02_using_compactness_support_smooth_test_function}, the triangle inequality, and the fact that~$\| \varphi \|_{L_{\infty}(\mathbb{R})} < \infty$ imply the existence of~$R > 0$ such that for~$N \in \mathbb N$ and~$\boldsymbol{\xi} = (\xi(y))_{y \in L_N^{-1}\mathbb Z} \in \mathcal{S}^{N}$,
\begin{equation} \label{Paper02_intermediate_step_moments_inner_product_test_function_i}
    \vert g^{N,\varphi}_{\mathcal{I}} (\boldsymbol{\xi}) \vert \leq \frac{1}{L_{N}} \sum_{\{x \in L_{N}^{-1}\mathbb{Z}: \vert x \vert < R\}} \| \varphi \|_{L_\infty(\mathbb R)}\frac{\| \xi(x) \|_{\ell_{1}}}{N}.
\end{equation}
We recall that the following elementary inequality holds for~$r \geq 1$,~$n \in \mathbb{N}$ and~$(a_{i})_{i = 1}^n \subset [0,\infty)$:
\begin{equation} \label{Paper02_elementary_inequality_power_arbitrary_finite_sum_positive_numbers}
    \Bigg(\sum_{i = 1}^{n} a_{i} \Bigg)^{r} \leq n^{r-1} \sum_{i = 1}^{n} a_{i}^{r}.
\end{equation}
For $r \geq 1$, taking both sides of~\eqref{Paper02_intermediate_step_moments_inner_product_test_function_i} to the power of~$r$ and then using~\eqref{Paper02_elementary_inequality_power_arbitrary_finite_sum_positive_numbers} and that $\vert \{x \in L_N^{-1}\mathbb Z: \, \vert x \vert < R\}\vert \leq 2RL_N$, we get, for~$\boldsymbol{\xi} = (\xi(x))_{x \in L_N^{-1}\mathbb Z} \in \mathcal{S}^{N}$,
\begin{equation} \label{Paper02_intermediate_step_moments_inner_product_test_function_ii}
     \vert g^{N,\varphi}_{\mathcal{I}} (\boldsymbol{\xi}) \vert^{r} \leq (2R)^{r-1}\| \varphi \|_{L_{\infty}(\mathbb{R})}^r \frac{1}{L_{N}} \sum_{\{x \in L_{N}^{-1}\mathbb{Z}: \vert x \vert < R\}} \frac{\| \xi(x) \|_{\ell_{1}}^{r}}{N^{r}}.
\end{equation}
Taking expectations on both sides of~\eqref{Paper02_intermediate_step_moments_inner_product_test_function_ii}, and then applying Theorem~\ref{Paper02_bound_total_mass}, we conclude that for any~$T \geq 0$ and~$r \geq 1$,
\begin{equation} \label{Paper02_intermediate_step_moments_inner_product_test_function_iii}
    \sup_{N \in \mathbb{N}} \; \sup_{t \in [0,T]} \; \mathbb{E}_{\boldsymbol \eta^N}\Big[\vert g^{N,\varphi}_{\mathcal{I}} (\eta^{N}(t)) \vert^{r}\Big] < \infty.
\end{equation}
For the second term within the expectation on the left-hand side of~\eqref{Paper02_general_estimates_moments_test_function_and_derivatives_generator}, observe that by~\eqref{Paper02_action_generator_inner_product_smooth_functions}, the triangle inequality, the fact that~$\varphi$ has compact support,~\eqref{Paper02_bound_discrete_laplacian} and the fact that~$\vert F_{k}(u) \vert \leq F^{+}_{k}(u)$ for~$u \in \ell_{1}^{+}$ and~$k \in \mathbb{N}_{0}$ by~\eqref{Paper02_reaction_term_PDE} and~\eqref{Paper02_reaction_predictable_bracket_process}, we conclude that there exists~$R > 0$ such that for~$N \in \mathbb N$ and~$\boldsymbol{\xi} = (\xi_k(x))_{k \in \mathbb N_0, x \in L_N^{-1}\mathbb Z} \in \mathcal{S}^{N}$,
\begin{equation} \label{Paper02_intermediate_step_moments_inner_product_test_function_iv}
\begin{aligned}
    \Big\vert \mathcal{L}^{N}g^{N,\varphi}_{\mathcal{I}} (\boldsymbol{\xi}) \Big\vert & \leq 2\frac{m_{N}}{L_{N}^{3}} \| \varphi'' \|_{L_{\infty}(\mathbb{R})} \sum_{\{x \in L_{N}^{-1}\mathbb{Z}: \vert x \vert < R\}} \; \sum_{k \in \mathcal{I}} \frac{\xi_{k}(x)}{N} \\
    & \quad + \frac{1}{L_{N}} \| \varphi \|_{L_{\infty}(\mathbb{R})} \sum_{\{x \in L_{N}^{-1}\mathbb{Z}: \vert x \vert < R\}} \; \sum_{k \in \mathcal{I}} F^{+}_{k}\left(\frac{\xi(x)}{N}\right).
\end{aligned}
\end{equation}
For $r \geq 1$, by taking both sides of~\eqref{Paper02_intermediate_step_moments_inner_product_test_function_iv} to the power of~$r$, and using~\eqref{Paper02_elementary_inequality_power_arbitrary_finite_sum_positive_numbers} and~\eqref{Paper02_trivial_bound_sum_reaction_terms}, and then by Theorem~\ref{Paper02_bound_total_mass} and the fact that by Assumption~\ref{Paper02_scaling_parameters_assumption} we have~$\displaystyle \frac{m_{N}}{L_{N}^{2}} \rightarrow m \in (0,\infty)$ as~$N \rightarrow \infty$, the following bound holds for~$T \geq 0$:
\begin{equation} \label{Paper02_intermediate_step_moments_inner_product_test_function_v}
     \sup_{N \in \mathbb{N}} \; \sup_{t \in [0,T]} \; \mathbb{E}_{\boldsymbol \eta^N}\Big[\Big\vert \Big(\mathcal{L}^{N}g^{N,\varphi}_{\mathcal{I}}\Big)(\eta^{N}(t)) \Big\vert^{r} \Big] < \infty.
\end{equation}
It remains to bound the third term within the expectation on the left-hand side of~\eqref{Paper02_general_estimates_moments_test_function_and_derivatives_generator}. By~\eqref{Paper02_action_generator_square_inner_product_test_function}, the triangle inequality,~\eqref{Paper02_bound_discrete_gradient} and the fact that~$\varphi$ has compact support, there exists~$R > 0$ such that for $N \in \mathbb N$ and~$\boldsymbol{\xi} = (\xi_k(x))_{k \in \mathbb N_0, x \in L_N^{-1}\mathbb Z} \in \mathcal{S}^{N}$
\begin{equation} \label{Paper02_intermediate_step_moments_inner_product_test_function_vi}
\begin{aligned}
    \Big\vert \Big(\mathcal{L}^{N}(g^{N,\varphi}_{\mathcal{I}})^{2}\Big)(\boldsymbol{\xi}) \Big\vert & \leq 2 \vert g^{N,\varphi}_{\mathcal{I}}(\boldsymbol{\xi})\vert \Big\vert \Big(\mathcal{L}^{N}g^{N,\varphi}_{\mathcal{I}}\Big)(\boldsymbol{\xi})\Big\vert + E(N,\varphi,\mathcal{I},\boldsymbol{\xi}),
\end{aligned}
\end{equation}
where
\begin{equation} \label{Paper02_intermediate_step_moments_inner_product_test_function_vii}
\begin{aligned}
E(N,\varphi,\mathcal{I},\boldsymbol{\xi}) & \defeq  \frac{m_{N}}{NL_{N}^{4}} \| \varphi'\|_{L_{\infty}(\mathbb{R})}^{2}\sum_{\{x \in L_{N}^{-1}\mathbb{Z}: \vert x \vert < R\}} \; \sum_{k \in \mathcal{I}} \frac{\xi_{k}(x)}{N} \\ & \qquad + \frac{1}{NL_{N}^{2}} \| \varphi \|^{2}_{L_{\infty}(\mathbb{R})} \sum_{\{x \in L_{N}^{-1}\mathbb{Z}: \vert x \vert < R\}} \; \sum_{k \in \mathcal{I}} F^{+}_{k}\left(\frac{\xi(x)}{N}\right).
\end{aligned}
\end{equation}
We now bound the moments of the terms on the right-hand side of~\eqref{Paper02_intermediate_step_moments_inner_product_test_function_vi} with $\boldsymbol \xi = \eta^N(t)$. For the first term, by the Cauchy-Schwarz inequality, and then by estimates~\eqref{Paper02_intermediate_step_moments_inner_product_test_function_iii} and~\eqref{Paper02_intermediate_step_moments_inner_product_test_function_v}, we have for all~$r \geq 1$ and~$T \geq 0$,
\begin{equation} \label{Paper02_intermediate_step_moments_inner_product_test_function_viii}
\begin{aligned}
    & \sup_{N \in \mathbb{N}} \; \sup_{t \in [0,T]} \mathbb{E}_{\boldsymbol \eta^N}\Big[\vert g^{N,\varphi}_{\mathcal{I}}(\eta^{N}(t))\vert^{r} \Big\vert \Big(\mathcal{L}^{N}g^{N,\varphi}_{\mathcal{I}}\Big)(\eta^{N}(t))\Big\vert^{r}\Big] \\ & \quad \leq \sup_{N \in \mathbb{N}} \; \sup_{t \in [0,T]} \mathbb{E}_{\boldsymbol \eta^N}\Big[\vert g^{N,\varphi}_{\mathcal{I}}(\eta^{N}(t))\vert^{2r}\Big]^{1/2} \mathbb{E}_{\boldsymbol \eta^N}\Big[\Big\vert \Big(\mathcal{L}^{N}g^{N,\varphi}_{\mathcal{I}}\Big)(\eta^{N}(t))\Big\vert^{2r}\Big]^{1/2} \\ & \quad < \infty.
\end{aligned}
\end{equation}
To bound the moments of the second term on the right-hand side of~\eqref{Paper02_intermediate_step_moments_inner_product_test_function_vi}, for $r \geq 1$, we take both sides of~\eqref{Paper02_intermediate_step_moments_inner_product_test_function_vii} to the power of~$r$, use~\eqref{Paper02_elementary_inequality_power_arbitrary_finite_sum_positive_numbers} and~\eqref{Paper02_trivial_bound_sum_reaction_terms}, and then Theorem~\ref{Paper02_bound_total_mass} and the fact that by Assumption~\ref{Paper02_scaling_parameters_assumption}, $\displaystyle \frac{m_{N}}{L_{N}^{2}} \rightarrow m \in (0,\infty)$ and~$L_{N} \rightarrow \infty$ as~$N \rightarrow \infty$ to conclude that for all~$r \geq 1$ and~$T \geq 0$,
\begin{equation} \label{Paper02_intermediate_step_moments_inner_product_test_function_ix}
    \lim_{N \rightarrow \infty} \; \sup_{t \in [0,T]} \; \mathbb{E}_{\boldsymbol \eta^N}[ E(N,\varphi,\mathcal{I},\eta^{N}(t))^{r}] = 0.
\end{equation}
By~\eqref{Paper02_intermediate_step_moments_inner_product_test_function_vi},~\eqref{Paper02_intermediate_step_moments_inner_product_test_function_viii} and~\eqref{Paper02_intermediate_step_moments_inner_product_test_function_ix}, we have for~$T \geq 0$ and~$r \geq 1$,
\begin{equation} \label{Paper02_intermediate_step_moments_inner_product_test_function_x}
    \sup_{N \in \mathbb{N}} \; \sup_{t \in [0,T]} \; \mathbb{E}_{\boldsymbol \eta^N}\Big[\Big\vert \Big(\mathcal{L}^{N}(g^{N,\varphi}_{\mathcal{I}})^{2}\Big)(\eta^{N}(t)) \Big\vert^{r} \Big] < \infty.
\end{equation}
Assertion~(iv) then follows from combining~\eqref{Paper02_intermediate_step_moments_inner_product_test_function_iii},~\eqref{Paper02_intermediate_step_moments_inner_product_test_function_v} and~\eqref{Paper02_intermediate_step_moments_inner_product_test_function_x}.

\medskip

\myemph{Proof of assertion~(v)}
Since the proof that the process~$M$ is a càdlàg martingale is similar to the proof of Lemma~\ref{Paper02_properties_greens_function_muller_ratchet}(v), we will omit some details. It follows from assertion~(iv) of this lemma and the triangle inequality that $M$ is an integrable càdlàg process. It remains to verify that~$M$ satisfies the martingale property with respect to the filtration~$\{\mathcal{F}^{\eta^{N}}_{t+}\}_{t \geq 0}$. For every~$n \in \mathbb{N}$, define~$h^{N,\varphi,n}_{\mathcal{I}}: \mathcal{S}^{N} \rightarrow \mathbb{R}$ by, for all~$\boldsymbol{\xi} \in \mathcal{S}^{N}$,
\begin{equation} \label{Paper02_truncation_square_inner_product_function}
    h^{N,\varphi,n}_{\mathcal{I}}(\boldsymbol{\xi}) \defeq (g^{N,\varphi}_{\mathcal{I}})^{2}(\boldsymbol{\xi}) \wedge n^{2}.
\end{equation}
By assertion~(i) of this lemma, $g^{N,\varphi}_{\mathcal{I}} \in \mathcal{C}_*(\mathcal{S}^{N}; \mathbb{R})$. By~\eqref{Paper02_truncation_square_inner_product_function}, for all~$\boldsymbol{\xi}, \boldsymbol{\zeta} \in \mathcal{S}^{N}$ and every~$n \in \mathbb{N}$, we have
\begin{equation*}
\begin{aligned}
    \Big\vert h^{N,\varphi,n}_{\mathcal{I}}(\boldsymbol{\xi}) - h^{N,\varphi,n}_{\mathcal{I}}( \boldsymbol{\zeta}) \Big\vert & \leq 2n \Big\vert g^{N,\varphi}_{\mathcal{I}}(\boldsymbol{\xi}) - g^{N,\varphi}_{\mathcal{I}}(\boldsymbol{\zeta}) \Big\vert.
\end{aligned}
\end{equation*}
Therefore, $h^{N,\varphi,n}_{\mathcal{I}} \in \mathcal{C}_*(\mathcal{S}^{N}; \mathbb{R})$ for every~$n \in \mathbb{N}$. Hence, by Theorem~\ref{Paper02_thm_existence_uniqueness_IPS_spatial_muller}, for every~$n \in \mathbb{N}$, the process~$(M^{(n)}(T))_{T \geq 0}$ given by, for all~$T \geq 0$,
\begin{equation} \label{Paper02_associated_truncated_martingale_ii}
    M^{(n)}(T) \defeq h^{N,\varphi,n}_{\mathcal{I}}(\eta^{N}(T)) - h^{N,\varphi,n}_{\mathcal{I}}(\eta^{N}(0)) - \int_{0}^{T} \Big(\mathcal{L}^{N}h^{N,\varphi,n}_{\mathcal{I}}\Big)(\eta^{N}(t-)) \, dt
\end{equation}
is a càdlàg martingale with respect to the filtration~$\{\mathcal{F}^{\eta^{N}}_{t+}\}_{t \geq 0}$. By the definitions of~$g^{N,\varphi}_{\mathcal{I}}$ in~\eqref{Paper02_formulation_inner_product_with_test_function_lipschitz_continuous_i} and of~$h^{N,\varphi,n}_{\mathcal{I}}$ in~\eqref{Paper02_truncation_square_inner_product_function}, and by the fact that~$\varphi$ is bounded and has compact support, and by~\eqref{Paper02_definition_state_space_formal}, by the same argument as we used to derive~\eqref{Paper02_constructing_sequence_UI_martingales_square_green}, the following limit holds almost surely for $T \geq 0$:
\begin{equation} \label{Paper02_almost_sure_convergence_trunctaed_martingales_inner_product}
    \lim_{n \rightarrow \infty} \; M^{(n)}(T) = M(T).
\end{equation}
As in the proof of Lemma~\ref{Paper02_properties_greens_function_muller_ratchet}(v), by~\eqref{Paper02_almost_sure_convergence_trunctaed_martingales_inner_product}, the martingale property of~$M$ will be proved after establishing that for all~$T \geq 0$, the sequence of random variables~$(M^{(n)}(T))_{n \in \mathbb{N}}$ is uniformly integrable. By~\eqref{Paper02_intermediate_step_moments_inner_product_test_function_iii} and the same argument as used to derive~\eqref{Paper02_easy_trick_cauchy_schwarz}, it will suffice to prove that for every~$N \in \mathbb{N}$ and all~$T \geq 0$,
\begin{equation} \label{Paper02_second_moment_estimate_application_generator_truncated_function}
    \sup_{n \in \mathbb{N}} \; \sup_{t \in [0,T]} \; \mathbb{E}_{\boldsymbol \eta^N}\Big[\Big(\mathcal{L}^{N}h^{N,\varphi,n}_{\mathcal{I}}\Big)^{2}(\eta^{N}(t))\Big] < \infty.
\end{equation}
By applying~\eqref{Paper02_elementary_inequality_difference_truncated_squares} and~\eqref{Paper02_elementary_identities_action_generator_inner_product_smooth_function_square}, and then the triangle inequality and~\eqref{Paper02_bound_discrete_gradient}, we conclude that for any $k \in \mathcal{I}$,~$n \in \mathbb{N}$,~$x \in L_{N}^{-1}\mathbb{Z}$ and $\boldsymbol{\xi} = (\xi_j(y))_{j \in \mathbb N_0, \, y \in L_N^{-1}\mathbb Z} \in \mathcal{S}^{N}$, the following estimates hold:
\begin{equation} \label{Paper02_bound_application_individual_generator_action_test_function}
\begin{aligned}
    & \mathds 1_{\{\xi_k(x)> 0\}}\Big\vert h^{N,\varphi,n}_{\mathcal{I}}\Big(\boldsymbol{\xi} + \boldsymbol{e}^{(x+L_{N}^{-1})}_{k} - \boldsymbol{e}^{(x)}_{k}\Big) - h^{N,\varphi,n}_{\mathcal{I}}\left(\boldsymbol{\xi}\right) \Big\vert \\
    & \quad \leq \frac{2\vert g^{N,\varphi}_{\mathcal{I}}(\boldsymbol{\xi})\vert}{NL_{N}}\| \varphi \|_{L_{\infty}(\mathbb{R})} + \frac{1}{N^{2}L_{N}^{2}} \| \varphi' \|_{L_{\infty}(\mathbb{R})}^{2},
    \\  & \mathds 1_{\{\xi_k(x)> 0\}}\Big\vert h^{N,\varphi,n}_{\mathcal{I}}\Big(\boldsymbol{\xi} + \boldsymbol{e}^{(x-L_{N}^{-1})}_{k} - \boldsymbol{e}^{(x)}_{k}\Big) - h^{N,\varphi,n}_{\mathcal{I}}\left(\boldsymbol{\xi}\right) \Big\vert \\ & \quad \leq \frac{2\vert g^{N,\varphi}_{\mathcal{I}}(\boldsymbol{\xi})\vert}{NL_{N}}\| \varphi \|_{L_{\infty}(\mathbb{R})} + \frac{1}{N^{2}L_{N}^{2}} \| \varphi' \|_{L_{\infty}(\mathbb{R})}^{2}, \\
    & \Big\vert h^{N,\varphi,n}_{\mathcal{I}}\Big(\boldsymbol{\xi} + \boldsymbol{e}^{(x)}_{k}\Big) - h^{N,\varphi,n}_{\mathcal{I}}\left(\boldsymbol{\xi}\right) \Big\vert \leq \frac{2\vert g^{N,\varphi}_{\mathcal{I}}(\boldsymbol{\xi})\vert}{NL_{N}}\| \varphi \|_{L_{\infty}(\mathbb{R})} + \frac{1}{N^{2}L_{N}^{2}} \| \varphi \|_{L_{\infty}(\mathbb{R})}^{2},
    \\  & \mathds 1_{\{\xi_k(x)> 0\}}\Big\vert h^{N,\varphi,n}_{\mathcal{I}}\Big(\boldsymbol{\xi} - \boldsymbol{e}^{(x)}_{k}\Big) - h^{N,\varphi,n}_{\mathcal{I}}\left(\boldsymbol{\xi}\right) \Big\vert \\
    & \quad \leq \frac{2\vert g^{N,\varphi}_{\mathcal{I}}(\boldsymbol{\xi})\vert}{NL_{N}}\| \varphi \|_{L_{\infty}(\mathbb{R})} + \frac{1}{N^{2}L_{N}^{2}} \| \varphi \|_{L_{\infty}(\mathbb{R})}^{2}.
\end{aligned}
\end{equation}
Combining~\eqref{Paper02_generator_foutel_etheridge_model}, the triangle inequality, the fact that~$\varphi$ has compact support,~\eqref{Paper02_bound_application_individual_generator_action_test_function} and then Fubini's theorem, the definition of~$F^{+} = (F^{+}_{k})_{k \in \mathbb{N}_{0}}$ in~\eqref{Paper02_reaction_predictable_bracket_process}, and~\eqref{Paper02_elementary_inequality_power_arbitrary_finite_sum_positive_numbers}, we conclude that there exists~$R > 0$ such that for every~$n \in \mathbb{N}$ and~$t \geq 0$,
\begin{equation*}
\begin{aligned}
    & \mathbb{E}_{\boldsymbol{\eta}^N}\Big[\Big(\mathcal{L}^{N}h^{N,\varphi,n}_{\mathcal{I}}\Big)^{2}(\eta^{N}(t))\Big] \\
    & \quad \lesssim_{N,\varphi} \mathbb{E}_{\boldsymbol \eta^N}\bigg[ (g^{N,\varphi}_{\mathcal{I}})^{2}(\eta^{N}(t))\bigg(\sum_{\{x \in L_{N}^{-1}\mathbb{Z}: \vert x \vert < R\}} \sum_{k \in \mathcal{I}} u^{N}_{k}(t,x)\bigg)^{2}\bigg] \\
    & \qquad \quad + \mathbb{E}_{\boldsymbol \eta^N}\bigg[\bigg(\sum_{\{x \in L_{N}^{-1}\mathbb{Z}: \vert x \vert < R\}} \sum_{k \in \mathcal{I}} u^{N}_{k}(t,x)\bigg)^{2}\bigg]
    \\
    & \qquad \quad + \mathbb{E}_{\boldsymbol \eta^N}\bigg[ (g^{N,\varphi}_{\mathcal{I}})^{2}(\eta^{N}(t))\bigg(\sum_{\{x \in L_{N}^{-1}\mathbb{Z}: \vert x \vert < R\}} \sum_{k \in \mathcal{I}} F^{+}_{k}(u^{N}(t,x))\bigg)^{2}\bigg] \\
    & \qquad \quad + \mathbb{E}_{\boldsymbol \eta^N}\bigg[\bigg(\sum_{\{x \in L_{N}^{-1}\mathbb{Z}: \vert x \vert < R\}} \sum_{k \in \mathcal{I}} F^{+}_{k}(u^{N}(t,x))\bigg)^{2}\bigg].
\end{aligned}
\end{equation*}
By applying the Cauchy-Schwarz inequality, we obtain
\begin{equation} \label{Paper02_bound_application_individual_generator_action_test_function_i}
\begin{aligned}
    & \mathbb{E}_{\boldsymbol{\eta}^N}\Big[\Big(\mathcal{L}^{N}h^{N,\varphi,n}_{\mathcal{I}}\Big)^{2}(\eta^{N}(t))\Big] \\
    & \quad \lesssim_{N,\varphi} \mathbb{E}_{\boldsymbol \eta^N}\Big[(g^{N,\varphi}_{\mathcal{I}})^{4}(\eta^{N}(t))\Big]^{1/2}\mathbb{E}_{\boldsymbol \eta^N}\bigg[\bigg(\sum_{\{x \in L_{N}^{-1}\mathbb{Z}: \vert x \vert < R\}} \sum_{k \in \mathcal{I}} u^{N}_{k}(t,x)\bigg)^{4}\bigg]^{1/2} \\
    & \qquad \quad + \mathbb{E}_{\boldsymbol \eta^N}\bigg[\bigg(\sum_{\{
    x \in L_{N}^{-1}\mathbb{Z}: \vert x \vert < R\}} \sum_{k \in \mathcal{I}} u^{N}_{k}(t,x)\bigg)^{2}\bigg] \\
    & \qquad \quad + \mathbb{E}_{\boldsymbol \eta^N}\Big[(g^{N,\varphi}_{\mathcal{I}})^{4}(\eta^{N}(t))\Big]^{1/2}\mathbb{E}_{\boldsymbol \eta^N}\bigg[\bigg(\sum_{\{x \in L_{N}^{-1}\mathbb{Z}: \vert x \vert < R\}} \sum_{k \in \mathcal{I}} F^{+}_{k}(u^{N}(t,x))\bigg)^{4}\bigg]^{1/2} \\
    & \qquad \quad + \mathbb{E}_{\boldsymbol \eta^N}\bigg[\bigg(\sum_{\{x \in L_{N}^{-1}\mathbb{Z}: \vert x \vert < R\}} \sum_{k \in \mathcal{I}} F^{+}_{k}(u^{N}(t,x))\bigg)^{2}\bigg].
\end{aligned}
\end{equation}
By substituting~\eqref{Paper02_intermediate_step_moments_inner_product_test_function_iii} and~\eqref{Paper02_elementary_inequality_power_arbitrary_finite_sum_positive_numbers} into~\eqref{Paper02_bound_application_individual_generator_action_test_function_i}, and then applying Fubini's theorem and then Theorem~\ref{Paper02_bound_total_mass}, we get~\eqref{Paper02_second_moment_estimate_application_generator_truncated_function}. Then, by~\eqref{Paper02_associated_truncated_martingale_ii},~\eqref{Paper02_intermediate_step_moments_inner_product_test_function_iii},~\eqref{Paper02_second_moment_estimate_application_generator_truncated_function} and the same argument as for~\eqref{Paper02_easy_trick_cauchy_schwarz}, we have for any~$T \geq 0$ and~$N \in \mathbb{N}$,
\begin{equation*}
    \sup_{n \in \mathbb{N}} \, \mathbb{E}_{\boldsymbol \eta^N}\Big[(M^{(n)}(T))^{2}\Big] < \infty,
\end{equation*}
which implies the sequence of random variables~$(M^{(n)}(T))_{n \in \mathbb{N}}$ is uniformly integrable. As explained after~\eqref{Paper02_almost_sure_convergence_trunctaed_martingales_inner_product}, combining~\eqref{Paper02_almost_sure_convergence_trunctaed_martingales_inner_product} and the uniform integrability of~$(M^{(n)}(T))_{n \in \mathbb{N}}$ implies that~$M$ is a martingale, which completes the proof.
\end{proof}

\textbf{Acknowledgments.}
The authors are grateful to Matthias Birkner, Alison Etheridge, Félix Foutel-Rodier, Karsten Matthies and Matt Roberts for helpful comments and suggestions. 
While this work was being carried out, JLdOM was supported by a scholarship from the EPSRC Centre for Doctoral Training in Statistical Applied Mathematics at Bath (SAMBa), under the project EP/S022945/1.
MO is partially supported by EPSRC grant EP/X040089/1. 
SP is supported by a Royal Society University Research Fellowship. While part of this work was being carried out, SP was visiting SLMath as a Research Member of the Probability and Statistics of Discrete Structures program.

\bibliographystyle{abbrv}
\setlength{\bibsep}{1pt plus 0.3ex}
\bibliography{bibli_law_large_numbers_mullers_ratchet}

\end{document}